



\documentclass[10pt,twoside,openright]{report}
\pdfoutput=1

\title{On the concordance orders of knots}
\author{Julia Collins}
\date{\today}

\usepackage{fancyhdr, amsmath,amssymb,amsfonts,dsfont,latexsym,mathrsfs,amsthm, amscd, url, enumitem}
\usepackage[phd]{edmaths}
\usepackage{graphicx,color,rotating,subfigure, wrapfig}
\usepackage[all]{xy}
\usepackage{tikz}

\def\R{\mathds{R}}
\def\Z{\mathds{Z}}
\def\Zp{\widetilde{\mathds{Z}}_p}
\def\N{{\mathds{N}}}
\def\Q{{\mathds{Q}}}
\def\F{{\mathds{F}}}
\def\CC{{\mathds{C}}}
\def\C{{\mathcal{C}}}
\def\A{{\mathcal{A}}}
\def\G{{\mathcal{G}}}
\def\CF{{\mathcal{F}}}
\def\I{{\mathcal{I}}}

\theoremstyle{definition}
\newtheorem{theorem}{Theorem}[section]
\newtheorem*{theoremNoNumber}{Theorem}
\newtheorem{proposition}[theorem]{Proposition}
\newtheorem{lemma}[theorem]{Lemma}
\newtheorem{corollary}[theorem]{Corollary}
\newtheorem{definition}[theorem]{Definition}
\newtheorem{conjecture}[theorem]{Conjecture}
\newtheorem{remark}[theorem]{Remark}
\newtheorem*{notation}{Notation}
\newtheorem{example}[theorem]{Example}
\newtheorem*{note}{Note}

\newtheorem*{thmtorussig}{Theorem \ref{Thm:L2torussig}}
\newtheorem*{cortwistslice}{Corollary \ref{Cor:twistslice}}
\newtheorem*{cortwisteddouble}{Corollary \ref{Cor:twisteddouble}}
\newtheorem*{thmtwistedpoly}{Theorem \ref{Thm:twistedpoly}}
\newtheorem*{thmtwistedpolyAlt}{Theorem \ref{Thm:twistedpolyAlt}}
\newtheorem*{thminfiniteordersum}{Theorem \ref{Thm:infiniteordersum}}
\newtheorem*{thmtwistedpolyq^n}{Theorem \ref{Thm:twistedpolyq^n}}
\newtheorem*{thmHighercover}{Theorem \ref{Thm:Highercover}}
\newtheorem*{thmreverseInfiniteOrder}{Theorem \ref{Thm:reverseInfiniteOrder}}
\newtheorem*{thmhighercoverConnSum}{Theorem \ref{Thm:highercoverConnSum}}

\newtheorem*{coralgorder4}{Corollary \ref{Cor:AlgOrder4}}

\DeclareMathOperator{\lk}{\emph{lk}}
\DeclareMathOperator{\GL}{GL}
\DeclareMathOperator{\sign}{sign}
\DeclareMathOperator{\disc}{Disc}
\DeclareMathOperator{\interior}{int}
\DeclareMathOperator{\Hom}{Hom}
\newcommand{\eps}{\varepsilon}

\setdescription{noitemsep,topsep=0pt}

\hyphenpenalty=5000
\tolerance=1000



\begin{document}
%

\maketitle
\newpage



\pagestyle{fancy}
\fancyhead{} 
\renewcommand{\headheight}{28pt}
\cfoot{}
\fancyfoot[LE,RO]{\thepage}

\renewcommand{\chaptermark}[1]{\markboth{\chaptername \ \thechapter.\ #1}{}}
\renewcommand{\sectionmark}[1]{\markright{\thesection.\ #1} {}}

\fancyhead[LE]{\small \slshape \leftmark}      
\fancyhead[RO]{\small \slshape \rightmark}     
\renewcommand{\headrulewidth}{0.3pt}    


\pagenumbering{roman}
\setcounter{page}{1}


\addcontentsline{toc}{chapter}{Abstract}
\chapter*{Abstract}
\noindent

This thesis develops some general calculational techniques for finding the orders of knots in the topological concordance group $\C$ . The techniques currently available in the literature are either too theoretical, applying to only a small number of knots, or are designed to only deal with a specific knot. The thesis builds on the results of Herald, Kirk and Livingston \cite{HeraldKirkLivingston10} and Tamulis \cite{Tamulis02} to give a series of criteria, using twisted Alexander polynomials, for determining whether a knot is of infinite order in $\C$.

 \medskip

There are two immediate applications of these theorems.  The first is to give the structure of the subgroups of the concordance group $\C$ and the algebraic concordance group $\G$ generated by the prime knots of $9$ or fewer crossings.  This should be of practical value to the knot-theoretic community, but more importantly it provides interesting examples of phenomena both in the algebraic and geometric concordance groups.  The second application is to find the concordance orders of all prime knots with up to $12$ crossings.  At the time of writing of this thesis, there are $325$ such knots listed as having unknown concordance order. The thesis includes the computation of the orders of all except two of these.

\medskip

In addition to using twisted Alexander polynomials to determine the concordance order of a knot, a theorem of Cochran, Orr and Teichner \cite{COT03} is applied to prove that the $n$-twisted doubles of the unknot are not slice for $n\neq 0,2$.  This technique involves analysing the `second-order' invariants of a knot; that is, slice invariants (in this case, signatures) of a set of metabolising curves on a Seifert surface for the knot.  The thesis extends the result to provide a set of criteria for the $n$-twisted double of a general knot $K$ to be slice; that is, of order 0 in $\C$.

 \medskip

The structure of the knot concordance group continues to remain a mystery, but the thesis provides a new angle for attacking problems in this field and it provides new evidence for long-standing conjectures.

\vspace{10mm}
\normalsize 

\newpage
\addcontentsline{toc}{chapter}{Declaration}
\chapter*{Declaration}

\normalsize
I do hereby declare that this thesis was composed by myself and that
the work described within is my own, except where explicitly stated otherwise.

\vspace{20mm}
\hfill {\it Julia Collins}

\hfill May 2011

\newpage
\addcontentsline{toc}{chapter}{Acknowledgements}
\chapter*{Acknowledgements}

\noindent

\normalsize

My biggest thanks and gratitude must go to Charles Livingston, without whom this thesis would not have been possible. For the past four years he has given selflessly his time, his ideas, his moral support and his patience, and I cannot describe how much these things have meant to me.

I am also incredibly grateful to my supervisor, Andrew Ranicki, for supporting me through the whole of my studies (and indeed, my career), even though I veered off from the path he originally envisaged for me. Through his encouragement I have become a better public speaker, better expositor and better graphic designer. I have had the privilege of meeting some of the world's greatest mathematicians at Andrew and Ida's wonderful dinner parties and at the high-level conferences he has organised. No graduate student could have had a supervisor who did more for them than mine did for me.

On the academic side I must also thank Paul Kirk for many illuminating conversations on the ideas in this thesis, and Chris Smyth for helping me out with the number theory side of things. Thanks go to my examiners Brendan Owens and Mark Grant for carefully reading the thesis and offering many valuable suggestions for improvement.

My mathematical brothers Mark Powell and Spiros Adams-Florou have been a constant source of inspiration and help to me throughout my thesis. Mark has given much of his time in helping me to understand the finer points of knot concordance, whilst Spiros deserves the credit for finding the slice movie for $11a_5$ and for helping me out when I was stuck in Section \ref{Sec:combiningprimepowers}. Their unfailing enthusiasm for mathematics and topology, and their readiness to help out a friend in need, have been much appreciated.

Many thanks go to my officemates Florian Pokorny, Thomas K\"oppe and (more recently!) Patrick Orson for keeping me cheerful, joining me for lunch every day and for putting up with the ever-increasing numbers of sheep-based objects in the room. A huge extra thanks goes to Thomas for always being there to help with computer and LaTeX-related queries and for helping me to make the classification website described in Chapter \ref{chapter7}. I feel privileged to have known such a generous and talented person.

My time in Edinburgh would not have been so wonderful without the presence of the friends who have been here (and elsewhere), who have kept me smiling and cheered along my research efforts. Graeme Taylor (the best housemate and friend I'll ever have), Shona Yu, Erik Jan de Vries, Berian James, Nathan Ryder, and the incomparable Paul Reynolds.

Many thanks to my parents and sister, who have always supported me in everything I chose to do, and who have done well in putting up with me talking about maths whenever I came home. Without their insistence on booking tickets to my graduation before I had even handed in, this thesis might never have been finished.

Last, but not least, my love and thanks goes to Michael Wharmby for his unfailing support in my thesis-writing, even when it meant many months of boring weekends spent working. Not only that, but he has the honour of being the one person (other than my examiners) who has read my thesis fully from cover to cover. I hope that one day I will be able to repay the time he spent in meticulously proofreading this document.

Even more lastly, I could not end these acknowledgements without a thanks to my sheep Haggis for being my constant companion and mastermind behind all my ideas. And, of course, EPSRC and the University of Edinburgh for financially supporting me through the long years of my PhD. 

\singlespacing

\ \newpage
\addcontentsline{toc}{chapter}{Contents}
\tableofcontents

\ \newpage
\addcontentsline{toc}{chapter}{List of symbols}
\chapter*{List of symbols}

\noindent

\normalsize

\begin{tabular}{cl}
Symbol & Meaning \\ \hline
\\
$S^n$ & The $n$-dimensional sphere, homeomorphic to $\{ x \in \R^n, \; |x| = 1\; \}$\\
$D^n$ & The $n$-dimensional disc, homeomorphic to $\{ x \in \R^n, \; |x| \leq 1\; \}$\\
$\#$ & Connected sum of manifolds\\
$M^T$ & The transpose of a matrix $M$\\
$-K$ & The mirror image of the knot $K$ with reversed orientation\\
$K^{\text{r}}$ & The knot $K$ with reversed orientation \\
$X$ & The knot complement $S^3 \backslash K$ (or the knot exterior $S^3 \backslash n(K)$)\\
$X_n$ & The $n$-fold cyclic cover of the knot complement $X$\\
$\Sigma_n$ & The $n$-fold branched cover of a knot \\
$\lk$ & The linking form of a knot\\
$\C_n$ & The $n$-dimensional knot concordance group\\
$\G, \G^{\Z}$ & The (integral) algebraic concordance group\\
$\G^{\Q}$ & The rational algebraic concordance group \\
$\G_{\F}$ & Cobordism classes of isometric structures over a field $\F$ \\
$\A$ & The group of algebraically slice knots in $\C$\\
$\F_n$ & The field with $n$ elements\\
$\F_n^*$ & The group of units of the field $\F_n$ \\
$\Z_n$ & The integers modulo $n$, otherwise known as $\Z/n\Z$ \\
$\Q_p$ & The $p$-adic rationals\\
$\widetilde{\Z_p}$ & The $p$-adic integers\\
$\Z_{(2)}$ & The integers $\Z$ localised at $2$, i.e. rationals with odd denominator \\
$\zeta_q$ & A complex number which is a $q^{\text{th}}$ root of unity \\
$W(R)$ & The Witt group over a ring $R$ \\
$\disc(p(t))$ & The discriminant of a polynomial $p(t)$\\
$\Delta_K(t)$ & The Alexander polynomial of a knot $K$ \\[0.5ex]
$\Delta_{\chi}^K$ & The twisted Alexander polynomial of a knot $K$ corresponding to \\
 & the map $\chi$ \\
$\doteq$ & Equal up to norms, in the context of twisted Alexander polynomials\\
$\sigma_n(p)$ & Galois conjugation of a polynomial $p(t) \in \Q(\zeta_q)[t,t^{-1}]$.\\
$\tau(M,\rho)$ & The Casson-Gordon invariant of a $3$-manifold $M$ and \\
 & map $\rho \colon M \to \Z \oplus \Z_d$ \\
$\sigma$ & The signature of a knot \\
$\sigma_{\omega}(K)$ & The $\omega$- or Tristram-Levine signature of a knot $K$\\
$\sigma_{\omega}(I)$ & The Casson-Gordon signature of a class $I \in W(\CC(t),\I)$ at the unit \\
& complex number $\omega$ \\
$\disc(I)$ & The Casson-Gordon discriminant of a class $I \in W(\CC(t),\I)$\\
$j_K(e^{2\pi i x})$ or $j_K(x)$ & The jump at $\omega= e^{2\pi i x}$ of the signature function $\sigma_{\omega}(K)$\\
$E$ & The set of prime $9$-crossing knots, including any distinct reverses\\
$\C_E$ & The subgroup of $\C$ generated by $E$\\
$\CF_E$ & The free abelian group generated by $E$\\
$\A_E$ & The kernel of the map $\CF_E \to \G$ \\
\end{tabular} 
\ \newpage
\clearpage




\onehalfspacing
\pagenumbering{arabic}

\setcounter{page}{1}
\chapter{\label{chapter1} Introduction}

In this chapter we give an overview of the subject of knot concordance: what it is, why it is interesting, and how mathematicians have gone about studying it.  We highlight the difficulties of working in this field and how the results of this thesis fit into the general picture we have of the structure of the knot concordance group.

Other good surveys of knot concordance may be found in Livingston \cite{Livingston05} and in the unpublished lecture notes of Peter Teichner \cite{Teichner01}.  Surveys of smooth knot concordance can be found in the introduction to the thesis of Adam Levine \cite{ALevine10} and in Jabuka \cite{Jabuka07}.

\section{What is knot concordance?}

Every knot $S^1 \hookrightarrow S^3$ bounds a surface in $S^3$, but only the unknot bounds a disc; that is, a surface without any holes in (see Figure \ref{Fig:knotsurface}).  Is it possible that knots could bound discs if the discs were allowed to sit in the 4th dimension?  This is the motivation behind the definition of a slice knot.

\begin{figure}
  \centering
  \includegraphics[height=4cm]{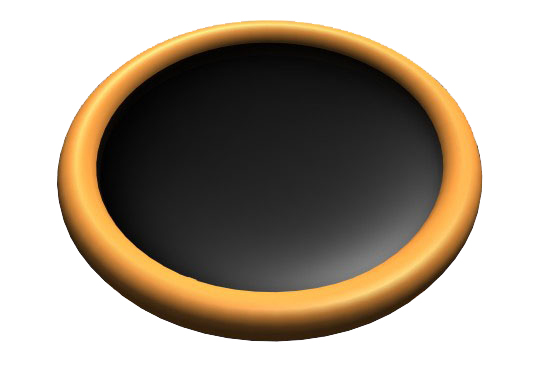}
  \includegraphics[height=4cm]{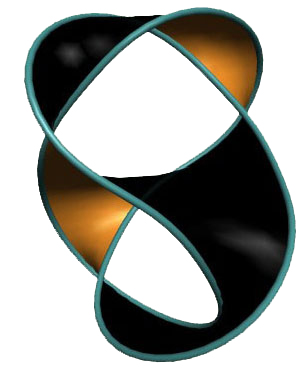}
  \caption{The unknot and figure-8 knot bounding surfaces}
  \label{Fig:knotsurface}
\end{figure}

A slice knot is a knot which bounds a locally flat disc $D^2$ in the 4-ball $D^4$.  Such a disc is called a \emph{slice disc}.  Locally flat means that every point of the disc has a neighbourhood around it which looks like the standard embedding of a disc $D^2$ into $D^4$.  This restriction is necessary in order to make the definition non-trivial.  For without the requirement that the disc be locally flat, \emph{every} knot would bound a disc in $D^4$.

To see this, simply take the cone over the knot, as in Figure \ref{Fig:cone}.  The cone over the knot is homeomorphic to a disc $D^2$ and, because of the embedding into 4 dimensions, there are no intersections of the surface with itself (as are visible in the 3-dimensional picture).  However, the vertex of the cone is not locally flat, and this is where the knot gets `squashed' to a point.  Slice knots are special kinds of knots where it is possible to find discs that they bound which avoid these kinds of singularities.

\begin{figure}
  \begin{center}
    \includegraphics[width=3cm]{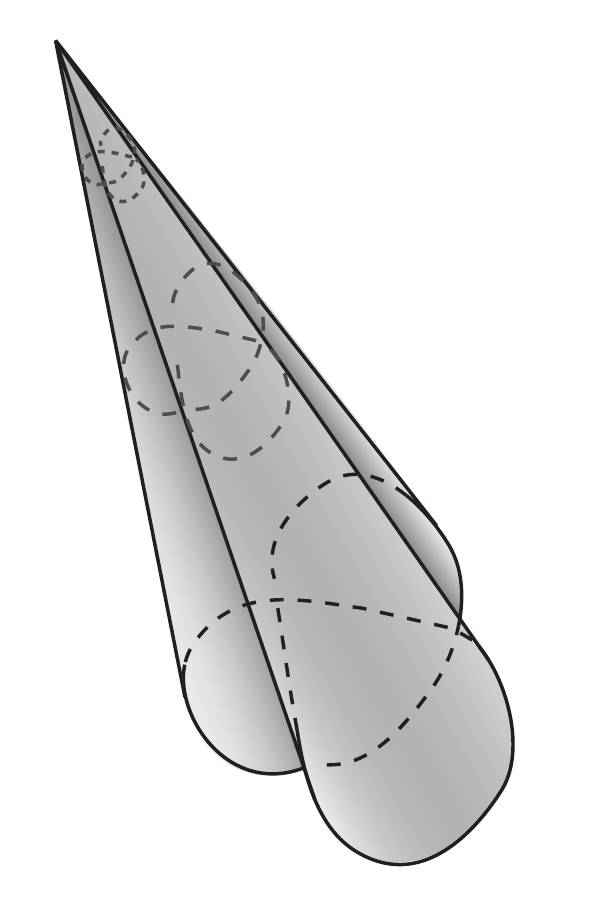}
  \end{center}
  \caption{Every knot bounds a non-locally flat disc: the cone over the knot}
  \label{Fig:cone}
\end{figure}

The definition of a slice knot was first made by Fox \cite{Fox62}, who used the word `slice' because he thought of slice knots as being those which were the cross-sections of locally flat $2$-spheres in $\R^4$, sliced by 3-dimensional hyperplanes.  Indeed, the original purpose of slice knots was to help study knotted $2$-spheres in $4$-space, non-trivial examples of which had been found by Artin in 1925~\cite{Artin25}.  Slice knots remain important in the study of embeddings of surfaces in 4 dimensions and in the classification of 4-manifolds.

Another reason why slice knots are interesting is that we can use them to make the set of knots into a group.  The problem with doing this usually is that, under the operation $\#$ of connected sum, a knot has no inverse.  Two knots can never be tied into a single piece of string so that the first cancels out the second.  To remedy this fact, we change our definition of a `trivial' knot. Instead of the unknot being the only trivial knot, we consider all slice knots to be trivial.  Formally, we say that two knots $K_1$ and $K_2$ are \emph{concordant} if $K_1 \# -K_2$ is slice, where $-K$ is the mirror image of $K$ with reversed orientation.  Then we make a group, called the \emph{concordance group} and denoted by $\C$, which is defined to be the set of knots under connected sum, modulo the equivalence class of slice knots.  One of the reasons this works is because $K \# -K$ is always slice, so knots do now have inverses.

Now that we have a group, the natural question to ask is ``What is the structure of this group?''.  Is it trivial: is every knot slice?  Is \emph{any} knot slice other than the unknot?  Are there elements of finite order?  Is it infinitely generated? Slowly we are coming to understand the answers to these questions but the structure of $\C$ remains a mystery to the mathematical community.

\section{An algebraic approach to slicing}

If a knot is slice then we can show this by simply exhibiting a slice disc for the knot.  But if it isn't slice then we need a topological obstruction to prove it.

An idea proposed by Michel Kervaire~\cite{Kervaire65} was that instead of looking for a slice disc straightaway, we should first find an arbitrary surface that the knot bounds (such surfaces are called \emph{Seifert surfaces} and there is an algorithm for constructing them) and then perform surgery to reduce the genus of that surface so that it becomes a disc in the $4^\text{th}$ dimension.  To be able to perform surgery to reduce the genus of a surface $F$, one needs to be able to find $n= \frac{1}{2} \dim H_1(F)$ closed curves on the surface which represent different elements in $H_1(F)$, which do not link with each other and which are each slice.  If such a set of curves exists, we can remove an annulus around each curve (i.e. remove $S^1 \times D^1$) and glue in a double set of slice discs (a $D^2 \times S^0$) to remove the homology class represented by that curve. (See Figure~\ref{Fig:surgerysurface}.)

\begin{figure}[h]
\begin{center}
  \includegraphics[width=7cm]{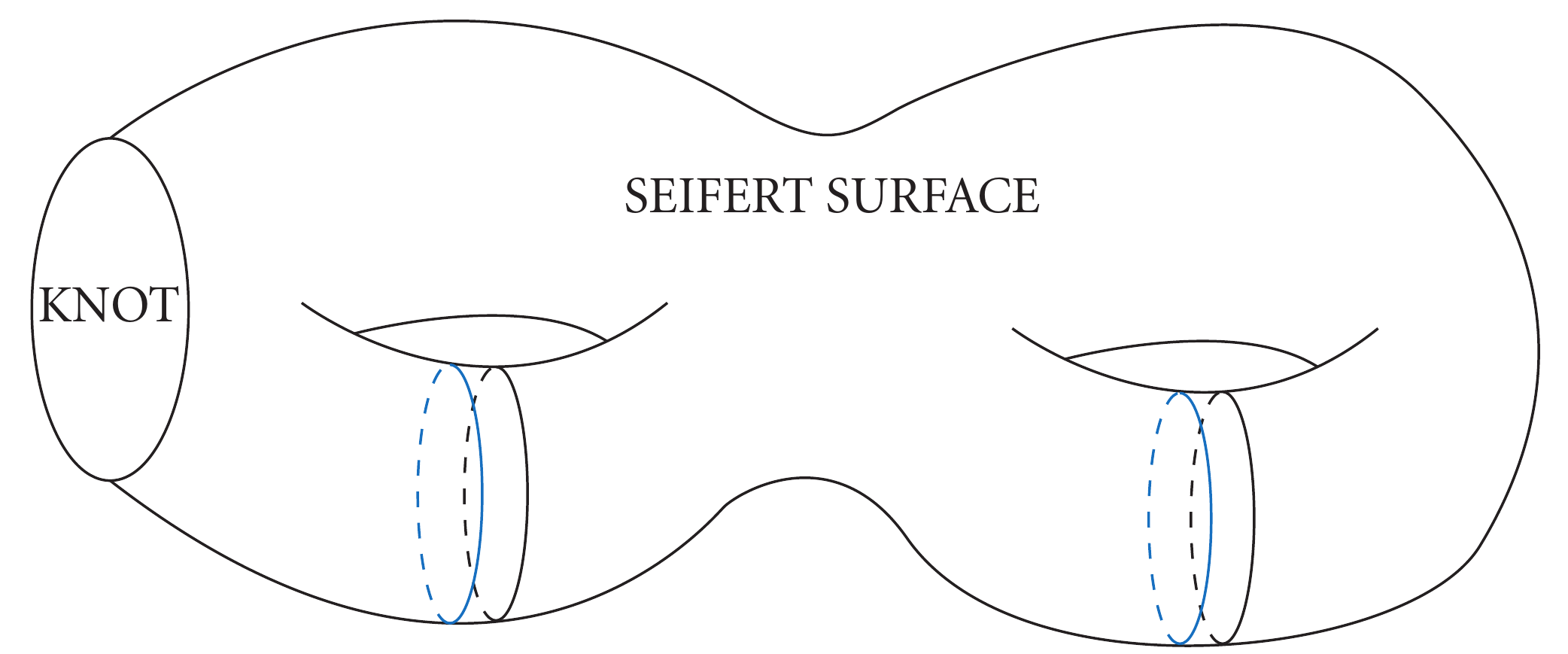}
  \includegraphics[width=7cm]{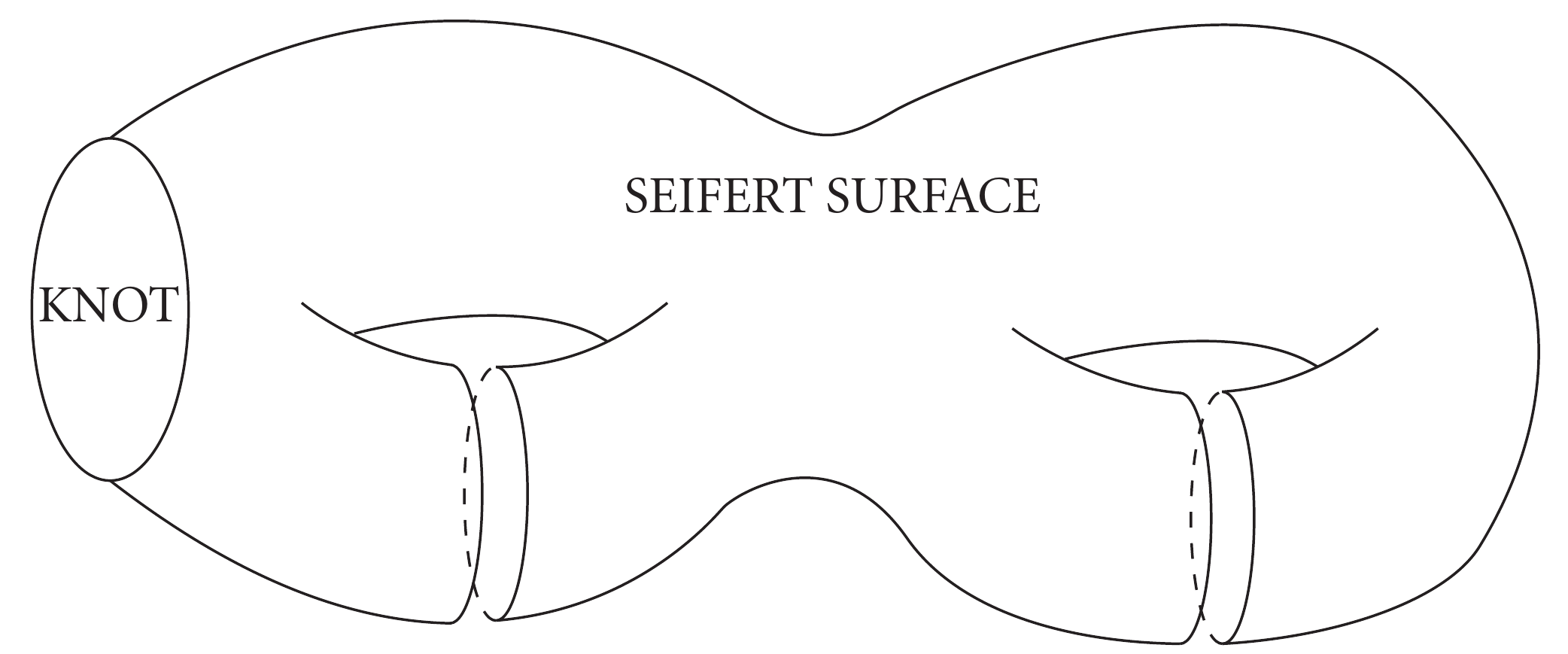}
\end{center}
\caption{Doing surgery to reduce the genus of a surface}
\label{Fig:surgerysurface}
\end{figure}

To implement this programme, Jerome Levine~\cite{Levine69} defined an object called the \emph{algebraic concordance group} $\G$.  The elements of this group are square integral matrices $A$ with the property that $\det(A + \eps A^T) = \pm 1$ where $\eps = \pm 1$.  A matrix is zero, or \emph{null cobordant}, in this group if it is congruent to a matrix of the form
  \[ \left(\begin{array}{cc} 0 & B \\ C & D \end{array}\right) \]
where $B$, $C$ and $D$ are square matrices of the same size.
Addition in the group is by block sum, and two matrices $A_1, A_2$ are considered equivalent, or \emph{cobordant}, if $A_1 \oplus (-A_2)$ is null-cobordant.  By analysing this group with the theory of quadratic forms, Levine~\cite{Levine69-1} proved that
  \[ \G \cong (\Z)^\infty \oplus (\Z_2)^\infty \oplus (\Z_4)^\infty \text{ .} \]
Every knot has an associated matrix in $\G$, called a Seifert matrix, and it can be proved (see Chapter 2, Theorem \ref{Thm:lagrangian}) that slice knots have null-cobordant Seifert matrices.  This means there is a group epimorphism from $\C$ to $\G$.  A knot whose image in $\G$ is zero is called \emph{algebraically slice}.

\subsection{Problems with the algebraic method}
\label{subsec:problemAlgebra}

Is the information contained in the algebraic concordance group enough to classify knots in $\C$?  In other words, if a knot is algebraically slice, does that mean it is geometrically slice?  The evidence points towards a negative answer:
\begin{itemize}
  \item The zeros in a Seifert matrix of an algebraically slice knot correspond to curves with zero linking number.  However, two knots with zero linking number may still be non-trivially linked (for example, the Whitehead link).
  \item Even if the zeros in the matrix \emph{do} correspond to knots which are unlinked, there is no guarantee that the knots in question are slice knots.  This means that surgery along these curves may not reduce the genus of the surface.
\end{itemize}

On the other hand, for slice knots in higher dimensions (i.e. $(S^n,S^{n+2}) = (\partial D^{n+1}, \partial D^{n+3})$, where $n\geq 2$) Levine and Kervaire proved that the algebra \emph{was} equivalent to the geometry (see Section \ref{Sec:higherdim}).   Their surgery-theoretic techniques worked perfectly in dimensions of 5 and above, but somehow there was always a problem when people tried applying it to 4 dimensions.  This problem was called the Whitney trick, which in high dimensions allows cancellation of opposite pairs of singularities (meaning that manifolds with `linking number' zero really have no intersections), but which fails to work in 4 dimensions.  Could there be a way of making it work for slice knots?

In the 1970s the first example was found of a knot which was algebraically slice but not geometrically slice.  Andrew Casson and Cameron Gordon~\cite{CassonGordon86} devised a concordance invariant which looked at intersection forms on $4$-manifolds whose boundaries were appropriate $3$-manifolds associated to the knot.  It sounds complicated, and it is: their invariant is almost incalculable for knots of genus higher than $1$, but it sufficed to prove that the map $\C \to \G$ has a non-trivial kernel.  The knots which were the first elements to be found in this kernel were the $n$-twisted doubles of the unknot (otherwise known simply as the twist knots), shown in Figure~\ref{Fig:twistknot}.

\begin{figure}
  \centering
  \includegraphics[width=6cm]{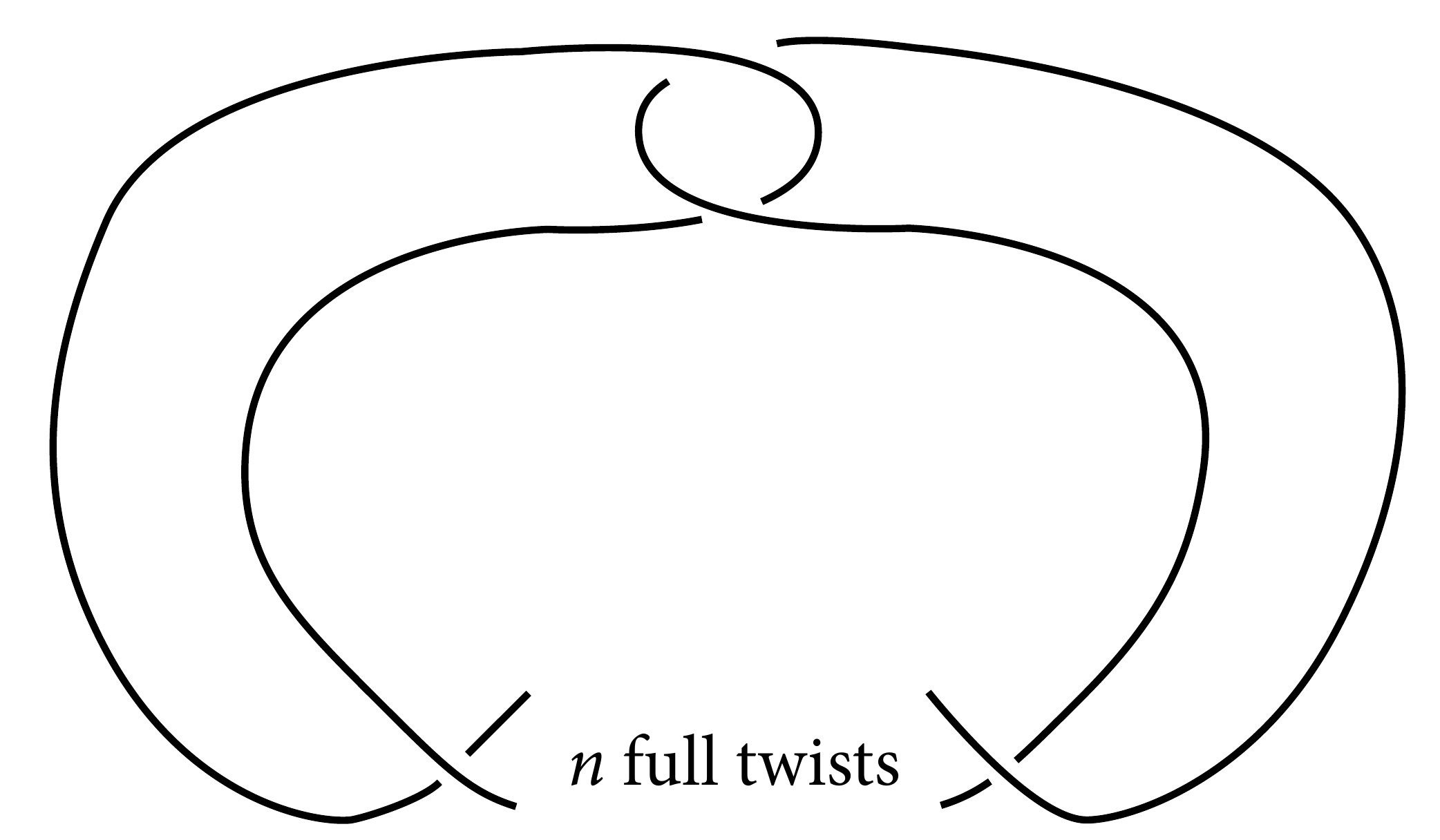}
  \caption{The $n$-twisted double of the unknot}
  \label{Fig:twistknot}
\end{figure}

A natural question following this discovery was: how much of the structure of the algebraic concordance group is present in the geometric concordance group?  For example, there are knots of algebraic order $2$ and algebraic order $4$; does this mean there are knots of geometric order $2$ and geometric order $4$?

The existence of order $2$ knots turned out to be easy to prove: any non-slice negative amphicheiral knot (that is, a knot $K$ where $K = -K$) is of order $2$, because $K \# -K$ is always slice.  For example, the Figure-8 knot $4_1$ is not slice but is of order $2$ because it is negative amphicheiral.  The orientation of the knot is important: positive amphicheiral knots (where $K=-K^r$) are not necessarily of order $2$, as Charles Livingston found \cite{Livingston01}.

A further question concerning $2$-torsion in $\C$ is whether all of it is generated by amphicheiral knots.  It was an interesting discovery that not all knots of order $2$ are themselves amphicheiral (and there are examples of this in Chapter \ref{chapter8}), but all of these examples have been found to be concordant to (negative) amphicheiral knots.

The question of whether there is $4$-torsion in $\C$ continues to elude mathematicians.  The prevailing conjecture, put forward by Livingston and Naik \cite{LivingstonNaik01}, is that all knots of algebraic order $4$ have infinite order in $\C$.  This conjecture has been proven for infinite families of algebraic order $4$ knots, but a general proof for all knots is yet to appear.  Interestingly, the techniques for proving that such knots are of infinite order have usually hinged on number-theoretic arguments that make use of primes which are equivalent to $3$ modulo $4$.  The number fields $\F_p$ of such primes have the property that $-1$ is not a square; it is fascinating that such a simple and esoteric fact can have such far-reaching consequences in the theory of knot concordance.

\section{The structure of $\C$}

As a group, we know little more than that $\C \supset \Z^{\infty} \oplus (\Z_2)^{\infty}$.  Rather than analysing the structure of $\C$, a more interesting project is to analyse the structure of the kernel of the map $\C \to \G$ to the algebraic concordance group.  This group also has a factor $\Z^{\infty} \oplus (\Z_2)^{\infty}$ because there are algebraically slice knots of infinite order \cite{Jiang81} and algebraically slice knots of order $2$ in $\C$ \cite{Livingston98}. But are all the knots in this kernel detected by Casson-Gordon invariants?  Could there be knots which are non-trivial in $\C$, but which are algebraically slice and have vanishing Casson-Gordon invariants?

The answer to this latter question is `yes', and indeed, there are infinitely many of them.  What about the invariant that detects such knots?  Could that be zero and yet the knot still not be slice?  Again, `yes'.  Three mathematicians, Tim Cochran, Kent Orr and Peter Teichner \cite{COT03}, constructed an infinite tower of such invariants, creating what is known as the \emph{filtration} of the knot concordance group:
\[ \dots \subset \CF_{n.5} \subset \CF_n \subset \dots \subset \CF_{0.5} \subset \CF_0 \subset \C \]
The first few stages of the filtration are relatively well understood: $\CF_{0}$ consists of those knots with vanishing Arf invariant; $\CF_{0.5}$ contains those knots which are algebraically slice; $\CF_{1.5}$ consists (roughly) of those knots with vanishing Casson-Gordon invariants.  The higher levels of the filtration are poorly understood by all except a few people.  Let us discuss some geometric intuition for what this filtration is measuring.

The idea behind algebraic sliceness is to say ``We don't know how to find a slice disc for a knot, so let's start with a higher genus surface and try to perform surgery to turn it into a disc.''  To be able to perform successful surgery on a genus $g$ surface, we have to find $g$ curves representing different homology classes on the surface, and each of those curves needs to be slice and unlinked with the other curves.  The algebraic concordance group measures the ability to find these $g$ unlinked curves; a better invariant might also try to measure whether the curves are slice.

But deciding whether these curves are slice is the same problem as we had with the initial knot!  We have simply pushed the problem down a level, as it were, in the hopes that these new curves might be simpler to deal with than the first knot.

So here is an iterative procedure for deciding if a knot $K$ is slice: find a genus $g$ surface $F$ that $K$ bounds, find $g$ unlinked curves of different homology classes on $F$ (if you can't, $K$ is not slice, so stop), then for each of these $g$ curves, repeat the procedure as for $K$.  The resulting `surface' is (amusingly) known as a \emph{grope} since it appears to have `multitudinous fingers' (see Figure \ref{Fig:grope} and \cite{Teichner04}).

\begin{figure}
  \centering
  \includegraphics[width=4cm]{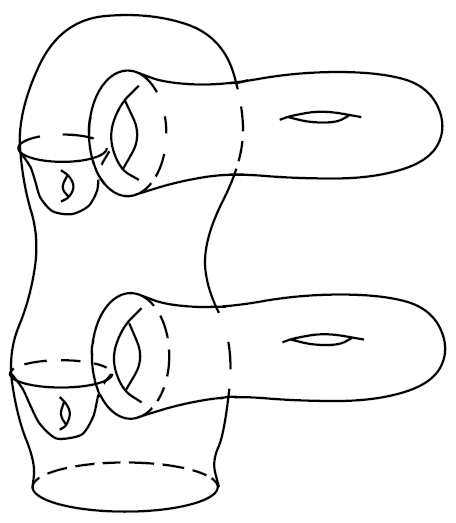}
  \caption{A grope}
  \label{Fig:grope}
\end{figure}

If the procedure stops after $n$ iterations, then the knot should intuitively be in level $n-1$ of the filtration, i.e. $\CF_{n-1}$ (although the Cochran-Orr-Teichner filtration is more complicated than this).  If a knot is slice, it should intuitively be in $\displaystyle \bigcap_{i=0}^\infty \CF_i$ because eventually the curves in some level will be slice and bound genus $0$ surfaces, which makes finding the requisite $g$ curves a triviality.  However, one of the open questions with the filtration is whether $\displaystyle \bigcap_{i=0}^\infty \CF_i$ consists \emph{exactly} of the slice knots.  That is, is it possible that there should be knots which are not slice, and yet for which this procedure never terminates?

Cochran, Orr and Teichner have proved that the number of knots in each level of the filtration does not decrease, as one might hope/expect.  Indeed, each quotient $\CF_{n}/\CF_{n.5}$ is not only infinite but is infinitely generated \cite{CochranTeichner07, CochranHarveyLeidy09}.  There are very few examples of knots which occur high up in the filtration (i.e. above the Casson-Gordon $1.5$ level), and those which we know of have been constructed artificially rather than found naturally in a table of knots.  This is because proving that a knot lies in a particular level of the filtration is conceptually and computationally very difficult. One would have to analyse all possible metabolising curves (i.e. sets of $g$ unlinked curves) on a surface at every level and show that every one of them was obstructed from being slice.  At the moment this is only possible for genus $1$ knots at the first level of curves \cite{COT04}.

\section{Smooth vs topological?}

So far we have discussed knot concordance in the context of the topological locally flat category.  We could instead work in the smooth category, where slice discs and embeddings are smooth (i.e. $C^\infty$).  It is one of the interesting things about $1$-dimensional slice knots (with slice discs embedded in $4$ dimensions) that the two categories are not equivalent.

One of the cornerstone theorems of algebraic topology is the $h$-cobordism theorem.  Given a compact cobordism $W$ between two $n$-dimensional manifolds $M$ and $N$ where the inclusion maps $M \hookrightarrow W$ and $N \hookrightarrow W$ are homotopy equivalences, the $h$-cobordism theorem states that if $M$ and $N$ are simply connected then $W$ is diffeomorphic to $M \times [0,1]$.  The Generalized Poincar\'{e} conjecture turned out to be a special case of the $h$-cobordism theorem; it said that objects which are homotopy equivalent to $S^n$ are homeomorphic to $S^n$.

Of course, given that the proof of the Poincar\'e Conjecture was worth $\$1$ million in 2003 and the $h$-cobordism theorem was proved by Smale in $1961$~\cite{Smale61}, there has to be a caveat.  The caveat is that $n$ must be greater than or equal to $5$.  The proof relies on the Whitney trick (mentioned in Section \ref{subsec:problemAlgebra} and further explained in Section \ref{Sec:higherdim}), and the Whitney trick fails in dimension $4$.  Despite this, in $1982$ Michael Freedman was able to prove that the $h$-cobordism theorem is true \emph{topologically} when $n=4$ \cite{FreedmanQuinn}.  Donaldson's work in 1987~\cite{Donaldson87} then showed that the theorem was \emph{not} true smoothly; a result which has implications for knot concordance.

A consequence of Freedman's work was that any knot whose Alexander polynomial $\Delta(t)$ (an algebraic invariant derived from the Seifert matrix - see Definition \ref{Def:Alexpoly}) is equal to $1$ is slice.  For example, the Whitehead double of any knot must be topologically slice (see Figure \ref{fig:Whiteheaddouble}).

\begin{figure}
	\centering
  \includegraphics[width=4cm]{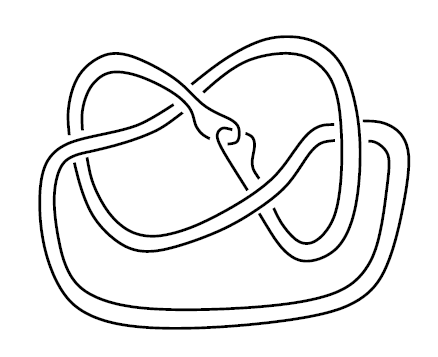}
  \caption{The Whitehead double of the Figure-8 knot}
  \label{fig:Whiteheaddouble}
\end{figure}

However, using invariants derived from Donaldson's work, Akbulut [unpublished] proved that the Whitehead double of the right-handed trefoil was not smoothly slice, thus providing the first example of a difference between the two categories of knot concordance.  This was followed by Rudolph \cite{Rudolph93} who showed that any strongly quasipositive knot could not be smoothly slice. This provided examples of topologically slice knots that are of infinite order in the smooth concordance category.

Since then there have been many invariants developed to detect the non-triviality of knots in the smooth knot concordance group.  Ozsv\'{a}th and Szab\'{o} developed Heegaard-Floer homology~\cite{OszvathSzabo04}, which is a $3$-manifold invariant and a powerful knot invariant which in some sense categorifies the Alexander polynomial.  Two integer-valued concordance invariants derived from it are the Oszv\'{a}th-Szab\'{o} $\tau$ invariant~\cite{OzsvathSzabo03} and the Manolescu-Owens $\delta$ invariant~\cite{ManolescuOwens07}.  There is also Khovanov homology, which categorifies the Jones polynomial, and from it we obtain the integer-valued Rasmussen $s$-invariant~\cite{Rasmussen04}.  Each of these invariants will vanish on torsion elements in the smooth concordance group, so their non-triviality will detect elements of infinite order.  However, it means that they are not powerful enough to distinguish, for example, between knots which are slice and knots which are of order 2.

Yet Heegaard-Floer homology \emph{is} strong enough to detect elements of finite order.  By using correction terms coming from Heegaard-Floer homology, Jabuka and Naik~\cite{JabukaNaik07} were able to find elements of (smooth) finite order.  Their argument was somewhat similar to the Casson-Gordon one, in the sense that both were trying to obstruct intersection forms of $4$-manifolds which are bounded by cyclic branched covers of the knot.  The Casson-Gordon invariants themselves were developed for the smooth category of knot concordance, but with Freedman's work were shown to also obstruct sliceness in the topological category.

The relationship between finite order elements in the topological concordance group $\C_{top}$ and the smooth concordance group $\C_{diff}$ remains unknown.  For example, what is the kernel of the map $\C_{diff} \to \C_{top}$?  All that is currently known is that it contains a subgroup isomorphic to $\Z^3$, via the three maps $\tau$, $\delta$ and $s$ above.  It is possible that there could exist examples of knots which have different finite orders in the different categories, but none have yet been found.

\section{Outline and main results of the thesis}

In this thesis we set out to probe the structure of the knot concordance group by developing computational techniques to decide whether a knot is of infinite order in $\C$.  The obstructions -- twisted Alexander polynomials -- are special cases of Casson-Gordon invariants, which means they find knots in level $1.5$ of the Cochran-Orr-Teichner concordance filtration.  It also means that they are both smooth and topological obstructions.  Out of $325$ prime knots of unknown concordance order, these techniques are powerful enough to find the slice status of all but two of them.

All calculations for this thesis were done in Maple 13 and in Sage; copies of the programs are available on request from the University of Edinburgh library or from the author. In all the theorems and results given here, we need to assume that Maple's polynomial factorisation algorithm (over $\Q(\zeta_q)[t,t^{-1}]$) is accurate.

\begin{theoremNoNumber}(Chapter \ref{chapter8})
  Of the prime knots of 12 or fewer crossings listed as having unknown concordance order, they are all of infinite order with the exception of the following:
    \begin{itemize}
      \item $11n_{34}$ is slice because it has Alexander polynomial equal to 1.
      \item $12a_{1288}$ is of order 2 because it is fully amphicheiral.
      \item $11a_5$, $11a_{104}$, $11a_{112}$, $11a_{168}$, $11n_{85}$, $11n_{100}$, $12a_{309}$, $12a_{310}$, $12a_{387}$, $12a_{388}$, $12n_{286}$ and $12n_{388}$ are all of order $2$, concordant to the Figure Eight knot $4_1$.
      \item $11a_{44}$, $11a_{47}$ and $11a_{109}$ are all of order $2$, concordant to the knot $6_3$.
      \item $12a_{631}$ remains of unknown order, but is suspected to be finite order and possibly slice.
      \item $12n_{846}$ remains of unknown order, and there are no suspicions as to whether it is of finite or infinite order.
    \end{itemize}
\end{theoremNoNumber}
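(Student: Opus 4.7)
The plan is to treat the theorem as a bookkeeping exercise: for each of the $325$ knots, one of three outcomes must be established, and the proof splits naturally into a slice/finite-order half (handled by exhibiting explicit concordances) and an infinite-order half (handled by obstructions from twisted Alexander polynomials). I would therefore structure the proof as a case analysis driven by a computer search, with human verification only at the endpoints.

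First I would deal with the positive (finite-order) cases. For $11n_{34}$ one computes $\Delta(t) = 1$ directly from a Seifert matrix and then invokes Freedman's theorem, quoted earlier in the Introduction, to conclude topological sliceness immediately. For $12a_{1288}$ one verifies negative amphicheirality from its diagram (or from a published table) and notes that $\Delta(t)\ne 1$ together with a nonzero Tristram--Levine signature shows the knot is not slice; the standard fact $K\#-K\sim 0$ then gives order exactly $2$. For each of the knots claimed to be concordant to $4_1$ or $6_3$, the substantive content is an explicit slice movie for $K\#-4_1$ (respectively $K\#-6_3$); I would exhibit a sequence of Reidemeister moves and saddle moves producing a disjoint union of unknots, thereby constructing a ribbon disc in $D^4$. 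Once the concordance to a known order-$2$ knot is established, the order of $K$ in $\mathcal{C}$ is forced: it divides $2$, and nontriviality follows from any nonvanishing concordance invariant (signature or $\Delta(t)\ne 1$).

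Next I would handle the infinite-order knots, which constitute the bulk of the $325$. The engine here is the suite of twisted Alexander polynomial criteria developed in earlier chapters (building on Herald--Kirk--Livingston and Tamulis). For each knot $K$ on the remaining list one computes the classical Alexander polynomial $\Delta_K(t)$, identifies its irreducible factorisation over $\Q(\zeta_q)[t,t^{-1}]$ for the relevant prime powers $q$ dividing the determinant, and then, for each prime-power cyclic cover $\Sigma_q$, runs through the characters $\chi\colon H_1(\Sigma_q)\to\Z_{q^n}$ and computes $\Delta^K_\chi$. The theorems of Chapters~3--6 (e.g. Theorem~\ref{Thm:twistedpoly}, Theorem~\ref{Thm:twistedpolyq^n}, Theorem~\ref{Thm:Highercover}) assert that $K$ has infinite order provided one can exhibit a factor of $\Delta_K$ whose twisted polynomials fail the norm condition $\doteq$ in the appropriate way; the task is thus an automated check against these criteria, carried out in Maple and Sage. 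For each knot it is enough to produce one certifying character.

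Finally I would record the two holdouts $12a_{631}$ and $12n_{846}$ simply by observing that none of the above obstructions applies: every candidate twisted polynomial satisfies the norm condition, and no ribbon disc or concordance to a tabulated knot has been found by the search. For $12a_{631}$ the vanishing of all computed twisted obstructions, together with its small signature data, accounts for the stated suspicion that it may be slice. The main obstacle throughout is not conceptual but computational: reliably factoring the several hundred required polynomials over cyclotomic fields (hence the explicit assumption on Maple's factorisation algorithm stated in the Introduction), and, for the finite-order cases, producing by hand or by guided search the explicit slice movies for $K\#-4_1$ and $K\#-6_3$. Once those ingredients are in place, the theorem is assembled by quoting the relevant result from the thesis for each knot in turn, yielding the seven bulleted conclusions.
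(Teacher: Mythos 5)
Your proposal follows essentially the same route as the thesis: Freedman's theorem for $11n_{34}$, amphicheirality for $12a_{1288}$, the suite of twisted Alexander polynomial criteria (applied in stages, first to the $2$-fold branched cover and then to higher-order covers) to certify infinite order for the bulk of the $325$ knots, explicit saddle-move/slice-movie constructions exhibiting concordances to $4_1$ or $6_3$ for the order-$2$ cases, and the two holdouts recorded as knots on which every available obstruction vanishes. The only cosmetic difference is that the thesis performs the saddle move directly on $K$ to obtain the unknot split from $4_1$ (or $6_3$), rather than slicing $K\#-4_1$, but these are equivalent.
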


The remaining two knots look to be strong candidates for examples of knots lying in the higher levels of the Cochran-Orr-Teichner concordance filtration, and as such will be interesting objects of further study.

\subsection{How to prove that a knot has infinite order in $\C$}

What are the techniques that we developed to determine the concordance orders of these $325$ prime knots? We summarise here the main results from Chapter \ref{chapter5}, which give a series of criteria for deciding if a knot has infinite order in the concordance group $\C$.  The idea is to look at metabolisers of the first homology of the $p$-fold branched cover $\Sigma_p$ of a knot, for a prime $p$, and to show that under certain circumstances there is always a twisted Alexander polynomial $\Delta_{\chi}$ which obstructs the knot from being slice. In what follows $\zeta_q$ is always a complex $q^{\text{th}}$ root of unity, a norm is an element of $\Q(\zeta_q)[t,t^{-1}]$ of the form $g(t)\overline{g(t^{-1})}$ and $\doteq$ means `up to norms'.

 In the first case we concentrate on the $2$-fold branched cover.

 \begin{thmtwistedpoly}
Suppose that we have a knot $K$ where $H_1(\Sigma_2;\Z) \cong \Z_{q} \oplus T$ for some prime $q \equiv 1$ mod $4$, where the order of $T$ is coprime to $q$.  Let $\chi_0 \colon H_1(\Sigma_2;\Z) \to \Z_q$ be the trivial map and $\chi_i \colon H_1(\Sigma_2;\Z) \to \Z_q$ be $\lk(-,i)$.  Then $K$ is of infinite order if it satisfies the following conditions:
  \begin{enumerate}
    \item $\Delta_{\chi_0}(t)$ is not a norm in $\Q(\zeta_q)[t,t^{-1}]$.
    \item There is a non-trivial irreducible factor $f(t)$ of $\Delta_{\chi_0}(t)$ for which $\overline{f(t^{-1})}$ is not a factor of $\Delta_{\chi_i}(t)$ for any $i$.
    \item $\Delta_{\chi_a}(t) \not\doteq \Delta_{\chi_1}(t)$, where $1+a^2 \equiv 0$ (mod $q$).
  \end{enumerate}
\end{thmtwistedpoly}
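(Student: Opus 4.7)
The strategy is a contrapositive argument: assume that some positive multiple $nK$ is slice, and derive a contradiction from the three hypotheses. By the Casson--Gordon--Litherland style sliceness obstruction assembled in the earlier chapters, sliceness of $nK$ forces the existence of a metaboliser $P$ of the linking form on $H_1(\Sigma_2(nK);\Z)\cong(\Z_q)^n\oplus T^n$ such that, for every character $\chi$ of order $q$ vanishing on $P$, the twisted Alexander polynomial $\Delta^{nK}_\chi(t)$ is a norm in $\Q(\zeta_q)[t,t^{-1}]$. Since $\gcd(q,|T|)=1$, the $q$-primary part $P_{(q)}:=P\cap(\Z_q)^n$ is itself a metaboliser for the linking form restricted to $(\Z_q)^n$, and it suffices to work entirely with the $q$-primary piece.

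I would first classify the possible $P_{(q)}$. The linking form on the $\Z_q$ summand of $H_1(\Sigma_2;\Z)$ is represented by a unit symbol $\langle a\rangle$ over $\F_q$, so that on $(\Z_q)^n$ it is the orthogonal sum $n\langle a\rangle$. The existence of a metaboliser forces $n$ to be even and $n\langle a\rangle$ to be hyperbolic; the hypothesis $q\equiv 1\pmod 4$ ensures that $-1$ is a square in $\F_q$, and a Witt-theoretic analysis shows that $P_{(q)}$ is built from isotropic pairs whose nonzero entries pair up as $(1,\alpha)$ with $1+\alpha^2\equiv 0\pmod q$, i.e.\ with $\alpha$ equal, up to sign, to the value $a$ singled out in condition (3).

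Next, I would invoke the multiplicativity of twisted Alexander polynomials under connected sum: for $\chi=(\chi^{(1)},\dots,\chi^{(n)})$ the polynomial $\Delta^{nK}_\chi(t)$ factors, up to norms, as $\prod_{j=1}^n \Delta^{K}_{\chi^{(j)}}(t)$. The characters vanishing on $P_{(q)}$ correspond bijectively, via the nondegenerate linking form on the $q$-primary part, to the elements of $P_{(q)}$ itself, so each component $\chi^{(j)}$ lies in $\{\chi_0,\chi_1,\dots,\chi_{q-1}\}$ in a pattern constrained by the classification above. Sliceness then requires that $\prod_j \Delta^{K}_{\chi^{(j)}}(t)$ be a norm for every such tuple, and the goal is to produce one tuple for which this fails.

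The three hypotheses are designed to rule out the three qualitative cases. Condition (1) eliminates the zero element of $P_{(q)}$, for which the product is $\Delta_{\chi_0}^n$: since norms form a subgroup under multiplication, an $n$-th power is a norm only if the base is, contradicting (1). For a nonzero $\chi\in P_{(q)}$ whose support is exactly one hyperbolic pair $(1,a)$, the product becomes $\Delta_{\chi_1}\cdot\Delta_{\chi_a}\cdot\Delta_{\chi_0}^{n-2}$; condition (3) prevents this from being a norm via the relation $\Delta_{\chi_a}\not\doteq\Delta_{\chi_1}$. The remaining elements of $P_{(q)}$ have larger support or differ from the basic pair $(1,a)$ by a nontrivial scalar and additional zero components, and condition (2) is tailored to handle these: the irreducible factor $f(t)$ of $\Delta_{\chi_0}$ appears with unbalanced total multiplicity across the product, while its conjugate $\overline{f(t^{-1})}$ cannot appear in any twisted factor to restore the balance, forcing a non-norm. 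The principal obstacle will be this final case analysis -- in particular, enumerating every configuration of supports that can arise from elements of $P_{(q)}$ for general even $n$, and verifying that the irreducible-factor count always produces a concrete failure. The $n=2$ case should provide the conceptual model, with the general even $n$ reducing to it via the hyperbolic-plane decomposition of $n\langle a\rangle$.
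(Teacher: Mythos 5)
Your overall strategy (assume $2mK$ slice, pass to the $q$-primary metaboliser, use the characters $\chi_w$ for $w$ in the metaboliser together with multiplicativity and the fact that squares of symmetric polynomials are norms) is the same as the paper's, but the combinatorial heart of your argument contains a genuine gap. You assert that a Witt-theoretic analysis shows $P_{(q)}$ ``is built from isotropic pairs whose nonzero entries pair up as $(1,\alpha)$ with $1+\alpha^2\equiv 0$.'' This is false for general metabolisers of $2m\langle a\rangle$ over $\F_q$: for $q=5$, $m=3$ the subspace spanned by $(1,0,0,1,2,2)$, $(0,1,0,2,1,-2)$, $(0,0,1,2,-2,1)$ is a metaboliser containing no vector supported on a single coordinate pair, so your case analysis never produces an obstructing character for it. The missing ingredient is precisely the Kirk--Livingston lemma (Lemma \ref{Lemma:KirkLiv}), which the paper packages as Lemma \ref{Lemma:oddvector}: after putting a basis of the metaboliser in the form $(I\;E)$ with $E$ nonsingular, \emph{either} some linear combination of the basis vectors has an odd number of zero entries (so $\Delta_{\chi_0}$ occurs with odd exponent and conditions (1) and (2) obstruct), \emph{or} $E$ is a permuted diagonal matrix and one gets the rows $(1,0,\dots,0,\pm a,0,\dots,0)$ handled by condition (3). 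In the example above, $v_1+v_2+v_3=(1,1,1,0,1,1)$ has one zero entry, which is how that metaboliser is actually caught. Without this dichotomy your enumeration of ``configurations of supports'' has no way to terminate.

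A second, smaller error: your use of condition (1) is misplaced. For the zero vector the character is trivial and the polynomial is $\Delta_{\chi_0}^{2m}$, which is an \emph{even} power of a symmetric polynomial and hence automatically a norm (since $p(t)^2=p(t)\overline{p(t^{-1})}$), regardless of whether $\Delta_{\chi_0}$ is a norm; the claim that ``an $n$-th power is a norm only if the base is'' fails for even $n$. Condition (1) is actually needed in the odd-zeros branch of the dichotomy, where $\Delta_{\chi_0}$ survives with odd exponent and must itself fail to be a norm, with condition (2) guaranteeing that its irreducible factor $f$ cannot be completed to a norm by conjugate factors coming from the nontrivial $\Delta_{\chi_i}$.
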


This theorem is extended to give criteria for a sum of knots to be of infinite order.  This is necessary for proving that knots are independent within $\C$.

\begin{thminfiniteordersum}
Let $K=K_1 + \dots + K_n$ with $K_{i_1},\dots,K_{i_{n'}}$ having $H_1(\Sigma_2(K_{i_j});\Z) \cong \Z_q \oplus T_j$ with the order of the $T_j$ coprime to $q$, and the remainder of the $K_i$ having $H_1(\Sigma_2(K_i);\Z) \cong T_i$ with the order of the $T_i$ coprime to $q$.  Then $K$ has infinite order in $\C$ if the twisted Alexander polynomials (as defined in Theorem~\ref{Thm:twistedpoly}) satisfy:
\begin{enumerate}
  \item $\Delta_{\chi_0}^{K_{i_j}}(t)$ does not factorise over $\Q(\zeta_q)[t,t^{-1}]$ as a norm for any $j=1,\dots,n'$.
  \item There is a non-trivial irreducible factor $f_j(t)$ of $\Delta_{\chi_0}^{K_{i_j}}(t)$ for which $\overline{f_j(t^{-1})}$ is not a factor of $\Delta_{\chi_\alpha}^{K_{i_k}}(t)$ for all $j,k=1,\dots,n'$ and all $\alpha \neq 0$.
  \item $\Delta_{\chi_1}^{K_{i_j}}(t)\not\doteq \Delta_{\chi_\gamma}^{K_{i_k}}(t)$ where $\gamma = \sqrt{-\alpha\beta^{-1}}$ with $\alpha = \lk_{K_{i_j}}(1,1)$ and $\beta = \lk_{K_{i_k}}(1,1)$, for all $j,k$ where $\gamma$ is defined. This means that if $q\equiv 1$ mod $4$ then $\alpha$ and $\beta$ must be the same modulo squares and if $q \equiv 3$ mod 4 then $\alpha$ and $\beta$ must be different modulo squares.
\end{enumerate}
\end{thminfiniteordersum}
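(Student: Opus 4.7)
My approach is by contradiction, mirroring the argument for Theorem~\ref{Thm:twistedpoly}. Suppose $K$ has finite order in $\C$, so that $mK$ is slice for some positive integer $m$, chosen so that $mn'$ is even. Then $\Sigma_2(mK)$ bounds a rational homology $4$-ball $W$, and the kernel $P$ of the induced map $H_1(\Sigma_2(mK);\Z)\to H_1(W;\Z)$ is a metaboliser for the linking form. By the Herald--Kirk--Livingston and Tamulis machinery invoked in Theorem~\ref{Thm:twistedpoly}, every character $\chi \colon H_1(\Sigma_2(mK);\Z)\to\Z_q$ vanishing on $P$ forces $\Delta_{\chi}^{mK}(t)$ to be a norm in $\Q(\zeta_q)[t,t^{-1}]$.

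The first step is to reduce to the $q$-primary part. Since $\Sigma_2(mK)$ is the connected sum of $m$ copies of each $\Sigma_2(K_i)$, the first homology splits orthogonally under the linking form, and because all the torsion pieces $T_i$ are coprime to $q$, the $q$-primary summand $V$ of $H_1(\Sigma_2(mK);\Z)$ is
\[
V \;\cong\; \bigoplus_{j=1}^{n'}\bigoplus_{r=1}^{m}\Z_q,
\]
an $\F_q$-vector space of dimension $mn'$ equipped with the orthogonal sum of the one-dimensional forms $\alpha_j := \lk_{K_{i_j}}(1,1)\in \F_q^{*}$, each occurring with multiplicity $m$. Then $P\cap V$ is a metaboliser of this $\F_q$-form, of dimension $mn'/2$; its annihilator $P^{*}\subset V^{*}$ also has dimension $mn'/2$ and consists of exactly those characters whose twisted polynomials are constrained to be norms. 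Writing a general $\chi\in P^{*}$ as $(\chi_{a_{j,r}})_{j,r}$, the connected-sum formula for twisted Alexander polynomials gives
\[
\Delta_{\chi}^{mK}(t) \;\doteq\; \prod_{j=1}^{n'}\prod_{r=1}^{m}\Delta_{\chi_{a_{j,r}}}^{K_{i_j}}(t),
\]
which must therefore be a norm.

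The second step is to find a character $\chi\in P^{*}$ whose product is provably not a norm, using the three hypotheses. A dimension count on $P^{*}$, combined with the non-degeneracy of each one-dimensional summand, shows that $P^{*}$ contains characters supported on a single coordinate $(j_0,r_0)$ or on a minimal cross-pair $(j_0,r_0)$ and $(k_0,s_0)$ paired by the metaboliser. The relation $P=P^{\perp}$ forces such a paired character to exist only when $-\alpha_{j_0}\alpha_{k_0}^{-1}$ is a square in $\F_q^{*}$, giving the parameter $\gamma=\sqrt{-\alpha_{j_0}\alpha_{k_0}^{-1}}$ of hypothesis~(3). For a singly supported character, the trivial coordinates contribute copies of $\Delta_{\chi_0}^{K_{i_{j}}}(t)$; by hypothesis~(2) these contain a factor $f_{j}(t)$ whose conjugate $\overline{f_{j}(t^{-1})}$ never appears in any $\Delta_{\chi_\alpha}^{K_{i_k}}(t)$ with $\alpha\neq 0$. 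A norm must pair factors with their conjugates, so the product fails to be one. For a paired character, the product contains the distinguished factor $\Delta_{\chi_1}^{K_{i_{j_0}}}\cdot\Delta_{\chi_\gamma}^{K_{i_{k_0}}}$ with all other factors trivial; hypothesis~(3) says these two polynomials are not norm-equivalent, again preventing the product from being a norm. Either way we contradict the metaboliser assumption, so no such $m$ can exist and $K$ is of infinite order.

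The main obstacle I expect is the bookkeeping inside $P^{*}$: one must enumerate all possible supports of characters in $P^{*}$ and show that each either reduces, via subtracting off already-handled pairs, to one of the two cases above, or else can be avoided by a suitable choice of character. The most subtle ingredient is the square-class accounting in hypothesis~(3) -- the $q\equiv 1$ versus $q\equiv 3$ mod $4$ dichotomy that controls whether $-1$ is a square in $\F_q^{*}$ -- which is precisely what determines which pairs $(\alpha_j,\alpha_k)$ can support a cross-metaboliser at all. Handling these cases uniformly, especially when several of the $\alpha_j$ coincide or fall in the same square class and the metaboliser can mix many coordinates simultaneously, is the crux of the proof.
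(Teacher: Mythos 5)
Your overall strategy (contradiction, restriction to the $q$-primary part, the product formula for $\Delta_\chi$ over the coordinates, and a case split on the shape of the metaboliser) is the right one, but the step that does all the work is wrong as stated. You claim that a dimension count shows $P^{*}$ contains a character supported on a single coordinate or on a cross-pair of coordinates. Since the linking form is non-degenerate and $P=P^{\perp}$, the characters vanishing on $P$ are exactly $\lk(-,x)$ for $x\in P$, so your claim amounts to saying every metaboliser contains a vector of support $1$ or $2$. Support $1$ is impossible ($\lk(x,x)=\alpha a^2\neq 0$), and support $2$ need not occur: the metaboliser spanned by $(1,0,0,1,2,2)$, $(0,1,0,2,1,-2)$, $(0,0,1,2,-2,1)$ over $\Z_5$ (the paper's own example) contains no vector of support $\leq 2$. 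The correct dichotomy is the content of Lemma \ref{Lemma:KirkLiv} and Lemma \ref{Lemma:oddvector}: after putting a basis in the form $(I\;E)$ with $E$ non-singular, \emph{either} some vector in the row span has an odd number of zero entries (it may have large support), \emph{or} $E$ is a permuted diagonal matrix and every row has support exactly $2$. Your proposal never invokes this lemma and has no substitute for it, so the case analysis is not justified.

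There is a second gap in your treatment of the first case. You argue that the trivial coordinates contribute copies of $\Delta_{\chi_0}^{K_{i_j}}$ and that hypothesis (2) then blocks the product from being a norm. But $\bigl(\Delta_{\chi_0}^{K_{i_j}}\bigr)^2$ is automatically a norm, so the trivial factors are only dangerous when one of them occurs with \emph{odd} multiplicity; this is exactly why the relevant vector must have an odd total number of zeros. Moreover, hypothesis (2) only forbids $\overline{f_j(t^{-1})}$ from dividing the non-trivial polynomials $\Delta_{\chi_\alpha}^{K_{i_k}}$ with $\alpha\neq 0$; it does not forbid two different knots' trivial polynomials $\Delta_{\chi_0}^{K_{i_j}}$ and $\Delta_{\chi_0}^{K_{i_k}}$ from pairing with each other to form a norm. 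The paper closes this by a parity-chasing argument: any such pairing forces the partner to occur with odd exponent as well, and since the total number of zero entries is odd the pairing cannot exhaust all the odd-exponent trivial factors. Your sketch needs both of these ingredients to be a proof. The second case (permuted diagonal, support-$2$ rows, $a_i^2\equiv-\alpha\beta^{-1}$, hypothesis (3)) you do handle essentially as the paper does.
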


Next we look at criteria for finding infinite order knots using branched covers whose homology has a more complicated structure.

\begin{thmtwistedpolyq^n}
Suppose that we have a knot $K$ where $H_1(\Sigma_2;\Z) \cong \Z_{q^n} \oplus T$ for some prime $q$, where the order of $T$ is coprime to $q$.  Let $\chi_0 \colon H_1(\Sigma_2;\Z) \to \Z_{q}$ be the trivial map and $\chi_i \colon H_1(\Sigma_2;\Z) \to \Z_{q}$ be $\lk(-,i)$ (mod $q$).  Then $K$ is of infinite order if it satisfies the following conditions:
  \begin{enumerate}
    \item If $n>1$, $\Delta_{\chi_0}(t)\Delta_{\chi_1}(t)$ is not a norm.
    \item $\Delta_{\chi_0}(t)$ is not a norm.
    \item $\Delta_{\chi_0}(t)$ is coprime, up to norms in $\Q(\zeta_q)[t,t^{-1}]$, to $\Delta_{\chi_i}(t)$ for all $i \neq 0$.
    \item If $q \equiv 1$ (mod $4$) then $\Delta_{\chi_a}(t) \not\doteq \Delta_{\chi_1}(t)$, where $1+a^2 \equiv 0$ (mod $q$).
  \end{enumerate}
\end{thmtwistedpolyq^n}

Next we look at twisted Alexander polynomials associated to branched covers $\Sigma_p$ of a knot $K$ with $p>2$.

\begin{thmHighercover}
Suppose $H_1(\Sigma_p;\Z)\cong \Z_q \oplus \Z_q \cong E_a \oplus E_b$ where $E_a$ and $E_b$ are the eigenspaces of the deck transformation $T$.  Let $e_a$ be an $a$-eigenvector (i.e. $a e_a = Te_a$) and $e_b$ be a $b$-eigenvector. Now define $\chi_a \colon H_1(\Sigma_p) \to \Z_q$ by $\chi_a(e_a) = 0$ and $\chi_a(e_b) = 1$.  Similarly, $\chi_b \colon H_1(\Sigma_p) \to \Z_q$ is defined by $\chi_b(e_a) = 1$ and $\chi_b(e_b) = 0$.
The knot $K$ is of infinite order in $\C$ if the following conditions on the twisted Alexander polynomial of $K$ are satisfied:
\begin{enumerate}
  \item $\Delta_{\chi_0}$ is coprime, up to norms, to both $\Delta_{\chi_a}$ and $\Delta_{\chi_b}$, and $\Delta_{\chi_0}$ is not a norm.
  \item $\Delta_{\chi_a + \chi_b} \not\doteq \Delta_{d \chi_a -d^{-1}\chi_b}$ for any $d \in \Z_q$.
\end{enumerate}
\end{thmHighercover}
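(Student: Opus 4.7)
The plan is to argue by contradiction: assume $K$ has finite order in $\C$, so $NK$ is slice for some $N\ge 1$. The slice obstruction developed earlier in this chapter (in the spirit of Herald--Kirk--Livingston and Tamulis) then produces a $T$-invariant metaboliser $M\subseteq H_1(\Sigma_p(NK);\Z)\cong E_a^N\oplus E_b^N$ of the linking form, with the property that for every character $\chi\colon H_1(\Sigma_p(NK))\to\Z_q$ vanishing on $M$ the twisted Alexander polynomial $\Delta_\chi^{NK}(t)=\prod_{i=1}^{N}\Delta_{\chi_i}^{K}(t)$ is a norm in $\Q(\zeta_q)[t,t^{-1}]$. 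The distinctness of the eigenvalues $a$ and $b$ forces any $T$-invariant subspace to split, so $M=M_a\oplus M_b$ with $M_a\subset E_a^N$ and $M_b=M_a^{\perp}\subset E_b^N$ under the non-degenerate pairing between $E_a^N$ and $E_b^N$. Set $k:=\dim_{\F_q}M_a$. Throughout I repeatedly use the Hermitian symmetry $\Delta_\psi^{K}(t)\doteq\overline{\Delta_\psi^{K}(t^{-1})}$, which makes every class $[\Delta_\psi^{K}]$ $2$-torsion in the norm quotient group.

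When $k=N$, i.e.\ $M=E_a^N$, the characters vanishing on $M$ restrict to each copy as $\alpha_i\chi_a$ for arbitrary $\alpha_i\in\F_q$. The choices $(0,\ldots,0)$ and $(1,0,\ldots,0)$ give that $(\Delta_{\chi_0}^{K})^N$ and $\Delta_{\chi_a}^{K}(\Delta_{\chi_0}^{K})^{N-1}$ are both norms, which in the norm quotient group collapses to $\Delta_{\chi_a}^{K}\doteq\Delta_{\chi_0}^{K}$. Combining this with the coprimality in condition~(1) forces both polynomials to be norms, contradicting the non-norm hypothesis on $\Delta_{\chi_0}^{K}$. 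The case $k=0$ is symmetric, using $\Delta_{\chi_b}^{K}$ in place of $\Delta_{\chi_a}^{K}$.

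When $0<k<N$, the plan is to pick out a pair of copies $i,j$ and a scalar $d\in\F_q$ for which $M$ contains a non-zero vector $v$ whose $E_a^N$-component equals $e_a^{(i)}+d\,e_a^{(j)}$ and whose $E_b^N$-component (forced by $M_b=M_a^\perp$) equals $e_b^{(i)}-d^{-1}e_b^{(j)}$. Under the identification of characters vanishing on $M$ with elements of $M$ via the linking pairing, this $v$ yields the character with restrictions $\chi_a+\chi_b$ on copy $i$, $d\chi_a-d^{-1}\chi_b$ on copy $j$, and the trivial character $\chi_0$ on every other copy. The resulting norm relation
\[
\Delta_{\chi_a+\chi_b}^{K}\cdot\Delta_{d\chi_a-d^{-1}\chi_b}^{K}\cdot(\Delta_{\chi_0}^{K})^{N-2}\ \text{is a norm,}
\]
combined with the trivial-character relation that $(\Delta_{\chi_0}^{K})^N$ is a norm and the $2$-torsion property of the norm quotient group, yields $\Delta_{\chi_a+\chi_b}^{K}\doteq\Delta_{d\chi_a-d^{-1}\chi_b}^{K}$, which directly contradicts condition~(2).

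The main obstacle I anticipate is verifying that the two-copy reduction in the case $0<k<N$ can always be arranged, i.e.\ that indices $i,j$ and a scalar $d$ exist for which $M$ contains the desired vector $v$ supported only on copies $i$ and $j$ in the specified form. Small cases such as $N=2$ or the boundary values $k=1$ or $k=N-1$ are immediate from dimension counting, but for intermediate $k$ in larger $N$ the choice of $(i,j,d)$ requires a careful basis-level argument using that both $M_a$ and $M_a^\perp$ are proper and non-zero, and that the pairing between them is non-degenerate. Once this rank-one reduction is established, the remainder of the proof is routine bookkeeping in the $2$-torsion norm quotient group, and the three cases together exhaust every $T$-invariant metaboliser.
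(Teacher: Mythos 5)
Your overall skeleton matches the paper's: assume $NK$ is slice, use the fact that a $T$-invariant metaboliser splits as $M_a\oplus M_b$ along the eigenspaces, and do the bookkeeping in the $2$-torsion quotient by norms. Your extreme cases $k=0,N$ are fine (they are the degenerate instance of the paper's ``unbalanced'' case). But the intermediate case $0<k<N$ contains a genuine gap, and it is exactly the one you flagged: the two-copy vector $v$ with $E_a$-part $e_a^{(i)}+d\,e_a^{(j)}$ and $E_b$-part $e_b^{(i)}-d^{-1}e_b^{(j)}$ need not exist. For a concrete failure, take $N=4$, $k=2$ and $M_a=\mathrm{span}\{(1,0,1,1),(0,1,1,-1)\}\subset E_a^4$: every nonzero vector of $M_a$ has support of size $3$ or $4$, so no element of $M$ has $E_a$-component supported on exactly two copies. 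Worse, even when $M_a$ does contain a two-coordinate vector, the $E_b$-part is not ``forced'' to be $e_b^{(i)}-d^{-1}e_b^{(j)}$: that vector annihilates the single element $e_a^{(i)}+d\,e_a^{(j)}$ but must lie in the annihilator of \emph{all} of $M_a$, which is a further nontrivial constraint. So the rank-one reduction you hoped to establish is simply false in general, and condition (2) alone cannot handle the intermediate case.

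What rescues the argument — and what the paper does — is a dichotomy rather than a reduction. If $k\neq N/2$, then whichever of $M_a$, $M_b$ has dimension exceeding $N/2$ can be put in the form $(I\;\,E)$ with $E$ having more rows than columns; a linear-dependence argument then produces a vector in its span with an \emph{odd number of zero coordinates}, so $\Delta_{\chi_0}$ occurs with odd multiplicity in the associated twisted polynomial and condition (1) (coprimality plus $\Delta_{\chi_0}$ not a norm) gives the contradiction. (In my example above with $k=2=N/2$ the same odd-vector mechanism still applies to that particular $M_a$.) Only in the balanced case $k=N/2$, and only when no such odd vector exists, does Lemma \ref{Lemma:KirkLiv} force $M_a=(I\;\,D)$ and (via the hyperbolic linking between $E_a$ and $E_b$) $M_b=(I\;\,-D^{-1})$ for a permuted diagonal $D$; subtracting the paired rows then produces exactly the character $\chi_a+\chi_b$ on one copy and $d\chi_a-d^{-1}\chi_b$ on another, which is where condition (2) enters. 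Your proposal never invokes the odd-zero-vector mechanism in the intermediate case, so it cannot close the gap; you need both branches of the dichotomy, with condition (1) doing the work whenever the metaboliser is not of the special diagonal shape.
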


An extension of this theorem gives a procedure to decide whether a knot is concordant to its reverse -- a very subtle and difficult problem in knot theory which has previously been solved for only a few special cases of knots by Kirk, Livingston and Naik \cite{Livingston83}, \cite{Naik96}, \cite{KirkLivingston99b}. In this next theorem we give criteria not only to tell if a knot is distinct (in $\C$) from its reverse, but whether the difference is of infinite order in $\C$. The notation $\sigma_d$ means the map taking $\zeta_q$ to $\zeta_q^d$ in the coefficients of $\Delta_{\chi}$.

\begin{thmreverseInfiniteOrder}
The knot $K-K^r$ is of infinite order in $\C$ if the following conditions on the twisted Alexander polynomial of $K$ are satisfied:
\begin{enumerate}
  \item $\Delta_{\chi_0}$ is coprime, up to norms, to both $\Delta_{\chi_a}$ and $\Delta_{\chi_b}$, and none of these polynomials are themselves norms.
  \item $\Delta_{\chi_a} \not\doteq \sigma_d(\Delta_{\chi_b})$ for any $d \in \Z_q$
  \item $\Delta_{\chi_a + \chi_b} \not\doteq \Delta_{d \chi_a -d^{-1}\chi_b}$ for any $d \in \Z_q$.
\end{enumerate}
\end{thmreverseInfiniteOrder}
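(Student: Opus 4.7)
The plan is to follow the same template as Theorem \ref{Thm:Highercover}: assume for contradiction that $m(K-K^r)$ is slice for some $m \geq 1$, exhibit a metaboliser for the linking form on the $q$-primary part of $H_1(\Sigma_p)$ of this connected sum, translate the metabolising condition into a norm-factorization statement for products of twisted Alexander polynomials, and use each of conditions (1)--(3) to kill off a different family of possible metabolisers.

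First I would set up the algebra. Writing $J = m(K - K^r)$, the $q$-primary part of $H_1(\Sigma_p(J);\Z)$ decomposes as $V = V_1 \oplus V_2$, where $V_1 = (E_a \oplus E_b)^m$ comes from the $m$ copies of $K$ and $V_2 = (E_a' \oplus E_b')^m$ from the $m$ copies of $-K^r$. The key point is how a character on a summand of $V_2$ translates to one on a $K$-summand: because reversing the orientation of the knot inverts the deck transformation (and mirroring is an orientation-reversing diffeomorphism), the twisted Alexander polynomial of $-K^r$ at the character analogous to $\chi_b$ equals, up to norms, $\sigma_d(\Delta_{\chi_a}^K)$ for an explicit $d \in \Z_q^\times$. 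This is exactly the Galois twist that appears in condition (2).

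Next I would analyse the metaboliser $H \subset V$. For any character $\chi : V \to \Z_q$ vanishing on $H$, the product of the twisted Alexander polynomials associated to the restriction of $\chi$ to each summand must be a norm in $\Q(\zeta_q)[t,t^{-1}]$. Condition (1)---that $\Delta_{\chi_0}$ is not a norm and is coprime (up to norms) to $\Delta_{\chi_a}$ and $\Delta_{\chi_b}$---forces, exactly as in Theorem \ref{Thm:Highercover}, that the restriction of $\chi$ to each summand is non-trivial and lies on an eigenline. Hence $H$ must be built out of graphs of isomorphisms identifying eigenline subgroups across different summands, so every admissible $H$ pairs summands of $V$ either within $V_1$, within $V_2$, or across $V_1$ and $V_2$.

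The remaining work is case analysis. Condition (3) is identical to condition (2) of Theorem \ref{Thm:Highercover} and rules out intra-$V_1$ and intra-$V_2$ pairings by forbidding $\Delta_{\chi_a+\chi_b} \doteq \Delta_{d\chi_a - d^{-1}\chi_b}$. For a pairing between a summand of $V_1$ and a summand of $V_2$, the Galois-twist identification above shows that the resulting norm relation reduces to $\Delta_{\chi_a}^K \doteq \sigma_d(\Delta_{\chi_b}^K)$ for some $d \in \Z_q$, which is exactly what condition (2) forbids. Thus no metaboliser can exist, giving the desired contradiction. The main obstacle will be the careful bookkeeping of the reversal operation on characters: pinning down the precise Galois element $\sigma_d$ that arises when one passes from characters on $\Sigma_p(-K^r)$ to characters on $\Sigma_p(K)$, and checking that every mixed-summand pairing really produces the relation in condition (2) rather than something weaker. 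Once this identification is in place, the rest is a direct adaptation of the argument for Theorem \ref{Thm:Highercover}.
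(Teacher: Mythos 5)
Your proposal follows essentially the same route as the paper's proof: reduce every metaboliser to the paired form $M_a=(I\;D)$, $M_b=(I\;-D^{-1})$ via the argument of Theorem \ref{Thm:Highercover} (with condition (1) disposing of the unbalanced case), then split the paired rows according to whether the two non-zero columns lie over copies of the same knot (handled by condition (3)) or over $K$ and $-K^r$ respectively (handled by condition (2)). The only cosmetic difference is where the Galois twist is booked: the paper gets $\sigma_d$ from the diagonal entry $d$ rescaling the character and uses $\Delta_{\chi_a}^{-K^r} \doteq \Delta_{\chi_b}^{K}$ for the reversal, whereas you fold the twist into the reversal itself, but since condition (2) quantifies over all $d \in \Z_q$ both conventions land in the same place.
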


Finally, we have a theorem which gives criteria for a connected sum of knots to be of infinite order, using higher-order branched covers. The set-up is again the same as in Theorem \ref{Thm:Highercover}.

\begin{thmhighercoverConnSum}
The knot $K=K_1 + \dots + K_n$ is of infinite order in $\C$ if the following conditions on the twisted Alexander polynomial of the $K_i$ are satisfied:
\begin{enumerate}
  \item $\Delta_{\chi_0}^{K_i}(s)$ is not a norm for any $i=1,\dots,n$.
  \item $\Delta_{\chi_0}^{K_i}$ is coprime, up to norms, to $\Delta_{\chi_a}^{K_j}$ and $\Delta_{\chi_b}^{K_j}$ for every $i$ and $j$.
  \item $\Delta_{\chi_a}^{K_i} \not\doteq \sigma_d(\Delta_{\chi_a}^{K_j})$ (or $\Delta_{\chi_b}^{K_i} \not\doteq \sigma_d(\Delta_{\chi_b}^{K_j})$) for any $d \in \Z_q$ and any $i\neq j$.
  \item $\Delta_{\chi_a + \chi_b}^{K_i} \not\doteq \Delta_{d \chi_a -d^{-1}\chi_b}^{K_i}$ for any $d \in \Z_q$ and any $i=1,\dots,n$.
\end{enumerate}
\end{thmhighercoverConnSum}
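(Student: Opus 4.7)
The plan is to follow the single-knot argument of Theorem \ref{Thm:Highercover} and upgrade it to the connected-sum setting. Suppose for contradiction that $mK = m(K_1 + \dots + K_n)$ is slice for some $m \geq 1$. Then $H_1(\Sigma_p(mK))$ decomposes as the orthogonal sum
\[
\bigoplus_{i=1}^{n} \bigoplus_{\ell=1}^{m} \bigl(E_a^{i,\ell} \oplus E_b^{i,\ell}\bigr),
\]
each summand $\cong \Z_q \oplus \Z_q$ carrying the linking form from the $\ell$-th copy of $K_i$. Sliceness of $mK$ produces a metabolising subgroup $P$ of the total linking form, and by the Herald–Kirk–Livingston / Tamulis machinery used throughout the thesis, every character $\chi \colon H_1(\Sigma_p(mK)) \to \Z_q$ vanishing on $P$ must make $\Delta_\chi^{mK}$ a norm in $\Q(\zeta_q)[t,t^{-1}]$.

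The first key step is the multiplicativity of twisted Alexander polynomials under connected sum: writing $\chi = (\chi_{i,\ell})$ with $\chi_{i,\ell}$ the restriction of $\chi$ to the $(i,\ell)$-summand, one has
\[
\Delta_\chi^{mK}(t) \;\doteq\; \prod_{i=1}^{n}\prod_{\ell=1}^{m} \Delta_{\chi_{i,\ell}}^{K_i}(t).
\]
So the goal becomes: for every possible metaboliser $P$, exhibit a character $\chi$ vanishing on $P$ whose corresponding product is \emph{not} a norm in $\Q(\zeta_q)[t,t^{-1}]$. I would split the analysis by whether the projection $P_{i,\ell}$ of $P$ onto the $(i,\ell)$-summand is the whole group, a nonzero proper subgroup of $E_a^{i,\ell} \oplus E_b^{i,\ell}$, or trivial. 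Condition (1) then handles the trivial-projection case, because together with the coprimality condition (2), any character $\chi$ with $\chi_{i,\ell}=0$ for some $(i,\ell)$ contributes a non-norm factor $\Delta_{\chi_0}^{K_i}$ which, being coprime up to norms to all $\Delta_{\chi_a}^{K_j}, \Delta_{\chi_b}^{K_j}$ appearing elsewhere in the product, cannot be cancelled.

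In the remaining case, where each $\chi_{i,\ell}$ is nontrivial, the Lagrangian condition on $P$ restricts the possible $\chi_{i,\ell}$ to (scalar multiples of) the characters of type $\chi_a$, $\chi_b$, or $\chi_a + \chi_b$ as in Theorem \ref{Thm:Highercover}. Condition (4) rules out the within-knot pairings $\chi_a+\chi_b$ versus $d\chi_a - d^{-1}\chi_b$ on the multiple copies of a single $K_i$, exactly as in the single-knot theorem; condition (3), via the Galois automorphism $\sigma_d$, rules out the cross-knot identifications of $\Delta_{\chi_a}^{K_i}$ with $\Delta_{\chi_a}^{K_j}$ (respectively the $\chi_b$ versions) that would be needed for the product to become a norm through cancellation between different knots. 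The main obstacle I anticipate is the bookkeeping for metabolisers $P$ that mix components of different $(i,\ell)$ in complicated off-diagonal ways, particularly when several copies of the same $K_i$ occur: one must show that in every such configuration the Lagrangian constraint forces at least one forbidden character configuration to appear, and hence a non-norm factor that cannot be absorbed. I expect a dimension count on the Lagrangian subspaces of the $2m$-dimensional $\Z_q$-space corresponding to the copies of each fixed $K_i$, combined with a pigeonhole argument on the eigenspace decomposition from Theorem \ref{Thm:Highercover}, to drive this final step.
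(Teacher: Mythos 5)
Your overall strategy is the paper's: use multiplicativity of twisted polynomials over the summands, split on the structure of the metaboliser, and use conditions (1)--(2) to kill characters with trivial restrictions and (3)--(4) to kill the remaining ``diagonal'' characters. But there are two genuine gaps. First, your claim that a single trivial restriction $\chi_{i,\ell}=0$ ``contributes a non-norm factor \dots which cannot be cancelled'' is not right as stated: since every twisted polynomial satisfies $x(t)=\overline{x(t^{-1})}$, an \emph{even} number of copies of $\Delta_{\chi_0}^{K_i}$ multiplies to a norm, and condition (2) says nothing about $\Delta_{\chi_0}^{K_i}$ versus $\Delta_{\chi_0}^{K_j}$ for $i\neq j$. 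What you actually need is a vector in the metaboliser with an \emph{odd} number of zero entries, followed by the parity argument of Theorem \ref{Thm:infiniteordersum} (an odd total number of trivial factors cannot pair off into norms). Producing such an odd vector when the $a$-eigenspace contribution $M_a$ has more than $nm$ rows is itself a nontrivial step (write $M_a=(I\;E)$ with $E$ having more rows than columns, find a vanishing linear combination of the rows of $E$, and adjust by one row to force odd parity); your projection-onto-summands case split does not deliver it.

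Second, and more seriously, the step you defer to ``a dimension count combined with a pigeonhole argument'' is the crux of the proof, and it is not a dimension count: it is Lemma \ref{Lemma:KirkLiv} combined with the hyperbolic structure of the linking form on $E_a\oplus E_b$. Because $\lk(e_a,e_a)=\lk(e_b,e_b)=0$, the metaboliser condition gives $M_a\,M_b^T\equiv 0 \pmod q$, which forces $M_b=X\bigl(I\;\;-(B_a^{-1})^T\bigr)P$ once $M_a=(I\;\;B_a)P$; then Lemma \ref{Lemma:KirkLiv} (applied when no odd vector exists) forces $B_a$ to be a permuted diagonal matrix, so that after a common change of basis $M_a=(I\;\;D)$ and $M_b=(I\;\;-D^{-1})$ with $D$ diagonal. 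Only at that point do the rows take the shape $(1,0,\dots,0,d,0,\dots,0)$ and $(1,0,\dots,0,-d^{-1},0,\dots,0)$, so that the two nonzero columns either lie over distinct knots (handled by condition (3) via $\sigma_d$) or over two copies of the same knot (handled by condition (4) after subtracting the rows, yielding $\Delta_{\chi_a+\chi_b}^{K_i}\Delta_{d\chi_a-d^{-1}\chi_b}^{K_i}$). Without this normal form your conditions (3) and (4) have nothing to bite on, so the proposal as written does not yet constitute a proof.
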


The combination of all these theorems allows us to attempt a full classification of all the prime knots with 9 or fewer crossings, which we will describe in the next section.

\subsection{A concordance classification of 9-crossing prime knots}
\label{Subsec:9crossingClass}

We wish to find the structure of the subgroups of $\C$ and $\G$ generated by all prime knots with 9 or fewer crossings. Such a result would allow us, given any linear combination of $9$-crossing prime knots, to instantly be able to decide the algebraic and geometric concordance orders of that knot.  The reason that $9$-crossing knots were chosen is that there are sufficiently many to throw up interesting challenges and phenomena, whilst being a small enough set to allow us to perform many of the calculations by hand.  Now that the groundwork and algorithms have been laid out, it should not be too difficult, in future work, to automate the process and produce a classification for $10$-, $11$- and even $12$-crossing knots.

\medskip

Let $E = \left\{ 3_1,4_1,5_1,5_2,6_1,6_2,6_3,7_1, \dots,7_7,8_1,\dots,8_{21}, 9_1, \dots, 9_{49} \right\}$, where $|E|=87$ since the list includes the distinct reverses $8_{17}^r$, $9_{32}^r$ and $9_{33}^r$.

\begin{notation}
Let $\C_E$ denote the subgroup of $\C$ generated by $E$.  Denote by $\CF_E$ the free abelian group generated by $E$.
\end{notation}

There are natural maps $\CF_E \xrightarrow{\psi} \C_E \xrightarrow{\phi} \G$.  We use the term `concordance classification' of $E$ to mean finding the kernel of both $\psi$ and $\phi \circ \psi$, since knowing these kernels would enable us to identify whether any linear combination of knots in $E$ were slice (algebraically and geometrically).

The following result, which is labelled as a conjecture because part of it remains unproved, writes $\CF_E$ in terms of a basis from which it is possible to read off the orders of any linear combination of knots.  It is an amalgamation of the results of Chapters \ref{chapter4} and \ref{chapter6}.

\begin{conjecture}
  A basis of $\CF_E$ consists of the union of the following independent sets.  In the notation, the superscript gives the order of the elements in the $A$=`algebraic' or $T$=`topological' concordance groups. For example, $\C_{A}^2$ means knots which represent elements of order $2$ in $\G$.
  \begin{itemize}
   \item $\C^\infty = \{ 3_1, 5_1, 5_2, 6_2, 7_1, \dots, 7_6, 8_2, 8_4, \dots 8_7, 8_{14}, 8_{16}, 8_{19}, 9_1, 9_3, \dots, 9_7, 9_9, 9_{10}, 9_{11}, 9_{13}, \\ \quad 9_{15},9_{17}, 9_{18}, 9_{20}, 9_{21}, 9_{22}, 9_{25}, 9_{26}, 9_{31}, 9_{32}, 9_{35}, 9_{36}, 9_{38}, 9_{43}, 9_{45}, 9_{47}, 9_{48}, 9_{49}\}$
   \item $\C_A^4  =  \left\{ 7_7, 9_{34} \right\}$
   \item $\C_A^2  =  \{8_1, 8_{13},(8_{15}-7_2-3_1), (9_2-7_4), (9_{12}-5_2), 9_{14}, (9_{16}-7_3-3_1), 9_{19},\\
       (9_{28}-3_1), 9_{30}, 9_{33},(9_{42}+8_5-3_1),(9_{44}-4_1)\}$
   \item $\C_A^1 = \{ (8_{21}-8_{18}-3_1), (9_8-8_{14}), (9_{23}-9_2-3_1), (9_{29}-9_{28}+2(3_1)),\\
             (9_{32}-9_{32}^r), (9_{33}-9_{33}^r), (9_{39}+7_2-4_1), (9_{40}-8_{18}-4_1-3_1) \}$
   \item $\C_A^{1'} = \{ (8_{17}-8_{17}^r) \}$
   \item $\C_T^2 = \{4_1, 6_3, 8_3, 8_{12}, 8_{17}, 8_{18}\}$
   \item $\C_T^1 = \{6_1, 8_8, 8_9, (8_{10}+3_1), (8_{11}-3_1), 8_{20}, (9_{24}-4_1), 9_{27}, (9_{37}-4_1), 9_{41}, 9_{46}\}$
  \end{itemize}

  A basis for the kernel $\A_E$ of $\phi \circ \psi \colon \CF_E \to \G$ is the union of $4\C_A^4$, $2\C_A^2$, $2\C_T^2$, $\C_A^1$, $\C_A^{1'}$ and $\C_T^1$.

  A basis for the kernel of $\psi \colon \CF_E \to \C_E$ is the union of $2\C_A^{1'}$, $2\C_T^2$ and $\C_T^1$.
\end{conjecture}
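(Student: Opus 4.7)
My plan is to reduce the conjecture to three pieces of book-keeping on $\CF_E$: verifying that the $87$ listed elements form a $\Z$-basis of $\CF_E$, establishing the algebraic concordance order in $\G$ of each displayed basis element, and establishing its topological concordance order in $\C_E$. The first step is purely formal: the displayed elements express as an upper-triangular $\pm 1$ change of basis from the standard generators of $\CF_E$, so they automatically form a $\Z$-basis. Once the orders are pinned down, both kernel descriptions follow by inspection, since each basis vector contributes its algebraic or topological order as the coefficient standing in front of it inside the relevant generating list.

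For the algebraic orders, I would compute a Seifert matrix for each of the $87$ knots in $E$ and extract its Levine invariants in $\G \cong \Z^\infty \oplus \Z_2^\infty \oplus \Z_4^\infty$: the classical and Tristram--Levine signatures, the discriminants, and the Witt classes in $\G_{\F_p}$ at each relevant prime $p$ dividing a discriminant. This fixes the algebraic orders and identifies the algebraically slice knots, while simultaneously revealing the linear relations that produce the compound generators of $\C_A^2$ (such as $8_{15} - 7_2 - 3_1$) and $\C_A^1$ (such as $9_{40} - 8_{18} - 4_1 - 3_1$); these are found by solving a linear system over the invariant values. The $\Z^\infty$-valued signatures separate the knots in $\C^\infty$ and in $\C_A^4$ in $\G$, giving their algebraic independence for free.

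For the topological side, the knots in $\C_T^2$ are handled by negative amphicheirality, which forces $K \# -K$ to be slice; the elements of $\C_T^1$ are handled either by Freedman's theorem when $\Delta_K(t) = 1$, or else by exhibiting an explicit ribbon movie for the displayed linear combination (for example, realising $8_{11} - 3_1 = 0$ in $\C$). For the claimed infinite-order generators of $\C^\infty$, I would compute $H_1(\Sigma_p;\Z)$ for a small prime $p$ and apply Theorem~\ref{Thm:twistedpoly}, Theorem~\ref{Thm:twistedpolyq^n}, or Theorem~\ref{Thm:Highercover} according to the structure of that homology; independence of arbitrary sums drawn from $\C^\infty$ is then controlled by Theorem~\ref{Thm:infiniteordersum} or Theorem~\ref{Thm:highercoverConnSum}, reducing the independence problem to a finite check on factorisations of twisted Alexander polynomials over the appropriate $\Q(\zeta_q)[t,t^{-1}]$.

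The main obstacle, and the source of the conjecture's conjectural status, is proving that each displayed element of $\C_A^1$ and $\C_A^{1'}$ represents a distinct non-trivial class in $\C_E$. The algebraic invariants vanish on these elements by construction, so the only available obstruction is the twisted Alexander polynomial machinery applied to non-trivial characters of $H_1(\Sigma_p;\Z)$; for the single class $8_{17} - 8_{17}^r \in \C_A^{1'}$ the specifically tailored Theorem~\ref{Thm:reverseInfiniteOrder} is the correct tool. Whenever those criteria are verified the corresponding generator is confirmed non-trivial, and the cases in which the twisted polynomial criteria fail to produce an obstruction are precisely the cases that remain conjectural, leaving open the possibility of unexpected concordances among these subtle algebraically slice classes.
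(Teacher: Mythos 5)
Your overall strategy coincides with the paper's: Chapter \ref{chapter4} computes exactly the Levine invariants you list (the $\omega$-signatures sampled between consecutive unit roots of the Alexander polynomials, reduced to echelon form, the exponents of symmetric irreducible factors, and the Witt classes in $W(\F_p)$ and $W(\Q_2)$) to pin down the algebraic orders and produce the compound generators, and Chapter \ref{chapter6} uses amphicheirality, known slice movies, and the twisted-polynomial theorems of Chapter \ref{chapter5} for the topological side. Two points in your proposal, however, do not match what actually happens and would cause trouble if carried out. First, you locate the conjectural gap in the wrong place. The individual non-triviality of every element of $\C_A^1$, $\C_A^2$ and $\C_A^{1'}$ \emph{is} established: each is shown to be of infinite order in $\C$ (or, for $8_{17}-8_{17}^r$, of order exactly $2$) via Theorems \ref{Thm:twistedpoly}, \ref{Thm:infiniteordersum}, \ref{Thm:reverseInfiniteOrder} and the ad hoc argument of Proposition \ref{Prop:9-23}. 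What remains unproved is the mutual \emph{linear independence} in $\C$ of the five elements $(9_2-7_4)$ (which lies in $\C_A^2$, not $\C_A^1$), $(8_{21}-8_{18}-3_1)$, $(9_{23}-9_2-3_1)$, $(9_{40}-8_{18}-4_1-3_1)$ and $(9_{32}-9_{32}^r)$. The obstruction is concrete: these knots interact only at the primes $3$ and $5$, where the relevant homology of the branched covers has the form $a\Z_q\oplus b\Z_{q^2}\oplus T$ (or a four-fold $\Z_q$ summand in higher covers), and none of the Chapter \ref{chapter5} theorems handle metabolisers of that shape; Section \ref{Sec:combiningprimepowers} shows exactly where the metaboliser analysis breaks down.

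Second, Theorem \ref{Thm:reverseInfiniteOrder} is not the right tool for $8_{17}-8_{17}^r$: that theorem certifies that $K-K^r$ has \emph{infinite} order, whereas the conjecture asserts $8_{17}-8_{17}^r$ has order exactly $2$ (it dies after doubling because $8_{17}$ itself has topological order $2$, which is why $2\C_A^{1'}$ appears in the kernel of $\psi$). If the hypotheses of that theorem were verified for $8_{17}$ at the $5$-fold cover you would have disproved the conjecture rather than proved it. The theorem is instead applied to $9_{32}-9_{32}^r$ and $9_{33}-9_{33}^r$, which sit in $\C_A^1$ and are claimed to be of infinite order. For $8_{17}-8_{17}^r$ what is needed is only non-sliceness and independence from the other generators, which the paper leaves contingent on the independence of the remaining elements of the generating set $S$.
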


\begin{corollary}
  The image of $\CF_E$ in the algebraic concordance group $\G$ is $\G_E \cong \Z^{46} \oplus \Z_2^{19} \oplus \Z_4^2$.

  The image of $\A_E$ (the kernel of $\psi \circ \phi$) in the geometric concordance group $\C$ is $\Z^{23} \oplus \Z_2$.
\end{corollary}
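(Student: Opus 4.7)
The plan is to derive both isomorphisms formally from the basis decomposition of $\CF_E$ stated in the preceding conjecture, together with the order information encoded in the naming of the basis subsets. No new topology is required: the corollary is a bookkeeping consequence of the conjecture.

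First I would handle the algebraic image. By definition $\A_E=\ker(\phi\circ\psi)$, so the image of $\CF_E$ in $\G$ equals $\CF_E/\A_E$. Partition the given basis of $\CF_E$ into the seven subsets $\C^\infty, \C_A^4, \C_A^2, \C_A^1, \C_A^{1'}, \C_T^2, \C_T^1$. By the second part of the conjecture, the subsets $\C_A^1, \C_A^{1'}, \C_T^1$ lie inside $\A_E$ and so contribute nothing to the quotient. The remaining three contribute their obvious pieces: the $46$ generators of $\C^\infty$ are of infinite algebraic order and produce $\Z^{46}$; the two generators of $\C_A^4$ produce $\Z_4^2$; and the $13+6=19$ generators of $\C_A^2\cup\C_T^2$ produce $\Z_2^{19}$. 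Independence of the summands in $\G$ is immediate from the basis property.

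Next I would treat the topological image. Since sliceness implies algebraic sliceness, $\ker\psi\subseteq\A_E$, and the image of $\A_E$ in $\C$ equals the quotient $\A_E/\ker\psi$. I would then match the two bases from the conjecture term by term. The subsets $4\C_A^4$ (2 generators), $2\C_A^2$ (13 generators) and $\C_A^1$ (8 generators) of the $\A_E$-basis have no element in common with the kernel basis $2\C_A^{1'}\cup 2\C_T^2\cup \C_T^1$, so they survive to give $\Z^{2+13+8}=\Z^{23}$; the subsets $2\C_T^2$ and $\C_T^1$ appear verbatim in both bases and so become zero; and the single generator $8_{17}-8_{17}^r$ of $\C_A^{1'}$ has its double in $\ker\psi$ but itself survives, contributing one copy of $\Z_2$. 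Summing gives $\Z^{23}\oplus\Z_2$, as claimed.

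The only genuinely non-bookkeeping claim in either argument is that the generators making up $4\C_A^4, 2\C_A^2$ and $\C_A^1$ really do have infinite order in $\C$, so that their images yield honest $\Z$ summands rather than collapsing further. This is not an obstacle at the level of this corollary, since it is precisely what the twisted Alexander polynomial criteria of Chapter \ref{chapter5} (in particular Theorems \ref{Thm:twistedpoly}, \ref{Thm:infiniteordersum}, \ref{Thm:twistedpolyq^n}, \ref{Thm:Highercover} and \ref{Thm:highercoverConnSum}) are designed to establish, and it is a component of the proof of the preceding conjecture rather than of the corollary itself.
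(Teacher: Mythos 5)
Your proposal is correct and matches the paper's (implicit) argument: the paper treats both isomorphisms as immediate bookkeeping consequences of the basis decomposition in the preceding conjecture, computing $\CF_E/\A_E$ and $\A_E/\ker\psi$ exactly as you do, with the counts $46$, $2$, $13+6=19$ for the algebraic image and $2+13+8=23$ free generators plus the single $\Z_2$ from $(8_{17}-8_{17}^r)$ for the topological one. You also correctly note that the substantive content (the infinite order of the surviving generators in $\C$) belongs to the proof of the conjecture, not of this corollary.
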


The only part of this conjecture which remains unproven is whether the knots $(9_2-7_4)$, $(8_{21}-8_{18}-3_1)$, $(9_{23}-9_2-3_1)$, $(9_{40}-8_{18}-4_1-3_1)$ and $(9_{32}-9_{32}^r)$ are linearly independent in $\C$.  That is, could there be some linear combination of these knots which is slice?  We have not yet been able to extend the theorems of Chapter \ref{chapter5} to deal with this case, which is where the homology of the $2$-fold branched cover of the knot is isomorphic to $a\Z_q \oplus b\Z_{q^2} \oplus T$, with the order of $T$ coprime to $q$. (Alternatively, where the homology of the $p$-fold branched cover, for $p>2$, is isomorphic to $\Z_q \oplus \Z_q \oplus \Z_q \oplus \Z_q$.)

Although algebraic concordance is well-understood, and a complete set of invariants exists to classify knots in $\G$, this investigation of $9$-crossings knots has raised more subtle questions about the structure of $\G$ and these invariants.  For example, is knowledge of the image of a knot in $W(\Q_2)$ necessary for the classification of the knot in $\G$, and can we find a prime knot which represents an element of order $2$ in $\G$ but which is twice another prime knot of order $4$?

The nature of the invariants which are developed in this investigation also provide more evidence that knots which are of algebraic order $4$ will be of infinite order in $\C$.

\subsection{Second-order slice obstructions}

 We end the thesis with a look at the computationally feasible second-order invariants provided by Cochran, Orr and Teichner. The metabolising curves for the twist knots have been shown to be torus knots, so by calculating signatures for the torus knots we provide another proof that the twist knots are not slice.

 \begin{thmtorussig}
  Let $p$ and $q$ be coprime positive integers.  Then the integral of the $\omega$-signatures of the $(p,q)$ torus knot is
  \[ \displaystyle \int_{S^1} \sigma_{\omega} = -\frac{(p-1)(p+1)(q-1)(q+1)}{3pq} \text{ .} \]
 \end{thmtorussig}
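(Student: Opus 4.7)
The plan is to convert the integral into a finite sum using the jump structure of the Tristram-Levine signature function, and then to evaluate that sum combinatorially using the explicit description of the jumps of $\sigma_\omega$ for torus knots.

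First I would record the general fact that for any knot $K$ the function $\sigma_\omega$ is piecewise constant on $S^1$, vanishes at $\omega=1$, and satisfies $\sigma_\omega=\sigma_{\bar\omega}$, with jumps $j_K(\alpha)$ concentrated at the roots $e^{2\pi i\alpha}$ of $\Delta_K(t)$. Parametrising $\omega=e^{2\pi i t}$ and using the symmetry, an elementary summation-by-parts yields
\begin{equation*}
\int_{S^1}\sigma_\omega \;=\; 2\int_0^{1/2}\sigma_{e^{2\pi i t}}\,dt \;=\; \sigma(K) \;-\; 2\!\sum_{\alpha\in(0,1/2)}\alpha\,j_K(\alpha),
\end{equation*}
where $\sigma(K)$ is the classical (Murasugi) signature.

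Next, I would specialise to $K=T_{p,q}$. The Alexander polynomial
$\Delta_{p,q}(t)=\tfrac{(t^{pq}-1)(t-1)}{(t^p-1)(t^q-1)}$
has simple roots at $e^{2\pi i k/(pq)}$ for those $k\in\{1,\dots,pq-1\}$ with $p\nmid k$ and $q\nmid k$; by the Chinese Remainder Theorem these $k$ are in bijection with pairs $(a,b)\in\{1,\dots,p-1\}\times\{1,\dots,q-1\}$ via $k\equiv a\pmod p,\ k\equiv b\pmod q$. The classical Brieskorn--Hirzebruch description of torus-knot signatures then identifies the jump $j_{T_{p,q}}(k/(pq))=\pm 2$, with the sign controlled by whether $a/p+b/q$ is less than or greater than $1$. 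Substituting this into the displayed identity reduces the problem to a signed double sum over the lattice rectangle $\{1,\dots,p-1\}\times\{1,\dots,q-1\}$. Pairing each lattice point $(a,b)$ with its reflection $(p-a,q-b)$, which corresponds to $k\leftrightarrow pq-k$ on the circle, symmetrises the sign pattern, and the resulting unsigned sum factors as a constant times $\sum_a a(p-a)\cdot\sum_b b(q-b)$. Applying the elementary identity $\sum_{a=1}^{p-1} a(p-a)=\tfrac{1}{6}p(p^2-1)$ together with its analogue for $q$, and absorbing the $\sigma(T_{p,q})$ correction, produces the stated value $-\tfrac{(p-1)(p+1)(q-1)(q+1)}{3pq}$.

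The main obstacle will be the sign bookkeeping in the Brieskorn--Hirzebruch jump formula. Several equivalent but notationally different conventions appear in the literature (depending on orientation choices for the knot and for the circle), and an error in the sign assignment at even a single lattice point would destroy the telescoping used to factor the double sum. A useful cross-check, which I would carry out explicitly, is the trefoil $T_{2,3}$: there the only jump in $(0,1/2)$ is at $\alpha=1/6$ with value $-2$ and $\sigma(T_{2,3})=-2$, so the summation-by-parts identity gives $-2-2\cdot\tfrac{1}{6}\cdot(-2)=-\tfrac{4}{3}$, which agrees with the claimed value $-(4-1)(9-1)/(3\cdot 6)=-\tfrac{4}{3}$.
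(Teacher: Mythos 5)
Your skeleton is the same as the paper's: turn the integral into a weighted sum of signature jumps, then into a signed sum over the lattice rectangle $\{1,\dots,p-1\}\times\{1,\dots,q-1\}$ with the sign at $(a,b)$ determined by which side of the line $a/p+b/q=1$ the point lies on. Your summation-by-parts identity is correct in your normalisation, and the $\sigma(T_{p,q})$ term is harmless only because it cancels identically against $\sum_{\alpha\in(0,1/2)}j_K(\alpha)$, leaving $\int_{S^1}\sigma_\omega=\sum_{\alpha\in(0,1/2)}(1-2\alpha)\,j_K(\alpha)=2\sum_{s\in\Sigma,\,s<1}(1-2s)$ with $\Sigma=\{a/p+b/q\}$. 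It had better cancel rather than be "absorbed'': as this chapter itself notes (Borodzik--Oleszkiewicz), $\sigma_{-1}(T_{p,q})$ admits no closed rational formula, so it cannot be computed and folded into the stated rational answer. Also, the correct bijection between jump points $k/pq$ and pairs $(a,b)$ is $k\equiv aq+bp \pmod{pq}$, not $k\equiv a\ (\mathrm{mod}\ p)$, $k\equiv b\ (\mathrm{mod}\ q)$; with raw residues your sign rule already fails for $T_{3,5}$ at $k=2$ (the jump at $e^{2\pi i\cdot 2/15}$ is $-2$, but $2/3+2/5>1$), even though it happens to work for the trefoil.

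The genuine gap is the evaluation of the resulting sum. You assert that after pairing $(a,b)$ with $(p-a,q-b)$ the sum "factors as a constant times $\sum_a a(p-a)\cdot\sum_b b(q-b)$''. It does not. What must be computed is $\sum_{(a,b):\,a/p+b/q<1}(a/p+b/q)$, a sum over a lattice \emph{triangle}; the constraint couples $a$ and $b$, so the double sum does not split into one-dimensional sums. The central reflection only yields $\sum_{f<1}f+\sum_{f>1}f=(p-1)(q-1)$ and cannot isolate the triangle sum; its actual value is $\frac{(p-1)(q-1)}{4}+\frac{(p^2-1)(q^2-1)}{12pq}$, whose first term is not proportional to $\frac{p(p^2-1)}{6}\cdot\frac{q(q^2-1)}{6}$ under any uniform normalisation (compare $(p,q)=(2,3)$, where the ratio is $5/24$, with $(2,5)$, where it is $2/25$). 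This triangle sum is precisely where the number-theoretic content of the theorem lives: the paper evaluates it by quoting Mordell's formula $\sum_{n\in S}n=\frac{1}{3}pq(p-1)(q-1)+\frac{1}{12}(p-1)(q-1)(p+q+1)$ for $S=\{n=qx+py:\ 0<x<p,\ 0<y<q,\ n<pq\}$, together with Litherland's description of the jumps and the fact that exactly one of $n$, $pq-n$ lies in $S$. Your sketch supplies no substitute for that step; that the \emph{final answer} happens to equal $-\frac{12}{(pq)^2}\cdot\frac{p(p^2-1)}{6}\cdot\frac{q(q^2-1)}{6}$ is a feature of the answer, not a proof that the intermediate sum factors, and the single trefoil check cannot detect the error.
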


 \begin{cortwistslice}
  The twist knots $K_n$ are not slice unless $n=0$ or $n=2$.
 \end{cortwistslice}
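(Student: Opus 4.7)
The strategy is to combine Theorem \ref{Thm:L2torussig} with the second-order Cochran-Orr-Teichner slice obstruction for genus-$1$ knots: if $K$ bounds a genus-$1$ Seifert surface $F$ and is topologically slice, then some rank-$1$ metaboliser $P\subset H_1(F;\Z)$ of the Seifert form is represented by a simple closed curve $\gamma$ on $F$ whose image as a knot in $S^3$ has vanishing integrated Tristram-Levine signature $\int_{S^1}\sigma_\omega$.

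First I would reduce to the algebraically slice case. Writing a Seifert matrix of $K_n$ as $V=\begin{pmatrix}-1 & 1\\0 & n\end{pmatrix}$, the algebraic sliceness condition $v^{T}Vv=0$ for a primitive integer vector $v=(a,b)$ becomes $-a^2+ab+nb^2=0$, which has an integer solution iff the discriminant $1+4n$ is a perfect square, i.e.\ iff $n=k(k+1)$ for some non-negative integer $k$. For every other $n$ a classical Tristram-Levine signature is already nonzero, so $K_n$ is not even algebraically slice, let alone slice.

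For the remaining cases $n=k(k+1)$ with $k\geq 2$ (the values $k=0$ and $k=1$ give the genuinely slice knots $K_0$ and $K_2$), the only primitive metabolisers of $V$ are generated by $v_+=(k+1,1)$ and $v_-=(-k,1)$. I would next identify the simple closed curves $\gamma_\pm$ on the standard genus-$1$ Seifert surface $F$ — a disk with one band carrying $n$ full twists — that realise these two homology classes. Using the standard basis of $H_1(F)$ in which one generator is the unknotted core of the disk and the other is the core of the twisted band, a direct calculation on the twisted band shows that $\gamma_\pm$ sits in $S^3$ as the torus knot $T(k,k+1)$ (up to mirroring and orientation), consistent with the degenerate case $k=1$ giving $T(1,2)=$ unknot.

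Finally I would invoke Theorem \ref{Thm:L2torussig} to compute
\[\int_{S^1}\sigma_\omega\bigl(T(k,k+1)\bigr)=-\frac{(k^2-1)\bigl((k+1)^2-1\bigr)}{3k(k+1)}=-\frac{(k-1)(k+2)}{3},\]
which is strictly negative for every $k\geq 2$. Since both metabolisers are represented by curves with nonzero integrated signature, the Cochran-Orr-Teichner condition fails and $K_n$ cannot be slice. The main obstacle is the middle step: pinning down the geometry of the metabolising curves as torus knots, since this requires careful bookkeeping of framings and orientations on the $n$-twisted band to rule out cabling or connected-sum alternatives. Once that identification is in place, the signature formula and the arithmetic $(k-1)(k+2)\neq 0$ for $k\geq 2$ deliver the result immediately.
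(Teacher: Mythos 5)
Your overall strategy --- reduce to the algebraically slice case $n=k(k+1)$, identify the metabolising curves on the genus-$1$ Seifert surface as the torus knot $T(k,k+1)$, and feed Theorem \ref{Thm:L2torussig} into the Cochran--Orr--Teichner genus-$1$ criterion (Theorem \ref{Thm:genusonesurface}) --- is exactly the route the paper takes, and your endgame arithmetic, $\int_{S^1}\sigma_\omega(T(k,k+1))=-\tfrac{(k-1)(k+2)}{3}\neq 0$ for $k\geq 2$, is correct.

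However, your reduction step contains an error that would sink the argument for positive $n$ with $4n+1$ not a perfect square. You claim that for every $n$ outside the family $k(k+1)$ ``a classical Tristram--Levine signature is already nonzero.'' That is true for $n<0$, where $-nt^2+(2n+1)t-n$ has complex conjugate roots on the unit circle and the signature function jumps there. But for $n>0$ the discriminant $4n+1$ is positive, so the two roots are real and reciprocal, hence off the unit circle; the signature function never jumps and $\sigma_\omega(K_n)\equiv 0$ for all $\omega$. (The figure-eight knot $K_1$ is the standard example: identically vanishing signature function, yet not slice.) The correct obstruction in this range --- and the one the paper uses --- is the Fox--Milnor condition: for $n>0$ with $4n+1$ not a square the Alexander polynomial is irreducible over $\Q$ and so cannot factor as $f(t)f(t^{-1})$, whence $K_n$ is not even algebraically slice. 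With that substitution (and keeping the signature argument only for $n<0$), your proof coincides with the paper's; the remaining labour, as you rightly note, is the Kauffman-style identification of the self-linking-zero curves on the twisted band as $T(k,k+1)$.
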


This result was already known to Casson and Gordon in the 1970s but the new proof given in this thesis is much shorter and generalises to larger classes of knots, such as the $n$-twisted doubles of an arbitrary knot $K$.

\begin{figure}
  \centering
  \includegraphics[width=9cm]{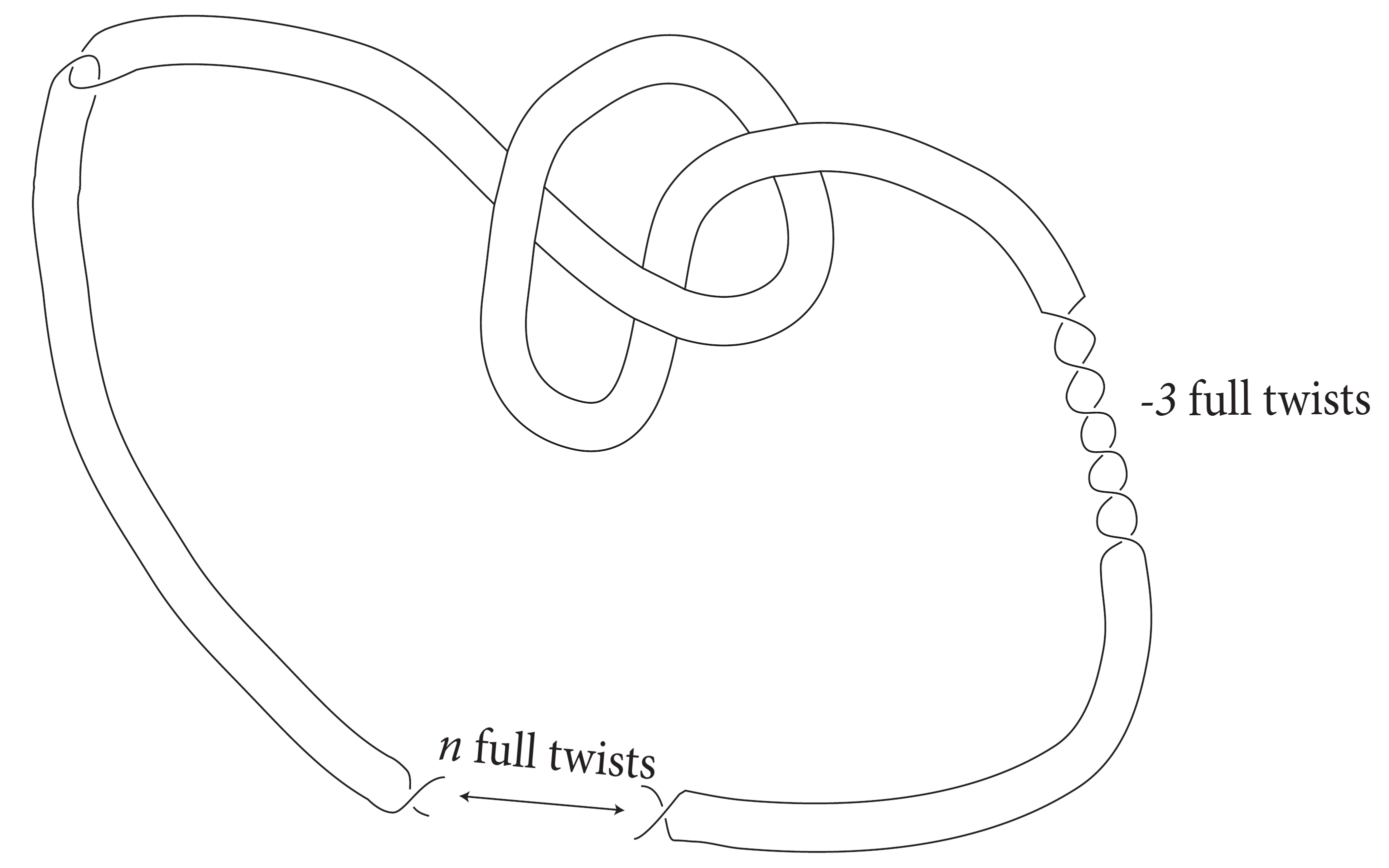}
  \caption{The $n$-twisted double of the right-handed trefoil.}
  \label{fig:twisteddoubleCh1}
\end{figure}

  \begin{cortwisteddouble}
  Let $K$ be a knot and $D_n(K)$ the $n$-twisted double ($n \neq 0$) of $K$ as shown in Figure \ref{fig:twisteddoubleCh1}.
      \begin{itemize}
        \item[(a)] $D_n(K)$ cannot be slice unless $n=m(m+1)$ for some $m \in \Z$ and $\int_{S^1} \sigma_\omega(K) = \frac{(m-1)(m+2)}{3}$.  In particular, $D_2(K)$ can only be slice if $\int_{S^1} \sigma_\omega(K)=0$.
        \item[(b)] For any given $K$ with $\int_{S^1} \sigma_\omega(K) \neq 0$, there is at most one $D_n(K)$ which can be slice.
      \end{itemize}
  \end{cortwisteddouble}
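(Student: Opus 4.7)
The plan is to exploit the genus-one structure of $D_n(K)$: pin down a natural Seifert surface, describe its primitive metabolisers as satellites of $K$ with torus-knot pattern, and apply the second-order Cochran-Orr-Teichner obstruction together with Theorem~\ref{Thm:L2torussig} to convert sliceness of $D_n(K)$ into an arithmetic constraint on $\int_{S^1}\sigma_\omega(K)$.

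First I would fix the standard genus-one Seifert surface $F$ for $D_n(K)$ built from two clasped bands: one unknotted band carrying a single full twist and one band running alongside the companion $K$ and carrying $n$ extra half-twists. With respect to the dual basis $\gamma_1,\gamma_2$ of $H_1(F)$ the Seifert matrix is (up to convention)
\[ A \;=\; \begin{pmatrix} -1 & 1 \\ 0 & n \end{pmatrix}. \]
Searching for a primitive isotropic vector $v=(a,b)$ with $v^{T}Av=0$ forces $-a^{2}+ab+nb^{2}=0$, which has integer solutions precisely when $1+4n$ is an odd square, i.e.\ when $n=m(m+1)$ for some $m\in\Z$. In that case every primitive metaboliser is represented either by $\eta_m=(m+1)\gamma_1+\gamma_2$ or by $\eta_{-m-1}=-m\gamma_1+\gamma_2$, the two being swapped by $m\leftrightarrow-m-1$ (which leaves $n$ unchanged).

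Next I would identify $\eta_m$ geometrically. Because $\gamma_2$ follows $K$ exactly once while $\gamma_1$ sits inside the unknotted clasp band, a simple closed curve on $F$ representing $(m+1)\gamma_1+\gamma_2$ lies in a tubular neighbourhood $V$ of $K$ and meets a meridional disc of $V$ exactly once; in the companion case $K=$ unknot, untwisting $V$ identifies this curve as the torus knot $T_{m,m+1}$, and therefore in general $\eta_m$ is the satellite of $K$ with pattern $T_{m,m+1}$ and winding number one. Litherland's formula for Tristram-Levine signatures under satellite operations, integrated over $S^{1}$ and using winding number one, then yields
\[ \int_{S^{1}}\sigma_{\omega}(\eta_m)\;=\;\int_{S^{1}}\sigma_{\omega}\bigl(T_{m,m+1}\bigr)+\int_{S^{1}}\sigma_{\omega}(K), \]
and Theorem~\ref{Thm:L2torussig} evaluates the torus-knot term as $-\tfrac{(m-1)(m+1)\cdot m(m+2)}{3m(m+1)}=-\tfrac{(m-1)(m+2)}{3}$.

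The Cochran-Orr-Teichner second-order signature obstruction now enters: if $D_n(K)$ is slice then some primitive metaboliser on $F$ must be realised by a simple closed curve whose integrated Tristram-Levine signature vanishes. Applying this to $\eta_m$ (and using the $m\leftrightarrow-m-1$ symmetry to handle the other metaboliser) forces $\int_{S^{1}}\sigma_{\omega}(K)=\tfrac{(m-1)(m+2)}{3}$, which is (a); specialising to $n=2$ (so $m=1$) gives $\int_{S^{1}}\sigma_{\omega}(K)=0$. For (b), observe that $(m-1)(m+2)=m^{2}+m-2=n-2$, so the constraint reads $\int_{S^{1}}\sigma_{\omega}(K)=\tfrac{n-2}{3}$; for fixed $K$ with non-zero signature integral this equation has at most one integer solution $n$, giving the uniqueness statement. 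The main obstacle I anticipate is the geometric step of pinning $\eta_m$ down as precisely the $(m,m+1)$-cable of $K$ with winding number one, rather than some other simple closed curve representing the metabolising homology class, and verifying that both primitive metabolisers yield the same satellite type so that no alternative slicing is missed; the integrated Litherland computation is then a brief change-of-variables argument.
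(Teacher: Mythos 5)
Your proposal is correct and follows essentially the same route as the paper: find the isotropic classes on the genus-one Seifert surface (forcing $n=m(m+1)$), identify the metabolising curve as $K$ combined with the $(m,m+1)$ torus knot, evaluate its integrated signature via Theorem \ref{Thm:L2torussig} plus additivity, and apply the Cochran--Orr--Teichner genus-one obstruction. The only cosmetic differences are that the paper describes the curve as the connected sum $K \# T_{(m,m+1)}$ rather than a winding-number-one satellite with Litherland's formula, and proves (b) by noting the two roots satisfy $m_1+m_2=-1$ rather than your identity $(m-1)(m+2)=n-2$; both yield the same conclusion.
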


\subsection{Outline}

The structure of this thesis is summarised below:

\begin{itemize}
  \item \textbf{Chapter 2:} A rigorous introduction to the required background knowledge for the thesis; including a definition of the knot concordance group, how to prove when a knot is slice, details of the algebraic concordance group, a discussion of results in higher-dimensional knot concordance, and a description of Casson-Gordon invariants.

  \item \textbf{Chapter 3:} A full and detailed description of the algebraic concordance group, focusing on the work of Levine and the invariants needed to find the image of a knot in $\G$.

  \item \textbf{Chapter 4:} The classification of the prime $9$-crossing knots in $\G$, using the results from Chapter 3.

  \item \textbf{Chapter 5:} A detailed description of twisted Alexander polynomials as slicing obstructions, followed by the main results which use these polynomials to provide criteria for a knot to be of infinite order in $\C$.

  \item \textbf{Chapter 6:} The classification of the prime $9$-crossing knots in $\C$, using the results from Chapters 4 and 5.

  \item \textbf{Chapter 7:} Detailed examples of how to use the classifications provided in
Chapters 4 and 6 to find the concordance order of any linear combination of $9$-crossing knots.  Discussion of how this procedure might fail when dealing with larger classes of knots.

  \item \textbf{Chapter 8:} A description of how the theorems in Chapter 5 have been applied to the prime knots of $12$ or fewer crossings of unknown concordance order, along with slice diagrams for those knots which have been shown to be of finite order.

  \item \textbf{Chapter 9:} How solving an elementary but difficult number-theoretic problem is found to be equivalent to looking at signatures of torus knots, and how these signatures are successful in obstructing the twist knots from being slice. Extensions of this result to obstructing the $n$-twisted doubles of an arbitrary knot from being slice.

  \item \textbf{Chapter 10:} A look at the open problems related to work in this thesis and suggestions on how these might be attacked.
\end{itemize} 
\chapter{\label{chapter2} Background}

In this chapter we give rigorous definitions of what it means for a knot to be (geometrically) slice and algebraically slice. We discuss the map from the geometric to the algebraic concordance group, and see that this is an isomorphism in higher dimensions. It is the failure of the Whitney trick in dimension 4 which causes the map $\C_1 \to \G$ to have a non-trivial kernel, and we look at the work of Casson and Gordon who first exhibited non-trivial elements in this kernel.

\section{The knot concordance group}

\begin{definition}
 A \emph{knot} is a locally flat embedding of a circle $S^1$ into the $3$-sphere $S^3$, under the equivalence relation of ambient isotopy.  That is, two knots $K_1$ and $K_2$ are considered equivalent if there is a homotopy of (orientation-preserving) homeomorphisms $f_t \colon S^3 \to S^3$ such that $f_0$ is the identity and $f_1$ carries $K_1$ to $K_2$.
\end{definition}

We will abuse the terminology in the standard way, with the word `knot' sometimes referring to the embedding and sometimes to the image of the embedding.

\begin{definition}
\label{Def:locallyflat}
  An embedding of manifolds $M^m \stackrel{q}{\hookrightarrow} N^n$ is called \emph{locally flat} if for each point $x \in M$ there is a neighbourhood $U$ of $x$ and a neighbourhood $V$ of $q(x)$ such that the pair $(V,qU)$ can be mapped homeomorphically onto $(D^n, D^m)$.
\end{definition}

\begin{definition}
 A knot $K$ is \emph{topologically (smoothly) slice} if it is the boundary of a locally flat (smooth) disc $D^2$ embedded into the 4-ball $D^4$.
\end{definition}

\begin{definition}
\label{Def:concordant}
 Two knots $K_1$ and $K_2$ are called \emph{concordant} if $K_1 \# -K_2$ is slice, where $-K_2$ means the mirror image of $K_2$ with reversed orientation.  An alternative and equivalent definition is that two knots are concordant if $K_1$ and $K_2$ cobound a locally flat annulus $S^1 \times [0,1]$ embedded in $S^3 \times [0,1]$.
\end{definition}

\begin{remark}
 To see that the two different definitions in \ref{Def:concordant} are equivalent, suppose that $K_1$ and $K_2$ cobound an annulus $A= S^1 \times I \subset S^3 \times I$.  Remove $(D^3 \times I, D^1 \times I)$ from $(S^3 \times I,A)$.  This turns the annulus into a slice disc in $D^4$, bounded by the connected sum $K_1 \# -K_2$ -- see Figure \ref{Fig:slicediscannulus}.
 \begin{figure}
   \begin{center}
     \includegraphics[width=8cm]{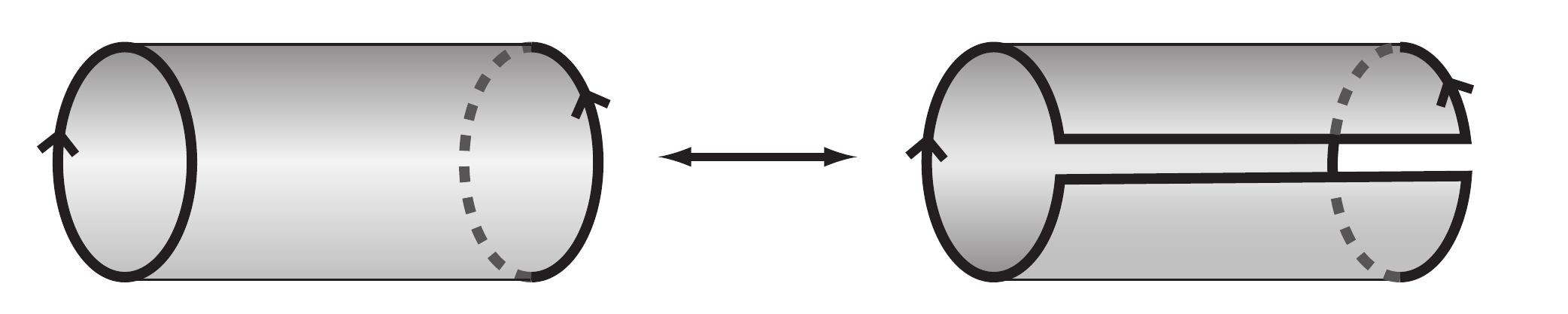}
   \end{center}
   \caption{Turning an annulus into a slice disc}
   \label{Fig:slicediscannulus}
 \end{figure}
\end{remark}

\begin{proposition}
 Concordance is an equivalence relation on the set of all knots.
\end{proposition}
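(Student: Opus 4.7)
The plan is to verify the three axioms of an equivalence relation — reflexivity, symmetry, transitivity — and the most geometric and economical route is to use the cobounding annulus formulation from Definition \ref{Def:concordant} rather than the slice-knot formulation, since gluing annuli end-to-end is immediate while arguing about connected sums of slice knots requires a small extra step.

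For \emph{reflexivity}, given any knot $K$, the product $K \times [0,1]$ embedded as an annulus in $S^3 \times [0,1]$ is manifestly locally flat and cobounds $K$ with itself. For \emph{symmetry}, if $A \subset S^3 \times [0,1]$ is a locally flat annulus cobounding $K_1$ and $K_2$, then the image of $A$ under the self-homeomorphism $(x,t) \mapsto (x,1-t)$ of $S^3 \times [0,1]$ is a locally flat annulus cobounding $K_2$ and $K_1$. For \emph{transitivity}, if $A_1 \subset S^3 \times [0,1]$ cobounds $K_1$ and $K_2$ and $A_2 \subset S^3 \times [1,2]$ cobounds $K_2$ and $K_3$, then the union $A_1 \cup A_2 \subset S^3 \times [0,2]$ is an annulus cobounding $K_1$ and $K_3$; after rescaling the $[0,2]$ factor back to $[0,1]$, we obtain a concordance from $K_1$ to $K_3$.

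The only non-routine point is verifying that the glued surface $A_1 \cup A_2$ is locally flat at points of $K_2 \times \{1\}$, where the two annuli meet. Away from this seam, local flatness is inherited from $A_1$ and $A_2$. At a point $p \in K_2 \times \{1\}$, one uses the fact that $A_1$ and $A_2$ are each locally flat at $p$ as submanifolds of $S^3 \times [0,1]$ and $S^3 \times [1,2]$ respectively, so each has a collar neighbourhood compatible with the product structure of $S^3 \times \{1\}$; these two collars fit together to give a bicollar of $K_2$ in $A_1 \cup A_2$, yielding the required local product chart $(D^4, D^2)$ at $p$. This is the main (and only) technical obstacle; it is standard for locally flat submanifolds but deserves to be mentioned since in the topological (as opposed to smooth) category one cannot simply appeal to transversality.
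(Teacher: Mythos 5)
Your proof is correct, but it takes a more uniform route than the thesis does. You work entirely with the cobounding-annulus formulation of Definition \ref{Def:concordant}, whereas the thesis mixes the two equivalent definitions: for reflexivity it does not use the product annulus $K \times [0,1]$ at all, but instead constructs an explicit ribbon disc bounded by $K \# -K$ (joining mirror-identified points of the diagram and pushing the resulting immersed disc into $D^4$); for symmetry it reverses the orientation of a slice disc for $K_1 \# -K_2$ to obtain one for $-K_1 \# K_2$; only for transitivity does it glue annuli as you do. Your reflexivity and symmetry arguments are shorter, but the ribbon-disc construction buys something your product annulus does not exhibit directly, namely a concrete slice disc for $K \# -K$, which is precisely the fact invoked later to show that every knot has an inverse in $\C$. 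Conversely, your treatment of transitivity is more careful than the thesis's: the written proof simply says the annuli ``may be glued'' and is silent about local flatness along the seam $K_2 \times \{1\}$, which in the topological locally flat category is a genuine point, resolved by the collar argument you supply. The one detail worth making explicit in your write-up is the orientation bookkeeping in the symmetry step --- the flip $(x,t) \mapsto (x,1-t)$ reverses the orientation the annulus induces on each boundary circle, and one should check this is compatible with the orientation convention implicit in the definition of concordance --- but this is routine, and the thesis is no more careful about it than you are.
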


\begin{proof}
 A knot is concordant to itself; to see the slice disc, construct the connected sum of the knot and its reversed mirror image, then join with lines the points which would be identified across a mirror.  These lines together trace out a smoothly immersed disc in $S^3$, which can be turned into an embedded disc by pushing the interior into $D^4$.

\begin{example}
 Here is the connected sum of the trefoil and its mirror image, followed by the corresponding  slice disc.
   \begin{figure}[h]
     \begin{center}
     \includegraphics[width=5cm]{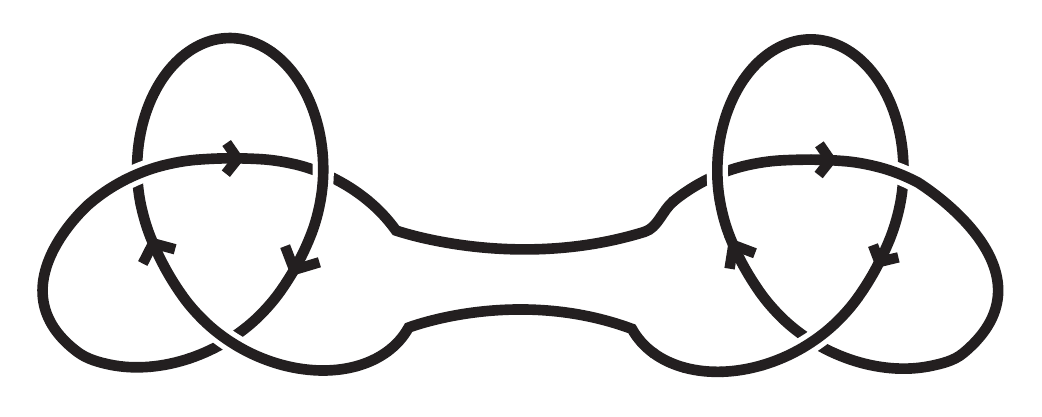}
     \includegraphics[width=5cm]{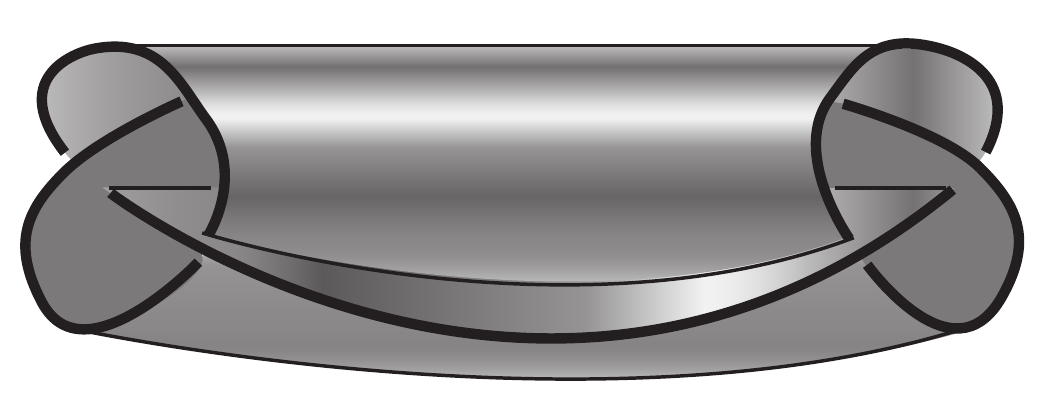}
     \end{center}
     \label{Fig:mirrorimagetrefoildisc}
   \end{figure}
 \end{example}

If $K_1$ is concordant to $K_2$, then $K_1 \# -K_2$ bounds a slice disc.  By reversing the orientation of the slice disc we get a slice disc for $-K_1 \# K_2$, so $K_2$ is concordant to $K_1$.

Finally, if $K_1$ is concordant to $K_2$ and $K_2$ is concordant to $K_3$, then $K_1$ and $K_2$ cobound an annulus $A_1$ whilst $K_2$ and $K_3$ bound another annulus $A_2$. We may glue the annuli $A_1$ and $A_2$ together to create one large annulus with boundary $K_1 \sqcup K_3$.  Thus $K_1$ is concordant to $K_3$.
\end{proof}

\begin{definition}
 The \emph{knot concordance group} $\C$ is the set of knots with the operation of connected sum modulo the equivalence relation of concordance.  The identity of the group is the set of all slice knots, and the inverse of a knot $K$ is $-K$.
\end{definition}

\subsection{How to tell if a knot is slice}
\label{Subsec:sliceMovie}

If a knot is smoothly slice, then its slice disc may be put into general position so that concentric $3$-spheres move through and intersect it to produce knots and links, together with a finite number of singularities.  These singularities correspond to
  \begin{enumerate}
	 \item a simple maximum or minimum
	 \item a saddle point
  \end{enumerate}
  \begin{figure}[h]
    \label{Fig:criticalpoints}
    \begin{center}
      \includegraphics[width=12cm]{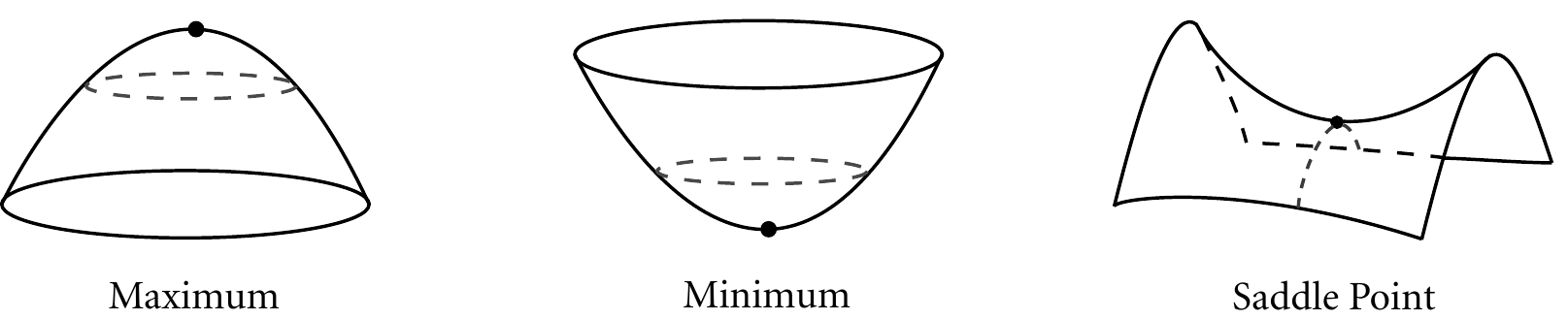}
    \end{center}
  \end{figure}

  To implement a move through a saddle point on the diagram of a knot, one must perform a surgery: remove $S^0 \times D^1$ and glue in $D^1 \times S^0$. Find two arcs of the knot with opposite orientation, remove them and re-glue as shown in the following figure:
    \begin{figure}[h]
    \begin{center}
      \includegraphics[width=5cm]{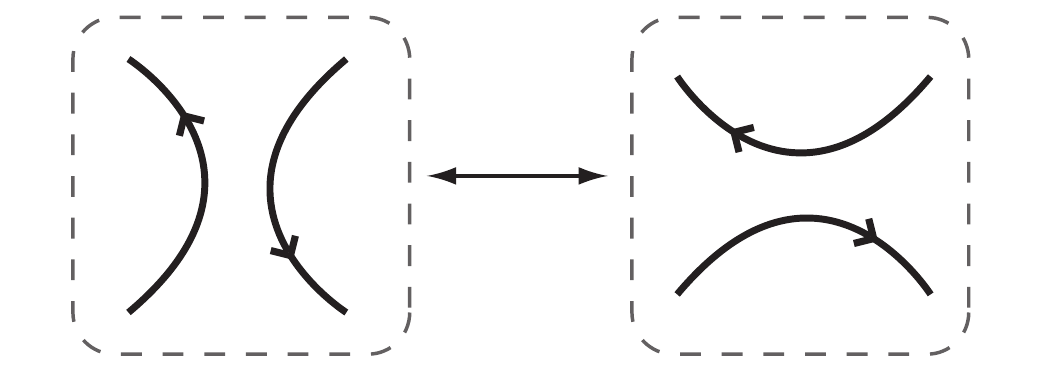}
    \end{center}
    \end{figure}

\begin{example}
  Stevedore's knot, otherwise known as $6_1$, is the simplest slice knot (other than the unknot).  The following `movie' shows how $3$-spheres move through the slice disc:
  \begin{center}
    \includegraphics[width=14cm]{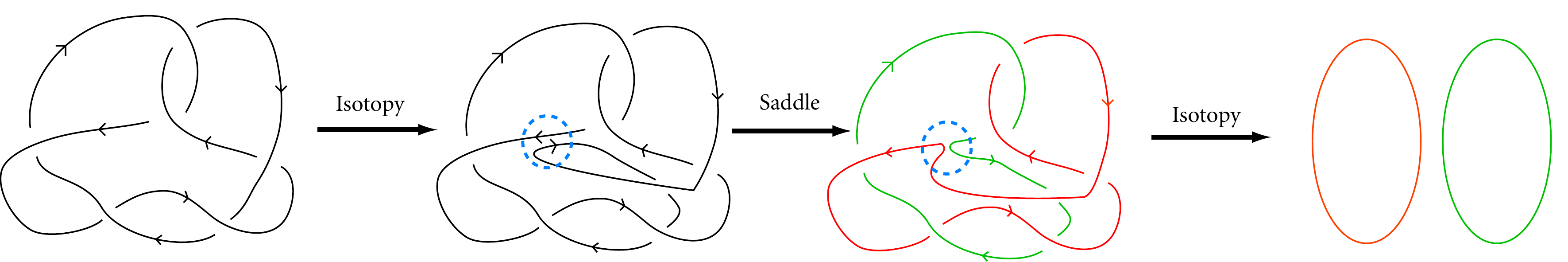}
  \end{center}

  The slice disc is shown schematically below:
  \begin{center}
    \includegraphics[width=6cm]{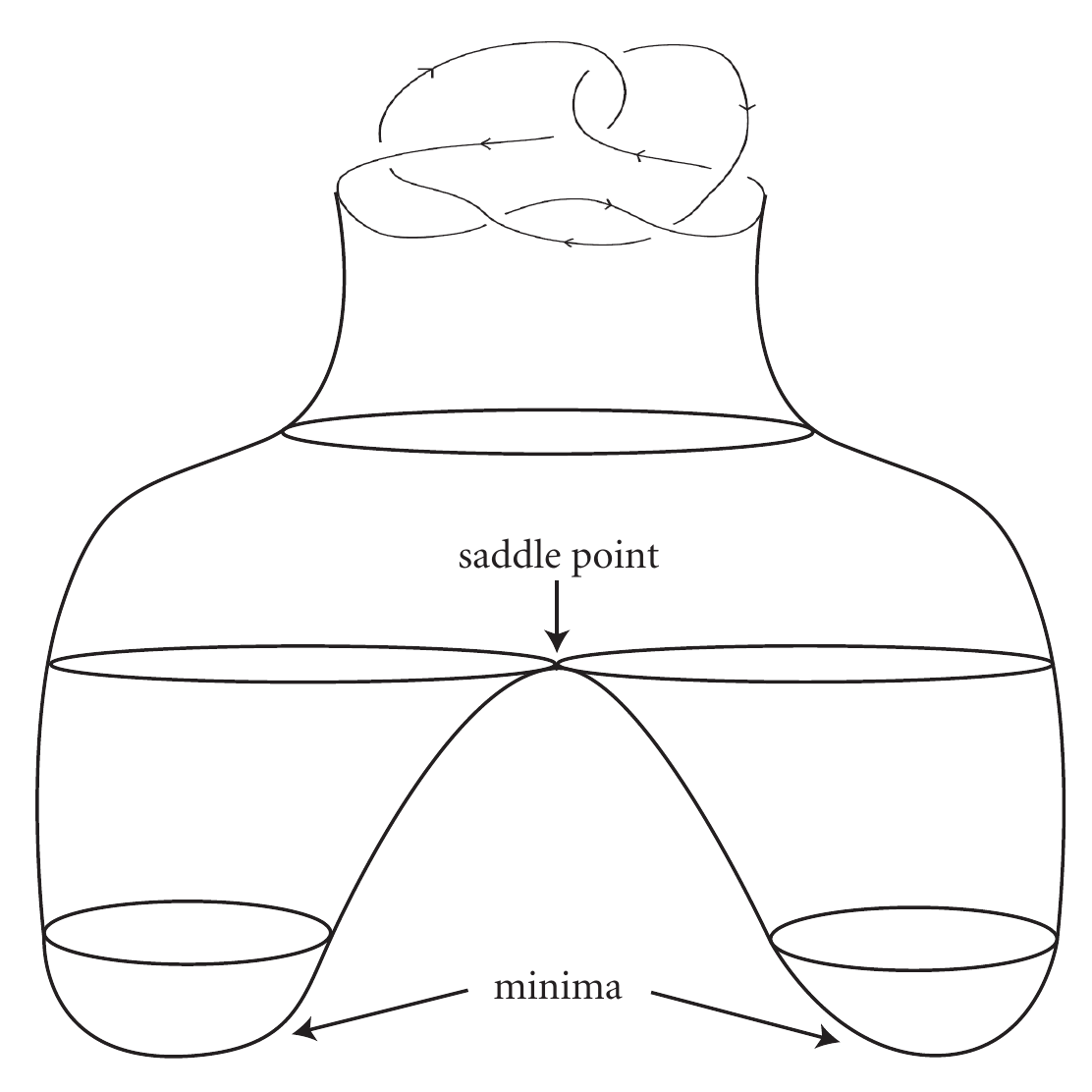}
  \end{center}
\end{example}

\begin{example}
  Another example of a slice knot is the 8-crossing knot $8_8$.  Here is the corresponding slice movie:
  \begin{center}
    \includegraphics[width=14cm]{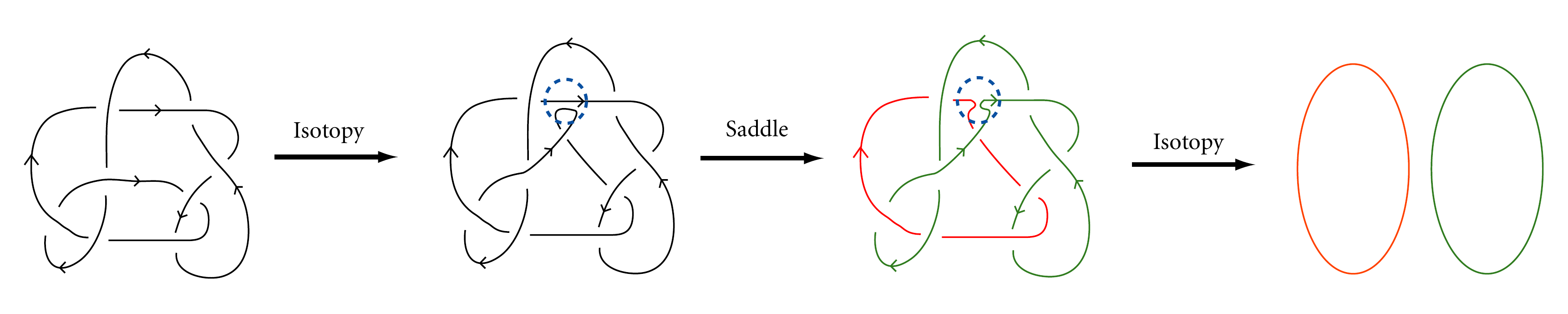}
  \end{center}
\end{example}

The important thing to notice in these examples is that after the saddle move has been made, the resulting knots are unknotted and unlinked.  If the knots after the saddle move were linked (which is what always happens when you try making a slice movie with a trefoil, for example) then we would not be able to cap them off with discs to finish making the slice disc.

\medskip

It is, in general, quite difficult to find a slice movie for a knot, even when one knows that the knot is slice.  The problem is not one which can be algorithmically implemented, since there are infinitely many places to try doing a surgery, including adding in more crossings to the knot and even tying in other slice knots. (For an example of this latter phenomenon see Example 10.1 in \cite{HeraldKirkLivingston10}, where the authors prove that $12a_{990}$ is slice by first tying in the connected sum of the trefoil with its reverse mirror image.)

\begin{remark}
 A \emph{ribbon knot} is a knot which bounds a smooth disc $D^2$ in $D^4$ such that the singularities of the ribbon disc are either minima or saddle points.  Clearly every ribbon knot is (smoothly) slice, but it is an open conjecture whether all smoothly slice knots are ribbon.  If it were so, this would make the process of looking for slice discs easier because we would not have to worry about introducing maxima into the slice movie.
\end{remark}

To prove that a knot is not slice is often much easier.  In the next section we will look at obstructions that are derived from Seifert matrices.

\section{Algebraic concordance}
\label{Sec:algConcordance}

Although only the unknot can bound a disc in $S^3$, all knots can bound some higher-genus surfaces in $3$ dimensions.  One approach to deciding whether a knot is slice is to take these surfaces and see if we can do surgery on them to reduce them to discs embedded in the $4^\text{th}$ dimension.

\subsection{Seifert surfaces and Seifert matrices}

\begin{definition}
A \emph{Seifert surface} of an oriented knot $K$ is a compact connected oriented surface whose boundary is $K$.
\end{definition}

\begin{theorem}
Every oriented link has a Seifert surface.
\end{theorem}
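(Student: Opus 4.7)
The plan is to use Seifert's algorithm, which takes a diagram of the link and explicitly constructs an orientable surface bounded by it. The construction is algorithmic and so gives existence directly.

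First I would fix a regular oriented diagram $D$ of the link $L$ in the plane (such a diagram exists because any embedding $S^1 \sqcup \cdots \sqcup S^1 \hookrightarrow S^3$ is ambient isotopic to one whose projection to a generic $S^2 \subset S^3$ has only transverse double points). Next, at each crossing of $D$ I would perform the orientation-respecting smoothing, replacing the crossing by the unique local resolution that preserves the orientation of each strand. The outcome is a disjoint collection of oriented simple closed curves in the plane, the \emph{Seifert circles} of $D$.

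Each Seifert circle bounds a disc in the plane by the Jordan curve theorem, but these discs can be nested. To handle nesting I would push each disc up to a different height in $\R^3$ (say, use the nesting depth of the Seifert circle as the $z$-coordinate) so that the discs become disjoint embedded discs in $S^3$. I would then reinsert each original crossing by attaching a small twisted band between the two Seifert circles that were cut apart at that crossing, choosing the sign of the half-twist to match the sign of the crossing. The resulting surface $F$ has boundary exactly $L$ by construction, and is orientable because we can orient each disc by the standard orientation of the plane (with sign determined by the orientation of its boundary Seifert circle) and check that the bands attach compatibly — here the key calculation is that the two possible crossing smoothings produce Seifert circles whose induced orientations on the band endpoints agree, so the orientations extend across the band.

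Finally, to make $F$ connected (as required by the definition of a Seifert surface), I would, if necessary, attach finitely many embedded tubes $S^1 \times [0,1]$ in $S^3$ between distinct components of $F$ away from $L$; this preserves orientability and does not change the boundary. The main obstacle is the bookkeeping of orientations at the bands — verifying that the local smoothing really does produce matching orientations so that the global surface is orientable — but this is a small finite local check at each crossing rather than a genuine conceptual difficulty.
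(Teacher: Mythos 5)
Your proof is correct and follows essentially the same route as the paper's: both are Seifert's algorithm, with oriented smoothings producing Seifert circles, nested circles pushed to distinct heights, filled-in discs joined by twisted bands at the crossings, and orientability verified by the local compatibility of the disc orientations across each band. The only difference is your explicit final tubing step to guarantee connectedness, a point the paper leaves implicit.
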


\begin{proof}(Seifert, 1934)
(The following method is known as \emph{Seifert's algorithm}.)  Fix an oriented projection of the link. At each crossing of the projection there are two incoming strands and two outgoing strands.  Eliminate the crossings by swapping which incoming strand is connected to which outgoing strand (see diagram below).  The result is a set of non-intersecting oriented topological circles called \emph{Seifert circles}, which, if they are nested, we imagine being at different heights perpendicular to the plane with the z-coordinate changing linearly with the nesting.  Fill in these circles, giving
discs, and connect the discs together by attaching twisted bands where the crossings used to be.  The direction of the twist corresponds to the direction of the crossing in the link.
  \begin{center}
    \includegraphics[width=8cm]{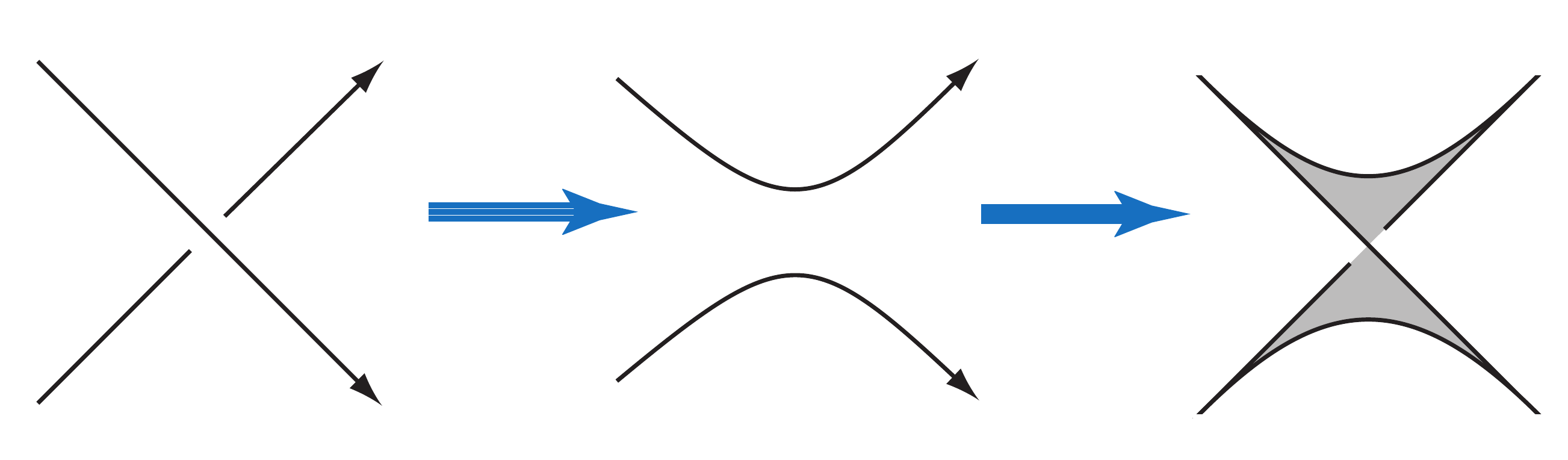}
  \end{center}
This procedure forms a surface which has the link as its boundary, and it is not hard to see that it is orientable.  If we colour the Seifert circles according to their orientation, e.g. the upward face blue for clockwise and the upward face red for anticlockwise, then the twists will consistently continue the colouring to the whole surface. According to the convention in Rolfsen~\cite{Rolfsen} we consider the red face as the `positive' side.
\end{proof}

\begin{remark}
  An excellent program called Seifertview~\cite{Wijk06} constructs a $3$-dimensional visualisation of a Seifert surface for any given knot.  The surfaces are constructed using exactly the algorithm described above. The Seifert surfaces shown in Figure \ref{Fig:knotsurface} are courtesy of Seifertview.
\end{remark}

Given a Seifert surface for a knot, we want to analyse the `holes' in this surface.  This means looking at generators of the first homology group of the surface and seeing how they interact with each other.  This interaction will be captured by the linking form.

\begin{definition}
Suppose that $D$ is a regular oriented projection of a two-component link with components $J$ and $K$.  Assign each crossing a sign:
  \begin{center}
    \includegraphics[width=4cm]{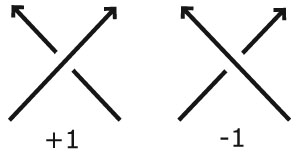}
  \end{center}
The \emph{linking number} $\lk(J,K) \in \Z$ of $J$ and $K$ is half the sum of the signs of the crossings at which one strand is from $J$ and the other is from $K$.
\end{definition}

\begin{definition}
\label{Def:linkingform}
Let $F$ be a Seifert surface for an oriented knot $K$.  We define the \emph{linking pairing} (also known as the \emph{linking form} or \emph{Seifert form})
\[ \lk \colon H_1(F) \times H_1(F) \to \Z \]
by $\lk(a,b) := \lk(a,b^+)$ where $b^+$ denotes the translation of $b$ in the positive normal direction into $S^3$.  A \emph{Seifert matrix} for $K$ is a matrix representing $\lk$ in some basis of $H_1(F)$.
\end{definition}

Since the Seifert matrix depends on the choice of Seifert surface and on the choice of a homology basis, it is not an invariant for the knot.  However, we can say how two different Seifert matrices for the same knot must be related.  Two Seifert surfaces for a knot are related by a sequence of ambient isotopies and by handle additions/removals (see \cite[Theorem 8.2]{Lickorish} for a proof).  If $F_2$ is obtained from $F_1$ by a handle addition, and if the respective Seifert matrices are $M_1$ and $M_2$ then we have
\[ M_2 = \left(\begin{array}{ccc} M_1 & v & 0 \\ 0 & 0 & 1 \\ 0 & 0 & 0  \end{array}\right)  \text{   or   }
         \left(\begin{array}{ccc} M_1 & 0 & 0 \\ w^T & 0 & 0 \\ 0 & 1 & 0  \end{array}\right) \]
for some vectors $v$ or $w$.  $M_2$ is called an \emph{elementary enlargement} of $M_1$ and $M_1$ is called an \emph{elementary reduction} of $M_2$.

\begin{definition}
  Two matrices $A$ and $B$ are called \emph{S-equivalent} if they are related by a finite sequence of elementary enlargements, reductions and by unimodular congruences (i.e. relations of the form $B=P^TAP$ with $\det P = \pm 1$).  Two knots are called \emph{S-equivalent} if they have S-equivalent Seifert matrices.
\end{definition}

\begin{lemma}\cite[Theorem 8.4]{Lickorish}
  Two Seifert matrices for a knot are S-equivalent.
\end{lemma}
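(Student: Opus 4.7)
The plan is to reduce the statement to the topological classification of Seifert surfaces for a fixed knot, and then analyse how the two permitted moves affect the associated Seifert matrix. The starting point is the result quoted immediately before the lemma (\cite[Theorem 8.2]{Lickorish}): any two Seifert surfaces $F_1$ and $F_2$ for the same knot $K$ are related by a finite sequence of (i) ambient isotopies of $S^3$ that carry $F_1$ to $F_2$ rel $\partial$, and (ii) handle additions and their inverses (handle removals / stabilisations). Since S-equivalence is a transitive relation, it suffices to verify the lemma for each of these elementary moves individually.

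First I would handle the case of an ambient isotopy. If $F_1$ is carried to $F_2$ by an isotopy $\{h_t\}$ of $S^3$, then $(h_1)_{*}\colon H_1(F_1)\to H_1(F_2)$ is an isomorphism. Because linking numbers in $S^3$ are preserved by ambient isotopy and the positive normal pushoff is carried to the positive normal pushoff of the isotoped surface, one has $\lk_{F_2}\bigl((h_1)_* a,(h_1)_* b\bigr)=\lk_{F_1}(a,b)$ for all $a,b\in H_1(F_1)$. Choosing matching bases on the two sides, the corresponding Seifert matrices are equal; for arbitrary choices of bases they differ by a unimodular congruence $P^{T}MP$, which is one of the allowed operations in the definition of S-equivalence. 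The case of a change of basis on a fixed surface is subsumed in this step.

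Next I would verify that a handle addition produces an elementary enlargement. Suppose $F_2$ is obtained from $F_1$ by attaching a $1$-handle along two discs in $F_1$, producing a surface of genus one higher. A standard computation with the Mayer--Vietoris sequence shows that $H_1(F_2)\cong H_1(F_1)\oplus\Z\langle\alpha\rangle\oplus\Z\langle\beta\rangle$, where $\alpha$ is the core curve of the new handle and $\beta$ is a curve on $F_2$ that meets $\alpha$ transversely once and is otherwise supported in a small collar of the attaching region. The key geometric inputs are then: $\alpha$ bounds a disc in $S^3$ disjoint from $F_1$ (since the handle is attached in $S^3$) and is null-homologous in the complement of $F_1$; this forces $\lk_{F_2}(\alpha,x)=0=\lk_{F_2}(x,\alpha)$ for every $x\in H_1(F_1)$, and $\lk_{F_2}(\alpha,\alpha)=0$. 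Moreover $\alpha$ and $\beta^{+}$ have algebraic intersection $\pm 1$ in $S^3$ by construction, so after possibly reversing the orientation of $\beta$ one has $\lk_{F_2}(\alpha,\beta)=1$, while the other mixed pairings involving $\beta$ are free to be arbitrary integers. Writing everything in the basis $(\text{basis of }H_1(F_1),\alpha,\beta)$ yields a Seifert matrix of exactly the form
\[
\left(\begin{array}{ccc} M_1 & v & 0 \\ 0 & 0 & 1 \\ 0 & 0 & 0 \end{array}\right)
\]
for some column vector $v$. The alternative form in the definition is obtained by swapping the roles of $\alpha$ and $\beta$. Handle removals are the inverse moves, so each contributes an elementary reduction.

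The main obstacle I anticipate is the handle step: one must argue carefully that the dual curve $\beta$ can be chosen so that its positive pushoff has the prescribed linking numbers with $\alpha$ and with the old basis classes, and that the zero row coming from $\alpha$ really is forced. This is where the fact that the handle is an embedded $1$-handle in $S^3$ (and not just an abstract topological stabilisation) does the work: the co-core disc of the handle is an unknotted disc in $S^3$ bounded by $\alpha$, which trivialises all linking pairings in which $\alpha$ appears on the left. Once this geometric point is pinned down, the matrix computations are routine and the concatenation of all the local moves exhibits the required S-equivalence between any two Seifert matrices for $K$.
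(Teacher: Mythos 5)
Your overall strategy is the right one, and it is exactly the argument behind the cited result (the thesis itself defers to Lickorish here, but the paragraph preceding the lemma sketches precisely this reduction): invoke the stabilisation theorem for Seifert surfaces, dispose of ambient isotopy by unimodular congruence, and show that a tube addition produces an elementary enlargement. The isotopy step and everything you say about $\alpha$ are fine: the cross-sectional disc of the solid tube is disjoint from $F_1$, one of its two pushoffs misses $F_2$ entirely, and this kills every pairing in which $\alpha$ appears except its pairing with $\beta$, which is $\pm1$. The genuine gap is in the $\beta$-row. You assert that the remaining mixed pairings involving $\beta$ ``are free to be arbitrary integers'' and then write down a matrix in which an entire block row of them vanishes; these two statements are incompatible, and it is the vanishing that needs proof. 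It is moreover false in the naive basis: the stabilisation theorem supplies an \emph{arbitrary} embedded tube in the complement of $F_1$, not one supported near the attaching discs, so the tube may wind around and link the old basis curves and its core may carry a non-trivial framing. In the basis $(f_1,\dots,f_{2g},\beta,\alpha)$ one therefore only gets
\[
\begin{pmatrix} M_1 & v & 0 \\ w^T & d & 1 \\ 0 & 0 & 0 \end{pmatrix}
\]
with $w$ and $d$ non-zero in general. Your ``main obstacle'' paragraph worries about the zero row for $\alpha$, which is the easy part; the real work is eliminating $w$ and $d$.

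The repair is a further unimodular congruence, which is a permitted move in S-equivalence, so the lemma survives: replace $f_i$ by $f_i - w_i\alpha$ and $\beta$ by $\beta - d\alpha$. Because the $\alpha$-row vanishes identically and the $\alpha$-column vanishes except for the single entry $\lk(\beta,\alpha^+)=1$, this change of basis subtracts $w_i$ from $\lk(\beta,f_i^+)$ and $d$ from $\lk(\beta,\beta^+)$ while leaving $M_1$, the column $v$ and all the $\alpha$-entries untouched, yielding exactly the stated elementary enlargement. Without some such step the tubing case is incomplete. Two smaller corrections: the two forms in the definition are not obtained from one another by swapping the roles of $\alpha$ and $\beta$ (that swap is itself a congruence, so only one form would then be needed); they correspond to the two sides of $F_1$ on which the solid tube can lie, that is, to which of $\alpha^{\pm}$ bounds a disc in the complement of $F_2$. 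And your displayed matrix does not match your declared basis order $(\dots,\alpha,\beta)$: with that ordering the column $v$ sits in the $\alpha$-slot, contradicting your own (correct) computation that every pairing of $\alpha$ with an old class vanishes.
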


Two inequivalent knots may be S-equivalent, and thus all of the invariants derived from their Seifert matrices will be identical.  This is the first clue that algebraic invariants will not be sufficient to tell us the whole story of which knots are slice.  Some of the invariants which will be important are contained in the following definition.

\begin{definition}
\label{Def:Alexpoly}
  The \emph{Alexander polynomial} of a knot $K$ with Seifert matrix $V$ is
    \[ \Delta_K(t):= \det(V-tV^T)\]
  (defined up to multiples of $\pm t^n$).  The \emph{signature} $\sigma$ is the number of positive eigenvalues minus the number of negative eigenvalues in $V+V^T$.  The \emph{$\omega$-signature} $\sigma_\omega(K)$ for a unit complex number $\omega$ is the signature of the hermitian matrix
    \[ (1-\omega)V + (1-\overline{\omega})V^T \text{ .}\]
\end{definition}

\subsection{Seifert matrices for slice knots}
\label{subsec:SeifertMatricesSlice}

To be able to use these invariants as slicing obstructions, we need to know what the Seifert matrix of a slice knot looks like.  This was determined in $1969$ by Levine~\cite[Lemma 2]{Levine69}.

\begin{theorem}
\label{Thm:lagrangian}
  If $K$ is slice, then for any Seifert surface $F$ of $K$ there exists a half-rank direct summand $L$ in $H_1(F)$ such that $V|_{L} = 0$ for $V$ a Seifert form for $F$.
\end{theorem}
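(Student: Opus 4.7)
Proof Proposal:

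The plan is to construct the metabolizer $L$ geometrically, as the saturation of the kernel of $H_1(F) \to H_1(N)$, where $N \subset D^4$ is an auxiliary 3-manifold bounded by the closure of $F$ built from the slice disc. First, let $\Delta \subset D^4$ be a slice disc for $K$. After pushing the interior of $F \subset S^3$ slightly into $D^4$ rel.\ $K$, the union $\hat F := F \cup_K \Delta$ becomes a smoothly embedded closed oriented genus-$g$ surface in $D^4$. Since $H_2(D^4;\Z)=0$, the class $[\hat F]$ vanishes, so $\hat F$ bounds a compact oriented 3-manifold $N \subset D^4$ (realize the fundamental class through the connecting map $H_3(D^4, \hat F) \to H_2(\hat F)$ and smooth).

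Because $\Delta$ is contractible and the boundary curve $K = \partial F$ is null-homologous in $F$ (the class of the boundary of any compact orientable surface is a product of commutators), Mayer--Vietoris for $\hat F = F \cup_K \Delta$ yields an isomorphism $H_1(F;\Z) \xrightarrow{\cong} H_1(\hat F;\Z)$. Define
$$L_0 := \ker\bigl(H_1(F;\Z) \xrightarrow{\cong} H_1(\hat F;\Z) \longrightarrow H_1(N;\Z)\bigr).$$
The classical half-lives-half-dies lemma, applied to $N$ with $\partial N = \hat F$, shows that $L_0$ has $\Z$-rank exactly $g = \tfrac{1}{2}\,\mathrm{rank}\,H_1(F;\Z)$.

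The crux is to verify that the Seifert form vanishes on $L_0$. For $a, b \in L_0$, each bounds a 2-chain $A, B \subset N$. The Seifert linking number $V(a,b) = \lk_{S^3}(a, b^+)$, with $b^+$ the positive-normal pushoff of $b$, can be reinterpreted as the 4-dimensional intersection number $A \cdot B^+$ in $D^4$, where $B^+$ is the 2-chain obtained by gluing $B$ to the thin pushoff annulus running from $b \subset F$ to $b^+ \subset S^3 \setminus F$. A careful general-position argument---perturbing $A$ slightly off $N$ using the normal line bundle of $N \subset D^4$ and correcting the boundary via a small annulus on $\hat F$ joining $a$ to its pushed-off copy---shows that both the interior contribution $A \cdot B$ and the annular contribution $A \cdot (B^+ \setminus B)$ vanish, giving $V(a,b) = 0$. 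Finally, replace $L_0$ by its saturation $L := \{x \in H_1(F;\Z) : nx \in L_0 \text{ for some } n \geq 1\}$ in $H_1(F;\Z)$; this is a half-rank direct summand, and the vanishing of $V$ persists because $V$ is $\Z$-bilinear and $H_1(F;\Z)$ is torsion-free (from $V(nx,my) = nm\,V(x,y) = 0$ one concludes $V(x,y)=0$).

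The main obstacle is the third step: reconciling the three-dimensional definition of the Seifert form as a linking number in $S^3$ with its four-dimensional reinterpretation as an intersection number in $D^4$, and ensuring the vanishing via general-position analysis. Two 2-chains confined to the 3-dimensional $N$ generically intersect inside $N$ in a 1-chain rather than a 0-chain, so one must work with perturbations in the ambient $D^4$ and carefully track boundary corrections and orientation conventions (positive vs.\ negative pushoff, normal to $F$ vs.\ normal to $N$, and the side of $\hat F$ on which $N$ lies) to extract the correct 0-dimensional intersection count. A subsidiary technicality in Step 2 is upgrading the rank statement of half-lives-half-dies to an integral direct-summand statement, handled by the saturation in Step 4.
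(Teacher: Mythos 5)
Your proposal is correct and follows essentially the same route as the paper: construct a $3$-manifold in $D^4$ bounded by the Seifert surface union the slice disc (the paper does this via an obstruction-theoretic map to $S^1$ and taking the preimage of a regular value, which is the standard proof of the bounding fact you invoke), take the kernel of the inclusion-induced map on $H_1$, apply half-lives-half-dies for the rank, push one surface off in the normal direction to kill the Seifert pairing, and saturate to obtain an integral direct summand. No substantive differences.
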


The proof of this theorem is long, but it reveals a lot about the topology of the slice disc so we have included it in full.

\begin{proof}
  \textbf{Step 1:} For a slice knot $K$ with Seifert surface $F$ and slice disc $\Delta$ there exists an oriented submanifold $M^3 \subset D^4$ with boundary $F \cup \Delta$.

  \medskip

  \emph{Proof of Step 1.} [This section of the proof is taken from \cite[Lemma 8.14]{Lickorish}.]
  Let $X$ be the exterior of $K$.  We want to define a map $\phi \colon X \rightarrow S^1$ so that $\phi_* \colon H_1(X) \rightarrow H_1(S^1)$ is an isomorphism and $\phi^{-1}(\mbox{pt}) = F$.  On a product neighbourhood of $F$ in $X$, define $\phi$ to be the projection $F \times [-1,1] \rightarrow [-1,1]$ followed by the map $t \mapsto e^{i \pi t} \in S^1$.  Let $\phi$ map the remainder of $X$ to $-1 \in S^1$.

  Let $N = \Delta \times I^2$, a neighbourhood of $\Delta$.  We extend $\phi$ to the rest of $\partial (\overline{D^4 - N})$ so that the inverse image of $1 \in S^1$ is $F \cup (\Delta \times \{*\})$ for some point $* \in \partial I^2$ (note: $\partial \Delta \times \{*\}$ is a longitude of $K$).  We now need to extend the map over all of $\overline{D^4 - N}$.

  Consider the simplices of some triangulation of $\overline{D^4 - N}$.  Let $T$ be a tree in the $1$-skeleton containing all the vertices of this triangulation, that contains a similar maximal tree of $\partial (\overline{D^4 - N})$.  Extend $\phi$ over all of $T$ in an arbitrary way.  Then on a $1$-simplex $\sigma$ not in $T$, define $\phi$ so that if $c$ is a $1$-cycle consisting of $\sigma$ summed with a $1$-chain in $T$ (joining up the ends of $\sigma$), then $[\phi c] \in H_1(S^1)$ is the image of $[c]$ under the isomorphism
  \[ H_1(\overline{D^4 - N}) \xleftarrow{\cong} H_1(X) \xrightarrow{\phi_*} H_1(S^1). \]

  Trivially, the boundary of a $2$-simplex $\tau$ of $\overline{D^4 - N}$ represents zero in $H_1(\overline{D^4 - N})$, so $[\phi(\partial \tau)]=0 \in H_1(S^1)$.  Hence $\phi$ is null-homotopic on $\partial \tau$ and so extends over $\tau$.  Finally, $\phi$ extends over the $3$ \& $4$-simplices, as any map from the boundary of an $n$-simplex to $S^1$ is null-homotopic when $n \geq 3$.

  Now regard $\phi \colon \overline{D^4 - N} \rightarrow S^1$ as a simplicial map to some triangulation of $S^1$ in which $1$ is not a vertex.  Then $\phi^{-1}(1)$ is a $3$-manifold $M^3$, and $\phi$ was constructed so that $\partial M^3 = F \cup (\Delta \times *)$.

  \medskip

  \textbf{Step 2:} $P:=\ker[H_1(\partial M; \Q) \to H_1(M,\Q)]$ is a Lagrangian subspace of dimension $g$ (where $\partial M$ has genus $g$).

  \begin{definition}
  \label{Def:metaboliser}
    A \emph{Lagrangian subspace} or \emph{metaboliser} of $H_1(F)$ with respect to the linking form $\lk$  is a vector subspace $P \in H_1(F)$ which satisfies $P=P^\perp$, where
    \[ P^\perp := \left\{ x \in H_1(F) \: | \: \lk(x,y^+) = 0 \quad \forall \, y \in P \right\}\]
    This implies that $P$ has half the rank of $H_1(F)$.
  \end{definition}

  \emph{Proof of Step 2.} [Taken from the unpublished lecture notes of a course given by Peter Teichner~\cite{Teichner01}.]  Look at the homology exact sequence of $(M, \partial M)$ (over $\Q$):
  \[
    \xymatrix{
      0 \ar[r] & H_3(M,\partial M) \ar[r] & H_2(\partial M)
      \ar[r] & H_2(M) \ar[r] & H_2(M,\partial M) \ar[r] &\\
      H_1(\partial M) \ar[r] & H_1(M) \ar[r] & H_1(M,\partial
      M) \ar[r] & H_0(\partial M) \ar[r] & H_0(M) \ar[r] & 0 }
  \]

  Lefschetz duality says $H_k(M,\partial M) \cong H_{3-k}(M)$ (since $H^k(M,\partial M) \cong H_{3-k}(M)$ and $H^k(M,\partial M) \cong H_k(M,\partial M)$ by the Universal Coefficient Theorem, as we are working over a field).  Poincar\'{e} duality says $H^k(\partial M) \cong H_{2-k}(\partial M)$, which again implies $\dim(H_k(\partial M)) = \dim(H_{2-k}(\partial M))$.

  So let
  \begin{description}
    \item[\phantom{bob}] $a = \dim(H_3(M,\partial M)) = \dim(H_0(M))$,
    \item[\phantom{bob}] $b = \dim(H_2(\partial M)) = \dim(H_0(\partial M))$,
    \item[\phantom{bob}] $c = \dim(H_2(M)) = \dim(H_1(M,\partial M))$,
    \item[\phantom{bob}] $d = \dim(H_2(M,\partial M)) = \dim(H_1(M))$,
    \item[\phantom{bob}] $e = \dim(H_1(\partial M))$.
  \end{description}

  Then the exactness of the sequence implies that
\[ a - b + c - d + e - d + c - b + a = 0 \Rightarrow 2(a - b + c - d) +e = 0\]
  We also have the exact sequence
  \[ 0 \to P \to H_1(\partial M) \to H_1(M) \to H_1(M,\partial M) \to H_0(\partial M) \to H_0(M) \to 0\]
  so $\dim P = e - d + c - b + a = e + (a - b + c - d)$.

  Therefore $2(\dim P -e) + e = 0$, so  $2 \dim P = e$ and thus $\dim P = \frac{1}{2} \dim(H_1(\partial M))$.

  Now note that $H_1(\partial M) = H_1(F)$ (recall $\partial M = F \cup \Delta$).  Suppose we have $\alpha$, $\beta \in
  \ker[H_1(\partial M) \rightarrow H_1(M)]$.  There exist surfaces $A, B \subset M$ with $\partial A = \alpha$, $\partial B = \beta$.  When $\alpha$ is moved to $\alpha^+$, the surface $A$ can also be moved off $M$ to $M \times \left\{1\right\}$ (so $\partial A^+ = \alpha^+$), and then the intersection of $A^+$ and $B$ is empty. Thus $\lk(\alpha^+,\, \beta) = A^+ \cdot \, B = 0$.

   Moving from $\Q$ to $\Z$ coefficients is not a problem, although the $\Z$-kernel of $H_1(F) \rightarrow H_1(M)$ might not be a direct summand.  We use instead $L = \left\{ a \in H_1(F) \colon \exists \, n \in \Z \backslash \left\{0 \right\} \mbox{s.t. } na \in P \right\}$.  The rank of $L$ is the same as that of the kernel and it is a direct summand because $H_1(F)/L$ is torsion-free.  Note also that $V(na,b)=0$ implies $V(a,b)=0$ by linearity, so $V|_L = 0$, as required.
\end{proof}

\begin{definition}
A square matrix congruent to one of the form
  \[ \left(\begin{array}{cc} 0 & A\\ B & C\end{array}\right)\]
for square matrices $A$, $B$ and $C$ of the same size is called \emph{metabolic} or \emph{Witt trivial}.
\end{definition}

Thus our above proof has shown that any Seifert matrix for a slice knot must be unimodular congruent to a metabolic matrix.  What do the signatures and Alexander polynomials of a metabolic form look like?

\begin{corollary}
\label{Cor:sliceinvts}
  For a slice knot $K$ we have
  \begin{description}
    \item[(i)] the signature $\sigma_{\omega}(K)$ is zero for every unit complex number $\omega$ except those which are roots of the Alexander polynomial $\Delta_K(t)$. (Notice that the regular signature is $\sigma_{-1}$ so the same corollary holds for it too.)
    \item[(ii)] the Alexander polynomial $\Delta_K(t)$ is of the form $f(t)f(t^{-1})$.
  \end{description}
\end{corollary}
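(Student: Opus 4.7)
Both parts flow directly from Theorem \ref{Thm:lagrangian}. Pick a basis of $H_1(F)$ whose first $g$ vectors span the metabolising summand $L$ (with $2g = \dim H_1(F)$). Since $V|_L = 0$, the Seifert matrix in this basis takes the block form
\[
V = \begin{pmatrix} 0 & A \\ B & C \end{pmatrix},
\]
where $A,B,C$ are $g\times g$ integer matrices. Everything else is an exercise in block linear algebra.

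\textbf{Part (ii).} I would compute $V - tV^T$ directly:
\[
V - tV^T = \begin{pmatrix} 0 & A - tB^T \\ B - tA^T & C - tC^T \end{pmatrix}.
\]
The antidiagonal block-determinant formula $\det\!\begin{pmatrix} 0 & X \\ Y & Z\end{pmatrix} = (-1)^g \det(X)\det(Y)$ gives
\[
\Delta_K(t) = (-1)^g \det(A - tB^T)\,\det(B - tA^T).
\]
Set $f(t) = \det(A - tB^T)$. Taking the transpose inside the second determinant and pulling a factor of $t$ out of each column yields $\det(B - tA^T) = \det(B^T - tA) = (-1)^g t^g\,f(t^{-1})$. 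Combining, $\Delta_K(t) \doteq f(t)\,f(t^{-1})$ modulo the unit $\pm t^g$, which is exactly the defining ambiguity of the Alexander polynomial.

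\textbf{Part (i).} The Hermitian matrix $H_\omega := (1-\omega)V + (1-\bar\omega)V^T$ inherits the same block shape:
\[
H_\omega = \begin{pmatrix} 0 & M(\omega) \\ M(\omega)^* & N(\omega) \end{pmatrix},\qquad M(\omega) = (1-\omega)A + (1-\bar\omega)B^T.
\]
Using $|\omega|=1$ (so $1-\bar\omega = -\bar\omega(1-\omega)$) one finds $M(\omega) = (1-\omega)\bigl(A - \bar\omega B^T\bigr)$, hence $\det M(\omega) = (1-\omega)^g f(\bar\omega)$. By part (ii), the roots of $\Delta_K$ on the unit circle are precisely the roots of $f(\omega)f(\bar\omega)$, so whenever $\omega\neq 1$ is a unit complex number with $\Delta_K(\omega)\neq 0$, the block $M(\omega)$ is invertible and $H_\omega$ is non-degenerate. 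The subspace $L_\C := \{(x,0)\in\C^{2g}\}$ is an isotropic subspace of complex dimension $g$ for $H_\omega$; being a Lagrangian (half the ambient dimension) of a non-degenerate Hermitian form forces $\sigma_\omega(K)=0$, because a non-degenerate Hermitian form of signature $(p,q)$ has maximal isotropic dimension $\min(p,q)$, and $\min(p,q)=g$ together with $p+q=2g$ gives $p=q=g$. The case $\omega=1$ is immediate since $H_1 = 0$.

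\textbf{Expected difficulty.} The computation in (ii) is bookkeeping; the only real point to check is the sign conventions so that the factor of $t^g$ is absorbed into the $\doteq$ ambiguity. Part (i) has the one non-trivial input -- the fact that the existence of a half-dimensional isotropic subspace of a non-degenerate Hermitian form forces vanishing signature -- which I would either justify with a two-line Sylvester argument or, alternatively, phrase the entire argument in Witt-group language by saying $H_\omega$ is metabolic and hence Witt-trivial over $\C$.
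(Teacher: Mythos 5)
Your proposal is correct and follows essentially the same route as the paper: both parts reduce to the metabolic block form of $V$ guaranteed by Theorem \ref{Thm:lagrangian}, with (ii) being the identical block-determinant computation. The only cosmetic difference is in (i), where the paper exhibits an explicit congruence $PM\overline{P}^T = \left(\begin{smallmatrix}0 & -I\\ -I & 0\end{smallmatrix}\right)$ while you invoke the equivalent abstract fact that a half-dimensional isotropic subspace of a non-degenerate Hermitian form forces zero signature.
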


\begin{proof}
  Let $V$ be a Seifert matrix for $K$.  By Theorem \ref{Thm:lagrangian} we may assume
      \[ V = \left(\begin{array}{cc} 0 & A\\ B & C\end{array}\right)\]
  for square matrices $A$, $B$ and $C$ of equal size ($g \times g$ where $g$ is the genus of the associated Seifert surface) with entries in $\Z$.
  \begin{description}
    \item[(i)] Let $\omega$ be a unit complex number such that $\Delta_K(\omega) \neq 0$ and let $M= (1-\omega)V+(1-\overline{\omega})V^T$.  Notice that $\omega^{-1}= \overline{\omega}$ and that $\Delta(\omega) = \Delta(\omega^{-1})$ up to a power of $\omega$. Now
    \begin{eqnarray*}
    M & = & (1-\omega)\left(\begin{array}{cc} 0 & A\\ B & C \end{array}\right) + (1-\overline{\omega})\left(\begin{array}{cc} 0 & B^T\\ A^T & C^T \end{array}\right)\\
      & = & \left(\begin{array}{cc} 0 & (1-\omega)A + (1-\overline{\omega})B^T\\ (1-\omega)B+ (1-\overline{\omega})A^T & (1-\omega)C+(1-\overline{\omega})C^T \end{array}\right)\\
      & =: & \left(\begin{array}{cc} 0 & L\\ \overline{L}^T & N \end{array}\right)
    \end{eqnarray*}
    We can rewrite $M$ as $(1-\omega)(V-\omega^{-1}V^T)$ so $\det M = (1-\omega)^{2g}\Delta_K(\omega^{-1})$.  Since $\Delta_K(\omega) \neq 0$ we know that $M$ is non-singular.  This implies that $L$ is non-singular and therefore invertible over $\Q$.  Define
    \[ P = \left(\begin{array}{cc} L^{-1} & 0\\ (1-\omega)CL^{-1} & -I \end{array}\right) \text{ .}\]
    Then $PM\overline{P}^T = \left(\begin{array}{cc} 0 & -I\\ -I & 0 \end{array}\right)$, so $\sigma_\omega(K) = \sigma(M) = \sigma(PM\overline{P}^T) = 0$.

    \item[(ii)] We have
    \begin{eqnarray*} \Delta_K(t) & = & \det(V - tV^T)\\
    & = & \det \left(\begin{array}{cc} 0 & A - tB^T\\ B-tA^T & C-tC^T \end{array}\right)\\
    & = & \det(A - tB^T) \det(B-tA^T) \\
    & = & f(t)f(t^{-1})
    \end{eqnarray*}
    up to units in $\Z[t,t^{-1}]$.
  \end{description}
\end{proof}

\begin{example} \mbox{}
 \begin{itemize}
  \item The trefoil knot $3_1$ has Seifert matrix $V = \left(\begin{array}{cc} -1 & 1\\ 0 & -1\end{array}\right)$, so $V + V^T = \left(\begin{array}{cc} -2 & 1\\ 1 & -2\end{array}\right)$.  Eigenvalues are both negative, so $\sigma = -2$.  Thus the trefoil is our first example of a knot which is not slice.
  \item The Figure-8 knot $4_1$ has Seifert matrix $V = \left(\begin{array}{cc} 1 & 0\\ 1 & -1\end{array}\right)$ so $V + V^T = \left(\begin{array}{cc} 2 & 1\\ 1 & -2\end{array}\right)$.  Eigenvalues are $\pm \sqrt{5}$ so $\sigma = 0$.  However, the Alexander polynomial of $4_1$ is $t^2 -3t + 1$, which does not factorise as $f(t)f(t^{-1})$.  This is clear because $\Delta_{4_1}(-1)= 5$ which is not a square.  Thus $4_1$ is also not a slice knot.
 \end{itemize}
\end{example}

Why is the signature of the Figure-8 knot zero even though the knot isn't slice?  If we examine the knot a little more closely, we find that it is \emph{negative amphicheiral}: it is its own mirror image reverse.  This means that in the concordance group $2(4_1)= 4_1 \# -4_1 = 0$ and $4_1$ is an element of order 2.  The signature is an integer-valued additive invariant, so if $2\sigma = 0$ then $\sigma=0$.  Similarly, the non-vanishing of the trefoil signature proves that the trefoil is an element of infinite order in $\C$.

\subsection{The algebraic concordance group}
\label{subsec:algconcgroup}

In the same way that we can make the set of knots into a group by quotienting out the slice knots, we can make the set of Seifert forms into a group by quotienting out the metabolic forms.  First we need a definition of the ``set of Seifert forms'' which is independent of knots.

We have the following property that characterises Seifert matrices.

\begin{lemma}
\label{Lemma:unimodSeifert}
  If $V$ is a Seifert matrix for a knot $K$ then $V-V^T$ is unimodular; that is, $\det(V-V^T) = \pm 1$.
\end{lemma}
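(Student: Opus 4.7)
The plan is to interpret $V - V^T$ geometrically as representing the intersection form on $H_1(F)$, and then invoke the fact that this intersection form is unimodular for a compact orientable surface with one boundary component.

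First I would unpack the matrix entry: if $\{a_1,\dots,a_{2g}\}$ is the basis of $H_1(F)$ used to define $V$, then $V_{ij} - V_{ji} = \lk(a_i, a_j^+) - \lk(a_j, a_i^+)$. Using symmetry of the linking number in $S^3$, I can rewrite $\lk(a_j, a_i^+) = \lk(a_i^+, a_j)$, and then, by translating both curves by the same normal vector field, $\lk(a_i^+, a_j) = \lk(a_i, a_j^-)$, where $a_j^-$ denotes the push-off in the negative normal direction. So
\[ V_{ij} - V_{ji} = \lk(a_i, a_j^+) - \lk(a_i, a_j^-). \]
The right-hand side counts the algebraic number of times the arc from $a_j^-$ to $a_j^+$ pierces any Seifert surface for $a_i$, which is exactly the intersection number $a_i \cdot a_j$ in the oriented surface $F$. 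Thus $V - V^T$ is the matrix of the intersection pairing on $H_1(F)$ in the chosen basis.

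Second, I would appeal to the standard fact that for a compact connected orientable surface $F$ of genus $g$ with one boundary component (which is the case here, since $\partial F = K$ is a knot), the intersection pairing on $H_1(F) \cong \mathbb{Z}^{2g}$ is a unimodular skew-symmetric form. One can see this by choosing a symplectic basis $\{a_1,b_1,\dots,a_g,b_g\}$ coming from the deformation retract of $F$ onto a wedge of $2g$ circles, in which the intersection matrix is the block form $\bigoplus_{i=1}^g \bigl(\begin{smallmatrix} 0 & 1 \\ -1 & 0 \end{smallmatrix}\bigr)$, which has determinant $1$. Since the determinant is invariant under unimodular change of basis, $\det(V-V^T) = \pm 1$ in any basis.

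The main (minor) obstacle is the orientation bookkeeping in the identification $V_{ij} - V_{ji} = a_i \cdot a_j$; conventions on which side is ``positive'' and on the sign of the intersection pairing vary between sources, and one must make sure the signs line up so that the geometric interpretation of $V - V^T$ as the intersection form is consistent with Definition \ref{Def:linkingform}. Once that is settled, the unimodularity statement is purely topological and requires no further knot-theoretic input.
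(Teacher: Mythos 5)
Your proposal is correct and follows essentially the same route as the paper: identify $V - V^T$ with the intersection form on $H_1(F)$ via $\lk(a_i, a_j^+) - \lk(a_i, a_j^-) = a_i \cdot a_j$, then observe that this form is the standard unimodular symplectic form. The only cosmetic difference is that the paper assumes the curves are arranged as a disc with bands (so the matrix is literally block diagonal), whereas you invoke invariance of the determinant under unimodular change of basis — the same content made slightly more explicit.
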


\begin{proof}
\begin{figure}
  \centering
  \includegraphics[width=12cm]{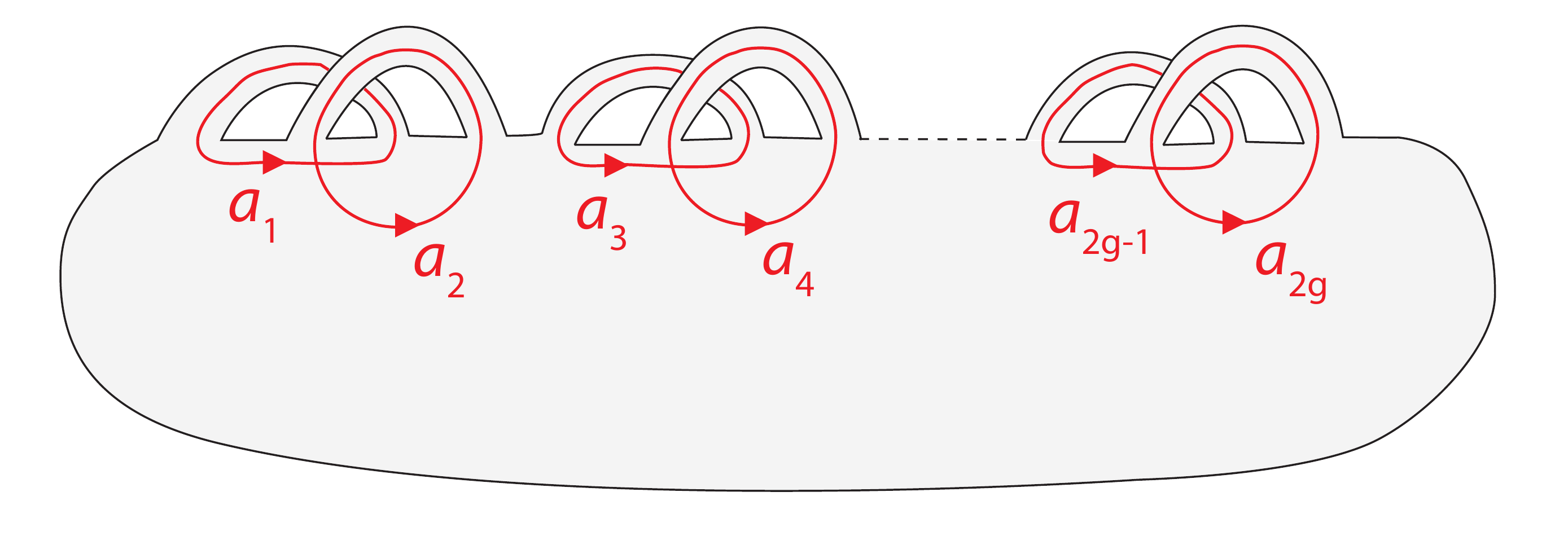}
  \caption{Every Seifert surface can be drawn as a disc with bands attached}
  \label{Fig:homologybasis}
\end{figure}
  Suppose that $V$ is a Seifert matrix obtained from a genus $g$ Seifert surface $F$ and a set of curves $\{a_1,\dots,a_{2g}\} \in H_1(F)$.  Without loss of generality, we may assume that the curves are arranged as in Figure \ref{Fig:homologybasis}, i.e. as a disc with bands attached, although the bands themselves may be twisted and knotted around each other.  The $(i,j)^{\text{th}}$ entry of $V-V^T$ is
    \[ \lk(a_i,a_j^+) - \lk(a_j,a_i^+) = \lk(a_i,a_j^+) - \lk(a_i,a_j^-) = \lk(a_i, a_j^+ - a_j^-) \text{ .}\]
  We have that $a_j^+ - a_j^-$ bounds an annulus normal to $F$, which meets $F$ in $a_j$.  The linking number of $a_j^+ - a_j^-$ with $a_i$ is then the algebraic intersection of this annulus with $a_i$, i.e. the algebraic intersection of $a_i$ with $a_j$.  We thus have that $V-V^T$ consists of
    \[ \left(\begin{array}{cc} 0 & 1 \\ -1 & 0 \end{array}\right) \]
  on the diagonal and zeros elsewhere.  It follows that this matrix is unimodular.
\end{proof}

The converse is a result of Seifert~\cite{Seifert34}; a proof may also be found in \cite[8.7]{Burde}.

\begin{lemma}
\label{Lemma:matrixIsSeifert}
  Every square matrix $V$ of even order satisfying $\det(V-V^T) = \pm 1$ is a Seifert matrix of a knot.
\end{lemma}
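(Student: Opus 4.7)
The plan is to realize $V$ as the Seifert form of an explicit Seifert surface constructed as a disc with attached twisted and linked bands, reversing the computation from Lemma \ref{Lemma:unimodSeifert}. The first step is to normalize $V-V^T$ into standard symplectic form. Since $V-V^T$ is a skew-symmetric unimodular integer matrix of even order $2g$, the classification of skew-symmetric bilinear forms over $\Z$ gives $P \in \GL_{2g}(\Z)$ with $P^T(V-V^T)P = J$, where $J = \bigoplus_{k=1}^g \left(\begin{smallmatrix}0 & 1\\ -1 & 0 \end{smallmatrix}\right)$. Setting $W = P^T V P$ gives $W-W^T = J$. Because a matrix realizable as a Seifert form remains so after unimodular congruence (the congruence is simply a change of homology basis on the same surface), it suffices to realize $W$. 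Replacing $V$ by $W$, I assume from now on that $V - V^T = J$.

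For the model surface I follow the picture of Figure \ref{Fig:homologybasis}: let $F_0 \subset S^3$ be a flat disc $D$ with $2g$ flat, untwisted, unlinked bands $b_1,\dots,b_{2g}$ attached along $\partial D$, arranged in $g$ pairs. The feet of the two bands in each pair $(b_{2k-1},b_{2k})$ are interleaved on $\partial D$ so that the pair behaves like a genus-one handle, while different pairs occupy disjoint arcs of $\partial D$. Let $a_i$ be the oriented core of $b_i$ closed up across $D$; then $\{a_1,\dots,a_{2g}\}$ is a basis of $H_1(F_0)$ in which the algebraic intersection form on $F_0$ is exactly $J$. The interleaving of feet within each pair ensures that $\partial F_0$ is a single circle, hence a knot.

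To realize $V$ I modify $F_0$ by three families of local moves, each performed in a disjoint $3$-ball so that they do not interfere with one another:
\begin{enumerate}
 \item Insert $V_{ii}$ right-handed full twists into band $b_i$ for each $i$; this changes $\lk(a_i,a_i^+)$ by $V_{ii}$ and leaves every other entry unaffected.
 \item For each pair of indices $i,j$ lying in different symplectic blocks, isotope the bands so that the cores $a_i,a_j$ have linking number $V_{ij}$ in $S^3$. Since $a_i\cdot a_j=0$ on $F$, the identity $V_{ij}-V_{ji}=a_i\cdot a_j$ established in Lemma \ref{Lemma:unimodSeifert} automatically forces $\lk(a_j,a_i^+)=V_{ji}=V_{ij}$, as demanded by $V-V^T=J$.
 \item For each symplectic pair $(i,j)=(2k-1,2k)$, isotope the two bands of the pair to achieve $\lk(a_i,a_j^+)=V_{ij}$; the intersection identity then yields $\lk(a_j,a_i^+)=V_{ij}-1=V_{ji}$, matching the $J$-block.
\end{enumerate}
The result is an embedded Seifert surface $F$ whose boundary $K=\partial F$ is a knot and whose Seifert form in the basis $\{a_i\}$ is precisely $V$.

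The main obstacle is the bookkeeping in the third stage: I must verify that the twisting and threading moves really can be carried out in mutually disjoint $3$-balls without destroying the embedding of $F$, and that each move changes only the intended entry (or paired entries) of $V$ and nothing else. The first point is handled by choosing sufficiently small tubular neighbourhoods of the bands; the second follows from the elementary additivity of linking numbers under local crossing changes performed away from any other strands. The normalization $V-V^T=J$ from the first step guarantees that the intersection constraint on the push-off differences is always satisfiable, so no consistency obstruction arises, and the construction yields a knot with Seifert matrix $V$.
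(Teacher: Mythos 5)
Your construction is correct and is essentially the classical band-realization argument: the paper does not actually prove this lemma but defers to Seifert (1934) and to Burde--Zieschang, Prop.\ 8.7, where precisely your strategy appears --- normalize $V-V^{T}$ to the standard symplectic form by a unimodular congruence (harmless, since congruence is just a change of basis of $H_1(F)$ under the paper's Definition \ref{Def:linkingform}), then realize the normalized matrix on a disc with $g$ interleaved pairs of bands by twisting bands for the diagonal entries and winding bands around one another for the off-diagonal ones, with the intersection identity from Lemma \ref{Lemma:unimodSeifert} forcing the remaining entries. The only point worth keeping explicit is the interference issue you already flag: each winding detour must be routed ``out and straight back'' so that it contributes zero net linking with every band it passes, and the diagonal twists should be adjusted last to absorb any writhe introduced by the finger moves.
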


\begin{remark}
The proof given for Lemma \ref{Lemma:unimodSeifert} actually proves that $\det(V-V^T) = 1$, and indeed, in Lemma \ref{Lemma:matrixIsSeifert} the $\det(V-V^T)=-1$ case does not appear. This is because $V-V^T$ is skew-symmetric and every even order invertible integral skew-symmetric matrix is congruent to a block sum of the matrix $\left(\begin{array}{cc} 0 & 1 \\ -1 & 0 \end{array}\right)$. The reason we include the $-1$ case is that it streamlines theorems and arguments in higher dimensions, where the $-1$ case \emph{does} exist.
\end{remark}

Given two square matrices $V_1$ and $V_2$ we can form their block sum $V_1 \oplus V_2 = \left(\begin{array}{cc} V_1 & 0 \\ 0 & V_2 \end{array}\right)$.

\begin{definition}
  Two square matrices $V_1$ and $V_2$ are called \emph{cobordant} (or \emph{Witt equivalent}) if $V_1 \oplus (-V_2)$ is Witt trivial (metabolic).
\end{definition}

If we restrict our attention to the set of matrices which are Seifert matrices, then Levine~\cite[Lemma 1]{Levine69} shows that this is an equivalence relation.

\begin{definition}
  The \emph{algebraic concordance group} $\G$ is defined to be the set of square integral matrices $V$ satisfying $\det(V-V^T) = \pm 1$ under the operation of block sum and modulo the relation of cobordism (Witt equivalence).
\end{definition}

\begin{theorem}
  The map $\phi \colon \C \to \G$ which maps a knot to one of its Seifert matrices is an epimorphism of groups.
\end{theorem}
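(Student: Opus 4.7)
The proof has three ingredients: well-definedness of $\phi$, the homomorphism property, and surjectivity. I would handle these in that order, since well-definedness is the subtlest.

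For well-definedness, the plan is first to show that any two Seifert matrices for the same knot represent the same class in $\G$. Since any two Seifert matrices for a knot $K$ are S-equivalent, it suffices to check that unimodular congruence and elementary enlargement preserve the Witt class. Unimodular congruence is immediate from the definition of $\G$. For an elementary enlargement
\[ V' = \left(\begin{array}{ccc} V & v & 0 \\ 0 & 0 & 1 \\ 0 & 0 & 0 \end{array}\right), \]
the plan is to exhibit a unimodular change of basis taking $V'$ to a block sum $V \oplus H$, where $H = \left(\begin{smallmatrix} 0 & 1 \\ 0 & 0 \end{smallmatrix}\right)$, by using the last column to clear $v$. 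The block $H$ is manifestly Witt trivial (metabolised by its first basis vector), so $[V']=[V]$ in $\G$. The analogous argument handles the other elementary enlargement.  Next, if $K_1$ and $K_2$ are concordant, then $K_1 \# -K_2$ is slice, and a Seifert matrix for this connected sum is $V_1 \oplus V_{-K_2}$ where $V_{-K_2}$ is obtained from $V_2$ by the sign conventions for mirroring and reversing orientation (a direct computation shows $V_{-K_2}$ differs from $-V_2$ at worst by a unimodular congruence); Theorem~\ref{Thm:lagrangian} then gives that this Seifert matrix is metabolic, i.e.\ $[V_1] = [V_2]$ in $\G$.

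For the homomorphism property, I would construct an explicit Seifert surface for $K_1 \# K_2$ by taking disjoint Seifert surfaces $F_1, F_2$ for $K_1, K_2$ and joining them by a small band along the connected-sum site; one sees directly that $H_1(F_1 \natural F_2) = H_1(F_1) \oplus H_1(F_2)$ and that curves on $F_1$ are unlinked from curves on $F_2$ in $S^3$, so the Seifert form of the connected sum is the block sum of the Seifert forms. Hence $\phi(K_1 \# K_2) = \phi(K_1) + \phi(K_2)$.

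For surjectivity, note first that every class in $\G$ admits an even-order representative: any $V \in \G$ satisfies $\det(V - V^T) = \pm 1$, which forces $V - V^T$ to be an invertible skew-symmetric matrix, hence of even order. Given such an even-order $V$, Lemma~\ref{Lemma:matrixIsSeifert} produces a knot $K$ whose Seifert matrix is exactly $V$, and then $\phi(K) = [V]$. Thus $\phi$ hits every class.

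The main obstacle is the elementary-enlargement step in well-definedness, together with the bookkeeping required to identify the Seifert matrix of $-K$ with $-V$ (up to Witt equivalence) so that the inversion in $\C$ corresponds to negation in $\G$; once these two technicalities are resolved the other parts of the proof fall out directly from the lemmas already established.
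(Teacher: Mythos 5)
Your overall architecture is the same as the paper's: the paper's own proof is a three-line affair recording that a Seifert matrix for $K_1 \# K_2$ is $V_1 \oplus V_2$ and for $-K_1$ is $-V_1$, invoking Theorem~\ref{Thm:lagrangian} for well-definedness on concordance classes, and citing Lemma~\ref{Lemma:matrixIsSeifert} for surjectivity. You supply the details the paper leaves implicit -- independence of the choice of Seifert surface via S-equivalence, the boundary connected sum of Seifert surfaces for the homomorphism property, and the even-order reduction (forced by $\det(V-V^T)=\pm1$ and skew-symmetry) before applying Lemma~\ref{Lemma:matrixIsSeifert}. All of that is sound.

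One step, however, does not work as written. For the elementary enlargement
\[ V' = \left(\begin{array}{ccc} V & v & 0 \\ 0 & 0 & 1 \\ 0 & 0 & 0 \end{array}\right) \]
you propose a unimodular congruence to $V \oplus H$ ``by using the last column to clear $v$''. The last column of $V'$ is supported only in row $n{+}1$ and the last row is zero, so adding multiples of the last column (with its paired row operation) cannot touch the entries of $v$, which sit in rows $1,\dots,n$; the only columns nonzero there are the first $n$, and using those drags new nonzero entries into row $n{+}1$. Chasing the change of basis through, one finds that $V' \cong V \oplus H$ requires solving $Va = -v$ over $\Z$, which fails whenever $v$ is not in the image of $V$ (and $V$ is typically not invertible over $\Z$). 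The repair is to prove the Witt-theoretic statement you actually need, namely $[V'] = [V]$ in $\G$, directly: the block sum $V' \oplus (-V)$ is metabolic, with metaboliser spanned by the $n$ diagonal vectors $e_i + f_i$ (where $e_i$, $f_i$ denote the standard bases of the two summands) together with $e_{n+2}$; all pairings among these vanish and the rank is $n+1 = \tfrac{1}{2}(2n+2)$. With that substitution your well-definedness argument goes through. In the same spirit, your parenthetical claim that the Seifert matrix of $-K_2$ differs from $-V_2$ ``at worst by a unimodular congruence'' should be softened: depending on conventions one obtains $-V_2$ or $-V_2^T$, and these need only be cobordant rather than congruent -- which is all the argument requires.
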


\begin{proof}
  If two knots $K_1$ and $K_2$ have Seifert matrices $V_1$ and $V_2$ respectively, then a Seifert matrix for $K_1 \# K_2$ is $V_1 \oplus V_2$, while a Seifert matrix for $-K_1$ is $-V_1$.  These facts, together with Theorem \ref{Thm:lagrangian} show that the map is a homomorphism, while Lemma \ref{Lemma:matrixIsSeifert} shows that it is surjective.
\end{proof}

\begin{definition}
 A knot whose image in $\G$ under the map $\phi$ is zero is called \emph{algebraically slice}.
\end{definition}

The big question of knot concordance in the 1970s was: is $\phi$ an isomorphism?  That is, are `slice' and `algebraically slice' equivalent notions? We will find the answer to this in Section \ref{Sec:CGInvariants}; for now, let us look more closely at the structure of $\G$.

\begin{theorem}[\cite{Levine69-1}]
  $\G \cong (\Z)^\infty \oplus (\Z_2)^\infty \oplus (\Z_4)^\infty \text{ .}$
\end{theorem}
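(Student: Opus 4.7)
The strategy is to tensor Seifert forms with $\Q$ and work in the rational algebraic concordance group $\G^\Q$, then show that the natural map $\G \to \G^\Q$ is injective (by a clearing-denominators argument together with Lemma \ref{Lemma:matrixIsSeifert} to upgrade a rational Witt-triviality to an integral one). To each rational Seifert form $V$ on $\Q^{2g}$ one associates an \emph{isometric structure}: the nondegenerate symmetric bilinear form $S := V + V^T$ together with the linear map $T := -(V + V^T)^{-1}(V - V^T)$, which is an isometry of $S$ whose characteristic polynomial is symmetric (i.e.\ satisfies $p(t) = t^{\deg p} p(t^{-1})$ up to sign) and agrees with the Alexander polynomial after substitution. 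Theorem \ref{Thm:lagrangian} translates to the statement that cobordism of Seifert forms corresponds exactly to Witt equivalence of these isometric structures, so the problem becomes one of computing the Witt group of isometric structures over $\Q$.

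The next step is a primary decomposition with respect to $T$: write $\Q^{2g}$ as a direct sum of $T$-invariant subspaces indexed by the irreducible factors $p(t)$ of the characteristic polynomial of $T$. Because $T$ is an isometry, $S$ pairs the $p$-primary summand with the $p^{\ast}$-primary summand where $p^\ast(t) := t^{\deg p} p(t^{-1})$. Consequently every contribution coming from a non-symmetric factor (one with $p \neq p^\ast$) is hyperbolic and hence Witt-trivial. This yields
\[ \G^\Q \;\cong\; \bigoplus_p W(\Q[t]/(p),\,\sigma_p) \]
summed over monic symmetric irreducible polynomials $p \in \Q[t]$ with $p(1)p(-1) \neq 0$ (the latter condition corresponding to the unimodularity of $V - V^T$), where $W(K_p,\sigma_p)$ denotes the hermitian Witt group of the number field $K_p := \Q[t]/(p)$ with the involution $\sigma_p$ induced by $t \mapsto t^{-1}$.

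For each such $p$, $K_p$ is a quadratic extension of its $\sigma_p$-fixed subfield $F_p$, so $W(K_p,\sigma_p)$ is a classical hermitian Witt group over a number field with involution. A Hasse--Minkowski-type local-global principle expresses $W(K_p,\sigma_p)$ in terms of its completions at places of $F_p$: real archimedean places at which $K_p$ becomes $\CC$ contribute a $\Z$-summand detected by the hermitian signature; non-dyadic finite places contribute a $\Z_2$-summand detected by the residue discriminant; and dyadic places contribute $\Z_2$ or $\Z_4$-summands arising from the $\Z_4$-valued Hasse invariant of hermitian forms over $\Q_2$-algebras. Since there are infinitely many distinct symmetric irreducible polynomials in $\Q[t]$ with $p(\pm 1)\neq 0$ and one can realise each local type infinitely often (for example via the Alexander polynomials of suitable torus knots and two-bridge knots), summing over all $p$ collapses to
\[ \G \;\cong\; \Z^\infty \oplus (\Z_2)^\infty \oplus (\Z_4)^\infty. \]

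The main obstacle is the local analysis at dyadic primes: whereas the Witt group of hermitian forms over a non-dyadic local field with involution is an elementary $2$-group, at $2$-adic places the Hasse invariant refines to a $\Z_4$-valued invariant, and this is the ultimate origin of the $\Z_4$-torsion in $\G$. Equally delicate is the verification that $\G \hookrightarrow \G^\Q$, since a Lagrangian over $\Q$ need not automatically come from one over $\Z$; one handles this by clearing denominators and enlarging the form via block-sum with a hyperbolic piece, then appealing to Lemma \ref{Lemma:matrixIsSeifert} to realise the result as an honest Seifert matrix.
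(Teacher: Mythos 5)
Your overall route --- inject $\G$ into $\G^{\Q}$, pass to isometric structures, discard the non-symmetric irreducible factors as hyperbolic, and compute place by place --- is Levine's, and packaging each symmetric factor as a hermitian Witt group of $\Q[t]/(p)$ over its fixed subfield is Milnor's legitimate variant of it. But two steps as written are wrong. First, $T:=-(V+V^T)^{-1}(V-V^T)$ is not an isometry of $S=V+V^T$: since $(V-V^T)^T=-(V-V^T)$, this operator is $S$-\emph{skew-adjoint} (you have written down the infinitesimal object rather than its Cayley transform), and its characteristic polynomial is neither symmetric nor the Alexander polynomial. For the trefoil matrix $V=\left(\begin{array}{cc}-1&1\\0&-1\end{array}\right)$ one gets $\lambda^2+\frac{1}{3}$ rather than $\lambda^2-\lambda+1$. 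The correct isometry is $T=V^{-1}V^T$, as in the isomorphism $\G^{\Q}\cong\G_{\Q}$ given by $A\mapsto(A+A^T,A^{-1}A^T)$.

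Second, and more seriously, the local analysis that you single out as the crux is inverted. The $\Z_4$-torsion in $\G$ does not come from the dyadic places; it comes from the non-dyadic ones. For odd $p$ one has $W(\Q_p)\cong W(\F_p)\times W(\F_p)$, and $W(\F_p)\cong\Z_4$ exactly when $p\equiv 3\pmod 4$, because $-1$ is not a square there, so $[1,1]$ is anisotropic while $[1,1,1,1]$ is hyperbolic. In your hermitian language the same $\Z_4$ appears at those finite places $v$ of $F_p$ at which $-1$ is not a local norm from $K_p$ (so $\langle 1,1\rangle$, of determinant $1$, is not the hyperbolic plane, of determinant $-1$, while $\langle 1,1,1,1\rangle$ is split); such places occur at odd residue characteristic, so your claim that non-dyadic finite places only contribute $\Z_2$-summands is false. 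The dyadic place, by contrast, contributes no $4$-torsion to $\G$ at all: although $W(\Q_2)\cong\Z_8\oplus\Z_2\oplus\Z_2$, the proof of Theorem \ref{Thm:order4class} shows that for a class coming from an integral Seifert matrix $(-1)^d\lambda(1)\lambda(-1)\equiv 1\pmod 4$ (this uses $\Delta(1)=\pm1$), so $\mu^2_\lambda(2\alpha)=1$ identically. As written, your argument would assert $4$-torsion where there is none and would fail to detect, for instance, that $7_7$ has algebraic order $4$ (it is detected at the primes $3$ and $7$).
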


Levine proved this theorem by finding a complete set of invariants for $\G$ coming from signatures and Witt groups.  We will analyse these in detail in Chapter \ref{chapter3}.

\section{Higher dimensions}
\label{Sec:higherdim}

There is an analogous notion of knot concordance in higher dimensions. The surprising result, which we will explore in this section, is that the structures of the high-dimensional concordance groups are very well understood, in stark contrast to the concordance group of $1$-dimensional knots.

\begin{definition}
  An \emph{$n$-knot} is a locally flat (or smooth) embedding of $S^n$ into $S^{n+2}$, defined up to ambient isotopy.  An $n$-knot $K$ is called \emph{slice} if it bounds a locally flat (smooth) disc $D^{n+1} \subset D^{n+3}$.  Two $n$-knots $K_1$ and $K_2$ are \emph{concordant} if $K_1 \# -K_2$ is slice, where $-K_2$ is the image of $K_2$ with reversed orientation under a reflection of $S^{n+2}$.
\end{definition}

\begin{definition}
 The $n$-dimensional concordance group $\C_n$ is the set of concordance classes of $n$-knots.
\end{definition}

As in the $1$-dimensional case, we will need Seifert surfaces as a starting point for proving that knots have slice discs.  Luckily we have the following theorem:

\begin{theorem}
  Every $n$-knot bounds some $(n+1)$-dimensional surface in $S^{n+2}$.
\end{theorem}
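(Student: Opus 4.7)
The plan is to imitate the construction carried out in Step 1 of the proof of Theorem \ref{Thm:lagrangian}, but starting from the knot $K$ itself rather than from a slice disc. Let $N(K) \cong S^n \times D^2$ be a tubular neighbourhood of $K$ in $S^{n+2}$, and let $X = S^{n+2} \setminus \interior N(K)$ be the knot exterior, so that $\partial X \cong S^n \times S^1$. Alexander duality gives
\[ H_1(X;\Z) \;\cong\; \tilde H^n(S^n;\Z) \;\cong\; \Z, \]
generated by the meridian $\mu = \{\mathrm{pt}\} \times S^1 \subset \partial X$. Because $S^1$ is an Eilenberg--MacLane space $K(\Z,1)$, the set of homotopy classes $[X,S^1]$ is naturally identified with $H^1(X;\Z) \cong \Z$.

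First I would construct a map $\phi \colon X \to S^1$ that represents a generator of $H^1(X;\Z)$ and which, on a collar of $\partial X$, coincides with the meridional projection $S^n \times S^1 \to S^1$. The construction is the same kind of obstruction-theoretic extension as in Step 1 of Theorem \ref{Thm:lagrangian}: define $\phi_0$ on the collar $S^n \times (D^2 \setminus \{0\}) \subset N(K) \setminus K$ as the angular coordinate on the $D^2$ factor, and then extend $\phi_0$ across the triangulated cells of $X$ one skeleton at a time. The obstruction to extending over each $2$-simplex vanishes, because the class of $\phi_0|_{\partial X}$ is precisely the image of a generator of $H^1(X;\Z)$ under restriction to the boundary; the obstructions on simplices of dimension $\ge 3$ vanish automatically since $\pi_k(S^1) = 0$ for $k \ge 2$.

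Second I would approximate $\phi$ by a smooth (or simplicial) map and pick a regular value $p \in S^1$. By transversality, $F := \phi^{-1}(p)$ is a compact properly embedded codimension-one submanifold of $X$, with
\[ \partial F \;=\; \phi^{-1}(p) \cap \partial X \;=\; S^n \times \{p\}, \]
a longitudinal pushoff of $K$ in $\partial N(K)$. Attaching the collar $S^n \times [0,p] \subset N(K)$ joining $K = S^n \times \{0\}$ to $S^n \times \{p\}$ gives an $(n+1)$-dimensional submanifold of $S^{n+2}$ with boundary $K$. Orientability of $F$ is automatic: it is the preimage of the oriented point $\{p\}$ under a smooth map between oriented manifolds, so its normal bundle is trivialised by pulling back a local orientation of $S^1$ near $p$.

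The main obstacle is the construction of $\phi$ with the prescribed boundary behaviour, and that is handled exactly as in Step 1 of Theorem \ref{Thm:lagrangian}. In fact the higher-dimensional setting is strictly easier: no Whitney-trick dimension restrictions ever enter, and because the target is the $K(\Z,1)$ $S^1$ the extension is controlled by a single first-cohomology class which we have already matched. Once $\phi$ is in hand, the Seifert surface drops out of transversality.
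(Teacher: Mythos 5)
Your proof is correct and follows essentially the same route as the paper's (which is the standard Rolfsen argument): extend the meridional angle map over the knot exterior by obstruction theory and take the preimage of a regular value. The only cosmetic difference is that you kill the degree-two obstruction by noting the meridional class lies in the image of $H^1(X;\Z)\to H^1(\partial X;\Z)$, whereas the paper computes $H^2(X,\partial X)\cong H_n(X)=0$ by duality; both are fine, and your explicit capping-off inside the tubular neighbourhood is a detail the paper leaves implicit.
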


\begin{proof}(Sketch proof from \cite[5B1]{Rolfsen})
  Suppose $n \geq 2$ (for the $n=1$ case see Section \ref{Sec:algConcordance}).  Let $K$ be an $n$-knot and $T \cong K \times D^2$ be a tubular neighbourhood of $K$.  Define a map $f \colon T\backslash K \to S^1$, corresponding to the map $K \times (D^2 \backslash \{0\}) \to S^1$ given by $(x,y) \mapsto y/|y|$.  We would like to extend $f$ to a map of the knot exterior $F \colon X \to S^1$, where $X = S^{n+2} \backslash  \interior(T)$ and $\partial X = \partial T$.  Obstruction theory says that such an extension is possible if and only if certain elements of the cohomology groups $H^{k+1}(X,\partial X)$, with coefficients in $\pi_k(S^1)$, vanish.  If $k>1$ then the coefficient group is trivial, and if $k=1$ then we have integer coefficients and $H^2(X,\partial X) \cong H_n(X) = 0$ by Lefschetz and Alexander dualities. So all the obstructions vanish and there is a map $F \colon X \to S^1$ extending $f$.  Choose a regular point $x \in S^1$ (we may assume that we are either in the PL or smooth category); then $F^{-1}(x)$ is the required orientable surface of codimension $1$.
\end{proof}

The first thing that was proven about the higher dimensional concordance groups was that the even dimensional groups were zero.

\begin{theorem}[\cite{Kervaire65}]
 $\C_{2k} = 0$ for $k \geq 0$.
\end{theorem}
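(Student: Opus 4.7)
The plan is to exhibit a slice disc for every even-dimensional knot by performing ambient surgery on a pushed-in Seifert surface. For the base case $k=0$, a $0$-knot is simply a pair of distinct points in $S^2$, which manifestly bounds an arc $D^1 \subset D^3$, so $\C_0 = 0$ trivially. For $k \geq 1$, given a $2k$-knot $K \subset S^{2k+2}$ with Seifert surface $F^{2k+1}$ (whose existence is guaranteed by the preceding theorem), I would push the interior of $F$ slightly into $D^{2k+3}$ to obtain a compact oriented manifold $W^{2k+1} \subset D^{2k+3}$ with $\partial W = K$. The goal is to modify $W$ by successive ambient surgeries, performed inside $D^{2k+3}$, until it becomes a disc $D^{2k+1}$, and since the resulting disc has $K$ as its boundary, this exhibits $K$ as slice.

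First I would kill the lower homotopy groups of $W$. Because $D^{2k+3}$ is contractible, any embedded sphere $S^i \subset W$ bounds a singular disc in $D^{2k+3}$, and for $i \leq k-1$ this disc can be made embedded and transverse to $W$ by standard general position arguments (the relevant dimension count is well within the stable range of the ambient ball). Ambient surgery along such discs successively kills $\pi_i(W)$ for $i = 1, \ldots, k-1$, after which $W$ is $(k-1)$-connected. Poincar\'e--Lefschetz duality then pins down the remaining nontrivial homology to the middle range $H_k(W)$ and $H_{k+1}(W)$.

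The hard step is the middle-dimensional surgery: one must kill $H_k(W)$, and thereby by duality the entire positive-degree homology of $W$, without reintroducing lower-dimensional classes. This is where the dimensional advantage over the classical case kicks in, since the ambient dimension $2k+3 \geq 5$ permits the Whitney trick, allowing algebraically paired intersections of middle-dimensional spheres to be cancelled geometrically. Using this, one produces a basis of $H_k(W)$ represented by disjoint embedded spheres with trivial normal bundle which bound embedded discs in $D^{2k+3}$, and surgery along these reduces $W$ to a contractible $(2k+1)$-manifold with boundary $S^{2k}$. A standard handle-theoretic argument (available because $2k+1 \geq 3$) identifies this with $D^{2k+1}$.

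The main obstacle is precisely this final middle-dimensional surgery; verifying that the relevant self-intersection obstruction vanishes is the technical heart of the argument and requires the high-dimensional Whitney trick. By contrast, for odd-dimensional knots $\C_{2k+1}$ the analogous middle-dimensional Seifert form carries a genuine algebraic concordance obstruction, so the surgery program does not always succeed; and in the classical case $n=1$ the further failure of the Whitney trick in dimension $4$ is responsible for the deeper Casson--Gordon and Cochran--Orr--Teichner obstructions that motivate the rest of this thesis.
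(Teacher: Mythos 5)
Your overall strategy — push the Seifert surface $F^{2k+1}$ into $D^{2k+3}$ and reduce it to a disc by ambient surgery — is exactly the approach the paper takes (its proof is a two-line sketch of precisely this). But you have inserted an obstruction that does not exist, and in doing so you have mislocated the entire point of the theorem. You describe killing $H_k(W)$ as ``middle-dimensional surgery'' requiring the Whitney trick to cancel ``algebraically paired intersections of middle-dimensional spheres,'' and you call the vanishing of a ``self-intersection obstruction'' the technical heart of the argument. There is no such obstruction here: $W$ has odd dimension $2k+1$, so two $k$-spheres in $W$ satisfy $k+k=2k<2k+1$ and are disjoint after a general position perturbation — there are no intersection points to cancel and no intersection or self-intersection form on $H_k(W)$. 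Every surgery needed ($i$-spheres for $i\le k$) is strictly \emph{below} the middle dimension of $W$, which is exactly the phrase the paper uses and exactly why $\C_{2k}=0$ unconditionally: the dichotomy you correctly state in your last paragraph (odd-dimensional knots carry a Seifert form obstruction, even-dimensional ones do not) is a consequence of the parity of $\dim F$, not of a Whitney-trick computation that happens to come out trivial.

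Where transversality genuinely matters is elsewhere: to perform the surgeries \emph{ambiently} you must find the $(k+1)$-dimensional surgery discs embedded in $D^{2k+3}$ meeting $W$ only along their boundary spheres, and a generic $(k+1)$-disc meets $W^{2k+1}$ in a set of dimension $(k+1)+(2k+1)-(2k+3)=k-1\ge 0$, so these interior intersections must be removed (by general position, piping, or Whitney moves in the ambient ball); one must also check the normal framings needed to do the surgery. That is the actual technical content, and your proposal passes over it while dwelling on a phantom middle-dimensional problem. A secondary slip: your closing appeal to ``a standard handle-theoretic argument (available because $2k+1\ge 3$)'' to identify the resulting contractible manifold with a disc is only standard for $2k+1\ge 5$ via the $h$-cobordism theorem; the case $k=1$ (a contractible $3$-manifold with boundary $S^2$) is not covered by that remark and needs separate care.
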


\begin{proof}
  Let $K$ be an $2k$-knot and let $F$ be a Seifert surface for $K$.  Perform ambient surgeries on $F$ below the middle dimension to turn $F$ into a slice disc $D^{2k+1} \subset D^{2k+3}$.
\end{proof}

For odd dimensions, the Seifert surfaces are even-dimensional so we have to worry about what happens in the middle dimension.  In the middle dimension is the linking form and it turns out that if this linking form has a Lagrangian then the knot is slice.  So the algebraic concordance group tells us about the geometry of slice knots.

The reason this works in dimensions above $1$ is the existence of the \emph{Whitney trick}.  When the ambient dimension is above $4$, the Whitney trick allows intersection points of opposite sign to be cancelled with each other.  If two $n$-knots have linking number zero then this means we can arrange for them to be actually disjoint.

\begin{theorem}[Whitney trick]
  Let $X^d$ be an oriented manifold of dimension $d\geq5$, and let $M^k$ and $N^{d-k}$ be oriented submanifolds of codimension at least two.  Suppose that either
  \begin{itemize}
    \item $k \geq 3$, $d-k \geq 3$, $\pi_1(X) = 0$, or
    \item $k=2$, $d-k \geq 3$, $\pi_1(X \backslash N) = 0$
  \end{itemize}
  Then pairs of intersections of $M$ and $N$ of opposite sign may be removed by isotopies of $M$ and $N$.
\end{theorem}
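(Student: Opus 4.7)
The plan is to construct a $2$-dimensional \emph{Whitney disc} $W\subset X$ that guides an ambient isotopy cancelling a chosen pair $p,q\in M\cap N$ of opposite-sign intersection points. First I would pick an arc $\alpha\subset M$ from $p$ to $q$ and an arc $\beta\subset N$ from $q$ back to $p$, so that $\gamma=\alpha\cup\beta$ is an embedded circle in $X$ meeting $M\cup N$ only in $\alpha\cup\beta$. An induction reduces the general case to removing a single such pair.

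Next, I would realise $\gamma$ as the boundary of a singular $2$-disc in $X$. In the case $k\geq 3$ and $d-k\geq 3$, the hypothesis $\pi_1(X)=0$ shows $\gamma$ is null-homotopic and so bounds an immersed disc $f\colon D^2\to X$. In the case $k=2$, the same argument applied inside $X\setminus N$, using $\pi_1(X\setminus N)=0$, delivers a singular disc whose interior already misses $N$; this is essential, because generically $\interior W\cap N$ has dimension $2-k=0$, and isolated intersection points of $W$ with $N$ cannot be removed by a dimension argument alone.

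The technical heart of the proof is to upgrade $f$ to an embedded Whitney disc $W$ with $\interior W\cap(M\cup N)=\emptyset$ and with trivial, compatibly framed normal bundle. General position makes $W$ embedded whenever $2\cdot 2<d$, i.e.\ $d\geq 5$; it makes $\interior W\cap M=\emptyset$ because $\dim(\interior W\cap M)=2+k-d<0$ under $d-k\geq 3$; and it makes $\interior W\cap N=\emptyset$ because $\dim(\interior W\cap N)=2-k<0$ when $k\geq 3$ (in the $k=2$ case, disjointness from $N$ has already been arranged). Since $D^2$ is contractible the normal bundle of $W$ in $X$ is trivial, and I would check that the framing can be chosen to restrict along $\alpha$ and $\beta$ to the normal framings of $M$ and $N$ in $X$; any mod-$2$ obstruction can be absorbed by an extra boundary twist of $\alpha$, using the codimensions to keep the disc embedded.

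The final step is to use $W$ as a geometric guide for the isotopy. A tubular neighbourhood of $W$, together with the framing, identifies a neighbourhood of $W\cup\alpha\cup\beta$ with a standard local Whitney model in which $M$ and $N$ appear as affine subspaces meeting in precisely the pair $\{p,q\}$; one then writes down an explicit ambient isotopy supported in this neighbourhood that pushes $M$ across $W$ and removes $p$ and $q$ simultaneously. The main obstacle throughout is the embedding and framing of $W$ in the third step: this is exactly where the argument collapses in ambient dimension $4$, because general position no longer embeds a $2$-disc into $X^4$ and the Whitney disc may be trapped with self-intersections and intersections with $M\cup N$ that cannot be removed.
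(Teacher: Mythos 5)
Your plan follows essentially the same route as the sketch given in the text (taken from Scorpan): join the cancelling pair by arcs in $M$ and $N$, use the fundamental-group hypothesis to bound the resulting circle by a disc, embed it and clear its interior of $M\cup N$ by general position in dimension $\geq 5$, and then push $M$ across the resulting Whitney disc. If anything you are more careful than the printed sketch, since you track which $\pi_1$ hypothesis is used in each codimension case and you flag the normal framing of the Whitney disc, a condition the sketch in the text omits entirely.
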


\begin{proof}(Sketch proof from \cite{Scorpan})
  By a general position argument, we may assume that intersections of $M$ and $N$ are transverse. Since $M$ and $N$ have complementary dimensions, they intersect in a collection of isolated points.  Each of these points has a sign coming from the orientations of $M$ and $N$.  Pick a pair of points $p$ and $q$ of opposite sign.  Draw a path linking $p$ and $q$ that lies inside $M$, and another path linking $p$ and $q$ that lies inside $N$.  Together, these two curves form a circle.  By the simple-connectedness of $X \backslash (M \cup N)$, this circle is homotopically trivial and therefore bounds an immersed disc in $X \backslash (M \cup N)$.  The weak Whitney embedding theorem tells us that immersions of discs in manifolds of dimension at least 5 can be approximated by embeddings.  Thus we have an embedded disc (called the \emph{Whitney disc}) with boundary in $M \cup N$ (Figure \ref{Fig:Whitney}, left).  Use this disc to construct an isotopy which pushes $M$ past $N$ until the intersections disappear (Figure \ref{Fig:Whitney}, right); this is possible without introducing new intersection points because of the opposite signs of $p$ and $q$.
  \begin{figure}
    \begin{center}
      \includegraphics[width=10cm]{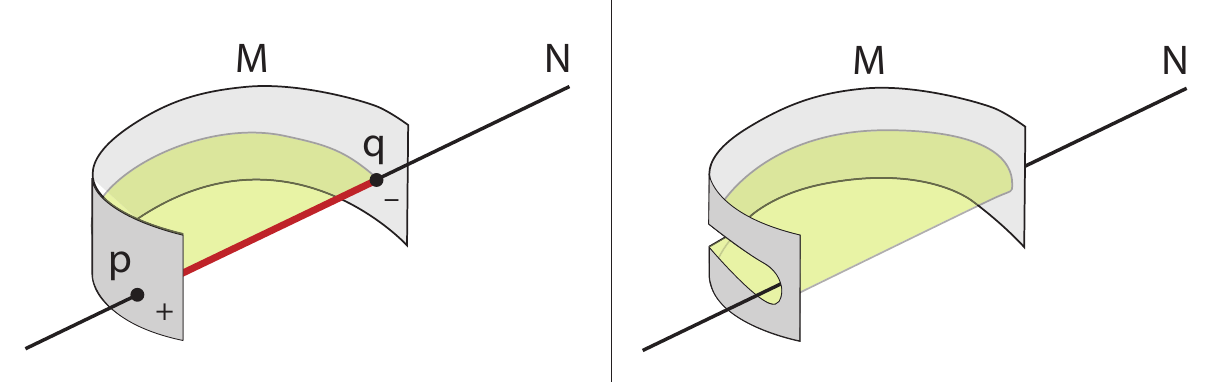}
    \end{center}
    \caption{Removing intersection points using the Whitney trick}
    \label{Fig:Whitney}
  \end{figure}
\end{proof}

It is Whitney's embedding theorem which fails in dimension 4; we cannot guarantee that the Whitney disc is an embedding rather than an immersion.

The group structure of the odd-dimensional concordance groups is $(\Z)^\infty \oplus (\Z_2)^\infty \oplus (\Z_4)^\infty$, just as for the one-dimensional algebraic concordance group.  However, the isomorphism is not to $\G$, as defined in Section \ref{subsec:algconcgroup}, but to a slight refinement of it.

\begin{definition}
  $\G_{\pm}$ is defined to be the set of matrices $V$ for which $V \pm V^T$ is unimodular, under the operation of block sum and modulo the equivalence relation of cobordism of matrices.  The group $\G_+^0$ is the subgroup of $\G_+$ of index 2, defined by matrices with property $+$ and signature $V+V^T$ a multiple of $16$.
\end{definition}

\begin{theorem}[\cite{Levine69}] \mbox{}
  \begin{itemize}
    \item $\C_{4k+1} \cong \G_{-}$ for $k \geq 1$
    \item $\C_{4k+3} \cong \G_{+}$ for $k \geq 1$
    \item $\C_3 \cong \G_+^0$.
  \end{itemize}
\end{theorem}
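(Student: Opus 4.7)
The plan is to construct, in each case, the homomorphism $\C_n \to \G_\pm$ sending a knot to the Seifert form on the middle-dimensional homology of a Seifert surface, and then to prove that this map is both an isomorphism (for $n = 4k+1, 4k+3$ with $k \geq 1$) and, when $n=3$, that its image is precisely the index-two subgroup $\G_+^0$. The sign $\pm$ in the target is dictated by the $(-1)^{q+1}$-symmetry of the intersection pairing on $H_q(F)$ where $F^{2q}$ is a Seifert surface, giving skew-symmetric data in dimensions $4k+1$ (so $\G_-$) and symmetric data in dimensions $4k+3$ (so $\G_+$). Well-definedness on concordance classes follows from the higher-dimensional analogue of Theorem \ref{Thm:lagrangian}: a slice $n$-knot bounds an $(n+1)$-manifold $M^{n+1} \subset D^{n+3}$ whose boundary is a Seifert surface union a slice disc, and the kernel of $H_q(F) \to H_q(M)$ is a metabolizer (a half-rank direct summand on which the Seifert form vanishes). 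Surjectivity is proved exactly as in the $1$-dimensional case (Lemma \ref{Lemma:matrixIsSeifert}), by plumbing disc bundles over $S^q$ according to the entries of a given matrix $V$, yielding a handlebody $F$ with prescribed Seifert form whose boundary is the desired $n$-knot.

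The heart of the theorem is injectivity: showing that if the Seifert form is metabolic then the knot is slice. Given a Seifert surface $F^{2q}$ with Seifert form $V$ and a metabolizer $L \subset H_q(F)$ of rank $g$, choose disjoint embedded $q$-spheres $\{\alpha_1,\ldots,\alpha_g\}$ in $F$ representing a basis of $L$; the goal is to perform ambient surgery on $F$ in $D^{n+3}$ along each $\alpha_i$ (replacing $S^q \times D^{q+1}$ with $D^{q+1}\times S^q$) to kill the $\alpha_i$ homologically, thereby reducing the genus. For this to work, each $\alpha_i$ must bound an embedded $(q+1)$-disc in $D^{n+3}\setminus F$; since the linking form vanishes on $L$ we obtain $(q+1)$-discs immersed in $D^{n+3}\setminus F$, and because $n \geq 5$ (so that the ambient dimension $n+3 \geq 8$) we may apply the \textbf{Whitney trick} to remove self-intersections and mutual intersections of these discs in pairs. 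After finitely many such surgeries, $F$ is replaced by a disc $D^{n+1} \subset D^{n+3}$, i.e.\ a slice disc. This is precisely where the proof breaks for $\C_1$: the ambient dimension is $4$ and the Whitney trick fails.

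For the low-dimensional exception $\C_3 \cong \G_+^0$, one has to account for an obstruction in the last stage of surgery. Here $F^4 \subset D^6$ and $q = 2$; the Seifert form is symmetric. The map $\C_3 \to \G_+$ is still well-defined and surjectivity onto $\G_+^0$ requires producing $3$-knots whose Seifert form has signature divisible by $16$. The content is that the image is \emph{exactly} $\G_+^0$: after the above surgery procedure, one is left with a closed, simply-connected, smooth (or spin) $4$-manifold whose intersection form must, by Rohlin's theorem, have signature divisible by $16$. Equivalently, the Arf/Rohlin invariant of the Seifert form modulo the subgroup $\G_+^0$ gives a well-defined homomorphism $\G_+ / \G_+^0 \to \Z_2$ which vanishes on the image of $\C_3$. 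Combined with the general argument, this shows the image of $\C_3$ is contained in, and by a direct realization construction equals, $\G_+^0$.

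I expect the Whitney-trick step to be the main technical obstacle, both because it requires careful verification that the immersed Whitney discs lie in the complement of $F$ (using the simple connectivity of $D^{n+3} \setminus F$, which follows from Alexander duality and general position once $n+3 \geq 5$) and because controlling framings so that the surgery can be performed ambiently requires additional bookkeeping. The $\C_3$ case further demands the spin-cobordism input behind Rohlin's theorem in order to pinpoint the index-two subgroup $\G_+^0 \subset \G_+$.
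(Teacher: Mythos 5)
The thesis does not actually prove this statement: it is quoted from Levine with a citation, and the surrounding text only sketches the even-dimensional vanishing and the role of the Whitney trick. So your proposal has to be judged against Levine's argument rather than against anything in the paper. For $n=4k+1$ and $n=4k+3$ with $k\geq 1$ your outline is the right one and matches what the paper gestures at in Section \ref{Sec:higherdim}: the Seifert form map, well-definedness via the submanifold $M^{n+1}\subset D^{n+3}$ cobounded by a Seifert surface and the slice disc (the higher-dimensional Theorem \ref{Thm:lagrangian}), realization of forms by handle attachment, and injectivity by ambient surgery on a metaboliser using the Whitney trick. Two points you gloss over even in this range: one must first surger $F$ below the middle dimension so that it is $(q-1)$-connected and the metaboliser classes are represented by embedded \emph{framed} $q$-spheres (the framing condition is an extra obstruction, not automatic from the vanishing of the Seifert form); and the plumbing realization produces a handlebody whose boundary is a priori only a homotopy $(2q-1)$-sphere, so surjectivity needs the high-dimensional Poincar\'e conjecture (or extra care in the smooth category) to conclude one has actually built a knot.

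The genuine gap is the case $\C_3\cong\G_+^0$. First, your mechanism for the index-two restriction is misplaced: the divisibility of $\sigma(V+V^T)$ by $16$ is a constraint on the \emph{image} of the Seifert-form map, obtained by capping off the stably parallelizable (hence spin) Seifert surface $F^4$ with a $4$-disc along $\partial F=S^3$ and applying Rohlin's theorem to the resulting closed spin $4$-manifold; it does not appear ``after the surgery procedure,'' which, if it succeeded, would produce a slice disc rather than a closed $4$-manifold. Second, and more seriously, you give no injectivity argument for $n=3$, yet the theorem asserts an isomorphism onto $\G_+^0$, not merely that the image lies there. Your surgery argument is explicitly set up for $n\geq 5$, and for $n=3$ the metaboliser lives in $H_2(F^4)$: representing those classes by disjointly embedded $2$-spheres \emph{inside a $4$-manifold} is not a general-position matter (generic representatives have double points and the Whitney trick is unavailable in dimension $4$), so the step ``choose disjoint embedded $q$-spheres in $F$ representing a basis of $L$'' fails as stated. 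Levine's proof that a metabolic Seifert form forces a $3$-knot to be slice requires a separate and more delicate argument carried out ambiently in $D^6$; without it, and without a realization construction showing every class of $\G_+^0$ is hit, the third bullet point is not established.
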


\section{Casson-Gordon invariants}
\label{Sec:CGInvariants}

In the 1970s Casson and Gordon devised a new invariant for the knot concordance group.  It is derived from the Atiyah-Singer index theorem and looks at the difference of two intersection forms, one of which has coefficients twisted by a representation of the knot group.  By calculating their new invariant for a family of knots known as the `$n$-twisted doubles of the unknot' (see Definition \ref{Def:twistKnot}) they proved the following theorem (which had been suspected to be true for some time).

\begin{theorem}
  The kernel of the homomorphism $\C \to \G$ is non-trivial.
\end{theorem}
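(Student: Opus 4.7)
The plan is to exhibit an explicit knot which is algebraically slice but provably not slice; the natural candidates are the $n$-twisted doubles of the unknot (twist knots) $K_n$ for appropriate values of $n$. I would begin by computing the Alexander polynomial of $K_n$, which is (up to units) $nt^2-(2n+1)t+n$, and applying Corollary~\ref{Cor:sliceinvts}(ii) in reverse: $K_n$ is algebraically slice precisely when $4n+1$ is a perfect square. This happens for $n=m(m+1)$, and one checks that the corresponding Seifert form is genuinely Witt-trivial (not merely that its Alexander polynomial factors), yielding an infinite family of algebraically slice knots. The simplest non-trivial candidate is $K_6$ (since $K_0$ is unknotted and $K_2=6_1$ is already geometrically slice).

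Next, I would introduce a concordance invariant that is finer than any invariant coming from the Seifert form. The Casson--Gordon invariant $\tau(\Sigma_2(K),\chi)$ is a natural choice: here $\Sigma_2(K)$ denotes the $2$-fold branched cover, which for $K_n$ is the lens space with $H_1(\Sigma_2(K_n);\Z)\cong \Z_{4n+1}$, and $\chi\colon H_1(\Sigma_2(K_n))\to \Z_d$ is a character of prime power order. The key algebraic fact I would establish (essentially a Lagrangian argument analogous to Theorem~\ref{Thm:lagrangian}, but carried out one level deeper in the branched cover associated to a putative slice disc $\Delta$) is that if $K$ is slice, then there exists a metaboliser $P\subset H_1(\Sigma_2(K))$ with $|P|^2=|H_1(\Sigma_2(K))|$ such that $\tau(\Sigma_2(K),\chi)=0$ for every $\chi$ that vanishes on $P$.

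Having set this up, I would compute $\tau(\Sigma_2(K_n),\chi)$ explicitly. Because the branched cover is a lens space, one obtains an essentially closed-form expression involving signatures of unions of torus knots together with a $\Z_d$-equivariant signature defect computed from an explicit $4$-manifold (produced, e.g.\ from the genus-one Seifert surface of $K_n$ together with a disc filling in its preferred metabolising curve). These computations reduce the problem to a finite sum of roots-of-unity signatures which can be evaluated by standard cyclotomic number theory.

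The main obstacle, and the heart of the proof, is then a number-theoretic argument showing that for infinitely many values of $n=m(m+1)$ there is no metaboliser $P\subset \Z_{4n+1}$ with the property that $\tau(\Sigma_2(K_n),\chi)$ vanishes for every $\chi$ vanishing on $P$. Concretely, I would choose $n$ so that $4n+1=pq^2$ for a prime $p\equiv 3 \pmod 4$ and use the fact that in this situation $-1$ is not a square modulo $p$; this parity obstruction (the same flavour of argument alluded to in the introductory discussion of $4$-torsion) prevents any single cyclic subgroup from simultaneously killing all the non-vanishing signature contributions. Hence $K_n$ is non-slice yet algebraically slice, so $K_n\in\ker(\C\to\G)\setminus\{0\}$.
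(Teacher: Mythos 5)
Your overall strategy --- exhibit the algebraically slice twist knots $K_{m(m+1)}$ and obstruct their sliceness with Casson--Gordon invariants of the $2$-fold branched cover --- is exactly the route Casson and Gordon took and the one this chapter describes, and your set-up is sound: the Alexander polynomial $-nt^2+(2n+1)t-n$, algebraic sliceness iff $4n+1$ is a perfect square (so $n=m(m+1)$), $H_1(\Sigma_2(K_n))\cong\Z_{4n+1}$, and the vanishing of $\tau$ on characters killing a metaboliser. The gap is in the final, decisive step. You propose to ``choose $n$ so that $4n+1=pq^2$ for a prime $p\equiv 3\pmod 4$'' and run a quadratic-residue argument. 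But you have already shown that $K_n$ is algebraically slice only when $4n+1=(2m+1)^2$ is a perfect square, and $pq^2$ is never a perfect square for $p$ prime, so the set of $n$ to which your obstruction would apply is empty. The ``$-1$ is not a square modulo $p$'' mechanism belongs to the order-$4$ problem (knots whose determinant is divisible by a prime $p\equiv 3\pmod 4$ to an \emph{odd} power); it has no purchase on algebraically slice knots, whose determinants are squares. What is actually required is a genuine non-vanishing computation: for the unique metaboliser $(2m+1)\Z_{(2m+1)^2}\subset\Z_{(2m+1)^2}$ one must produce a character of odd prime power order vanishing on it whose Casson--Gordon signature is non-zero, which Casson and Gordon did by an explicit lattice-point count for the lens space $\Sigma_2(K_n)$. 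Note also that the cheap criterion $\sigma(K,2,\chi)\equiv\lk(x,x)\pmod{\Z}$ is useless here, since the linking form vanishes identically on the metaboliser by definition.

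For comparison, the proof this thesis actually develops (Chapter \ref{chapter10}) sidesteps the lens-space computation entirely: the only homologically essential self-linking-zero curve on the genus-one Seifert surface of $K_{m(m+1)}$ is the torus knot $T_{m,m+1}$, whose integrated $\omega$-signature is computed in closed form (Theorem \ref{Thm:L2torussig}) to be $-\tfrac{1}{3}(m-1)(m+2)$, non-zero unless $n=0$ or $n=2$; the Cochran--Orr--Teichner criterion (Theorem \ref{Thm:genusonesurface}) then forbids $K_{m(m+1)}$ from being slice. To repair your argument, either carry out the Casson--Gordon signature computation honestly, or replace your last step with this second-order signature obstruction.
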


In other words, Casson and Gordon were able to use their invariant to prove that some algebraically slice knots were not geometrically slice.

Here is how their invariant was constructed.  We give the construction first for a general $3$-manifold and then describe the particular manifold that will be used for obstructing the sliceness of knots. In what follows, $W(\CC(t),\I)$ is the Witt group of $\CC(t)$ consisting of (equivalence classes of) pairs $(V,\beta)$ where $V$ is a finite-dimensional vector space over $\CC(t)$ and $\beta$ is a hermitian inner product (i.e. complex-valued bilinear form) satisfying $\beta(av,bw) = a\I(b)\beta(v,w)$ and $\beta(v,w) = \I(\beta(w,v))$.  Here $\I$ is the involution that sends $\sum a_i t^i$ to $\sum \overline{a_i} t^{-i}$ with $\overline{a_i}$ being complex conjugation. (For more detail on Witt groups, see Section \ref{Subsec:background}.)

Let $(M,\rho)$ be a $3$-manifold together with a homomorphism $\rho \colon \pi_1(M) \to \Z_d \oplus \Z$, where $d$ is an odd prime power.  The $3$-dimensional bordism group $\Omega_3(\Z_d \oplus \Z)$ is finite; in fact, $d$-torsion (we will see more details in the next section).  Thus $d(M,\rho) = \partial(W,\overline{\rho})$ for some $4$-manifold $W$ and map $\overline{\rho} \colon \pi_1(W) \to \Z_d \oplus \Z$.
The manifold $W$ has a non-singular hermitian $\CC(t)$-intersection form $I(W,\overline{\rho}) \in W(\CC(t),\I)$, where the local coefficients are twisted using the map $\overline{\rho}$.  The $\Z$-action is multiplication by $t$ and the $\Z_d$-action is multiplication by $e^{2\pi i/d}$.

The manifold $W$ also has an ordinary (untwisted) intersection form $I(W) \in \text{Image } \{W(\Q) \in W(\CC(t),\I)\}$. (If this is singular, take the quotient of this form by its kernel.)  The Casson-Gordon invariant is defined to be
 \[ \tau(M,\rho) = \frac{1}{d}(I(W,\overline{\rho}) - I(W)) \in W(\CC(t),\I) \otimes \Q \]
and is independent of $W$.  Since $d$ is odd, $\tau$ actually lives in $W(\CC(t),\I) \otimes \Z_{(2)}$, where $\Z_{(2)}$ is $\Z$ localised at $2$, i.e. the set of rational numbers with odd denominator.

Let $X$ be the complement $S^3 \backslash K$ of a knot $K$.  Let $M_0$ be the $3$-manifold constructed by $0$-surgery on $X$ and $M^k$ be the $k$-fold cyclic cover of $M_0$, where $k$ is a power of a prime $p^r$.
There is a natural surjection $\eps \colon \pi_1(M^k) \to \Z$ via the covering projection of $M^k$ to $X$ and the map of $H_1(X)$ to $\Z$ determined by the orientation of $K$.  Given a surjective homomorphism $\chi \colon H_1(\Sigma_k;\Z) \to \Z_d$ (where $\Sigma_k$ is the $k$-fold branched cover of $S^3$ over $K$) we can define a representation
 \[ \rho_{\chi} \colon \pi_1(M^k) \to H_1(M^k;\Z) \to H_1(\Sigma_k;\Z) \xrightarrow{\chi} \Z_d \]
where the first map is the Hurewicz map.

The Casson-Gordon invariant which is then a slicing obstruction for $K$ is $\tau(M^{k}, \eps \times \rho_{\chi})$.  This may also be denoted $\tau(K,p^r,\chi)$.

\subsection{Casson-Gordon signatures}
\label{Subsec:CGsig}
For each class $I \in W(\CC(t),\I)$ and for each unit complex number $\omega \in S^1$, a signature $\sigma_{\omega}(I)$ is defined by evaluating a representative of that class at $\omega$ and computing the signature of the resulting Hermitian matrix.  If the representative is singular at $\omega$ then the signature is the average of the one-sided limits, i.e. $\displaystyle \lim_{x \to \omega^+}\sigma_{x}(I)$ and $\displaystyle \lim_{x \to \omega^-}\sigma_{x}(I)$.

Each $\sigma_{\omega}$ is a homomorphism that can be extended to
 \[ \sigma_{\omega} \colon W(\CC(t),\I) \otimes \Z_{(2)} \to \Z_{(2)} \]
in the obvious way.

\begin{definition}
 The \emph{Casson-Gordon signature} of a knot $K$ together with a map \newline
$\chi \colon H_1(\Sigma_k;\Z) \to \Z_d$ (where $d$ and $k$ are odd prime powers) is defined to be
   \[ \sigma_1(\tau(K,k,\chi)). \]
We shall abbreviate this to $\sigma(K,k,\chi)$.
\end{definition}

(What follows is taken from \cite{LivingstonNaik99}, Section 4.) The Casson-Gordon signature can be viewed as a map from the bordism group $\Omega_3(\Z_d \oplus \Z)$ to $\Z_{(2)}$. If two elements $(M_1,\rho_1)$ and $(M_2,\rho_2)$ represent the same element in $\Omega_3(\Z_d \oplus \Z)$ then $\tau(M_1,\rho_1)$ and $\tau(M_2,\rho_2)$ differ by an element of $W(\CC(t),\I)$. Thus the difference $\sigma(M_1,\rho_1) - \sigma(M_2,\rho_2)$ is an integer, and we can see $\sigma$ as a homomorphism from $\Omega_3(\Z_d \oplus \Z) \to \Q / \Z$. Indeed, by the definition of the Casson-Gordon invariant, $\sigma(K,k,\chi)$ takes values in $(\frac{1}{d})\Z/\Z \cong \Z_d$. Casson and Gordon \cite{CassonGordon86} show that $\sigma \colon \Omega_3(\Z_d \oplus \Z) \to \Z_d$ is actually an isomorphism.

We have further isomorphisms:
 \[ \Omega_3(\Z_d \oplus \Z) \cong \Omega_3(\Z_d) \cong \Z_d \]
where the first is given by the projection and inclusion maps, whilst the second is as follows. For a pair $(M,\rho)$ with a character $\rho \colon H_1(M) \to \Z_d$, we can always find some $x \in \text{torsion}(H_1(M))$ so that $\rho(y) = \lk(x,y)$ for all $y \in \text{torsion}(H_1(M))$. The image of $(M,\rho)$ in $\Z_d$ is then $\lk(x,x)$.

Putting these isomorphisms together, we obtain a straightforward generalisation of Theorem 2.5 of \cite{LivingstonNaik01}:
\begin{theorem}
 If $\chi \colon H_1(\Sigma_{p^r}) \to \Z_{q^s}$ is a character obtained by linking with the element $x \in H_1(\Sigma_{p^r})$, then $\sigma(K,p^r,\chi) \equiv \lk(x,x)$ mod $\Z$.
\end{theorem}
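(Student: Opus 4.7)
The plan is to reduce the statement to a bordism computation and then to verify that the two relevant homomorphisms agree on a canonical generator of $\Omega_3(\Z_d)$, where $d = q^s$. First I would observe that $\sigma(K,p^r,\chi) = \sigma_1(\tau(M^{p^r},\eps \times \rho_\chi))$ depends only on the class of $(M^{p^r},\eps \times \rho_\chi)$ in $\Omega_3(\Z_d \oplus \Z) \cong \Omega_3(\Z_d)$, the first isomorphism being projection onto the $\Z_d$-factor. Under the identification $\Omega_3(\Z_d) \cong \Z_d$ given by $(M,\rho) \mapsto \lk(x,x)$ (with $\rho = \lk(x,-)$), both this map and the Casson--Gordon signature $\sigma$ are isomorphisms of $\Z_d$ by the theorem of Casson and Gordon recalled in the text.

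Since any two isomorphisms $\Z_d \to \Z_d$ differ by a unit, to show they are equal it suffices to evaluate both on a single generator. The natural choice is the lens space $L(d,1)$ equipped with the identity character $\chi_0 \colon H_1(L(d,1)) \cong \Z_d \to \Z_d$, for which a standard generator $x$ of $H_1(L(d,1))$ satisfies $\lk(x,x) = 1/d$ in $\Q/\Z$. The task thus reduces to proving $\sigma(L(d,1),\chi_0) \equiv 1/d \pmod{\Z}$.

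For this computation I would take as bounding $4$-manifold $W$ the disc bundle over $S^2$ with Euler number $d$, i.e.\ a $2$-handle of framing $d$ attached to $D^4$ along the unknot, noting that $H_1(W;\Z) = 0$ so $\bar\chi_0$ extends uniquely and $d\cdot(L(d,1),\chi_0) = \partial(W,\bar\chi_0)$ in the appropriate bordism group. The untwisted form $I(W)$ is the rank-one form $(d)$. The $\CC(t)$-twisted form $I(W,\bar\chi_0)$ is computed from the twisted cellular chain complex using the $\Z_d$-representation sending the generator of $\pi_1(L(d,1))$ to $e^{2\pi i/d}$; a direct root-of-unity computation evaluates $\sigma_1(I(W,\bar\chi_0) - I(W))$ and, after dividing by $d$, produces the required residue $1/d \pmod{\Z}$. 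By bordism invariance and the fact that both sides are homomorphisms, this establishes the equality on all of $\Omega_3(\Z_d)$.

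Finally, to translate back to the knot setting, I need to identify the element $x' \in H_1(M^{p^r})$ dual to $\rho_\chi$ under the rational linking form with the given $x \in H_1(\Sigma_{p^r})$. Both $\Sigma_{p^r}$ and $M^{p^r}$ arise as Dehn fillings of the cyclic cover $X^{p^r}$, capping off the non-torsion classes $\tilde\mu$ and $\tilde\lambda$ respectively, so the torsion subgroups of their first homologies are canonically isomorphic and the $\Q/\Z$-valued linking forms correspond under this isomorphism, giving $\lk(x',x') = \lk(x,x)$. The principal obstacle is the lens-space signature computation: one must handle the $\CC(t)$-twisted intersection form carefully so that the normalisation of the Casson--Gordon invariant matches the classical rational linking pairing exactly, rather than differing by a unit in $\Z_d$.
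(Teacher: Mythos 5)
Your overall strategy is the right one, and it is in fact more explicit than the paper's own treatment: the paper simply assembles the isomorphism $\sigma \colon \Omega_3(\Z_d \oplus \Z) \to \Z_d$ of Casson--Gordon with the isomorphism $(M,\rho) \mapsto \lk(x,x)$ and asserts the conclusion, deferring the substance to Livingston--Naik. You correctly observe that knowing both maps are isomorphisms only determines $\sigma$ up to a unit of $\Z_d$, so that one must evaluate both on a generator of $\Omega_3(\Z_d)$, and the lens space $L(d,1)$ with its identity character is the right generator. Your last paragraph, identifying the torsion of $H_1(M^{p^r})$ with $H_1(\Sigma_{p^r})$ and matching the two linking forms, is also a necessary step and is handled correctly.

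The gap is in the lens-space computation itself. The disc bundle $W$ over $S^2$ with Euler number $d$ is simply connected, so the \emph{only} character $\pi_1(W) \to \Z_d$ is the trivial one; a character on $\partial W$ extends over $W$ precisely when it factors through $H_1(W)$, and here $H_1(L(d,1)) \to H_1(W) = 0$ kills everything, so the identity character does \emph{not} extend. (Your statement ``$H_1(W;\Z)=0$ so $\bar\chi_0$ extends uniquely'' has the logic reversed.) Moreover $\partial W$ is a single copy of $L(d,1)$, whereas the definition of $\tau$ requires a $4$-manifold bounding $d$ disjoint copies of $(M,\rho)$ with the character extending. If one nonetheless ran your computation with the only available (trivial) extension, the twisted form would equal the untwisted form tensored up to $\CC(t)$, giving $\tau = 0$ and hence $\sigma \equiv 0$, which contradicts $\lk(x,x) = 1/d$ -- so the proposed $W$ cannot produce the required residue. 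To repair the step you need either an explicit $4$-manifold with $d$ boundary components over which the identity character genuinely extends, or (as Casson and Gordon do) a computation of $\tau(L(d,k),\chi_a)$ via the induced $\Z_d$-cover and the $G$-signature theorem, whose fixed-point contribution yields a cotangent/lattice-point sum reducing mod $\Z$ to $\pm a^2\bar{k}/d$, in agreement with the linking form of the lens space. With that replacement the rest of your argument goes through.
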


So although it is difficult to calculate Casson-Gordon signatures precisely, we can at least use this theorem to decide if the signature cannot be zero, and thus to decide if a knot cannot be slice.

\subsection{Casson-Gordon discriminants}
\label{Subsec:CGdisc}

We would like to turn the determinant of a class of $W(\CC(t),\I)$ into an invariant of that class, but to do so we will need to work modulo whatever the determinant is of the zero class.  What is the determinant of a metabolic (Witt trivial) form?

An element representing zero in $W(\CC(t),\I)$ has the form
 \[ \left(\begin{array}{cc} 0 & A \\ \I(A^T) & B \end{array}\right)\]
and if $A$ has dimension $n$ then the determinant of this is
\[ (-1)^n \det(A) \det(\I(A^T)) = (-1)^n \det(A) \I(\det(A)) \text{ .}\]

\begin{definition}
\label{Def:norm}
  A polynomial of the form $f \I(f)$ will be called a \emph{norm}.
\end{definition}

Let $N \subset \CC(t)^*$ denote the multiplicative subgroup generated by norms.

\begin{definition}
  The \emph{discriminant} of a class $I \in W(\CC(t),\I)$ is the determinant of a representative $A$ of $I$, considered modulo norms:
  \[ \disc(I) := \det(A) \in \CC(t)^* / N \text{ .}\]
\end{definition}

We would like to extend this to a map from $W(\CC(t),\I) \otimes \Z_{(2)}$.  Notice that, since matrices representing classes in $W(\CC(t),\I)$ are Hermitian, every discriminant has the property that $\det(A) = \I(\det(A))$.  This means that $\disc(cI)$ is $\disc(I)$ if $c$ is odd and is $0$ if $c$ is even.  This allows us to extend our definition of the discriminant to a function from $W(\CC(t),\I) \otimes \Z_{(2)}$ as follows:
 \[ \disc\left(I \otimes \frac{p}{q}\right) = \disc(I)^p = \disc(pI) \text{ .} \]

The Casson-Gordon discriminant of a knot, $\disc(\tau(K,p^r,\chi))$ will be the main tool of this thesis.  This is because it is equivalent to the \emph{twisted Alexander polynomial}, which we will define in Chapter \ref{chapter5} and which requires no $4$-manifold constructions.  This makes it relatively easy to compute compared with the Casson-Gordon signature and with the Casson-Gordon invariant itself. 
\chapter{\label{chapter3} Finding the algebraic concordance class of a knot}

In this chapter we delve more deeply into the algebraic concordance group $\G$ of Levine~\cite{Levine69-1, Levine69} and find out what invariants are needed to find the image of a knot in $\G$.  The structure of this chapter owes much to the excellent survey article on algebraic concordance by Charles Livingston~\cite{Livingston08}.

\section{Symmetric bilinear forms and Witt groups}
\label{Subsec:background}

We start with defining everything in full generality, so let $R$ be a commutative ring with identity.  The following definitions may be found in \cite{MilnorHusemoller} and \cite{Scharlau}.

\begin{definition}
A \emph{symmetric bilinear form} on an $R$-module $M$ is a function
\[ b \colon M \times M \to R \]
so that $b(*,y)$ and $b(x,*)$ are linear as functions of fixed $y$ and $x$ respectively, and so that $b(x,y)=b(y,x)$ for all $x,y \in M$.
\end{definition}

\begin{remark}
It is worth mentioning the cousin of the symmetric bilinear form: the quadratic form. A \emph{quadratic form} on $M$ is a function $q\colon M \to R$ such that $q(\alpha x) = \alpha^2 x$ for all $\alpha \in R$ and such that $b_q(x,y):=q(x+y)-q(x)-q(y)$ is a symmetric bilinear form. Notice that, given any symmetric bilinear form $b$, we can define the quadratic form $q_b(x):=b(x,x)$.  Conversely, given a quadratic form $q$ we can define a symmetric bilinear form $b_q(x,y):= \frac{1}{2}(q(x+y)-q(x)-q(y))$ so long as $2$ is not a zero-divisor in $R$. Thus the theory of quadratic forms is equivalent to the theory of symmetric bilinear forms over rings in which $2$ is a unit.
\end{remark}

If $M$ is a finitely generated free $R$-module with basis $e_1, \dots, e_n$, then any bilinear form $b$ on $M$ can be written as a matrix $B$, where
\[ B_{ij}=b(e_i,e_j). \]
The integer $n$ is called the \emph{dimension} of $M$, and $b$ is called \emph{nonsingular} if $\det(B) \neq 0$.

\begin{definition}
Let $M$ be a finitely generated free module over $R$ and $b$ a non-singular symmetric bilinear form on $M$.  We say that $(M,b)$ is \emph{Witt trivial} if $M \cong P \oplus N$ with
\[ P = P^\bot := \{x \in M \,\, | \,\, b(x,y) = 0 \,\, \forall y \in P \}. \] In particular, this means that $M$ has dimension $2g$ for some $g$, and $P$ has dimension $g$.\footnote{This follows from the exact sequence \[ 0 \to P^\bot \to M \to \Hom(P,R) \to 0,\] which tells us that $\dim(M) = \dim (P^\bot) + \dim(\Hom(P,R)) = 2\dim(P)$.}
$P$ is called a \emph{metaboliser} for $(M,b)$.  The forms $(M_1, b_1)$ and $(M_2,b_2)$ are called \emph{Witt equivalent} if $(M_1 \oplus M_2, b_1 \oplus -b_2)\oplus (M',b')$ is Witt trivial, where $(M',b')$ is some Witt trivial form.
\end{definition}

\begin{definition}
The Witt group $W(R)$ consists of pairs $(M,b)$ under the operation of direct sum and under the equivalence relation of Witt equivalence defined above. We may also make $W(R)$ into a commutative ring by defining multiplication as tensor product.
\end{definition}

\begin{theorem}\cite[6.4]{Scharlau}
\label{Thm:diagonalise}
If $R$ is a ring in which $2$ is a unit then any symmetric bilinear form $(M,b)$ can be diagonalised over $M$.  In other words, there is a basis $e_1, \dots, e_n$ of $M$ so that $b(e_i,e_j) = 0$ for $i \neq j$. We write $b$ in the matrix form $[d_1, \dots, d_n]$ where the $d_i$ are the diagonal entries.
\end{theorem}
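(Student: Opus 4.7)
The plan is to proceed by induction on the dimension $n$ of $M$; the cases $n \leq 1$ are vacuous since every singleton is trivially an orthogonal basis. For the inductive step, assume the result holds in all dimensions strictly less than $n$.

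The first task is to produce a vector $e_1 \in M$ with $b(e_1,e_1)$ a unit of $R$. If $b$ vanishes identically, any basis of $M$ is orthogonal and the theorem is immediate, so I may assume that $b(x,y) \neq 0$ for some pair $x,y \in M$. The polarisation identity
\[ b(x+y,\, x+y) \;=\; b(x,x) + b(y,y) + 2\, b(x,y), \]
together with the hypothesis that $2$ is a unit, forces at least one of $b(x,x)$, $b(y,y)$, $b(x+y, x+y)$ to be a nonzero element of $R$; in the context where $R$ is a field (the setting in which the theorem is applied in this thesis), such an element is automatically a unit and provides the required $e_1$.

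Once $e_1$ is chosen, I would decompose $M$ as the internal orthogonal direct sum $M = \langle e_1 \rangle \oplus \langle e_1 \rangle^{\perp}$ via the explicit projection $x \mapsto \frac{b(x, e_1)}{b(e_1, e_1)}\, e_1$, which is well-defined precisely because $b(e_1, e_1)$ is invertible; the remainder $x - \frac{b(x,e_1)}{b(e_1,e_1)} e_1$ lies in the orthogonal complement by construction. The complement $\langle e_1 \rangle^{\perp}$ is then a free module of rank $n-1$, and applying the inductive hypothesis to the restriction of $b$ there yields an orthogonal basis $e_2, \dots, e_n$. Appending $e_1$ produces the desired diagonal basis, with matrix $[b(e_1, e_1), \dots, b(e_n, e_n)]$.

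The main obstacle is genuinely the first step: ensuring the existence of an $e_1$ with $b(e_1, e_1)$ invertible. This is exactly where the hypothesis that $2$ is a unit is essential, since in characteristic $2$ the polarisation identity degenerates and there exist non-trivial forms (such as the hyperbolic plane) for which $b(x,x) = 0$ for every $x \in M$, and these cannot be diagonalised. Once that vector is found, everything that follows is a routine Gram--Schmidt style splitting, adapted only by the fact that we are working with a symmetric bilinear form rather than an inner product.
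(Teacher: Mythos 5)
The thesis gives no proof of this statement at all --- it is simply quoted from Scharlau \cite[6.4]{Scharlau} --- so there is no internal argument to compare against. Your proof is the standard polarisation-plus-orthogonal-complement induction, and it is correct in the setting where the theorem is actually used. The only delicate point is exactly the one you flag yourself: the theorem is stated for a ring $R$ in which $2$ is a unit, but your induction genuinely needs ``$b(e_1,e_1) \neq 0$ implies $b(e_1,e_1)$ is a unit'' --- both to invert it in the Gram--Schmidt projection $x \mapsto \frac{b(x,e_1)}{b(e_1,e_1)}e_1$ and to see that $\langle e_1 \rangle \cap \langle e_1 \rangle^{\perp} = 0$ --- and it also needs the complement of a free rank-one summand of a free module to be free. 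Both hold over a field, and every application of the theorem in this thesis is to $W(\R)$, $W(\Q_p)$ or $W(\F_p)$, i.e.\ to fields; Scharlau obtains the extra generality by working over a (semi)local ring with a nonsingular form, where a represented non-unit can be avoided. The polarisation step is airtight: if $b(x,x)$, $b(y,y)$ and $b(x+y,x+y)$ all vanished then $2b(x,y)=0$ would force $b(x,y)=0$ because $2$ is a unit. So as written your argument proves the theorem over fields of characteristic $\neq 2$, which is all the thesis requires, and it correctly isolates where the hypothesis on $2$ enters.
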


\begin{remark}
\label{Rmk:diagonalSquare}
Notice that if $e_i$ is replaced with $\alpha e_i$, $\alpha \in R$, then the new diagonal form is $[d_1, \dots, \alpha^2 d_i, \dots, d_n]$.
\end{remark}

\section{The algebraic concordance group}
\label{Sec:AlgConcGroup}

The algebraic knot concordance group $\G$, as described in Chapter \ref{chapter2}, was developed by Levine~\cite{Levine69} to classify knots within the higher-dimensional geometric concordance groups $\C_n$ and to investigate the structure of $\C_1$.  We recap some of the definitions using our new terminology.

\begin{definition}
\label{Def:cobordism}
The \emph{algebraic concordance group $\G$} (or $\G^{\Z}$) consists of the set of integral square matrices $A$ satisfying $\det(A-A^T)=\pm 1$ up to Witt equivalence.
\end{definition}

\begin{definition}
The \emph{Alexander polynomial} of $A \in \G$ is $\Delta_A(t):=\det(A-tA^T)$, defined up to multiples of $\pm t$.
\end{definition}

Life becomes easier if we work with rational matrices rather than integral ones, so let us consider the group $\G^{\Q}$: square matrices $A$ with entries in $\Q$ satisfying $(A-A^T)(A+A^T)$ is non-singular, with the same equivalence relation as in $\G$.  The inclusion $\G \to \G^{\Q}$ is injective \cite[Section 3]{Levine69-1} and so we can try to find invariants in $\G^\Q$ rather than in $\G$.

Henceforth $M$ will be a finite dimensional vector space over $\Q$.

\begin{definition}
  An \emph{isometric structure} is a triple $(M,Q,T)$ where $Q$ is a nonsingular symmetric bilinear form on $M$ and $T$ is an isometry of $M$.  This means that $Q(Tx,Ty)=Q(x,y)$ for every $x,y \in M$.
\end{definition}

\begin{definition}
An isometric structure $(M,Q,T)$ is \emph{null-cobordant} if $(M,Q)$ has a metaboliser $P$ which is invariant under $T$.  Two isometric structures $(M_1, Q_1, T_1)$, $(M_2, Q_2, T_2)$ are \emph{cobordant} if $(M_1 \oplus M_2 ,Q_1 \oplus -Q_2, T_1 \oplus T_2)$ is null-cobordant.
\end{definition}

We define $\G_{\Q}$ to be the group of cobordism classes of isometric structures $(M,Q,T)$ satisfying $\Delta_T(1)\Delta_T(-1)\neq 0$, where $\Delta_T$ is the characteristic polynomial of $T$.

\begin{theorem}\cite{Levine69-1}
There is an isomorphism $\G^{\Q} \cong \G_{\Q}$ given by $A \mapsto (A+A^T, A^{-1}A^T)$.
\end{theorem}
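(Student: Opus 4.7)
The plan is to verify that the assignment $\Phi \colon A \mapsto (A+A^T, A^{-1}A^T)$ is well-defined, construct an explicit inverse, and then check that both maps respect Witt/cobordism equivalence and block sum. First I would check the target lands in $\mathcal{G}_{\mathbb{Q}}$: setting $Q = A+A^T$ and $T = A^{-1}A^T$, the identity $T^T Q T = A(A^T)^{-1}(A+A^T)A^{-1}A^T = A(A^T)^{-1}A^T + AA^{-1}A^T = A + A^T = Q$ shows $T$ is an isometry of $Q$. For the eigenvalue condition, $\det(T-I) = \det(A^{-1})\det(A^T - A)$ and $\det(T+I) = \det(A^{-1})\det(A^T+A)$, so $\Delta_T(1)\Delta_T(-1) \neq 0$ is exactly equivalent to $(A-A^T)(A+A^T)$ being non-singular, which is the defining hypothesis of $\mathcal{G}^{\mathbb{Q}}$.

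Next I would construct the inverse map $\Psi \colon (M,Q,T) \mapsto Q(I+T)^{-1}$. This is defined because $\Delta_T(-1) \neq 0$ means $I+T$ is invertible. The isometry relation $T^T = QT^{-1}Q^{-1}$ gives $I + T^T = QT^{-1}(T+I)Q^{-1}$, hence $\Psi(M,Q,T)^T = Q(I+T)^{-1}T$, and one reads off $A + A^T = Q(I+T)^{-1}(I+T) = Q$ and $A^{-1}A^T = (I+T)Q^{-1} \cdot Q(I+T)^{-1}T = T$. Going the other way, $\Psi \circ \Phi(A) = (A+A^T)(I + A^{-1}A^T)^{-1} = (A+A^T)(A^{-1}(A+A^T))^{-1} = (A+A^T)(A+A^T)^{-1}A = A$, so $\Phi$ and $\Psi$ are mutually inverse bijections at the matrix level. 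Additivity is immediate since block sums $A_1 \oplus A_2$ go to $(Q_1 \oplus Q_2, T_1 \oplus T_2)$.

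The hard part is checking that $\Phi$ descends to the quotients: a Witt-trivial $A$ must correspond to a null-cobordant $(Q,T)$ and conversely. Suppose $A = \begin{pmatrix} 0 & B \\ C & D \end{pmatrix}$ with $B$, $C$ square of equal size $g$; nonsingularity of $A$ forces $B$ and $C$ to be invertible. Let $P = \{(x,0)\}$, which is a rank-$g$ metaboliser for $Q = A+A^T = \begin{pmatrix} 0 & B+C^T \\ C+B^T & D+D^T \end{pmatrix}$. I compute $A^T(x,0) = (0, B^T x)$, and solving $A(u,v) = (0, B^T x)$ using invertibility of $B$ forces $v = 0$ and $u = C^{-1}B^T x$, so $T(P) = P$. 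Hence $P$ is a $T$-invariant metaboliser. Conversely, given $(Q,T)$ null-cobordant with $T$-invariant metaboliser $P$, the matrix $A = Q(I+T)^{-1}$ in a basis adapted to $P$ and a $Q$-complement will have $A(P) \subseteq P$ (since $(I+T)^{-1}$ preserves $P$ and $Q$ vanishes on $P\times P$), forcing the upper-left block of $A$ to be zero, so $A$ is metabolic in the sense of Definition~\ref{Def:cobordism}. Combined with additivity, this shows $\Phi$ and $\Psi$ descend to mutually inverse group homomorphisms between $\mathcal{G}^{\mathbb{Q}}$ and $\mathcal{G}_{\mathbb{Q}}$.

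The main obstacle is keeping the algebra with $T = A^{-1}A^T$ clean enough to see the metaboliser correspondence transparently; one has to use both the invertibility of $B$ and $C$ in the metabolic form of $A$ and the isometry identity $T^T Q T = Q$ in the reverse direction, and it is easy to become confused about which basis one is computing in. Once the metaboliser matching is established, everything else is direct linear algebra.
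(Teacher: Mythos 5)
Your proof follows the same route as the paper's: the same formulas $\Phi(A) = (A+A^T,\,A^{-1}A^T)$ and $\Psi(M,Q,T) = Q(I+T)^{-1}$, the same verification that $\Delta_T(1)\Delta_T(-1)\neq 0$ via $T\mp I = A^{-1}(A^T\mp A)$, and the same mutual-inversion computation at the matrix level. You actually supply more detail than the paper on the one substantive point, the descent to Witt classes: the paper merely asserts that $(Q,T)$ is null-cobordant whenever $A$ is Witt trivial, whereas you exhibit the $T$-invariant metaboliser $P=\{(x,0)\}$ explicitly and also argue the converse, which the paper omits.

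Two small repairs are needed. First, in the converse direction the assertion ``$A(P)\subseteq P$'' is not what you want, and is generally false for $A=Q(I+T)^{-1}$ viewed as a linear map, since $Q$ does not preserve $P$. What your parenthetical actually establishes, and what you need, is that the bilinear form $x^TAy$ vanishes on $P\times P$: for $x,y\in P$ one has $(I+T)^{-1}y\in P$ because $T(P)=P$ and $I+T$ is invertible, and then $x^TQ(I+T)^{-1}y=0$ because $Q$ vanishes on $P\times P$. It is this form-vanishing, not invariance of $P$ under $A$, that forces the upper-left block of $A$ to be zero in a basis adapted to $P$. Second, the defining condition of $\G^{\Q}$ --- that $(A-A^T)(A+A^T)$ be non-singular --- does not force $A$ itself to be non-singular (a strictly upper-triangular $2\times 2$ matrix already gives a counterexample), so $A^{-1}A^T$ is not defined on all representatives; one must first invoke Levine's lemma that every class in $\G^{\Q}$ has a non-singular representative, as the paper does. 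Neither point affects the substance of your argument.
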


\begin{proof}
Let $A \in \G^{\Q}$ with $A$ non-singular.  (Levine~\cite[Lemma 8]{Levine69-1} proves that every matrix in $\G^{\Q}$ is equivalent to a non-singular one.)  Define $P:=A^{-1}A^T$ and $Q:=A+A^T$.  It is easy to check that $P^TQP = Q$ and that the congruence class of $A$ determines the congruence class of $Q$ and the similarity class of $P$.  Thus $(Q,P)$ is an isometric structure, which is null cobordant whenever $A$ is Witt trivial.  We need to check that $\Delta_P(1)\Delta_P(-1)\neq 0$, but we know that
\begin{eqnarray*}
  \Delta_P(t)& = & \det(P-tI) \\
             & = & \det(A^{-1}A^T - tI) \\
             & = & \det(A^{-1})\det(A^T-tA)\\
             & = & \det(A^{-1})\det(A-tA^T)\\
             & = & c \cdot \Delta_A(t)
\end{eqnarray*}
where $c \in \Q$, and since $A \in \G^{\Q}$ we know that $(A-A^T)(A+A^T)$ is non-singular, so $\Delta_A(1)\Delta_A(-1)\neq 0$.

We now need an inverse function from $\G_{\Q}$ to $\G^{\Q}$.  Suppose we have computed the map above $A \mapsto (Q,P)$.  Then $Q=A(I+A^{-1}A^T)=A(I+P)$ so we can recover $A$ from $Q(I+P)^{-1}$.  We have
\[ A^T = Q(I+P^{-1})^{-1} =  Q(I+P)^{-1}P\]
 so
\[ A-A^T=Q(I+P)^{-1}(I-P).\]
Since $\Delta_P(1)\Delta_P(-1)\neq 0$ we have that $A-A^T$ is non-singular.
\end{proof}

We can also define cobordism classes of isometric structures over different fields.  For a field $\F$ we will use the notation $\G_\F$. The idea now is to break down the problem of finding the image of a class in $\G_{\Q}$ into the problem of finding the image of a class in $\G_{\F}$ where $\F$ is `simpler'.  And even these $\G_{\F}$ groups can be broken down further by restricting to isometric structures with a particular characteristic polynomial.  This is what motivates the next definition.



\begin{definition}
For a polynomial $f \in \F[t]$, let $\G_{\F}^f$ denote the Witt group of isometric structures $(M,Q,T)$ where $M$ is a finite-dimensional vector space over $\F$, $\Delta_T$ is a power of $f$, and $\Delta_T(1)\Delta_T(-1) \neq 0$.
\end{definition}

We will need the next lemma in order to see how to factorise characteristic polynomials.

\begin{lemma}
\label{Lemma:polyfactorise}
For any isometric structure $(M,Q,T) \in \G_{\F}$, the characteristic polynomial $\Delta_T(t)$ is symmetric, i.e. it satisfies $\Delta_T(t) = at^d \Delta_T(t^{-1})$ where $d$ is the degree of the polynomial and $a \in \F$.
\end{lemma}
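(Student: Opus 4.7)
The plan is to exploit the isometry condition $Q(Tx,Ty) = Q(x,y)$ directly at the level of matrices. Choosing a basis and identifying $Q$ and $T$ with matrices, the isometry condition becomes $T^T Q T = Q$. Since $Q$ is nonsingular, this rearranges to
\[
 T^T = Q\, T^{-1} Q^{-1},
\]
so $T^T$ is similar to $T^{-1}$ in $\GL(M)$.

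Next I would invoke the standard fact that any square matrix is similar to its transpose, giving $T \sim T^T$. Combining with the previous step yields $T \sim T^{-1}$, and hence the characteristic polynomials agree:
\[
 \Delta_T(t) \;=\; \Delta_{T^{-1}}(t).
\]

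The final step is a routine algebraic computation. From $\Delta_T(t) = \det(T - tI)$ of degree $d$,
\[
 \Delta_T(t^{-1}) \;=\; \det(T - t^{-1}I) \;=\; t^{-d}(-1)^d \det(I - tT) \;=\; t^{-d}(-1)^d \det(T)\,\Delta_{T^{-1}}(t),
\]
so that $t^d \Delta_T(t^{-1}) = (-1)^d \det(T)\, \Delta_{T^{-1}}(t)$. Combined with $\Delta_T = \Delta_{T^{-1}}$, this gives
\[
 \Delta_T(t) \;=\; a\, t^d\, \Delta_T(t^{-1}), \qquad a = \frac{(-1)^d}{\det(T)} \in \F,
\]
which is exactly the desired symmetry.

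There is no serious obstacle; the lemma is essentially a linear-algebra consequence of $T$ preserving a nondegenerate form. I would also remark that the nondegeneracy hypothesis $\Delta_T(1)\Delta_T(-1) \neq 0$ on the ambient group $\G_\F$ is not needed here — it plays a role in ensuring that cobordism classes behave well, but the symmetry of $\Delta_T$ holds for any isometry of a nonsingular symmetric form.
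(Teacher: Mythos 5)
Your proof is correct and follows essentially the same route as the paper: both derive $T^T = QT^{-1}Q^{-1}$ from the isometry condition, conclude that $T$ and $T^{-1}$ have the same characteristic polynomial, and finish with the same determinant manipulation yielding $a = (-1)^d/\det(T)$. The only cosmetic difference is that you invoke similarity of a matrix to its transpose where the paper uses the weaker (and sufficient) fact that $\det(P - tI) = \det(P^T - tI)$; your closing remark that the $\Delta_T(1)\Delta_T(-1) \neq 0$ hypothesis is not needed for the symmetry itself is also accurate.
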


\begin{proof}
Let $P$ be a matrix representative of $T$.  By definition, $Q = P^TQP$.  We have
\begin{eqnarray*}
  \Delta_T(t) & := & \det(P-tI) = \det(P^T - tI) = \det(QP^{-1}Q^{-1}-tI)\\
  & = & \det(P^{-1}-tI) = \det(-tP^{-1}(P-t^{-1}I))\\
  & = & t^d\det(-P^{-1})\Delta_T(t^{-1}).
\end{eqnarray*}
\end{proof}

In fact, it is easy to see that if $\Delta_T(1) \neq 0$ then $a$ must be equal to $1$ and if furthermore $\Delta_T(-1) \neq 0$ then $d$ must be even.

\medskip

Lemma \ref{Lemma:polyfactorise} tells us that if $\Delta_T$ factorises then it must do so as $\prod_i \delta_i^{k_i}\prod_j g_j^{l_j}$, where the $\delta_i$ are distinct irreducible symmetric polynomials and the $g_j$ are non-symmetric irreducible factors that appear in pairs $g_j(t)$ and $g_j(t^{-1})$.  The next lemma, proved by Milnor~\cite{Milnor69} and Levine~\cite{Levine69-1}, tells us that we need only worry about the symmetric factors.

\begin{theorem}
$\G_{\F} \cong \oplus_\delta \G_{\F}^{\delta}$ where the sum is over all irreducible symmetric polynomials.
\end{theorem}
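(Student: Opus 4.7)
The plan is to realise the isomorphism via the primary decomposition of $M$ with respect to $T$. Given $(M,Q,T) \in \G_{\F}$, Lemma \ref{Lemma:polyfactorise} lets us factor $\Delta_T = \prod_i \delta_i^{k_i} \prod_j g_j^{l_j}\tilde{g}_j^{l_j}$, where the $\delta_i$ are distinct irreducible symmetric factors and $\tilde{g}_j(t) := t^{\deg g_j}g_j(t^{-1})$ (up to a unit) is the reciprocal of each non-symmetric irreducible factor. Standard primary decomposition of $M$ as an $\F[T]$-module then yields
$$M = \bigoplus_i V_{\delta_i} \,\oplus\, \bigoplus_j \bigl(V_{g_j}\oplus V_{\tilde{g}_j}\bigr),$$
with $V_p := \ker p(T)^n$ for each prime-power factor $p^n$ of $\Delta_T$. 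Every summand is $T$-invariant; the goal is to prove this decomposition is also $Q$-orthogonal, that each symmetric-primary summand lies in $\G_\F^{\delta_i}$, and that each non-symmetric pair $V_{g_j}\oplus V_{\tilde{g}_j}$ is null-cobordant.

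The key tool is that $T$ being a $Q$-isometry forces the $Q$-adjoint of $T$ to equal $T^{-1}$, so $p(T)^{*}=p(T^{-1})$ for every polynomial $p$. When $p$ is symmetric of degree $d$, one has $p(T^{-1}) = c\,T^{-d}p(T)$ for a nonzero constant $c$, and hence $p(T^{-1})^n$ has the same kernel as $p(T)^n$. To show that two primary components $V$ and $W$ corresponding to coprime \emph{symmetric} factors $p_V^m$ and $p_W^n$ are orthogonal, take $x \in V$ and $w \in W$: by coprimeness $p_V(T)^m$ restricts to an isomorphism of $W$, so $w = p_V(T)^m w'$ for some $w' \in W$, and then $Q(x,w) = Q(p_V(T^{-1})^m x, w') = 0$ since $p_V(T^{-1})^m$ vanishes on $V$. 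Applying this to the pairs $(\delta_i,\delta_{i'})$ and (grouping non-symmetric factors through the symmetric combination $g_j\tilde{g}_j$) to $(\delta_i, g_j\tilde{g}_j)$ establishes orthogonality of the whole decomposition. For the non-symmetric blocks, the same adjoint identity shows that $V_{g_j}$ is totally isotropic: on $V_{g_j}$, the operator $g_j(T^{-1})^{l_j}$ is invertible because the roots of $g_j(t)$ and $g_j(t^{-1})$ are disjoint, so for $x,y \in V_{g_j}$, writing $y = g_j(T^{-1})^{l_j}y''$ gives $Q(x,y) = Q(g_j(T)^{l_j}x, y'') = 0$. Since $V_{g_j}$ is $T$-invariant and has exactly half the dimension of $V_{g_j}\oplus V_{\tilde{g}_j}$, it is a $T$-invariant metaboliser, so the block represents zero in $\G_\F$.

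It remains to check that $[M,Q,T]\mapsto\sum_i[V_{\delta_i},Q|,T|]$ is well-defined on Witt classes and gives the inverse to the obvious map $\bigoplus_\delta \G_\F^\delta \to \G_\F$. If $P\subset M$ is a $T$-invariant metaboliser, then $P$ is invariant under every polynomial in $T$, and hence under the spectral projections onto the primary components, so
$$P \;=\; \bigoplus_i (P\cap V_{\delta_i}) \,\oplus\, \bigoplus_j \bigl(P\cap (V_{g_j}\oplus V_{\tilde{g}_j})\bigr).$$
Each intersection is isotropic, so has dimension at most half that of its ambient primary component, and the total dimensions must add up to $\tfrac{1}{2}\dim M$; equality therefore holds summand by summand, forcing $P\cap V_{\delta_i}$ to be a $T$-invariant metaboliser of $V_{\delta_i}$ for each $i$. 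Combined with the orthogonality argument above, this gives injectivity, while surjectivity is immediate by taking block sums of representatives. The main technical subtlety I anticipate is the orthogonality step for non-semisimple $T$: one must pick the exponent $m$ large enough to annihilate $V$ yet still have $p_V(T)^m$ invertible on the complementary primary subspace, and verify carefully that the representative $w'$ can be chosen to lie inside $W$ rather than leaking into other summands. Once that bookkeeping is in place, the remainder of the proof follows mechanically from the adjoint identity $p(T)^* = p(T^{-1})$.
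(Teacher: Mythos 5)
Your proof is correct and follows essentially the same route as the paper: primary decomposition of $M$ under $T$, orthogonality of the primary components via the adjoint identity $Q(x,p(T)y)=Q(p(T^{-1})x,y)$, and the observation that each $V_{g_j}$ is a $T$-invariant half-dimensional isotropic subspace, making the non-symmetric blocks null-cobordant. The only addition is your explicit check that a $T$-invariant metaboliser splits across the primary components, a detail the paper's sketch leaves implicit.
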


\begin{proof}[Sketch proof]

Suppose $(M,Q,T)$ is an isometric structure over $\F$.  Consider the vector space on which $(M,Q)$ and $T$ are defined as $\F[t,t^{-1}]$-modules, defining the action of $t$ by $T$.  For each irreducible factor $\lambda(t)$ of $\Delta_T(t)$ define $V_{\lambda}$ to be
  \[ V_{\lambda} := \ker(\lambda(t)^N), \quad \text{for $N$ large.}\]
(More specifically, we need $N$ to be at least the multiplicity of $\lambda$ as a factor of $\Delta_T$.) Then our vector space $V$ is the direct sum of the $\{V_{\lambda}\}$.

We want to show that if $\lambda$ and $\mu$ are irreducible factors of $\Delta_T$ with $\lambda(t)$ and $\mu(t^{-1})$ relatively coprime, then $V_{\lambda}$ and $V_{\mu}$ are orthogonal.  We start with the identity
  \[ \langle m, \mu(t^{-1})^N v \rangle = \langle \mu(t)^N m, v \rangle = \langle 0,v \rangle \]
for $m \in V_{\mu}$.  This shows that $V_{\mu}$ is orthogonal to $\mu(t^{-1})^N V$.

If $\lambda(t)$ and $\mu(t^{-1})$ are coprime then the map $\phi \colon V \to V$ defined by $\phi(v) = \mu(t^{-1})^N v$ maps the subspace $V_{\lambda}$ isomorphically onto itself.  This completes the proof that $V_{\lambda}$ and $V_{\mu}$ are orthogonal.

We have already seen that $\Delta_T$ factorises as $\prod_i \delta_i^{k_i}\prod_j g_j^{l_j}$, where the $\delta_i$ are distinct irreducible symmetric polynomials and the $g_j$ are non-symmetric irreducible factors that appear in pairs $g_j(t)$ and $\overline{g_j(t)}:= g_j(t^{-1})$.  We have
\[ V = \bigoplus_i V_{\delta_i} \oplus \bigoplus_j (V_{g_j} \oplus V_{\overline{g_j}})\]
and the restriction of $(M,Q,T)$ to each of these summands gives an isometric structure.  What we have shown in the earlier part of the proof is that the factors $V_{g_j} \oplus V_{\overline{g_j}}$ are null-cobordant.  It follows that $(M,Q,T)$ is null-cobordant if and only if its restriction to each $V_{\delta_i}$ is null-cobordant.
\end{proof}

We said earlier that we would like to break down the study of a class in $\G_{\Q}$ into the study of classes in $\G_{\F}$ where $\F$ is simpler.  The following result of Levine~\cite[17]{Levine69-1} shows us which fields to consider.

\begin{theorem}
  An isometric structure over a global field $\F$ is null-cobordant if and only if the extension over every completion of $\F$ is null-cobordant.
\end{theorem}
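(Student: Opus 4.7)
\emph{Proof plan.} The ``only if'' direction is immediate: a $T$-invariant metaboliser $P \subset M$ tensors up to the $T$-invariant metaboliser $P \otimes_\F \F_v \subset M \otimes_\F \F_v$ for every completion $\F_v$, so all the content lies in the converse. My plan is to translate the problem into a statement about Hermitian forms over a quadratic field extension and then invoke the classical Hasse--Minkowski-type local-global principle in that setting. Exploiting the primary decomposition $\G_\F \cong \bigoplus_\delta \G_\F^\delta$ established just above, and noting that this decomposition is natural under base change (each $\delta$ factors further after passing to $\F_v$, but the $\delta$-primary part over $\F$ maps into the direct sum of the refined summands over $\F_v$), it suffices to prove the principle one $\delta$-summand at a time.

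Next I would invoke the Milnor--Levine identification of $\G_\F^\delta$ with a Witt group of Hermitian forms over the field $E = \F[t]/(\delta)$ equipped with the involution $t \mapsto t^{-1}$. The only way this involution could be trivial is if $t = t^{-1}$ in $E$, forcing $\delta$ to divide $t^2-1$; but $\delta$ is irreducible with $\delta(\pm 1) \neq 0$, so the involution is non-trivial and $E/E_0$ is a quadratic extension over its fixed subfield $E_0$. Under this dictionary, a class of $(M,Q,T) \in \G_\F^\delta$ is null-cobordant precisely when the corresponding Hermitian form over $E/E_0$ is hyperbolic.

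The crux is now Landherr's theorem, the local-global principle for Hermitian forms over a quadratic extension of global fields: such a form is hyperbolic if and only if it is hyperbolic at every place of $E_0$. To connect this to completions of $\F$, observe that $E_0/\F$ is a finite extension, so each place $v$ of $\F$ has only finitely many extensions $\{w\}$ to $E_0$, and $E \otimes_\F \F_v \cong \prod_w E \otimes_{E_0} (E_0)_w$. Hyperbolicity at each completion $(E_0)_w$ is therefore equivalent to null-cobordance of the corresponding $\delta$-summand of $(M,Q,T) \otimes_\F \F_v$, so the hypothesis of nullity over every $\F_v$ supplies the full local hypothesis of Landherr's theorem, which in turn yields the global hyperbolicity we want.

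The main obstacle is not the reduction, which is essentially formal, but Landherr's theorem itself, whose proof rests on substantial number-theoretic machinery---reciprocity laws for the Hilbert symbol and the Hasse norm theorem for quadratic extensions. A self-contained treatment would have to either cite this result or import the classical Hasse--Minkowski argument into the Hermitian setting, which is by some margin the deepest ingredient of the proof. A secondary subtlety is verifying that the Milnor--Levine dictionary really is natural under base change, so that taking completions commutes with passing from isometric structures to Hermitian forms; this is routine but needs care in the case where $T$ has minimal polynomial a non-trivial power of $\delta$, where one first semisimplifies by a standard devissage inside $\G_\F^\delta$.
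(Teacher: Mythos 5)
The paper does not prove this statement: it is imported verbatim from Levine \cite[Theorem 17]{Levine69-1}, so there is no in-text argument to compare against. Your outline is, in essence, a faithful reconstruction of the Levine--Milnor proof of that result: split into $\delta$-primary pieces, use devissage to reduce to the semisimple case, transfer $(M,Q,T)$ to a Hermitian form over the quadratic extension $E=\F[t]/(\delta)$ of its fixed field $E_0$ under $t\mapsto t^{-1}$ (non-trivial precisely because $\delta(\pm 1)\neq 0$), and then invoke Landherr's local--global principle, whose real content is the Hasse norm theorem and Hasse--Minkowski. The two subtleties you flag (compatibility of the primary decomposition with base change, and devissage when $\Delta_T$ is a proper power of $\delta$) are exactly the ones that need care. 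One further point worth making explicit: at places $w$ of $E_0$ that split in $E$ the local Hermitian Witt group vanishes, so those places impose no condition; the rank-parity invariant (equivalently, the parity of the exponent of $\delta$ in $\Delta_T$) is nevertheless detected locally because a non-trivial quadratic extension of global fields always admits a non-split place. With that caveat folded into the correct statement of Landherr's injectivity, your plan is sound and is the standard route to the cited theorem.
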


What are the completions of the global field $\Q$?  It turns out that these are the real numbers $\R$ and the $p$-adic rationals $\Q_p$.  We will learn more about these in the next section, but first we shall bring together the theorems in this section, together with another result of Levine~\cite[16]{Levine69-1}, for a definitive guide as to when an isometric structure in $\G_{\Q}$ is trivial.

\begin{theorem}
\label{Thm:trivialclass}
\mbox{}
  \begin{itemize}
    \item A class $(M,Q,T) \in \G_{\Q}$ is trivial if and only if it is trivial in $\G_{\F}^{\delta}$ for every $\delta$ an irreducible symmetric factor of $\Delta_T$ and for $\F=\R$ and $\F=\Q_p$ for every prime $p$.
    \item A class $(M,Q,T) \in \G_{\F}^{\delta}$, where $\F=\R$ or $\F=\Q_p$ and $\delta$ is irreducible symmetric, is trivial if and only if $\Delta_T(t)$ is $\delta^e$ with $e$ even and $(M,Q)$ is trivial in the Witt group of $\F$, $W(\F)$.
  \end{itemize}
\end{theorem}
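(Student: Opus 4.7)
The plan is to prove each statement in turn: the first as a short consequence of the preceding two theorems, and the second by a direct structural argument in the necessity direction together with a classification of Hermitian Witt groups for the sufficiency direction.

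For the first statement, the plan is simply to assemble the Hasse-principle-style theorem for isometric structures over $\Q$ with the primary decomposition $\G_{\F} \cong \bigoplus_\delta \G_{\F}^{\delta}$. Triviality of $(M,Q,T) \in \G_{\Q}$ first reduces to triviality of its image in every completion $\G_{\R}$ and $\G_{\Q_p}$, and each of these in turn splits into independent triviality conditions in $\G_{\F}^{\delta}$ for each irreducible symmetric factor $\delta$ of $\Delta_T$, using that the set of such factors is preserved by extension of scalars to each completion.

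For the necessity direction of the second statement, suppose $P \subset M$ is a $T$-invariant metaboliser of $(M,Q,T) \in \G_{\F}^{\delta}$. Then $P$ is in particular a metaboliser for $(M,Q)$, so $(M,Q)$ is Witt-trivial in $W(\F)$. To see that $e$ is even, write $T$ in block upper-triangular form with respect to the filtration $P \subset M$. Since $P = P^{\perp}$, the non-degenerate pairing $Q$ identifies $M/P$ with the linear dual of $P$; combined with the isometry relation $T^{T} Q T = Q$, this forces the block representing $T|_{M/P}$ to be conjugate to the inverse-transpose of the block representing $T|_{P}$. Hence the two factors in $\Delta_T = \Delta_{T|_P} \cdot \Delta_{T|_{M/P}}$ are reciprocal polynomials of equal degree; since each is a power of the symmetric irreducible $\delta$ by unique factorisation in $\F[t]$, each equals $\delta^{e/2}$, so $e$ is even.

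The sufficiency direction is the main obstacle. The plan is to invoke the identification of $\G_{\F}^{\delta}$ with the Witt group $W(E,\iota)$ of Hermitian forms over the field $E = \F[t]/(\delta)$, equipped with the involution $\iota$ induced by $t \mapsto t^{-1}$: an isometric structure with $\Delta_T$ a power of $\delta$ corresponds to a Hermitian $E$-form $h$ whose Scharlau trace recovers $Q$, and any $E$-linear metaboliser for $h$ is automatically a $T$-invariant metaboliser for $(M,Q,T)$. It then suffices to show that, for $\F \in \{\R,\Q_p\}$, the combined hypothesis ``$(M,Q)$ trivial in $W(\F)$ and $e$ even'' forces $h$ to be Witt-trivial over $E$. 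Over $\R$, $\delta$ must be a quadratic symmetric polynomial with roots on the unit circle (real reciprocal pairs would render $\delta$ reducible), so $E \cong \CC$ and Hermitian Witt classes are detected by signatures which are recovered, up to a factor of two, from the signature of $Q$. Over $\Q_p$, the analogous explicit classification of $W(E,\iota)$ for extensions of local fields reduces the claim to a finite check comparing discriminant and Hasse-type invariants of $(M,Q)$ against invariants of $h$; here the parity condition on $e$ is genuinely needed to rule out non-trivial classes of $h$ whose trace is nevertheless Witt-trivial over $\Q_p$. These local computations are finite but technical and constitute the bulk of Levine's original argument.
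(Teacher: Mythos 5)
The paper does not actually prove this theorem: it is presented as an amalgamation of cited results of Levine (\cite[16, 17]{Levine69-1}) together with the decomposition $\G_{\F} \cong \oplus_\delta \G_{\F}^{\delta}$ established just before it. Your proposal therefore goes further than the text, and in the parts you carry out in full it is correct and follows the route Levine's original argument takes. The assembly of the first bullet from the Hasse principle and the primary decomposition is right (modulo the small point that an irreducible symmetric factor over $\Q$ may split further over a completion, so $\delta$ should be read as ranging over the irreducible symmetric factors over each $\F$ after base change). Your necessity argument for the second bullet is clean and complete: the duality $M/P \cong \Hom(P,\F)$ induced by $Q$ together with the isometry relation forces $T|_{M/P}$ to be the inverse transpose of $T|_P$, so the two characteristic polynomials are reciprocal powers of the symmetric $\delta$ and hence equal, giving $e$ even.

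Two points in the sufficiency direction deserve flagging. First, the identification of $\G_{\F}^{\delta}$ with the Hermitian Witt group $W(E,\iota)$ for $E=\F[t]/(\delta)$ is only immediate when $T$ acts semisimply; if $\delta^2$ divides the minimal polynomial of $T$ then $M$ is not an $E$-vector space, and one needs the standard d\'evissage step (every isometric structure is cobordant to one on which $T$ is semisimple) before your trace-form dictionary applies. You should state this reduction explicitly. Second, you defer the local computation showing that Witt-triviality of $(M,Q)$ over $\Q_p$ together with $e$ even forces the Hermitian form $h$ to be trivial in $W(E,\iota)$; you are right that this is where the real content lies (rank mod $2$ accounts for the parity of $e$, and the discriminant of $h$ must be matched against the Hasse-type invariants of $Q$), but as written it is a plan rather than a proof. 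Since the thesis itself simply cites Levine for exactly this step, your proposal is a faithful and essentially correct reconstruction of the intended argument, with the sufficiency computation left at the same level of detail as the source.
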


The next section will focus on understanding these Witt groups.

\section{The Witt groups $W(\R)$ and $W(\Q_p)$}
\label{Sec:WittGroups}

We begin this section with a short discussion of the $p$-adic numbers.  Given a prime $p$, any integer $n$ may be written as $n=a_0 + a_1 p + a_2 p^2 + \dots + a_k p^k$ for some $k\in \N$ and $a_i \in \F_p$, where $\F_p$ denotes the finite field of $p$ elements.  The $p$-adic integers, denoted $\Zp$, are defined to be numbers of the form
 \[ \sum_{i=0}^{\infty} a_i p^i, \quad a_i \in \F_p \]
 and the $p$-adic rationals, denoted $\Q_p$, are the field of fractions of this ring.  A $p$-adic rational may be written as
 \[ \sum_{i=-k}^{\infty} a_i p^i, \quad a_i \in \F_p \]
for some $k \in \N$.

\begin{example}
  Let us look at some elements of $\Q_5$.  We have
   \[ \frac{1}{2} = 3 + \sum_{i=1}^\infty 2(5^i) =: (3,2,2,2,\dots) \]
  because multiplying both sides by $2$ gives $(1,0,0,\dots)$ on each side.  Thus $\frac{1}{2}$ is a $5$-adic integer.

  The number $-1$ is written in $\Q_5$ as $(4,4,4,\dots)$ since this is the unique number which, when added to $1$, makes zero.  This is also a $5$-adic integer.
\end{example}

Notice that any element of $\Zp$ with $a_0=0$ cannot have a multiplicative inverse in $\Zp$.  The group of units of $\Zp$, denoted $\Zp^*$, are those elements with $a_0 \neq 0$. Every element of $\Q_p$ can be written as $p^n u$ with $u \in \Zp^*$ and $n \in \Z$.

\begin{remark}
  In the ring of integers $\Z$ there are many maximal ideals - one for each prime number $p$.  In the $p$-adic integers $\Zp$ there is precisely one non-zero maximal ideal, meaning that $\Zp$ is a \emph{discrete valuation ring} (and therefore a \emph{local ring}).  By putting together the information about all the local rings we hope to reconstruct the behaviour of the global ring.  This is the rationale behind studying $\Q$ by looking at $\Q_p$ (for all primes $p$) and $\R$ (which we can think of as $\Q_{\infty}$).
\end{remark}

Given any element in $W(\Q)$, we get elements in $W(\R)$ and $W(\Q_p)$ by extension of scalars, i.e. by tensoring over $\Q$ with $\R$ and $\Q_p$ respectively.  For this mapping to be informative, we need to know the structure of $W(\R)$ and $W(\Q_p)$.  Let us start with the easy one.

\begin{lemma}
\label{Lemma:signature}
  $W(\R) \cong \Z$.
\end{lemma}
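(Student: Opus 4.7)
The plan is to construct an explicit isomorphism $W(\R) \to \Z$ via the signature of a symmetric bilinear form, verifying that it is well-defined on Witt classes, additive under direct sum, surjective, and injective.

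First I would establish a normal form for nonsingular symmetric bilinear forms over $\R$. By Theorem \ref{Thm:diagonalise} any such form $(M,b)$ admits a diagonalisation $[d_1,\dots,d_n]$. Since every positive real is a square and every negative real is a negative square, Remark \ref{Rmk:diagonalSquare} allows us to rescale each basis vector $e_i$ by $1/\sqrt{|d_i|}$, giving a diagonal form $[\underbrace{1,\dots,1}_{p},\underbrace{-1,\dots,-1}_{n}]$. Sylvester's law of inertia (which I would quote) says the pair $(p,n)$ is an invariant of $(M,b)$, and so the signature $\sigma(M,b) := p - n \in \Z$ is well-defined on isomorphism classes.

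Next I would show that $\sigma$ descends to a homomorphism $W(\R) \to \Z$. Additivity under direct sum is immediate from the diagonal normal form. The key point is that any Witt trivial form has signature zero: if $(M,b)$ has dimension $2g$ and a metaboliser $P$ of dimension $g$, normalise $b$ as above with signature $(p,n)$ where $p+n = 2g$. A maximal totally isotropic subspace of such a diagonal form has dimension $\min(p,n)$, so the existence of a $g$-dimensional metaboliser forces $\min(p,n) \geq g$, and combined with $p+n=2g$ this yields $p=n=g$, hence $\sigma = 0$. Therefore Witt equivalent forms have equal signatures and $\sigma$ is a well-defined homomorphism.

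Finally I would verify that $\sigma$ is bijective. Surjectivity is clear: the one-dimensional form $[1]$ has signature $1$, so $\sigma$ hits every integer. For injectivity, suppose $\sigma(M,b) = 0$. Then the normal form is $[\underbrace{1,\dots,1}_{p},\underbrace{-1,\dots,-1}_{p}]$ in some basis $e_1,\dots,e_p,f_1,\dots,f_p$. The subspace $P$ spanned by $\{e_i + f_i\}_{i=1}^{p}$ has dimension $p$, and a direct computation gives
\[ b(e_i+f_i,\, e_j+f_j) = \delta_{ij} - \delta_{ij} = 0, \]
so $P$ is isotropic of dimension equal to half the total dimension, hence a metaboliser. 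Thus $(M,b)$ is Witt trivial, proving injectivity.

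No step looks genuinely hard; the one thing to be careful with is the metaboliser-forces-balanced-signature argument, since it relies on the classical fact about maximal isotropic subspaces of an indefinite real quadratic form, which I would either recall as a standard lemma from Sylvester/Witt decomposition or prove directly by intersecting $P$ with the positive- and negative-definite subspaces.
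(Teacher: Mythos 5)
Your proof is correct and follows essentially the same route as the paper: diagonalise over $\R$, rescale to entries $\pm 1$, and use the signature as the isomorphism to $\Z$. The paper's version is much terser (it simply notes $[1,-1]=0$ and that the sum of signs determines the class), whereas you explicitly verify well-definedness on Witt classes and injectivity; both of your arguments for these steps are sound.
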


\begin{proof}
Every real number is either a square or the negative of a square; thus every quadratic form can be diagonalised as $[1,\dots,1,-1,\dots, -1]$.  In $W(\R)$ we have $[1,-1]=0$, so every class in $W(\R)$ is determined by the sum of the signs of its diagonalisation.  This value is called the signature, denoted by $\sigma$, and is an isomorphism between $W(\R)$ and $\Z$.
\end{proof}

To start our investigation of $W(\Q_p)$, we need to understand what the squares in $\Q_p$ look like.

\begin{lemma}
\label{Lemma:Qpsquares}
If $p$ is odd, the quotient $\Q_p^*/(\Q_p^*)^2$ is isomorphic to $\Z_2 \oplus \Z_2$. The four distinct elements are $\{1,u,p,pu\}$ where $0<u<p$ is not a square modulo $p$.
\end{lemma}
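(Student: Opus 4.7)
The plan is to exploit the fact that every element of $\Q_p^*$ decomposes uniquely as $p^n v$ with $n \in \Z$ and $v \in \Zp^*$, and then to analyse the two factors separately modulo squares.

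First I would observe that for $\alpha = p^n v \in \Q_p^*$, being a square depends only on the parity of $n$ together with the class of $v$ modulo $(\Zp^*)^2$: if $n = 2m$ then $\alpha = (p^m)^2 v$ is a square iff $v$ is, and if $n$ is odd then $\alpha$ differs from $pv$ by a square. This gives a short exact sequence
\[ 1 \to \Zp^*/(\Zp^*)^2 \to \Q_p^*/(\Q_p^*)^2 \to \Z/2 \to 1 \]
induced by the $p$-adic valuation, split by sending the generator of $\Z/2$ to $p$. Hence $\Q_p^*/(\Q_p^*)^2 \cong \Zp^*/(\Zp^*)^2 \oplus \Z/2$, so the whole problem reduces to computing $\Zp^*/(\Zp^*)^2$.

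Next I would show that the reduction map $\Zp^* \to \F_p^*$ induces an isomorphism $\Zp^*/(\Zp^*)^2 \cong \F_p^*/(\F_p^*)^2$. The key input is Hensel's lemma: if $v = a_0 + a_1 p + \cdots \in \Zp^*$ and $a_0$ is a square modulo $p$, say $a_0 \equiv b_0^2 \pmod{p}$, then since $p$ is odd the polynomial $f(x) = x^2 - v$ satisfies $f(b_0) \equiv 0 \pmod p$ and $f'(b_0) = 2b_0 \not\equiv 0 \pmod p$, so $b_0$ lifts to a $p$-adic square root of $v$. Conversely, the reduction mod $p$ of a square in $\Zp^*$ is clearly a square in $\F_p^*$. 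Since $\F_p^*$ is cyclic of even order $p-1$ (as $p$ is odd), its squares form a subgroup of index $2$, giving $\F_p^*/(\F_p^*)^2 \cong \Z/2$ with representatives $1$ and any fixed non-square $u$ with $0 < u < p$.

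Combining the two steps yields $\Q_p^*/(\Q_p^*)^2 \cong \Z/2 \oplus \Z/2$ with the four coset representatives $\{1, u, p, pu\}$ as claimed. The main obstacle, if there is one, is the Hensel-lifting step for the units, which is really the only nontrivial ingredient; the decomposition $\alpha = p^n v$ and the splitting of the valuation sequence are essentially formal. The hypothesis that $p$ is odd is crucial precisely because it makes $f'(b_0) = 2b_0$ a unit, which is exactly where the case $p = 2$ would require a separate (and more delicate) treatment.
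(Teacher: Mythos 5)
Your proof is correct and follows essentially the same route as the paper: reduce to the parity of the $p$-adic valuation together with the class of the unit part, and then use Hensel's lemma on $x^2 - v$ (where oddness of $p$ makes $f'(b_0)=2b_0$ a unit) to identify $\Zp^*/(\Zp^*)^2$ with $\F_p^*/(\F_p^*)^2 \cong \Z_2$. The split exact sequence is just a slightly more formal packaging of the paper's observation that every element is, up to an even power of $p$, of the form $v$ or $pv$ with $v$ a unit.
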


\begin{proof}
We first prove that a unit $u = a_0+a_1p+a_2p^2+\dots$ in $\Zp$ is a square if and only if $a_0$ is a square in $\Z_p$.  This is due to Hensel's Lemma (see, for example, \cite[Theorem 3.7]{Eisenbud} ) which states that if $r_k$ is a solution to the congruence $f(x)\equiv 0$ mod $p^k$ for $k\geq 1$, and if $f'(r_k) \not\equiv 0$ mod $p$, then there exists a number $r_{k+1}$ which is a solution to $f(x)\equiv 0$ mod $p^{k+1}$ and $r_k\equiv r_{k-1}$ mod $p^k$.  If $a_0=b_0^2$ in $\Z_p$ we can let $f(x) = x^2-u$, so $f'(x) = 2x$ and $2(b_0) \neq 0$ in $\Z_p$.  Hensel's Lemma lets us construct the coefficients $b_1,b_2\dots$ in the $p$-adic integer which is the square root of $u$.

Up to a factor of an even power of $p$, every element of $\Q_p^*$ can be written as $u$ or $pu$ where $u$ is a unit in $\Zp$. From the first half of this proof, $u$ is a square if and only if $a_0$ is a square in $\Z_p$, and $\Z_p^* / (\Z_p^*)^2 \cong \Z_2$.  Since $p$ is not a square, the result follows.
\end{proof}

The result for $p=2$ is more complicated and a proof may be found in \cite{Scharlau}.

\begin{lemma}
  The quotient $\Q_2^*/(\Q_2^*)^2$ is isomorphic to $\Z_2 \oplus \Z_2 \oplus \Z_2$.  The eight distinct elements are the set $\{ \pm 1, \pm 2, \pm 5, \pm 10 \}$.
\end{lemma}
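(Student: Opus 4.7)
My plan is to follow the same strategy as the proof of Lemma \ref{Lemma:Qpsquares}, decomposing $\Q_2^*$ via the $2$-adic valuation and then analysing the unit group modulo squares. First I would write any element of $\Q_2^*$ uniquely as $2^n u$ with $u \in \Zp^*$ (here $p=2$) and $n \in \Z$. Since squares in $\Q_2^*$ have even valuation and unit part a square, this decomposition descends to an isomorphism $\Q_2^*/(\Q_2^*)^2 \cong \Z_2 \oplus \Zp^*/(\Zp^*)^2$, with the first factor generated by the class of $2$. So it suffices to show $\Zp^*/(\Zp^*)^2 \cong \Z_2 \oplus \Z_2$ with representatives, for instance, $\{1,-1,5,-5\}$.

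Next I need to identify $\Zp^*/(\Zp^*)^2$, and here the argument diverges from the odd case: the naive application of Hensel's Lemma to $f(x) = x^2 - u$ fails because $f'(b_0) = 2b_0$ vanishes modulo $2$. The main obstacle is therefore to prove the correct refinement, namely that a unit $u = 1 + 2a_1 + 4a_2 + 8a_3 + \dots \in \Zp^*$ is a square if and only if $u \equiv 1 \pmod 8$. The forward direction is immediate since the squares in $(\Z/8\Z)^*$ are just $\{1\}$. For the converse, given $u \equiv 1 \pmod 8$, I would construct a square root recursively: starting with $b_0 = 1$ which satisfies $b_0^2 \equiv u \pmod 8$, I would show inductively that any solution $b_k$ to $x^2 \equiv u \pmod{2^{k+3}}$ lifts to a solution $b_{k+1} = b_k + 2^{k+2}\eps$ to $x^2 \equiv u \pmod{2^{k+4}}$ by choosing $\eps \in \{0,1\}$ to cancel the middle coefficient (the cross term $2 b_k \cdot 2^{k+2}\eps = 2^{k+3}\eps$ modulo $2^{k+4}$). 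Passing to the limit produces $b \in \Zp^*$ with $b^2 = u$.

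Once this refinement is established, I can identify $\Zp^*/(\Zp^*)^2$ with the units in $\Z/8\Z$, namely $\{1,3,5,7\} \cong \Z_2 \oplus \Z_2$. Choosing integer representatives, these four classes are $\{1, -1, 5, -5\}$ since $-1 \equiv 7$ and $-5 \equiv 3$ modulo $8$; these are pairwise inequivalent modulo squares because their residues mod $8$ are distinct. Combining this with the $\Z_2$ factor coming from the valuation, I obtain $\Q_2^*/(\Q_2^*)^2 \cong \Z_2 \oplus \Z_2 \oplus \Z_2$, with the eight cosets represented by the set $\{\pm 1, \pm 2, \pm 5, \pm 10\}$, exactly as claimed.
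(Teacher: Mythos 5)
Your proof is correct. The paper itself gives no argument for this lemma --- it simply defers to Scharlau --- so there is nothing to compare against except the standard proof, which is what you have reconstructed: split off the valuation mod $2$, observe that the na\"ive Hensel argument from the odd case breaks because $f'(x)=2x$ vanishes mod $2$, and replace it with the refined criterion that a $2$-adic unit is a square iff it is $1$ mod $8$, proved by the successive-approximation lifting $b_{k+1}=b_k+2^{k+2}\eps$. Your induction is sound (the cross term $2^{k+3}b_k\eps$ is controllable precisely because $b_k$ is odd, and the error term $2^{2k+4}\eps^2$ dies modulo $2^{k+4}$), and the identification of the eight coset representatives $\{\pm 1,\pm 2,\pm 5,\pm 10\}$ via residues mod $8$ is exactly right.
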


Now that we understand the squares of $\Q_p$, we have a way to map $W(\Q_p)$ into yet simpler Witt groups -- at least, in the case when $p$ is odd.

\begin{theorem}
\label{Thm:QpintoFp}
For $p$ odd, $W(\Q_p) \cong W(\F_p) \times W(\F_p)$.
\end{theorem}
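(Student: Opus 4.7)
The plan is to construct an explicit isomorphism $W(\Q_p) \to W(\F_p) \times W(\F_p)$ by means of the two \emph{residue homomorphisms} $\partial_0$ and $\partial_1$ associated to the discrete valuation on $\Q_p$. By Theorem \ref{Thm:diagonalise} (since $2$ is a unit in $\Q_p$), every class in $W(\Q_p)$ has a diagonal representative $[d_1, \dots, d_n]$. Writing each $d_i = p^{n_i} u_i$ with $u_i \in \Zp^*$ and using Remark \ref{Rmk:diagonalSquare} to rescale basis vectors by powers of $p$, I can arrange that each $n_i \in \{0,1\}$. Thus every class decomposes as a sum of two diagonal forms $[u_1, \dots, u_r] \oplus [p v_1, \dots, p v_s]$ with all $u_i, v_j \in \Zp^*$.

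The first residue $\partial_0$ sends such a class to $[\overline{u_1}, \dots, \overline{u_r}] \in W(\F_p)$, where $\overline{\cdot}$ denotes reduction mod $p$; the second residue $\partial_1$ sends it to $[\overline{v_1}, \dots, \overline{v_s}] \in W(\F_p)$ (i.e.\ first pull out the uniformiser, then reduce). The combined map $\Phi = (\partial_0, \partial_1)$ is plainly additive under orthogonal sum once well-definedness is established. Surjectivity is essentially free: given any pair of diagonal forms over $\F_p$, lift each entry to a unit of $\Zp$, and multiply the entries of the second by $p$ — the resulting $\Q_p$-form maps to the given pair.

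The main obstacle, and the step requiring care, is well-definedness of $\Phi$ at the level of Witt classes. Independence of the diagonalisation within each "parity" class uses Lemma \ref{Lemma:Qpsquares}: any two units $u, u'$ of $\Zp$ are squares in $\Q_p$ of the same class in $\Q_p^*/(\Q_p^*)^2$ iff their reductions $\overline{u}, \overline{u'}$ agree in $\F_p^*/(\F_p^*)^2$, so the ambiguity in scaling basis vectors exactly matches the ambiguity one quotients out over $\F_p$. The deeper point is that a metabolic $\Q_p$-form must map to a pair of metabolic $\F_p$-forms. For this I would take a metaboliser $P \subset M$ for $(M, Q)$ and, after clearing denominators, intersect with the $\Zp$-lattice spanned by the chosen basis to obtain two reductions giving metabolisers for the two $\F_p$-forms; Hensel's lemma (as used in Lemma \ref{Lemma:Qpsquares}) guarantees that totally isotropic subspaces reduce to totally isotropic subspaces of the correct dimension.

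For injectivity, suppose $\Phi([M,Q]) = 0$. Then each of the two $\F_p$-diagonal forms is Witt trivial, so after stabilising by hyperbolic planes over $\F_p$ (which lift to hyperbolic planes over $\Q_p$, since $[1, -1]$ and $[p, -p]$ are hyperbolic in $W(\Q_p)$) we may assume each $\F_p$-form has an explicit metaboliser. I would then lift a basis of each metaboliser to $\Zp$ using Hensel's lemma applied to the equations $Q(x,x) = 0$ and $Q(x,y) = 0$ defining isotropy and orthogonality; the hypothesis $\text{char}(\F_p) \neq 2$ ensures the relevant Jacobians are non-vanishing modulo $p$, so the lift succeeds and produces a metaboliser for $(M,Q)$ over $\Q_p$. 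This Hensel-lifting argument is the technical heart of the proof and is where the completeness of $\Q_p$ is essential.
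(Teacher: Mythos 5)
Your construction is exactly the paper's: diagonalise (using that $2$ is a unit), rescale so each entry is a unit or $p$ times a unit, and send the two halves to their reductions mod $p$ — the paper calls this same map $\psi_p \oplus \partial_p$ and simply asserts it is the desired isomorphism. Your additional sketches of well-definedness, surjectivity and injectivity (via reduction and Hensel lifting of metabolisers) fill in checks the paper leaves implicit, so the proposal is correct and follows essentially the same route.
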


\begin{proof}
By Theorem \ref{Thm:diagonalise}, Remark \ref{Rmk:diagonalSquare} and Lemma \ref{Lemma:Qpsquares}, any form in $W(\Q_p)$ can be diagonalised as $[u_1,\dots,u_k,pv_1,\dots,pv_j]$ where the $u_i$ and $v_i$ are units in $\Zp$.  For a unit $u=a_0 +a_1p+a_2p^2 +\dots$ write $\overline{u} = a_0\in \F_p^*$.  Then the map
\[ [u_1,\dots,u_k,pv_1,\dots,pv_j] \mapsto ([\overline{u_1},\dots,\overline{u_k}], [\overline{v_1}, \dots, \overline{v_j}])\]
is the desired isomorphism.
  For future reference, we will denote this isomorphism by $\psi_p \oplus \partial_p$.
\end{proof}

\begin{theorem}
$W(\Q_2) \cong \Z_8 \oplus \Z_2 \oplus \Z_2$.
\end{theorem}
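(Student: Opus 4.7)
The plan is to mirror the proof of Theorem \ref{Thm:QpintoFp} as closely as possible, but the prime $2$ forces us to extract more invariants because passing to residues collapses too much information. Using Theorem \ref{Thm:diagonalise}, every class in $W(\Q_2)$ is represented by a diagonal form $\langle a_1, \ldots, a_n\rangle$ with each $a_i$ drawn from the eight representatives $\{\pm 1, \pm 2, \pm 5, \pm 10\}$ of $\Q_2^*/(\Q_2^*)^2$. To each such form I would attach three invariants: the dimension $n \bmod 2$, the discriminant $\mathrm{disc}(Q) = \prod_i a_i \in \Q_2^*/(\Q_2^*)^2$, and the Hasse--Witt invariant $\mathrm{Hasse}(Q) = \prod_{i<j}(a_i,a_j)_2 \in \{\pm 1\}$, where $(\cdot,\cdot)_2$ is the $2$-adic Hilbert symbol. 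Each is readily checked to be a Witt invariant (constant on the Witt class of $Q$).

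The key structural claim splits into three parts: (a) the class $[\langle 1\rangle]$ has order exactly $8$; (b) there are two further independent classes of order $2$ which, together with $[\langle 1\rangle]$, generate $W(\Q_2)$; and (c) the three invariants above are complete, so no further relations exist. For (a), the upper bound comes from the classical fact that $-1$ is a sum of four squares in $\Q_2$, from which one deduces that $\langle 1,1,1,1\rangle$ is isotropic over $\Q_2$ and splits off a hyperbolic plane, and iterating gives $8\langle 1\rangle \cong 4\mathbb{H} = 0$ in $W(\Q_2)$. The lower bound (that $k\langle 1\rangle \neq 0$ for $1 \leq k \leq 7$) follows by computing Hasse invariants and discriminants of $k\langle 1\rangle$ and observing that the pair never matches that of the zero form until $k = 8$.

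For (b), two convenient independent order-two generators are $\langle 1,-5\rangle$ and $\langle 1,-2\rangle$: each has discriminant in a different nontrivial coset of $\Q_2^*/(\Q_2^*)^2$, each has trivial contribution to the invariant producing the $\Z_8$ summand, and doubling each produces a hyperbolic form $\langle 1,-u,1,-u\rangle \cong 2\mathbb{H}$, so both are $2$-torsion. A short check with Hilbert symbols then shows these two classes, together with $[\langle 1\rangle]$, generate a subgroup of the stated shape $\Z_8 \oplus \Z_2 \oplus \Z_2$.

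The main obstacle will be (c), the completeness assertion. This is the local classification theorem for quadratic forms over $\Q_2$: two forms of the same dimension are Witt equivalent if and only if they agree on both discriminant and Hasse invariant. The proof proceeds by induction on dimension and ultimately relies on the nontrivial fact that every $5$-dimensional quadratic form over $\Q_2$ is isotropic, so that large forms always split off a hyperbolic plane and reduce to the low-dimensional classification. Rather than reprove this in detail, I would cite the classical treatment in Scharlau \cite{Scharlau} and simply verify that the three invariants match our proposed generators, confirming $W(\Q_2) \cong \Z_8 \oplus \Z_2 \oplus \Z_2$.
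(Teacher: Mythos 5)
Your overall route is the same as the paper's, which simply names the generators $[1]$, $[-1,5]$, $[-1,2]$ and defers to Scharlau for the classification; the substantive problem is a concrete error in your step (a). The form $\langle 1,1,1,1\rangle$ is \emph{not} isotropic over $\Q_2$: it is the norm form of the $2$-adic quaternion division algebra (by the rank-$4$ isotropy criterion, its discriminant is $1$ but its Hasse invariant $1$ differs from $(-1,-1)_2=-1$). The fact that $-1$ is a sum of four squares in $\Q_2$ only shows that the \emph{five}-dimensional form $\langle 1,1,1,1,1\rangle$ is isotropic, since isotropy of the sum of $n$ squares is equivalent to $-1$ being a sum of $n-1$ squares. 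Your deduction is also internally inconsistent: if $\langle 1,1,1,1\rangle$ split off a hyperbolic plane and this ``iterated'', you would conclude $4\langle 1\rangle=0$, contradicting your own claim that $[\langle 1\rangle]$ has order exactly $8$. The correct statement is that the level of $\Q_2$ is $4$ (so $\langle 1^5\rangle$ is isotropic but $\langle 1^4\rangle$ is not), and for a nonreal field of level $2^k$ the class $\langle 1\rangle$ has additive order $2^{k+1}$ in the Witt group --- for instance because $2^{k+1}\langle 1\rangle$ is an isotropic Pfister form and hence hyperbolic. That gives $8[\langle 1\rangle]=0$, and the lower bound can then be read off from the invariants as you propose.

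A secondary inaccuracy: the invariants as you have literally defined them are not constant on Witt classes. Adding a hyperbolic plane multiplies $\prod_i a_i$ by $-1$, so one must use the signed discriminant $(-1)^{n(n-1)/2}\prod_i a_i$; likewise $S\bigl(Q\perp\langle 1,-1\rangle\bigr)=S(Q)\cdot(\det Q,-1)$, so the raw Hasse symbol must be corrected by dimension- and discriminant-dependent Hilbert symbols, exactly as in the definition of $\mu$ in Definition \ref{Def:alginvariants}. With those standard adjustments, and with Scharlau's completeness theorem (which both you and the paper ultimately invoke for step (c)), your verification of the generators $\langle 1\rangle$, $\langle 1,-5\rangle$, $\langle 1,-2\rangle$ does go through and yields $W(\Q_2)\cong\Z_8\oplus\Z_2\oplus\Z_2$.
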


\begin{proof}
The generators are $[1]$, $[-1,5]$ and $[-1,2]$.  For a proof, see \cite[Chapter 5, Theorem 6.6]{Scharlau}.
\end{proof}

To fully understand $W(\Q_p)$ when $p$ is odd, it thus suffices to understand $W(\F_p)$.  The following theorem deals with this question, including the case of $W(\F_2)$ for completeness.

\begin{theorem}
  \[W(\F_p) = \begin{cases} \Z_2 & \text{ if } p = 2\\
                          \Z_2 \oplus \Z_2 & \text{ if } p\equiv 1 \mod 4\\
                          \Z_4 & \text{ if } p\equiv 3 \mod 4.
  \end{cases}\]
\end{theorem}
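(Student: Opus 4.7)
The plan is to diagonalise every form via Theorem \ref{Thm:diagonalise} and then reduce the classification to understanding the squares in $\F_p$ together with the single Witt relation $[1,-1] = 0$. Throughout, for $p$ odd, the quotient $\F_p^* / (\F_p^*)^2 \cong \Z_2$, so every non-singular diagonal entry is equivalent (up to rescaling a basis vector, using Remark \ref{Rmk:diagonalSquare}) to $1$ or to a chosen non-square $u$. Hence every class in $W(\F_p)$ is represented by $[\underbrace{1,\dots,1}_{k},\underbrace{u,\dots,u}_{j}]$ for some $k,j \geq 0$, and the task becomes: determine which of these are Witt trivial. The case $p=2$ is handled separately: Theorem \ref{Thm:diagonalise} does not apply, but the only non-zero element of $\F_2$ is $1$, so any non-singular form is of the shape $[1,\dots,1]$, and the vector $(1,1)$ is self-orthogonal in $[1,1]$, giving $[1,1]=0$; thus $W(\F_2) \cong \Z_2$ with generator $[1]$ (which is non-trivial because Witt trivial forms have even dimension).

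For $p$ odd, the key dichotomy is whether $-1$ is a square, which is classical: $-1 \in (\F_p^*)^2$ if and only if $p \equiv 1 \pmod 4$. When $p \equiv 1 \pmod 4$, write $-1 = i^2$. Then $[1,1] = [1,i^2] = [1,-1] = 0$ using rescaling, and likewise $[u,u] = u\cdot[1,1] = 0$. So every generator has order dividing $2$, and I would then check that the four representatives $0, [1], [u], [1,u]$ are pairwise distinct: the forms $[1], [u]$ are non-trivial by dimension parity, and the non-triviality of $[1,u]$ (which also shows $[1] \neq [u]$) follows because a metaboliser would produce a solution to $a^2 + u b^2 = 0$, i.e.\ $-u$ a square, contradicting the fact that $-u$ is a non-square when $-1$ is a square and $u$ is not. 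This gives $W(\F_p) \cong \Z_2 \oplus \Z_2$.

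When $p \equiv 3 \pmod 4$, $-1$ is a non-square, so I may take $u = -1$. The form $[1,u] = [1,-1]$ is then identically zero by the Witt relation, which means $[u] = -[1]$ in $W(\F_p)$. Hence $W(\F_p)$ is cyclic, generated by $[1]$. To pin down its order I would show $[1,1] \neq 0$ but $[1,1,1,1] = 0$: the first is immediate since a metaboliser would require $a^2 + b^2 = 0$ with $(a,b)\neq (0,0)$, forcing $-1$ to be a square; the second uses the standard pigeonhole argument that the sets $\{a^2\}$ and $\{-1-b^2\}$ each have $(p+1)/2$ elements and so intersect, producing $a,b \in \F_p$ with $a^2 + b^2 = -1$, whence the pair $(a,b,1,0),\ (-b,a,0,1)$ spans a $2$-dimensional self-orthogonal subspace. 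This gives $W(\F_p) \cong \Z_4$.

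The main obstacle is the case $p \equiv 3 \pmod 4$, specifically producing the metaboliser for $[1,1,1,1]$; everything else is essentially bookkeeping around the relation $[1,-1]=0$ and the squares in $\F_p$. The crucial input there is the elementary but non-trivial fact that every element of $\F_p$ is a sum of two squares, which feeds the existence of $a,b$ with $a^2+b^2=-1$ and thereby the explicit metaboliser. Once this is in hand, the structural results follow cleanly and the three cases assemble into the stated classification.
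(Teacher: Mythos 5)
For odd $p$ your argument is essentially the paper's own: diagonalise via Theorem \ref{Thm:diagonalise}, reduce entries modulo squares to $1$ or a fixed non-square $u$, use $[1,-1]=0$, split on whether $-1$ is a square, and for $p\equiv 3 \pmod 4$ produce the metaboliser of $[1,1,1,1]$ from a pigeonhole solution of $a^2+b^2=-1$ (your spanning vectors $(a,b,1,0),(-b,a,0,1)$ are a harmless reparametrisation of the paper's $(1,0,a,c),(0,1,-c,a)$). You are in fact slightly more careful than the paper in the $p\equiv 1\pmod 4$ case, where you verify that $[1]$, $[u]$ and $[1,u]$ are pairwise distinct rather than only that each has order $2$; that check is worth keeping.

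The one genuine flaw is in the $p=2$ case. The claim that ``the only non-zero element of $\F_2$ is $1$, so any non-singular form is of the shape $[1,\dots,1]$'' is false: diagonalisation fails in characteristic $2$, and the hyperbolic plane $\left(\begin{smallmatrix} 0 & 1 \\ 1 & 0 \end{smallmatrix}\right)$ is a non-singular symmetric form over $\F_2$ that is alternating (every vector is self-orthogonal) and hence not congruent to any $[1,\dots,1]$. The conclusion $W(\F_2)\cong\Z_2$ survives, but only because the forms you have omitted are all metabolic: a non-degenerate symmetric form over $\F_2$ is either non-alternating, in which case it \emph{is} diagonalisable as $[1,\dots,1]$, or alternating, in which case it is an orthogonal sum of hyperbolic planes, each of which is Witt trivial. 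This dichotomy (or the paper's equivalent decomposition into copies of $[1]$ and $\left(\begin{smallmatrix} 0 & 1 \\ 1 & 1 \end{smallmatrix}\right)$) is the missing step; with it inserted, your $[1,1]=0$ computation finishes the case as before.
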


\begin{proof}
For $p=2$ every form can be represented by a sum of the forms $[1]$ and $\left(\begin{array}{cc} 0 & 1\\ 1 & 1 \end{array}\right)$.  The first of these has order $2$ in $W(\F_2)$ and the second is Witt trivial.

For $p$ odd, the group of units $\F_p^*$ is cyclic of even order $p-1$, so $\F_p^*/(\F_p^*)^2 = \Z_2$.  Modulo squares then, every number is equivalent to $1$ or to $a$ where $a$ is not a square.  Thus every form in $W(\F_p)$ is equivalent to $[1,\dots,1,a,\dots,a]$.  If $p=1$ mod $4$ then $-1$ is a square, so any form $[b,b] = [b,-b]$, which is Witt trivial.  Hence any non-trivial form is equivalent to $[1]$, $[a]$ or $[1,a]$ and each of these is of order $2$.

If $p=3$ mod $4$, then $-1$ is not a square so we can let $a=-1$.  Since $[1,-1]$ is trivial, every form is equivalent to a multiple of $[1]$ or a multiple of $[-1]$. The form $[b,b]$ is nontrivial but $[b,b,b,b]$ is trivial, with metaboliser $<(1,0,a,c),(0,1,-c,a)>$ where $(a,c)$ satisfy $1+a^2+c^2 = 0$.  The pair $(a,c)$ exist by the Pigeonhole Principle: there are $(p+1)/2$ values for $x^2$ in $\F_p$ and also $(p+1)/2$ values for $-1-y^2$. There are only $p$ values in $\F_p$ so the equation $x^2 = -1-y^2$ must have a solution.
\end{proof}

We now understand all that we need to know about the Witt groups of the completions of $\Q$.  There is one remaining remark, which concerns the isomorphism in the proof of Theorem \ref{Thm:QpintoFp}. We denoted this isomorphism by $\psi_p \oplus \partial_p$, and it turns out that we may safely ignore $\psi_p$ and just use the map $\partial_p$.  The rest of this section gives a proof of this fact and further illuminates how we can understand the group $W(\Q)$ by looking at all the local Witt groups $W(\Q_p)$ and $W(\R)$.

  Recall that the homomorphism $\partial_p \colon W(\Q) \to W(\F_p)$ (which factors through $W(\Q_p)$) maps an element $[\alpha]$ (where $\alpha=p^n \frac{x}{y}$ with $x$ and $y$ coprime to $p$) to $\left[\frac{x}{y}\right]$ if $n$ is odd and $0$ if $n$ is even.  A quick example illustrates this.

\begin{example}
\label{Example:partialp}
Let $p=5$.
\begin{eqnarray*}
\partial_p\left(\left[\frac{13}{50},\frac{15}{2}\right]\right)
& = & \partial_p\left(\left[5^{-2}\left(\frac{13}{2}\right),\; 5\left(\frac{3}{2}\right)\right]\right)\\
& = & \left[\frac{3}{2}\right]\\
& = & \left[2^2 \frac{3}{2}\right] = \left[6\right] = \left[1\right].
\end{eqnarray*}
\end{example}

Notice that for any $\alpha \in \Q$, $\partial_p(\alpha) = 0$ for almost all $p$.  We may thus take the direct sum of all these homomorphisms to get one giant homomorphism $\partial \colon W(\Q) \to \oplus_p W(\F_p)$.

\begin{theorem}
The sequence
\[ 0 \rightarrow \Z \xrightarrow{i} W(\Q) \xrightarrow{\partial} \oplus W(\F_p) \rightarrow 0\]
is split exact, where $\partial = \oplus \partial_p$ and $i$ maps $1\in \Z$ to $[1] \in W(\Q)$.
\end{theorem}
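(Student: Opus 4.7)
The plan is to verify the four exactness conditions in turn and then split the sequence. The signature map $\sigma \colon W(\Q) \to W(\R) \cong \Z$, obtained by extension of scalars and using Lemma \ref{Lemma:signature}, satisfies $\sigma \circ i = \mathrm{id}$ on $\Z$ because $i(n) = n[1]$ has signature $n$; this simultaneously gives the injectivity of $i$ and a splitting of the sequence, yielding $W(\Q) \cong \Z \oplus \bigoplus_p W(\F_p)$. For the composition $\partial \circ i = 0$, write $1 = p^0 \cdot 1$: the $p$-adic valuation of $1$ is the even number $0$ for every prime $p$, so $\partial_p([1]) = 0$ by definition.

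Surjectivity of $\partial$ is established by lifting generators. An element of $\bigoplus_p W(\F_p)$ is supported on only finitely many primes, so it suffices to lift a single generator $[\bar{u}] \in W(\F_p)$ for each $p$. For $p$ odd and $\bar{u} \in \F_p^*$, I choose an integer $u$ coprime to $p$ with $u \equiv \bar{u} \pmod{p}$ and consider $[pu] \in W(\Q)$: exactly as in Example \ref{Example:partialp}, the valuation of $pu$ at $p$ is $1$ and at every other prime is $0$, so $\partial_p([pu]) = [\bar{u}]$ and $\partial_q([pu]) = 0$ for $q \neq p$. For $p = 2$ the generator $[1]$ of $W(\F_2)$ is lifted analogously by $[2] \in W(\Q)$, and summing finitely many such lifts hits any prescribed target.

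The substantive remaining task is to show $\ker(\partial) \subseteq \mathrm{image}(i)$: if $\partial_p([\alpha]) = 0$ for every prime $p$, then $[\alpha] = n[1]$ for some $n \in \Z$. My plan is to diagonalise $[\alpha] = [d_1, \dots, d_r]$ via Theorem \ref{Thm:diagonalise} and, by rescaling each basis vector (Remark \ref{Rmk:diagonalSquare}), assume each $d_i$ is a signed squarefree integer. The vanishing of $\partial_p([\alpha])$ then says that, for each prime $p$, the $p$-divisible entries among the $d_i$ (once stripped of one factor of $p$ and reduced modulo $p$) sum to zero in $W(\F_p)$. The strategy is to induct on a complexity measure --- such as the lexicographically ordered multiset of primes appearing across the $d_i$ --- using the Witt identity $[a, b] \cong [a+b,\, ab(a+b)]$ (valid when $a + b \neq 0$) together with square-scaling to combine pairs of $p$-divisible entries into entries not involving $p$. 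Once every $d_i$ equals $\pm 1$, the relation $[1, -1] = 0$ in $W(\Q)$ --- witnessed by the metaboliser spanned by $(1,1)$ --- collapses the class to $n[1]$, where necessarily $n = \sigma([\alpha])$.

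The principal obstacle is making the reduction step genuinely decrease complexity: the identity $[a+b,\, ab(a+b)]$ can introduce previously-absent primes into the representation, so the invariant must be chosen carefully to dominate this introduction. An alternative, more conceptual route is to use the local-global philosophy of Sections \ref{Subsec:background} and \ref{Sec:WittGroups}: the kernel of $\partial_p$, restricted to the ``unit-only'' summand of $W(\Q_p) \cong W(\F_p) \oplus W(\F_p)$, corresponds under $\psi_p$ to classes defined over $\Zp$; simultaneous integrality at every prime forces $[\alpha]$ to come from $W(\Z) \cong \Z \cdot [1]$, sidestepping the explicit manipulation altogether.
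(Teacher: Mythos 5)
Your splitting via the signature (Lemma \ref{Lemma:signature}) and the verification that $\partial \circ i = 0$ are correct and agree with the paper's closing step, but two of the remaining exactness conditions are not actually established. The surjectivity argument rests on a false claim: for a general unit $\bar u \in \F_p^*$, an integer representative $u$ coprime to $p$ need not have even valuation at the other primes, so $\partial_q([pu])$ need not vanish for $q \mid u$. For instance $\partial_3([21]) = [1] \neq 0$ in $W(\F_3)$, so $[21]$ is not a lift of a class supported only at $7$. The statement is salvageable --- choosing $0 < u < p$ confines the error terms to primes smaller than $p$, after which one corrects by induction on the primes --- but that correction is precisely the content of the paper's filtration $L_1 \subset L_2 \subset \cdots$ of $W(\Q)$ with graded pieces $L_p/L_{p-1} \cong W(\F_p)$, and you have not supplied it.

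More seriously, exactness at $W(\Q)$, which is the heart of the theorem, is left open. You candidly identify the obstacle yourself: the Witt relation $[a,b] = [a+b,\, ab(a+b)]$ can introduce primes absent from the original diagonalisation, and no complexity measure is exhibited that provably decreases, so the induction is not known to terminate. The proposed alternative --- that simultaneous vanishing of every $\partial_p$ forces the class into $W(\Z) \cong \Z\cdot[1]$ --- is essentially a restatement of $\ker\partial \subseteq \mathrm{im}\,i$ rather than a proof of it (and the identification of the ``integral'' classes in $W(\Q)$ with $\Z\cdot[1]$ is itself nontrivial). The paper's sketched route resolves both gaps simultaneously: it shows each $L_k \to \oplus_{p \le k} W(\F_p)$ is surjective with kernel $L_1 \cong \Z$ by working one graded piece $L_p/L_{p-1} \cong W(\F_p)$ at a time, then passes to the direct limit; some version of that triangular, prime-by-prime bookkeeping is what your write-up still needs.
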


\begin{proof}[Sketch proof]
For each $k\in \N$, let $L_k$ be the subring of $W(\Q)$ generated by $[1],[2],\dots,[k]$.  Then
\[ L_1 \subset L_2 \subset L_3 \subset \dots\]
and $\cup_i L_i = W(\Q)$.  Note that $L_1 \cong \Z$ and $L_k = L_{k-1}$ unless $k$ is prime.  Each of the homomorphisms $\partial_p$ induce an isomorphism $L_p / L_{p-1} \to W(\F_p)$. Using these isomorphisms and an argument by induction, one can show that the homomorphisms
\[ L_k \to \displaystyle \oplus_{p \leq k} W(\F_p)\]
are surjective with kernel equal to $L_1$.  Passing to the direct limit as $k \to \infty$ we get that
\[ 0 \rightarrow \Z \to W(\Q) \to \oplus W(\F_p) \rightarrow 0\]
is exact.  Using the signature homomorphism $W(\Q) \to W(\R) \to \Z$ (defined in Lemma \ref{Lemma:signature}) it follows that the sequence is actually split exact.
\end{proof}

\section{Invariants of algebraic knot concordance}
\label{Sec:classifyAlgKnots}

In the final section of this chapter we will investigate the set of algebraic concordance invariants given by Levine in \cite{Levine69-1}.  In Section \ref{Sec:AlgConcGroup} we saw that, in order to find the algebraic concordance order of a knot with image $(M,Q,T) \in \G_{\Q}$, we would have to look at the image of $(M,Q,T)$ in $W(\Q_p)$ for every prime $p$.  Since we wish to classify our knots in a finite amount of time, we need to find a way to reduce the list of primes that we have to check.

\begin{definition}
\label{Def:alginvariants}
  Let $(M,Q,T) \in \G_{\Q}$ be an isometric structure, $\lambda(t)$ be an irreducible symmetric factor of $\Delta_T(t)$ and $p$ be a prime.
  \begin{itemize}
    \item $\eps_{\lambda}(M,Q,T)$ is the exponent, modulo $2$, of $\lambda(t)$ in $\Delta_T(t)$.
    \item $\sigma_{\lambda}(M,Q,T)$ is the signature of $(M,Q) \in W(\R)$ restricted to the $\lambda(t)$-primary component.
    \item $\mu_{\lambda}^p(M,Q,T)$ is $\mu(M',Q')$ where $(M',Q')$ is the image of $(M,Q)$ in $W(\Q_p)$ restricted to the $\lambda(t)$-primary component, where $\mu$ is defined by
    \[ \mu(\alpha) = (-1,-1)^{\frac{r(r+3)}{2}} (\det(\alpha),-1)^r S(\alpha)\]
    and where $(-,-)$ is the Hilbert symbol for $\Q_p$, $S(-)$ is the Hasse invariant and $\alpha$ has rank $2r$.
  \end{itemize}
\end{definition}

This last definition of $\mu$ needs some more explanation.  We will give the definitions of Hilbert symbol and Hasse invariant, followed by a formula for the Hilbert symbol in the case of the field being $\Q_p$.

\begin{definition}
  The \emph{Hilbert symbol} of a local field $K$ is the function $(-,-) \colon K^* \times K^* \to \{-1,1\}$ defined by
  \[ (a,b) = \begin{cases} 1 & \text{if } z^2 = ax^2 + by^2 \text{ has a non-zero solution } (x,y,z) \in K^3 \\ -1 & \text{ otherwise.} \end{cases}\]
\end{definition}

\begin{definition}
  The \emph{Hasse invariant}, or \emph{Hasse symbol}, of a quadratic form $\alpha$ diagonalised as $[d_1,\dots, d_n]$ over a local field $K$ is
   \[ S(\alpha) = \prod_{i<j}(d_i,d_j) \in \{-1,1\}\]
where $(-,-)$ is the Hilbert symbol.
\end{definition}

\begin{proposition}(\cite[Chapter 3, 1.2]{Serre})
\label{Prop:Hilbertsymbol}
  If $K=\Q_p$ and if we write $a=p^\alpha u$, $b=p^\beta v$ for units $u,v \in \Zp^*$, then we have
  \[ (a,b)_p = (-1)^{\alpha \beta \epsilon(p)} \left( \frac{\overline{u}}{p}\right)^\beta \left( \frac{\overline{v}}{p}\right)^\alpha \quad \text{for } p \neq 2 \]
  \[ (a,b)_2 = (-1)^{\epsilon(u)\epsilon(v) + \alpha \omega(v) + \beta \omega(u)} \quad \text{for } p=2\]
where $\epsilon(n) = \frac{n-1}{2}$, $\omega(n) = \frac{n^2-1}{8}$ and $\left(\frac{n}{p}\right)$ is the Legendre symbol (see \cite{Serre} for more details about this).
\end{proposition}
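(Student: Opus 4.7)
My plan is to follow the standard bilinear-form approach, which reduces the problem of computing $(a,b)_p$ on all of $\Q_p^* \times \Q_p^*$ to checking the stated formula on a finite set of representatives. The first step is to establish the elementary properties that $(-,-)_p$ is bilinear, symmetric, and factors through $\Q_p^* / (\Q_p^*)^2 \times \Q_p^*/(\Q_p^*)^2$. Bilinearity is the nontrivial property: it follows from the fact that $(a,b)_p = 1$ is equivalent to $b$ being a norm from the extension $\Q_p(\sqrt{a})/\Q_p$ (when $a$ is not a square), and norm groups are multiplicative. Once bilinearity and symmetry are established, verification of the formula reduces to a finite check.

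For $p$ odd, I would use Lemma \ref{Lemma:Qpsquares} to take representatives $\{1, u_0, p, pu_0\}$ for $\Q_p^*/(\Q_p^*)^2$, with $u_0$ a non-square unit. By bilinearity and symmetry it suffices to compute $(u, v)_p$, $(u, p)_p$ and $(p,p)_p$ for $u, v$ units. For $(u,v)_p$, the plan is to show the equation $ux^2 + vy^2 \equiv 1 \pmod p$ always has a solution (a counting argument: each of $\{ux^2 : x \in \F_p\}$ and $\{1 - vy^2 : y \in \F_p\}$ has $(p+1)/2$ elements, so by pigeonhole they intersect), then lift to $\Q_p$ by Hensel's lemma, giving $(u,v)_p = 1$; this matches the formula when $\alpha = \beta = 0$. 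For $(u,p)_p$, one verifies that $u$ is a norm from $\Q_p(\sqrt{p})$ precisely when $\overline{u}$ is a square in $\F_p$, yielding the Legendre symbol $\left(\frac{\overline{u}}{p}\right)$. Finally $(p,p)_p$ follows from the identity $(p,p)_p = (p,-1)_p (p,-p)_p = (p,-1)_p$ since $(p,-p)_p = 1$ trivially, and the sign $(-1)^{\epsilon(p)}$ records whether $-1$ is a square in $\F_p$. Assembling these gives the closed formula.

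For $p=2$ the main obstacle is that Hensel's lemma does not apply directly to $x^2 - a$ because the derivative $2x$ vanishes modulo $2$, so the criterion for a $2$-adic unit to be a square uses congruence modulo $8$ rather than just modulo $2$. The plan here is to use the representatives $\{\pm 1, \pm 2, \pm 5, \pm 10\}$ for $\Q_2^*/(\Q_2^*)^2$ and to verify the formula on each pair, exploiting bilinearity and symmetry to cut the work down to a manageable number of cases (essentially the values of the symbol on generators $-1, 2, 5$). The key computational inputs are that $(-1,-1)_2 = -1$ (the form $-x^2 - y^2 = z^2$ has no nonzero $2$-adic solution), $(2,5)_2 = -1$, and $(5,5)_2 = 1$; these can be checked either by direct solvability modulo $8$ or by norm-group arguments in the ramified and unramified quadratic extensions of $\Q_2$. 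With these basic values in hand, a direct comparison with the exponent $\epsilon(u)\epsilon(v) + \alpha \omega(v) + \beta \omega(u)$ on each pair of generators finishes the proof.

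The hard part, as indicated, is the $p=2$ case: the failure of the naive Hensel lift forces a more delicate analysis, and the formula's ingredients $\epsilon$ and $\omega$ reflect precisely the mod-$4$ and mod-$8$ obstructions to a $2$-adic unit being a square. Everything else is relatively mechanical once bilinearity is available, but if one wished to avoid citing bilinearity as a black box, the genuine obstacle would be proving it — this is itself essentially the content of the Hilbert reciprocity / local norm residue theory and would require a separate development.
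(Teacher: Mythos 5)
The paper does not prove this proposition at all: it is quoted verbatim from Serre's \emph{Course in Arithmetic} (Chapter III, \S 1.2), so there is no internal argument to measure yours against. Your sketch is a correct outline of a standard proof, and the individual computations you list are all right: the counting-plus-Hensel argument giving $(u,v)_p=1$ for units $u,v$ when $p$ is odd, the identification of $(u,p)_p$ with the Legendre symbol $\left(\frac{\overline{u}}{p}\right)$, the reduction $(p,p)_p=(p,-1)_p$, and the key values $(-1,-1)_2=-1$, $(2,5)_2=-1$, $(5,5)_2=1$ at the prime $2$.

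The one substantive caution concerns bilinearity. In Serre's treatment the formula is proved by a direct case analysis on $(\alpha,\beta)$ modulo $2$ using the structure of squares in $\Q_p$ (and modulo $8$ in $\Q_2$), and bilinearity is then \emph{deduced from} the formula, since the right-hand side is visibly multiplicative in each argument. Your route inverts this: you use bilinearity to reduce to generators of $\Q_p^*/(\Q_p^*)^2$. That is legitimate only if bilinearity is sourced independently of the formula --- e.g.\ from the norm criterion ($(a,b)_p=1$ exactly when $a$ is a norm from $\Q_p(\sqrt{b})$) together with the fact that the norm group of each quadratic extension of $\Q_p$ has index $2$, which must be checked extension by extension and is itself the hardest single computation at $p=2$. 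You flag this correctly, so the argument is not circular as written, but be aware that the black box is carrying most of the weight: the total work is comparable to Serre's direct verification. Two smaller points you should make explicit: for odd $p$ the reduction to the four classes $\{1,u_0,p,pu_0\}$ already requires bilinearity (invariance under squares alone does not recover, say, $(pu_0,pu_0)_p$ from the generator values); and at $p=2$ the extension from generators to all of $\Q_2^*/(\Q_2^*)^2$ also needs the easy check that $\epsilon$ and $\omega$ are additive modulo $2$ on units, so that the claimed right-hand side is itself bilinear and the comparison on generators suffices.
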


\begin{theorem}(\cite[21]{Levine69-1})
\label{Thm:CompleteCobordismInvts}
  The functions $\{ \eps_{\lambda}, \sigma_{\lambda}, \mu_{\lambda}^p \}$ are a complete set of cobordism invariants for an isometric structure $(M,Q,T)$ in $\G_{\Q}$, when taken over every possible $\lambda$ and every prime $p$.
\end{theorem}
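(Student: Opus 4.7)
The plan is to reduce the problem entirely to the structural results already established, namely Theorem \ref{Thm:trivialclass} on when an element of $\G_{\Q}$ vanishes, Lemma \ref{Lemma:signature} identifying $W(\R) \cong \Z$, and Theorem \ref{Thm:QpintoFp} decomposing $W(\Q_p)$ in terms of $W(\F_p)$. The strategy is: first verify that each of $\eps_\lambda$, $\sigma_\lambda$, $\mu_\lambda^p$ is a cobordism invariant, then use the reduction to local Witt groups to show that simultaneous vanishing forces null-cobordism.

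For the invariance direction, a null-cobordant $(M,Q,T)$ has $\Delta_T(t) \doteq f(t)\overline{f(t^{-1})}$, so every symmetric irreducible factor $\lambda$ appears with even multiplicity; this gives invariance of $\eps_\lambda$. A $T$-invariant metaboliser for $(M,Q,T)$ restricts to a $T$-invariant metaboliser on each $\lambda$-primary summand, and in particular is a metaboliser for the underlying symmetric form there; thus the image of $(M|_\lambda, Q|_\lambda)$ in $W(\R)$ and in $W(\Q_p)$ is Witt trivial, which forces $\sigma_\lambda = 0$ and $\mu_\lambda^p = 0$ since both are Witt invariants of the underlying symmetric form.

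For completeness, suppose all the invariants vanish. By the first part of Theorem \ref{Thm:trivialclass} it suffices to show that each $\lambda$-primary component is null-cobordant in $\G_\F^\lambda$ for $\F = \R$ and $\F = \Q_p$ for every prime $p$. The second part of that theorem further reduces this to checking that $\eps_\lambda = 0$ (even exponent) and that $(M|_\lambda, Q|_\lambda)$ is trivial in $W(\F)$. Over $\R$ this is exactly $\sigma_\lambda = 0$ by Lemma \ref{Lemma:signature}. Over $\Q_p$ with $p$ odd we invoke Theorem \ref{Thm:QpintoFp}, and the underlying symmetric form in the $\lambda$-primary component, having even rank (from $\eps_\lambda = 0$) and discriminant controlled by $\lambda$ itself up to norms, is classified up to Witt equivalence by the Hasse-type invariant packaged as $\mu_\lambda^p$; over $\Q_2$ the same argument uses the structure $W(\Q_2) \cong \Z_8 \oplus \Z_2 \oplus \Z_2$ together with the fact that the relevant $\Z_8$ summand is detected by signature and hence already controlled by $\sigma_\lambda$.

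The hard part will be the last step: verifying that the particular combination $\mu(\alpha) = (-1,-1)^{r(r+3)/2}(\det(\alpha),-1)^r S(\alpha)$ is indeed additive on Witt classes and detects triviality on the $\lambda$-primary subgroup, given the even rank and the specific form of the discriminant. The Hilbert-symbol normalizing factors are precisely what cancel the contributions of $(\det, \det)$ and the rank mod $4$ ambiguity so that $\mu$ descends to a homomorphism on the Witt group and vanishes exactly on the trivial class; carrying this out requires the explicit formulas from Proposition \ref{Prop:Hilbertsymbol}. One also needs the implicit finiteness assertion: for any fixed $(M,Q,T)$, $\mu_\lambda^p$ vanishes for all but finitely many $p$, which follows because the form $(M|_\lambda, Q|_\lambda)$ can be diagonalised over $\Q$ with finitely many entries, and at any prime $p$ not dividing these entries nor the resultant invariants, all Hilbert symbols are trivial.
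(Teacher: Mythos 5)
First, a point of reference: the paper does not prove this theorem at all --- it is imported verbatim from Levine's paper (the ``[21]'' in the citation), so there is no in-text argument to measure yours against. Your outline does have the right overall shape: reduce via Theorem \ref{Thm:trivialclass} to triviality in $\G_{\F}^{\delta}$ for $\F=\R$ and $\F=\Q_p$, dispose of $\R$ with the signature and Lemma \ref{Lemma:signature}, and attack $\Q_p$ through the Witt-group structure of Theorem \ref{Thm:QpintoFp}. The invariance half is essentially fine (modulo justifying that a $T$-invariant metaboliser forces $\Delta_T$ to factor as $f(t)\overline{f(t^{-1})}$, which you assert rather than prove).

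The completeness half, however, has genuine gaps, and the step you explicitly defer (``the hard part will be the last step'') is the entire content of the theorem. Concretely: (i) Theorem \ref{Thm:trivialclass} demands triviality in $\G_{\Q_p}^{\delta}$ for each irreducible symmetric factor $\delta$ of $\Delta_T$ \emph{over $\Q_p$}, and a $\lambda$ irreducible over $\Q$ will in general split into several such $\delta$'s over $\Q_p$. You have only the single quantity $\mu_{\lambda}^p$ attached to the whole $\lambda$-primary block, so you must explain why the several local pieces cannot be individually nontrivial while their contributions to $\mu_{\lambda}^p$ cancel; this is exactly where Levine's argument does real work, and your sketch is silent on it. (ii) The claim that the $\Q_2$ case is ``detected by signature'' is wrong as stated: the $\Z_8$ summand of $W(\Q_2)$ is not controlled by $\sigma_{\lambda}$ in any direct way. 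The actual reason $p=2$ contributes nothing is the congruence $(-1)^d\lambda(1)\lambda(-1)\equiv 1 \pmod 4$ (established in this thesis only later, inside the proof of Theorem \ref{Thm:order4class}), which kills the relevant Hilbert symbols at $2$; that computation, or an equivalent, must appear in a completeness proof. A smaller slip: the even rank of the $\lambda$-primary component does not come from $\eps_{\lambda}=0$ (it is $e\deg\lambda$ with $\deg\lambda$ already even); what $\eps_{\lambda}=0$ actually buys is the even-exponent hypothesis of Theorem \ref{Thm:trivialclass} and the triviality of the discriminant $(\lambda(1)\lambda(-1))^{e}$ modulo squares.
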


Of these three invariants, $\sigma$ is the only one which takes values in a non-finite group, so is the only one which can detect elements of infinite order.  The invariant $\eps$ is of order $2$ and $\mu$ is of order $4$ (as we shall see), so this proves that $1$, $2$ and $4$ are the only finite orders that a knot may have in the algebraic concordance group.

We will now look at practical ways to detect knots of order $2$, $4$ and $\infty$. Knots of order $1$, i.e. algebraically slice knots, are those for which all invariants $\{ \eps_{\lambda}, \sigma_{\lambda}, \mu_{\lambda}^p \}$ vanish, where $\lambda$ runs over all the irreducible symmetric factors of the Alexander polynomial $\Delta_K(t)$ and $p$ runs over every prime.

\subsection{Detecting infinite order elements in $\G$}
\label{Subsec:infiniteOrderElts}

If a knot has infinite order in $\G$ then it must have signature $\sigma_{\lambda} \neq 0$ for some symmetric irreducible factor $\lambda$ of $\Delta_T$.  Over the real numbers, such irreducible symmetric polynomials are of the form $t^2 + 2at + 1$, up to a unit.  The roots are
 \[ t = -a \pm \sqrt{a^2-1} \]
and because the polynomial is irreducible we must have $a^2 <1$.  Writing $a=\cos \theta$ gives the roots as $-\cos \theta \pm i \sin \theta$, which we can write as $e^{i \phi}$ for $\phi = \pm (\pi - \theta)$.

We need to know a formula for computing $\sigma_{\lambda}$ at these irreducible polynomials.

\begin{definition}
  For a knot $K$ with Seifert matrix $V$, and for a unit modulus complex number $\omega$, the \emph{$\omega$-signature} $\sigma_{\omega}$ is the signature of the Hermitian matrix
  \[ (1-\omega)V + (1-\overline{\omega})V^T \text{ .}\]
\end{definition}

The $\omega$-signature of $V$, as a map $S^1 \to \Z$, is continuous with jumps only at the unit roots of the Alexander polynomial of $K$.  More details about the $\omega$-signature and these jumps are provided in Section \ref{Sec:sigAndjump}; for now we need only the following theorem.

\begin{theorem}(\cite{Matumoto77})
  If a knot $K$ with Seifert matrix $V$ has $\delta_a = t^2+2at+1$ as a factor of its Alexander polynomial $\Delta_K(t)$, then $\sigma_{\delta_a}(V+V^T)$ equals, up to sign, the jump in the $\omega$-signature function at $e^{i\phi}$ where $\cos \phi = -a$.
\end{theorem}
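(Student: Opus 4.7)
The plan is to reduce to the $\delta_a$-primary summand of the isometric structure $(M,Q,T) = (\Q^{2g},\,V+V^T,\,V^{-1}V^T)$, and then to interpolate the Hermitian form $H_\omega := (1-\omega)V + (1-\bar{\omega})V^T$ between $\theta = 0$ and $\theta = \pi$ (writing $\omega = e^{i\theta}$), catching the whole change of signature at $\theta = \phi$.

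First I would verify the identity $T^T V T = V$, which follows from $(V^{-1}V^T)^T V (V^{-1}V^T) = V V^{-T} \cdot V \cdot V^{-1} V^T = V$. Taking transposes gives $T^T V^T T = V^T$, so $T$ preserves every linear combination of $V$ and $V^T$, including $Q$, $S := V - V^T$, and $H_\omega$. The orthogonality argument used for $Q$ in Section~3.2 then shows that the primary decomposition $M = \bigoplus_\lambda M_\lambda$ is orthogonal with respect to $H_\omega$ and $S$ as well. Consequently $\sigma_\omega(V) = \sum_\lambda \sigma(H_\omega|_{M_\lambda})$, and since for every $\lambda \ne \delta_a$ the restriction $H_\omega|_{M_\lambda}$ stays non-singular at $\omega = e^{i\phi}$, the entire jump comes from $M_{\delta_a}$.

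Next I would write $H_\omega = 2\sin(\theta/2)\,F_\theta$ with $F_\theta := \sin(\theta/2)\,Q - i\cos(\theta/2)\,S$. For $\theta \in (0,2\pi)$ the factor $2\sin(\theta/2)$ is positive, so $\sigma(H_\omega|_{M_{\delta_a}}) = \sigma(F_\theta|_{M_{\delta_a}})$; crucially, unlike $H_\omega$, the form $F_\theta$ extends continuously to $\theta = 0$. At $\theta = \pi$ one has $F_\pi = Q$, whence $\sigma(F_\pi|_{M_{\delta_a}}) = \sigma_{\delta_a}(Q)$. At $\theta = 0$ one has $F_0 = -iS$; since $\det(S) = \pm 1$ by Lemma~\ref{Lemma:unimodSeifert} and the decomposition is $S$-orthogonal, $S|_{M_{\delta_a}}$ is a non-degenerate real skew-symmetric form on an even-dimensional space, whose imaginary eigenvalues $\pm i\mu_j$ give $-iS$ equally many positive and negative eigenvalues, i.e.\ signature zero.

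Finally, a short computation shows that on $M_{\delta_a}$ the determinant $\det(F_\theta|_{M_{\delta_a}})$ is a nonzero real constant times $(\cos\theta + a)^{m_{\delta_a}}$, which vanishes on $[0,\pi]$ only at $\theta = \phi$. Hence $\sigma(F_\theta|_{M_{\delta_a}})$ is locally constant on $[0,\phi)$ and on $(\phi,\pi]$, taking the values $0$ and $\sigma_{\delta_a}(Q)$ respectively, so the jump of $\sigma_\omega$ at $\omega = e^{i\phi}$ has magnitude $|\sigma_{\delta_a}(Q)|$. I expect the main point requiring care to be justifying that the primary decomposition remains $S$-orthogonal and that $S|_{M_{\delta_a}}$ stays non-degenerate even when $T$ acts non-semisimply on $M_{\delta_a}$; but this follows from the same argument as for $Q$ once one has the identities $T^T S T = S$ and $\det(S) = \pm 1$.
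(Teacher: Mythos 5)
Your argument is correct. Note first that the thesis offers no proof of this statement at all --- it is quoted directly from Matumoto --- so there is nothing in the text to compare against step by step; what you have written is essentially the classical Matumoto--Levine interpolation argument, and it holds up. The key identities all check out: $T^TVT=V$ for $T=V^{-1}V^T$ does imply that $Q$, $S$ and every $H_\omega$ are preserved by $T$, so Milnor's orthogonality argument (which uses only that the adjoint of $T$ is $T^{-1}$ and works with generalised eigenspaces, hence needs no semisimplicity) splits each of these forms along the primary decomposition; the factorisation $H_\omega = 2\sin(\theta/2)\bigl(\sin(\theta/2)Q - i\cos(\theta/2)S\bigr)$ is exact; $\det\bigl(F_\theta|_{M_{\delta_a}}\bigr) = (-2)^{m}\det\bigl(V|_{M_{\delta_a}}\bigr)(\cos\theta+a)^{m}$, so the only singular parameter in $[0,\pi]$ is $\theta=\phi$; and $-iS$ restricted to an $S$-orthogonal summand of a unimodular skew form (Lemma \ref{Lemma:unimodSeifert}) is a nonsingular Hermitian form of signature zero. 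It is worth contrasting this with the proof the thesis \emph{does} give for the neighbouring Theorem \ref{Thm:sigjump}, which diagonalises the class of $A(t)$ in $W(\CC(t),\I)$ and reads off sign changes from the product-of-sines formula of Lemma \ref{Lemma:detsin}: that route only pins down the jump modulo $2$ and needs the relevant unit roots to be simple, whereas your primary-decomposition argument identifies the jump exactly, multiplicities included, which is what Matumoto's statement requires.

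Two small points to tighten. First, you tacitly assume $V$ is invertible so that $T=V^{-1}V^T$ exists; this is harmless because every Seifert matrix is S-equivalent to a nonsingular one and both sides of the identity are S-equivalence invariants, but it should be said. Second, with the normalisation of the jump function used in the proof of Theorem \ref{Thm:sigjump} (which carries a factor of $\tfrac{1}{2}$), your computation gives jump $=\tfrac{1}{2}\sigma_{\delta_a}(Q)$ rather than $\sigma_{\delta_a}(Q)$; since the statement is only claimed ``up to sign'' and conventions for the jump differ by exactly this factor, this is cosmetic, but you should fix one convention and state it.
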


If the $\omega$-signature of a knot has no jumps, i.e. is zero at every $\omega$, then the knot must be of finite order in the knot concordance group.

\subsection{Detecting order 4 elements in $\G$}

From our previous analysis of Witt groups we know there is a chance that a knot may have algebraic order $4$, since $W(\F_p) \cong \Z_4$ for $p\equiv 3$ mod $4$ and there is a factor $\Z_8 \subset W(\Q_2)$.  The following theorem was essentially worked out by Levine~\cite[22]{Levine69-1} and gives exact criteria for a knot to have algebraic order $4$.  Interestingly, there are no elements in $\G$ with image of order $4$ in $W(\Q_2)$.  The proof given here uses elements from Levine's paper as well as Livingston's paper~\cite[3.1]{Livingston08}.

\begin{theorem}
\label{Thm:order4class}
  A class $(M,Q,T) \in \G_{\Q}$ which is the image of an element of $\G$ is of order $4$ if and only if
  \begin{itemize}
	  \item $\sigma_{\lambda}(M,Q,T) = 0$ for every $\lambda$,
	  \item There exists an irreducible symmetric factor $\lambda(t)$ of $\Delta_T(t)$ with $\eps_{\lambda}(M,Q,T) = 1$ and with $\lambda(1)\lambda(-1) = p^n q$ for a prime $p \equiv 3$ mod 4, $n$ odd and $q$ relatively prime to $p$.
  \end{itemize}
\end{theorem}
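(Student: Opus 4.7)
The plan is to use Theorem \ref{Thm:CompleteCobordismInvts}, which asserts that the family $\{\eps_{\lambda},\sigma_{\lambda},\mu_{\lambda}^{p}\}$ is a complete set of cobordism invariants. The strategy is to determine, for an element of order dividing $4$, which of these invariants are forced to vanish and which can genuinely carry $4$-torsion information, then match the surviving constraints to the stated condition on $\lambda(1)\lambda(-1)$.

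For the forward direction, suppose $(M,Q,T)$ has order $4$ in $\G_{\Q}$. Since the signatures $\sigma_{\lambda}$ take integer values and are additive, any nonzero $\sigma_{\lambda}$ would give an element of infinite order; hence $\sigma_{\lambda}(M,Q,T)=0$ for every $\lambda$. The invariant $\eps_{\lambda}$ lives in $\Z_{2}$ and therefore is killed on doubling, so whatever distinguishes $2(M,Q,T)$ from zero must come from some $\mu_{\lambda}^{p}$. I would then analyse how $\mu_{\lambda}^{p}$ transforms under the map $\alpha \mapsto 2\alpha$ using the defining formula $\mu(\alpha)=(-1,-1)^{r(r+3)/2}(\det\alpha,-1)^{r}S(\alpha)$ and the multiplicativity of the Hasse invariant. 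A direct Hilbert-symbol manipulation shows that, for odd $p$, $\mu_{\lambda}^{p}$ carries genuine order-$4$ information exactly when $-1$ is not a square in $\F_{p}$, i.e.\ $p\equiv 3\pmod 4$, in agreement with the computation $W(\F_p)\cong\Z_4$ from Section \ref{Sec:WittGroups}. One also has to rule out $p=2$: although $W(\Q_{2})\cong\Z_{8}\oplus\Z_{2}\oplus\Z_{2}$ contains $4$-torsion, the constraint $\det(A-A^{T})=\pm 1$ on matrices in $\G$ forces the $2$-adic component of the image to land inside the subgroup generated by $[\pm 1]$, which carries only $2$-torsion.

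The remaining task is to translate ``$\mu_{\lambda}^{p}$ has order $4$'' into the arithmetic condition on $\lambda(1)\lambda(-1)$. The idea is that on the $\lambda$-primary summand the form $(M,Q)$ is a trace form whose discriminant equals, up to squares in $\Q^{*}$, the quantity $\lambda(1)\lambda(-1)$ (this is the standard discriminant computation for forms associated to a symmetric polynomial). By Lemma \ref{Lemma:Qpsquares}, the image of this discriminant in $\Q_{p}^{*}/(\Q_{p}^{*})^{2}$ is nontrivial exactly when $v_{p}(\lambda(1)\lambda(-1))$ is odd, i.e.\ $\lambda(1)\lambda(-1)=p^{n}q$ with $n$ odd and $\gcd(p,q)=1$. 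Combined with the Hilbert-symbol analysis of the previous paragraph, this discriminant condition at a prime $p\equiv 3\pmod 4$ is exactly what makes $\mu_{\lambda}^{p}(2(M,Q,T))$ nonzero. Finally, the parity condition $\eps_{\lambda}=1$ enters because the rank of the $\lambda$-primary block and the exponent of $\lambda$ in $\Delta_{T}$ determine the parity of $r$ in the $(\det\alpha,-1)^{r}$ factor of $\mu$; without $\eps_{\lambda}=1$, the rank is even and the discriminant contribution cancels, killing the $4$-torsion. For the converse, granted these three conditions one checks directly that $\mu_{\lambda}^{p}$ is nontrivial but becomes trivial on quadrupling, and that every other invariant vanishes appropriately, placing $(M,Q,T)$ in exactly one $\Z_{4}$ summand of $\G$.

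The main obstacle will be the bookkeeping in the Hilbert-symbol computation of $\mu_{\lambda}^{p}(2\alpha)$: one must carefully track the parity of the rank, the precise power of $(-1,-1)_{p}$ appearing, and the effect of the Hasse invariant under block sum, all while restricting to the $\lambda$-primary component. The secondary subtlety is the $p=2$ exclusion, which uses an integrality argument peculiar to $\G$ rather than $\G^{\Q}$; this is the only place where the hypothesis that $(M,Q,T)$ comes from $\G$ (not just $\G_{\Q}$) is used in an essential way.
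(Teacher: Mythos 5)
Your overall strategy is the one the paper follows: force $\sigma_{\lambda}=0$ by additivity, observe that $\eps_{\lambda}$ dies on doubling so any order-$4$ phenomenon must be seen by some $\mu_{\lambda}^{p}$ applied to $2\alpha$, and then run a Hilbert-symbol case analysis over the primes. The paper short-circuits your proposed discriminant computation by quoting Levine's identity $\mu_{\lambda}^{p}(2\alpha)=((-1)^{d}\lambda(1)\lambda(-1),-1)^{\eps_{\lambda}(\alpha)}$ with $2d=\deg\lambda$; this identity also makes transparent why $\eps_{\lambda}=1$ is needed (it is literally the exponent in the formula), whereas your explanation via ``parity of the rank $r$'' is off --- the $\lambda$-primary block always has even rank since $\deg\lambda$ is even, so rank parity cannot be the mechanism. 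That is a repairable imprecision rather than a fatal error, since deriving Levine's formula would supply the correct statement.

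The genuine gap is your exclusion of $p=2$. You assert that the $2$-adic image of a class coming from $\G$ lands in the subgroup of $W(\Q_{2})$ generated by $[\pm 1]$ and that this subgroup ``carries only $2$-torsion.'' Both halves are problematic: by the structure theorem $W(\Q_{2})\cong\Z_{8}\oplus\Z_{2}\oplus\Z_{2}$ with the $\Z_{8}$ summand generated by $[1]$, so the subgroup generated by $[\pm 1]$ is the full $\Z_{8}$ and certainly contains elements of order $4$; and no argument is given for why integrality of the Seifert form would confine the image to that subgroup in the first place. The paper's actual argument is arithmetic rather than group-theoretic: writing $\lambda$ out as a palindromic polynomial and using that $\lambda(1)$ is odd (this is where $\Delta_{T}(1)=\pm 1$, i.e.\ the hypothesis that the class comes from $\G$, enters), one shows $(-1)^{d}\lambda(1)\lambda(-1)\equiv 1\pmod 4$, whence the Hilbert symbol $\bigl((-1)^{d}\lambda(1)\lambda(-1),-1\bigr)_{2}=+1$ and $\mu_{\lambda}^{2}(2\alpha)=1$ identically. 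Without this congruence (or some substitute), your proof does not rule out $4$-torsion detected at the prime $2$, and the ``only if'' direction of the theorem --- that order $4$ forces a prime $p\equiv 3\pmod 4$ dividing $\lambda(1)\lambda(-1)$ to odd order --- is not established.
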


\begin{proof}
  The signature function $\sigma$ is additive with values in $\Z$, so if it is not zero then $(M,Q,T)$ must have infinite order.

  Levine~\cite[19]{Levine69-1} proves that, for $\alpha \in \G_{\Q}$ and $2d=\text{degree
  } \lambda(t)$,
    \[ \mu_{\lambda}^p(2\alpha) = ((-1)^d \lambda(1)\lambda(-1),-1)^{\eps_{\lambda}(\alpha)}. \]
  In particular, if $\eps_{\lambda}(\alpha) = 0$ then this equals $1$ and $\alpha$ has order at most $2$.  So suppose there is some factor $\lambda(t)$ of $\Delta_T(t)$ with odd exponent in $\Delta_T(t)$.

  For $\alpha$ to have order $4$, we need that $\mu_{\lambda}^p(2\alpha) = -1$ for some prime $p$.  We need to consider two cases: $p$ odd and $p=2$.

  $\mathbf{p}$ \textbf{is odd:} From Proposition \ref{Prop:Hilbertsymbol} we have
  \[ (a,-1)_p = \left( \frac{-1}{p}\right)^{n} = \left((-1)^{\frac{p-1}{2}}\right)^n\]
  where $a = p^n u$ for a unit $u \in \Zp^*$.  Immediately we see that if $p\equiv 1$ mod 4 then $\mu_{\lambda}^p(2\alpha)= 1$ and $\alpha$ would be of order $2$.  So assume $p\equiv 3$ mod 4.  For $\alpha$ to be of order $4$ we also need the exponent $n$ to be odd.  In our case
  \[ a = (-1)^d \lambda(1)\lambda(-1) \]
  so this means we need $p$ to divide $\lambda(1)\lambda(-1)$ with odd exponent.

  $\mathbf{p=2}$\textbf{:} We want to show that $\mu_{\lambda}^2(2\alpha) = 1$ for any class $\alpha$.  This would mean that the prime $p=2$ cannot detect elements of order $4$.  We have
  \[ (a,-1)_2 = (-1)^{\epsilon(u)}\]
  where $a = 2^n u$ for $u$ coprime to $2$.  As before, we have  $a = (-1)^d \lambda(1)\lambda(-1)$ with degree$(\lambda) = 2d$. We will show that this is always congruent to $1$ modulo $4$, meaning that $\epsilon(u) = 0$ and $\mu_{\lambda}^2(2\alpha)$ is always equal to $1$.

 Write
 \[ \lambda(t) = a_0 + a_1 t + \dots + a_{d-1}t^{d-1} + a_d t^d + a_{d-1}t^{d+1} + \dots + a_0 t^{2d}\]
 Since $\alpha$ is the image of an element of $\G$, we have that $\Delta_T(1)$ is odd.  Thus $l:=\lambda(1)$ is odd.  We have
   \[ l = 2(a_0 + \dots + a_{d-1}) + a_d = 2a_{even} + 2a_{odd} + a_d\]
where $a_{even}$ and $a_{odd}$ are the sums of the coefficients with even or odd subscripts, respectively.  We also have
  \[ \lambda(-1) = 2a_{even} - 2a_{odd} + (-1)^d a_d = 2a_{even} - 2a_{odd} + (-1)^d(l - 2a_{even} - 2a_{odd}).\]
  If $d$ is even then $\lambda(-1) = l - 4a_{odd} \equiv l \text{ mod } 4$.  If $d$ is odd then $\lambda(-1) = -l + 4a_{even} \equiv -l \text{ mod } 4$.  In both cases we have $(-1)^d \lambda(1)\lambda(-1) \equiv l^2 \text{ mod } 4$, and since $l$ is odd this is $1$ modulo $4$.
\end{proof}

We rewrite the result of Theorem \ref{Thm:order4class} in a way that will be useful to us in Chapter 4.

\begin{corollary}
\label{Cor:AlgOrder4}
If a knot $K$ is of order 4 in the algebraic concordance group $\G$ then for some prime $p\equiv 3$ mod $4$ and some symmetric irreducible factor $g(t)$ of $\Delta_K(t)$, $p$ divides $g(-1)$ and $g$ has odd exponent in $\Delta_K$.
\end{corollary}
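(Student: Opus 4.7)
The plan is to deduce the corollary directly from Theorem \ref{Thm:order4class}, using one standard fact about Alexander polynomials of classical knots to clean up the statement. Theorem \ref{Thm:order4class} tells us that if $K$ represents a class of order $4$ in $\G$, then there exists an irreducible symmetric factor $\lambda(t)$ of $\Delta_K(t)$ with odd exponent $\eps_\lambda = 1$, and with $\lambda(1)\lambda(-1) = p^n q$ for some prime $p \equiv 3 \pmod 4$, odd $n$, and $q$ coprime to $p$. Taking $g(t) := \lambda(t)$ already gives the odd-exponent condition, so it remains only to replace the product $\lambda(1)\lambda(-1)$ by $\lambda(-1)$.

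The key point is that for any classical knot $K$, the Alexander polynomial satisfies $\Delta_K(1) = \pm 1$; this follows from the fact that $\Delta_K(t) = \det(V - tV^T)$ and $\det(V-V^T) = \pm 1$ by Lemma \ref{Lemma:unimodSeifert}. Since $\Delta_K(1) = \pm \prod_i g_i(1)$ (including the non-symmetric paired factors, all of which have integer values at $1$), each irreducible integer factor must satisfy $g(1) = \pm 1$. In particular, $\lambda(1) = \pm 1$.

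Therefore the prime $p$ appearing with odd exponent in $\lambda(1)\lambda(-1)$ cannot come from $\lambda(1)$, and so $p$ must divide $\lambda(-1)$. This yields precisely the conclusion: with $g = \lambda$, we have $p \mid g(-1)$ and $g$ has odd exponent in $\Delta_K$, as required. The main (and essentially only) thing to be careful about is the normalization of $\Delta_K$: it is only defined up to multiplication by $\pm t^n$, but both $\Delta_K(1) = \pm 1$ and the value $\lambda(1)\lambda(-1) \bmod \text{squares}$ are invariant under this ambiguity, so no difficulty arises.
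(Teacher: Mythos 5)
Your derivation is correct and matches what the paper intends: the corollary is stated there as a direct rewriting of Theorem \ref{Thm:order4class}, and the only content needed is exactly your observation that $\Delta_K(1)=\det(V-V^T)=\pm1$ forces every (primitive integral) irreducible factor $\lambda$ to satisfy $\lambda(1)=\pm1$, so the odd power of $p\equiv 3 \pmod 4$ in $\lambda(1)\lambda(-1)$ must come from $\lambda(-1)$. Your remark on the $\pm t^n$ normalization is also the right point to check, and it is handled correctly.
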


\subsection{Detecting order 2 elements in $\G$}

\subsubsection{Order $2$ via the exponent map}

If a knot has all its signatures $\sigma_{\lambda}$ equal to zero and is known not to be of order $4$ by Corollary \ref{Cor:AlgOrder4} but has $\eps_{\lambda}=1$ for some irreducible symmetric $\lambda$, then such a knot must be an element of algebraic order $2$.

\subsubsection{Order $2$ in $W(\F_p)$ for $p\equiv 1$ and $p\equiv 3$ mod 4}

We now suppose that we have a knot whose signatures $\sigma_{\lambda}$ are all zero, which is known not to be of order $4$ and which has $\eps_{\lambda}=0$ for all $\lambda$.  We wish to compute the image of this knot in $W(\F_p)$ for primes $p\equiv 1$ modulo $4$, and seek some way to eliminate the majority of such primes from consideration.  The hope is that, as with Corollary \ref{Cor:AlgOrder4}, we only need to consider those primes which divide $\Delta_K(-1)$.  Unfortunately the situation is somewhat more complicated.

The following is Theorem 4.1 from \cite{Livingston08}, where the notation $\Delta_V$ is equivalent to $\Delta_K$ and is the Alexander polynomial associated to the matrix $V$.

\begin{theorem}
\label{Thm:DetDisc}
Let $V$ be a non-singular Seifert matrix and suppose that each irreducible symmetric factor of $\Delta_V(t)$ has even exponent.  Then for any prime $p$ that does not divide $2\det(V)\disc(\overline{\Delta}_V(t))$ we have that $V$ represents zero in $\G_{\Q_p}$, where $\overline{\Delta}_V(t)$ denotes the product of all the distinct irreducible factors of $\Delta_V$.
\end{theorem}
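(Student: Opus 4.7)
The plan is to exhibit a $\Q_p$-Lagrangian for $V$: a half-dimensional subspace $L \subset \Q_p^n$ with $V(L,L) = 0$. Any such $L$ automatically satisfies $L = L^{\perp_V}$ by dimension count, and the identity $V(x, Ty) = V(y, x)$ (immediate from $VT = V^T$ for $T = V^{-1}V^T$) then forces $TL \subseteq L^{\perp_V} = L$; so $L$ is a $T$-invariant metabolizer for the isometric structure $(V + V^T,\, V^{-1}V^T)$, and via the isomorphism $\G^{\Q_p} \cong \G_{\Q_p}$ this exhibits $[V] = 0$ in $\G_{\Q_p}$.

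First I would decompose $\Q_p^n = \bigoplus_\eta M_\eta$ into $T$-primary components indexed by monic $\Q_p$-irreducible factors $\eta$ of $\Delta_V$. As in the proof that $\G_\F = \bigoplus_\delta \G_\F^\delta$, components $M_\eta$ and $M_{\eta'}$ are $V$-orthogonal whenever $\eta(t)$ and $\eta'(t^{-1})$ are coprime, so for each non-symmetric factor $g$ the subspace $M_g$ is self-$V$-isotropic and can be included wholesale in $L$. This reduces the problem to constructing a half-dimensional $V$-isotropic subspace inside each $M_\eta$ for a $\Q_p$-irreducible \emph{symmetric} $\eta$. The even-multiplicity hypothesis transfers cleanly from $\Q$ to $\Q_p$: a $\Q$-irreducible symmetric factor $\delta$ of multiplicity $2k$ in $\Delta_V$ factors over $\Q_p$ as a product of $\Q_p$-irreducible symmetric factors (each inheriting a multiplicity in $\delta$) together with pairs of non-symmetric reciprocals, and raising $\delta$ to the $2k$-th power preserves parity, so every $\Q_p$-irreducible symmetric factor of $\Delta_V$ appears with even multiplicity.

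The main technical step is the construction of $L_\eta \subset M_\eta$, and this is precisely where both hypotheses on $p$ become essential. Since $p \nmid \det V$, both $V$ and $T$ lie in $GL_n(\Zp)$, so $T$ preserves $\Lambda = \Zp^n$ and $V$ is $\Zp$-unimodular on $\Lambda$. Since $p \nmid \disc(\overline{\Delta}_V)$, the polynomial $\overline{\Delta}_V$ is separable mod $p$, and Hensel's lemma refines the primary decomposition to a $\Zp[T]$-direct sum $\Lambda = \bigoplus_\eta \Lambda_\eta$ which remains $V$-orthogonal in the appropriate pattern. I would then pass to the reduction $\bar V$ over $\F_p$ -- itself a Seifert-like matrix inheriting the same even-multiplicity structure for symmetric factors -- and build a Lagrangian in each $\bar\eta$-primary piece by combining the even multiplicity with the separability of $\bar\eta$ mod $p$ to control the Jordan structure of $\bar T$ and split off $V$-hyperbolic summands; finally I would lift this $\F_p$-Lagrangian to a $\Zp$-direct-summand Lagrangian $L_\eta \subset \Lambda_\eta$ by a Hensel-type argument enabled by the $\Zp$-unimodularity of $V$. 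The main obstacle is precisely this mod-$p$ construction: even multiplicity of $\bar\eta$ on its own does not force Witt-triviality over $\F_p$ (reflected in the fact that $\mu_\eta^p$ can be non-trivial in general), so the delicate point is to exploit separability and unimodularity together to arrange the $\bar T$-module structure on $\bar\Lambda_\eta$ into mutually $V$-annihilating halves of the correct rank $k \deg\eta$.
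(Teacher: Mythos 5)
First, note that the thesis does not actually prove this statement: it is quoted verbatim as Theorem 4.1 of Livingston's survey \cite{Livingston08}, so there is no internal proof to compare against. Judged on its own merits, your architecture is the right one --- decompose into $T$-primary components, discard the non-symmetric reciprocal pairs as automatically isotropic, observe that even multiplicity of each $\Q$-irreducible symmetric factor descends to even multiplicity of each $\Q_p$-irreducible symmetric factor, and then use $p \nmid \det(V)$ for $\Zp$-unimodularity and $p \nmid \disc(\overline{\Delta}_V)$ for separability mod $p$. The preliminary reduction (that a half-rank $V$-isotropic subspace is automatically $T$-invariant, and that $M_g$ for non-symmetric $g$ is self-orthogonal) is correct.

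The genuine gap is that you stop exactly where the proof has to happen, and you have misdiagnosed the difficulty there. The mod-$p$ step is not an ``obstacle'' requiring delicate control of Jordan blocks: by the d\'evissage underlying Theorem \ref{Thm:trivialclass} (Milnor/Levine), the class of the $\bar\eta$-primary component in $\G_{\F_p}^{\bar\eta}$ is governed by a hermitian form over the residue field $E = \F_p[t]/(\bar\eta)$, equipped with the involution induced by $t \mapsto t^{-1}$; this involution is nontrivial because $\bar\eta$ is symmetric of degree $\geq 2$ (degree-one symmetric factors $t\pm1$ are excluded since $\Delta_T(1)\Delta_T(-1)\neq 0$). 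Hermitian forms over a finite field with nontrivial involution are classified by rank alone, so the hermitian Witt group is $\Z_2$ and even multiplicity is precisely the condition that kills the class --- your worry that ``even multiplicity does not force Witt-triviality over $\F_p$'' ignores the $T$-structure, and the nontriviality of $\mu^p_\lambda$ in general arises only from odd exponent or from $p \mid \det(V)$, both excluded here. Moreover, the final Hensel lift of a $T$-invariant Lagrangian from $\F_p$ to $\Zp$ is more than you need and is the most technically fragile part of your plan: given even exponent, the second bullet of Theorem \ref{Thm:trivialclass} reduces triviality in $\G_{\Q_p}^{\delta}$ to triviality of the underlying quadratic form in $W(\Q_p)$, which via the splitting $W(\Q_p) \cong W(\F_p) \oplus W(\F_p)$ of Theorem \ref{Thm:QpintoFp} is checked componentwise --- the residue form $\partial_p$ vanishes by unimodularity, and the reduction $\psi_p$ vanishes because it is the Scharlau transfer of the even-rank (hence hyperbolic) hermitian form above. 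Rerouted this way, your outline closes up into a proof; as written, the central claim is asserted rather than established.
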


The discriminant of the Alexander polynomial is closely related to the determinant of the knot.  The determinant of a knot $K$, denoted $\det(K)$, is defined to be $\det(V+V^T)=\Delta_V(1)\Delta_V(-1)$.  Notice that the primes dividing $\det(K)$ are exactly those dividing $\overline{\Delta}_V(-1)$, so we will assume for now that the Alexander polynomial has no repeated irreducible factors.

\medskip

Write the Alexander polynomial as
 \[ \Delta_V(t) = \det(V)(t^{2n} + a_{1}t^{2n-1} + \dots + a_1 t + 1).\]
Then the discriminant of $\Delta_V(t)$ is the product of the squares of the differences of the roots
\[ \disc(\Delta_V(t)) = \det(V)^{4n-2}\prod_{i < j}(\alpha_i - \alpha_j)^2 \]
where the roots $\alpha_i$ lie in some algebraic closure of $\Q$.  The Alexander polynomial is symmetric so the roots come in inverse pairs: write $\alpha_i = \frac{1}{\alpha_{n+i}}$ for $i=1, \dots, n$. Since $\pm1$ is not a root, $\alpha\neq \frac{1}{\alpha}$ and we can re-write the discriminant as
\[ \disc(\Delta_V(t)) = \det(V)^{4n-2}\prod_{i=1}^n(\alpha_i - \frac{1}{\alpha_i})^2 \prod_{\substack{\alpha_j\neq \frac{1}{\alpha_i}\\ i<j}}(\alpha_i - \alpha_j)^2. \]
Let us re-write the first product as follows:
\[ \prod_{i=1}^n(\alpha_i - \frac{1}{\alpha_i})^2 = \frac{1}{\prod_{i=1}^n \alpha_i^2} \prod_{i=1}^n (\alpha_i^2-1)^2
= \frac{1}{\prod_{i=1}^n \alpha_i^2}\prod_{i=1}^n(\alpha_i-1)^2(\alpha_i+1)^2.\]
Now let us find a formula for the determinant.  We have
\[ \Delta_V(t) = \det(V)\prod_{i=1}^n(t-\alpha_i)(t-\frac{1}{\alpha_i})\]
by definition of the $\alpha_i$. The determinant is therefore
\begin{eqnarray*}
\Delta_V(1)\Delta_V(-1) & = & \det(V)^2\prod_{i=1}^n(1-\alpha_i)(1-\frac{1}{\alpha_i})(1+\alpha_i)(1+\frac{1}{\alpha_i})\\
& = & (-1)^n \det(V)^2 \frac{1}{\prod_{i=1}^n \alpha_i^2} \prod_{i=1}^n (\alpha_i-1)^2(\alpha_i+1)^2.
\end{eqnarray*}

The result of these manipulations is the formula in the next Lemma.

\begin{lemma}
If the Alexander polynomial $\Delta_V(t)$ of a knot $K$ has no repeated irreducible factors, then the discriminant of $\Delta_V(t)$ is related to the determinant of $K$ by the following formula.
\[\disc(\Delta_V(t)) = (-1)^n\det(V)^{4n-4}\det(K)\prod_{\substack{\alpha_j\neq \frac{1}{\alpha_i}\\ i<j}}(\alpha_i - \alpha_j)^2.\]
\end{lemma}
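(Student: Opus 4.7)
The proof is essentially a direct algebraic substitution, repackaging the formulas already derived in the paragraphs preceding the lemma statement. The plan is to start from the displayed factorisation of $\disc(\Delta_V(t))$, use the preceding identity for $\prod_i(\alpha_i-\tfrac{1}{\alpha_i})^2$, and then eliminate the awkward factor $\prod_i(\alpha_i-1)^2(\alpha_i+1)^2/\prod_i\alpha_i^2$ by substituting the formula for $\det(K)=\Delta_V(1)\Delta_V(-1)$.

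More precisely, I would first record the splitting of the full product already established in the excerpt:
\[
\disc(\Delta_V(t)) = \det(V)^{4n-2}\prod_{i=1}^n\!\Big(\alpha_i-\tfrac{1}{\alpha_i}\Big)^{\!2}\!\!\prod_{\substack{\alpha_j\neq 1/\alpha_i\\ i<j}}\!(\alpha_i-\alpha_j)^2,
\]
where the separation is justified because the Alexander polynomial has simple roots (by the hypothesis that $\Delta_V$ has no repeated irreducible factors) and $\pm 1$ is not a root (since $\det V\neq 0$ and $\det(V-V^T)=\pm 1$ forces $\Delta_V(\pm 1)\neq 0$ via the formulas in the excerpt). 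I would then quote verbatim the identity
\[
\prod_{i=1}^n\Big(\alpha_i-\tfrac{1}{\alpha_i}\Big)^2 = \frac{1}{\prod_{i=1}^n\alpha_i^2}\prod_{i=1}^n(\alpha_i-1)^2(\alpha_i+1)^2,
\]
derived in the text by clearing denominators and using the difference of squares.

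Next, I would rearrange the preceding formula for the knot determinant,
\[
\det(K)=\Delta_V(1)\Delta_V(-1) = (-1)^n\det(V)^2\cdot\frac{1}{\prod_{i=1}^n\alpha_i^2}\prod_{i=1}^n(\alpha_i-1)^2(\alpha_i+1)^2,
\]
to solve for the offending symmetric factor:
\[
\frac{1}{\prod_{i=1}^n\alpha_i^2}\prod_{i=1}^n(\alpha_i-1)^2(\alpha_i+1)^2 = (-1)^n\frac{\det(K)}{\det(V)^2}.
\]
Substituting into the discriminant expression gives
\[
\disc(\Delta_V(t)) = \det(V)^{4n-2}\cdot(-1)^n\frac{\det(K)}{\det(V)^2}\prod_{\substack{\alpha_j\neq 1/\alpha_i\\ i<j}}(\alpha_i-\alpha_j)^2,
\]
which simplifies, via $\det(V)^{4n-2}/\det(V)^2=\det(V)^{4n-4}$, to the claimed formula.

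There is no real obstacle here: every ingredient has been set up in the preceding discussion, and the argument amounts to bookkeeping of signs and exponents. The only point worth double-checking is that the parity $(-1)^n$ coming from the determinant formula survives unchanged after division by $\det(V)^2$ (it does, since $\det(V)^2$ is manifestly positive as an algebraic quantity and carries no sign), and that the index set $\{(i,j): i<j,\ \alpha_j\neq 1/\alpha_i\}$ in the residual product is the same before and after the extraction step, which is immediate from the definition of the splitting.
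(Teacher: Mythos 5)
Your proposal is correct and follows essentially the same route as the paper: the lemma is obtained there precisely by splitting off the inverse-pair factors $\prod_i(\alpha_i-\tfrac{1}{\alpha_i})^2$, rewriting that product as $\frac{1}{\prod_i\alpha_i^2}\prod_i(\alpha_i-1)^2(\alpha_i+1)^2$, and substituting the displayed formula for $\det(K)=\Delta_V(1)\Delta_V(-1)$ to replace it by $(-1)^n\det(K)/\det(V)^2$. Your bookkeeping of the sign and of the exponent $\det(V)^{4n-2}/\det(V)^2=\det(V)^{4n-4}$ matches the paper's computation exactly.
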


\begin{remark}
 The discriminant of a polynomial $p(t)$ may be expressed as an integer polynomial in the coefficients of $p$ (see, for example, Proposition 7.5 in \cite{Grillet}). Since the Alexander polynomial has integer coefficients, the discriminant will therefore be an integer. The determinant of $K$ is also an integer. It therefore does not matter which algebraic closure we use to find the roots $\alpha_i$, in terms of deciding which primes divide the discriminant but not the determinant.
\end{remark}

\begin{remark}
In fact, it is fairly rare for a prime other than $2$ to divide the discriminant but not the determinant of $K$.  Of the $41$ knots forming a basis for the kernel of the signature function (see Section \ref{Sec:signature}) there are $12$ knots with this property.  This goes down to only one knot out of the $11$ knots in $B_4$ (see Section \ref{Sec:MoreWitt}), which is the set of knots that are trivial in $W(\F_p)$ for all primes $p$ dividing the determinant.
\end{remark}

\subsubsection{Order $2$ in $W(\F_p)$ for $p\equiv 3$ mod 4}

It may happen that a knot is detected to be of order $2$ in $W(\F_p)$ for $p\equiv 3$ mod $4$.  These Witt groups are isomorphic to $\Z_4$, so it is possible that an element representing $2 \in \Z_4$ arises as twice an element of order $4$.  If this is the case, then all other $\Z_2$-valued invariants will vanish. (Such examples will occur at the end of Section \ref{Sec:ElementsOrder4}.) The following theorem will help us to eliminate such a possibility.

\begin{theorem}
\label{Thm:sigjump}
  Suppose the symmetric factors of the Alexander polynomial $\Delta_K(t)$ of a knot $K$ factor (over $\CC$) as $at^k\prod(t-\omega_i)$ with all $\omega_i$ distinct unit complex numbers.  Then the jump $j_{\omega}(K)$ in the $\omega$-signature function of $K$ is $1$ or $0$ modulo $2$ depending on whether $\omega = \omega_i$ for some $i$ or not.
\end{theorem}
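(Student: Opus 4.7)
The plan is to work directly with the Hermitian matrix $H(\omega) := (1-\omega)V + (1-\bar\omega)V^T$, whose signature is $\sigma_\omega(K)$, where $V$ is a Seifert matrix for $K$. On the unit circle $\bar\omega = \omega^{-1}$, so a short manipulation yields $H(\omega) = \tfrac{1-\omega}{\omega}(\omega V - V^T)$ and
\[ \det H(\omega) \;=\; \left(\tfrac{1-\omega}{\omega}\right)^{2g}\Delta_K(\omega). \]
On $S^1 \setminus \{1\}$ the prefactor is non-vanishing, so $H(\omega)$ is singular precisely at the unit-circle roots of $\Delta_K$.

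Next I would observe that every such root arises from a symmetric factor: if $|\omega_0|=1$ then the minimal polynomial of $\omega_0$ over $\Q$ has real coefficients and is therefore stable under complex conjugation, which on $S^1$ coincides with the inversion $\alpha \mapsto \alpha^{-1}$; this minimal polynomial is thus a symmetric polynomial. Combined with the hypothesis, every unit-circle root $\omega_i$ of $\Delta_K$ is simple, and $\det H(\omega)$ has a simple zero there. On the complement in $S^1\setminus\{1\}$ of the $\omega_i$ the matrix $H(\omega)$ is non-singular and the integer-valued signature $\sigma_\omega$ is locally constant, forcing $j_\omega = 0$.

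At each $\omega_i$ the simple zero of $\det H$ pins down the nullity of $H(\omega_i)$ to be exactly $1$, since the nullity is sandwiched between $1$ and the order of vanishing of $\det H$. By Rellich's theorem the eigenvalues of $H(e^{i\theta})$ may be arranged as real-analytic functions of $\theta$ near $\theta_i$, and exactly one of them, $\lambda(\theta)$, vanishes at $\theta_i$; the simple zero of $\det H$ forces $\lambda'(\theta_i)\neq 0$, so $\lambda$ crosses zero transversally while all other eigenvalues stay bounded away from zero. Hence $\sigma_\omega$ jumps by $\pm 2$ as $\omega$ traverses $\omega_i$, and in the half-jump convention $j_{\omega_i} = \pm 1 \equiv 1 \pmod{2}$. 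The main obstacle — excluding a tangential touch rather than a genuine crossing — is exactly what the simplicity hypothesis removes. The borderline point $\omega=1$, where $H\equiv 0$, is handled similarly by the first-order expansion $H(e^{i\theta}) = i\theta(V^T-V)+O(\theta^2)$, which gives $j_1 = \pm\sigma(i(V^T-V))$; this is even because $V-V^T$ is nondegenerate on the $2g$-dimensional space on which it acts.
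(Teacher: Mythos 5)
Your proof is correct, but it takes a genuinely different route from the one in the thesis. The thesis argues algebraically inside the Witt group $W(\CC(t),\I)$: it diagonalises the class of $A(t)=(1-t)V+(1-t^{-1})V^T$, reduces each diagonal entry modulo norms to a product $at^{-k}\prod_j(t-\omega_j)$ with distinct unit roots, rewrites that product as $\pm 2^{2k}\prod_j \sin((\theta-\theta_j)/2)$ (Lemma \ref{Lemma:detsin}), and reads off that each rank-one summand contributes a jump of $\pm 1$ exactly at its roots; additivity then gives the parity statement. You instead run a direct spectral-flow argument on the Hermitian family $H(e^{i\theta})$: the identity $\det H(\omega)\doteq\Delta_K(\omega)$ on $S^1\setminus\{1\}$, the observation that unit-circle roots of $\Delta_K$ can only come from symmetric irreducible factors (so the distinctness hypothesis makes every such root simple), the sandwich ``$1\le\text{nullity}\le\text{order of vanishing of }\det H$'', and Rellich's theorem to force a single transversal eigenvalue crossing. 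Each approach buys something: yours is more elementary (no Witt-group or norm bookkeeping), computes the jump exactly as $\pm 1$ rather than only mod $2$, and explicitly disposes of the degenerate point $\omega=1$, which the thesis absorbs into the norm reduction. The thesis's argument, on the other hand, does not use the simplicity hypothesis at all and actually proves the stronger statement that $j_K(\omega)$ is congruent mod $2$ to the multiplicity of $(t-\omega)$ in the symmetric part of $\Delta_K$ — your transversality argument would not survive a repeated root, where the kernel can be higher-dimensional or an eigenvalue can touch zero tangentially. Two minor points worth tightening: you should say explicitly that the order of vanishing of $\det H$ along the circle (in $\theta$) agrees with its order as a zero in $\omega$ (immediate from the chain rule, since $d\omega/d\theta\neq 0$), and at $\omega=1$ you can note that $\sigma(i(V^T-V))$ is exactly $0$, not merely even, since the eigenvalues of $i$ times a real nonsingular skew-symmetric matrix occur in pairs $\pm\mu$.
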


\begin{proof}
(Taken from Appendix A of \cite{HeddenKirkLivingston11}.) Let $V$ be a Seifert matrix for the knot $K$.
Write
\[ A(t) = (1-t)V + (1 - t^{-1})V^T = (1-t)(V-t^{-1}V^T) \in W(\CC(t),\I) \]
so the determinant of $A(t)$ is $(1-t)^{2n} \Delta_K(t^{-1})$. For a unit complex number $\omega$, the $\omega$-signature $\sigma_{\omega}(K)$ is simply the signature of $A(\omega)$, taking the average of the one-sided limits when $A(\omega)$ is singular. Since $\sigma_{\omega} \colon S^1 \to \Z$ is continuous except at these singularities, it is a constant function with jumps. Define the jump function $j_K(\omega) \colon S^1 \to \Z$ by
  \[ j_K(e^{i\theta}) = \displaystyle \frac{1}{2} \left(\lim_{x \to \theta +} \sigma_{e^{ix}}(K) - \lim_{x \to \theta -} \sigma_{e^{ix}}(K)\right). \]
  (Despite the factor of $\frac{1}{2}$, the jump function is actually integer-valued, as will be seen shortly.)

   Since we are considering $A(t)$ as an element of $W(\CC(t),\I)$, we consider its determinant as an element of $\CC(t)^*/ N$, where (as in Chapter \ref{chapter2}) $N$ is the subgroup of $\CC(t)^*$ generated by norms. Thus we may write
  \[ \det(A(t)) = at^k\prod_{i=1}^{2n}(t-\omega_i)\]
where the $\omega_i$ are distinct unit complex numbers. This is because $\det(A(t)) = \I(\det(A(t)))$ so if $(t-\omega)$ is a factor then $(t- \overline{\omega^{-1}})$ is also a factor, and if $\omega$ is not a unit complex number then these two factors form a norm and can be removed from the product. If $\omega$ is a unit complex number then any factors of $(1-\omega)^2$ can also be removed. Furthermore, the symmetry of $\det(A)$ allows us to conclude that $k=-n$ and $a^2 = \prod {\omega_i}^{-1}$.

Now, we may diagonalise any class in $W(\CC(t),\I)$, and the diagonal entries are also of the form $at^{-k} \prod_{i=1}^{2k}(t-\omega_i)$ for some distinct set of $\omega_i \in S^1$ and for $a^2 = \prod {\omega_i}^{-1}$. Some clever manipulation gives us the following lemma.

\begin{lemma}
\label{Lemma:detsin}
 If $d=at^{-k} \prod_{j=1}^{2k}(t-\omega_j)$, if $t=e^{i\theta}$ and if $\omega_j = e^{i\theta_j}$ then
  \[ d = \pm 2^{2k} \prod_{i=1}^{2k} \sin((\theta - \theta_j)/2). \]
\end{lemma}

\begin{proof}
 Write
 \begin{eqnarray*}
   \sin((\theta - \theta_j)/2) & = & \frac{1}{2i}\left( e^{i(\theta-\theta_j)/2} - e^{-i(\theta-\theta_j)/2} \right) \\
   & = & \frac{1}{2i}\left( e^{i(\theta-\theta_j)/2} - e^{-i(\theta-\theta_j)/2} \right)\left(e^{i(\theta+\theta_j)/2}/e^{i(\theta+\theta_j)/2}\right) \\
   & = & \frac{1}{2i}\left( e^{i \theta} - e^{i\theta_j} \right)/(e^{i(\theta+\theta_j)/2}) \\
   & = & \frac{1}{2i} (t-\omega_j)/(t\omega_j)^{1/2}
 \end{eqnarray*}
 So
  \[ \prod_{j=1}^{2k} \sin((\theta - \theta_j)/2) = (-1)^k 2^{-2k}at^{-k} \prod_{j=1}^{2k} (t-\omega_j).\]
\end{proof}

If $I \in W(\CC(t),\I)$ is a $1$-dimensional form with representative $d$ as given in Lemma \ref{Lemma:detsin} then it is clear that
 \begin{itemize}
   \item $\sigma_\omega(I) = 0$ or $\pm 1$ depending on whether $\omega = \omega_j$ or not.
   \item $j_I(\omega) = \pm 1$ or $0$ depending on whether $\omega = \omega_j$ or not.
 \end{itemize}
 where the second point is seen from the $\sin$ form of $d$, since this changes sign as $\theta$ goes from $\theta_j + \eps$ to $\theta_j - \eps$.

 Since the signature and jump functions are additive, this gives us the result that the jump $j_K(\omega)$ is $1$ mod $2$ if $(t-\omega)$ is a factor of $\det (A(t))$ (and therefore of the symmetric part of $\Delta_K(t)$) and $0$ mod $2$ otherwise. This completes the proof.
\end{proof}

In particular, this means that if an Alexander polynomial of a knot has any distinct complex roots of unit modulus, then the signature function \emph{will} jump at those roots, meaning that the knot must be of infinite order.

\chapter{\label{chapter4} Algebraic concordance classification of $9$-crossing knots}

In this chapter we use the theorems from Chapter \ref{chapter3} to identify the subgroup of the algebraic concordance group $\G$ generated by the prime knots of $9$ or fewer crossings.  From this `classification' we can easily decide what order a linear combination of $9$-crossing knots has in $\G$ and, in particular, which linear combinations are algebraically slice.

To begin, let us recap the definitions made in Section \ref{Subsec:9crossingClass}.

\medskip

Let $E = \left\{ 3_1,4_1,5_1,5_2,6_1,6_2,6_3,7_1, \dots,7_7,8_1,\dots,8_{21}, 9_1, \dots, 9_{49} \right\}$, where $|E|=87$ since the list includes the distinct reverses $8_{17}^r$, $9_{32}^r$ and $9_{33}^r$.

\begin{notation}
Let $\C_E$ denote the subgroup of $\C$ generated by $E$.  Denote by $\CF_E$ the free abelian group generated by $E$.
\end{notation}

There are natural maps $\CF_E \xrightarrow{\psi} \C_E \xrightarrow{\phi} \G$, and in this chapter we wish to analyse the kernel and image of $\phi \circ \psi$. (A summary of the result may be found in Section \ref{Sec:Ch4summary}.)  We will be using the invariants described in Definition \ref{Def:alginvariants}:
\begin{itemize}
  \item $\eps_{\lambda}(M,Q,T)$: the exponent, modulo $2$, of $\lambda(t)$ in $\Delta_K(t)$,
  \item $\sigma_{\lambda}(M,Q,T)$: the signature of $(M,Q) \in W(\R)$ restricted to the $\lambda(t)$-primary component,
  \item $\mu_{\lambda}^p(M,Q,T)$: $\mu(M',Q')$ where $(M',Q')$ is the image of $(M,Q)$ in $W(\Q_p)$ restricted to the $\lambda(t)$-primary component,
\end{itemize}
where $\lambda$ runs over the irreducible symmetric factors of the Alexander polynomial $\Delta_K(t)$ and $p$ is a prime. For a Seifert matrix $V$, recall that $Q=V+V^T$ and $T=V^{-1}V^T$.  However, instead of using $\mu_{\lambda}^p$ directly we will simply look at the image of knots in the Witt groups $W(\Q_p)$ restricted to each $\lambda(t)$-primary component.

If a knot has no irreducible symmetric factors, it must be algebraically slice. Similarly, if the image of the knot under all of these maps is zero, then it must be algebraically slice. That is, if the $\omega$-signature $\sigma_\omega(V)$ is zero for every unit complex number $\omega$ which is not a root of the Alexander polynomial $\Delta_K(t)$; if the exponent of every irreducible symmetric factor of $\Delta_K(t)$ is even; and if the image of $K$ in $W(\Q_p)$ is zero for every prime $p$, then $K = 0 \in \G$.

\section{Analysing the signature function}
\label{Sec:signature}
We begin by finding a basis for the knots of infinite order in $\G$ by analysing the signature function of each knot.  Given a Seifert matrix $V$ for a knot $K$, the signature function $S^1 \to \Z$ is defined for each unit complex number $\omega$ as follows
 \[ \sigma_\omega(V) := \sign((1-\omega)V + (1-\overline{\omega})V^T) \]
and is continuous except at unit roots of the Alexander polynomial $\Delta_K(t) := \det(V-tV^T)$. If $\omega$ is a unit root of $\Delta_K$, we define $\sigma_\omega(V)$ to be the average of the limit on either side.

By the analysis of Section \ref{Subsec:infiniteOrderElts} we know that if this function is non-zero for any value of $\omega \in S^1$, then the knot $K$ is of infinite order in $\G$.  Conversely, by Theorem \ref{Thm:CompleteCobordismInvts} if $\sigma_{\omega}(V)= 0$ for every $\omega \in S^1$ then $K$ is of finite order in $\G$.

Since the signature function of a knot $K$ is continuous except at the unit roots of $\Delta_K(t)$, and is integer-valued, this means that $\sigma_{\omega}(V)$ is constant between unit roots of $\Delta_K(t)$.  This simplifies our computations because it means that, to determine the full signature function of any knot, we need only take a sample of the signature values in between each pair of the unit roots of $\Delta_K(t)$.  Furthermore, we need only do this for the unit roots with positive imaginary part, because the signature function is symmetric: $\sigma_{\omega}(V) = \sigma_{\overline{\omega}}(V)$.

The set of unit roots, with positive imaginary part, of Alexander polynomials of knots in $E$ consists of 70 values, which we shall call $e^{i\theta_1}, \dots, e^{i\theta_{70}}$ ordered by argument.  We shall choose the midpoint of each consecutive pair of values:
\[  \delta_1:=1; \quad \delta_i :=\exp\left(i \frac{(\theta_{i-1} + \theta_i)}{2}\right) \text{ for } 1<i\leq 70. \]
Evaluating the signature function of each knot at each of the $\delta_i$ gives us a complete picture of the signature function of each knot.  We put the values into an $84 \times 70$ matrix and then perform column operations to get it in reduced echelon form. (See Appendix \ref{AppendixD} for this reduced-form matrix.)  From this we can read off a basis for the kernel of the signature function as well as a basis for the knots which are of infinite order.

The following is a linearly independent set of $46$ knots which form a basis in $\CF_E$ for the knots of infinite algebraic concordance order:
\begin{eqnarray*}\C^\infty & = & \{ 3_1, 5_1, 5_2, 6_2, 7_1, \dots, 7_6, 8_2, 8_4, \dots 8_7, 8_{14}, 8_{16}, 8_{19}, 9_1, 9_3, \dots, 9_7, 9_9, 9_{10}, 9_{11}, 9_{13}, 9_{15},\\
& & \quad 9_{17}, 9_{18}, 9_{20}, 9_{21}, 9_{22}, 9_{25}, 9_{26}, 9_{31}, 9_{32}, 9_{35}, 9_{36}, 9_{38}, 9_{43}, 9_{45}, 9_{47}, 9_{48}, 9_{49}\}
\end{eqnarray*}

The following $41$ knots are a basis for the kernel of the signature function:
\begin{eqnarray*}B_\sigma & = & \{ 4_1, 6_1, 6_3, 7_7, 8_1, 8_3, 8_8, 8_9, (8_{10}+3_1), (8_{11}-3_1), 8_{12}, 8_{13}, (8_{15}-7_2-3_1), 8_{17}, 8_{18}, 8_{20},\\
& & \quad (8_{21}-3_1), (9_2-7_4), (9_8-8_{14}), (9_{12}-5_2), 9_{14}, (9_{16}-7_3-3_1), 9_{19}, (9_{23}-7_4-3_1),\\
& & \quad 9_{24}, 9_{27},(9_{28}-3_1), (9_{29}+3_1), 9_{30}, 9_{33}, 9_{34}, 9_{37}, (9_{39}+7_2), (9_{40}-3_1), 9_{41},\\
& & \quad (9_{42}+8_5-3_1), 9_{44}, 9_{46}\} \\
& & \cup \,\, \{ (8_{17}-8_{17}^r), (9_{32}-9_{32}^r), (9_{33}-9_{33}^r) \}
\end{eqnarray*}
These knots must all have finite order in $\G$, and we will now analyse them further to determine what order they have in $\G$.

\section{Elements of algebraic order 4}
\label{Sec:ElementsOrder4}

In this section we begin our first analysis of the Witt groups $W(\F_p)$.  Recall Corollary \ref{Cor:AlgOrder4}, which states that

\begin{coralgorder4}
  If a knot $K$ is of order 4 in the algebraic concordance group $\G$ then for some prime $p \equiv 3$ mod 4 and some symmetric irreducible factor $g(t) \in \Z[t,t^{-1}]$ of $\Delta_K(t)$, $p$ divides $g(-1)$ and $g$ has odd exponent in $\Delta_K$.
\end{coralgorder4}

We can see immediately that if $\det(K)=\Delta_K(-1)$ factors into primes that are all congruent to $1$ mod $4$ then $K$ cannot be of order $4$. In this section we will thus consider the knots in $B_\sigma$ whose determinant contains a prime divisor that is congruent to $3$ modulo $4$. For each of these knots and for each prime congruent to $3$ mod $4$ that divides $\lambda(-1)$ for $\lambda$ a factor of $\Delta_K$ (including the factors of even exponent), we will calculate the image of $V+V^T$ in $W(\F_p)$ restricted to the $\lambda(t)$-primary component, via the map $\partial_p$ (see Theorem \ref{Thm:QpintoFp} and Example \ref{Example:partialp}).  This image is found by diagonalising $V+V^T$ to consist of square-free integers, removing the entries that are coprime to $p$ and dividing the rest by $p$.

The knots that we have to deal with are:
\begin{eqnarray*} & & 7_7, (8_{10}+3_1), (8_{11}-3_1), (8_{15}-7_2-3_1), 8_{18}, 8_{20}, (8_{21}-3_1),(9_2-7_4), (9_8-8_{14}), \\ & & (9_{12}-5_2), (9_{16}-7_3-3_1), (9_{23}-7_4-3_1), 9_{24}, (9_{28}-3_1),(9_{29}+3_1), 9_{34}, (9_{39}+7_2), \\ & & (9_{40}-3_1), (9_{42}+8_5-3_1), (9_{32} - 9_{32}^r).
\end{eqnarray*}
The last knot will clearly be zero in any Witt group $W(\F_p)$.

The results of the Witt group calculations can be found in Appendix \ref{AppendixA}.  From performing column operations on this matrix we can deduce the following:
\begin{itemize}
  \item That $\C_A^4:=\left\{ 7_7, 9_{34} \right\}$ are independent generators of $\Z_4$ summands in $\G$.
  \item That $\C^2_1:=\left\{(8_{15}-7_2-3_1), 8_{18}, (9_2-7_4), (9_{12}-5_2), (9_{42}+8_5-3_1)\right\}$ are independent generators of $\Z_2$ summands in $\G$.
  \item That $B_1:=\{(8_{21}-8_{18}-3_1), (9_{16}-8_{18}-7_3-3_1), (9_{23}-9_2-3_1), (9_{28}-8_{18}-3_1), (9_{29}-8_{18}+3_1), (9_{40}-8_{18}-3_1), (8_{10}+3_1), (8_{11}-3_1), 8_{20}, (9_8-8_{14}), 9_{24}, (9_{39}+7_2), (9_{32}-9_{32}^r) \}$ cannot be detected in the Witt groups $W(\F_p)$ for $p\equiv3$ mod $4$.
\end{itemize}

\begin{remark}
Notice that since $7_7$ and $9_{34}$ are detected at different primes (i.e. $7_7$ is non-trivial in $W(\F_7)$ while $9_{34}$ is not) it is clear that no linear combination $a7_7+b9_{34}$, $a,b \neq 0 \mod 4$, is algebraically slice. Similarly, we know that the elements of $\C^2_1$ are linearly independent since they are detected using different polynomials (even though some are detected using the same prime).
\end{remark}

It is worth further analysing the knots which have been detected as elements of order 2.  Since they have been detected using a $\Z_4$-valued function, a natural question is whether these elements of order 2 could be twice elements of algebraic order $4$.  Suppose that $K = 2L$ for some knot $L$ of order $4$.  Then the order of $K$ could not be detected by any $\Z_2$ invariants (for example, by the Witt groups $W(\F_p)$ with $p \equiv 1$ mod $4$).  Let us calculate the other Witt groups for the knots with a ``$1$ mod $4$'' prime in their determinant and see what they tell us.

\begin{center}
\begin{tabular}{|lcc|} \hline
Polynomial & $t^2-3t+1$ & $4t^2-7t+4$ \\ \hline
Prime & $5$ & $5$ \\ \hline
$\phantom{9_{12}}8_{18}$ & $(1,0)$ & $0$ \\
$(9_2-7_4)$ & $0$ & $(1,1)$ \\
$(9_{12}-5_2)$ & $(1,0)$ & $0$ \\\hline
\end{tabular}
\end{center}
Since these knots are all nontrivial in $W(\F_5)$, they cannot be twice another knot.

The knots $K_1:=(8_{15}-7_2-3_1)$ and $K_2:=(9_{42}+8_5-3_1)$ do not have any factors congruent to $1$ mod $4$ in their determinants and so we must try different tactics.  If $K_1 = 2L_1$ and $K_2 = 2L_2$ then
\[\Delta_{L_1}(t) = (1-t+t^2)(3-5t+3t^2)\]
\[\Delta_{L_2}(t)=(1-t+t^2)(1-2t+t^2-2t^3+t^4). \]
Both polynomials contain four distinct roots on the unit circle in $\mathbb{C}$.  This means that the signatures of $L_1$ and $L_2$ are non zero (by Theorem \ref{Thm:sigjump}), and so the knots cannot be of algebraic order $4$.

\section{The exponent map}

For the next section of the algebraic classification, we consider the map $\eps_{\lambda}$ which looks at the exponent of each of the symmetric irreducible factors $\lambda$ of $\Delta_K(t)$ and evaluates them mod $2$.  The knots now of concern are those in $B_1$ along with those in $B_\sigma$ whose determinants did not contain a $3$ mod $4$ prime factor.

At this point (although we could have done this at the beginning) we may isolate those knots whose Alexander polynomial does not contain any symmetric irreducible factors.  These are:
\[ B_2:= \{6_1, 8_8, 8_9, 9_{27}, 9_{41}, 9_{46} \} \]
and they are all algebraically slice.

The following knots have a unique (i.e. not found in any other knot) irreducible factor with odd exponent in their Alexander polynomial:
\[ \C_2^2 := \{ 6_3, 8_1, 8_3, 8_{12}, 8_{13}, 8_{17}, 9_{14}, 9_{19}, 9_{30}, 9_{33} \}. \]

The following knots' Alexander polynomials contain the factor $(1-3t+t^2)$ with odd exponent:
\[ 4_1, (9_{16}-8_{18}-7_3-3_1), 9_{24}, (9_{28}-8_{18}-3_1), (9_{29}-8_{18}+3_1), 9_{37}, (9_{39}+7_2), (9_{40}-8_{18}-3_1) \]
whilst those in the next list contain the factor $(1-4t+7t^2-4t^3+t^4)$:
\[ (9_{28}-8_{18}-3_1), (9_{29}-8_{18}+3_1), 9_{44}. \]
We take the first knot in each list (i.e. the one with smallest crossing number) as a generator, and differences of the others to be in the kernel of the exponent map.  Thus a basis of the kernel of the exponent map is
\begin{eqnarray*}B_3 & = & \{ (8_{10}+3_1), (8_{11}-3_1), 8_{20}, (8_{21}-8_{18}-3_1), (9_8-8_{14}),(9_{16}-8_{18}-7_3-4_1-3_1),\\
 & & \quad (9_{23}-9_2-3_1), (9_{24}-4_1), (9_{29}-9_{28}+2(3_1)), (9_{37}-4_1), (9_{39}+7_2-4_1),\\
 & & \quad (9_{40}-8_{18}-4_1-3_1), (9_{44}-9_{28}+8_{18}-4_1+3_1),\\
 & & \quad (8_{17}-8_{17}^r), (9_{32}-9_{32}^r), (9_{33}-9_{33}^r) \}.
\end{eqnarray*}

\section{More Witt group analysis}
\label{Sec:MoreWitt}

By Theorem \ref{Thm:DetDisc}, any elements of order $2$ in $B_3$ are detected only at primes that divide the determinant of the Seifert form (i.e. the leading coefficient of the Alexander polynomial), the discriminant of the (reduced) Alexander polynomial and in $W(\Q_2)$.  We need to examine the knots in $B_3$ in each of the relevant Witt groups.

The knots $\{(8_{17}-8_{17}^r), (9_{32}-9_{32}^r), (9_{33}-9_{33}^r)\}$ are clearly zero in any Witt group and are thus algebraically slice.

The only knot in $B_3$ which has an odd prime (namely, 3) as the leading coefficient of its Alexander polynomial is $9_{39}+7_2-4_1$.  However, this knot is trivial in $W(\F_3)$.  The only knot in $B_3$ whose discriminant contains an odd prime (namely, $17$) that is not a factor of the determinant is $9_{16}-8_{18}-7_3-4_1-3_1$, and this knot is also trivial in $W(\F_{17})$.

\begin{conjecture}
\label{Conj:DetPrime}
Every knot is trivial in $W(\F_p)$ for any odd prime $p$ which does not divide the determinant of the knot.
\end{conjecture}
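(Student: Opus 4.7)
The plan is to deduce the conjecture directly from the diagonalisation result of Theorem \ref{Thm:diagonalise}, working over the localisation $\Z_{(p)}$ (rationals with denominator coprime to $p$). The key observation is that the hypothesis $p \nmid \det(K)$ forces any diagonalisation of $V+V^T$ over $\Z_{(p)}$ to have every entry a $p$-adic unit, which immediately gives $\partial_p(V+V^T) = 0$.

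Concretely, for a knot $K$ with $2g \times 2g$ Seifert matrix $V$ and an odd prime $p$ not dividing $\det(K) = |\det(V+V^T)|$, I would regard $V+V^T$ as a symmetric bilinear form on the free $\Z_{(p)}$-module $\Z_{(p)}^{2g}$. Since $p$ is odd, $2$ is a unit in $\Z_{(p)}$, so Theorem \ref{Thm:diagonalise} provides $P \in \GL_{2g}(\Z_{(p)})$ with $P^T(V+V^T)P = \mathrm{diag}(d_1, \dots, d_{2g})$ and each $d_i \in \Z_{(p)}$. Tracking $p$-adic valuations, $v_p(\det(P)^2) = 0$ and $v_p(\det(V+V^T)) = 0$ by hypothesis, so $\sum_i v_p(d_i) = 0$; since each $v_p(d_i) \geq 0$, this forces every $d_i \in \Z_{(p)}^*$. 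By the recipe for $\partial_p$ given after Theorem \ref{Thm:QpintoFp} (cf.\ Example \ref{Example:partialp})---keep entries of odd $p$-valuation, divide by $p$, reduce modulo $p$---the image $\partial_p(V+V^T) \in W(\F_p)$ is zero.

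There is essentially no substantive obstacle in this argument; the only delicate point is interpretational. The convention adopted in Section \ref{Sec:ElementsOrder4}---which is the one under which the conjecture is implicitly stated---takes ``trivial in $W(\F_p)$'' to mean $\partial_p$-trivial, not trivial in the full Witt group $W(\Q_p)$. The companion map $\psi_p$ may carry $V+V^T$ to a non-zero class at primes not dividing $\det(K)$ (for example, a direct computation shows $\psi_3(V_{4_1} + V_{4_1}^T) \neq 0$ in $W(\F_3)$), but this component is not used as a concordance obstruction in the classification, so the conjecture as stated goes through.
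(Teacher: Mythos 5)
First, note that the paper does not prove this statement: it is left as Conjecture~\ref{Conj:DetPrime}, supported only by two computed examples (the knots $9_{39}+7_2-4_1$ at $p=3$ and $9_{16}-8_{18}-7_3-4_1-3_1$ at $p=17$). So you are attempting something the thesis leaves open, and unfortunately your argument proves a weaker statement than the one the conjecture is making. What you establish is correct but essentially content-free in this context: since $\det(V+V^T)=\pm\det(K)$ is a $p$-adic unit, the lattice $(\Z_{(p)}^{2g}, V+V^T)$ is unimodular at $p$ and hence the \emph{total} form has vanishing second residue $\partial_p$. But the invariant actually used throughout Chapters 3 and 4 (see Definition~\ref{Def:alginvariants} and Theorem~\ref{Thm:trivialclass}) is the image of $(M,Q,T)$ in $W(\Q_p)$ \emph{restricted to each $\lambda(t)$-primary component} of the Alexander polynomial; "trivial in $W(\F_p)$" in the sentence preceding the conjecture means that every one of these restrictions is trivial, since that is what is needed to conclude a knot is zero in $\G_{\Q_p}$.

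This is where the gap lies. The primary decomposition $V=\bigoplus_\lambda V_\lambda$ is orthogonal over $\Q$, but it need not be compatible with the unimodular $\Z_{(p)}$-lattice: the sublattices $L\cap V_\lambda$ can have index divisible by $p$ in $L$ precisely when $p$ divides the discriminant of $\overline{\Delta}_V$ or the leading coefficient of $\Delta_V$ --- and by Theorem~\ref{Thm:DetDisc} those are exactly the primes the conjecture is about, all others being already known to give trivial image. At such a prime the individual restrictions $Q|_{V_\lambda}$ can each carry nonzero $\partial_p$ whose sum cancels; your valuation count only controls the sum $\sum_\lambda \partial_p(Q|_{V_\lambda})$, not the summands. (Your argument does suffice when $\Delta_V$ has a single irreducible symmetric factor, since the non-symmetric part contributes a hyperbolic, hence Witt-trivial, piece.) Ruling out this cancellation phenomenon for Seifert forms is precisely the substance of the conjecture, so the claim that "there is essentially no substantive obstacle" is not justified; the interpretational point about $\psi_p$ versus $\partial_p$, which you do handle correctly, is not the real issue.
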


A full analysis of the remaining knots and primes is to be found in Appendix \ref{AppendixB}.  From the table we see that
\begin{itemize}
  \item $\C_3^2:= \{ (9_{16}-8_{18}-7_3-4_1-3_1), (9_{44}-9_{28}+8_{18}-4_1+3_1)\}$ are independent generators of a $\Z_2$ summand.
  \item $B_4:=\{ (8_{21}-8_{18}-3_1), (9_{23}-9_2-3_1), (9_{24}-4_1), (9_{29}-9_{28}+2(3_1)), (9_{37}-4_1),\\ (9_{39}+7_2-4_1), (9_{40}-8_{18}-4_1-3_1)\} \cup \{ (8_{10}+3_1), (8_{11}-3_1), 8_{20}, (9_8-8_{14})\}$ cannot be detected in the Witt groups $W(\F_p)$ for $p \equiv 1$ mod $4$.
\end{itemize}

To prove that the knots in $B_4$ are algebraically slice, we must also check their image in the Witt group $W(\Q_2)$.  These all turn out to be trivial.

The knots $\{ (8_{10}+3_1), (8_{11}-3_1), 8_{20}, (9_8-8_{14}) \}$ do not have any $1$ mod $4$ primes in their determinants, but we must check their image in $W(\Q_2)$. These are all trivial.

\begin{conjecture}
\label{Conj:Witt2}
If a knot has trivial image in each $W(\F_p)$ for $p$ an odd prime, then it also has trivial image in $W(\Q_2)$.
\end{conjecture}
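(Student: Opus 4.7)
The plan is to combine Hilbert reciprocity with the structural constraint, coming from Lemma \ref{Lemma:unimodSeifert} and its proof, that $\det(V + V^T) = \pm \Delta_K(-1)$ is always odd for a knot $K$. As a result, the discriminant of the symmetrized Seifert form is a $2$-adic unit, which already eliminates any summand of $W(\Q_2) \cong \Z_8 \oplus \Z_2 \oplus \Z_2$ detected by classes with nontrivial $2$-adic valuation of the discriminant (for instance the class of $[-1,2]$). Thus before doing any work the image in $W(\Q_2)$ is confined to the subgroup generated by $[1]$ and $[-1,5]$, i.e.\ $\Z_8 \oplus \Z_2$.

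First I would decompose via Levine's splitting $\G_{\Q} \cong \bigoplus_\lambda \G_{\Q}^\lambda$, reducing the statement to the following per-factor version: for each irreducible symmetric factor $\lambda$ of $\Delta_K$, triviality of the $\lambda$-primary component in every $W(\F_p)$ with $p$ odd forces triviality in $W(\Q_2)$. Within each $\lambda$-component the invariant $\mu_\lambda^p$ of Definition \ref{Def:alginvariants} is a product of Hilbert symbols, so the hypothesis $\mu_\lambda^p = 1$ for every odd $p$, combined with vanishing of the real signature $\sigma_\lambda$ (automatic in the range of interest, since the conjecture is only substantive for knots of finite algebraic order), should yield $\mu_\lambda^2 = 1$ by the product formula $\prod_v (a,b)_v = 1$. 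This kills the remaining $\Z_2$ factor.

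The hard part will be ruling out a contribution to the $\Z_8$ summand, since $\mu_\lambda^2 \in \{\pm 1\}$ does not by itself distinguish all classes in $W(\Q_2)$; the $\Z_8$ is generated by $[1]$ and is sensitive to residues modulo $16$ of signatures of integral representatives, information that $\mu$ does not directly see. The strategy there would be to use a Tate-style residue formula relating the global signature to the local invariants at all places, together with vanishing of $\sigma_\lambda$ and of $\partial_p$ for every odd $p$, to force the image modulo the $\Z_8$ factor to vanish. The empirical computations in Appendix \ref{AppendixB} suggest that the oddness of the discriminant together with the integrality of $V$ always achieves this, but reconciling the $\Z_8$ contributions across all $\lambda$-components simultaneously — in particular controlling how the primary decomposition interacts with the passage from $W(\Q)$ to $W(\Q_2)$ — is the principal technical hurdle, and is presumably why the author leaves this as a conjecture rather than a theorem.
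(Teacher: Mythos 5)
First, be aware that the paper contains no proof of this statement: it is recorded as Conjecture~\ref{Conj:Witt2} precisely because the author could only verify it computationally for the handful of knots in $B_4$, so there is no ``paper proof'' to compare against and your proposal must stand on its own. As you concede in your final paragraph, it does not: the $\Z_8$ summand of $W(\Q_2)$ is left unresolved, and that is exactly where the entire content of the conjecture lives. Two further steps are glossed over. The hypothesis is triviality of the image under $\partial_p$ in $W(\F_p)$, which is strictly weaker than triviality of the class in $W(\Q_p)$, so ``$\mu_{\lambda}^p=1$ for every odd $p$'' is not immediate and must be argued (it does hold, because a class with unit discriminant and vanishing residue at an odd place is represented by a form with unit diagonal entries, and all Hilbert symbols of units at an odd place equal $1$, but this is a step, not a triviality). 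More seriously, your parenthetical assumption that $\sigma_{\lambda}=0$ is not part of the statement: the conjecture as written also applies to knots of infinite order all of whose images in the odd $W(\F_p)$ happen to vanish, and for such a knot the conjecture silently asserts $\sigma\equiv 0\pmod 8$ --- a genuinely nontrivial arithmetic claim that your argument never touches.

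There is a cleaner route that isolates the one real lemma to be proved. The split exact sequence $0\to\Z\to W(\Q)\xrightarrow{\partial}\oplus_p W(\F_p)\to 0$ of Section~\ref{Sec:WittGroups} already does the bookkeeping: since $\det(V+V^T)=\pm\Delta_K(-1)$ is odd, the form is unimodular over the $2$-adic integers and $\partial_2$ vanishes for free, so the hypothesis forces the class of $V+V^T$ in $W(\Q)$ to equal $\sigma\cdot[1]$, whose image in $W(\Q_2)$ is $\sigma\bmod 8$ in the $\Z_8$ summand (the $[-1,5]$ and $[-1,2]$ summands die automatically, as you observed). The conjecture is therefore \emph{equivalent} to the assertion that $8$ divides $\sigma$ under the hypotheses. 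Since $V+V^T$ always has even diagonal, it is an even integral lattice whose discriminant group has odd order, and the van der Blij--Milgram Gauss sum formula computes $\sigma\bmod 8$ as a sum of local contributions from the $p$-primary parts of the discriminant linking form; triviality in $W(\F_p)$ should make each $p$-part Witt-trivial and kill its contribution. Making that last correspondence precise is the missing ingredient; your ``Tate-style residue formula'' gestures at it without supplying it, and the per-$\lambda$ decomposition you propose only obscures matters, since the exact-sequence argument runs most naturally on the whole form at once.
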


\section{Summary}
\label{Sec:Ch4summary}

Putting the last four sections of analysis together, we have the following theorem.

\begin{theorem}
\label{Thm:algclassification}
 A basis for the knots in $\CF_E$ of infinite algebraic concordance order is
 \begin{itemize}
   \item $\C^\infty = \{ 3_1, 5_1, 5_2, 6_2, 7_1, \dots, 7_6, 8_2, 8_4, \dots 8_7, 8_{14}, 8_{16}, 8_{19}, 9_1, 9_3, \dots, 9_7, 9_9, 9_{10}, 9_{11}, 9_{13}, \\
       \quad 9_{15}, 9_{17}, 9_{18}, 9_{20}, 9_{21}, 9_{22}, 9_{25}, 9_{26}, 9_{31}, 9_{32}, 9_{35}, 9_{36}, 9_{38}, 9_{43}, 9_{45}, 9_{47}, 9_{48}, 9_{49}\}$
 \end{itemize}
 while a basis for the kernel $\A_E$ of the map $\psi \circ \phi$, $\CF_E \xrightarrow{\psi} \C_E \xrightarrow{\phi} \G$, consists of the union of the independent sets  $4\C_A^4$, $2\C_A^2$ and $\C_A^1$ where
 \begin{itemize}
   \item $\C_A^4=\left\{ 7_7, 9_{34} \right\}$
   \item $\C_A^2= \C_1^2 \cup C_2^2 \cup \C_3^2 \cup \{4_1, (9_{28}-8_{18}-3_1)\}\\
          \phantom{\C_A^2} = \{4_1, 6_3, 8_1, 8_3, 8_{12}, 8_{13},(8_{15}-7_2-3_1), 8_{17}, 8_{18}, (9_2-7_4), (9_{12}-5_2), 9_{14},\\
          \phantom{\C_A^2 = \,\,} (9_{16}-8_{18}-7_3-4_1-3_1),  9_{19}, (9_{28}-8_{18}-3_1), 9_{30}, 9_{33},(9_{42}+8_5-3_1),\\
          \phantom{\C_A^2 = \,\,} (9_{44}-9_{28}+8_{18}-4_1+3_1)\}$
   \item $\C_A^1 = B_2 \cup B_4 \cup \{ (8_{17}-8_{17}^r), (9_{32}-9_{32}^r), (9_{33}-9_{33}^r)\}\\
         \phantom{\C_A^1} = \{6_1, 8_8, 8_9, (8_{10}+3_1), (8_{11}-3_1), (8_{17}-8_{17}^r), 8_{20}, (8_{21}-8_{18}-3_1), (9_8-8_{14}),\\
         \phantom{\C_A^1 = \,\,} (9_{23}-9_2-3_1), (9_{24}-4_1), 9_{27}, (9_{29}-9_{28}+2(3_1)), (9_{32}-9_{32}^r), (9_{33}-9_{33}^r), (9_{37}-4_1), \\
         \phantom{\C_A^1 = \,\,} (9_{39}+7_2-4_1), (9_{40}-8_{18}-4_1-3_1), 9_{41}, 9_{46}\}.$
 \end{itemize}
\end{theorem}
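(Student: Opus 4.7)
The strategy is a filtration of $\CF_E$ by the standard invariants from Definition \ref{Def:alginvariants}: first $\sigma_\omega$ to isolate the infinite-order knots, then $\mu^p_\lambda$ at primes $p\equiv 3\pmod 4$ to isolate order-$4$ knots, then $\eps_\lambda$, and finally $\mu^p_\lambda$ at the remaining primes (including $p=2$) to certify algebraic sliceness. Each stage produces new basis elements together with a set of combinations lying in the kernel of the invariants used so far; the latter set is then fed into the next stage. This is precisely the pipeline already executed in Sections \ref{Sec:signature}--\ref{Sec:MoreWitt}; the proposal is simply to assemble the four outputs into the statement of the theorem.

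First I would tabulate, for each knot in $E$, the values of $\sigma_{\delta_i}$ at the $70$ sample points $\delta_i$ chosen to lie between consecutive unit roots of the Alexander polynomials occurring in $E$. By Section \ref{Subsec:infiniteOrderElts}, knowing the signature function on these samples determines it everywhere. Row-reducing the resulting $84\times 70$ matrix (Appendix \ref{AppendixD}) simultaneously picks out $46$ knots whose signature columns are independent (these form $\C^\infty$) and expresses every other generator of $\CF_E$ as that generator plus a signature-trivial correction; the corrected generators form the set $B_\sigma$. By Theorem \ref{Thm:CompleteCobordismInvts} the elements of $B_\sigma$ have finite order in $\G$, and by the linear independence of their signature columns the elements of $\C^\infty$ are independent of infinite order.

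Next, for each knot in $B_\sigma$ whose determinant has a prime factor $p\equiv 3\pmod 4$, I would compute the image under $\partial_p$ in $W(\F_p)$, restricted to each $\lambda(t)$-primary component, by diagonalising $V+V^T$ over $\Q$ modulo squares and extracting the $p$-divisible entries. By Corollary \ref{Cor:AlgOrder4}, only such primes can detect order $4$. The tabulation in Appendix \ref{AppendixA} isolates $\C_A^4=\{7_7,9_{34}\}$ as generators of $\Z_4$-summands (distinguished from each other by the prime at which they are detected) and $\C_1^2$ as five more independent $\Z_2$-generators. To rule out the possibility that an element of $\C_1^2$ equals $2L$ for some order-$4$ $L$, I would either evaluate a $\Z_2$-valued invariant (the image in $W(\F_5)$, for the three knots whose determinant contains $5$) or, for $K_1=8_{15}{-}7_2{-}3_1$ and $K_2=9_{42}{+}8_5{-}3_1$, apply Theorem \ref{Thm:sigjump} to the putative Alexander polynomial of $L_i$, whose distinct unit complex roots would force a nonzero jump in $\sigma_\omega$ and contradict finite order. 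The remaining combinations, encoded in the set $B_1$, lie in the kernel of all $\mu^p_\lambda$ for $p\equiv 3\pmod 4$.

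The third step applies the exponent invariant $\eps_\lambda$. Knots in $B_\sigma\setminus B_1$ whose Alexander polynomial carries no symmetric irreducible factor are algebraically slice by inspection (this gives $B_2$). For the other knots I would read off the irreducible symmetric factors appearing with odd exponent: factors that occur in a unique knot yield a new independent generator (producing $\C_2^2$), while factors shared across several knots (notably $1-3t+t^2$ and $1-4t+7t^2-4t^3+t^4$) produce further combinations lying in $\ker\eps$. These combinations together with $B_2$ assemble into $B_3$. The fourth and final step is to verify that the remaining combinations outside $B_2$ are in fact algebraically slice. By Theorem \ref{Thm:DetDisc} the only primes that can detect them are those dividing the determinant, the discriminant of $\overline{\Delta}$, or $p=2$. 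I would compute the images in $W(\F_p)$ for each such odd prime (Appendix \ref{AppendixB}), which produces two more generators $\C_3^2$ of $\Z_2$-summands and leaves a sub-collection $B_4$ undetected; then I would compute the images in $W(\Q_2)$ using the generators $[1],[-1,5],[-1,2]$ and confirm they vanish. The three knots $8_{17}{-}8_{17}^r$, $9_{32}{-}9_{32}^r$, $9_{33}{-}9_{33}^r$ are algebraically slice trivially since reversing orientation preserves the Seifert form. Collecting: $\C_A^4=\{7_7,9_{34}\}$, $\C_A^2=\C_1^2\cup\C_2^2\cup\C_3^2\cup\{4_1,9_{28}{-}8_{18}{-}3_1\}$, and $\C_A^1=B_2\cup B_4\cup\{8_{17}{-}8_{17}^r,9_{32}{-}9_{32}^r,9_{33}{-}9_{33}^r\}$, giving the basis described in the statement.

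The main obstacle is the final step: verifying vanishing in $W(\Q_2)$ and in $W(\F_p)$ for the sporadic primes $p$ that divide a discriminant without dividing a determinant. This is where one is forced either to do the explicit diagonalisation honestly or to invoke Conjectures \ref{Conj:DetPrime} and \ref{Conj:Witt2}; I would do the calculations directly for the handful of combinations in $B_4$, which is feasible because their Seifert matrices have small size and their determinants have few prime factors. A secondary subtlety is the order-$4$-versus-twice-order-$4$ distinction for $\C_1^2$, which is the only place in the argument where an auxiliary invariant (either a $\Z_2$-valued Witt image or the jump argument via Theorem \ref{Thm:sigjump}) is genuinely needed, rather than just a bookkeeping exercise on the outputs of a linear-algebra computation.
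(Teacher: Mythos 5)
Your proposal is correct and follows essentially the same route as the paper: the theorem is proved there by exactly the four-stage filtration you describe (signature sampling and reduction of the $84\times 70$ matrix in Section \ref{Sec:signature}, the $W(\F_p)$ computations for $p\equiv 3\pmod 4$ with the twice-order-$4$ check via $W(\F_5)$ and Theorem \ref{Thm:sigjump} in Section \ref{Sec:ElementsOrder4}, the exponent map, and the final $W(\F_p)$/$W(\Q_2)$ verifications of Section \ref{Sec:MoreWitt}), with the theorem itself just assembling $\C^\infty$, $\C_A^4$, $\C_1^2\cup\C_2^2\cup\C_3^2\cup\{4_1,(9_{28}-8_{18}-3_1)\}$ and $B_2\cup B_4\cup\{(8_{17}-8_{17}^r),(9_{32}-9_{32}^r),(9_{33}-9_{33}^r)\}$. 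Your identification of the two genuinely delicate points --- the order-$4$ versus twice-order-$4$ distinction and the direct $W(\Q_2)$ and sporadic-prime checks in place of Conjectures \ref{Conj:DetPrime} and \ref{Conj:Witt2} --- matches where the paper also has to work hardest.
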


\begin{remark}
We may change basis in $\C_A^2$ to make things look nicer:
\begin{eqnarray*} \C_A^2 & = & \{4_1, 6_3, 8_1, 8_3, 8_{12}, 8_{13},(8_{15}-7_2-3_1), 8_{17}, 8_{18}, (9_2-7_4), (9_{12}-5_2), 9_{14}, \\
& & \quad (9_{16}-7_3-3_1), 9_{19}, (9_{28}-3_1), 9_{30}, 9_{33},(9_{42}+8_5-3_1),(9_{44}-4_1)\}
\end{eqnarray*}
\end{remark}

\begin{remark}
 The image of $\CF_E$ in the algebraic concordance group $\G$ is $\G_E \cong \Z^{46} \oplus \Z_2^{19} \oplus \Z_4^2$.
\end{remark} 
\chapter{\label{chapter5} Topological techniques}

Our ultimate aim in Chapter \ref{chapter6} will be to calculate the image of the algebraically slice $9$-crossing knots $\A_E \subset \CF_E$ in the concordance group $\C$.  There are few methods which are powerful enough to determine whether or not an algebraically slice knot is topologically slice.  One of these methods is due to Casson and Gordon \cite{CassonGordon86} (and was described in Section \ref{Sec:CGInvariants}), but calculations of their invariant are possible for only a small number of knots.  However, a determinant of the Casson-Gordon invariant turns out to be the twisted Alexander polynomial \cite{KirkLivingston99a} which has been developed by a number of people and which is relatively easy to compute.  We will concentrate on developing the use of the twisted Alexander polynomial to finish our classification of 9-crossing knots in $\C$.

To understand twisted Alexander polynomials, the following basic definition will be key.

\begin{definition}
Any finitely generated module $M$ over a principal ideal domain $R$ is the direct sum of cyclic modules
\[ F \oplus R/\langle a_1 \rangle \oplus \dots \oplus R/ \langle a_k \rangle\]
where $F$ is a free $R$-module and the $a_i \neq 1$ are defined modulo units in $R$ with $a_i$ dividing $a_{i+1}$ for all $i<k$. (See, for example, \cite[Theorem 6.3]{Grillet}.) The \emph{order} (of the torsion) of $M$ is defined to be the product of all the ideals $\langle a_i \rangle$.
\end{definition}

\begin{notation}
In this section $K$ will be an oriented knot, $\Sigma_n(K)$ will denote the $n$-fold cover of $S^3$ branched over $K$ and $X_n$ will denote the $n$-fold cyclic cover of the knot complement $X:=S^3\backslash K$. We write $\zeta_q$ for a complex number which is a $q^{\text{th}}$ root of unity and an overline, e.g. $\overline{z}$, to mean complex conjugation.
\end{notation}

\section{Twisted Alexander polynomials}

The original Alexander polynomial \cite{Alexander28} of a knot $K$ can be viewed as a description of the homology of the infinite cyclic cover $X_\infty$ of the knot complement $X = S^3 \backslash K$.  More precisely, $\Delta(t)$ is the order of the $\Z[t,t^{-1}]$-module $H_1(X_{\infty};\Z)$ (this is well-defined for knots, as the order ideal is principal).  This concept can be generalised by looking at representations of the knot group $\pi_1(X)$ and using them to twist the coefficients of $H_1(X_{\infty};\Z)$.  A detailed description of what follows may be found in \cite{KirkLivingston99a}.

\medskip

We will first work in greater generality than we need, so let $Y$ be a finite CW-complex with fundamental group $\pi := \pi_1(Y)$.  Let $\eps \colon \pi \to \Z$ be an epimorphism and $\rho \colon \pi \to \GL(V)$ be a homomorphism, where $V$ is a finite-dimensional vector space over a field $\F$.  The map $\rho$ determines a right $\Z[\pi]$-action on $V$, so we may construct the $(\F[t^{\pm 1}], \Z[\pi])$-bimodule
  \[ M := \F[t^{\pm 1}] \otimes_{\F} V \]
with right $\Z[\pi]$-action given by
  \[ (f(t) \otimes v) \cdot \gamma = t^{\eps(\gamma)} f(t) \otimes v \rho(\gamma) \,\, \text{ for } \gamma \in \pi \text{ .} \]
If $\widetilde{Y}$ is the universal cover of $Y$ then $\pi$ acts on the left of the cellular chain complex $C_*(\widetilde{Y})$ by deck transformations.  We define the twisted chain complex
  \[ C_*(Y;M) := M \otimes_{\Z[\pi]} C_*(\widetilde{Y}) \]

\begin{definition}
  The \emph{twisted Alexander polynomial} associated to $Y$, $\eps$ and $\rho$, denoted $\Delta_{Y,\eps,\rho}$, is the order of $H_1(Y;M):=H_1(C_*(Y;M))$ as a left $\F[t^{\pm 1}]$-module.  This is well-defined up to multiplication by units in $\F[t^{\pm 1}]$.
\end{definition}

\section{Twisted polynomials as slicing obstructions}
\label{Sec:twistpolyobstr}

To use twisted Alexander polynomials in a knot-slicing context, we will let $Y$ be $X_p$, the $p$-fold cyclic cover of the knot complement $X$ for a prime $p$, whilst $\F = V = \Q(\zeta_{q^r})$ for a prime $q \neq 2$.  This means that $M$ is $\Q(\zeta_{q^r})[t^{\pm1}]$.

A surjection $\eps' \colon \pi_1(X_p) \to~p\Z$ is obtained through composing the map on fundamental groups $\pi_1(X_p) \to \pi_1(X)$, induced by the covering map, with the Hurewicz homomorphism $\pi_1(X) \to H_1(X) \cong \Z$ (which is uniquely defined since our knot is oriented). Let $\eps \colon \pi_1(X_p) \to \Z$ be $\frac{1}{p}\eps'$.  We construct the representation $\rho$ in the following way, which is consistent with the representation used in the construction of Casson-Gordon invariants (see Section \ref{Sec:CGInvariants}).

Choose a character (group homomorphism) $\chi \colon H_1 (\Sigma_p;\Z) \to \Z_d$ where $d$ is a prime power $q^r$.  Precompose this map with the map on homology arising from the inclusion $X_p \hookrightarrow \Sigma_p$ and then with the Hurewicz homomorphism to get a map $\pi_1(X_p) \to \Z_{d}$.  We then have that $\Z_d$ maps into $\Q(\zeta_d)$ by
$i \mapsto \zeta^i_d$, and hence into $\Q(\zeta_d)^* = \Q(\zeta_d)\backslash \left\{0\right\}$.  All these operations give us the following composition of maps:
  \[ \pi_1(X_p) \to H_1(X_p) \to H_1(\Sigma_p) \xrightarrow{\chi} \Z_d \to \Q(\zeta_d)^*. \]
We shall denote the specific twisted Alexander polynomial obtained in this way by $\Delta_{K,\chi}$, or simply $\Delta_\chi$ where there is no confusion.

\begin{remark}
\label{Rmk:ReidemeisterTorsion}
 Milnor in \cite{Milnor62} interpreted the Alexander polynomial of a knot as the Reidemeister torsion of an associated chain complex. This allowed him to recover the result that the Alexander polynomial of a knot is symmetric, i.e. $\Delta_K(t) = \Delta_K(t^{-1})$ (up to a power of $t$). In \cite{Kitano96}, Kitano was able to show that twisted Alexander polynomials could also be interpreted as Reidemeister torsion. Kirk and Livingston~\cite{KirkLivingston99a} developed these ideas in the context above to show that, up to a factor of $rt^n$ with $r \in \Q(\zeta_d)$, the twisted Alexander polynomial $\Delta_\chi$ was the Reidemeister torsion of the complex
    \[C_*(X_p, V(t)) := (\F(t) \otimes_{\F} V) \otimes_{\rho} C_*(\widetilde{X_p})\]
 where $\F = V = \Q(\zeta_d)$ as before.
 This interpretation allowed Kirk and Livingston to conclude \cite[Corollary 5.2]{KirkLivingston99a} that twisted Alexander polynomials are symmetric in the sense that $\Delta_{\chi}(t) = \overline{\Delta_{\chi}(t^{-1})}$.
\end{remark}

The twisted Alexander polynomial defined above turns out to be equivalent to the Casson-Gordon determinant. Recall from Chapter \ref{chapter2} (Definition \ref{Def:norm}) the following notation.

 \begin{notation}
 A \emph{norm} in $\Q(\zeta_d)[t,t^{-1}]$ is a polynomial of the form $f(t)\overline{f(t^{-1})}$, where $\overline{f}$ means complex conjugation of the coefficients of $f$.
 \end{notation}

\begin{theorem}\cite[Theorem 6.5]{KirkLivingston99a}
Suppose that $p$ and $q$ are odd primes with $d=q^r$ and let $\chi$ be a character $\chi \colon H_1 (\Sigma_p;\Z) \to \Z_d$. Then
  \[ \disc(\tau(K, p, \chi)) = \Delta_{K,\chi}(t-1)^s \text{  modulo norms}\]
with $s=0$ or $s=1$ if $\chi$ is trivial or non-trivial respectively.
\end{theorem}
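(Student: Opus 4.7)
The plan is to compute the discriminant of $\tau(K,p,\chi)$ by unwinding the $4$-manifold definition and comparing the result with the order of the twisted homology $H_1(X_p;M)$ that defines $\Delta_{K,\chi}$.

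First I would split the Casson--Gordon invariant by linearity of the discriminant. Recall $\tau(K,p,\chi) = \frac{1}{d}(I(W,\overline{\rho}) - I(W))$, where $I(W)$ sits inside the image of $W(\Q) \hookrightarrow W(\CC(t),\I)$. Its determinant therefore lies in $\Q^* \subset \CC(t)^*$, and (since $q$ is odd, so signs are squares in the relevant $2$-local group) such an element is a norm modulo the subgroup $N$ generated by $f(t)\overline{f(t^{-1})}$. Hence, modulo norms,
\[ \disc(\tau(K,p,\chi)) \equiv \disc(I(W,\overline{\rho}))^{1/d}, \]
where the $1/d$-power makes sense because $d$ is odd and we work in the $\Z_{(2)}$-localization as in Section \ref{Subsec:CGdisc}.

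Second, I would identify $\disc(I(W,\overline{\rho}))^{1/d}$ with an order ideal on the boundary $\partial W = d\cdot M_0^p$. The crucial input is the interpretation of twisted Alexander polynomials as Reidemeister torsion (Remark \ref{Rmk:ReidemeisterTorsion}). Combined with Poincar\'e--Lefschetz duality on the twisted pair $(W, \partial W)$ and the multiplicativity of torsion under gluing, the determinant of the twisted hermitian intersection form factors, modulo $N$, as a $d$-th power times the torsion of the twisted chain complex of $M_0^p$. Extracting $d$-th roots in $\CC(t)^*/N$ yields
\[ \disc(I(W,\overline{\rho}))^{1/d} \equiv \Delta_{M_0^p,\,\chi}(t) \pmod{N}. \]

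Third, I would relate $\Delta_{M_0^p,\chi}(t)$ to $\Delta_{K,\chi}(t)$. The $0$-surgery manifold is obtained from the knot exterior by filling along the longitude, so $M_0^p$ is built from $X_p$ by gluing in $p$ solid tori. The Mayer--Vietoris sequence for the twisted coefficients $M = \Q(\zeta_d)[t^{\pm 1}]$ shows that the effect on the order ideal is to multiply $\Delta_{K,\chi}(t)$ by a factor coming from the added solid tori. Because the meridian of $K$ acts on $M$ by $t \cdot \rho(\mu)$ and on the filling disk trivially, this factor reduces to $(t-1)^s$: when $\chi$ is trivial the contribution is a unit and is absorbed into $N$, giving $s=0$, while when $\chi$ is non-trivial one gets a genuine $(t-1)$ factor, so $s=1$.

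The main technical obstacle is the second step, because going from a hermitian intersection form on a $4$-manifold to a statement about Reidemeister torsion of the boundary requires tracking many potential unit ambiguities: choices of bases, duality isomorphisms, Mayer--Vietoris boundary maps, and the extraction of the $d$-th root. Each ambiguity must be shown to lie in $N$. The key tool here is the hermitian symmetry $\Delta_\chi(t) \doteq \overline{\Delta_\chi(t^{-1})}$ of twisted Alexander polynomials, which should be invoked repeatedly to collapse the ambiguities into genuine norms and to turn the duality pairing on $W$ into a norm-square on the torsion side.
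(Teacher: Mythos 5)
This theorem is quoted from Kirk and Livingston; the thesis gives no proof of it, so there is no internal argument to compare yours against. What you have written is, in outline, the route Kirk and Livingston actually take: interpret $\Delta_{K,\chi}$ as Reidemeister torsion (Remark \ref{Rmk:ReidemeisterTorsion}), invoke a duality theorem identifying the discriminant of the twisted intersection form of $W$ with the torsion of $\partial W$ modulo norms, and then compute the effect of the surgery filling to produce the $(t-1)^s$ factor. Your handling of the $\tfrac{1}{d}$ is also essentially right: hermitian-symmetric elements satisfy $z^2 = z\,\I(z) \in N$, so they have order at most two in $\CC(t)^*/N$ and odd powers are invisible.

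Two concrete points do need repair. First, your disposal of the untwisted term is too quick: a norm $f\,\I(f)$ is non-negative wherever it is defined on the unit circle, so a negative rational is \emph{not} in the subgroup $N$, and $\disc(I(W))$ can perfectly well be negative. (The same issue shows that $\disc$ only descends to the Witt group once one works modulo $\pm$ norms, or more precisely modulo units $at^n$ with $a \in \Q(\zeta_q)$ as in Theorem \ref{Thm:sliceness}; that convention, not the claim $\Q^* \subset N$, is what absorbs this term.) Second, $M_0^p$ is obtained from $X_p$ by gluing in a \emph{single} solid torus, not $p$ of them: the core of the surgery torus is isotopic to a meridian of $K$ and hence generates $H_1(M_0) \cong \Z$, so its preimage in the $p$-fold cover is connected. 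This does not derail the Mayer--Vietoris computation, but it changes the bookkeeping that yields exactly $(t-1)^s$ rather than some other cyclotomic factor. Finally, be aware that essentially all of the content sits in your second step: the statement $\disc(I(W,\overline{\rho})) \doteq \tau(\partial W)$ is itself a theorem requiring proof, and your proposal names it rather than establishes it.
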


The work of Casson and Gordon in \cite[Lemma 4 \& corollary]{CassonGordon86} implies that if $K$ is a slice knot and if the character $\chi$ extends to the $p$-fold branched cover of the slice disc, then the Casson-Gordon invariant $\tau(K, p, \chi)$ vanishes. A particular set of characters which extend over the branched cover of the slice disc are those which vanish on a metaboliser of the linking form on $\Sigma_p$ (see \cite{Gilmer83} and \cite[Theorem 2]{CassonGordon86}). This linking form is analogous to Definition \ref{Def:linkingform} except that it is defined on the homology of the cyclic branched cover rather than on the homology of a Seifert surface for the knot.

\begin{definition}
  The \emph{linking form} \[\lk \colon H_1(\Sigma_p) \times
  H_1(\Sigma_p) \to \Q/\Z\] is a non-singular form defined as follows.  Let $x$ and $y$ be $1$-cycles in $\Sigma_p$.  Suppose
that $qx$ bounds a 2-chain $c$ for some $q \in \Z$.  Then
  \[ \lk( [x],[y]) = \frac{c \cdot y}{q} \in \Q / \Z, \]
where $c \cdot y$ is the intersection number of $c$ and $y$.  If $H_1(\Sigma_{p}) \cong \Z_q$, then $\lk$ takes values in $\Z_q$.
\end{definition}

There are various presentation matrices of $H_1(\Sigma_p)$ which we may use to calculate the linking form.
\begin{proposition}\cite[8D.5]{Rolfsen}
\label{Presentation1}
  Let $V$ be a Seifert matrix for $K$.  If $V$ is invertible, $(V^T V^{-1})^n - I$ is a presentation matrix for $H_1(\Sigma_n)$ as an abelian group.
\end{proposition}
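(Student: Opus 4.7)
The plan is to construct $\Sigma_n$ explicitly from a Seifert surface $F$ of genus $g$ for $K$, and then apply Mayer--Vietoris to compute $H_1(\Sigma_n;\Z)$. Let $Y := S^3 \backslash N(F)$ denote the complement of an open tubular neighbourhood of $F$, whose boundary decomposes as $F^+ \sqcup F^-$. The branched cover $\Sigma_n$ is obtained by taking $n$ disjoint copies $Y_0,\dots,Y_{n-1}$ and identifying $F^+$ of $Y_k$ with $F^-$ of $Y_{k+1}$ cyclically, with indices taken modulo $n$; the branching set $K$ sits along the common boundary circles.

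First I would fix a basis $\{a_1,\dots,a_{2g}\}$ of $H_1(F)$ and note that by Alexander duality $H_1(Y) \cong \Z^{2g}$ has a dual basis $\{a_1^*,\dots,a_{2g}^*\}$ of loops each linking one $a_j$ once and the others not at all. With the convention $V_{ij} = \lk(a_i^+, a_j)$, the inclusions $\iota^{\pm} \colon H_1(F) \to H_1(Y)$ are represented in these bases by $V^T$ and $V$ respectively. Assembling the $n$ copies yields the Mayer--Vietoris exact sequence
\[ \bigoplus_{k=0}^{n-1} H_1(F_k) \xrightarrow{\,\Phi\,} \bigoplus_{k=0}^{n-1} H_1(Y_k) \longrightarrow H_1(\Sigma_n;\Z) \longrightarrow 0, \]
so that $\Phi$ is already a presentation matrix for $H_1(\Sigma_n;\Z)$. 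Written as $n\times n$ blocks of size $2g\times 2g$, the matrix $\Phi$ carries $-V$ on the diagonal, $V^T$ on the subdiagonal, and a single additional $V^T$ in the top--right block implementing the cyclic closure.

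The main calculation is to reduce this $2gn \times 2gn$ block matrix to a single $2g \times 2g$ block. Since $V$ is invertible, pre-multiplying each row block by $-V^{-1}$ is an invertible change of basis and replaces the subdiagonal entries with $A := V^{-1}V^T$. The resulting relations read $a^*_{\cdot,k+1} = A\, a^*_{\cdot,k}$ for $k=0,\dots,n-2$, together with a cyclic closure $a^*_{\cdot,0} = A\, a^*_{\cdot,n-1}$. Eliminating all generators in favour of $a^*_{\cdot,0}$ collapses everything to the single relation $(A^n - I)\,a^*_{\cdot,0} = 0$. Using the identity $V^T(V^{-1}V^T)^n = (V^T V^{-1})^n V^T$, conjugation by $V^T$ rewrites this relation in the form $(V^T V^{-1})^n - I$, giving an equivalent presentation matrix for $H_1(\Sigma_n;\Z)$.

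The main obstacle I expect is bookkeeping: keeping track of Mayer--Vietoris signs, distinguishing the matrices representing $\iota^+$ and $\iota^-$ consistently, and handling the cyclic wrap-around correctly. A secondary subtlety is that, strictly speaking, $V^{-1}$ need only have rational entries, so one must justify that the row operations produce an equivalent presentation of the original integral module rather than only of its rationalisation; this is controlled by the conjugation step above, which realises $(V^T V^{-1})^n - I$ as the cokernel of a rational change of basis applied to $\Phi$, whose cokernel remains canonically isomorphic to $H_1(\Sigma_n;\Z)$.
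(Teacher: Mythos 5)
The paper does not actually prove this statement --- it is quoted verbatim from Rolfsen [8D.5] --- so there is no in-paper argument to compare against. Your strategy (cut $S^3$ along a Seifert surface, glue $n$ copies cyclically, apply Mayer--Vietoris, then collapse the block circulant matrix) is exactly the standard textbook route, and the identification of $\iota^{\pm}_*$ with $V$ and $V^T$ via the Alexander-dual basis is fine up to transposition, which does not affect cokernels. However, there are two genuine gaps.

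First, the Mayer--Vietoris sequence you display cannot end in $\cdots \to H_1(\Sigma_n;\Z)\to 0$ for the decomposition you describe. If the copies $F^{\pm}$ are disjoint (open bicollars), the cyclic gluing produces the \emph{unbranched} cover $X_n$, and the $H_0$ terms contribute: the kernel of $\bigoplus_k H_0(F_k)\to\bigoplus_k H_0(Y_k)$ is a diagonal $\Z$, so one only gets $H_1(X_n)\cong \Z\oplus\operatorname{coker}(\Phi)$, with the extra $\Z$ generated by the lifted meridian; you must then fill in the branch solid torus to kill that class and conclude $H_1(\Sigma_n)\cong\operatorname{coker}(\Phi)$. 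If instead the $F_k$ are taken closed, all sharing the branch circle $K$, then they are not pairwise disjoint and the $n$-fold Mayer--Vietoris does not take the form you wrote. Either reading needs an extra step; the conclusion is right but the sequence as stated is not.

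Second, and more seriously, the reduction of $\Phi$ by premultiplying row blocks by $-V^{-1}$ is a \emph{rational} row operation and does not preserve the integral cokernel, and your closing remark that conjugation by $V^T$ controls this is not a proof: conjugation by a matrix in $\GL_{2g}(\Q)$ does not preserve cokernels of integer matrices in general (e.g.\ $\left(\begin{smallmatrix}0&4\\1&0\end{smallmatrix}\right)$ and $\left(\begin{smallmatrix}0&2\\2&0\end{smallmatrix}\right)$ are $\Q$-conjugate but have cokernels $\Z_4$ and $\Z_2\oplus\Z_2$). Indeed a determinant count shows the discrepancy is real: $\det\Phi=\pm\prod_{\zeta^n=1}\Delta_K(\zeta)=\pm|H_1(\Sigma_n)|$, whereas $\det\bigl((V^{-1}V^T)^n-I\bigr)=\det\Phi/\det(V)^n$. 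So your elimination is valid precisely when $\det V=\pm1$ (in which case all operations are unimodular and the argument goes through cleanly); for $\det V\neq\pm1$ the matrix $(V^TV^{-1})^n-I$ is not even integral, and one must instead argue through the $\Z[t,t^{-1}]$-module $H_1(X_\infty)$ and the identification $H_1(\Sigma_n)\cong H_1(X_\infty)/(t^n-1)$, or use the always-integral variant $G^n-(G-I)^n$ with $G=(V-V^T)^{-1}V$ given as Proposition \ref{Presentation2}. You correctly flagged the subtlety, but the fix you propose does not close it.
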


\begin{proposition}\cite{Gilmer93}
\label{Presentation2}
  In the case that the Seifert matrix $V$ may not be invertible, let $G:= (V - V^T)^{-1}V$.  Then $G^n - (G - I)^n$ is a presentation matrix for
  $H_1(\Sigma_n)$ as an abelian group.
\end{proposition}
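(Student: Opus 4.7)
The plan is to identify $H_1(\Sigma_n;\Z)$ with $R^{2g}/MR^{2g}$, where $R = \Z[t]/(t^n-1)$ and $M = (t-1)G + I$, and then to reduce this larger $\Z$-presentation to the claimed $2g \times 2g$ matrix $G^n - (G-I)^n$ via an iterative algebraic identity whose key ingredient is that $G$ commutes with $G-I$.

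First, observe that $A := V - V^T$ is unimodular by Lemma \ref{Lemma:unimodSeifert}, so $A^{-1}$ has integer entries and $G = A^{-1}V$ lies in $M_{2g}(\Z)$; in particular $G^n - (G-I)^n$ is a well-defined integer matrix. Factoring the standard presentation of the Alexander module as $tV - V^T = A[(t-1)G + I]$ and using that $A$ is a constant unimodular matrix yields $H_1(X_\infty;\Z) \cong \Lambda^{2g}/M\Lambda^{2g}$ with $\Lambda = \Z[t,t^{-1}]$. The classical identification $H_1(\Sigma_n;\Z) \cong H_1(X_\infty;\Z)/(t^n - 1)H_1(X_\infty;\Z)$ for the branched cyclic covers of a knot then gives $H_1(\Sigma_n;\Z) \cong R^{2g}/MR^{2g}$.

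Next I would derive the crucial relation. From $Me_i = 0$ in the quotient one reads off $t(Ge_i) = (G - I)e_i$, and since $G$ commutes with $G - I$ (and the scalar $t$ commutes with both), iterating yields $t^k G^k e_i = (G-I)^k e_i$ for every $k \geq 0$. Setting $k = n$ and using $t^n = 1$ in $R$ collapses this to $[G^n - (G-I)^n]e_i = 0$ in $H_1(\Sigma_n;\Z)$, so the inclusion $\Z^{2g} \hookrightarrow R^{2g}$ descends to a well-defined homomorphism
\[ \Phi \colon \Z^{2g}/[G^n - (G-I)^n]\Z^{2g} \longrightarrow H_1(\Sigma_n;\Z). \]

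Finally I would prove $\Phi$ is an isomorphism by establishing injectivity and matching orders. For injectivity, a kernel element $y \in \Z^{2g} \cap MR^{2g}$ may be written $y = M\bigl(\sum_{k=0}^{n-1} t^k r_k\bigr)$ with $r_k \in \Z^{2g}$; requiring all $t^k$-coefficients for $k > 0$ to vanish forces the recurrence $Gr_{k-1} = (G-I)r_k$, and chaining this around the cycle produces the identity $G^{n-1}y = [G^n - (G-I)^n]r_{n-1}$ while simultaneously forcing $(G-I)^{n-1}r_{n-1} \in G^{n-1}\Z^{2g}$, from which one concludes $y \in [G^n - (G-I)^n]\Z^{2g}$. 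For the order comparison, the factorization $x^n - y^n = \prod_{k=0}^{n-1}(x - \omega^k y)$ with $\omega = e^{2\pi i/n}$, combined with $(1-\omega^k)G + \omega^k I = A^{-1}(V - \omega^k V^T)$ and $\det A = \pm 1$, gives $|\det(G^n - (G-I)^n)| = \prod_{k=1}^{n-1}|\Delta_K(\omega^{-k})|$, which equals $|H_1(\Sigma_n;\Z)|$ by the classical formula; injectivity between finite abelian groups of equal order then upgrades to bijectivity. The main obstacle is the injectivity step when $G$ is non-unimodular, for instance when $|\det V| > 1$ (as occurs for $6_1$): one cannot directly invert $G$ in the recurrence, and care is needed to extract the identity $G^{n-1}y = [G^n - (G-I)^n]r_{n-1}$ from the integrality constraints on the $r_k$; the order-matching step is then essential precisely because it avoids having to exhibit a canonical $\Z$-generating set for $H_1(\Sigma_n;\Z)$ drawn from $\{e_1,\ldots,e_{2g}\}$.
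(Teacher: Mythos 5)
The thesis itself offers no proof of this proposition --- it is quoted from Gilmer --- so I am judging your argument on its own terms. Your framework is the right one: the identification $H_1(\Sigma_n;\Z) \cong R^{2g}/MR^{2g}$ with $R=\Z[t]/(t^n-1)$ and $M=(t-1)G+I$ is correct (since $(V-V^T)M = tV-V^T$ and $V-V^T$ is unimodular), the well-definedness of $\Phi$ via $t^kG^k e_i = (G-I)^k e_i$ is sound, and the determinant computation $|\det(G^n-(G-I)^n)| = \prod_{k=1}^{n-1}|\Delta_K(\omega^k)|$ is correct. But the injectivity step, which you yourself identify as the main obstacle, is genuinely incomplete as written: from the recurrence $Gr_{k-1}=(G-I)r_k$ you extract only the single identity $G^{n-1}y = [G^n-(G-I)^n]r_{n-1}$, and when $G$ is singular or non-unimodular (which is the entire point of this proposition, as opposed to Proposition \ref{Presentation1}) there is no way to cancel $G^{n-1}$; the auxiliary observation that $(G-I)^{n-1}r_{n-1}\in G^{n-1}\Z^{2g}$ does not bridge this. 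As it stands the proof establishes injectivity only up to $G^{n-1}$-torsion.

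The gap is fillable, and the fix is worth recording. The chain of relations gives, for \emph{every} $j=0,\dots,n-1$, the identities $(G-I)^{m}r_{n-1}=G^{m}r_{n-1-m}$ and $G^{m}r_{0}=(G-I)^{m}r_{m}$, whence
\[
  G^{j}(G-I)^{n-1-j}\,y \;=\; G^{n}r_{j}-(G-I)^{n}r_{j}\;=\;\bigl[G^{n}-(G-I)^{n}\bigr]r_{j}
\]
for each $j$, not just $j=n-1$. Since $G$ and $G-I$ commute and $G-(G-I)=I$, the binomial theorem gives
\[
  I=\bigl(G-(G-I)\bigr)^{n-1}=\sum_{j=0}^{n-1}(-1)^{n-1-j}\binom{n-1}{j}G^{j}(G-I)^{n-1-j},
\]
and applying this to $y$ yields $y=[G^{n}-(G-I)^{n}]\sum_j(-1)^{n-1-j}\binom{n-1}{j}r_j\in[G^{n}-(G-I)^{n}]\Z^{2g}$, completing injectivity with no invertibility hypothesis on $G$. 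One further caveat: your order-matching step requires $H_1(\Sigma_n;\Z)$ to be finite, which holds for $n$ a prime power (the only case used in this thesis) but fails for general $n$ when $\Delta_K$ vanishes at an $n$-th root of unity; for the proposition as stated one would still need a direct surjectivity argument in that degenerate case.
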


\begin{proposition}\cite[Theorem 9.1]{Lickorish}
  In the case of the two-fold branched cover, $V+V^T$ is a presentation matrix for $H_1(\Sigma_2)$ as an abelian group.
\end{proposition}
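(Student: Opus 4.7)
The plan is to derive the statement as a short corollary of Proposition \ref{Presentation2}, which exhibits $G^n - (G-I)^n$ as a presentation matrix for $H_1(\Sigma_n;\Z)$, where $G := (V-V^T)^{-1}V$.

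First I would specialize the formula to $n=2$. Since the $G^2$ terms cancel, an elementary manipulation gives
\[ G^2 - (G-I)^2 \;=\; 2G - I \;=\; (V-V^T)^{-1}\bigl(2V - (V-V^T)\bigr) \;=\; (V-V^T)^{-1}(V+V^T). \]
Hence $(V-V^T)^{-1}(V+V^T)$ is a presentation matrix for $H_1(\Sigma_2;\Z)$.

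Second, by Lemma \ref{Lemma:unimodSeifert}, the matrix $V-V^T$ is unimodular over $\Z$. Left multiplication by $V-V^T$ is therefore an invertible row operation on any integral presentation matrix and does not change the abelian group presented. Applying this operation to $(V-V^T)^{-1}(V+V^T)$ yields $V+V^T$, which is accordingly also a presentation matrix for $H_1(\Sigma_2;\Z)$.

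The only point requiring verification is that the argument is consistent over $\Z$ and not merely over $\Q$: Lemma \ref{Lemma:unimodSeifert} ensures that $(V-V^T)^{-1}$ has integer entries, so $G$ and hence $2G-I$ are integer matrices. Thus there is no real obstacle here, and the proof is a one-line reduction once Proposition \ref{Presentation2} is in hand. (If a self-contained proof independent of Proposition \ref{Presentation2} were desired instead, I would apply Mayer--Vietoris to the decomposition $\Sigma_2 = N_1 \cup N_2$ of the branched cover as two copies of $S^3$ cut open along a Seifert surface, glued along the double $\hat F$; computing the connecting map $H_1(\hat F) \to H_1(N_1) \oplus H_1(N_2)$ in terms of $V$ and $V^T$ via Alexander duality produces a $4g \times 4g$ block matrix that, after unimodular row and column operations exploiting Lemma \ref{Lemma:unimodSeifert}, reduces to $V+V^T$.)
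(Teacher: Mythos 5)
Your derivation is correct. The algebra checks out: $G^2-(G-I)^2 = 2G-I = (V-V^T)^{-1}(V+V^T)$, the entries are integral because $\det(V-V^T)=\pm1$ makes $(V-V^T)^{-1}$ an integer matrix, and left multiplication by the unimodular matrix $V-V^T$ does not change the cokernel, so $V+V^T$ presents the same group as $G^2-(G-I)^2$. The thesis, however, does not prove this proposition at all --- it is stated as a citation to Lickorish, whose own argument is the topological one you sketch in your parenthetical remark: cut the branched cover along the Seifert surface, apply Mayer--Vietoris, and reduce the resulting block matrix in $V$ and $V^T$. Your main argument instead piggybacks on Proposition \ref{Presentation2} (Gilmer's formula $G^n-(G-I)^n$), which in the thesis is itself only cited; so logically you have traded one external reference for another, but the reduction is clean, genuinely shorter than the topological proof, and has the side benefit of showing that the $n=2$ case of Gilmer's formula and the Lickorish statement are equivalent up to a unimodular change of basis. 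If you wanted a proof that stands on its own within the thesis, the Mayer--Vietoris route would be the one to write out in full.
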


\begin{definition}
  An (invariant) \emph{metaboliser for} $\lk$ is a subgroup $M \subset H_1(\Sigma_p)$ (invariant under the action of the covering transformation) for which $M = M^\bot$, where
  \[ M^\bot := \left\{ x \in H_1(\Sigma_p) \: | \: \lk(x,y) = 0 \quad \forall \, y \in M \right\} \]
  It follows that $\text{order}(M)^2 = \text{order}(H_1(\Sigma_p))$.
\end{definition}

We now state the main theorem that will be used in the rest of this thesis. This theorem is a generalisation of the fact, proved by Fox and Milnor~\cite{FoxMilnor66} (and also in Corollary \ref{Cor:sliceinvts}), that Alexander polynomials for slice knots factor as $f(t)f(t^{-1})$.

\begin{theorem}\cite[Theorem 6.2]{KirkLivingston99a}
\label{Thm:sliceness}
  If $K$ is slice and $p,q$ are distinct primes with $q \neq 2$, then there is an (invariant) metaboliser of the linking form $M \subset H_1(\Sigma_p;\Z)$ with the following property.  For all characters $\chi \colon H_1(\Sigma_p;\Z) \to \Z_{q^r}$ with $\chi(M) = 0$, $\Delta_{K,\chi} (t)$ factors as
  \begin{equation}
  \label{Eq:factorisation}
    at^n \cdot f(t) \cdot \overline{f(t^{-1})} \cdot (t-1)^s,
  \end{equation}
  where $a \in \Q(\zeta_q)$, $f \in \Q(\zeta_q)[t^{\pm1}]$ and $s=0$
  or $s=1$ if $\chi$ is trivial or non-trivial, respectively.
\end{theorem}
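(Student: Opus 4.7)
The approach is to mimic the proof of Theorem~\ref{Thm:lagrangian} at the level of the $p$-fold cyclic branched cover of $D^4$ along a slice disc, and then to use the interpretation of $\Delta_{K,\chi}$ as Reidemeister torsion (Remark~\ref{Rmk:ReidemeisterTorsion}) together with Poincar\'e--Lefschetz duality to extract the factorization. Let $D\subset D^4$ be a slice disc for $K$, and let $W\to D^4$ denote the $p$-fold cyclic cover branched over $D$, so that $\partial W=\Sigma_p$. The covering involution of $\Sigma_p$ extends canonically to an action on $W$.

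First I would take as candidate metaboliser
\[
   M \;:=\; \ker\!\bigl(H_1(\Sigma_p;\Z)\to H_1(W;\Z)\bigr),
\]
saturated if necessary so that $H_1(\Sigma_p)/M$ is torsion-free. That $M$ is a Lagrangian for the linking form $\lk$ is a direct analogue of Step~2 in the proof of Theorem~\ref{Thm:lagrangian}: Poincar\'e--Lefschetz duality on $W$ forces $M$ to have half the rank of $H_1(\Sigma_p;\Q)$, and if $[x],[y]\in M$ are bounded in $W$ by $2$-chains $A,B$, the intersection $A\cdot B^+$ (with $B^+$ pushed into the interior of $W$) computes both $\lk([x],[y])$ and zero. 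Equivariance of the inclusion-induced map makes $M$ automatically invariant under the deck transformation.

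Next, a character $\chi\colon H_1(\Sigma_p;\Z)\to\Z_{q^r}$ that vanishes on $M$ extends over the branched cover $W$ in the Casson--Gordon sense: there is an extension $\widetilde\chi$ of $\chi$ to $\pi_1(W)$. Combined with the $\Z$-coordinate coming from $\varepsilon$ on the exterior $Y:=W\setminus\nu(D)$, this produces a representation twisting the cellular chain complex $C_*(Y;\Q(\zeta_{q^r})[t^{\pm 1}])$. By Remark~\ref{Rmk:ReidemeisterTorsion}, $\Delta_{K,\chi}(t)$ is, up to a unit, the Reidemeister torsion of $C_*(X_p)$ under the same twisting.

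The final and most delicate step is the duality argument. Multiplicativity of Reidemeister torsion along the cofibration $X_p\hookrightarrow Y$ yields
\[
   \Delta_{K,\chi}(t)\;\doteq\;\tau(Y)\cdot\tau(Y,X_p),
\]
and Poincar\'e--Lefschetz duality with twisted coefficients on the oriented $4$-manifold $Y$ identifies $\tau(Y,X_p)$ with $\overline{\tau(Y)(t^{-1})}$ up to a unit. Setting $f(t):=\tau(Y)$ then produces the factor $a t^n \cdot f(t)\cdot\overline{f(t^{-1})}$, and the correction $(t-1)^s$ accounts for the untwisted $\varepsilon$-direction: when $\chi$ is trivial, the coefficient system splits off an untwisted $\Q(t)$-summand whose $S^1$-torsion contributes a single $(t-1)$, whereas for nontrivial $\chi$ the twisted complex is honestly acyclic and no such factor appears. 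The hard part will be verifying acyclicity of the relevant twisted complexes over $\Q(\zeta_{q^r})(t)$ -- without this, the torsions are not even defined -- and it is precisely for this that the hypotheses $p\neq q$, $q\neq 2$, and coprimality of $q^r$ with the torsion orders in $H_1(W)$ are essential.
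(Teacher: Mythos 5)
The paper does not actually prove this statement---it is quoted verbatim from Kirk--Livingston \cite[Theorem 6.2]{KirkLivingston99a}---so there is no internal proof to compare against; what you have sketched is essentially the strategy of that reference (and of Casson--Gordon before it): pass to the $p$-fold branched cover $W$ of the slice disc, take the kernel of inclusion as the metaboliser, extend the character, and extract the norm from Reidemeister-torsion duality. The skeleton is right, but two of your steps contain genuine errors and a third defers the real content. First, for $p$ a prime power $H_1(\Sigma_p;\Z)$ is a \emph{finite} group (since $\Delta_K(1)=\pm1$ forces $\Delta_K(\zeta)\neq 0$ at prime-power roots of unity), so ``half the rank of $H_1(\Sigma_p;\Q)$'' is vacuous and ``saturated so that $H_1(\Sigma_p)/M$ is torsion-free'' would force $M$ to be the whole group. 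You are transplanting the Seifert-surface template of Theorem \ref{Thm:lagrangian}, where $H_1(F)$ is free, into a setting where the linking form is $\Q/\Z$-valued on a finite group; the correct statement is $M=M^\perp$ with $|M|^2=|H_1(\Sigma_p)|$, proved by an order count in the exact sequence of $(W,\Sigma_p)$, and the hypothesis $p\neq q$ enters precisely here to control the $q$-primary part of $H_1(W)$.

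Second, multiplicativity of torsion for $X_p\hookrightarrow Y$ reads $\tau(Y)=\tau(X_p)\cdot\tau(Y,X_p)$, so $\Delta_{K,\chi}\doteq\tau(Y)\cdot\tau(Y,X_p)^{-1}$, not the product you display; Milnor's duality in dimension $4$ gives $\tau(Y,X_p)\doteq\overline{\tau(Y)(t^{-1})}^{\,-1}$, and it is only because these two inverses cancel that a norm appears. Taken literally, your two displayed identities combine to $g(t)\,\overline{g(t^{-1})}^{\,-1}$, which is \emph{not} in the subgroup generated by norms (its divisor is antisymmetric rather than symmetric under $z\mapsto 1/\bar z$), so the sign discipline is not cosmetic. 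Finally, the acyclicity you flag at the end is not a verification but the heart of the matter: $H_2(Y;\Q(\zeta_{q^r})(t))$ is generally nonzero---it is exactly where the Casson--Gordon Witt class lives---so the torsion of $Y$ is not defined without further argument, the factor $(t-1)^s$ is the non-acyclic contribution of the boundary distinguishing trivial from non-trivial $\chi$, and one must separately show that $f$ may be taken to be a Laurent polynomial rather than a rational function. These are the points that occupy most of Kirk--Livingston's actual proof, so as it stands your argument is an accurate outline of the known proof rather than a proof.
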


It will often be easier to work with maps to $\Z_q$ rather than to $\Z_{q^r}$. The following corollary shows us how to do this.

\begin{corollary}\cite[Corollary 8.3]{HeraldKirkLivingston10}
\label{Thm:slicenessmodq}
  If $K$ is slice and $p,q$ are distinct primes with $q \neq 2$, then there is a subspace of the linking form $\widetilde{M} \subset H_1(\Sigma_p;\Z_q)$ with the following property.  For all characters $\chi \colon H_1(\Sigma_p;\Z_q) \to \Z_q$ with $\chi(\widetilde{M}) = 0$, $\Delta_{K,\chi} (t)$ factors as
  \[   a \cdot f(t) \cdot \overline{f(t^{-1})} \cdot (t-1)^s \]
  where $a \in \Q(\zeta_q)$, $f \in \Q(\zeta_q)[t^{\pm1}]$ and $s=0$
  or $s=1$ if $\chi$ is trivial or non-trivial, respectively.  The subspace $\widetilde{M}$ is a reduction modulo $q$ of a metaboliser for $\lk$.
\end{corollary}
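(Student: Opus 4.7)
The plan is to derive the corollary directly from Theorem \ref{Thm:sliceness} by starting with an (invariant) metaboliser $M \subset H_1(\Sigma_p;\Z)$ furnished by that theorem and then reducing modulo $q$. Concretely, let $\pi \colon H_1(\Sigma_p;\Z) \to H_1(\Sigma_p;\Z_q) \cong H_1(\Sigma_p;\Z) \otimes_{\Z} \Z_q$ be the natural reduction map, and set $\widetilde{M} := \pi(M)$. This makes the last sentence of the corollary true by construction.

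Next, given any character $\chi \colon H_1(\Sigma_p;\Z_q) \to \Z_q$ satisfying $\chi(\widetilde{M}) = 0$, I would form the pullback $\hat\chi := \chi \circ \pi \colon H_1(\Sigma_p;\Z) \to \Z_q$. The condition $\chi(\widetilde{M}) = 0$ is equivalent to $\hat\chi(M) = 0$ because $\pi$ sends $M$ onto $\widetilde{M}$. Viewing $\Z_q$ as the case $q^r = q$ in Theorem \ref{Thm:sliceness}, the character $\hat\chi$ satisfies the hypotheses of that theorem, so
\[
\Delta_{K,\hat\chi}(t) \;=\; a\, t^n \, f(t)\,\overline{f(t^{-1})}\,(t-1)^s
\]
with $a \in \Q(\zeta_q)$, $f \in \Q(\zeta_q)[t^{\pm 1}]$, and $s \in \{0,1\}$ matching the (non)triviality of $\hat\chi$.

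The remaining step is to identify $\Delta_{K,\chi}$ with $\Delta_{K,\hat\chi}$. This is essentially tautological from the definition in Section \ref{Sec:twistpolyobstr}: the representation $\pi_1(X_p) \to \Q(\zeta_q)^*$ used to twist the chain complex is built by composing the Hurewicz map and the inclusion-induced map $H_1(X_p) \to H_1(\Sigma_p)$ with the chosen character, and both $\chi$ and $\hat\chi$ yield exactly the same composite $\pi_1(X_p) \to \Z_q \hookrightarrow \Q(\zeta_q)^*$ because $\hat\chi = \chi \circ \pi$ factors through the reduction. Hence the twisted chain complexes, and therefore the twisted Alexander polynomials, are identical up to units. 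Finally, the factor $t^n$ is a unit in $\Q(\zeta_q)[t^{\pm 1}]$ and may be absorbed into $a$ (or, since $t = t \cdot \overline{t^{-1}}$ is itself a norm, into $f \cdot \overline{f(t^{-1})}$), giving the cleaner form stated in the corollary.

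The only place that needs any real care is the bookkeeping that $\chi(\widetilde{M})=0$ really does imply $\hat\chi(M) = 0$ and that the resulting $\hat\chi$ lands in the subgroup $\Z_q \subset \Z_{q^r}$ required by Theorem \ref{Thm:sliceness}; both are immediate once the universal property of reduction mod $q$ is invoked, so I do not expect a genuine obstacle — the corollary is essentially a translation of Theorem \ref{Thm:sliceness} from integral characters vanishing on $M$ to mod-$q$ characters vanishing on $\widetilde{M}$.
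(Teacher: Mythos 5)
Your derivation is correct and is exactly the intended argument: the paper gives no independent proof (it cites Herald--Kirk--Livingston), and the standard proof is precisely your reduction -- set $\widetilde{M}=\pi(M)$, observe that $\chi(\widetilde{M})=0$ forces $\hat\chi(M)=\chi(\pi(M))=0$ with $\hat\chi=\chi\circ\pi$ landing in $\Z_q=\Z_{q^1}$, apply Theorem \ref{Thm:sliceness}, and note that $\Delta_{K,\chi}$ is by definition $\Delta_{K,\hat\chi}$ since the twisting representation factors through the reduction. One cosmetic slip: $t$ is not a norm (indeed $t\cdot\overline{t^{-1}}=1$, and norms are symmetric Laurent polynomials), but $t^n$ is a unit of $\Q(\zeta_q)[t^{\pm1}]$ and $\Delta_{K,\chi}$ is only defined up to such units, which is the correct reason it can be dropped.
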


To prove that a knot is not slice, it therefore suffices to show that every metaboliser $M$ of $H_1(\Sigma_p)$ (for some prime $p$) has a non-trivial map $\chi \colon H_1(\Sigma_p) \to \Z_{q^r}$ (for some prime $q$) such that $\chi(M)=0$ and such that $\Delta_\chi/(t-1)$ is not a norm.

In order to use twisted polynomials to detect torsion in $\C$, we need to know how they behave under connected sums of knots.  The following result was proved by Gilmer~\cite{Gilmer83} in the context of Casson-Gordon invariants, and by Kirk and Livingston \cite{KirkLivingston99b} for twisted Alexander polynomials.

\begin{proposition}
\label{Prop:additivity}
  Suppose $K = K_1 \# K_2$ and that $\chi$ restricts to $\chi_i$ on the factors (i.e. $\chi = \chi_1 \oplus \chi_2$).  Then
    \[ \Delta_{K,\chi}(t) = \Delta_{K_1,\chi_1}(t) \Delta_{K_2,\chi_2}(t) \mbox{ or } \Delta_{K_1,\chi_1}(t) \Delta_{K_2,\chi_2}(t) (1-t)  \]
  with the second case occurring if and only if $\chi_1$ and $\chi_2$ are non-trivial.
\end{proposition}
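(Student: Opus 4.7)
The plan is to compute $\Delta_{K,\chi}$ by applying twisted Mayer--Vietoris to a natural decomposition of the $p$-fold cyclic cover $X_p$ of $X := S^3 \setminus (K_1 \# K_2)$. First I would write $X = X_1 \cup_A X_2$, where each $X_i$ has the homotopy type of $S^3 \setminus K_i$ and $A$ is an annular neighbourhood of the meridian at the connected-sum site. Because this meridian $\mu$ generates $H_1(X) \cong \Z$, its preimage in $X_p$ is a single circle wrapping $p$ times; hence $A$ lifts to a single annulus $\tilde A$ and $X_p = X_{1,p} \cup_{\tilde A} X_{2,p}$, where $X_{i,p}$ denotes the $p$-fold cyclic cover of $X_i$ and $\chi_i$ pulls back from $H_1(\Sigma_p(K_i);\Z)$ in the obvious way.

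Next I would calculate the twisted homology of the overlap and the pieces. The lifted meridian $\tilde\mu \in \pi_1(\tilde A)$ satisfies $\eps(\tilde\mu) = 1$, while crucially $\chi(\tilde\mu) = 0$ because $\mu$ bounds a disk in the branching locus and so represents zero in $H_1(\Sigma_p;\Z)$. Consequently on $\tilde A \simeq S^1$ the coefficient system reduces to multiplication by $t$, the twisted chain complex is $M \xrightarrow{t-1} M$, and $H_0(\tilde A; M) \cong \F[t^{\pm 1}]/(t-1)$ (of order $t-1$) while $H_1(\tilde A; M) = 0$. A parallel $H_0$-calculation gives $H_0(X_{i,p}; M) \cong \F[t^{\pm 1}]/(t-1)$ when $\chi_i$ is trivial (only the $t$-twist survives and one takes the cokernel of $t-1$), and $H_0(X_{i,p}; M) = 0$ when $\chi_i$ is non-trivial (since no non-zero vector in $V$ is fixed by the $\rho$-action).

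I would then feed these into the twisted Mayer--Vietoris sequence
\begin{multline*}
0 \to H_1(X_{1,p};M) \oplus H_1(X_{2,p};M) \to H_1(X_p;M) \to H_0(\tilde A;M) \\
\xrightarrow{\phi} H_0(X_{1,p};M) \oplus H_0(X_{2,p};M) \to H_0(X_p;M) \to 0
\end{multline*}
and split by cases. If at least one $\chi_i$ is trivial, the inclusion-induced map $H_0(\tilde A;M) \to H_0(X_{i,p};M)$ is an isomorphism, so $\phi$ is injective, the preceding connecting map vanishes, and taking orders gives $\Delta_{K,\chi} \doteq \Delta_{K_1,\chi_1}\Delta_{K_2,\chi_2}$. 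If both $\chi_i$ are non-trivial then $\phi = 0$, producing the short exact sequence $0 \to H_1(X_{1,p};M) \oplus H_1(X_{2,p};M) \to H_1(X_p;M) \to \F[t^{\pm 1}]/(t-1) \to 0$; multiplicativity of orders for torsion $\F[t^{\pm 1}]$-modules in short exact sequences contributes an extra factor $(t-1) \doteq (1-t)$, giving the second case.

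The main technical obstacle is justifying the claim $\chi(\tilde\mu)=0$, since $\chi$ is defined on $H_1(\Sigma_p;\Z)$ rather than on $H_1(X_p;\Z)$ and one must track where the lifted meridian sits after composing through the inclusion $X_p \hookrightarrow \Sigma_p$; but this is precisely the point where the meridian is killed in the branched cover, so the identification is clean. The remaining work is just bookkeeping with orders in the Mayer--Vietoris sequence.
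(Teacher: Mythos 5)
Your Mayer--Vietoris argument is correct: the decomposition of $X_p$ along the single lifted annulus, the computation $H_*(\tilde A;M)$ via $t-1$ (using $\eps(\tilde\mu)=1$, $\chi(\tilde\mu)=0$), the vanishing or non-vanishing of $H_0(X_{i,p};M)$ according to the triviality of $\chi_i$, and the multiplicativity of orders in the resulting exact sequences all go through as you describe. The thesis does not prove this proposition itself but cites Gilmer and Kirk--Livingston, and your argument is essentially the standard one given in those sources.
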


\section{Detecting infinite order}

Twisted Alexander polynomials are a powerful tool in proving non-sliceness of algebraically slice knots (see, e.g. \cite{KirkLivingston99b}, \cite{HeraldKirkLivingston10}).  They were used in \cite{KirkLivingston01} to prove that a specific collection of knots had infinite order and were linearly independent in $\C$.  In this section we develop techniques and theorems which will enable us to decide whether any collection of arbitrary knots is linearly independent in $\C$. For the rest of the chapter we abuse notation and write $\Delta_{\chi}$ when we mean $\Delta_{\chi}/(t-1)$.

\subsection{From metabolisers to polynomials}

Our strategy for showing that a knot has infinite order in $\C$ is to analyse metabolisers of $H_1(\Sigma_p(nK))$ for an arbitrary $n \in \N$ and to show that for each metaboliser there is a character $\chi$ vanishing on it for which $\Delta_\chi$ is not a norm.  For most of what follows, $p$ will be equal to $2$ and $q$ will be an odd prime.

\medskip

What does a metaboliser for $H := H_1(\Sigma_2(nK);\Z)$ look like?  Suppose first that $H_1(\Sigma_2(K);\Z) \cong \Z_q$.  Then $H \cong (\Z_q)^{n}$.  In order for a metaboliser to exist, we need $n$ to be even, say $n=2m$.  Let $M$ be a metaboliser.  It is spanned by $m$ vectors $v_1, \dots, v_m \in (\Z_q)^{2m}$ where $v_i \cdot v_j \equiv 0$ (mod $q$) for every $i$ and $j$. (Here we are using the standard inner product on $(\Z_q)^{2m}$.)  This is because, if we write $v_i = (v_{i,1}, \dots, v_{i,2m})$ we have
 \begin{eqnarray*}
 0 & = & \lk_{2mK}(v_i,v_j) = lk_K(v_{i,1},v_{j,1}) + \dots + lk_K(v_{i,2m},v_{j,2m})\\
   & = & \lk_K(1,1) (v_{i,1}v_{j,1} + \dots + v_{i,2m}v_{j,2m})\\
   & = & \lk_K(1,1) v_i \cdot v_j
 \end{eqnarray*}
 and since $\lk_K(1,1) \neq 0$ we have $v_i \cdot v_j \equiv 0$.

\begin{example}
For example, let $m=3$ and $q=5$.  Then the following set of vectors could be a spanning set for a metaboliser:
    \[ \begin{array}{lcl}
      v_1 & = & (1,0,0,1,2,2)\\
      v_2 & = & (0,1,0,2,1,-2)\\
      v_3 & = & (0,0,1,2,-2,1).\\
    \end{array} \]
\end{example}

Next, what characters vanish on $M$?  Let $\chi_i \colon H_1(\Sigma_2(K);\Z) \to \Z_q$ be the map $\lk(-,i)$.  Then if $w \in M$ we can write $w = (w_1,\dots,w_{2m})$ and we have that the map
  \[ \chi_w := \chi_{w_1} \oplus \dots \oplus \chi_{w_{2m}}\]
will vanish on all of $M$.  By Proposition \ref{Prop:additivity} we have that
  \begin{equation}
  \label{Eq:metabvector}
    \Delta_{\chi_w} = \Delta_{\chi_{w_1}} \dots \Delta_{\chi_{w_{2m}}}
  \end{equation}
and this should be a norm if $K$ is slice.  The following observation will help us to simplify this polynomial.

A twisted Alexander polynomial $p(t)$ has the symmetry $p(t)~=~\overline{p(t^{-1})}$ (see \ref{Rmk:ReidemeisterTorsion}).  In particular, this means that $p(t)^2$ is a norm. Thus if a vector $w \in M$ contains a repeated entry, say $w_i = w_j$, then we can simplify equation (\ref{Eq:metabvector}) by removing $\Delta_{\chi_{w_i}}(t)$ and $\Delta_{\chi_{w_j}}(t)$ from the right-hand side.  So it is only if an entry in a metabolising vector occurs an odd number of times that it contributes to the twisted Alexander polynomial.

There is a further simplification we can make when doing calculations, which is that the polynomials related to the various $\chi_i$, $i \in \Z_q$, are all related by a particular symmetry.

\begin{lemma}
  Suppose that $\chi \colon H \to \Z_q$ and $\chi' \colon H \to \Z_q$ are two characters such that $\chi' = n \chi$ for some $n \in \Z_q$.  Then $\Delta_{\chi'} = \sigma_n(\Delta_\chi)$, where $\sigma_n \colon \Q(\zeta_q) \to \Q(\zeta_q)$ is the map that takes $\zeta_q$ to $\zeta_q^n$.  We say that $\Delta_\chi$ and $\Delta_{\chi'}$ are \emph{Galois conjugates} of each other.  Notice that if $\Delta_\chi$ factorises as a norm then so does $\Delta_{\chi'}$.
\end{lemma}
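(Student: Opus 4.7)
The plan is to trace through the construction of the twisted Alexander polynomial and observe that changing $\chi$ to $n\chi$ amounts to post-composing the representation with the Galois automorphism $\sigma_n$; the polynomial, being defined by a determinant (order of a module) over $\Q(\zeta_q)[t,t^{-1}]$, then transforms by applying $\sigma_n$ to coefficients.

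More concretely, recall the representation $\rho_\chi \colon \pi_1(X_p) \to \Q(\zeta_q)^*$ is built as the composite
\[ \pi_1(X_p) \to H_1(X_p) \to H_1(\Sigma_p) \xrightarrow{\chi} \Z_q \xrightarrow{i \mapsto \zeta_q^i} \Q(\zeta_q)^*. \]
When $\chi$ is replaced by $\chi' = n\chi$, the last arrow becomes $i \mapsto \zeta_q^{ni}$, so $\rho_{\chi'} = \sigma_n \circ \rho_\chi$, where $\sigma_n$ denotes the Galois automorphism $\zeta_q \mapsto \zeta_q^n$ of $\Q(\zeta_q)/\Q$ (extended $\Q$-linearly to $\Q(\zeta_q)[t^{\pm 1}]$ by fixing $t$). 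First I would spell this out and then note that the twisted chain complex $C_*(X_p; M')$ used to compute $\Delta_{\chi'}$ is obtained from $C_*(X_p;M)$ by applying $\sigma_n$ entrywise to the boundary matrices.

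Next I would use the fact that the twisted Alexander polynomial is the order of the first homology of this chain complex as a $\Q(\zeta_q)[t^{\pm 1}]$-module, computed up to units as a determinant of a square presentation matrix obtained from these boundary matrices (equivalently, up to units it is the Reidemeister torsion, as noted in Remark \ref{Rmk:ReidemeisterTorsion}). Since determinants commute with ring homomorphisms, applying $\sigma_n$ to the presentation matrix yields a presentation matrix for $H_1(X_p;M')$ whose determinant is $\sigma_n(\Delta_\chi)$. This gives $\Delta_{\chi'} \doteq \sigma_n(\Delta_\chi)$, where the equality is up to units in $\Q(\zeta_q)[t^{\pm 1}]$, i.e.\ elements of the form $a t^k$. (I expect the main subtlety to be bookkeeping the ambiguity up to such units, but since $\sigma_n$ carries units to units this causes no real difficulty.)

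Finally, for the concluding remark I would verify that $\sigma_n$ sends norms to norms. The involution $f(t) \mapsto \overline{f(t^{-1})}$ on $\Q(\zeta_q)[t^{\pm 1}]$ acts on coefficients as complex conjugation, which is the Galois automorphism $\sigma_{-1}$. Since the Galois group $\operatorname{Gal}(\Q(\zeta_q)/\Q)$ is abelian, $\sigma_n$ commutes with $\sigma_{-1}$, and visibly with substitution $t \mapsto t^{-1}$, so
\[ \sigma_n\bigl(f(t)\,\overline{f(t^{-1})}\bigr) = \sigma_n(f)(t)\cdot \overline{\sigma_n(f)(t^{-1})}, \]
which is again a norm. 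Hence if $\Delta_\chi$ is a norm, so is $\Delta_{\chi'}$, completing the proof.
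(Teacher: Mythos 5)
The paper states this lemma without proof, so there is nothing to compare against; your argument is the natural one and is correct. You correctly identify that $\rho_{\chi'}=\sigma_n\circ\rho_\chi$, that the twisted chain complex for $\chi'$ is therefore the $\sigma_n$-transform of the one for $\chi$ (so the order ideal of its first homology is the image under $\sigma_n$ of the original order ideal, well-defined up to units, which $\sigma_n$ preserves), and that $\sigma_n$ commutes with the involution $f(t)\mapsto\overline{f(t^{-1})}$ because $\operatorname{Gal}(\Q(\zeta_q)/\Q)$ is abelian and $\sigma_n$ fixes $t$, so norms go to norms. The only nitpick is that the statement should require $n\neq 0$ in $\Z_q$ (i.e.\ $n$ a unit) for $\sigma_n$ to be a field automorphism; this is implicit in how the lemma is used but worth recording.
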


So in order to stop Equation (\ref{Eq:metabvector}) from factorising as a norm we need to find a vector $w \in M$ which contains at least one entry an odd number of times. We then need to show that none of other polynomials on the right-hand side of (\ref{Eq:metabvector}) are the same as the polynomial corresponding to that `odd' entry. One way to show this would be to show that none of the Galois conjugates of $\Delta_{\chi_1}$ are equal to each other, but this is slightly overkill and we will see that we can get away with a lesser condition.
All these ideas will be made more precise in the next section.

Finally, we also need to be able to deal with the trivial twisted polynomial, $\Delta_{\chi_0}$.  This polynomial is related to the standard Alexander polynomial, but does not always behave in the same way.

\begin{lemma}
\label{Lemma:algslicenorm}
  If $K$ is algebraically slice then the trivial twisted Alexander polynomial $\Delta_{\chi_0}(t) := \text{order} (H_1(X_p,\Q[t^{\pm1}]))$ is a norm.  However the converse is not true: it may happen that $\Delta_{\chi_0}(t)$ is a norm whilst the standard Alexander polynomial $\Delta_K(t)$ does not factorise as $f(t)f(t^{-1})$.
\end{lemma}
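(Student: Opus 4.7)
The plan is to first identify $\Delta_{\chi_0}$ with the classical Alexander polynomial of the $p$-fold cyclic cover, then exploit the Fox--Milnor factorisation for the forward direction, and finally exhibit the right-handed trefoil as a counterexample.

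Since $\chi_0$ is trivial, the representation $\rho$ acts trivially on $V$, so the coefficient module $M$ reduces to $\Q(\zeta_q)[t^{\pm 1}]$ on which $\pi_1(X_p)$ acts only through $\eps$. The kernel of $\eps$ is exactly $\pi_1(X_\infty) \leq \pi_1(X_p)$, so the infinite cyclic cover of $X_p$ determined by $\eps$ is $X_\infty$ itself, with the generator $s$ of its covering translations acting as $t^p$. This gives an identification
\[ H_1(X_p;M) \;\cong\; H_1(X_\infty;\Q) \otimes_\Q \Q(\zeta_q) \]
of $\Q(\zeta_q)[s^{\pm 1}]$-modules, so $\Delta_{\chi_0}(s)$ equals, up to units, the characteristic polynomial $\Delta_K^{(p)}(s)$ of multiplication by $t^p$ on the rational Alexander module. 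A direct Galois-theoretic calculation with the eigenvalues of $t$ yields the classical product formula $\Delta_K^{(p)}(s^p) \doteq \prod_{j=0}^{p-1} \Delta_K(\zeta_p^j s)$, with equality understood up to units.

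For the forward direction, Corollary~\ref{Cor:sliceinvts}(ii) ensures that if $K$ is algebraically slice then $\Delta_K(t) \doteq f(t)f(t^{-1})$ for some $f \in \Q[t,t^{-1}]$. Substituting into the product formula gives
\[ \Delta_K^{(p)}(s^p) \;\doteq\; \Bigl(\prod_{j=0}^{p-1} f(\zeta_p^j s)\Bigr)\Bigl(\prod_{j=0}^{p-1} f(\zeta_p^j s^{-1})\Bigr). \]
The first bracketed factor is invariant both under $s \mapsto \zeta_p s$ (which cyclically permutes the factors) and under the natural action of $\mathrm{Gal}(\Q(\zeta_p)/\Q)$, so it lies in $\Q[s^p, s^{-p}]$; I would write it as $g(s^p)$ with $g \in \Q[u, u^{-1}]$. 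The second factor is then $g(s^{-p})$, and substituting $u = s^p$ exhibits $\Delta_{\chi_0}(u) \doteq g(u)g(u^{-1})$ as a norm.

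For the converse I will use the right-handed trefoil. Its Alexander polynomial $\Delta_K(t) = t^2 - t + 1$ is irreducible over $\Q$, and since $\Delta_K(-1) = 3$ is not a square in $\Q$, $\Delta_K$ cannot factor as $f(t)f(t^{-1})$ in $\Q[t, t^{-1}]$. Taking $p = 3$, the relation $t^3 \equiv -1 \pmod{t^2 - t + 1}$ shows that $t^3$ acts as $-\mathrm{id}$ on $\Q[t]/(t^2 - t + 1)$, so $\Delta_{\chi_0}(s) \doteq (s+1)^2$; up to the unit $s$ this equals $(1+s)(1+s^{-1}) = f(s)\overline{f(s^{-1})}$ with $f(s) = 1 + s$, so it is a norm. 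The main obstacle will be the clean identification of $\Delta_{\chi_0}$ with $\Delta_K^{(p)}$ and the derivation of the product formula; once those are in place, both directions reduce to essentially formal manipulations.
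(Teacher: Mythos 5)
Your proof is correct and follows essentially the same route as the thesis: both rest on the product formula $\Delta_{\chi_0}(s^p) \doteq \prod_{j=0}^{p-1}\Delta_K(\zeta_p^j s)$ together with the Fox--Milnor factorisation for the forward direction (your rotation/Galois-invariance argument just makes explicit what the thesis dismisses as ``trivial''). The only difference is the counterexample for the converse --- you take the trefoil with $p=3$, where $t^3=-1$ on the Alexander module gives $\Delta_{\chi_0}(s)\doteq(s+1)^2$, whereas the thesis takes $p=2$ and the irreducible symmetric polynomial $2-3t^2+2t^4$, whose $2$-fold polynomial is $(2-3t+2t^2)^2$; both examples are valid.
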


\begin{proof}
  The trivial twisted polynomial is related to the standard Alexander polynomial by the following formula:
    \[ \Delta_{\chi_0}(t) = \prod_{i=0}^{p-1}\Delta_K(\zeta_p^i t^{\frac{1}{p}}). \]
  If $K$ is algebraically slice then $\Delta_K(t) = f(t)f(t^{-1})$ for some $f$ in $\Q[t^{\pm1}]$.  It follows trivially that $\Delta_{\chi_0}$ is a norm.

  Conversely, let $p=2$ and $\Delta_K(t)= 2-3t^2+2t^4$ (which is irreducible). Then $\Delta_{\chi_0}(t) = (2-3t+2t^2)^2$, which is a norm.
\end{proof}

\subsection{Odd vectors and the main theorem}

We will use the following lemma from \cite{KirkLivingston01} to find metabolising vectors whose entries occur an odd number of times.

\begin{lemma}
\label{Lemma:KirkLiv}
Let $E$ be a non-singular $m \times m$ matrix over $\Z_q$ for a prime $q>2$.  Suppose that every vector in the subspace of $(\Z_q)^{2m}$ spanned by the rows of the $m \times 2m$ matrix $(I \, \, E)$ has an even number of non-zero entries. Then $E$ is obtained from a diagonal
matrix by permuting the columns.
\end{lemma}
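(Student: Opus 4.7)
The plan is to reformulate the hypothesis, extract combinatorial constraints on $E$, and then conclude by induction on $m$. A general vector in the row span of $(I \ \ E)$ has the form $(c, cE)$ for $c \in (\Z_q)^m$ and contains $|c|_0 + |cE|_0$ nonzero entries, where $|\cdot|_0$ denotes the number of nonzero coordinates. The hypothesis is therefore equivalent to the statement that $|c|_0 \equiv |cE|_0 \pmod{2}$ for every $c \in (\Z_q)^m$.

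Applying this first to $c = e_i$ shows that each row $E_i$ has odd support, where $S_i := \{\ell : E_{i\ell} \neq 0\}$. Applying it next to $c = \alpha e_i + \beta e_j$ with $\alpha, \beta \in \Z_q^\ast$, I split the columns $\ell$ according to which of $E_{i\ell}$ and $E_{j\ell}$ is nonzero; when both are nonzero, $\alpha E_{i\ell} + \beta E_{j\ell}$ vanishes precisely when $\beta/\alpha$ equals the single value $-E_{i\ell}/E_{j\ell}$. Summing the parity conditions as $(\alpha, \beta)$ ranges over $(\Z_q^\ast)^2$ and using that $q-1$ is even (because $q$ is an odd prime) then yields, for every $i \neq j$, both that $|S_i \cap S_j|$ is even and that for each $\mu \in \Z_q^\ast$ the number of columns $\ell$ with $E_{i\ell}, E_{j\ell}$ both nonzero and $E_{j\ell}/E_{i\ell} = \mu$ is even.

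The induction on $m$ has trivial base case $m = 1$. The key reduction is the following: if some row, say $E_m$, has support exactly $\{j\}$, say $E_m = \alpha e_j$, then the constraint $|S_i \cap S_m| \equiv 0 \pmod{2}$ for each $i < m$ together with $S_m = \{j\}$ forces $E_{ij} = 0$ for all $i < m$, so column $j$ of $E$ has a unique nonzero entry. Deleting row $m$ and column $j$ then produces an $(m-1) \times (m-1)$ non-singular matrix $E'$, and a direct check (the deleted row and column contribute to the weight count in a decoupled way, since $E$ is block-triangular with respect to this split) shows that $E'$ still satisfies the hypothesis of the lemma. The inductive hypothesis gives that $E'$ is monomial, and reinserting row $m$ and column $j$ shows the same for $E$.

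The main obstacle is therefore to show that some row of $E$ must have support of size exactly $1$. I would argue by contradiction, supposing every row has support at least $3$, and appeal to the three-row consequence of the hypothesis: for any distinct $i, j, k$ and $(\alpha,\beta,\gamma) \in (\Z_q^\ast)^3$, the weight $|\alpha E_i + \beta E_j + \gamma E_k|_0$ must be odd. The pairwise constraints from the second paragraph already imply that the support indicators $\chi(E_1), \ldots, \chi(E_m) \in \F_2^m$ form an orthonormal set with respect to the standard inner product, which restricts the combinatorial possibilities enormously; however, exotic such sets exist (for instance the complements-of-basis pattern $\{\mathbf{1} - e_i\}$ in $\F_2^4$), so this alone is insufficient. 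The plan is then to combine the three-row constraint with the pairwise ratio constraints: for each non-trivial support pattern, a case analysis selecting appropriate $(\alpha,\beta,\gamma) \in (\Z_q^\ast)^3$ should produce a linear combination of even weight (or else force the rows to be linearly dependent, contradicting non-singularity), using crucially the actual $\Z_q$-values of the entries and not merely their supports.
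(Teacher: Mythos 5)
The thesis does not actually prove this lemma --- it is quoted from Kirk and Livingston \cite{KirkLivingston01} --- so there is no internal proof to measure you against; I will assess your argument on its own terms. Your preliminary analysis is correct: the hypothesis is equivalent to $|c|_0\equiv|cE|_0\pmod 2$ for all $c$; taking $c=e_i$ shows each row of $E$ has odd support; and the two-row computation, summed over $(\alpha,\beta)\in(\Z_q^*)^2$ using that $q-1$ is even, does yield that $|S_i\cap S_j|$ is even and that each ratio class inside $S_i\cap S_j$ has even size. The reduction step of your induction is also sound: a row with support $\{j\}$ forces column $j$ to vanish off that row, and deleting the row and the column preserves both non-singularity and the even-weight hypothesis, since extending $c'$ by a zero in the last slot leaves both weights unchanged.

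The genuine gap is that the entire difficulty of the lemma is concentrated in the one claim you do not prove: that some row must have singleton support. As your own example of the vectors $\mathbf{1}-e_i$ in $\F_2^4$ shows, the one- and two-row constraints (odd supports, even pairwise intersections, even ratio classes) admit non-monomial configurations at the level of supports, so the three-row and higher constraints must actually be exploited using the $\Z_q$-values of the entries --- and this is precisely the step you leave as ``a case analysis \dots should produce a linear combination of even weight.'' This is not a routine verification. Even in the smallest problematic case ($m=4$, each row supported on the complement of a distinct column) one must choose $\beta/\alpha$ and $\gamma/\alpha$ to annihilate two specific coordinates and then use the pairwise ratio relations to check that exactly two coordinates survive, giving even weight; that computation works, but it depends delicately on the ratio relations, and for general $m$ and general support patterns consistent with your orthonormality constraints no recipe is given and none is obviously available without a further idea. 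The fallback you mention (``or else force the rows to be linearly dependent'') is likewise unsubstantiated. As written, the proposal is an honest and correctly set-up partial argument whose central step --- the only step where the conclusion that $E$ is monomial could actually be forced --- is missing.
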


We adapt this result to our own situation.

\begin{lemma}
\label{Lemma:oddvector}
 Let $K=K_1+\dots+K_{2m}$ be a knot, where $H_1(\Sigma_2(K_i);\Z) \cong \Z_q \oplus T_{i}$ for each $i=1,\dots,2m$, with the order of the $T_{i}$ coprime to $q$.  Let $v_1, \dots, v_m$ be vectors in $(\Z_q)^{2m}$ which are a basis for a metaboliser $M \subset H_1(\Sigma_2(K);\Z_q)$ on which the linking form vanishes.  If the linking forms of $K_1,\dots,K_{2m}$ are the same (i.e. $\lk_{K_i}(1,1) = \lk_{K_j}(1,1)$ for all $i,j$) then
 \begin{itemize}
 \item If $q \equiv 1$ (mod $4$) then either there is a linear combination of the $v_i$ which contains an odd number of zero entries or there is a linear combination of the $v_i$ of the form $(1,0,\dots,0,\pm a,0,\dots,0)$ for an $a$ such that $1+a^2 \equiv 0$ (mod q).
 \item If $q \equiv 3$ (mod $4$) then $m$ is even and there must be a linear combination of the $v_i$ which contains an odd number of zero entries.
 \end{itemize}
\end{lemma}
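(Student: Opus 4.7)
The plan is to put the metaboliser into a standard matrix form and apply Lemma~\ref{Lemma:KirkLiv} to force either the desired linear combination or a rigid structural conclusion that fails when $q \equiv 3 \pmod{4}$. First, since $v_1,\dots,v_m$ span a rank-$m$ subspace of $(\Z_q)^{2m}$, I would row-reduce the $m \times 2m$ matrix with rows $v_i$ and, if necessary, permute columns (i.e.\ relabel the $K_i$, which is harmless because $\lk_{K_i}(1,1)$ is the same for every $i$) so that the basis becomes the rows of $(I \mid E)$ for some $m \times m$ matrix $E$ over $\Z_q$.

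Next I would translate the isotropy of $M$ into a matrix equation. Writing $c := \lk_{K_i}(1,1) \in \Z_q^*$, the linking form on the $q$-part $(\Z_q)^{2m}$ equals $c$ times the standard inner product, so the vanishing $\lk(v_i,v_j) = 0$ becomes $I + EE^T \equiv 0 \pmod{q}$, i.e.\ $EE^T = -I$ in $\mathrm{GL}_m(\Z_q)$. In particular $\det(E)^2 = (-1)^m$ is nonzero, so $E$ is nonsingular.

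Then I would argue by contradiction. Suppose no linear combination of the $v_i$ has an odd number of zero entries; since $2m$ is even, this is equivalent to every vector in $M$ having an even number of nonzero entries, which is exactly the hypothesis of Lemma~\ref{Lemma:KirkLiv}. That lemma then produces a diagonal matrix $D = \mathrm{diag}(d_1,\dots,d_m)$ and a permutation matrix $P$ with $E = DP$. Substituting into $EE^T = -I$ gives $D^2 = -I$, so $d_i^2 \equiv -1 \pmod{q}$ for every $i$. When $q \equiv 3 \pmod{4}$ this is impossible because $-1$ is not a square in $\Z_q$, which contradicts the assumption and proves existence of the desired linear combination. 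When $q \equiv 1 \pmod{4}$, each $d_i = \pm a$ with $a^2 \equiv -1 \pmod{q}$, and the $i$-th row of $(I \mid DP)$ is an element of $M$ whose only nonzero entries are a $1$ in position $i$ and a $\pm a$ in position $m + \sigma(i)$ (where $\sigma$ is the permutation underlying $P$); this is precisely a vector of the shape $(1,0,\dots,0,\pm a,0,\dots,0)$.

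The main obstacle I expect is the parenthetical claim that $m$ must be even when $q \equiv 3 \pmod{4}$, which must be argued separately from the contradiction above. I would handle it by a discriminant count: in order for any metaboliser of the form $cI_{2m}$ on $(\Z_q)^{2m}$ to exist at all, the form must be hyperbolic, and over $\F_q$ with $q$ odd a nondegenerate symmetric bilinear form of dimension $2m$ is hyperbolic iff its discriminant equals $(-1)^m$ modulo squares. The discriminant of $cI_{2m}$ is $c^{2m}$, always a square, so $(-1)^m$ must be a square, which forces $m$ to be even whenever $q \equiv 3 \pmod{4}$.
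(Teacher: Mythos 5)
Your proof is correct and follows essentially the same route as the paper: reduce the metaboliser to $(I \mid E)$, deduce $EE^T \equiv -I$ and hence that $E$ is nonsingular, and apply Lemma~\ref{Lemma:KirkLiv} to force either an odd vector or $E = DP$ with $d_i^2 \equiv -1 \pmod q$. The only cosmetic difference is your separate discriminant argument for the parity of $m$; the paper (and indeed your own second paragraph) gets this immediately from $\det(E)^2 \equiv (-1)^m \pmod q$, since a square cannot equal a non-square when $q \equiv 3 \pmod 4$.
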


\begin{proof}
Given any basis $v_1, \dots, v_m \in (\Z_q)^{2m}$ for a metaboliser we can perform row and column operations to put
the basis into the $m \times 2m$ matrix form $(I \, \, E)$ required by Lemma \ref{Lemma:KirkLiv}.  We need to show that $E$ is non-singular.

Different columns of $(I \, \, E)$ correspond to elements in the homology of the branched cover of different knots, and these different knots may have different linking forms.  Let $D$ be the diagonal matrix with diagonal entries $\lk_{K_1}(1,1),\dots,\lk_{K_{2m}}(1,1)$ (noting that these may be in a different order because of the aforementioned column operations). Since the rows are part of a metaboliser, we have
\[ \left(\begin{array}{cc}I & E \end{array}\right)D\left(\begin{array}{c}I \\ E^T \end{array}\right)
= \left(\begin{array}{cc}I & E \end{array}\right)\left(\begin{array}{cc}D_1 & 0 \\0 & D_2 \end{array}\right)\left(\begin{array}{c}I \\ E^T \end{array}\right)= D_1+ED_2E^T \equiv 0 \;\;(\text{mod } q).\]
Rearranging and taking the determinant of both sides gives us
\[ \det(E)^2\det(D_2) \equiv \det(-D_1) \;\;(\text{mod } q). \]
Since $\det(D_1)$ and $\det(D_2)$ are nonzero, $E$ is nonsingular too, as required.

We can conclude from Lemma \ref{Lemma:KirkLiv} that either there is a vector in the span of the $v_i$ with an odd number of non-zero entries, or that $E$ is obtained from a diagonal matrix by permuting the columns.  However, if the linking forms of the $K_i$ are identical we may deduce more.  We have $D=\alpha I$ so $EE^T \equiv -I$ (mod $q$) and $\det(E)^2 \equiv (-1)^m$ (mod $q$).  If $E$ is obtained from a diagonal matrix by permuting the columns then the diagonal entries must be $\pm a$, where $-1 \equiv a^2$ (mod $q$).  Such an element does not exist if $q \equiv 3$ (mod $4$) since $-1$ is not a square.  For the same reason, if $q \equiv 3$ (mod $4$) then $m$ must be even.
\end{proof}

By a theorem of Livingston and Naik~\cite[Theorem 1.2]{LivingstonNaik99} we know that if $H_1(\Sigma_2(K);\Z) \cong \Z_{q^n}$ for any prime $q \equiv 3$ (mod $4$) and $n$ odd, then $K$ is of infinite order in $\C$.  In this next theorem we give conditions for a knot to be of infinite order when $q \equiv 1$ (mod $4$) and $n=1$.  We will then generalise this to deal with connected sums of knots, and in the next section look at what happens when $H_1(\Sigma_2(K);\Z) \cong \Z_{q^n}$ for any value of $n$.

\begin{notation}
We use the symbol $\doteq$ to denote `modulo norms'.
\end{notation}

\begin{theorem}
\label{Thm:twistedpoly}
Suppose that we have a knot $K$ where $H_1(\Sigma_2;\Z) \cong \Z_{q} \oplus T$ for some prime $q \equiv 1$ mod $4$, where the order of $T$ is coprime to $q$.  Let $\chi_0 \colon H_1(\Sigma_2;\Z) \to \Z_q$ be the trivial map and $\chi_i \colon H_1(\Sigma_2;\Z) \to \Z_q$ be $\lk(-,i)$.  Construct $\Delta_{\chi_i}(t)$ as in Section \ref{Sec:twistpolyobstr}. Then $K$ is of infinite order if it satisfies the following conditions:
  \begin{enumerate}
    \item $\Delta_{\chi_0}(t)$ does not factorise over $\Q(\zeta_q)[t,t^{-1}]$ as $g(t)\overline{g(t^{-1})}$.
    \item There is a non-trivial irreducible factor $f(t)$ of $\Delta_{\chi_0}(t)$ for which $\overline{f(t^{-1})}$ is not a factor of $\Delta_{\chi_i}(t)$ for any $i$.
    \item $\Delta_{\chi_a}(t) \not\doteq \Delta_{\chi_1}(t)$, where $1+a^2 \equiv 0$ (mod $q$).
  \end{enumerate}
\end{theorem}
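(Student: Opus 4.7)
The plan is to assume, for a contradiction, that $nK$ is slice for some positive integer $n$ and derive a contradiction from conditions~1--3. By Corollary~\ref{Thm:slicenessmodq} applied to $nK$, there is a mod-$q$ reduction $\widetilde M\subset H_1(\Sigma_2(nK);\Z_q)\cong(\Z_q)^n$ of a metaboliser of the linking form such that every character vanishing on $\widetilde M$ has twisted polynomial equal to a norm times a power of $(t-1)$. Since $\widetilde M$ is itself a metaboliser of the non-singular $\F_q$-form, $n=2m$ must be even. Under $w\mapsto\lk(-,w)$ each $w=(w_1,\dots,w_{2m})\in\widetilde M$ supplies a vanishing character $\chi_w=\chi_{w_1}\oplus\cdots\oplus\chi_{w_{2m}}$, and by Proposition~\ref{Prop:additivity}, together with $\Delta_\chi^2\doteq 1$ (which follows from the symmetry $\Delta_\chi(t)=\overline{\Delta_\chi(t^{-1})}$),
\[ \Delta_{nK,\chi_w}(t)\doteq \Delta_{\chi_0}(t)^{k\bmod 2}\prod_{i=1}^{q-1}\Delta_{\chi_i}(t)^{n_i\bmod 2}, \]
where $k$ counts zero entries of $w$ and $n_i$ counts entries equal to $i$.

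Next I would apply Lemma~\ref{Lemma:oddvector}, which is available because all summands are copies of $K$ and hence share the common value $\lk_K(1,1)$. Since $q\equiv 1\pmod 4$, this produces $w\in\widetilde M$ of one of two types: either (A)~$w$ has an odd number of zero entries, or (B)~$w$ is a two-support vector $e_i\pm ae_j$ with $a^2\equiv-1\pmod q$. In the $-a$ subcase of~(B), scaling by $a$ converts $e_i-ae_j$ into $ae_i+e_j$, which after relabelling is of $+a$ form; so we may assume the $+a$ subcase.

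In case (A), $\Delta_{\chi_w}\doteq\Delta_{\chi_0}\prod_{i\neq 0}\Delta_{\chi_i}^{n_i\bmod 2}$. Condition~1 forces a self-symmetric irreducible factor $f$ of $\Delta_{\chi_0}$ to occur with odd multiplicity: if every self-symmetric factor occurred with even multiplicity, then the self-conjugacy $\Delta_{\chi_0}(t)=\overline{\Delta_{\chi_0}(t^{-1})}$ would pair each non-self-symmetric $h$ with $\overline{h(t^{-1})}$ at equal multiplicity, contributing norms $(h\,\overline{h(t^{-1})})^r$ and squares from the self-symmetric factors, making $\Delta_{\chi_0}$ a norm. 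The factor $f$ supplied by condition~2 satisfies $\overline{f(t^{-1})}\nmid\Delta_{\chi_i}$ for $i\neq 0$; by twisted-polynomial symmetry this is equivalent to $f\nmid\Delta_{\chi_j}$ for $j\neq 0$. Hence $f$ appears in $\Delta_{\chi_w}$ with the same odd multiplicity as in $\Delta_{\chi_0}$, whereas any self-symmetric irreducible in a norm $g(t)\overline{g(t^{-1})}$ must appear with even multiplicity. So $\Delta_{\chi_w}$ is not a norm, contradicting sliceness. In case (B+), $\Delta_{\chi_w}\doteq\Delta_{\chi_1}\cdot\Delta_{\chi_a}$ modulo norms; since every twisted polynomial has order dividing $2$ in the quotient group $\Q(\zeta_q)(t)^*/(\text{norms})$, this product is a norm iff $\Delta_{\chi_1}\doteq\Delta_{\chi_a}$, which is precisely what condition~3 forbids.

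The main technical obstacle will be the multiplicity bookkeeping in case~(A): one must verify that the factor $f$ furnished by condition~2 can simultaneously be chosen to be self-symmetric with odd multiplicity in $\Delta_{\chi_0}$, because only in that situation does the divisibility argument produce an odd-parity obstruction to being a norm. The remaining ingredients---the reduction of case~(B) to a single sign and the order-$2$ arithmetic modulo norms---are formal consequences of the definitions and the twisted-polynomial symmetry.
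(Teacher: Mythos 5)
Your proof follows essentially the same route as the paper's: assume $2mK$ is slice, use Lemma \ref{Lemma:oddvector} to extract from any metaboliser either a vector with an odd number of zero entries (handled by conditions 1 and 2) or one of the form $(1,0,\dots,0,\pm a,0,\dots,0)$ (handled by condition 3), and your treatment of both cases matches the paper's, including the reduction of the $-a$ subcase and the order-two arithmetic modulo norms. The ``technical obstacle'' you flag in case (A) --- that the factor $f$ from condition 2 must be self-symmetric and of odd multiplicity in $\Delta_{\chi_0}$ for the parity argument to close --- is a genuine point, but the paper's own proof glosses over exactly the same issue (it simply asserts that the factor supplied by condition 2 ``cannot combine with any other twisted Alexander polynomial to factorise as a norm''), so your account is, if anything, the more careful of the two.
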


\begin{proof}
Suppose that $K$ satisfies conditions (1-3), and suppose that $(2m)K$ is slice (for some $m \in \N)$, and try to obtain a contradiction.  Since $2mK$ is slice, there is a metaboliser $M$ of rank $m$.

For each element $w=(w_1, \dots, w_{2m})$ of $M$ we have a character
  \[ \chi_w = \chi_{w_1} \oplus \dots \oplus \chi_{w_{2m}} \]
which vanishes on $M$, and a twisted Alexander polynomial
  \begin{equation}
  \label{Eqn:polyproduct}
    \Delta_{\chi_w} (t) = \Delta_{\chi_{w_1}}(t) \dots \Delta_{\chi_{w_{2m}}}(t).
  \end{equation}
By the sliceness of $2mK$, $\Delta_\chi(t)$ must factorise as a norm for every such $\chi$, up to a factor of an element of $\Q(\zeta_q)$ and a possible factor of $(t-1)$ (Theorem \ref{Thm:sliceness}).  Before embarking on the rest of the argument, we notice the trivial but important point that $\Q(\zeta_q)[t,t^{-1}]$ is a unique factorisation domain.

Recall that any twisted Alexander polynomial $x(t)$ has the property that $x(t) = \overline{x(t^{-1})}$.  Thus if $\Delta_{\chi_i}$ occurs with an even exponent on the RHS of (\ref{Eqn:polyproduct}) then we can ignore it because it factorises as a norm.  By assumption 1 and 3 (notice that assumption 3 implies that $\Delta_{\chi_i}$, $i \neq 0$, is not a norm) , the converse is also true: if a polynomial occurs with an odd exponent on the RHS of (\ref{Eqn:polyproduct}) then it cannot factorise (by itself, at least) as a norm.  We now use the result of Lemma \ref{Lemma:oddvector}, and find ourselves with two cases to consider.

In the first case there is a vector $w \in M$ which contains an odd number of zero entries.  So $\Delta_{\chi_0}(t)$ occurs with an odd exponent on the RHS of (\ref{Eqn:polyproduct}) (i.e. non-trivially). By assumption 2, we know that $\Delta_{\chi_0}(t)$ contains at least one non-trivial irreducible factor which cannot combine with any other twisted Alexander polynomial to factorise as a norm.  Thus $\Delta_{\chi}(t)$ cannot factorise as a norm, which is a contradiction.

In the second case there is a vector $w \in M$ which looks like $(1,0,\dots,0,\pm a,0,\dots,0)$ for an $a$ such that $1+a^2 \equiv 0$ (mod q).  In this case we can simplify the right-hand side of (\ref{Eqn:polyproduct}) to get
  \[ \Delta_\chi (t) \doteq \Delta_{\chi_1}(t) \Delta_{\chi_{\pm a}}(t). \]
From assumption 3 we know that $\Delta_{\chi_1}(t) \not\doteq \Delta_{\chi_ {\pm a}}(t)$, so this polynomial cannot factorise as a norm, completing the contradiction.
\end{proof}

\begin{remark}
For any $i \in \Z_q$, the twisted polynomials $\Delta_{\chi_i}$ and $\Delta_{\chi_{-i}}$ are equal because of the action of the 2-fold deck transformation on $\Sigma_2$.
\end{remark}

This theorem is quite powerful; we will see in Chapter \ref{chapter8} that it applies to 84\% of the knots of unknown concordance order with up to 12 crossings.  However, one obvious situation when it cannot be applied is when the knot is algebraically slice, since the first condition is automatically not satisfied (see Lemma \ref{Lemma:algslicenorm}).  Thankfully, there is an alternative method of checking if a knot has infinite order in $\C$ which does not rely on assuming that $\Delta_{\chi_0}$ is not a norm.

\begin{theorem}
\label{Thm:twistedpolyAlt}
Suppose that we have a knot $K$ where $H_1(\Sigma_2;\Z) \cong \Z_{q} \oplus T$ for some prime $q \equiv 1$ mod $4$, where the order of $T$ is coprime to $q$.  Let $\chi_0 \colon H_1(\Sigma_2;\Z) \to \Z_q$ be the trivial map and $\chi_i \colon H_1(\Sigma_2;\Z) \to \Z_q$ be $\lk(-,i)$.  Suppose that $\Delta_{\chi_0}$ is a norm.  Then $K$ is of infinite order if it satisfies the following condition:
  \begin{itemize}
    \item $\Delta_{\chi_i}(t)$ is coprime, up to norms in $\Q(\zeta_q)[t,t^{-1}]$, to $\Delta_{\chi_j}(t)$ for all $i \neq j$, $i,j>0$.
  \end{itemize}
\end{theorem}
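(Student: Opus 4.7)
The plan is to mirror the proof of Theorem \ref{Thm:twistedpoly}, adapted to the situation where $\Delta_{\chi_0}$ is already a norm rather than contributing a non-trivial obstruction. Suppose for contradiction that $2mK$ is slice for some $m \geq 1$. By Corollary \ref{Thm:slicenessmodq} there is a subspace $\widetilde{M} \subset H_1(\Sigma_2(2mK); \Z_q)$ of dimension $m$ coming from a metaboliser, and for every $w = (w_1, \ldots, w_{2m}) \in \widetilde{M}$ the associated character $\chi_w = \bigoplus_j \chi_{w_j}$ vanishes on $\widetilde{M}$, forcing $\Delta_{\chi_w}(t)$ to be a norm in $\Q(\zeta_q)[t, t^{-1}]$. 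Proposition \ref{Prop:additivity} then gives $\Delta_{\chi_w} \doteq \prod_{j=1}^{2m} \Delta_{\chi_{w_j}}$.

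Next I would reduce this product modulo norms. The symmetry $\Delta_{\chi_i}(t) = \overline{\Delta_{\chi_i}(t^{-1})}$ implies that $\Delta_{\chi_i}^2$ is always a norm, so the product depends only on the parities of the multiplicities $n_i(w) := \#\{j : w_j = i\}$. Combined with the hypothesis that $\Delta_{\chi_0}$ is a norm, this yields
\[ \Delta_{\chi_w} \doteq \prod_{i=1}^{q-1} \Delta_{\chi_i}^{\,n_i(w) \bmod 2}. \]
The pairwise coprimality (up to norms) of the $\Delta_{\chi_i}$ for $i > 0$, together with the UFD structure of $\Q(\zeta_q)[t, t^{-1}]$, then says that such a product can be a norm only if each factor appearing with odd exponent is itself a norm.

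The next step is to apply Lemma \ref{Lemma:oddvector} (in the $q \equiv 1 \pmod 4$ case) to produce a distinguished $w \in \widetilde{M}$. In the case where $w$ has the form $(1, 0, \ldots, 0, \pm a, 0, \ldots, 0)$ with $a^2 \equiv -1 \pmod q$, the deck-transformation symmetry $\Delta_{\chi_{-a}} = \Delta_{\chi_a}$ reduces the product to $\Delta_{\chi_w} \doteq \Delta_{\chi_1} \Delta_{\chi_a}$, which by the coprimality condition forces both $\Delta_{\chi_1}$ and $\Delta_{\chi_a}$ to be norms. In the alternative case $w$ has an odd number of zero entries, so the number of nonzero entries $2m - \#\{j : w_j = 0\}$ is odd, $\sum_{i>0} n_i(w)$ is odd, and some $n_i(w)$ must be odd, again forcing the corresponding $\Delta_{\chi_i}$ to be a norm.

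Finally I would invoke Galois symmetry to convert this into a contradiction. Since $\chi_i = i \cdot \chi_1$, the polynomials $\Delta_{\chi_i}$ for $i \in \{1, \ldots, q-1\}$ are all Galois conjugates $\sigma_i(\Delta_{\chi_1})$, and the automorphism $\sigma_i \colon \zeta_q \mapsto \zeta_q^i$ commutes with the involution $g(t) \mapsto \overline{g(t^{-1})}$; hence it sends norms to norms and non-norms to non-norms, so the $\Delta_{\chi_i}$ are either all norms or none are. The main obstacle is to convert the conclusion ``some $\Delta_{\chi_i}$ must be a norm'' into a genuine contradiction: my reading of the theorem is that the coprimality hypothesis is meant to be non-vacuous, i.e.\ that at least one (and hence every) $\Delta_{\chi_i}$ for $i > 0$ is a genuine non-norm, since otherwise every character $\chi_w$ would give a norm and the statement would have no content. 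With this implicit non-triviality in place, the Galois argument closes the proof.
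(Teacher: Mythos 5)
Your argument is essentially the paper's own proof: it follows the skeleton of Theorem \ref{Thm:twistedpoly}, invokes Lemma \ref{Lemma:oddvector} to produce either a vector with an odd number of zero entries or one of the form $(1,0,\dots,0,\pm a,0,\dots,0)$, and uses the pairwise coprimality of the $\Delta_{\chi_i}$ together with Galois conjugacy to rule out both cases. The non-vacuousness issue you flag at the end is precisely the point the paper disposes of with the remark that if one $\Delta_{\chi_i}$ were a norm then all of its Galois conjugates would be norms as well and the coprimality condition ``would not be satisfied,'' so your reading of the hypothesis matches the intended one.
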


\begin{proof}
The proof is identical to that of Theorem \ref{Thm:twistedpoly} except for the case where the metaboliser contains a vector with an odd number of zeros.  We now know that $\Delta_{\chi_0}$ is a norm, so we need a different argument.  A vector with an odd number of zeros also contains an odd number of non-zero entries.  This means that at least one entry, say $i$, occurs an odd number of times and so its corresponding twisted Alexander polynomial $\Delta_{\chi_i}$ occurs with odd exponent in the polynomial for that vector.  By the condition of our theorem we know that $\Delta_{\chi_i}$ is coprime to all other twisted polynomials, so there is no other factor in the product of polynomials which can combine with $\Delta_{\chi_i}$ to form a norm.  We also know that $\Delta_{\chi_i}$ is not a norm, because if it were then all of its Galois conjugates would be norms as well, and the condition would not be satisfied.  Thus the twisted Alexander polynomial corresponding to this `odd vector' is not a norm.

As in the previous proof, we also require that $\Delta_{\chi_1}(t) \Delta_{\chi_{a}}(t)$ is not a norm, for $1+a^2 \equiv 0$ (mod $q$).  This fact is taken care of by our condition too.
\end{proof}

We may generalise Theorem~\ref{Thm:twistedpoly} to concern sums of knots, though a little care must be taken to ensure that each individual knot satisfies the set-up of the theorem.  Notice that, whilst each individual knot in the sum must not be algebraically slice, there is no restriction on the algebraic sliceness of the sum.

\begin{remark}
The following theorem has an alternative version, corresponding to Theorem \ref{Thm:twistedpolyAlt}, where we don't need to assume that the component knots of the sum are not algebraically slice.  However, the following theorem suffices to prove our 9-crossing classification, so we do not write out this alternative theorem and instead leave it as an exercise for the reader.
\end{remark}

\begin{theorem}
\label{Thm:infiniteordersum}
Let $K=K_1 + \dots + K_n$ with $K_{i_1},\dots,K_{i_{n'}}$ having $H_1(\Sigma_2(K_{i_j});\Z) \cong \Z_q \oplus T_j$ with the order of the $T_j$ coprime to $q$, and the remainder of the $K_i$ having $H_1(\Sigma_2(K_i);\Z) \cong T_i$ with the order of the $T_i$ coprime to $q$.  Then $K$ has infinite order in $\C$ if the twisted Alexander polynomials (as defined in Theorem~\ref{Thm:twistedpoly}) satisfy:

\begin{enumerate}
  \item $\Delta_{\chi_0}^{K_{i_j}}(t)$ does not factorise over $\Q(\zeta_q)[t,t^{-1}]$ as a norm for any $j=1,\dots,n'$.
  \item There is a non-trivial irreducible factor $f_j(t)$ of $\Delta_{\chi_0}^{K_{i_j}}(t)$ for which $\overline{f_j(t^{-1})}$ is not a factor of $\Delta_{\chi_\alpha}^{K_{i_k}}(t)$ for all $j,k=1,\dots,n'$ and all $\alpha \neq 0$.
  \item $\Delta_{\chi_1}^{K_{i_j}}(t)\not\doteq \Delta_{\chi_\gamma}^{K_{i_k}}(t)$ where $\gamma = \sqrt{-\alpha\beta^{-1}}$ with $\alpha = \lk_{K_{i_j}}(1,1)$ and $\beta = \lk_{K_{i_k}}(1,1)$, for all $j,k$ where $\gamma$ is defined. This means that if $q\equiv 1$ mod $4$ then $\alpha$ and $\beta$ must be the same modulo squares and if $q \equiv 3$ mod 4 then $\alpha$ and $\beta$ must be different modulo squares.
\end{enumerate}
\end{theorem}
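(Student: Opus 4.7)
The plan is to mirror the proof of Theorem~\ref{Thm:twistedpoly}, accommodating the extra bookkeeping introduced by varying linking forms across the summands. I will suppose for contradiction that $NK$ is slice for some $N>0$; by replacing $N$ with $2N$ if needed (since $2NK = NK + NK$ remains slice), I may take $N=2m$. Because the orders of all of the $T_i$ and $T_j$ are coprime to $q$, the $\F_q$-reduction of $H_1(\Sigma_2(NK);\Z)$ is isomorphic to $(\F_q)^{Nn'}$, with the summands indexed by the $N$ copies of each of the $n'$ knots $K_{i_j}$ carrying a $\Z_q$ factor. Corollary~\ref{Thm:slicenessmodq} then produces a metaboliser $M \subset (\F_q)^{Nn'}$ of dimension $mn'$, and for each $w\in M$ Proposition~\ref{Prop:additivity} gives
\[
\Delta_{\chi_w}(t)\;\doteq\;\prod_{r=1}^{N}\prod_{j=1}^{n'}\Delta_{\chi_{w_{j,r}}}^{K_{i_j}}(t) \cdot \prod_{i\notin\{i_j\}}\!\bigl(\Delta_{\chi_0}^{K_i}(t)\bigr)^{N}.
\]
Because $N=2m$ is even and every twisted Alexander polynomial $p$ is Hermitian (so $p^{2m}=(p^{m})\,\overline{(p^{m})}$ is a norm), the second factor is automatically a norm; it therefore suffices to produce some $w\in M$ for which the first factor is not.

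The crux is a generalization of Lemma~\ref{Lemma:oddvector} to the setting where the underlying linking form is diagonal but not scalar. First I would put a basis of $M$ into the block form $(I\mid E)$ after row and column permutations, so that the metaboliser relation reads $D_1 + ED_2E^T \equiv 0\pmod q$, where $D_1,D_2$ are invertible diagonal matrices with entries drawn from $\{\ell_1,\dots,\ell_{n'}\}$ and $\ell_j:=\lk_{K_{i_j}}(1,1)$. Taking determinants shows that $E$ is non-singular, so Lemma~\ref{Lemma:KirkLiv} applies and yields the dichotomy: either some $w\in M$ has an odd number of non-zero (equivalently, an odd number of zero) entries, or $E$ is a column permutation of a diagonal matrix. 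In the latter case, the diagonal entries of $D_1+ED_2E^T\equiv 0$ reduce to
\[
e_i^2\;\equiv\;-\alpha_i\beta_i^{-1}\pmod q,
\]
where $\alpha_i$ and $\beta_i$ are the two linking numbers attached to the two non-zero entries of the $i$-th basis vector. A solution exists iff $-\alpha_i\beta_i^{-1}$ is a square in $\F_q$, and since $-1$ is a square modulo $q$ precisely when $q\equiv 1\pmod 4$, this recovers the parity-of-squares compatibility on $\alpha_i,\beta_i$ stated in hypothesis~(3).

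To finish, I would analyse the two cases. If some $w\in M$ has an odd total number of zero entries, then the zero count $z_j$ in the $K_{i_j}$-block is odd for at least one $j$, so $\Delta_{\chi_0}^{K_{i_j}}$ appears with odd exponent in $\Delta_{\chi_w}$; a Hermitian polynomial fails to be a norm exactly when some Hermitian irreducible factor carries odd multiplicity, so hypothesis~(1) produces such a factor $f_j$, and hypothesis~(2), read as isolating $f_j$ from every other twisted polynomial in the family, then forces $f_j$ to have odd total multiplicity in $\Delta_{\chi_w}$, contradicting the norm condition. If instead $E$ is a permutation of a diagonal matrix, each basis vector $v_i$ contributes $\Delta_{\chi_1}^{K_{a(i)}}\cdot\Delta_{\chi_{e_i}}^{K_{b(i)}}$; since both factors are Hermitian, this product is a norm in $\Q(\zeta_q)[t,t^{-1}]$ iff $\Delta_{\chi_1}^{K_{a(i)}}\doteq\Delta_{\chi_{e_i}}^{K_{b(i)}}$, and using $e_i=\pm\gamma$ with $\gamma^2\equiv -\alpha_i\beta_i^{-1}$ together with the equality $\Delta_{\chi_{\gamma}}=\Delta_{\chi_{-\gamma}}$ coming from the 2-fold deck action, this becomes precisely the equality excluded by hypothesis~(3). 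The hardest part will be the careful extension of Lemma~\ref{Lemma:oddvector} beyond the scalar case $D=\alpha I$, together with the combinatorial bookkeeping that records which knot $K_{i_j}$ corresponds to each coordinate of a metaboliser vector so that the correct linking number intervenes at each step.
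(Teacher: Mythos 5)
Your overall architecture --- reduce mod $q$, discard the even power of trivial polynomials coming from the summands with no $\Z_q$ in their homology, put a metaboliser basis into the form $(I\mid E)$, use the relation $D_1+ED_2E^T\equiv 0$ to see that $E$ is nonsingular, invoke Lemma~\ref{Lemma:KirkLiv}, and split into the ``odd number of zeros'' case and the ``permuted diagonal'' case with $e_i^2\equiv-\alpha_i\beta_i^{-1}$ --- is exactly the paper's proof, and your treatment of the diagonal case, including the square/non-square compatibility according to $q$ modulo $4$ and the identification $\Delta_{\chi_\gamma}=\Delta_{\chi_{-\gamma}}$ via the deck transformation, is correct. (Note that the nonsingularity of $E$ for a non-scalar diagonal linking matrix is already established inside the proof of Lemma~\ref{Lemma:oddvector}; only that lemma's final conclusions assume identical linking forms, so the ``generalization'' you anticipate as the hardest part is essentially already in place.)

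The gap is in your first case. You read hypothesis~(2) as ``isolating $f_j$ from every other twisted polynomial in the family,'' but the hypothesis only excludes $\overline{f_j(t^{-1})}$ from the polynomials $\Delta_{\chi_\alpha}^{K_{i_k}}$ with $\alpha\neq 0$; it says nothing about the \emph{other trivial} polynomials $\Delta_{\chi_0}^{K_{i_k}}$ with $k\neq j$. If two blocks, say those of $K_{i_1}$ and $K_{i_2}$, both carry an odd number of zeros, then $\Delta_{\chi_0}^{K_{i_1}}$ and $\Delta_{\chi_0}^{K_{i_2}}$ both occur with odd exponent in $\Delta_{\chi_w}$, and nothing in hypotheses (1)--(3) prevents the self-conjugate factor $f_1$ from also dividing $\Delta_{\chi_0}^{K_{i_2}}$ with odd multiplicity; its total multiplicity in $\Delta_{\chi_w}$ would then be even and your contradiction evaporates. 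The paper closes exactly this loophole with an additional parity argument: since the total number of zeros in $w$ is odd, the number of blocks with odd zero count is odd, so an odd number of the (non-norm) trivial polynomials appear with odd exponent, and the paper argues that these cannot all pair off against one another to produce a norm, so some odd-multiplicity self-conjugate factor survives. You need this step (or a strengthened version of hypothesis~(2) that also covers $\alpha=0$, $k\neq j$) to complete Case~1.
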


\begin{proof}
Suppose that $(2m)K$ is slice and that a basis of a metaboliser in $H_1(\Sigma_2(2mK);\Z_q)$ is
\begin{eqnarray*}
v_1 & = & (a_{1,1}, \dots, a_{1,2m},b_{1,1},\dots,b_{1,2m},\dots,0,\dots, 0)\\
v_2 & = & (a_{2,1}, \dots, a_{2,2m},b_{2,1},\dots,b_{2,2m},\dots,0,\dots, 0)\\
\vdots\\
v_{mn'} & = & (a_{mn',1}, \dots, a_{mn',2m},b_{mn',1},\dots,b_{mn',2m},\dots,0,\dots, 0)
\end{eqnarray*}
Since we are considering an even multiple of $K$, the number of zeros at the end of each vector is even and can be ignored.  Taking the $mn'\times2mn'$ matrix $V$ whose rows are the entries of the $v_i$ from the $n'$ non-trivial knots, we perform row and column operations on $V$ to turn it into $V' = (I \, \, E)$.  The proof of Lemma \ref{Lemma:oddvector} adapts to this situation so we know that $E$ is non-singular and we can therefore invoke the conclusions of Lemma~\ref{Lemma:oddvector}.  There are two cases.  (For what follows, remember that each column of $V'$ corresponds to one of the $K_{i_j}$.)

Firstly, we may be able to find a linear combination of the rows of $V'$ which contains an odd number of zeros.  An odd number of zeros must therefore come from a particular knot $K_i$.  This means that $p(t) := \Delta_{\chi_0}^{K_i}(t)$ occurs with odd exponent in the twisted Alexander polynomial $\Delta_\chi$ corresponding to that vector.  From condition (1) we know that $p(t)$ is not a norm, and from condition (2) we know that it cannot factorise as a norm as a result of being combined with non-trivial twisted polynomials from any of the other knots.  The only way that it could become a norm is to be combined with one of the other trivial twisted polynomials, say $q(t)$.  Since $q(t)$ is not a norm it must also occur with odd exponent.  But this pairing up of non-trivial twisted polynomials cannot continue indefinitely, since we know there are an odd number of zeros in our vector. There must therefore be a factor of $\Delta_\chi$ which is not a norm and which occurs with odd exponent, meaning that $\Delta_\chi$ is itself not a norm.

If there is no combination of the rows of $V'$ which contains an odd number of zeros, Lemma \ref{Lemma:KirkLiv} says that $E$ is obtained by permuting the columns of a diagonal matrix.  Let the diagonal matrix have entries $a_1, \dots, a_{mn'}$.  Every row in $V'$ then looks like
  \[ (0,\dots,0, 1, 0, \dots, 0, a_i, 0, \dots, 0).\]
In any particular row, suppose the knot corresponding to the column containing the $1$ has the linking form $\alpha$ and the knot corresponding to the column with the $a_i$ has the linking form $\beta$.  Then we would have $\alpha+\beta a_i^2 \equiv 0$ (mod $q$), which means $a_i^2 \equiv -\alpha \beta^{-1}$ (mod $q$).  If $q\equiv 1$ (mod $4$) then we need $\alpha$ and $\beta$ to be either both squares or both non-squares for there to be a solution.  If $q \equiv 3$ (mod $4$) then we need $\alpha$ and $\beta$ to be different modulo squares.  Assuming that there is a solution $a_i = \gamma$, this row then gives us the twisted polynomial
  \[ \Delta_\chi \doteq \Delta_{\chi_1}^{K_{i_j}} \Delta_{\chi_\gamma}^{K_{i_k}}\]
which by condition (3) is designed not to be a norm.
\end{proof}

\section{Prime powers}
\label{Section:powers}

Suppose $H_1(\Sigma_2(K);\Z)\cong \Z_{q^n}$ for a knot $K$, a prime $q \neq 2$ and a natural number $n$.  We want a criterion for deciding if $K$ is of infinite order.  Livingston and Naik have the following theorem in the case when $q \equiv 3$ (mod $4$) and when $n$ is odd.

\begin{theorem}[1.2, \cite{LivingstonNaik99}]
\label{Thm:LivNaik3mod4}
If $H_1(\Sigma_2(K)) = \Z_{q^n} \oplus G$ with $q$ a prime congruent to $3$ modulo $4$, $n$ odd and $q$ not dividing the order of $G$, then $K$ is of infinite order in $\C$.
\end{theorem}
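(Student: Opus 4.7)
The plan is to proceed by contradiction, in the same general spirit as the proof of Theorem~\ref{Thm:twistedpoly}, but exploiting that $-1$ is a non-square modulo $q$ (since $q\equiv 3\pmod 4$) at a more basic level: the self-linking obstruction already prevents metabolisers from existing in the shape Lemma~\ref{Lemma:KirkLiv} allows. Suppose for contradiction that $mK$ is slice for some $m\geq 1$. By Theorem~\ref{Thm:sliceness}, $H_1(\Sigma_2(mK);\Z)$ admits an invariant metaboliser $M$ for the linking form. Because the linking form respects the primary decomposition and $q$ is coprime to $|G|$, the projection of $M$ to the $q$-primary summand $(\Z_{q^n})^m$ of $H_1(\Sigma_2(mK))$ is a metaboliser $M_q$ for the orthogonal sum $m\cdot\ell_K$ of $m$ copies of the linking form $\ell_K$ of $K$ on $\Z_{q^n}$. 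Since $|M_q|^2 = q^{nm}$ and $n$ is odd, $m$ must be even.

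Next I would carry out the Lemma~\ref{Lemma:oddvector}-style row-column reduction on a basis of $M_q$ to bring it into the shape $(I\mid E)$ for some $(m/2)\times(m/2)$ matrix $E$ over $\Z_{q^n}$. The metaboliser condition, together with all self-linkings being equal to $\lambda/q^n$ with $\lambda\in\Z_{q^n}^*$, forces $EE^T\equiv -I\pmod{q^n}$. Reducing mod $q$, this equation in $M_{m/2}(\F_q)$ has no solution in monomial matrices (signed-permutation matrices) because a diagonal entry $\pm a$ would satisfy $a^2\equiv -1\pmod q$, which is impossible for $q\equiv 3\pmod 4$. Lemma~\ref{Lemma:KirkLiv} then guarantees the existence of an element $x=(x_1,\dots,x_m)\in M_q$ whose support has odd size.

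The third step is to turn this odd-support vector into a Casson--Gordon obstruction. Using the character $\chi_x=\bigoplus_i\chi_{x_i}$ and the formula $\sigma(mK,2,\chi_x)=\sum_i\sigma(K,2,\chi_{x_i})$ together with the congruence $\sigma(K,2,\chi_{x_i})\equiv \lk(x_i,x_i)\pmod 1$ from Section~\ref{Subsec:CGsig}, the obstruction that $\sigma$ must vanish on $M_q$ (which it would if $mK$ were slice) translates into a congruence in $\Z/q^n\Z$ involving $\lambda\sum x_i^2\equiv 0$ on odd-support vectors in $M_q$. The residue of $\lambda$ in $\Z_{q^n}^*/(\Z_{q^n}^*)^2$ — which is well-defined since $n$ is odd — then yields the required arithmetic contradiction by the same non-existence of $a^2\equiv -1\pmod q$. (Equivalently, one can argue this last step via twisted Alexander polynomials by mirroring the argument of Theorem~\ref{Thm:twistedpoly}, using the odd-support vector to produce a $\Delta_{\chi_x}$ whose factorisation contains the trivial twisted polynomial $\Delta_{\chi_0}$ to an odd power that cannot be absorbed into a norm.)

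The main obstacle in this plan is that the hypothesis here is purely about the \emph{abstract} structure of $H_1(\Sigma_2(K))$, with no assumption on the twisted Alexander polynomials themselves; one cannot simply quote Theorem~\ref{Thm:twistedpoly} or \ref{Thm:twistedpolyq^n}. Consequently, the final step must use only general properties of the Casson--Gordon signature that flow from the self-linking formula, rather than polynomial non-triviality, and one has to verify carefully that the Galois-conjugation symmetries of $\Delta_{\chi_i}$ and the deck-transformation invariance of $M$ do not conspire to cancel out the odd-support contribution. This is precisely the number-theoretic subtlety — the non-existence of $\sqrt{-1}\in\F_q$ — that the hypothesis $q\equiv 3\pmod 4$ is designed to supply, and where the parity assumption $n$ odd is used to control squares in $\Z_{q^n}^*$ via reduction mod~$q$.
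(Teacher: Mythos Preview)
The paper does not prove this theorem; it is quoted from \cite{LivingstonNaik99} and the thesis only remarks that the original proof ``made use of Casson--Gordon signatures'' and borrows its metaboliser bookkeeping (the $k_i$ analysis) in the proof of Theorem~\ref{Thm:twistedpolyq^n}. So there is no in-paper proof to compare against, but your proposal has two genuine gaps that would prevent it from going through.

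First, the row reduction in your second step is not available. A metaboliser $M_q\subset(\Z_{q^n})^m$ is a subgroup of a module over a ring that is not a field, and it need not be free of rank $m/2$; it can have generators of various orders $q^{n-i}$. This is precisely why the thesis (following Livingston--Naik) introduces the invariants $k_i$ counting rows divisible by $q^i$ and argues separately in the cases $k_0=0$, $k_0=m$, $0<k_0<m$. Your $(I\mid E)$ picture only covers the middle case.

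Second, and more seriously, your third step is circular. If $x\in M_q$ then $\lk(x,x)=0$ by definition of a metaboliser, so the congruence $\sigma(mK,2,\chi_x)\equiv\lk(x,x)\pmod 1$ gives no obstruction whatsoever --- it just says $0\equiv 0$. Having odd support does not help: for instance with $m=4$ one can take $a,b$ with $1+a^2+b^2\equiv 0\pmod{q^n}$ (which exists for any odd $q$) and the metaboliser generated by $(1,0,a,b)$ and $(0,1,-b,a)$ contains odd-support vectors while being perfectly self-annihilating. There is no ``arithmetic contradiction'' coming from $\lambda\in\Z_{q^n}^*/(\Z_{q^n}^*)^2$ at this point. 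The parenthetical fallback to twisted polynomials fails for the reason you yourself flag: without hypotheses on $\Delta_{\chi_0}$ and $\Delta_{\chi_i}$ (which this theorem does not assume), an odd occurrence of $\Delta_{\chi_0}$ need not obstruct anything. The actual Livingston--Naik argument extracts a character \emph{not} arising from an element of $M$ --- one that vanishes on the mod-$q$ reduction of $M$ but whose defining element has nonzero self-linking --- and it is there that the $q\equiv 3\pmod 4$, $n$ odd hypotheses bite.
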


But what happens when $q \equiv 1$ (mod $4$) or if $n$ is even?  The proof of Theorem \ref{Thm:LivNaik3mod4} made use of Casson-Gordon signatures, which are not powerful enough to provide an obstruction in these cases.  However, we shall see that twisted Alexander polynomials do provide computable obstructions, much as in the $n=1$ case.

\begin{theorem}
\label{Thm:twistedpolyq^n}
Suppose that we have a knot $K$ where $H_1(\Sigma_2;\Z) \cong \Z_{q^n} \oplus T$ for some prime $q$, where the order of $T$ is coprime to $q$.  Let $\chi_0 \colon H_1(\Sigma_2;\Z) \to \Z_{q}$ be the trivial map and $\chi_i \colon H_1(\Sigma_2;\Z) \to \Z_{q}$ be $\lk(-,i)$ (mod $q$).  Construct $\Delta_{\chi_i}(t)$ as in Section \ref{Sec:twistpolyobstr}. Then $K$ is of infinite order if it satisfies the following conditions:
  \begin{enumerate}
    \item If $n>1$, $\Delta_{\chi_0}(t)\Delta_{\chi_1}(t)$ is not a norm.
    \item $\Delta_{\chi_0}(t)$ is not a norm.
    \item $\Delta_{\chi_0}(t)$ is coprime, up to norms in $\Q(\zeta_q)[t,t^{-1}]$, to $\Delta_{\chi_i}(t)$ for all $i \neq 0$.
    \item If $q \equiv 1$ (mod $4$) then $\Delta_{\chi_a}(t) \not\doteq \Delta_{\chi_1}(t)$, where $1+a^2 \equiv 0$ (mod $q$).
  \end{enumerate}
\end{theorem}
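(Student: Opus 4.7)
The plan is to mimic the proof of Theorem~\ref{Thm:twistedpoly} and then address the new subtlety that arises when $n>1$. Suppose for contradiction that $(2m)K$ is slice for some $m\geq 1$. By Corollary~\ref{Thm:slicenessmodq} there is a subspace $\widetilde{M}\subset H_1(\Sigma_2((2m)K);\Z_q) \cong (\Z_q)^{2m}$ (working modulo the summand $T^{2m}$ whose order is coprime to $q$) such that every character $\chi$ vanishing on $\widetilde{M}$ has $\Delta_\chi$ factoring as $(t-1)^s$ times a norm in $\Q(\zeta_q)[t,t^{-1}]$. This $\widetilde{M}$ is the mod-$q$ reduction of an integral metaboliser $M\subset (\Z_{q^n})^{2m}$, and reducing the linking pairing shows $\widetilde{M}$ is isotropic under the dot product on $(\Z_q)^{2m}$, so that $d:=\dim_{\Z_q}\widetilde{M}\leq m$. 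A character $\chi=(\chi_{v_1},\dots,\chi_{v_{2m}})$ vanishes on $\widetilde{M}$ exactly when the vector $v=(v_1,\dots,v_{2m})\in(\Z_q)^{2m}$ lies in $\widetilde{M}^\perp$, and by Proposition~\ref{Prop:additivity} its polynomial satisfies $\Delta_\chi\doteq \prod_j \Delta_{\chi_{v_j}}$.

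First I would dispose of the generic case $d=m$, in which $\widetilde{M}$ is Lagrangian and the argument is essentially that of Theorem~\ref{Thm:twistedpoly}. Apply Lemma~\ref{Lemma:oddvector} to a basis of $\widetilde{M}$: either one finds $w\in\widetilde{M}$ with an odd number of zero entries, so that $\Delta_{\chi_0}$ appears with odd exponent in $\Delta_{\chi_w}$ while the remaining factors are $\Delta_{\chi_i}$'s with $i\neq 0$, or (only if $q\equiv 1 \pmod{4}$) a generator of the form $(1,0,\dots,0,\pm a,0,\dots,0)$ with $1+a^2\equiv 0 \pmod{q}$. Condition~(3) forces any common factor of $\Delta_{\chi_0}$ with any $\Delta_{\chi_i}$ ($i\neq 0$) to be a norm, so by unique factorisation in $\Q(\zeta_q)[t,t^{-1}]$ the first scenario makes $\Delta_{\chi_w}$ a norm only if $\Delta_{\chi_0}$ is itself a norm, contradicting condition~(2). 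In the second scenario $\Delta_{\chi_w}\doteq \Delta_{\chi_1}\Delta_{\chi_a}$, and the symmetry $\Delta=\overline{\Delta(t^{-1})}$ turns this product being a norm into $\Delta_{\chi_a}\doteq\Delta_{\chi_1}$, excluded by condition~(4).

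The genuinely new case is $d<m$, which can arise only when $n>1$ (for $n=1$ the reduction map on any metaboliser is injective, giving $d=m$). Here $\widetilde{M}^\perp$ has dimension $2m-d>m$, so more characters are available than would be seen from $\widetilde{M}$ itself. Whenever the dot product form on $(\Z_q)^{2m}$ admits an isotropic subspace of dimension $m$ containing $\widetilde{M}$, I would extend $\widetilde{M}$ to such a Lagrangian $\widetilde{M}'\subset\widetilde{M}^\perp$ and re-run the Lemma~\ref{Lemma:oddvector} argument inside $\widetilde{M}'$; the resulting characters vanish on $\widetilde{M}'$ and hence on $\widetilde{M}$, and conditions~(2)--(4) deliver the contradiction as above. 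The only situation in which no such Lagrangian extension exists is $m=1$ with $q\equiv 3 \pmod{4}$, where the Witt index of the form on $(\Z_q)^2$ is zero; then $\widetilde{M}=0$ and \emph{every} character of $(\Z_q)^2$ vanishes on $\widetilde{M}$, so the simplest non-trivial choice $\chi=(\chi_1,\chi_0)$ has polynomial reducing modulo norms to $\Delta_{\chi_0}\Delta_{\chi_1}$, which condition~(1) forbids from being a norm.

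The main obstacle will be the intermediate case $0<d<m$, where a Lagrangian extension $\widetilde{M}'\supset \widetilde{M}$ does exist but $\widetilde{M}$ itself is non-trivial: one has to verify that Lemma~\ref{Lemma:oddvector}, applied inside $\widetilde{M}'$, still produces a vector whose character violates one of conditions~(2)--(4), with no further appeal to the $\Z_{q^n}$-structure of the original metaboliser. Assembling all the cases will exhaust the possibilities and contradict the assumed sliceness of $(2m)K$ for every $m$, so $K$ must have infinite order in $\C$.
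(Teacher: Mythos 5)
Your overall architecture matches the paper's: split on the size $d$ of the mod-$q$ reduction $\widetilde{M}$ of the metaboliser, handle $d=m$ exactly as in Theorem~\ref{Thm:twistedpoly} via Lemma~\ref{Lemma:oddvector}, and handle $d=0$ via condition~(1). Those two cases are fine. The gap is in your treatment of $0<d<m$ by extending $\widetilde{M}$ to a Lagrangian $\widetilde{M}'$. Your claim that such an extension fails only for $m=1$ with $q\equiv 3\pmod 4$ is false: the form in question is (a unit multiple of) the sum of $2m$ squares over $\F_q$, whose Witt index is $m$ precisely when $(-1)^m$ is a square, i.e.\ when $q\equiv 1\pmod 4$ or $m$ is even. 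For $q\equiv 3\pmod 4$ and \emph{any} odd $m$, every maximal isotropic subspace has dimension $m-1$, so no $m$-dimensional isotropic $\widetilde{M}'$ exists, and your fallback (which assumes $\widetilde{M}=0$ and $m=1$) does not cover $0<d\leq m-1$ there. Conversely, your worry about the intermediate case \emph{when the Lagrangian does exist} is unfounded: Lemma~\ref{Lemma:oddvector} applies verbatim to any $m$-dimensional isotropic subspace, so that part needs no extra verification.

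The paper sidesteps the Lagrangian question entirely. In its Case~3 ($0<k_0<m$, where $k_0=d$), it divides the $k_i$ rows of the integral metaboliser by $q^i$ to obtain $2m-k_0>m$ vectors, each of which pairs to zero mod $q$ with the reduction $\widetilde{M}$ and hence defines an admissible character. Row-reducing these to $(I\,|\,F)$ with $F$ of size $(2m-k_0)\times k_0$ — strictly more rows than columns — a pigeonhole argument on linear dependences among the rows of $F$ produces a vector with an odd number of zero entries, after which conditions~(2) and~(3) force the corresponding polynomial to be a non-norm. You can repair your proof the same way: whenever $d<m$, work with all of $\widetilde{M}^\perp$ (dimension $2m-d>m$) rather than an isotropic extension, write a basis as $(I\,|\,F)$, and run the odd-vector pigeonhole argument; no appeal to the $\Z_{q^n}$-structure or to Witt indices is needed.
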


\begin{remark}
Conditions (2)-(4) may be replaced by the condition that $\Delta_{\chi_i}$ should be coprime (modulo norms) to $\Delta_{\chi_j}$ for any $i \neq j$, $i,j >0$.  The proof below is easily adjusted to this other case using the same proof as for Theorem \ref{Thm:twistedpolyAlt}.
\end{remark}

\begin{proof}
We will discuss the proof of the case when $H_1(\Sigma_2;\Z) \cong \Z_{q^n}$; the case of $H_1(\Sigma_2;\Z) \cong \Z_{q^n} \oplus T$ is identical except that everywhere we would need to consider the $q$-primary component of the homology.

Suppose that $2mK$ is slice.  Then $H := H_1(\Sigma_2(2mK);\Z) \cong (\Z_{q^n})^{2m}$, so any metaboliser has order $q^{nm}$.  Let $M$ denote the matrix whose rows are the spanning set of a metaboliser.  Following the proof of \cite[1.2]{LivingstonNaik99}, we let $k_i$ denote the number of rows in $M$ divisible by $q^i$.  Since the metaboliser $M$ has the property that $H/M \cong M$, we have that $k_i = k_{n-i}$ for $i > 0$.

\begin{example}
Let $n=4$ and $m=5$, so $H = (\Z_{q^4})^{10}$.  A metaboliser needs to have order $q^{20}$.  Here is one possibility:
\[ \left(\begin{array}{cccccccccc}
1 & * & * & * & * & * & * & * & * & * \\
0 & q & 0 & 0 & * & * & * & * & * & * \\
0 & 0 & q & 0 & * & * & * & * & * & * \\
0 & 0 & 0 & q & * & * & * & * & * & * \\
0 & 0 & 0 & 0 & q^2 & 0 & * & * & * & * \\
0 & 0 & 0 & 0 & 0 & q^2 & * & * & * & * \\
0 & 0 & 0 & 0 & 0 & 0 & q^3 & 0 & 0 & * \\
0 & 0 & 0 & 0 & 0 & 0 & 0 & q^3 & 0 & * \\
0 & 0 & 0 & 0 & 0 & 0 & 0 & 0 & q^3 & * \\
\end{array}\right) \]
Here $k_0 = 1$, $k_1 = k_3 = 3$ and $k_2 = 2$.
\end{example}

As per Corollary \ref{Thm:slicenessmodq} we need to find a map $\chi \colon H_1(\Sigma_2) \to \Z_q$ which vanishes on the image modulo $q$ of a metaboliser, and for which the corresponding twisted Alexander polynomial does not factorise as a norm.  The image modulo $q$ of $M$ is clearly just the first $k_0$ rows.

\textbf{Case 1: $k_0=0$}

If all the rows are divisible by $q$, then any character from $H_1(\Sigma_2)$ to $\Z_q$ will vanish on $M$.  In particular we can take $\chi_1 \bigoplus_{2m-1} \chi_0$, giving us the condition that $\Delta_{\chi_1}\Delta_{\chi_0}$ is a norm if $2mK$ is slice.  This is obstructed by condition (1) of the theorem.

\textbf{Case 2: $k_0 = m$}

In this case, all other $k_i$ are zero and every row has order $q^n$. The image of $M$ under the map which reduces every entry modulo $q$ has rank $m$ and the form $M' :=(I \, \, E)$ for a square matrix $E$. Since the dot product of any two rows in $M$ is zero modulo $q^n$, the dot product of any two rows of $M'$ is zero modulo $q$. By Lemma \ref{Lemma:oddvector} we then know that $E$ is non-singular and that either (a) there is a vector in the span of the rows of $M'$ with an odd number of zero entries, or (b) $E$ is obtained from a diagonal matrix by permuting the columns. In case (a) the corresponding twisted polynomial is obstructed from being a norm by conditions (2) and (3) of the theorem. In case (b) the rows in $E$ have the form $(1,0,\dots,0,\pm a,0,\dots,0)$ and $1+a^2 \equiv 0$ modulo $q$ since the row has dot product zero with itself. The corresponding twisted Alexander polynomial is obstructed from being a norm by condition (4).

\textbf{Case 3: $0 < k_0 < m$}

The span of the rows must create a subspace of order $q^{nm}$. The $k_i$ rows which are a multiple of $q^i$ each have order $q^{n-i}$, telling us that
\[ n k_0 + (n-1)k_1 + \dots + 2k_{n-2} + k_{n-1} = nm.\]

Let $r:= \frac{n-1}{2}$ if $n$ is odd and $r:= \frac{n-2}{2}$ if $n$ is even. Using the fact that $k_i = k_{n-i}$ for $i>0$ we get $2\sum_{i=0}^r k_i + k_{\frac{n}{2}} = 2m$ (where the summand $k_{\frac{n}{2}}$ does not exist in the odd case). The number of rows is
\[ \sum_{i=0}^{n-1} k_i = k_0 + 2\sum_{i=1}^r k_i + k_{\frac{n}{2}}.\]
These calculations tell us that the number of rows is $2m-k_0$.

Create a new matrix $M'$ by dividing the $k^i$ rows of $M$ by $q^i$ for each $i=1,\dots,n-1$. Then perform row operations until $M'$ has the form $(I \, | \, F)$ where there is a $(2m-k_0) \times (2m-k_0)$ identity matrix on the left of the vertical line, and $F$ is a $(2m-k_0) \times k_0$ matrix. Since $k_0 < m$, we have $2m-k_0 > k_0$ and $F$ has more rows than columns.

Notice that each row of $M'$ has dot product zero modulo $q$ with the original $k_0$ rows of $M$, since the rows of $M$ have dot product zero modulo $q^n$ with the first $k_0$ rows. Therefore the rows of $M'$, denoted $v_1, \dots, v_{\alpha}$, can be thought of as maps $\chi_{v_1}, \dots, \chi_{v_{\alpha}}$ to $\Z_q$ vanishing on the image modulo $q$ of the metaboliser. We need to find a linear combination of them to give us a non-trivial twisted Alexander polynomial; we will do this by finding a row with an odd number of non-zero (and thus zero) entries in the span of the $v_i$. Conditions (2) and (3) of the theorem then obstruct the twisted Alexander polynomial from being slice.

If there are an even number of non-zero entries in any row of $F$, then the row in $M'$ has an odd number of non-zero entries and we are done.  So suppose that every row of $F$ has an odd number of non-zero entries.  Since $F$ has more rows than columns, there is a linear combination of the rows which is zero.  Say $\sum_i s_i \text{row}_i(F) \equiv 0$ (mod $q$).  If an odd number of the $s_i$ are non-zero, then $\sum_i s_i \text{row}_i(M')$ is an odd vector.

So assume that an even number of the $s_i$ are non-zero.  Choose $s_j \neq 0$, and then a non-zero $x \neq s_j \in \Z_q$.  Then
\[ \sum_i s_i \text{row}_i(M') - x\cdot \text{row}_j(M')\]
is a vector with an odd number of non-zero entries. (This is because there is an even number of non-zero values among the first $2m-k_0$ entries, and an odd number of non-zero entries coming from the last $k_0$ entries because we assumed that the rows of $F$ all had an odd number of non-zero entries.)

\end{proof}

\section{Higher order covers}
\label{Sec:HigherCovers}

In this section we analyse metabolisers of higher order branched covers of knots in order to find further criteria which indicate when a knot has infinite order in $\C$.  This is a very different situation to that of the double branched cover which we have studied up until now, because the homology of $\Sigma_p$ for a prime $p>2$ is always a direct double. That is, $H_1(\Sigma_p;\Z) \cong \Z_n \oplus \Z_n$ for some $n\in \N$. (For a proof, see \cite[Theorem 8, Section 8D]{Rolfsen}.) This means that we need different kinds of characters (i.e. not just the linking form) and a more involved analysis of the metabolisers. The arguments here owe their existence to the work of Kirk and Livingston in \cite{KirkLivingston01}; particularly their work in Chapter 5.

Let $K$ be a knot with $H_1(\Sigma_p;\Z)\cong \Z_q \oplus \Z_q \cong E_a \oplus E_b$ where $E_a$ and $E_b$ are the eigenspaces of the deck transformation $T$.  Let $e_a$ be an $a$-eigenvector (i.e. $a e_a = Te_a$) and $e_b$ be a $b$-eigenvector. Now define $\chi_a \colon H_1(\Sigma_p) \to \Z_q$ by $\chi_a(e_a) = 0$ and $\chi_a(e_b) = 1$.  Similarly, $\chi_b \colon H_1(\Sigma_p) \to \Z_q$ is defined by $\chi_b(e_a) = 1$ and $\chi_b(e_b) = 0$.

\begin{theorem}
\label{Thm:Highercover}
The knot $K$ is of infinite order in $\C$ if the following conditions on the twisted Alexander polynomial of $K$ are satisfied:
\begin{enumerate}
  \item $\Delta_{\chi_0}$ is coprime, up to norms, to both $\Delta_{\chi_a}$ and $\Delta_{\chi_b}$, and $\Delta_{\chi_0}$ is not a norm.
  \item $\Delta_{\chi_a + \chi_b} \not\doteq \Delta_{d \chi_a -d^{-1}\chi_b}$ for any $d \in \Z_q$.
\end{enumerate}
\end{theorem}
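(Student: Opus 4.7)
The plan is to argue by contradiction. Suppose that $mK$ is slice for some positive integer $m$. Applying Corollary \ref{Thm:slicenessmodq} to the $p$-fold branched cover of $mK$ produces a metaboliser $\widetilde{M} \subseteq H_1(\Sigma_p(mK);\Z_q)$, necessarily invariant under the deck transformation $T$, with the property that every character $\chi \colon H_1(\Sigma_p(mK);\Z_q) \to \Z_q$ vanishing on $\widetilde{M}$ yields a twisted polynomial which factors as a norm. My aim is to exhibit such a $\chi$ whose polynomial is not a norm.

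Because $\widetilde{M}$ is $T$-invariant, it splits along eigenspaces as $\widetilde{M} = M_a \oplus M_b$ with $M_a \subseteq V_a := \bigoplus_{i=1}^m E_a^i$ and $M_b \subseteq V_b := \bigoplus_{i=1}^m E_b^i$. The linking form pairs $V_a$ and $V_b$ nondegenerately and, after identifying each space with $(\Z_q)^m$ via the per-knot eigenvector bases, may be taken to be the standard dot product; the metabolic condition then forces $M_b = M_a^\perp$, so $\dim M_a + \dim M_b = m$. A character vanishing on $\widetilde{M}$ corresponds, via the induced duality, to a pair $(\alpha,\beta) \in M_b \times M_a$, and its restriction to the $i$-th copy of $K$ is the character $\alpha_i\chi_b + \beta_i\chi_a$. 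By Proposition \ref{Prop:additivity}, the full twisted polynomial is (up to a factor of $(t-1)$) the product of the per-knot polynomials.

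The argument then splits into cases. When $m$ is odd, the trivial character already yields $\Delta_{\chi_0}^m$, which is not a norm by condition 1. When $m$ is even and either $M_a$ or $M_b$ contains a vector $v$ with an odd number of zero entries, the character $(0,v)$ (or $(v,0)$) produces a polynomial of the form $\Delta_{\chi_0}^r \cdot \prod_{v_i \neq 0}\sigma_{v_i}\Delta_{\chi_a}$ with $r$ odd, and the coprimeness clause of condition 1 (combined with the observation that $\Delta_{\chi_0}$, having rational coefficients, is Galois-invariant and therefore coprime to every $\sigma_{v_i}\Delta_{\chi_a}$) prevents this polynomial from being a norm. In the remaining case every vector of both $M_a$ and $M_b$ has an even number of zero entries; here the duality $M_b = M_a^\perp$ together with the even-support hypothesis forces $M_a$ and $M_b$ into a structure analogous to that produced by Lemma \ref{Lemma:KirkLiv} in the proof of Theorem \ref{Thm:twistedpoly}, namely that $M_a$ admits a basis of vectors each supported on a pair of indices, with the corresponding orthogonal vectors lying in $M_b$. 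Picking such a pair $\{i,j\}$ on which an $M_a$-basis vector restricts to $(1,c)$ (so that the matching $M_b$-vector restricts to $(-c,1)$), I would scale these vectors to build a character whose per-knot restrictions are $\chi_a + \chi_b$ on knot $i$, $c\chi_a - c^{-1}\chi_b$ on knot $j$, and trivial elsewhere. Since $m-2$ is even, the resulting polynomial reduces modulo norms to $\Delta_{\chi_a+\chi_b} \cdot \Delta_{c\chi_a - c^{-1}\chi_b}$, which fails to be a norm by condition 2 with $d = c$.

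The main obstacle will be the structural lemma required in this final case: formulating and proving the analog of Lemma \ref{Lemma:KirkLiv} and Lemma \ref{Lemma:oddvector} in the dual-paired subspace setting $M_b = M_a^\perp$ (rather than the single-matrix setting $(I\;|\;E)$), and verifying that the even-support hypothesis on both $M_a$ and $M_b$ is strong enough to produce the basis of pair-supported vectors from which the character is built. Once that structural claim is in hand, the remainder of the proof is a direct combination of Proposition \ref{Prop:additivity}, Corollary \ref{Thm:slicenessmodq}, and the two hypotheses of the theorem.
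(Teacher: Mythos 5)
Your proposal is correct and follows essentially the same route as the paper's proof: split the metaboliser along the eigenspaces, dispatch any vector with an odd number of zero entries via condition (1) (your remark that the Galois-invariance of $\Delta_{\chi_0}$ upgrades coprimality with $\Delta_{\chi_a}$ to coprimality with all its conjugates $\sigma_{v_i}\Delta_{\chi_a}$ is a detail the paper leaves implicit), and in the remaining balanced case produce the character restricting to $\chi_a+\chi_b$ and $c\chi_a-c^{-1}\chi_b$ so that condition (2) applies. The structural step you flag as the main obstacle needs no new paired-subspace lemma: the paper applies Lemma~\ref{Lemma:KirkLiv} to $M_a$ alone, written as $(I \;\, B_a)P$ with $B_a$ non-singular (a singular $B_a$, like an unbalanced dimension count, would already yield an odd-support vector), and then the orthogonality $M_a \cdot M_b^T \equiv 0$ forces $M_b = (I \;\, -(B_a^{-1})^T)P$, giving exactly the pair-supported bases you describe.
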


\begin{proof}
We want to show that $nK$ is not slice for any $n$, and without loss of generality we let $n=2m$.  For every metaboliser we need to find a character for which the corresponding twisted Alexander polynomial does not factorise.  Any invariant metaboliser will be spanned by eigenvectors.  Let $M_a$ be a $k \times 2m$ matrix which represents the contribution from the $a$-eigenspaces, and $M_b$ be a $(2m-k)\times 2m$ matrix representing the contribution from the $b$-eigenspaces.

\textbf{Case 1: $k>m$}

In this case we will prove that in every metaboliser there will be a vector which has an odd number of zero entries.  By Condition (1) the corresponding twisted Alexander polynomial cannot factorise.

We can write $M_a = (I \; E)$ where $E$ is a $k \times (2m-k)$ matrix.  If there are an even number of non-zero entries in any row of $E$, then the row in $M_a$ has an odd number of non-zero entries and we are done.  So suppose that every row of $E$ has an odd number of non-zero entries.  Now, $E$ has more rows than columns, so there is a linear combination of the rows which is zero.  Say $\sum_i s_i \text{row}_i(E) \equiv 0$ (mod $q$).  If an odd number of the $s_i$ are non-zero, then $\sum_i s_i \text{row}_i(M_a)$ is an odd vector.

So assume that an even number of the $s_i$ are non-zero.  Choose $s_j \neq 0$, and then a non-zero $x \neq s_j \in \Z_q$.  Then
\[ \sum_i s_i \text{row}_i(M_a) - x\cdot \text{row}_j(M_a)\]
is a vector with an odd number of non-zero entries. (This is because there is an even number of non-zero values among the first $k$ entries, and an odd number of non-zero entries coming from the last $2m-k$ entries because we assumed that the rows of $E$ all had an odd number of non-zero entries.)

\medskip

\textbf{Case 2: $k=m$}

In this case both $M_a$ and $M_b$ are $m \times 2m$ matrices.  Write $M_a = F(I \; B_a)P$ where $F$ is a change of basis matrix (i.e. row operations) and $P$ is a permutation matrix (i.e. column interchanges).  Without loss of generality, we may assume that $F$ is the identity matrix.  We may also assume that $B_a$ is non-singular, since if it weren't, we would be able to use the argument detailed in the first case to obtain an odd vector.

Now, whatever permutations we do to the matrix $M_a$, we must also do to the matrix $M_b$.  Write $M_b = (X \; Y)P$.

Notice that $\lk(e_a,e_a) = \lk(Te_a,Te_a) = \lk(ae_a,ae_a) = a^2\lk(e_a,e_a)$ so $\lk(e_a,e_a) = 0$, and similarly for $\lk(e_b,e_b)$. The linking form relating $E_a$ to $E_b$ is therefore
\[ \left(\begin{array}{cc} 0 & 1 \\ 1 & 0 \end{array}\right)\]
and because $M_a$ and $M_b$ represent elements of a metaboliser we have $M_a \cdot M_b^T \equiv 0$ (mod $q$).  This means that
\[ 0 \equiv (I \; B_a)P P^T \left(\begin{array}{c}X^T \\ Y^T \end{array}\right) =
   (I \; B_a)\left(\begin{array}{c}X^T \\ Y^T \end{array}\right) = X^T + B_a Y^T \text{.}\]
Rearranging gives us that $Y = -X(B_a^{-1})^T$.  So $M_b = (X \; -X(B_a^{-1})^T)P = X(I \; -(B_a^{-1})^T)P$.  We may assume that $X$ is non-singular, since if it weren't, we would again be in the previous case and able to find an odd vector in $M_b$.  The matrix $X$ represents a change of basis, so once again we may assume that it is the identity.

To summarise, we are in the situation where $M_a = (I \; B_a)P$ and $M_b = (I \; -(B_a^{-1})^T)P$.  This means we can apply Lemma \ref{Lemma:KirkLiv} to conclude that $B_a$ is obtained from a diagonal matrix $D$ by permuting the columns.  So write $B_a = DQ$ where $Q$ is a permutation matrix, and $R= \left(\begin{array}{cc}I & 0 \\0 & Q\end{array}\right)P$.  Then
\begin{itemize}
  \item $M_a = (I \; DQ)P = (I \; D)R$
  \item $(B_a^{-1})^T = (Q^{-1}D^{-1})^T = (Q^T D^{-1})^T = D^{-1}Q$
  \item $P = \left(\begin{array}{cc}I & 0 \\ 0 & Q^{-1}\end{array}\right)R$
  \item $M_b = (I \; -D^{-1}Q)P = (I \; -D^{-1})R$
\end{itemize}

Since the permutation matrix $R$ is common to both $M_a$ and $M_b$, we may assume $R=I$.  So $M_a = (I \; D)$ and $M_b = (I \; -D^{-1})$.

Consider the top rows of $M_a$ and $M_b$:
\[ m_a = (1, 0, \dots, 0, d, 0, \dots, 0)\]
\[ m_b = (1, 0, \dots, 0, -d^{-1},0, \dots, 0)\]
We subtract these vectors, and notice that the character $\chi_a + \chi_b$ vanishes on $e_a - e_b$, whilst $d\chi_a - d^{-1}\chi_b$ vanishes on $de_a + d^{-1}e_b$.  The twisted Alexander polynomial corresponding to $m_a - m_b$ is thus
\[ \Delta_{\chi_a + \chi_b}^K \cdot \Delta_{d\chi_a - d^{-1} \chi_b}^{K}. \]
By condition (2) of the theorem, this is assumed not to be a norm, completing the proof.
\end{proof}

To complete our classification of $9$-crossing knots, we will need an extension of this theorem to decide when a connected sum of knots is not slice. A special case of this, which we shall look at first, is deciding when a knot minus its reverse is not slice.  There do not currently exist any general methods in the literature for proving that a knot is not concordant to its reverse, let alone that the difference is of infinite order in $\C$.  Specific examples have been dealt with by Kirk, Livingston and Naik \cite{Livingston83}, \cite{Naik96}, \cite{KirkLivingston99b} using Casson-Gordon invariants and twisted Alexander polynomials, and this theorem can be seen as a generalisation of their methods.

The proof is almost identical to that of Theorem \ref{Thm:Highercover} except than an extra condition is needed at the end.

\begin{theorem}
\label{Thm:reverseInfiniteOrder}
The knot $K-K^r$ is of infinite order in $\C$ if the following conditions on the twisted Alexander polynomial of $K$ are satisfied:
\begin{enumerate}
  \item $\Delta_{\chi_0}$ is coprime, up to norms, to both $\Delta_{\chi_a}$ and $\Delta_{\chi_b}$, and none of these polynomials are themselves norms.
  \item $\Delta_{\chi_a} \not\doteq \sigma_d(\Delta_{\chi_b})$ for any $d \in \Z_q$
  \item $\Delta_{\chi_a + \chi_b} \not\doteq \Delta_{d \chi_a -d^{-1}\chi_b}$ for any $d \in \Z_q$.
\end{enumerate}
\end{theorem}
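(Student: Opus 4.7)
The plan is to adapt the proof of Theorem \ref{Thm:Highercover} to the knot $2m(K - K^r) = 2mK \# (-2mK^r)$, carefully tracking how the twisted Alexander polynomials transform under the operation $K \mapsto -K^r$. The critical preliminary observation is that $\Sigma_p(K)$ and $\Sigma_p(K^r)$ are homeomorphic 3-manifolds on which the deck transformation acts by $T$ and $T^{-1}$ respectively, so eigenvectors for eigenvalue $a$ on one side become eigenvectors for eigenvalue $a^{-1} = b$ on the other. Combined with the mirror flip that turns $K^r$ into $-K^r$, this yields a relation of the form $\Delta_{\chi_a}^{-K^r} \doteq \sigma_c(\Delta_{\chi_b}^K)$ for an appropriate Galois conjugation $\sigma_c$, and symmetrically with $a$ and $b$ interchanged.

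Suppose for contradiction that $2m(K - K^r)$ is slice; pick an invariant metaboliser of $H_1(\Sigma_p(2m(K - K^r));\Z_q)$, and let $M_a$ and $M_b$ denote the matrices whose rows encode the contributions from the $a$- and $b$-eigenspaces across all $4m$ summands (the first $2m$ columns coming from copies of $K$, the remaining $2m$ from copies of $-K^r$). Then $M_a$ is $k \times 4m$ and $M_b$ is $(4m - k) \times 4m$. I would then run the same case split as in Theorem \ref{Thm:Highercover}.

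In Case 1 ($k \neq 2m$), the odd-vector argument of Theorem \ref{Thm:Highercover}, using Lemma \ref{Lemma:KirkLiv}, still produces a row in $M_a$ or $M_b$ with an odd number of zero entries. The associated twisted polynomial contains $\Delta_{\chi_0}$ (or its Galois conjugate from a $-K^r$-slot, which agrees with $\Delta_{\chi_0}$ modulo norms since the Alexander polynomial is invariant under the reverse) with odd total exponent, and all other factors are Galois conjugates of $\Delta_{\chi_a}^K$ or $\Delta_{\chi_b}^K$. Condition (1) — that $\Delta_{\chi_0}$ is not a norm and is coprime modulo norms to $\Delta_{\chi_a}$ and $\Delta_{\chi_b}$ (hence to all their Galois conjugates, since coprimality is preserved by $\sigma_d$) — then forbids the polynomial from factoring as a norm.

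In Case 2 ($k = 2m$), the same analysis as before forces $M_a$ and $M_b$ into the form $(I \mid D)$ and $(I \mid -D^{-1})$ with $D$ diagonal, and the top rows produce a character whose twisted polynomial is a product of two factors coming from the ``$1$'' entry and the ``$\pm d^{\pm 1}$'' entry. The new feature is that each of these two non-zero entries can independently lie in either a $K$-slot or a $-K^r$-slot. If both sit in slots of the same type, the polynomial reduces (after a uniform Galois conjugation) to $\Delta_{\chi_a + \chi_b}^K \cdot \Delta_{d \chi_a - d^{-1}\chi_b}^K$ and is obstructed from being a norm by condition (3); if they sit in slots of opposite types, applying the relation $\Delta^{-K^r}_{\chi_\bullet} \doteq \sigma_c(\Delta^K_{\chi_\bullet})$ rewrites the product, up to norms, as $\Delta_{\chi_a}^K \cdot \sigma_d(\Delta_{\chi_b}^K)$ for some $d \in \Z_q$, which is exactly the case ruled out by the new condition (2). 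The main obstacle will be the bookkeeping in this last step: one must pin down precisely which Galois conjugation $\sigma_c$ arises from the mirror/reverse and verify that the sub-case (b) polynomial has the claimed form, after which condition (2) bites and the proof concludes exactly as in Theorem \ref{Thm:Highercover}.
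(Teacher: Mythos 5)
Your proposal is correct and follows essentially the same route as the paper: reduce to the proof of Theorem \ref{Thm:Highercover}, use the fact that reversal swaps the eigenspaces of the deck transformation so that $\Delta_{\chi_a}^{-K^r} \doteq \Delta_{\chi_b}^{K}$, and in the diagonal case $M_a = (I\;D)$, $M_b = (I\;-D^{-1})$ split according to whether the two non-zero entries of a row lie in copies of the same knot (handled by condition (3) via $m_a - m_b$) or of different knots (handled by the new condition (2), yielding $\Delta_{\chi_a}^K\cdot\sigma_d(\Delta_{\chi_b}^K)$). The only cosmetic difference is that you work with $2m(K-K^r)$ where the paper takes $m(K-K^r)$, which changes nothing.
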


\begin{proof}
We want to show that $m(K-K^r)$ is not slice for any $m$.  This means that for every metaboliser, we need to find a character for which the corresponding twisted Alexander polynomial does not factorise.  Any invariant metaboliser will be spanned by eigenvectors.  Let $M_a$ be a $k \times 2m$ matrix which represents the contribution from the $a$-eigenspaces, and $M_b$ be a $(2m-k)\times 2m$ matrix representing the contribution from the $b$-eigenspaces.

The first part of the proof of this theorem is identical in every way to that of Theorem \ref{Thm:Highercover}, up until the following line, where it begins to differ.

We have $M_a = (I \; D)$ and $M_b = (I \; -D^{-1})$ where $D$ is a diagonal matrix.

Consider the top rows of $M_a$ and $M_b$:
\[ m_a = (1, 0, \dots, 0, d, 0, \dots, 0)\]
\[ m_b = (1, 0, \dots, 0, -d^{-1},0, \dots, 0)\]
We have two cases to consider.

In the first case, the columns corresponding to the non-zero entries are from different knots; i.e. one is from $K$ and the other is from $-K^r$.  In this case, the twisted polynomial corresponding to $m_a$ is
\[ \Delta_{\chi_a}^K \cdot \sigma_d(\Delta_{\chi_a}^{-K^r}) = \Delta_{\chi_a}^K \cdot \sigma_d(\Delta_{\chi_b}^{K}). \]
By condition (2) of the theorem, this is assumed not to be a norm.

In the second case, the two non-zero columns represent different copies of the \emph{same} knot.  In this case we subtract the two vectors, and the twisted Alexander polynomial corresponding to $m_a - m_b$ is (as explained in the previous proof)
\[ \Delta_{\chi_a + \chi_b}^K \cdot \Delta_{d\chi_a - d^{-1} \chi_b}^{K}. \]
By condition (3) of the theorem, this is also assumed not to be a norm, completing the proof.
\end{proof}

Let $K$ now be a connected sum of knots, each of which has the same condition on the $p$-fold branched cover for some prime $p$; that is, let $K = K_1 + \dots + K_n$ with $H_1(\Sigma_p(K_i);\Z) \cong \Z_q \oplus \Z_q \cong E_a \oplus E_b$ for some prime $p$, some prime $q$ and for every $i=1,\dots,n$, where $E_a$ and $E_b$ are eigenspaces of the deck transformation. For each knot we have the maps $\chi_a$ and $\chi_b$ as described earlier.

\begin{theorem}
\label{Thm:highercoverConnSum}
The knot $K=K_1 + \dots + K_n$ is of infinite order in $\C$ if the following conditions on the twisted Alexander polynomial of the $K_i$ are satisfied:
\begin{enumerate}
  \item $\Delta_{\chi_0}^{K_i}(t)$ is not a norm in $\Q(\zeta_q)[t,t^{-1}]$ for any $i=1,\dots,n$.
  \item $\Delta_{\chi_0}^{K_i}$ is coprime, up to norms, to $\Delta_{\chi_a}^{K_j}$ and $\Delta_{\chi_b}^{K_j}$ for every $i$ and $j$.
  \item $\Delta_{\chi_a}^{K_i} \not\doteq \sigma_d(\Delta_{\chi_a}^{K_j})$ (or $\Delta_{\chi_b}^{K_i} \not\doteq \sigma_d(\Delta_{\chi_b}^{K_j})$) for any $d \in \Z_q$ and any $i\neq j$.
  \item $\Delta_{\chi_a + \chi_b}^{K_i} \not\doteq \Delta_{d \chi_a -d^{-1}\chi_b}^{K_i}$ for any $d \in \Z_q$ and any $i=1,\dots,n$.
\end{enumerate}
\end{theorem}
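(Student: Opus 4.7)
The plan is to adapt the arguments of Theorems \ref{Thm:Highercover} and \ref{Thm:reverseInfiniteOrder} in parallel, keeping careful track of which connected-summand each column in the metaboliser matrix belongs to. Suppose for contradiction that $mK = m(K_1 + \dots + K_n)$ is slice; after replacing $m$ by $2m$ we may assume the total number $N := mn$ of summands is even. By Corollary \ref{Thm:slicenessmodq} there is an invariant metaboliser $M \subset H_1(\Sigma_p(mK);\Z_q) \cong \bigoplus_{i,j} (E_a(K_i^{(j)}) \oplus E_b(K_i^{(j)}))$. Because $M$ is invariant under the deck transformation, which acts as $a$ on the first factor and $b$ on the second, it decomposes as $M = M_a \oplus M_b$, where $M_a$ is a $k \times N$ matrix of $a$-eigenvector coordinates and $M_b$ is an $(N-k) \times N$ matrix of $b$-eigenvector coordinates. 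Each column is labelled by a specific copy of a specific $K_i$, and a $\Z_q$-entry $d$ in column $(i,j)$ contributes the twisted polynomial $\sigma_d(\Delta_{\chi_a}^{K_i})$ (respectively $\sigma_d(\Delta_{\chi_b}^{K_i})$) to the overall product, while a zero entry contributes $\Delta_{\chi_0}^{K_i}$.

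Next I would carry out the matrix reduction from the proof of Theorem \ref{Thm:Highercover}. If $k > N/2$ then the Kirk--Livingston argument (Lemma \ref{Lemma:KirkLiv} together with the pigeonhole trick of finding a suitable linear combination) produces a row in $M_a$ with an odd total number of zero entries; the same holds by symmetry for $M_b$ when $k < N/2$. The corresponding twisted Alexander polynomial then has the form $\prod_i \bigl(\Delta_{\chi_0}^{K_i}\bigr)^{n_i} \cdot \prod_{l} \sigma_{d_l}\!\bigl(\Delta_{\chi_a}^{K_{i_l}}\bigr)$, where $\sum_i n_i$ is odd, so some $n_i$ is odd. By condition (2) each factor $\Delta_{\chi_0}^{K_i}$ is coprime (modulo norms) to every $\Delta_{\chi_a}^{K_j}$ and $\Delta_{\chi_b}^{K_j}$, so the $\chi_0$-factors can only possibly combine amongst themselves; condition (1) then prevents the resulting product from being a norm in $\Q(\zeta_q)[t,t^{-1}]$.

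If $k = N/2$, one first reduces $M_a$ to the form $(I \ B_a)P$ and similarly $M_b$ to $(X \ Y)P$ with the same column permutation $P$; the orthogonality $M_a M_b^T \equiv 0$ (mod $q$) forces $Y = -X(B_a^{-1})^T$, and then a second application of Lemma \ref{Lemma:KirkLiv} shows that (after absorbing further row and column operations) $M_a = (I \ D)$ and $M_b = (I \ -D^{-1})$ for a diagonal matrix $D$. Comparing the top rows $m_a = (1,0,\dots,0,d,0,\dots,0)$ and $m_b = (1,0,\dots,0,-d^{-1},0,\dots,0)$ splits into two sub-cases depending on whether the two nonzero columns belong to copies of distinct knots $K_i, K_j$ or to copies of the same knot $K_i$. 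In the first sub-case the row $m_a$ gives the polynomial $\Delta_{\chi_a}^{K_i} \cdot \sigma_d(\Delta_{\chi_a}^{K_j})$ (or the $\chi_b$-analogue), which condition (3) obstructs from being a norm. In the second sub-case subtracting $m_b$ from $m_a$ and using that $\chi_a + \chi_b$ vanishes on $e_a - e_b$ while $d\chi_a - d^{-1}\chi_b$ vanishes on $de_a + d^{-1}e_b$ yields the polynomial $\Delta_{\chi_a + \chi_b}^{K_i} \cdot \Delta_{d\chi_a - d^{-1}\chi_b}^{K_i}$, which condition (4) obstructs.

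The main obstacle will be the bookkeeping of columns across the two cases $k > N/2$ and $k = N/2$: one must verify that after the row and column operations used to diagonalise $B_a$, the labelling of columns by summands $K_i$ is respected well enough that the resulting twisted polynomial really does factor into pieces of the form stated above. A secondary subtlety is ensuring that condition (1) (each $\Delta_{\chi_0}^{K_i}$ individually not a norm) is strong enough to rule out cancellations between $\Delta_{\chi_0}^{K_i}$ and $\Delta_{\chi_0}^{K_j}$ for $i \neq j$ when several $n_i$ are simultaneously odd; in practice one interprets condition (1) as providing for each $i$ an irreducible factor of $\Delta_{\chi_0}^{K_i}$ that does not pair with anything else in sight, exactly as in Theorem \ref{Thm:infiniteordersum}, so the argument ultimately reduces to a unique-factorisation check in $\Q(\zeta_q)[t,t^{-1}]$.
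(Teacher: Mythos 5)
Your proposal follows essentially the same route as the paper's proof: split the metaboliser into eigenspace matrices $M_a$ and $M_b$, handle $k > nm$ by producing a row with an odd number of zero entries (obstructed by conditions (1) and (2)), and handle $k = nm$ by reducing to $M_a = (I\;D)$, $M_b = (I\;-D^{-1})$ and splitting into the distinct-knot and same-knot sub-cases obstructed by conditions (3) and (4) respectively. The subtlety you flag about distinct trivial polynomials pairing into a norm is real but is resolved exactly as in Theorem \ref{Thm:infiniteordersum}, which is also how the paper treats it.
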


\begin{proof}
 We want to show that $2mK$ cannot be slice for any $m$. This means that for every metaboliser, we need to find a character for which the corresponding twisted Alexander polynomial does not factorise as a norm.  Any invariant metaboliser will be spanned by eigenvectors.  Let $M_a$ be a $k \times 2mn$ matrix which represents the contribution from the $a$-eigenspaces, and $M_b$ be a $(2mn-k)\times 2mn$ matrix representing the contribution from the $b$-eigenspaces.

 \textbf{Case 1:} $k > nm$

 In this case we can follow the same proof as in Theorem \ref{Thm:Highercover} to show that in every metaboliser there will be a vector which has an odd number of zero entries. By Conditions (1) and (2) the corresponding twisted Alexander polynomial corresponding to this vector cannot be a norm.

 \medskip

 \textbf{Case 2:} $k=nm$

 We again follow the proof of Theorem \ref{Thm:Highercover} until the following line:

 We have $M_a = (I \; D)$ and $M_b = (I \; -D^{-1})$ where $D$ is a diagonal matrix. Consider the top rows of $M_a$ and $M_b$:
  \[ m_a = (1, 0, \dots, 0, d, 0, \dots, 0)\]
  \[ m_b = (1, 0, \dots, 0, -d^{-1},0, \dots, 0)\]
 We have two cases to consider.

 In the first case, the columns corresponding to the non-zero entries are from different knots; say one is from $K_i$ and the other is from $K_j$. By condition (3) the twisted Alexander polynomial corresponding to either $m_a$ or $m_b$ is assumed not to be a norm.

 In the second case, the two non-zero columns represent different copies of the same knot $K_i$. In this case we subtract the two vectors and the corresponding twisted Alexander polynomial is (as explained in the proof of Theorem \ref{Thm:Highercover})
 \[ \Delta_{\chi_a + \chi_b}^{K_i} \cdot \Delta_{d\chi_a - d^{-1} \chi_b}^{K_i}. \]
 By condition (4) of the theorem, this is assumed not to be a norm, completing the proof.
\end{proof}

\begin{remark}
 Using Corollary \ref{Thm:slicenessmodq} we can extend Theorem \ref{Thm:highercoverConnSum} to include knots $K_i$ in the connected sum for which $H_1(\Sigma_p(K_i);\Z) \cong \Z_q \oplus \Z_q \oplus T_i$ (or simply $H_1(\Sigma_p(K_i);\Z) \cong T_i$) with the order of the $T_i$ coprime to $q$.
\end{remark} 
\chapter{\label{chapter6} Geometric concordance classification of $9$-crossing knots}

In Chapter \ref{chapter4} we took the free abelian group $\CF_E$ generated by the $9$-crossing prime knots $E = \left\{ 3_1,4_1,5_1,5_2,6_1,6_2,6_3,7_1, \dots,7_7,8_1,\dots,8_{21}, 9_1, \dots, 9_{49} \right\}$ (including the distinct reverses $8_{17}^r$, $9_{32}^r$ and $9_{33}^r$) and in Theorem \ref{Thm:algclassification} found a basis for the kernel $\A_E$ of the map $\CF_E \to \C_E \to \G$, where $\C_E$ is the subgroup of $\C$ generated by $E$.

We would now like to complete our investigation into the structure of $\C_E$ by identifying a basis for the kernel of the map $\phi \colon \A_E \to \C_E$.

Let us first pick out those knots which have been shown to be slice or order $2$ by other people (see Knotinfo~\cite{ChaLivingston}). The notation $\C_T^n$ will stand for the set of elements of geometric (T$\, = \,$Topological) order $n$.
\[ \C_T^2 = \{4_1, 6_3, 8_3, 8_{12}, 8_{17}, 8_{18}\}\]
\[ \C_T^1 = \{6_1, 8_8, 8_9, (8_{10}+3_1), (8_{11}-3_1), 8_{20}, (9_{24}-4_1), 9_{27}, (9_{37}-4_1), 9_{41}, 9_{46}\} \]

It remains to look at the image under $\phi$ of the following set of knots:
\begin{eqnarray*}S & = & \{4(7_7), 4(9_{34}), 2(8_1), 2(8_{13}),2(8_{15}-7_2-3_1), 2(9_2-7_4), 2(9_{12}-5_2), 2(9_{14}), \\ & & 2(9_{16}-7_3-3_1), 2(9_{19}), 2(9_{28}-3_1), 2(9_{30}), 2(9_{33}),2(9_{42}+8_5-3_1),2(9_{44}-4_1), \\ & & (8_{17}-8_{17}^r),(8_{21}-8_{18}-3_1), (9_8-8_{14}), (9_{23}-9_2-3_1), (9_{29}-9_{28}+2(3_1)), \\ & & (9_{32}-9_{32}^r), (9_{33}-9_{33}^r), (9_{39}+7_2-4_1), (9_{40}-8_{18}-4_1-3_1) \}
\end{eqnarray*}

\begin{conjecture}
\label{Conj:geomclassification}
The image of $\A_E$ in $\C_E$ is isomorphic to $\Z^{23} \oplus \Z_2$ generated by the image under $\phi$ of the set $S$.
\end{conjecture}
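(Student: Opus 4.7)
The plan is to verify the conjecture by going through the list $S$ element-by-element, applying the infinite-order criteria of Chapter \ref{chapter5} to show that all but one of the generators have infinite order in $\C$, identifying the remaining generator as a genuine 2-torsion element, and then proving linear independence using the connected-sum versions of those criteria. By Theorem \ref{Thm:algclassification} we already know that every element of $S$ lies in $\A_E$, so the image of $\A_E$ in $\C_E$ is certainly generated by $\phi(S)$; the content of the conjecture is the structure $\Z^{23}\oplus\Z_2$.

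First I would dispatch the two knots arising from $\C_A^4$, namely $4(7_7)$ and $4(9_{34})$. For each I would compute $H_1(\Sigma_2)$, check that it is cyclic of prime-power order $q^n$ with $q$ odd, and apply Theorem \ref{Thm:twistedpolyq^n} to conclude infinite order in $\C$. The $\C_A^2$ generators $2K$ all have $K$ algebraically of order $2$, so a calculation of the twisted Alexander polynomials over $\Q(\zeta_q)[t,t^{-1}]$ for the relevant primes $q$ dividing $\det(K)$ should let me invoke whichever of Theorems \ref{Thm:twistedpoly}, \ref{Thm:twistedpolyAlt} or \ref{Thm:twistedpolyq^n} matches the structure of $H_1(\Sigma_2(K))$. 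For the algebraically slice generators in $\C_A^1$, which are differences of knots, I would use $H_1(\Sigma_2)$ or $H_1(\Sigma_p)$ for an appropriate prime $p>2$ and invoke Theorem \ref{Thm:infiniteordersum}, Theorem \ref{Thm:Highercover} or Theorem \ref{Thm:reverseInfiniteOrder}; the latter is tailored to handle $(9_{33}-9_{33}^r)$. At each step the verification is a finite computation: enumerate the metabolisers of the linking form, check the coprimality and Galois-conjugate conditions for the associated twisted polynomials, and produce the required obstruction.

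The element $(8_{17}-8_{17}^r)$ is the unique candidate for the $\Z_2$ factor. Since $8_{17}\in\C_T^2$, we have $2(8_{17})=0=2(8_{17}^r)$ in $\C$, so $(8_{17}-8_{17}^r)$ has order dividing $2$. To show it is not slice I would apply Theorem \ref{Thm:reverseInfiniteOrder} (or a lower-order analogue derived from the same metaboliser analysis) to $8_{17}-8_{17}^r$; the resulting twisted polynomial obstruction will show that it is non-trivial in $\C$, hence of order exactly $2$. Linear independence across all 24 generators is where the connected-sum theorems earn their keep: Theorem \ref{Thm:infiniteordersum} and Theorem \ref{Thm:highercoverConnSum} give precisely the coprimality/conjugacy hypotheses needed to prevent cancellation between twisted polynomials coming from distinct summands. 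Organising the generators by which prime $q$ is used in the obstruction and which eigenspace structure of $H_1(\Sigma_p)$ is invoked would partition the problem into small independent blocks in which the hypotheses can be verified directly.

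The main obstacle, as flagged in Section \ref{Subsec:9crossingClass}, is the five knots $(9_2-7_4)$, $(8_{21}-8_{18}-3_1)$, $(9_{23}-9_2-3_1)$, $(9_{40}-8_{18}-4_1-3_1)$ and $(9_{32}-9_{32}^r)$: for these the relevant homology has the form $a\Z_q\oplus b\Z_{q^2}\oplus T$ (or, in the higher-cover case for $9_{32}-9_{32}^r$, $(\Z_q)^4$), and Lemma \ref{Lemma:oddvector} does not suffice to extract an odd metabolising vector from every Lagrangian. This is why the statement is a conjecture rather than a theorem: I can push Theorems \ref{Thm:twistedpolyq^n}, \ref{Thm:Highercover} and \ref{Thm:highercoverConnSum} far enough to cover the remaining 19 generators and to establish $\Z^{23}\oplus\Z_2$ modulo the independence of those five classes, but a new twisted-polynomial criterion adapted to mixed $\Z_q\oplus\Z_{q^2}$ metabolisers, or to four-dimensional $q$-elementary eigenspaces, is required to close the gap. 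Producing such a criterion — essentially an analogue of Lemma \ref{Lemma:KirkLiv} in which one controls the number of zero entries in a metabolising vector whose coordinates have mixed orders — is the step I expect to be hardest, and is left for future work.
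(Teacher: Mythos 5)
Your plan follows the paper's own treatment of this conjecture essentially step for step: infinite order of the generators via Theorems \ref{Thm:twistedpoly}--\ref{Thm:highercoverConnSum}, the $\Z_2$ factor from $(8_{17}-8_{17}^r)$, independence organised prime by prime, and the same five classes $(9_2-7_4)$, $(8_{21}-8_{18}-3_1)$, $(9_{23}-9_2-3_1)$, $(9_{40}-8_{18}-4_1-3_1)$, $(9_{32}-9_{32}^r)$ left unresolved for exactly the reason the paper gives (mixed $\Z_q\oplus\Z_{q^2}$ metabolisers, resp.\ $(\Z_q)^4$ eigenspaces in higher covers). The only detail you gloss over is that some sums, e.g.\ $(9_{23}-9_2-3_1)$, need an ad hoc argument (Proposition \ref{Prop:9-23}) because the genus-one summand $9_2$ has trivial $\Delta_{\chi_1}$, so condition (3) of Theorem \ref{Thm:infiniteordersum} fails; this is a computational wrinkle, not a change of method.
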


The $\Z_2$ summand is generated by $(8_{17}-8_{17}^r)$, since $8_{17}$ is itself of order $2$.  For now we will remove $(8_{17}-8_{17}^r)$ from $S$, since if we can prove that no linear combination of the other knots in $S$ is slice then $(8_{17}-8_{17}^r)$ is independent of those knots. To prove the rest of the conjecture we need to show that all the remaining knots are of infinite order in $\C$ and that they are independent of one another.

\section{Knots of infinite order}
\label{Subsec:infiniteorder}

The knots in $S$ are all algebraically slice, so we will need Casson-Gordon invariants to prove that they are not geometrically slice. In particular, we will use twisted Alexander polynomials and the theorems developed in Chapter \ref{chapter5}.

The knots $4(7_7), 4(9_{34})$ and $2(8_{1})$ are known to be of infinite order from KnotInfo~\cite{ChaLivingston}.  The other singleton knots $2(8_{13}), 2(9_{14}), 2(9_{19}), 2(9_{30}), 2(9_{33})$ and $2(9_{44}-4_1) = 2(9_{44})$ are shown to be of infinite order via Theorem~\ref{Thm:twistedpoly}.

Theorem \ref{Thm:infiniteordersum} applies to the following knots to show that they are of infinite order:
\begin{eqnarray*}&& 2(8_{15}-7_2-3_1), 2(9_2-7_4), 2(9_{12}-5_2), 2(9_{16}-7_3-3_1), 2(9_{28}-3_1), 2(9_{42}+8_5-3_1),\\
                   &&(9_8-8_{14}), (9_{29}-9_{28}+2(3_1)), (9_{40}-8_{18}-4_1-3_1), (8_{21}-8_{18}-3_1), (9_{39}+7_2-4_1).
\end{eqnarray*}

The twisted Alexander polynomial calculations, performed using Maple $13$, can be found in Appendix \ref{AppendixC}. We work through a couple of examples to illustrate how the theorem works.

\begin{example}
Using Theorem \ref{Thm:infiniteordersum} we show that $8_{15}-7_2-3_1$ is of infinite order in $\C$. We have $H_1(\Sigma_2(8_{15});\Z) \cong \Z_{33}$, $H_1(\Sigma_2(7_2);\Z) \cong \Z_{11}$ and $H_1(\Sigma_2(3_1);\Z) \cong \Z_3$, so we choose to use the prime $q=11$. This means we do not have to compute any polynomials related to $3_1$.

Condition (1) of Theorem \ref{Thm:infiniteordersum} asks us to check whether any of the trivial twisted polynomials are norms. We have
\[ \Delta_{\chi_0}^{8_{15}} = (9t^2-7t+9)(t^2+t+1)\]
and
\[ \Delta_{\chi_0}^{7_2} = (9t^2-7t+9) \]
where
\[ 9t^2-7t+9 = \frac{1}{9}(9t-1+5w+5w^3+5w^4+5w^5+5w^9)(9t-1+5w^2+5w^6+5w^7+5w^8+5w^{10}) \]
for $w$ an $11^\text{th}$ root of unity. Is this a norm? Write the first factor as $9t+f(w)$. Then the inverse conjugate of this is
\[ 9t^{-1} + f(\overline{w})= 9f(\overline{w})^{-1} + t = \frac{1}{9}(81f(\overline{w})^{-1} + 9t) \]
and we are reduced to deciding whether $81f(\overline{w})^{-1} = g(w)$ where $g(w)$ is the constant term in the second factor. Calculating using a program such as Maple, we find that
\[ 81 f(\overline{w})^{-1} = -1 + 5w+5w^3+5w^4+5w^5+5w^9 = f(w) \neq g(w). \]
Thus we have shown that the trivial twisted polynomials are not norms.

Condition (2) asks us to check whether any factors of $\Delta_{\chi_0}$ for either $8_{15}$ or $7_2$ are contained in any of the non-trivial polynomials. This is clearly not the case, since $\Delta_{\chi_1}^{7_2} = 1$ and
\[ \Delta_{\chi_1}^{8_{15}} = 1+t(w^3-w^4+w^8-w^7+3w^5+3w^6-5)+t^2\]
which is irreducible over $\Q(w)[t,t^{-1}]$.

Finally, condition (3) asks us to look at the linking forms of $8_{15}$ and $7_2$. These are
 $\lk_{8_{15}}(3,3) = -5 \in \Z_{11}$ (which is not a square) and $\lk_{-7_2}(1,1) = -1 \in \Z_{11}$ (also not a square). Since the linking forms are the same modulo squares, and since $q=11 \equiv 3$ modulo $4$, there is no further work to be done.
\end{example}

\begin{example}
 As a further example to illustrate when condition (3) of Theorem \ref{Thm:infiniteordersum} is necessary, consider the knot $9_8 - 8_{14}$. For this knot we use the prime $q=31$, which is $3$ modulo $4$, and we find that $9_8$ and $-8_{14}$ have \emph{different} linking forms modulo squares. This means we have to check whether certain Galois conjugates of $\Delta_{\chi_1}^{9_8}$ and $\Delta_{\chi_1}^{8_{14}}$ are the same. The polynomials we are dealing with are of the form $(1+tf(w)+t^2)$ for $9_8$ and $(1+tg(w) + t^2)$ for $8_{14}$, where
 \begin{align*} f(w) = & -w -3w^2 -5w^3 -7w^4 -10w^5 -11w^6 -12w^7 -13w^8
-14w^9 -16w^{10}- 18w^{11}\\ & -20w^{12} -22w^{13} - 24w^{14} -24w^{15} -24w^{16}
-24w^{17} -22w^{18}-20w^{19}-18w^{20}\\ & -16w^{21}-14w^{22}-13w^{23}-12w^{24}
-11w^{25}-10w^{26}-7w^{27}-5w^{28}-3w^{29}-w^{30}
\end{align*}
and
\begin{align*} g(w) = & 32w+11w^2+20w^3+26w^4+6w^5+36w^6+5w^7+28w^8
+17w^9+13w^{10}+30w^{11}\\ & +3w^{12}+34w^{13}+9w^{14}+23w^{15}+23w^{16}
+9w^{17}+34w^{18}+3w^{19}+30w^{20}+13w^{21}\\ & +17w^{22}+28w^{23}+5w^{24}
+36w^{25}+6w^{26}+26w^{27}+20w^{28}+11w^{29}+32w^{30}.
\end{align*}
Since Galois conjugation does not change the frequency of the coefficients in $f$ or $g$, it is easy to see that no Galois conjugate of $f$ can ever equal $g$.
\end{example}

The knot $(9_{23}-9_2-3_1)$ is problematic because the twisted polynomial $\Delta_{\chi_1}$ for $9_2$ is trivial,~\footnote{The twisted Alexander polynomial of any genus $1$ knot is an element of $\Q(\zeta_d)$, which is considered trivial. This is because the degree of the twisted Alexander polynomial is a lower bound for the Thurston norm~\cite{FriedlKim08}, which is $1$ for a genus $1$ knot.} and so condition (3) of Theorem \ref{Thm:infiniteordersum} fails to hold.  The homology of the two-fold branched cover of this knot is
\[ H_1(\Sigma_2;\Z) \cong (\Z_9 \oplus \Z_5) \oplus (\Z_5 \oplus \Z_3) \oplus \Z_3 \text{ .}\]
Using the prime $q=5$, we must follow the proof of Theorem \ref{Thm:infiniteordersum} more carefully and identify whether the conclusions still hold.

\begin{proposition}
\label{Prop:9-23}
The knot $K=(9_{23}-9_2-3_1)$ is of infinite order in $\C$.
\end{proposition}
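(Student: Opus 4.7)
The plan is to mirror the proof of Theorem \ref{Thm:infiniteordersum} while carefully tracking how the degenerate twisted polynomials of $9_2$ interact with those of $9_{23}$. Since $9_2$ has genus $1$, each $\Delta_{\chi_i}^{9_2}$ with $i \neq 0$ reduces to an element of $\Q(\zeta_5)$, so the $9_2$ summands contribute no genuine $t$-dependence to the twisted polynomial of $K$; the entire burden falls on the $9_{23}$ part.

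Suppose for contradiction that $2mK$ is slice. By Corollary \ref{Thm:slicenessmodq}, there exists a metaboliser $M$ of the linking form on $H_1(\Sigma_2(2mK);\Z_5) \cong (\Z_5)^{4m}$ (the $3_1$ summands contribute nothing at $q=5$), with the first $2m$ coordinates coming from the $5$-primary parts of the $9_{23}$ copies and the last $2m$ from the $9_2$ copies. We must show that for every such $M$ there is a character $\chi$ vanishing on $M$ whose twisted polynomial does not factorise as $a \cdot f(t)\overline{f(t^{-1})}(t-1)^s$. As a preliminary I would establish the rank bound that the projection $\pi_A \colon M \to (\Z_5)^{2m}$ onto the $9_{23}$-coordinates has rank at least $m$: its kernel is self-orthogonal for the non-degenerate $9_2$-linking form and so has rank at most $m$.

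After a row reduction and a permutation of columns respecting the block structure, I would apply the argument of Lemma \ref{Lemma:oddvector} to put the basis of $M$ in a standard form, making allowance for the fact that the linking-form values $\lk_{9_{23}}(1,1)$ and $\lk_{-9_2}(1,1)$ may differ in $\Z_5$. This produces two cases. In the ``odd zeros'' case, some linear combination of basis vectors has an odd number of zero entries, so the corresponding polynomial contains the genuine $t$-polynomial $\Delta_{\chi_0}^{9_{23}}$ with odd total exponent; the computations in Appendix \ref{AppendixC} then show that this polynomial is not a norm and is coprime modulo norms to all the other twisted polynomials of $9_{23}$ and $9_2$ in play, producing the desired contradiction. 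In the ``paired vector'' case each basis vector has exactly two non-zero entries of the form $(1, \pm a)$; the subcase with both entries in $9_{23}$-columns is handled by condition (3) of Theorem \ref{Thm:twistedpoly} applied to $9_{23}$ alone, and the mixed subcase (one entry in each block) yields $\Delta_{\chi_1}^{9_{23}}$ times a constant in $\Q(\zeta_5)$, which cannot be of the form $f(t)\overline{f(t^{-1})}$ because $\Delta_{\chi_1}^{9_{23}}$ has non-trivial, asymmetric $t$-dependence.

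The main obstacle is the remaining subcase, where both non-zero entries land in the $9_2$-columns. The polynomial then degenerates to a product of constants in $\Q(\zeta_5)$ multiplied by $(\Delta_{\chi_0}^{9_{23}})^{2m}(\Delta_{\chi_0}^{9_2})^{2m-2}$, both of which are automatic norms by parity of exponent, so this vector does not by itself obstruct sliceness. The resolution is to combine this observation with the rank bound on $\pi_A(M)$: if every basis vector of $M$ were supported on a pair of $9_2$-columns, then $\pi_A(M)$ would have rank zero, contradicting the lower bound $\mathrm{rank}\,\pi_A(M) \geq m$. Hence some linear combination of basis vectors must have a non-zero $9_{23}$-entry, and any such vector falls into one of the cases already handled. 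The delicate step is the book-keeping of precisely when the paired structure is incompatible with the two-block linking-form arithmetic at $q = 5$; this is the technical core of the argument and is where the specific polynomial data for $9_{23}$ recorded in Appendix \ref{AppendixC} is used most heavily.
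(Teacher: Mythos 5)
Your overall strategy --- put the metaboliser into the $(I\;E)$ standard form of Lemma \ref{Lemma:oddvector}, split into an odd-zeros case and a paired-vector case, and dispose of the degenerate subcase in which both non-zero entries of a row lie in $9_2$-columns by showing that not every row can be of that type --- is exactly the route the paper takes. Your rank bound $\mathrm{rank}\,\pi_A(M)\ge m$ (via the isotropy of $\ker\pi_A$ for the non-degenerate $9_2$-block linking form) is a clean reformulation of the paper's observation that only half of the columns belong to $9_2$, so the $2m$ distinct columns carrying the non-zero entries of $(I\;D)$ cannot all be $9_2$-columns. The mixed and $9_{23}$--$9_{23}$ subcases are handled as in the paper.

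The one step that does not hold as written is your treatment of the odd-zeros case. An odd number of zero entries in a metabolising vector does not force $\Delta_{\chi_0}^{9_{23}}$ to occur with odd exponent: the zeros may fall in even number among the $9_{23}$-columns and in odd number among the $9_2$-columns, in which case it is $\Delta_{\chi_0}^{9_2}=16t^2-17t+16$ that carries the odd exponent. Your opening premise that ``the $9_2$ summands contribute no genuine $t$-dependence'' is true only for the non-trivial characters (that is the genus-one phenomenon); the trivial character on $9_2$ contributes a genuine, non-norm quadratic. For the same reason your claimed coprimality is false: $\Delta_{\chi_0}^{9_{23}}=(t^2+t+1)(16t^2-17t+16)$ and $\Delta_{\chi_0}^{9_2}=16t^2-17t+16$ share an irreducible non-norm factor. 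The paper's proof exploits precisely this shared factor: however the odd number of zeros is distributed between the two blocks, $(16t^2-17t+16)$ appears with odd total exponent in the product, and since it is not a norm and is coprime to the non-trivial twisted polynomials in play, the product cannot be a norm. With that correction your argument closes; as stated, the odd-zeros case is not actually covered.
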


\begin{proof}
Suppose $mK$ is slice and that a basis of a metaboliser in $H_1(\Sigma_2(mK);\Z_5)$ is
\begin{eqnarray*}
v_1 & = & (a_{1,1}, \dots, a_{1,m},b_{1,1},\dots,b_{1,m},0_1,\dots, 0_m)\\
v_2 & = & (a_{2,1}, \dots, a_{2,m},b_{2,1},\dots,b_{2,m},0_1,\dots, 0_m)\\
\vdots\\
v_{m} & = & (a_{m,1}, \dots, a_{m,m},b_{m,1},\dots,b_{m,m},0_1,\dots, 0_m)
\end{eqnarray*}
where $a_i \in H_1(\Sigma_2(9_{23});\Z_5)$ and $b_i \in H_1(\Sigma_2(9_{2});\Z_5)$.

Consider the truncated $(m \times 2m)$ matrix $V$ whose rows are the $v_i$ but without the final $m$ zeros.  If a linear combination of these rows is found which contains an odd number of zeros then the polynomial $(16t^2-17t+16)$ will occur with odd degree as part of the corresponding twisted Alexander polynomial of that vector, since this polynomial is a factor of both $\Delta_{\chi_0}(9_{23})$ and $\Delta_{\chi_0}(9_{2})$ (but not $\Delta_{\chi_0}(3_1)$).  The twisted polynomial can therefore not factorise and $mK$ is not slice.  If such a linear combination cannot be found, then by Lemmas \ref{Lemma:KirkLiv} and \ref{Lemma:oddvector} we can perform row and column operations on $V$ to transform it into $(I \, \, E)$, where $E$ is obtained from a diagonal matrix whose entries are $\pm2$ (since the linking forms of $-9_2$ and $9_{23}$ are the same) by permuting the columns.  Each row of this new matrix (which we will call $V'$) gives us a twisted Alexander polynomial of the form
\[ \Delta_\chi \doteq \Delta_{\chi_1}^{K_i} \Delta_{\chi_2}^{K_j} \text{ .}\]
If $K_i \neq K_j$ or if $K_i=K_j = 9_{23}$ then this polynomial is not a norm; the difficulty comes if $K_i = K_j = 9_2$ because then this polynomial is equal to $1$.  However, it is impossible that $K_i = K_j = 9_2$ for \emph{every} row of $V'$ because that would imply that every column corresponded to the knot $9_2$, when we know that only half the columns do.  We can thus deduce that there is a row which corresponds to a twisted polynomial that is not a norm, finishing the proof that $mK$ cannot be slice.
\end{proof}

\begin{remark}
This proof can be adapted to work for any knot $K=K_1 + \dots + K_n$ which satisfies all the conditions of Theorem \ref{Thm:infiniteordersum} except that one constituent knot has a $\Delta_{\chi_1}$ which is a norm.  In fact, a more careful analysis should reveal how many of the constituent knots need not satisfy condition (3) before the theorem fails.
\end{remark}

Finally, the knots $9_{32}-9_{32}^r$ and $9_{33}-9_{33}^r$ cannot be shown to be of infinite order by the $2$-fold branched cover, so we must look to higher covers and use Theorem \ref{Thm:reverseInfiniteOrder}.  This applies to our two knots when $p=5$ (i.e. looking at $H_1(\Sigma_5;\Z)$).  For $9_{32}-9_{32}^r$ we work at the prime $q=11$ and for $9_{33}-9_{33}^r$ we use the prime $q=101$.  What are $a$ and $b$ in these cases, and why are there only two relevant eigenvalues when in principle there could be four?  The following theorem, proved by Fox~\cite[Theorem 1]{Fox70} and also restated by Hartley~\cite[1.7]{Hartley83}, answers these questions.

\begin{theorem}
Let $K$ be a knot and suppose that $p$ divides $q-1$ and that $\alpha \in \Z_q$ is an element of order $p$ in $\Z_q$.  Suppose also that $H_1(\Sigma_p(K);\Z) \cong \bigoplus_i E_i$ with the $E_i$ being different eigenspaces of the deck transformation.  Then $E_{\alpha}$ is one of the eigenspaces in this sum if and only if $q$ divides $\Delta_K(\alpha)$.
\end{theorem}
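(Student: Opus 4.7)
\medskip

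The plan is to express $H_1(\Sigma_p;\Z)$ as a module with a known presentation coming from the Seifert matrix and then reduce modulo $q$ to identify the eigenspace decomposition. Let $V$ be a Seifert matrix for $K$ of size $2g$. The Alexander module $A = H_1(X_\infty;\Z)$ has presentation $\Z[t,t^{-1}]^{2g}/(tV - V^T)\Z[t,t^{-1}]^{2g}$, where $t$ acts as the deck transformation. A standard computation for the $p$-fold branched cover (which for $p=2$ recovers the familiar presentation by $V + V^T$) gives
\[ H_1(\Sigma_p;\Z) \;\cong\; \Z[t]^{2g}\big/\bigl(tV - V^T,\; \Phi_p(t)\bigr), \]
where $\Phi_p(t) = 1 + t + \cdots + t^{p-1}$. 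The action of the covering transformation on the left corresponds to multiplication by $t$ on the right.

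Next I would tensor this presentation with $\F_q$. Because $p$ divides $q-1$, the group $\F_q^*$ contains the full group of $p$-th roots of unity, and $\Phi_p(t)$ splits over $\F_q$ as $\prod_{\beta} (t - \beta)$, where $\beta$ runs over the elements of order $p$ in $\F_q^*$. By the Chinese Remainder Theorem,
\[ \F_q[t]/\Phi_p(t) \;\cong\; \prod_{\beta}\F_q, \]
so tensoring with $\F_q$ and invoking exactness gives
\[ H_1(\Sigma_p;\Z)\otimes\F_q \;\cong\; \bigoplus_{\beta}\, \F_q^{2g}\big/(\beta V - V^T)\F_q^{2g}. \]
The summand corresponding to $\beta$ is precisely the $\beta$-eigenspace of the induced action of the deck transformation on $H_1(\Sigma_p;\Z)\otimes\F_q$, and it equals the $q$-torsion contribution of $E_\beta$ to the decomposition in the statement of the theorem.

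The summand for $\beta$ is a nonzero $\F_q$-vector space if and only if the presentation matrix $\beta V - V^T$ is singular over $\F_q$, i.e.\ if and only if $\det(\beta V - V^T) \equiv 0 \pmod q$. Since $\Delta_K(t) = \det(V - tV^T)$ (up to units) and $\det(\beta V - V^T) = \pm\beta^{2g}\Delta_K(\beta^{-1})$, and since Alexander polynomials are symmetric so that $q \mid \Delta_K(\beta^{-1})$ iff $q \mid \Delta_K(\beta)$, this condition is equivalent to $q \mid \Delta_K(\beta)$. Taking $\beta = \alpha$ of order $p$, we conclude that $E_\alpha$ appears as a nontrivial eigenspace of the deck transformation on $H_1(\Sigma_p;\Z)$ (in its $q$-primary part) precisely when $q \mid \Delta_K(\alpha)$.

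The only real obstacle is justifying the presentation in the first paragraph, which requires invoking the standard computation identifying $H_1(\Sigma_p;\Z)$ with the Alexander module quotiented by $\Phi_p(t)$ — a fact that can be verified by either the Mayer--Vietoris sequence relating $\Sigma_p$, $X_p$, and the solid-torus filling of the branch locus, or by direct inspection of the cellular chain complex of the covering CW-structure. Once that presentation is in hand, the rest is a clean application of the Chinese Remainder Theorem plus the identification of singularity of $\beta V - V^T$ with vanishing of $\Delta_K(\beta)$.
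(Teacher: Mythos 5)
Your argument is correct, and in fact the thesis does not prove this statement at all: it is quoted as a theorem of Fox (restated by Hartley) with a citation in place of a proof, so there is no in-paper argument to compare against. Your route is the classical one and it works: the presentation $H_1(\Sigma_p;\Z)\cong \Z[t,t^{-1}]^{2g}/\bigl((tV-V^T),\,\Phi_p(t)\bigr)$ with $t$ acting as the deck transformation is the standard fact (it follows from the Wang sequence for $X_\infty\to X_p$ plus the Mayer--Vietoris argument for the solid-torus filling, using that $t-1$ acts invertibly on the Alexander module so that $(t^p-1)A=\Phi_p(t)A$), and for $p=2$ it specialises to the presentation $V+V^T$ that the thesis does record. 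Since $p\mid q-1$ forces $p\neq q$, the polynomial $\Phi_p(t)$ is separable over $\F_q$ and splits into the $p-1$ distinct linear factors $t-\beta$ with $\beta$ of order $p$, so your Chinese Remainder decomposition is valid, and the $\beta$-summand $\F_q^{2g}/(\beta V-V^T)$ is nonzero exactly when $\det(\beta V-V^T)\equiv 0\pmod q$; note that $\det(\beta V-V^T)=\det(\beta V^T-V)=\Delta_K(\beta)$ up to sign directly by transposing, so you do not even need to invoke the symmetry of $\Delta_K$. The only point worth making explicit is that you compute $H_1(\Sigma_p;\Z)\otimes\F_q$ rather than the $q$-torsion subgroup; these can differ when higher powers of $q$ occur, but they vanish or not simultaneously when $H_1(\Sigma_p;\Z)$ is finite, which is the situation of the theorem, so the ``if and only if'' conclusion about the presence of $E_\alpha$ is unaffected.
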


In particular, if $E_a$ is an eigenspace of the deck transformation then so is $E_{a^{-1}}$ since $\Delta(a) = \Delta(a^{-1})$ (up to a factor of $t$).  Thus $b=a^{-1}$ in $\Z_q$.

For $9_{32}-9_{32}^r$ there are four $5^\text{th}$ roots of unity in $\Z_{11}$: $3,4,5,9$.  However, only $3$ and $4$ are roots of the Alexander polynomial modulo $11$. (And indeed they are inverses of each other modulo $11$.)  For $9_{33}-9_{33}^r$ there are also four $5^\text{th}$ roots of unity in $\Z_{101}$: $36,84,87,95$.  Only $36$ and $87$ are roots of the Alexander polynomial modulo $101$.

The prime $101$ is a rather large one: for condition (3) of Theorem \ref{Thm:reverseInfiniteOrder} there are potentially $101$ twisted polynomials that need to be checked.  Thankfully, many of them turn out to be the same via the action of the deck transformation $T$.

\begin{lemma}
\label{Lemma:polynomialOrbit}
Let $(x,y) \in \Z_{q}^2$ denote the twisted Alexander polynomial $\Delta_{x\chi_a + y\chi_b}$, where $\chi_a$ and $\chi_b$ are explained at the beginning of Section \ref{Sec:HigherCovers}.  Then the polynomial $(x,y)$ is equal to $(b^n x, a^n y)$ for each $n=0,\dots,p-1$.
\end{lemma}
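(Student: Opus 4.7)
The plan is to interpret the desired equality as a naturality statement for twisted homology under the deck transformation $T$ of $\Sigma_p$ (equivalently, of $X_p$). Specifically, I will first show that for any character $\chi \colon H_1(\Sigma_p;\Z) \to \Z_q$ one has the equality of twisted Alexander polynomials $\Delta_\chi = \Delta_{\chi \circ T_*^n}$, where $T_*$ denotes the map induced by $T$ on $H_1(\Sigma_p;\Z)$; then I will compute $\chi \circ T_*^n$ explicitly on the eigenbasis $\{e_a,e_b\}$ to extract the formula $(x,y) = (b^n x, a^n y)$.

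First I would recall the construction from Section \ref{Sec:twistpolyobstr}: the character $\chi = x\chi_a + y\chi_b$ is turned into a representation
\[ \rho_\chi \colon \pi_1(X_p) \to H_1(X_p;\Z) \to H_1(\Sigma_p;\Z) \xrightarrow{\chi} \Z_q \to \Q(\zeta_q)^*, \]
and $\Delta_\chi$ is the order of $H_1(X_p;M_{\rho_\chi})$ as a $\Q(\zeta_q)[t^{\pm 1}]$-module. Next I would invoke naturality. Since $T$ is a self-homeomorphism of $X_p$ covering the identity on $X$, the induced automorphism $T_\# \colon \pi_1(X_p) \to \pi_1(X_p)$ satisfies $\eps \circ T_\# = \eps$ (because the Hurewicz-plus-orientation map factors through the covering projection to $X$). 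Consequently a lift of $T$ to $\widetilde{X_p}$ yields a chain isomorphism
\[ C_*(X_p;M_{\rho_\chi \circ T_\#^n}) \xrightarrow{\cong} C_*(X_p;M_{\rho_\chi}) \]
covering the identity on $\Q(\zeta_q)[t^{\pm 1}]$, and so the two twisted first homologies are isomorphic as modules; in particular they have the same order ideal. This gives $\Delta_{\chi \circ T_*^n} = \Delta_\chi$.

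Finally, the formula drops out from a one-line calculation. Using $T_* e_a = a e_a$ and $T_* e_b = b e_b$, together with $\chi_a(e_a)=0,\chi_a(e_b)=1,\chi_b(e_a)=1,\chi_b(e_b)=0$, we obtain
\[ (\chi \circ T_*^n)(e_a) = a^n \chi(e_a) = a^n y, \qquad (\chi \circ T_*^n)(e_b) = b^n \chi(e_b) = b^n x, \]
so $\chi \circ T_*^n = (b^n x)\chi_a + (a^n y)\chi_b$, which combined with the naturality step gives the claimed equality $(x,y) = (b^n x, a^n y)$. The main obstacle is the middle naturality step: one must check carefully that $T_\#$ preserves the augmentation $\eps$ so that the twisting factor $t^{\eps(\gamma)}$ is genuinely unchanged, but this is precisely because $T$ covers the identity on $X$. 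Everything else is routine.
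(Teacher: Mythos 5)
Your proposal is correct and follows essentially the same route as the paper: the invariance $\Delta_\chi \doteq \Delta_{\chi\circ T_*^n}$ under the deck transformation (which the paper simply asserts and you justify via naturality of the twisted chain complex), followed by the eigenbasis computation showing $\chi\circ T_*^n = (b^n x)\chi_a + (a^n y)\chi_b$. The only cosmetic difference is that you evaluate the composed character on the eigenvectors $e_a, e_b$ directly, whereas the paper expands a general element $\alpha e_a + \beta e_b$; the content is identical.
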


\begin{proof}
For any map $\chi \colon H_1(\Sigma_p;\Z) \to \Z_q$, the twisted Alexander polynomial $\Delta_{\chi}$ is equivalent to $\Delta_{\chi \circ T}$, where $T \colon H_1(\Sigma_p) \to H_1(\Sigma_p)$ is the deck transformation of $\Sigma_p$.

An element in $H_1(\Sigma_p(K))$ may be written as $\alpha e_a + \beta e_b$ for some $\alpha, \beta \in \Z_q$, where $e_a$ and $e_b$ are $a$- and $b$- eigenvectors respectively.  This means that
\begin{eqnarray*}
 (x \chi_a + y \chi_b) \circ T(\alpha e_a + \beta e_b) & = & (x \chi_a + y \chi_b)(a\alpha e_a + b\beta e_b)\\
 & = & x \chi_a(a\alpha e_a + b\beta e_b) + \chi_b(a\alpha e_a + b\beta e_b) \\
 & = & x\chi_a(b\beta e_b) + y \chi_b(a\alpha e_a) \\
 & = & b x\chi_a(\beta e_b) + a y \chi_b(\alpha e_a) \\
 & = & (b x\chi_a + a y\chi_b)(\alpha e_a + \beta e_b)
\end{eqnarray*}
because $T e_a = a e_a$ and $T e_b = b e_b$ by the definition of $e_a$ and $e_b$ being eigenvectors. Applying the deck transformation repeatedly gives the result.
\end{proof}

Continuing with the same notation as in Lemma \ref{Lemma:polynomialOrbit}, we may consider the polynomials $(x,y)$ as living in the projective space of $\Z_q \oplus \Z_q$.  This is because $i(x,y)$ is the same polynomial as $(x,y)$ up to Galois conjugation, for any $i \in \Z_q$.  This means there are $q+1$ different polynomials in our space, \emph{a priori}: $(1,0),(0,1),(1,1),(1,2), \dots, (1,q-1)$.  However, we also have an action on this group, namely, multiplication of the first entry by $b$ and multiplication of the second entry by $a$.  (Notice that since $a=b^{-1}$ we have $(b,a) = (1,a^2)$, and the action of the group is equivalent to multiplying the second entry by $a^2$.)  Orbits of this action, which are of order $p$ since $a^p = 1 = b^p$ (mod $q$), give us the same polynomial.  This means there are only $\frac{q-1}{p}$ potential different polynomials to be checked for condition (3) of Theorem \ref{Thm:reverseInfiniteOrder}.  In fact, we can cut this down by a further factor of two, since we are only interested in checking whether $(1,1) \neq (d,-d^{-1})=(1,-d^{-2})$, so we only have to check the polynomials where the second factor is a (negative) square. (The action of the group preserves squares in the second factor.)

In the case of $9_{32}-9_{32}^r$ we have $p=5$ and $q=11$, so for condition (3) there is only one polynomial to be checked against $(1,1)$ (up to Galois conjugates), namely $(1,-1)$. We must also check that $(1,0) \neq (0,1)$ up to Galois conjugates.  These conditions are satisfied, and we give the relevant polynomials in Appendix \ref{AppendixC}.  For $9_{33}-9_{33}^r$ we have $p=5$ and $q=101$, so there are $\frac{101-1}{2\times 5} = 10$ polynomials to check for condition (3).  Since $101 \equiv 1$ (mod 4), one of these polynomials is actually $(1,1)$ (when $d=10$), so we must remember to check that $\Delta_{\chi_a + \chi_b} \neq \sigma_{10}\Delta_{\chi_a + \chi_b}$. (They aren't equal!)  It would be impractical to write out all of these polynomials (each coefficient of a power of $t$ has $100$ terms!) so instead we look at the coefficient of $s$ only.  This coefficient is a polynomial in $w$, where $w$ is a $101^\text{th}$ root of unity.  In Appendix \ref{AppendixC} we give the frequency of the coefficients of the $w^i$; for example, in the polynomial $(1,6)$ we see that $-2$ occurs as a coefficient of the $w^i$ $20$ times.  Since Galois conjugation does not change these frequencies, it is possible to see that all the required polynomials are different from $(1,1)$.

\section{Linear independence}
\label{Subsec:indep}

Consider the knot
\begin{eqnarray*}K & = & 4a_1(7_7) + 4a_2(9_{34}) + 2a_3(8_1) + 2a_4(8_{13}) + 2a_5(8_{15}-7_2-3_1) + 2a_6(9_2-7_4) + \\ & & 2a_7(9_{12}-5_2) + 2a_8(9_{14}) + 2a_9(9_{16}-7_3-3_1) + 2a_{10}(9_{19}) + 2a_{11}(9_{28}-3_1) + \\ & & 2a_{12}(9_{30}) +  2a_{13}(9_{33}) + 2a_{14}(9_{42}+8_5-3_1) +  2a_{15}(9_{44}-4_1) + a_{16}(8_{21}-8_{18}-3_1) + \\ & & a_{17}(9_8-8_{14}) + a_{18}(9_{23}-9_2-3_1) + a_{19}(9_{29}-9_{28} + 2(3_1))+ a_{20}(9_{32}-9_{32}^r) + \\ & & a_{21}(9_{33}-9_{33}^r) + a_{22}(9_{39}+7_2-4_1) + a_{23}(9_{40}-8_{18}-4_1-3_1).
\end{eqnarray*}

\begin{conjecture}
$K$ is slice if and only if $a_i=0$ for all $i=1,\dots,23$.
\end{conjecture}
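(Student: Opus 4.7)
The backward implication is trivial, so assume $K$ is slice and deduce that every $a_i$ vanishes. Because each generator of our list is already in the kernel $\A_E$ of the map to the algebraic concordance group (the coefficients $4$ and $2$ were chosen for precisely this reason), the algebraic concordance classification of Theorem \ref{Thm:algclassification} yields no direct constraint on the $a_i$. The argument must therefore run entirely through Casson-Gordon / twisted Alexander polynomial obstructions, i.e.\ via Theorem \ref{Thm:sliceness} and Corollary \ref{Thm:slicenessmodq}.

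The plan is to proceed prime by prime. For each odd prime $q$ that appears in $\det(K_i)$ for some constituent knot $K_i$, restrict attention to the $q$-primary component of $H_1(\Sigma_2(K);\Z)$ (and, for the two reversal differences, to the relevant eigenspaces inside $H_1(\Sigma_5(K);\Z)$). If $K$ is slice then for every invariant metaboliser of the linking form on this component there is a character vanishing on it whose twisted Alexander polynomial factors as a norm times $(t-1)^s$. The generators partition naturally by detecting prime: for example $4(7_7)$ is detected at $q=7$, $2(8_{13})$ at $q=29$, $2(9_{14})$ at $q=37$, the reversal difference $9_{32}-9_{32}^r$ at $q=11$ via $\Sigma_5$, and $9_{33}-9_{33}^r$ at $q=101$ via $\Sigma_5$. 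Whenever the detecting primes are pairwise distinct, the $q$-primary metaboliser analysis immediately isolates one generator from all the others and Theorems \ref{Thm:infiniteordersum}, \ref{Thm:reverseInfiniteOrder}, and \ref{Thm:highercoverConnSum} each force the corresponding $a_i$ to vanish. When several generators share a detecting prime, one must invoke the coprimality hypotheses of Theorems \ref{Thm:twistedpolyq^n} and \ref{Thm:highercoverConnSum} together with a check (by Maple computation, as in Appendix \ref{AppendixC}) that the relevant twisted polynomials and their Galois conjugates are pairwise coprime modulo norms.

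The principal obstacle, flagged already in Section \ref{Subsec:9crossingClass}, is that five of the generators -- namely $(9_2-7_4)$, $(8_{21}-8_{18}-3_1)$, $(9_{23}-9_2-3_1)$, $(9_{40}-8_{18}-4_1-3_1)$, and $(9_{32}-9_{32}^r)$ -- involve constituent knots whose branched cover homology at the relevant prime has the mixed form $a\Z_q \oplus b\Z_{q^2} \oplus T$ at $\Sigma_2$ or, in the reversal case, $\Z_q \oplus \Z_q \oplus \Z_q \oplus \Z_q$ at $\Sigma_5$. The existing theorems of Chapter \ref{chapter5} do not cover these situations, because a metaboliser in $H_1(\Sigma_2(mK);\Z)$ (or $\Sigma_5$) can now mix $\Z_q$-summands from one generator with $\Z_{q^2}$-summands from another, and the column-permutation conclusion of Lemma \ref{Lemma:oddvector} (via Lemma \ref{Lemma:KirkLiv}) breaks down: one can no longer force either an odd vector or the restrictive diagonal form that the proofs of Theorems \ref{Thm:infiniteordersum} and \ref{Thm:highercoverConnSum} exploit.

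The hard step, therefore, and the reason this is stated as a conjecture, is extending those two theorems to the mixed-homology regime. Concretely, the extension needs a classification of metabolisers of the linking form on a direct sum of the form $\bigoplus_i (\Z_q^{r_i} \oplus \Z_{q^2}^{s_i})$ that is fine enough to guarantee either an odd-weight vector (handled by conditions on $\Delta_{\chi_0}$) or a two-term vector whose twisted polynomial is controlled by a coprimality condition on the $\Delta_{\chi_i}^{K_j}$. Once such a structural lemma is available, the prime-by-prime argument sketched above completes the proof. Until then, the remaining five relations cannot be ruled out and the conjecture stays open.
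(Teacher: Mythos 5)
Your proposal follows essentially the same route as the thesis: a prime-by-prime reduction via twisted Alexander polynomials (Theorems \ref{Thm:infiniteordersum}, \ref{Thm:twistedpolyq^n}, \ref{Thm:reverseInfiniteOrder}, \ref{Thm:highercoverConnSum}) that forces all but a handful of the $a_i$ to vanish, and you isolate exactly the same five generators --- $(9_2-7_4)$, $(8_{21}-8_{18}-3_1)$, $(9_{23}-9_2-3_1)$, $(9_{40}-8_{18}-4_1-3_1)$ and $(9_{32}-9_{32}^r)$ --- as the unresolved ones, for exactly the reason the statement is left as a conjecture here: the mixed $a\Z_q \oplus b\Z_{q^2}$ homology in $\Sigma_2$ (respectively the $\Z_q^{\,4}$ splitting in $\Sigma_5$) defeats the odd-vector/diagonal dichotomy of Lemma \ref{Lemma:oddvector}. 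Your diagnosis of where the argument stops agrees with Sections \ref{Subsec:indep} and \ref{Sec:combiningprimepowers}, so the residual gap you flag is the same one left open in the text, not a defect of your approach relative to it.
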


The method of proof will involve identifying primes in $H_1(\Sigma_2(K);\Z)$ which only occur in the homology of the branched covers of very few of the constituent knots.  The problem will then be reduced to a series of independence proofs involving only two or three knots at a time.

\begin{notation}
Let $K_i$ denote the knot whose coefficient is $a_i$ in the sum of knots making up $K$.
\end{notation}

We start by identifying primes which only occur in $H_1(\Sigma_2(K_i))$ for a single $i$.  Since we know that all the $K_i$ are of infinite order, the coefficients of these `singleton' knots must be zero.

\begin{center}
\begin{tabular}{cc}
\textbf{Prime} & \textbf{Coefficient which is zero}\\
$23$ & $a_2$ \\
$29$ & $a_4$ \\
$37$ & $a_8$ \\
$41$ & $a_{10}$ \\
$53$ & $a_{12}$ \\
$31$ & $a_{17}$ \\
($59$) & ($a_{20}$)
\end{tabular}
\end{center}

The last coefficient is in brackets because it is for the knot $9_{32}-9_{32}^r$, which cannot be shown to be of infinite order using the $2$-fold cover.  We shall ignore this knot for now, but keep it in mind to deal with later.

Here follows a table of the other coefficients with the primes we will use to attack them.

\begin{center}
\begin{tabular}{cc}
\textbf{Prime} & \textbf{Coefficients}\\
$13$ & $a_3$, $a_9$ \\
$11$ & $a_5$, $a_{22}$ \\
($61$) & ($a_{13}$, $a_{21}$) \\
$7$ & $a_1$, $a_7$, $a_{14}$ \\
$17$ & $a_{11}$, $a_{15}$, $a_{19}$ \\
\end{tabular}
\end{center}

Again, the prime which is bracketed refers to the knot $9_{33}-9_{33}^r$ which we know cannot be dealt with using the $2$-fold cover, so we will put it to one side for the moment.  For $q=13$ there is a problem with $\Delta_{\chi_1}^{8_1}$ being trivial, but a combination of Theorem \ref{Thm:infiniteordersum} and the techniques of Proposition \ref{Prop:9-23} serve to deal with it and prove that $a_3 = a_9 =0$. For $q=11,7,17$, Theorem \ref{Thm:infiniteordersum} suffices to prove that the corresponding coefficients are all zero.

The remaining coefficients are $a_6$, $a_{16}$, $a_{18}$ and $a_{23}$, each of them appearing non-trivially at the primes $3$ and $5$.  When $q=5$ there is a problem with $a_{23}$ because $H_1(\Sigma_2(9_{40});\Z_5) \cong \Z_5 \oplus \Z_5$.  When $q=3$ there is also a problem because in $H_1(\Sigma_2)$ there are summands of $\Z_3$ and $\Z_9$.  Since we cannot make further progress with the $2$-fold branched cover, we shall instead study the higher covers and attempt to make use of Theorem \ref{Thm:highercoverConnSum}.

Let
\begin{eqnarray*} K' & = & 2a_6(9_2 - 7_4) + a_{16}(8_{21}-8_{18}-3_1) + a_{18}(9_{23}-9_2-3_1) + a_{23}(9_{40}-8_{18}-4_1-3_1)\\ & & + a_{20}(9_{32}-9_{32}^r) + 2a_{13}(9_{33}) + a_{21}(9_{33}-9_{33}^r)
\end{eqnarray*}
and let us look at $H_1(\Sigma_p(K');\Z)$ for various $p$. Recall that in order to apply Theorem \ref{Thm:highercoverConnSum} we need to find primes $q$ dividing the order of $H_1(\Sigma_p)$ such that $p$ divides $q-1$.

When $p=3$, the primes $q=2$, $5$, $11$ and $23$ divide the order of $H_1(\Sigma_3(K'))$, but none of these have the property that $3$ divides $q-1$.

When $p=5$ we have more luck, since the primes dividing $H_1(\Sigma_5(K'))$ are $11$, $61$ and $101$. But $H_1(\Sigma_5(9_{40});\Z) \cong \Z_{11} \oplus \Z_{11} \oplus \Z_{11} \oplus \Z_{11}$ and this situation is not covered by Theorem \ref{Thm:highercoverConnSum}, while at the prime $61$ the twisted Alexander polynomials of $K'$ do not satisfy the theorem's requirements. In particular, the twisted polynomials for $7_4$ and $9_2$ are trivial and $\Delta_{\chi_a + \chi_b}^{9_{23}} \doteq \Delta_{\chi_a - \chi_b}^{9_{23}}$. However, the prime $101$ is a success. The twisted polynomials for $9_{33}$ satisfy the conditions of Theorem \ref{Thm:highercoverConnSum} and so we may conclude that if $K'$ is slice then $a_{13} = a_{21} = 0$.

Moving on to $p=7$, the primes dividing $H_1(\Sigma_7(K'))$ are $q=251$, $29$ and $743$. Unfortunately $7$ does not divide $250$, and again we have the problem that $H_1(\Sigma_7(9_{40});\Z) \cong \Z_{29} \oplus \Z_{29} \oplus \Z_{29} \oplus \Z_{29}$ so the prime $29$ is out of the question.  The prime $743$ is a good one because $7$ divides $742$, but the difficulty here is in the sheer amount of computation involved in checking the conditions of Theorem \ref{Thm:highercoverConnSum}. Using the analysis from Section \ref{Subsec:infiniteorder} we see that there are $\frac{742}{2 \times 7} = 53$ polynomials to be checked for condition $(4)$ of the theorem.

In fact, $9_{40}$ appears to always have a four-fold summand in the homology of its branched covers, whilst the homology of the covers of $9_{23}$ never splits into the requisite eigenspaces. It seems that we are once again stuck, so we shall go back to studying $H_1(\Sigma_2(K');\Z)$ and see if we can find a way to use twisted Alexander polynomials when there are different powers of a prime involved.

\section{Combining prime powers}
\label{Sec:combiningprimepowers}

How can we combine the theorems in Chapter \ref{chapter5} to determine the sliceness of a composite knot where different powers of a prime occur in the homologies of $\Sigma_2$ of different component knots?  For example, how can we tell if the knot $K=aK_1 + bK_2$, where $H_1(\Sigma_2(K_1)) \cong \Z_9$ and $H_1(\Sigma_2(K_2)) \cong \Z_3$, is slice?  Let us take this very simple example and investigate what a metaboliser of $K$ would look like.

Suppose that $2mK$ is slice.  Then
\[ H_1(\Sigma_2(2mK);\Z) \cong \bigoplus_{2ma}\Z_9 \oplus \bigoplus_{2mb}\Z_3\]
so a matrix $M$ representing a metaboliser will have $2m(a+b)$ columns.  The first $2ma$ columns will have entries in $\Z_9$ and the last $2mb$ columns will have entries in $\Z_3$.  Some number of the rows, say $\beta$, will be of order $9$, with the rest, $\gamma$, of order $3$.  The order of the homology is $3^{2m(2a+b)}$, so the order of the metaboliser is $3^{m(2a+b)}$.  We must therefore have the relation
\[ 2\beta + \gamma = m(2a+b) \text{.}\]
So far, we have a picture of $M$ which looks like this:

\noindent (There are $\beta$ rows above the first horizontal line and $\gamma$ rows beneath it; the entries to the left of the vertical double line are in $\Z_9$ whilst those to the right are in $\Z_3$.)
\[ \left(\begin{array}{c|c|c||c}
\begin{array}{ccc} 1 & & 0 \\ & \ddots & \\ 0 && 1 \end{array} & A_1 & A_2 & B_1 \\\hline
0 & \begin{array}{ccc} 3 & & 0 \\ & \ddots & \\ 0 && 3 \end{array} & A_3 & B_2 \\ \hline
0 & 0 & 0 & B_3
\end{array}\right)\]

By the definition of a metaboliser, the linking pairing between any pair of rows must be zero in $\Q/\Z$.  Let $x = (a_1, \dots, a_{2ma},b_1,\dots,b_{2mb}) \in M$.  Then
\[ \lk(x,x) = \lk_{K_1}(1,1) \sum_i a_i^2 + \lk_{K_2}(1,1) \sum_i b_i^2.\]
Let us assume (currently without loss of generality) that $\lk_{K_1}(1,1) = \frac{1}{9}$ and $\lk_{K_2}(1,1) = \frac{1}{3}$.  For $\lk(x,x)$ to be an integer we must have that
\begin{equation}
\label{Eqn:Z9product}
  \sum_i a_i^2 + 3\sum_i b_i^2 \equiv 0 \mod 9.
\end{equation}
It follows that $\sum_i a_i^2$ must be zero modulo $3$.  A corresponding condition holds for the dot product of any two rows of $M$.

In the $\gamma$ rows of order $3$, the dot product of the $\Z_9$-part of the rows is zero modulo $9$.  For these rows, it follows that $\sum_i b_i^2$ must be zero modulo $3$ (and similarly for the dot product between any two of the order $3$ rows).

What about the dot product between an order $9$ row and an order $3$ row?  Let us split the $\gamma$ rows of order $3$ into the $\gamma_1$ rows that have a non-trivial $\Z_9$ part and the $\gamma_2$ rows that have a trivial $\Z_9$ part.  The bottom $\gamma_2$ rows clearly need to have dot product zero (modulo $3$) with the top $\beta$ rows. In the middle $\gamma_1$ rows, the $\Z_9$-entries are all multiples of $3$. If we divide these entries by $3$, Equation \ref{Eqn:Z9product} tells us that the resulting dot product between a $\beta$ row and a modified $\gamma_1$ row is zero modulo $3$.

Here is a picture of our metaboliser $M$ where the middle $\gamma_1$ rows have had their $\Z_9$ entries divided by $3$:
\[ \left(\begin{array}{c|c|c||c|c}
I_{\beta} & A_1 & A_2 & 0 & B_1' \\\hline
0 & I_{\gamma_1} & A_3' & 0 & B_2' \\ \hline
0 & 0 & 0 & I_{\gamma_2} & B_3'
\end{array}\right)\]
To summarise the relations between rows, we have
\begin{enumerate}
  \item The top left block of $\beta$ rows have dot product zero modulo $3$ amongst themselves.
  \item The bottom right block of $\gamma_2$ rows have dot product zero modulo $3$ between themselves and any other row in the matrix. I.e. $B_3' \cdot B_1' \equiv 0$ and $B_3' \cdot B_2' \equiv 0$ modulo $3$.
  \item The middle block of $\gamma_1$ rows have dot product zero modulo $3$ with any other row in the matrix (but not necessarily themselves).
\end{enumerate}
where ``left'' and ``right'' refer to $\Z_9$ and $\Z_3$ entries respectively.

\begin{example}
An example matrix which satisfies all these conditions is (before division by 3)
\[ \left(\begin{array}{cccc||cccc}
1 & 1 & 1 & 0 & 0 & 1 & 0 & -1 \\\hline
0 & 3 & 0 & 0 & 0 & 1 & 1 & -1 \\
0 & 0 & 3 & 0 & 0 & 1 & 1 & -1 \\
0 & 0 & 0 & 3 & 0 & 0 & 0 & 0 \\\hline
0 & 0 & 0 & 0 & 1 & 1 & 0 & 1
\end{array}\right)\]
\end{example}
Here $m=2$, $a=b=1$, $\beta = 1$, $\gamma_1 = 3$, $\gamma_2 = 1$.
\medskip

Now let us try to find some `odd' vectors in $M$ so that we get a slice obstruction from the twisted Alexander polynomials.  In what follows, all entries of $M$ have been mapped into $\Z_3$.

If $\beta = 0$ then the vector $(1,0,\dots,0)$ has zero dot product with any row of $M$, giving us the twisted Alexander polynomial condition $\Delta_{\chi_1}^{K_1}\Delta_{\chi_0}^{K_1} \doteq 1$.

If $\beta \geq ma$, then condition (1) means that we can use Lemma \ref{Lemma:oddvector} to find a linear combination of rows which contains an odd number of zeros (on the `left' of $M$).  If $\Delta_{\chi_1}^{K_1}\Delta_{\chi_0}^{K_1} \not \doteq 1$ then this again obstructs $K$ from being slice.

If $\gamma_2 \geq mb$ then condition (3) means that we can use Lemma \ref{Lemma:oddvector} to find a linear combination of rows which contains an odd number of zeros (on the `right' of $M$).  If $\Delta_{\chi_1}^{K_2}\Delta_{\chi_0}^{K_2} \not \doteq 1$ then this obstructs $K$ from being slice.

Finally, we are in the situation where $0 < \beta < ma$ and $\gamma_2 < mb$.  Since $2\beta + \gamma_1 + \gamma_2 = m(2a+b)$, we have $\gamma_1 > 0$.  If we can find an odd row in the middle $\gamma_1$ rows, then we are done since these rows have zero dot product with any other row.  The problem is that we do not have a dot product condition between any of the $\gamma_1$ rows, so we cannot use any variant of Lemma \ref{Lemma:oddvector}.

At this point we are stuck and cannot complete the proof of the conjecture. In future work, perhaps Casson-Gordon signatures could be combined with twisted Alexander polynomials to eliminate the final case from consideration.

\section{Summary}

The analysis of this chapter leaves us still with the conjecture that we had at the start; namely:

\begin{conjecture}
\label{Conj:geomsummary}
A basis for the kernel of the map $\phi \colon \A_E \to \C_E$ is the union of the independent sets $2\C_A^{1'}$, $2\C_T^2$ and $\C_T^1$, where
  \begin{itemize}
   \item $\C_A^{1'} = \{ (8_{17}-8_{17}^r) \}$
   \item $\C_T^2 = \{4_1, 6_3, 8_3, 8_{12}, 8_{17}, 8_{18}\}$
   \item $\C_T^1 = \{6_1, 8_8, 8_9, (8_{10}+3_1), (8_{11}-3_1), 8_{20}, (9_{24}-4_1), 9_{27}, (9_{37}-4_1), 9_{41}, 9_{46}\}$
  \end{itemize}
\end{conjecture}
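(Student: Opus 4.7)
The plan is to establish the three pieces in the standard order: (i) each proposed generator lies in $\ker\phi$, (ii) they are linearly independent, and (iii) every element of $\ker\phi$ is a $\Z$-combination of them. Parts (i) and (ii) are quick: each $K\in \C_T^1$ is slice by definition; for $K\in \C_T^2$ the knot has concordance order $2$ so $2K=0$ in $\C$; and because $8_{17}$ has topological order $2$, $2(8_{17}-8_{17}^r)=2\cdot 8_{17}-2\cdot 8_{17}^r = 0$ in $\C$. Independence in $\A_E$ (as a subgroup of the free abelian group $\CF_E$) is inherited directly from the basis of $\A_E$ listed in Conjecture~\ref{Conj:9crossingClass}.

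The substance of the proof lies in (iii), and the strategy is the one already begun in Section~\ref{Subsec:indep}. Take a general element
\[ K \;=\; \sum_i 4a_i K_i^{(4)} \;+\; \sum_j 2b_j K_j^{(2)} \;+\; \sum_k c_k K_k^{(1)} \;+\; d\,(8_{17}-8_{17}^r) \;+\; (\text{generators known to be in }\ker\phi)\]
with $K_i^{(4)}\in\C_A^4$, $K_j^{(2)}\in\C_A^2$, $K_k^{(1)}\in\C_A^1$ and $d\in\{0,1\}$, and show that if $K$ is slice then all $a_i,b_j,c_k,d$ vanish. The approach is to run, for each prime $q$ dividing $|H_1(\Sigma_2(K_*);\Z)|$ for some constituent, the twisted Alexander polynomial machinery of Chapter~\ref{chapter5}: first isolate coefficients attached to knots whose $2$-fold branched cover contains a prime $q$ not occurring in any other constituent (Theorem~\ref{Thm:twistedpoly}), then reduce to pairs and triples and apply Theorem~\ref{Thm:infiniteordersum}, and finally handle $d(8_{17}-8_{17}^r)$ together with $9_{32}-9_{32}^r$ and $9_{33}-9_{33}^r$ via the higher-cover theorems (Theorem~\ref{Thm:reverseInfiniteOrder} and Theorem~\ref{Thm:highercoverConnSum}) at $(p,q)=(5,11)$ and $(5,101)$ respectively. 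The prime-by-prime elimination table at the start of Section~\ref{Subsec:indep} is essentially the scaffold for this computation.

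The hard part, and the reason the statement remains a conjecture, is the residual system identified in Section~\ref{Sec:combiningprimepowers}: after the above elimination one is left with the knot
\[ K' \;=\; 2a_6(9_2-7_4) + a_{16}(8_{21}-8_{18}-3_1) + a_{18}(9_{23}-9_2-3_1) + a_{23}(9_{40}-8_{18}-4_1-3_1), \]
where all surviving coefficients contribute at $q=3$ but the $2$-fold branched covers of the constituents carry mixed summands $\Z_3$ and $\Z_9$. The metaboliser analysis then produces a block of ``middle rows'' (corresponding to rows of the metaboliser matrix originally of order $3$ but with a non-trivial $\Z_9$-part) for which no self-orthogonality relation holds, so Lemma~\ref{Lemma:oddvector} cannot be invoked to force an odd vector. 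Higher covers do not rescue the situation either: at $p=5$ the knot $9_{40}$ contributes $\Z_{11}^{\oplus 4}$, which falls outside the eigenspace hypothesis of Theorem~\ref{Thm:highercoverConnSum}, and at $p=7$ the analogous obstruction persists at $q=29$ while $q=743$ is computationally prohibitive.

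My plan for overcoming this obstacle is two-pronged. First, develop an analogue of Theorem~\ref{Thm:twistedpolyq^n} for sums of knots whose $2$-fold branched covers involve mixed powers $\Z_{q^{n_1}}\oplus\cdots\oplus\Z_{q^{n_r}}$, by refining the row-stratification of the metaboliser (the $\beta$, $\gamma_1$, $\gamma_2$ decomposition of Section~\ref{Sec:combiningprimepowers}) and producing an odd vector in the $\gamma_1$-block using the linking-pairing information between $\gamma_1$-rows and both $\beta$-rows and themselves at different depths modulo $q^i$. Second, and complementarily, supplement the twisted-polynomial discriminant with a Casson-Gordon signature argument in the style of Section~\ref{Subsec:CGsig}, using the fact that $\sigma(K,p^r,\chi)\equiv \lk(x,x) \bmod \Z$ for $\chi=\lk(-,x)$ to obstruct the one remaining configuration at $q=3$ directly. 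Either route reduces the conjecture to a finite computation on the four-knot residue $K'$ and thereby completes the basis claim.
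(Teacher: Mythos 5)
Your proposal is not a proof, and indeed the statement is labelled a \emph{conjecture} in the thesis precisely because the step you defer at the end has never been carried out. Parts (i) and (ii) and the prime-by-prime elimination in part (iii) faithfully reproduce the argument of Sections \ref{Subsec:infiniteorder}, \ref{Subsec:indep} and \ref{Sec:combiningprimepowers}, and you correctly locate the obstruction: in the residual system at $q=3$ the metaboliser matrix acquires a block of $\gamma_1$-rows (order $3$ but with non-trivial $\Z_9$-part) among which no self-orthogonality relation is available, so Lemma \ref{Lemma:oddvector} cannot be used to manufacture an odd vector, and the higher covers fail because $9_{40}$ contributes four-fold summands. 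But your two-pronged plan for closing this gap is aspirational rather than executed: you neither prove the mixed-power analogue of Theorem \ref{Thm:twistedpolyq^n} for connected sums, nor carry out the Casson-Gordon signature computation, and there is no a priori reason either must succeed --- the signature route in particular only gives information modulo $\Z$ via $\sigma(K,p^r,\chi)\equiv \lk(x,x)$, which need not obstruct the surviving configuration. A plan to prove a lemma is not a lemma.

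There is also a concrete inaccuracy in your elimination. You assert that $9_{32}-9_{32}^r$ is disposed of by Theorem \ref{Thm:reverseInfiniteOrder} at $(p,q)=(5,11)$ and accordingly drop it from the residual knot $K'$. That theorem shows $9_{32}-9_{32}^r$ is of infinite order \emph{in isolation}, but it cannot separate its coefficient $a_{20}$ inside the composite, because $H_1(\Sigma_5(9_{40});\Z)\cong \Z_{11}^{\oplus 4}$ places $9_{40}$ outside the eigenspace hypothesis of Theorem \ref{Thm:highercoverConnSum} at exactly that prime. The unresolved residue therefore consists of the five knots $(9_2-7_4)$, $(8_{21}-8_{18}-3_1)$, $(9_{23}-9_2-3_1)$, $(9_{40}-8_{18}-4_1-3_1)$ and $(9_{32}-9_{32}^r)$, not the four you list. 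Only the $9_{33}$-coefficients ($a_{13}$ and $a_{21}$) are actually killed by the $5$-fold cover, at the prime $101$.
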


In order to prove this conjecture, it remains to show that the knots $(9_2-7_4)$, $(8_{21}-8_{18}-3_1)$, $(9_{23}-9_2-3_1)$, $(9_{40}-8_{18}-4_1-3_1)$ and $(9_{32}-9_{32}^r)$ are linearly independent in $\C$.

\begin{remark}
 If this conjecture holds, then the image of $\A_E$ in $\C_E$ is isomorphic to $\Z^{23} \oplus \Z_2$.
\end{remark}

\chapter{\label{chapter7} Examples}

In Chapters \ref{chapter4} and \ref{chapter6} we analysed the concordance relations of the knots in $E$, that is, the prime knots with up to 9 crossings.  Given that Conjecture \ref{Conj:geomsummary} is correct, we know where all the algebraic and geometrical torsion in $\C_E$ comes from and we know which knots are independent of each other in $\C$.  But what can we do with all this information?

There are two questions that our classification is designed to answer:

\begin{enumerate}
  \item Given a knot $K$ which is a linear combination of knots in $E$, what are its algebraic and topological concordance orders?
  \item Suppose that there is a mystery knot $K$ which we are told is a linear combination of knots in $E$.  How do we determine what this linear combination is (up to concordance)?
\end{enumerate}

In the first case, the final classification itself is sufficient to answer this question for any knot $K$.  For the second question, we will need not only the classification but all the calculations of invariants which were done along the way.  To illustrate the methods that one would need to answer these questions, we shall work with a specific test knot
\[ K = 2(3_1) + 5(7_2) - 3(8_{12}) - 9_8 + 9_{16} + 9_{34} + 9_{46}.\]

\section{Question 1: Finding the concordance order of $K$}

To find the concordance order of $K$, we first re-write the knot in terms of our basis.
\[ K = 3(3_1) + 5(7_2) + 7_3 - 3(8_{12}) - 8_{14} - (9_8 - 8_{14}) + (9_{16}-7_3-3_1) + 9_{34} + 9_{46}.\]
These knots are independent (in the sense described in previous chapters) and so the concordance order of $K$ is the maximum of the concordance orders of these knots. (Strictly speaking it is the least common multiple, but since the only possible orders are $1,2,4$ or $\infty$ this corresponds to the maximum.)  Since $3_1$ has infinite order, both in $\A$ and $\C$, then so does $K$.

As a further example, let us take $K' = -3(8_{12}) + (9_{34})$. We know that $8_{12}$ has both algebraic and topological order 2 (so $-3(8_{12}) = 8_{12}$) whilst $9_{34}$ is algebraically of order $4$ and topologically of infinite order.  Thus $K'$ is algebraically of order $4$ and topologically of infinite order.

We have made a website which implements this method, which can be found at

\[ \text{\url{http://www.maths.ed.ac.uk/~s0681349/Classification/bigmatrix.html}.}\]

It will output the algebraic and geometric concordance orders of any linear combination of prime 9-crossing knots, as well as giving a breakdown of the knot in terms of the basis knots from Chapter \ref{chapter6}.

\section{Question 2: Finding a decomposition of $K$ in terms of elements of $E$}

Henceforth we will assume that we do not know what $K$ is (for example, we have only some unrecognisable knot diagram).  All that we can do is ask for the values of certain invariants.  We will first try to identify the knots in the decomposition which are non-trivial in $\A$, and then to find component knots which are non-trivial in $\C$.  Notice that, in the eyes of the concordance group $\C$, our knot $K$ is equivalent to
\[ 2(3_1) + 5(7_2) + 8_{12} - 9_8 + 9_{16} + 9_{34}\]
because $8_{12}$ is amphicheiral and $9_{46}$ is slice.  We will use this version of the knot for all invariants which follow.

\subsection{Signatures}

To identify the components of $K$ which have infinite order we will use the $\omega$-signatures as detailed in Section \ref{Sec:signature}.  The $\omega$-signatures of every knot were originally evaluated at $70$ different values, but only $46$ of these values are needed to identify the infinite order knots.

If we evaluate the $\omega$-signature of $K$ at each of the relevant $46$ values of $\omega$, we find that it is non-zero at $\delta_{36}, \delta_{24}, \delta_{15}, \delta_{35} $, which correspond to the knots $3_1, 7_2, 7_3$ and $8_{14}$ respectively. (See Appendix \ref{AppendixD}.)  A closer analysis of the signature tells us the coefficients of these knots: for example, $7_2$ has signature $-2$ at $\delta_{24}$, but when we evaluate $K$ at $\delta_{24}$ we get $-10$, so the coefficient of $7_2$ must be $5$.  After this analysis we know that
\[ K = 3(3_1) + 5(7_2) + 7_3 - 8_{14} + K_1\]
where $K_1$ is a knot of finite order in $\A$.

\subsection{The Alexander Polynomial}

The Alexander polynomial of $K$ is
\begin{eqnarray*}
(1-t+t^2)^3 (3-5t+3t^2)^5 (1-7t+13t^2-7t^3+t^4) (2-8t+11t^2-8t^3+2t^4)& &\\
(2-3t+3t^2-3t^3+2t^4) (1-6t+16t^2-23t^3+16t^4-6t^5+t^6).
\end{eqnarray*}
We will denote the (unique) factors of this polynomial by the letters $A,\dots, F$, starting with $A=(1-t+t^2)$.

There are two polynomials here which are not accounted for by the knot $K-K_1$: namely $C$ and $F$.  Out of the knots in $\C_{\A}^2$ and $\C_{\A}^4$, there is exactly one knot corresponding to each polynomial.  The knot with Alexander polynomial $C$ is $8_{12}$ and the knot with Alexander polynomial $F$ is $9_{34}$.  We now know that
\[ K = 3(3_1) + 5(7_2) + 7_3 + 8_{12} - 8_{14} \pm (4a+1)(9_{34}) + K_2\]
for some $a \in \Z$, where $K_2$ is a finite-order knot whose Alexander polynomial factorises as $f(t)f(t^{-1})$.

\subsection{Witt groups for primes $p\equiv3$ mod 4}

We will examine the image of $K$ in $W(\F_p)$ for primes congruent to 3 mod 4 which divide $\Delta_K(-1)$.  These primes correspond to the polynomials $A,B,D$ and $F$.  The results are given in the following table.

\begin{center}
\begin{tabular}{lccccc}
Polynomial & $A$ & $B$ & $D$ & $F$ & $F$\\
Prime $p$ & 3 & 11 & 31 & 3 & 23\\
Image in $W(\F_p)$ & 1 & 1 & 1 & 3 & 1
\end{tabular}
\end{center}

We now apply a similar analysis to the knot $K-K_2$.
\begin{center}
\begin{tabular}{lccccc}
Polynomial & $A$ & $B$ & $D$ & $F$ & $F$\\
Prime $p$ & 3 & 11 & 31 & 3 & 23\\
Image in $W(\F_p)$ & 3 & 1 & 1 & $\pm3$ & $\pm1$
\end{tabular}
\end{center}

It is easy to see that polynomial $A$ is not giving us the right invariant.  The knot $K_2$ must therefore contain a component which has the image $2 \in W(\F_3)$ and zero in the Witt groups corresponding to the other primes. The knots in $\C_{\A}^2$ and $\C_{\A}^4$ which satisfy this condition and whose Alexander polynomial contains a factor $(1-t+t^2)$ are $8_{18}, (9_{16}-7_3-3_1)$ and $(9_{28}-3_1)$.  However, $8_{18}$ and $(9_{28}-3_1)$ both contain factors with odd exponent in their Alexander polynomials which do not divide $\Delta_K(t)$, so this rules them out.

Comparing images in $W(\F_p)$ for the polynomial $F$ gives us the information that we should take $9_{34}$ and not $-9_{34}$.  Our updated knowledge of $K$ is thus
\[ K = 3(3_1) + 5(7_2) + 7_3 + 8_{12} - 8_{14} + (2b+1)(9_{16}-7_3-3_1) + (4a+1)(9_{34}) + K_3\]
for some $a,b \in \Z$.

\subsection{Witt groups for primes $p\equiv1$ mod 4}

We will examine the image of $K$ in $W(\F_p)$ for primes congruent to 1 mod 4 which divide $\Delta_K(-1)$.  These primes correspond to the polynomials $C$ and $E$.  The results are given in the following table.

\begin{center}
\begin{tabular}{lcc}
Polynomial & $C$ & $E$\\
Prime $p$ & 29 & 13\\
Image in $W(\F_p)$ & (0,1) & (0,1)
\end{tabular}
\end{center}

The image of $K-K_3$ is
\begin{center}
\begin{tabular}{lcc}
Polynomial & $C$ & $E$\\
Prime $p$ & 29 & 13\\
Image in $W(\F_p)$ & (0,1) & (0,1)
\end{tabular}
\end{center}

Thus our current knowledge of $K$ is the best that we can do from looking at the algebraic concordance group.  It means that $K_3$ must be algebraically slice.

\subsection{Twisted Alexander polynomials}

The final tool at our disposal for finding out what knot $K_3$ is, up to concordance, is the twisted Alexander polynomial.  Let us take a quick recap of what we know about our knot so far.

Our original knot $K = 2(3_1) + 5(7_2) + 3(8_{12}) - 9_8 + 9_{16} + 9_{34} + 9_{46}$ has
 \begin{eqnarray*} H_1(\Sigma_2(K);\Z) & \cong & (\Z_3)^2 \oplus (\Z_{11})^5 \oplus (\Z_{29})^3 \oplus \Z_{31} \oplus (\Z_3 \oplus \Z_{13}) \oplus (\Z_3 \oplus \Z_{23}) \oplus \\& & \oplus (\Z_3 \oplus \Z_3)\\
 & \cong & (\Z_3)^6 \oplus (\Z_{11})^5 \oplus \Z_{13} \oplus \Z_{23} \oplus (\Z_{29})^3 \oplus \Z_{31}
 \end{eqnarray*}

Our current estimate of $K$ is
\[ K' = 3(3_1) + 5(7_2) + 7_3 + 8_{12} - 8_{14} + (2b+1)(9_{16}-7_3-3_1) + (4a+1)(9_{34})\]
whose homology is
 \begin{eqnarray*} H_1(\Sigma_2(K');\Z) & \cong & (\Z_3)^3 \oplus (\Z_{11})^5 \oplus \Z_{13} \oplus \Z_{29} \oplus \Z_{31} \oplus ((\Z_3)^2 \oplus (\Z_{13})^2)^{2b+1} \oplus\\ & & (\Z_3 \oplus \Z_{23})^{4a+1}\\
 & \cong & (\Z_3)^{6+4b+4a} \oplus (\Z_{11})^5 \oplus (\Z_{13})^{3+4b} \oplus (\Z_{23})^{4a+1} \oplus \Z_{29} \oplus \Z_{31}
 \end{eqnarray*}

Any algebraically slice knots forming $K_3$ must be detectable at the primes $3,11,13,23,29$ and $31$.

There is only one algebraically slice knot which is nontrivial at the prime $23$: this is $4(9_{34})$.  Since there is only one summand of $\Z_{23}$ in $H_1(\Sigma_2(K))$, this means that the coefficient of $9_{34}$ in $K'$ is $1$, so $a=0$.

For the prime $31$ there is only a single summand in $H_1(\Sigma_2(K))$, so there can only be a single knot to cause it.  We may calculate a non-trivial twisted Alexander polynomial corresponding to this knot and this prime.  If we do this, we will find that it gives a different, non-Galois-conjugate, polynomial to $\Delta_{\chi_1}^{K'}$.  The component $-8_{14}$ in $K'$ is therefore incorrect.  The only algebraically slice knot which is non-trivial at the prime $31$ is $9_8 - 8_{14}$.  If we let $K'' := K' - (9_8 - 8_{14})$, then we find that we indeed get the correct twisted Alexander polynomial.

Continuing this method, there is only one $\Z_{13}$ summand in $H_1(\Sigma_2(K))$ so it must be generated by only one knot.  The possible $\Z_{13}$ algebraically slice knots are $2(6_3)$, $2(8_1)$ and $2(9_{16}-7_3-3_1)$.  There is no combination of these which would produce a single $\Z_{13}$ summand, from which we conclude that $b=0$.

There are three $\Z_{29}$ summands, only one of which is accounted for by $K''$.  The algebraically slice knots which may account for this are $2(8_{12})$ and $\pm 2(8_{13})$.  The knot $8_{13}$ is of infinite order and this is detected by the twisted Alexander polynomial, whilst $8_{12}$ is amphicheiral and has twisted Alexander polynomials which are Galois conjugates of each other.  In principle it should be possible to compute the twisted Alexander polynomials for $K$ and decide which case we are in, though in practice our current Maple program gets stuck because the homology has the wrong form. That is, the homology of $H_1(\Sigma_2(2J))$ will always be a direct double for any knot $J$, precluding the use of the $2$-fold cover, whilst higher covers will have a quadruple summand.

At the prime $q=11$ there are two possible algebraically slice knots which could be part of $K_3$.  These are $2(8_{15}-7_2-3_1)$ and $9_{39}+7_2-4_1$.  However, the second of these two knots has non-trivial twisted Alexander polynomials at the prime $5$, and this prime does not divide the homology of $\Sigma_2(K)$.  Both $2(8_{15}-7_2-3_1)$ and $4(8_{15}-7_2-3_1)$ are possibilities since they preserve the $(\Z_{11})^5$ summand.  It should be possible to detect the presence (or, in this case, absence) of an $8_{15}$ component using twisted Alexander polynomials, since the $8_{15}$ twisted polynomials are (Galois-) distinct from the $7_2$ twisted polynomials.  Unfortunately, as we have already lamented, the program which currently exists cannot compute twisted polynomials for composite knots where two or more constituent knots have the same primes dividing the homology of their branched covers.

Up to this point, we have determined that
\begin{eqnarray*}
K'' & = & 3(3_1) + 5(7_2) + 7_3 + 3(8_{12}) - 9_8 + (9_{16}-7_3-3_1) + 9_{34}\\
    & = & 2(3_1) + 5(7_2) + 3(8_{12}) - 9_8 + 9_{16} + 9_{34}
\end{eqnarray*}

The only part of the homology of $K$ not accounted for is a factor $(\Z_3)^2$.  The algebraically slice knots whose $H_1(\Sigma_2)$ has order a power of $3$ are $6_1, 8_{10}+3_1, 8_{11}-3_1, 8_{20}$ and $9_{46}$.  These knots all happen to be slice, so up to concordance is it impossible to distinguish them.  However, we can say a little more than that.  The knots $6_1$ and $8_{20}$ would introduce a $\Z_9$ summand to the homology, whilst $8_{10}+3_1$ and $8_{11}-3_1$ would both introduce a $\Z_{27}$ summand.  Since the factor we are looking for is $\Z_3 \oplus \Z_3$, it must therefore come from the knot $9_{46}$.

Thus we have recovered the fact that
\[ K = 2(3_1) + 5(7_2) + 3(8_{12}) - 9_8 + 9_{16} + 9_{34} + 9_{46} \text{ .} \]

\subsection{Limitations}

Aside from the problem of computing twisted Alexander polynomials for composite knots, there are also limitations of this algorithm if we stop restricting ourselves to just the $9$-crossings knots.  For example, if a generic knot were given which claimed to be concordant to a linear combination of $9$-crossing prime knots, there is no guarantee that we could find this linear combination.  For example, suppose that a knot in the sum were concordant to $9_8$, but the homology of its 2-fold branched cover were $\Z_{31} \oplus \Z_{{31}^2}$.  The twisted Alexander polynomial theorems in Chapter \ref{chapter5} cannot deal with this sort of homology, so we would be stuck. 

\chapter{\label{chapter8} Unknown concordance orders}

The website KnotInfo (\url{http://www.indiana.edu/~knotinfo/}) tabulates all known knot invariants for prime knots of up to 12 crossings.  The algebraic concordance order is known for every such knot, but there are many unknown values for the smooth and topological concordance orders.  The smallest knot with unknown (topological) concordance order is $8_{13}$, whilst $12a_{631}$ is the only knot which has neither been proven to be slice or not slice.  The most recent list of unknown values, as of the writing of this thesis, may be found in \cite{ChaLivingston09}.

In this chapter we apply the theorems from Chapter \ref{chapter5} to each of the $325$ knots of unknown concordance order.  If the conditions of the theorems hold then these knots are of infinite order (smoothly and topologically).  If the conditions do not hold, we have investigated whether the knots have finite order and for some knots have found that this is indeed the case.  There remain only two knots for which the concordance order is unknown.  The main difficulty in this exercise is showing when a knot \emph{is} of finite order, since the only method of doing this is to find a slice movie for the knot (see Section \ref{Subsec:sliceMovie}).  When the knots involved have $11$ or more crossings, showing that they are of order $2$ means manipulating a knot diagram which has over $22$ crossings. The only shortcut available is when a knot happens to be concordant to a smaller knot of order $2$ (such as the Figure Eight knot $4_1$), but even proving this concordance is not always easy.

\section{Finding the concordance orders of prime knots up to 12 crossings}

There are $325$ knots listed as having unknown topological concordance order in Knotinfo. ($247$ of these also have unknown smooth concordance order.)  Of these, we can immediately find the concordance orders of nine of them because of previously existing theorems.  The knot $11n_{34}$ is slice because it has Alexander polynomial $\Delta_K(t) = 1$ \cite[11.7B]{FreedmanQuinn}.  The knots $12a_{48}$, $12a_{60}$, $12a_{130}$, $12a_{291}$, $12a_{303}$, $12a_{699}$, $12a_{1100}$ and $12a_{1770}$ each have a determinant which factors into primes that are congruent to $3$ modulo $4$ (with odd powers), so by Theorem \ref{Thm:LivNaik3mod4} they are of infinite order. The knot $12a_{1288}$ is known to be fully amphicheiral, so is therefore of order 2.

To do our first real batch of analysis we will use Theorem \ref{Thm:twistedpoly}, which we will restate here.

\begin{thmtwistedpoly}
Suppose that we have a knot $K$ where $H_1(\Sigma_2;\Z) \cong \Z_{q} \oplus T$ for some prime $q \equiv 1$ mod $4$, where the order of $T$ is coprime to $q$.  Let $\chi_0 \colon H_1(\Sigma_2;\Z) \to \Z_q$ be the trivial map and $\chi_i \colon H_1(\Sigma_2;\Z) \to \Z_q$ be $\lk(-,i)$.  Construct $\Delta_{\chi_1}(t)$ as in Section \ref{Sec:twistpolyobstr}. Then $K$ is of infinite order if it satisfies the following conditions:
  \begin{enumerate}
    \item $\Delta_{\chi_0}(t)$ is not a norm.
    \item There is a non-trivial irreducible factor $f(t)$ of $\Delta_{\chi_0}(t)$ for which $\overline{f(t^{-1})}$ is not a factor of $\Delta_{\chi_i}(t)$ for any $i$.
    \item $\Delta_{\chi_a}(t) \not\doteq \Delta_{\chi_1}(t)$, where $1+a^2 \equiv 0$ (mod $q$).
  \end{enumerate}
\end{thmtwistedpoly}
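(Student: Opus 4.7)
The plan is to argue by contradiction: suppose $2mK$ is slice for some positive integer $m$, and use Theorem \ref{Thm:sliceness} together with Corollary \ref{Thm:slicenessmodq} to produce a metabolizer $M \subset H_1(\Sigma_2(2mK);\Z_q)$ of rank $m$ on which the linking form vanishes, such that every character of $H_1(\Sigma_2(2mK);\Z_q) \to \Z_q$ vanishing on $M$ yields a twisted Alexander polynomial that factors as a norm (up to a $(t-1)$ factor and a scalar in $\Q(\zeta_q)$). The key observation is that $H_1(\Sigma_2(2mK);\Z_q) \cong (\Z_q)^{2m}$ via the assumption $H_1(\Sigma_2(K);\Z) \cong \Z_q \oplus T$ with $|T|$ coprime to $q$, and a metabolizing element $w = (w_1,\ldots,w_{2m})$ gives a character $\chi_w = \chi_{w_1} \oplus \cdots \oplus \chi_{w_{2m}}$ with $\Delta_{\chi_w}(t) \doteq \prod_i \Delta_{\chi_{w_i}}(t)$ by Proposition \ref{Prop:additivity}.

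Next, I would exploit Lemma \ref{Lemma:oddvector}, which is tailored exactly to this setup (all component knots equal, so identical linking forms). Because $q \equiv 1 \mod 4$, this lemma gives two alternatives for $M$: either $M$ contains a vector $w$ with an odd number of zero entries, or $M$ contains a vector of the shape $(1,0,\ldots,0,\pm a, 0,\ldots,0)$ where $1 + a^2 \equiv 0 \mod q$. Since twisted Alexander polynomials satisfy the symmetry $p(t) = \overline{p(t^{-1})}$, any factor $\Delta_{\chi_{w_i}}$ appearing with even multiplicity in the product automatically contributes a norm and may be discarded; only factors of odd multiplicity matter.

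In the first alternative, $\Delta_{\chi_0}$ appears with odd multiplicity in $\Delta_{\chi_w}$. Condition (1) tells us $\Delta_{\chi_0}$ is itself not a norm, and condition (2) provides an irreducible factor $f(t)$ of $\Delta_{\chi_0}$ whose inverse conjugate $\overline{f(t^{-1})}$ is not a factor of any nontrivial $\Delta_{\chi_i}$. Working in the UFD $\Q(\zeta_q)[t,t^{-1}]$, I would argue that $f(t)$ must pair with $\overline{f(t^{-1})}$ in the full product to yield a norm, but the only source of $\overline{f(t^{-1})}$ is another copy of $\Delta_{\chi_0}$; pairing these up leaves $f(t)$ with odd multiplicity, contradicting the norm conclusion. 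In the second alternative, after deleting even-multiplicity factors the product reduces to $\Delta_{\chi_1}(t)\, \Delta_{\chi_{\pm a}}(t)$, which condition (3) (combined with the observation $\Delta_{\chi_{-a}} = \Delta_{\chi_a}$ from the deck involution) forbids from being a norm.

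The main obstacle I expect is the case analysis in the first alternative: making rigorous the claim that condition (2) really prevents $\Delta_{\chi_0}$ from being absorbed into norms by combining with the nontrivial $\Delta_{\chi_i}$. The subtlety is that even though no single $\Delta_{\chi_i}$ contains $\overline{f(t^{-1})}$, products of several $\Delta_{\chi_i}$ could in principle contribute such a factor; unique factorization in $\Q(\zeta_q)[t,t^{-1}]$ handles this cleanly because if $\overline{f(t^{-1})}$ divides a product then it divides some factor. A secondary subtlety is keeping track of the possible $(t-1)$ and scalar adjustments from Theorem \ref{Thm:sliceness}, but these are controlled since $f$ is a nontrivial factor with $f \neq (t-1)$ up to units (the $(t-1)$ factor only depends on whether $\chi_w$ is nontrivial). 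Once these bookkeeping points are addressed, both alternatives yield the desired contradiction and $K$ has infinite order in $\C$.
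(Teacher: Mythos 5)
Your proposal is correct and follows essentially the same route as the paper's proof: reduce to a metaboliser of $H_1(\Sigma_2(2mK);\Z_q)$, apply Lemma \ref{Lemma:oddvector} to split into the odd-number-of-zeros case and the $(1,0,\dots,0,\pm a,0,\dots,0)$ case, and dispatch these with conditions (1)--(2) and condition (3) respectively. The unique-factorisation and $(t-1)$ bookkeeping you flag are treated at the same (brief) level in the paper, so nothing further is needed.
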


This theorem applies to all the remaining $316$ knots of unknown concordance order with the exception of the following:
    \begin{itemize}
      \item \textbf{Condition (1) not satisfied:} $12a_{912}$, $12n_{488}$, $12n_{499}$, $12n_{587}$, $12n_{690}$.
      \item \textbf{Condition (3) not satisfied:} $11a_{44}$, $11a_{47}$, $11a_{109}$.
      \item \textbf{Conditions (1), (2) and (3) not satisfied:} $10_{158}$, $11n_{85}$, $11n_{100}$, $12a_{309}$, $12a_{310}$, $12a_{387}$, $12a_{388}$, $12n_{286}$, $12n_{388}$.
      \item \textbf{$H_1(\Sigma_2)$ is not of the form $\Z_q \oplus T$:} $11a_{5}$, $11a_{67}$, $11a_{104}$, $11a_{112}$, $11a_{168}$, $11n_{45}$, $11n_{145}$, $12a_{169}$, $12a_{360}$, $12a_{596}$, $12a_{631}$, $12a_{836}$, $12n_{31}$, $12n_{132}$, $12n_{210}$, $12n_{221}$, $12n_{224}$, $12n_{264}$, $12n_{367}$, $12n_{480}$, $12n_{532}$, $12n_{536}$, $12n_{579}$, $12n_{631}$, $12n_{681}$, $12n_{731}$, $12n_{745}$, $12n_{760}$, $12n_{812}$, $12n_{813}$, $12n_{841}$, $12n_{846}$, $12n_{884}$.
    \end{itemize}
  This is a total of $51$ knots, meaning that the theorem has an 84\% success rate.

We will first address those knots for which condition (1) was not satisfied; namely those knots for which $\Delta_{\chi_0}$ was a norm.  This is exactly the case that Theorem \ref{Thm:twistedpolyAlt} was designed for.

\begin{thmtwistedpolyAlt}
Suppose that we have a knot $K$ where $H_1(\Sigma_2;\Z) \cong \Z_{q} \oplus T$ for some prime $q \equiv 1$ mod $4$, where the order of $T$ is coprime to $q$.  Let $\chi_0 \colon H_1(\Sigma_2;\Z) \to \Z_q$ be the trivial map and $\chi_i \colon H_1(\Sigma_2;\Z) \to \Z_q$ be $\lk(-,i)$.  Suppose that $\Delta_{\chi_0}$ is a norm.  Then $K$ is of infinite order if it satisfies the following condition:
  \begin{itemize}
    \item $\Delta_{\chi_i}(t)$ is coprime, up to norms in $\Q(\zeta_q)[t,t^{-1}]$, to $\Delta_{\chi_j}(t)$ for all $i \neq j$, $i,j>0$.
  \end{itemize}
\end{thmtwistedpolyAlt}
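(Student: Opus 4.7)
The plan is to mimic the proof of Theorem \ref{Thm:twistedpoly}, substituting a new argument only at the single step that previously relied on $\Delta_{\chi_0}$ being a non-norm. Assume for contradiction that $2mK$ is slice for some $m \in \N$. By Corollary \ref{Thm:slicenessmodq} there exists a metaboliser $M \subset H_1(\Sigma_2(2mK);\Z_q)$ such that for every character vanishing on $M$ the resulting twisted Alexander polynomial factors modulo norms as at most a power of $(t-1)$. For each $w = (w_1, \ldots, w_{2m}) \in M$, the character $\chi_w := \chi_{w_1} \oplus \cdots \oplus \chi_{w_{2m}}$ vanishes on the subgroup generated by $w$, and Proposition \ref{Prop:additivity} yields
\[ \Delta_{\chi_w}(t) \doteq \prod_{k=1}^{2m} \Delta_{\chi_{w_k}}(t). \]
Since $\Delta_\chi(t) = \overline{\Delta_\chi(t^{-1})}$ renders every squared factor automatically a norm, only the parity of the exponent of each distinct $\Delta_{\chi_i}$ in this product is significant.

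Lemma \ref{Lemma:oddvector} (in its $q \equiv 1 \pmod 4$ form) then forces one of two cases: either (a) $M$ contains a vector with an odd number of zero entries, or (b) $M$ contains a vector of the shape $(1,0,\ldots,0,\pm a,0,\ldots,0)$ with $1+a^2 \equiv 0 \pmod q$. Case (b) is handled identically to Theorem \ref{Thm:twistedpoly}: the product collapses to $\Delta_{\chi_1} \cdot \Delta_{\chi_{\pm a}}$, which fails to be a norm by the coprimality hypothesis together with the non-norm property of $\Delta_{\chi_1}$ itself (established in the next paragraph).

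The substantive new step is case (a), where the original argument used that $\Delta_{\chi_0}$ is a non-norm but we are now told the opposite. I would argue combinatorially: a vector of length $2m$ with an odd number of zero entries has an odd number of non-zero entries, so by pigeonhole some value $i \in \Z_q \setminus \{0\}$ occurs among the $w_k$ with odd multiplicity. Hence $\Delta_{\chi_i}$ appears with odd exponent in $\Delta_{\chi_w}$, and pairwise coprimality modulo norms ensures no other $\Delta_{\chi_j}$ can contribute factors that pair up with it to form a norm. It remains to verify that $\Delta_{\chi_i}$ is not itself a norm: if it were, then every Galois conjugate $\sigma_n(\Delta_{\chi_i}) \doteq \Delta_{\chi_{ni}}$ would also be a norm, hence every $\Delta_{\chi_j}$ ($j \ne 0$) would be trivial modulo norms, and the pairwise coprimality hypothesis would be vacuous (or, on a strict reading, violated, since two polynomials equivalent to $1$ modulo norms share every factor modulo norms). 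Thus $\Delta_{\chi_w}$ contains an odd-exponent, non-norm, non-cancellable factor and is itself not a norm, contradicting slicenesss of $2mK$.

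The main obstacle I anticipate is the subtle handling of the phrase ``coprime modulo norms'', specifically verifying that a non-norm polynomial $\Delta_{\chi_i}$ appearing with odd exponent in a product whose other factors are pairwise coprime to it (modulo norms) genuinely survives to the product level. This reduces to unique factorisation in $\Q(\zeta_q)[t,t^{-1}]$ together with multiplicative closure of the subgroup $N$ of norms; once that bookkeeping is established, the remainder is a parity count and the linear algebra packaged into Lemma \ref{Lemma:oddvector}.
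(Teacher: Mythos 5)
Your proposal is correct and follows essentially the same route as the paper's own proof: reuse the argument of Theorem \ref{Thm:twistedpoly}, replace the $\Delta_{\chi_0}$ step in the ``odd number of zeros'' case by the observation that some non-zero entry occurs with odd multiplicity, and rule out $\Delta_{\chi_i}$ being a norm via the Galois-conjugate argument, with case (b) handled by the same coprimality condition. No gaps.
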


The knots $12a_{912}$, $12n_{488}$, $12n_{499}$, $12n_{587}$ and $12n_{690}$ do satisfy the conditions of this theorem and are therefore all of infinite order.  This brings us down to $46$ knots of unknown concordance order.

To attack the knots which have the `wrong' kind of homology, we need Theorem \ref{Thm:twistedpolyq^n}.

\begin{thmtwistedpolyq^n}
Suppose that we have a knot $K$ where $H_1(\Sigma_2;\Z) \cong \Z_{q^n} \oplus T$ for some prime $q$, where the order of $T$ is coprime to $q$.  Let $\chi_0 \colon H_1(\Sigma_2;\Z) \to \Z_{q}$ be the trivial map and $\chi_i \colon H_1(\Sigma_2;\Z) \to \Z_{q}$ be $\lk(-,i)$ (mod $q$).  Construct $\Delta_{\chi_1}(t)$ as in Section \ref{Sec:twistpolyobstr}. Then $K$ is of infinite order if it satisfies the following conditions:
  \begin{enumerate}
    \item If $n>1$, $\Delta_{\chi_0}(t)\Delta_{\chi_1}(t)$ is not a norm.
    \item $\Delta_{\chi_0}(t)$ is not a norm.
    \item $\Delta_{\chi_0}(t)$ is coprime, up to norms, to $\Delta_{\chi_i}(t)$ for all $i \neq 0$.
    \item If $q \equiv 1$ (mod $4$) then $\Delta_{\chi_a}(t) \not\doteq \Delta_{\chi_1}(t)$, where $1+a^2 \equiv 0$ (mod $q$).
  \end{enumerate}
Alternatively, if $\Delta_{\chi_0}$ is a norm, conditions (2)-(4) may be replaced by
   \begin{enumerate}
     \item[$2'$.] $\Delta_{\chi_i}$ is coprime, up to norms, to $\Delta_{\chi_j}$ for any $i \neq j$, $i,j >0$.
   \end{enumerate}
\end{thmtwistedpolyq^n}

This theorem applies to the following $19$ knots to show that they are of infinite order: $10_{158}$, $11n_{45}$, $11n_{145}$, $12a_{169}$, $12a_{360}$, $12n_{31}$, $12n_{132}$, $12n_{221}$, $12n_{224}$, $12n_{264}$, $12n_{532}$, $12n_{536}$, $12n_{579}$, $12n_{631}$, $12n_{681}$, $12n_{731}$, $12n_{812}$, $12n_{841}$ and $12n_{884}$.

Of the remaining $27$ knots, the following problems with the theorem arise:
  \begin{itemize}
    \item \textbf{Condition (1) not satisfied, so $\Delta_{\chi_0}\Delta_{\chi_1} \doteq 1$:} $11n_{85}$, $11n_{100}$, $12a_{309}$, $12a_{310}$, $12a_{387}$, $12a_{388}$, $12n_{286}$.
    \item \textbf{Condition (4) not satisfied, so $\Delta_{\chi_1}\Delta_{\chi_a} \doteq 1$:} $11a_{44}$, $11a_{47}$, $12a_{836}$.
    \item \textbf{Both $\Delta_{\chi_0}$ and $\Delta_{\chi_1}$ are norms:} $11a_{5}$, $11a_{67}$, $11a_{104}$, $11a_{109}$ $11a_{112}$, $11a_{168}$, $12a_{596}$, $12a_{631}$, $12n_{367}$, $12n_{388}$.
    \item \textbf{$H_1(\Sigma_2)$ is not of the form $\Z_{q^n} \oplus T$:} $12n_{210}$, $12n_{480}$, $12n_{745}$, $12n_{760}$, $12n_{813}$, $12n_{846}$.
  \end{itemize}

The first bullet point is interesting because here we have examples where neither $\Delta_{\chi_0}$ nor $\Delta_{\chi_1}$ are norms, but their product is a norm.  At this point it seemed worth investigating these knots to see if any of them were of finite order in $\C$.  It turned out that $11n_{85}$, $11n_{100}$ and $12n_{286}$ were all of order $2$, meaning that condition (1) of Theorem \ref{Thm:twistedpolyq^n} is a necessary one.  Upon further investigation, we also found that $11a_5$ and $12n_{388}$ are elements of order $2$; likewise the knots $11a_{104}$, $11a_{112}$ and $11a_{168}$ have been proved to be of order $2$ by Kate Kearney (in unpublished work).  None of these knots are either positive or negative amphicheiral, but they are concordant to the Figure-8 knot $4_1$.

We now have $19$ knots of unknown concordance order.  Clearly no further progress (using the existing theorems) can be made using the $2$-fold branched cover of the knots, so it is time to move on to information contained within the higher branched covers.

The setup for Theorem \ref{Thm:Highercover} is as follows.  Take a knot $K$ with $H_1(\Sigma_p;\Z)\cong \Z_q \oplus \Z_q \cong E_a \oplus E_b$, where $E_a$ and $E_b$ are the eigenspaces of the deck transformation.  Let $e_a$ be an $a$-eigenvector (i.e. $a e_a = e_a$) and $e_b$ be a $b$-eigenvector. Define $\chi_a \colon H_1(\Sigma_p) \to \Z_q$ by $\chi_a(e_a) = 0$ and $\chi_a(e_b) = 1$.  Similarly, $\chi_b \colon H_1(\Sigma_p) \to \Z_q$ is defined by $\chi_b(e_a) = 1$ and $\chi_b(e_b) = 0$.

\begin{thmHighercover}
The knot $K$ is of infinite order in $\C$ if the following conditions on the twisted Alexander polynomial of $K$ are satisfied:
\begin{enumerate}
  \item $\Delta_{\chi_0}$ is coprime, up to norms, to both $\Delta_{\chi_a}$ and $\Delta_{\chi_b}$, and $\Delta_{\chi_0}$ is not a norm.
  \item $\Delta_{\chi_a + \chi_b} \not\doteq \Delta_{d \chi_a -d^{-1}\chi_b}$ for any $d \in \Z_q$.
\end{enumerate}
\end{thmHighercover}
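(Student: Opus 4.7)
The plan is to follow the strategy used in the proofs of Theorems \ref{Thm:twistedpoly} and \ref{Thm:twistedpolyq^n}: assume for contradiction that $2mK$ is slice for some $m$, use Corollary \ref{Thm:slicenessmodq} to produce an invariant metaboliser $M \subset H_1(\Sigma_p(2mK);\Z_q)$, and exhibit inside $M$ a character whose associated twisted Alexander polynomial is obstructed from being a norm by one of the two hypotheses. Since $M$ is invariant under the deck transformation $T$, which acts semisimply with the two eigenvalues $a$ and $b$, $M$ is spanned by eigenvectors, which I would record as the rows of a $k \times 2m$ matrix $M_a$ (contributions from the $a$-eigenspaces) and a $(2m-k) \times 2m$ matrix $M_b$ (from the $b$-eigenspaces). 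The argument then splits on whether $k = m$ or not.

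In the case $k \neq m$, without loss of generality $k > m$, I would write $M_a = (I\ E)$ with $E$ having strictly more rows than columns, and show — exactly as in the analogous step of the proof of Theorem \ref{Thm:twistedpolyq^n} — that some $\Z_q$-linear combination of the rows has an odd number of nonzero entries, via a parity-plus-perturbation argument on the rows of $E$. Such a vector corresponds to a character in which $\chi_0$ appears an odd number of times alongside copies of $\chi_a$ and $\chi_b$; by Proposition \ref{Prop:additivity} the associated twisted Alexander polynomial is a product in which $\Delta_{\chi_0}$ occurs with odd multiplicity, and condition (1) — coprimality of $\Delta_{\chi_0}$ to $\Delta_{\chi_a}$ and $\Delta_{\chi_b}$ together with $\Delta_{\chi_0}$ not being a norm — prevents the product from factoring as a norm.

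The harder case is $k = m$, where both $M_a$ and $M_b$ have shape $m \times 2m$. Using $\lk(e_a,e_a) = \lk(e_b,e_b) = 0$, which follows from $T$-invariance of $\lk$ together with $a^2, b^2 \neq 1$ in $\Z_q$, the pairing between $E_a$ and $E_b$ is the off-diagonal form $\bigl(\begin{smallmatrix} 0 & 1 \\ 1 & 0 \end{smallmatrix}\bigr)$, and the metabolic condition $M_a M_b^T \equiv 0 \pmod{q}$ then forces a direct algebraic relation between the two row-bases. After absorbing changes of basis on each side and a common column permutation, I would reduce to the normal form $M_a = (I\ D)$ and $M_b = (I\ -D^{-1})$ with $D$ diagonal; the reduction to diagonal $D$ uses Lemma \ref{Lemma:KirkLiv}, where failure of its hypothesis throws us back into Case~1. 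Finally, for any matching pair of rows, the difference corresponds (via $e_a - e_b$ and $d e_a + d^{-1} e_b$) to the character pair $\chi_a + \chi_b$ and $d\chi_a - d^{-1}\chi_b$, so the associated twisted Alexander polynomial is $\Delta_{\chi_a + \chi_b} \cdot \Delta_{d\chi_a - d^{-1}\chi_b}$, and condition (2) prevents this from being a norm.

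The main obstacle will be the bookkeeping in the $k = m$ case: I need to ensure that the simultaneous changes of basis on $M_a$ and $M_b$ preserve the class of characters under consideration, that the absorption of the permutation matrix into a common frame works without distorting the eigenspace labelling, and that the diagonal entry $d$ produced from $M_a$ really is the same $d$ that appears (with a minus sign in the reciprocal) in $M_b$, so that the character identification $\chi_a + \chi_b$ versus $d\chi_a - d^{-1}\chi_b$ is correct on the nose. The parity counting in Case~1 and the checking that Lemma \ref{Lemma:KirkLiv} applies in the mod-$q$ reduction coming from Corollary \ref{Thm:slicenessmodq} are routine by comparison, but must be done carefully.
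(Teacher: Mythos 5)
Your proposal is correct and follows essentially the same route as the paper's proof: the same eigenvector decomposition into $M_a$ and $M_b$, the same case split on $k$ versus $m$ with the parity-and-perturbation argument when $k>m$, and the same reduction via Lemma \ref{Lemma:KirkLiv} to the normal form $M_a=(I\;D)$, $M_b=(I\;-D^{-1})$ followed by subtracting matching rows to invoke condition (2). The details you flag as needing care (the common permutation, the identification of $d$, and the vanishing of $\lk(e_a,e_a)$ from $a^2\neq 1$) are exactly the points the paper handles, and in the same way.
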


Let us investigate whether we can apply this theorem to any of our remaining $19$ knots.

First we will look at the information contained in the $3$-fold branched cover.  The conditions in the theorem turn out to apply to only two of the remaining knots: $11a_{67}$ and $12n_{367}$.  However, there are another three knots whose only problem is that $\Delta_{\chi_0}$ is a norm: $12a_{596}$, $12n_{210}$ and $12n_{813}$.  In the same way that Theorems \ref{Thm:twistedpoly} and \ref{Thm:twistedpolyq^n} had an alternative version in the case of $\Delta_{\chi_0}$ being a norm, so does Theorem \ref{Thm:Highercover}.  The proof still holds if we replace assumption (1) with
   \begin{enumerate}
     \item[$1'$.] $\Delta_{\chi_a}$ and $\Delta_{\chi_b}$ are not norms and are coprime over $\Q(\zeta_q)[t,t^{-1}]$ (up to norms) to $\Delta_{\chi_0}$.
   \end{enumerate}
This assumption ensures that if there is an `odd' vector in $M_a$ or $M_b$ (see the proof of Theorem \ref{Thm:Highercover} for details) then the corresponding twisted Alexander polynomial is not a norm.  The three knots $12a_{596}$, $12n_{210}$ and $12n_{813}$ do satisfy this condition, so we know they are of infinite order.

Of the remaining $14$ knots there are examples of both condition (1) (and ($1'$)) failing ($12a_{309}$, $12a_{310}$, $12a_{387}$, $12a_{388}$ and $12a_{631}$) and condition (2) failing ($11a_{44}$, $11a_{47}$, $11a_{109}$ and $12n_{745}$) as well as knots whose homology does not split into two eigenspaces ($12a_{836}$, $12n_{480}$, $12n_{760}$ and $12n_{846}$).

Moving on to yet higher order covers, the infinite order of $12n_{480}$, $12n_{745}$ and $12n_{760}$ is detected by the $7$-fold cover, whilst $12a_{836}$ is proved to be of infinite order via the $13$-fold cover.  Unfortunately the knot $12n_{846}$ has incredibly large primes in the homology of its covers: $701$ for the $5$-fold cover, $8681$ for the $7$-fold cover and $1385341$ for the $11$-fold cover.  It is not possible to compute the corresponding twisted Alexander polynomials using the computing resources that we have had access to during this thesis. It is therefore difficult to conjecture whether $12n_{846}$ will turn out to be of infinite order or finite order.

The knots $11a_{44}$, $11a_{47}$ and $11a_{109}$ all turn out to be of order $2$, concordant to the amphicheiral knot $6_3$.  The knots $12a_{309}$, $12a_{310}$, $12a_{387}$ and $12a_{388}$ are also of order $2$, concordant to the amphicheiral knot $4_1$.

The remaining knot $12a_{631}$ seems likely to be of finite order, since it fails to satisfy the conditions of each of the relevant theorems in some way. It is still unknown whether or not $12a_{631}$ is slice.

\section{Summary}

Of the $316$ prime knots of unknown concordance order (i.e. ignoring the knots listed at the beginning of the chapter which can be dealt with using existing theorems), they can all be proven to be of infinite order in $\C$ by using twisted Alexander polynomials computed from the $2$-fold branched cover, with the exception of the following:

\begin{itemize}
 \item $11a_{67}$, $12a_{596}$, $12n_{210}$, $12n_{367}$ and $12n_{813}$ are shown to be of infinite order via their $3$-fold covers.
 \item $12n_{480}$, $12n_{745}$ and $12n_{760}$ are shown to be of infinite order via their $7$-fold covers.
 \item $12a_{836}$ is shown to be of infinite order via its $13$-fold cover.
 \item $11a_5$, $11a_{104}$, $11a_{112}$, $11a_{168}$, $11n_{85}$, $11n_{100}$, $12a_{309}$, $12a_{310}$, $12a_{387}$, $12a_{388}$, $12n_{286}$ and $12n_{388}$ are all of order $2$, concordant to the Figure Eight knot $4_1$.
 \item $11a_{44}$, $11a_{47}$ and $11a_{109}$ are all of order $2$, concordant to the knot $6_3$.
 \item $12a_{631}$ remains of unknown order, but is suspected to be finite order and possibly slice.
 \item $12n_{846}$ remains of unknown order, and there are no suspicions as to whether it is of finite or infinite order in $\C$.
\end{itemize}

In summary, the theorems given in Chapter \ref{chapter5} have a 99\% success rate in detecting knots of infinite order.

Slice movies of the finite order knots can be found in Appendix \ref{AppendixE}. 
\chapter{\label{chapter10} Torus signatures}

\section{Introduction}
In this chapter we find a formula for the $L^2$ signature of a $(p,q)$ torus knot, which is the integral of the $\omega$-signatures over the unit circle.  We then apply this to a theorem of Cochran, Orr and Teichner to prove that the $n$-twisted doubles of the unknot, $n\neq 0,2$, are not slice.  This is a new proof of the result first proved by Casson and Gordon.

\begin{note}
It has been drawn to our attention that the main theorem of this chapter, Theorem \ref{Thm:L2torussig}, was first proved in $1993$ by Robion Kirby and Paul Melvin \cite{KirbyMelvin94} using essentially the same method presented here.  The theorem has also recently been reproved using different techniques by Maciej Borodzik \cite{Borodzik09}.
\end{note}

Before we recall the definition of $\omega$-signatures, let us first motivate the subject with an elementary but difficult problem in number theory.  Suppose we have two coprime integers, $p$ and $q$, together with another (positive) integer $n$ which is neither a multiple of $p$ nor of $q$.  Write
\[ n= ap+bq, \quad a,b \in \Z, \quad 0 < a < q. \]
Now we ask the question:
\begin{center}
  ``Is $b$ positive or negative?''
\end{center}

Clearly, given any \emph{particular} $p$ and $q$, the answer is easy to work out, so the question is whether there is an (explicit) formula which could anticipate the answer.  Let us define
\[ j(n) = \begin{cases}\phantom{-}1 & \text{if $b>0$,}\\ -1 & \text{if $b<0$}\end{cases} \]
and let us study the sum
\[ s(n) = \sum_{i=1}^n j(i)\]
as $n$ varies between $1$ and $pq-1$.  It would not be an unreasonable first guess to suggest that $j(n)$ is $-1$ for the first $\lfloor \frac{pq}{2} \rfloor$ values of $n$, and $+1$ for the other half of the values.  Indeed, this is true when $p=2$ (see the nice `V' shape in Figure~\ref{fig:edge-a}).  But if we investigate other values of $p$ and $q$ then strange `wiggles' in the graph of $s$ start appearing (see Figures~\ref{fig:edge-b}, \ref{fig:edge-c} and \ref{fig:edge-d}). The pattern of wiggles seems different for every $p$ and $q$: could there be an underlying principle to explain them?

\begin{figure}[htp]
  \begin{center}
    \subfigure[$p=2$, $q=19$]{\label{fig:edge-a}\includegraphics[width=7cm]{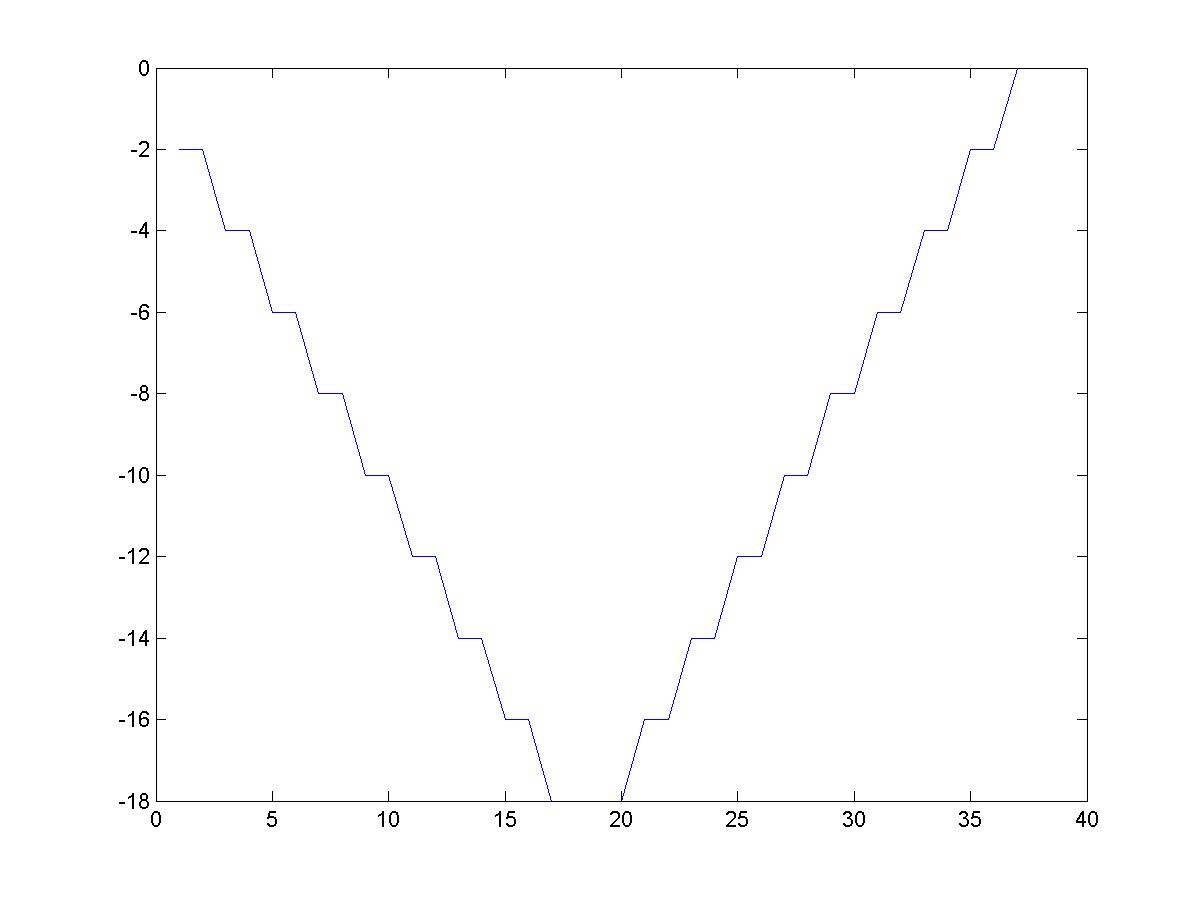}}
    \subfigure[$p=3$, $q=10$]{\label{fig:edge-b}\includegraphics[width=7cm]{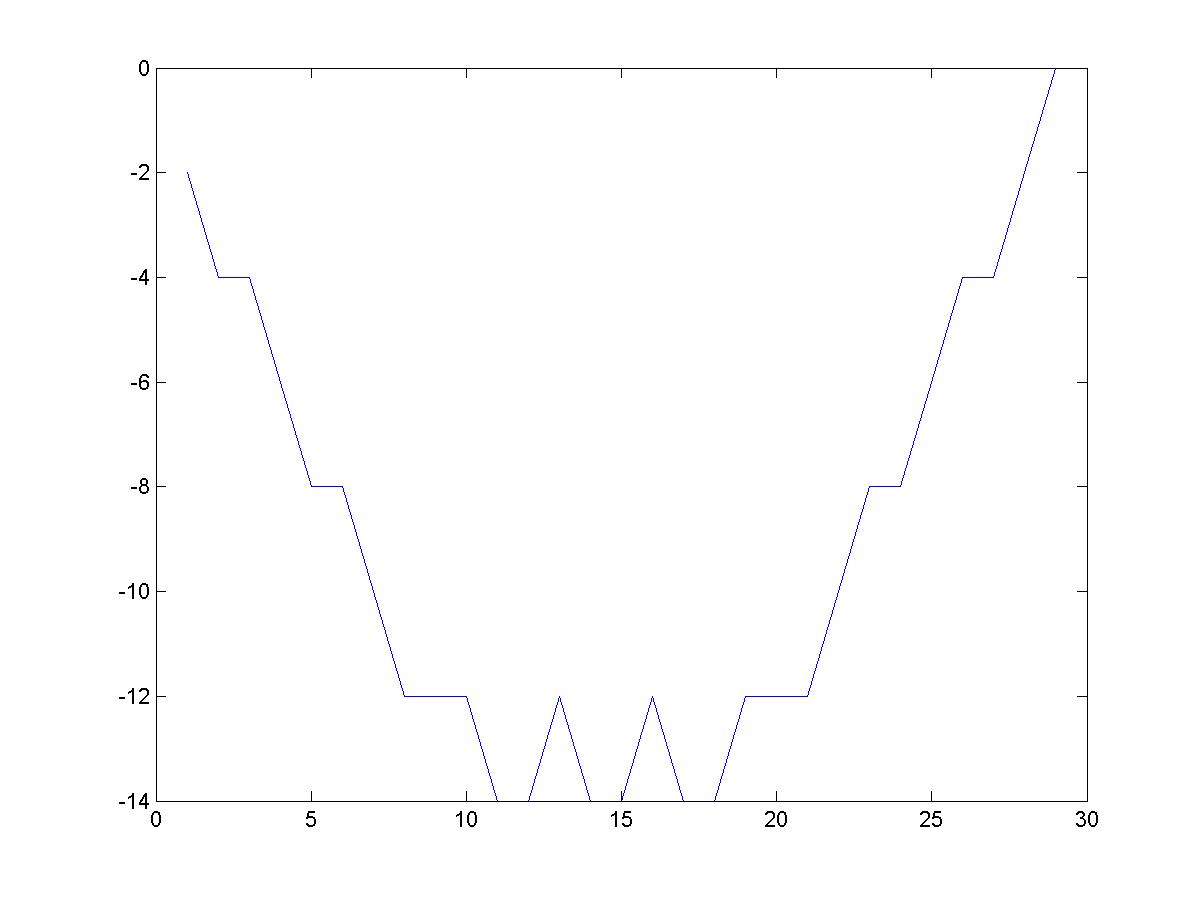}}
    \subfigure[$p=5$, $q=24$]{\label{fig:edge-c}\includegraphics[width=7cm]{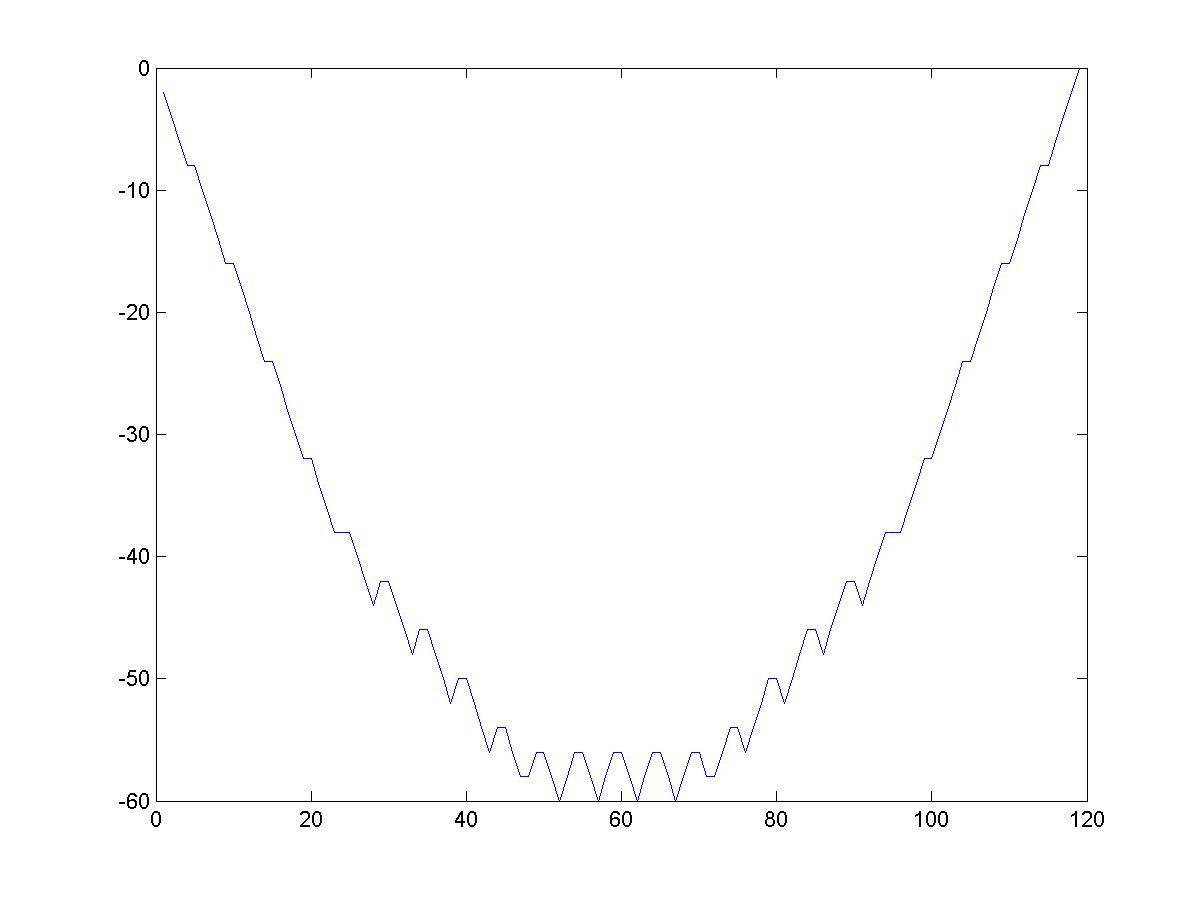}}
    \subfigure[$p=7$, $q=16$]{\label{fig:edge-d}\includegraphics[width=7cm]{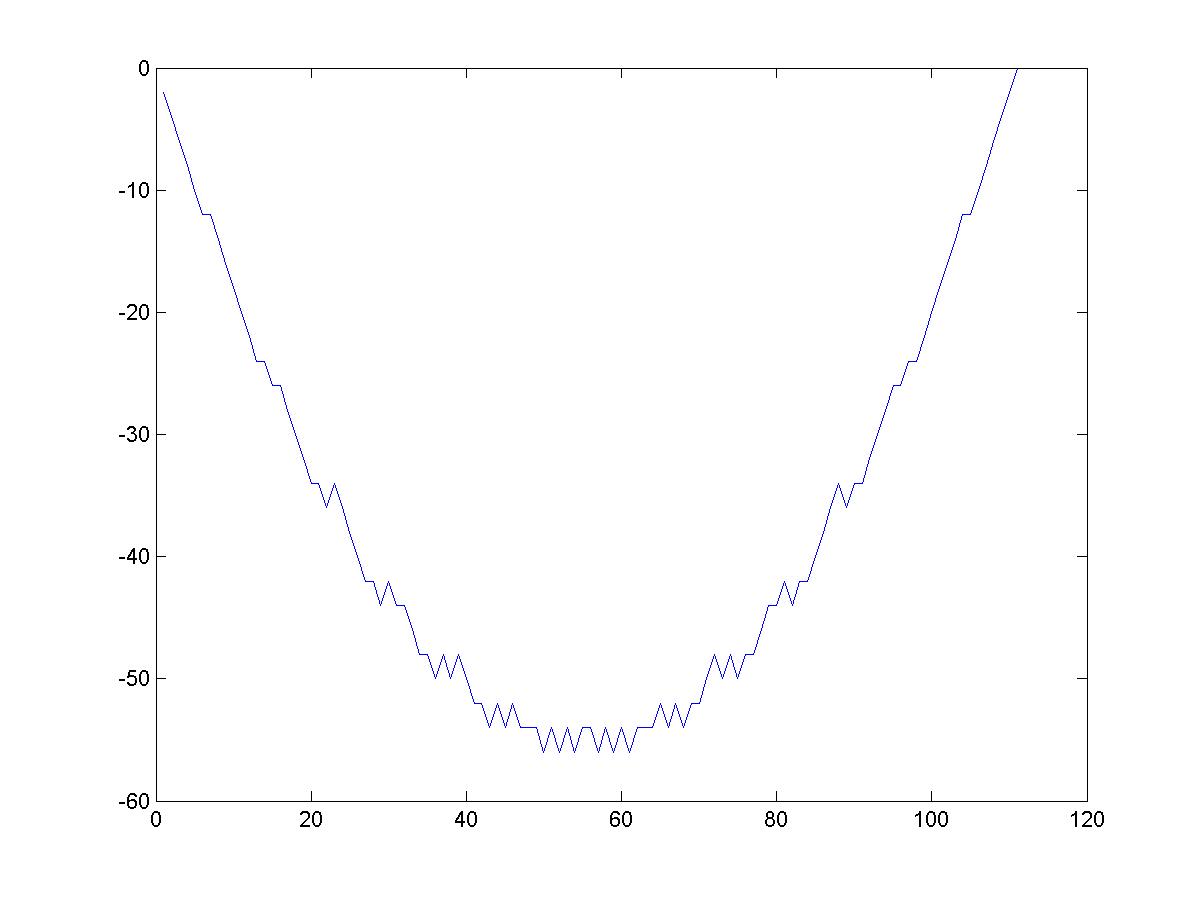}}
  \end{center}
  \caption{Graphs of $2s$ for various values of $p$ and $q$.}
  \label{graphs}
\end{figure}

It turns out that the clue to finding the pattern is to realise that the function $j$ is the jump function of the $\omega$-signature of a $(p,q)$ torus knot.  What does this mean?  A torus knot $T_{p,q}$ is a knot which lives on the boundary of a torus, wrapping $p$ times around the meridian and $q$ times around the longitude. (If $p$ and $q$ are not coprime then $T_{p,q}$ is a link rather than a knot.)  Given a non-singular Seifert matrix $V$ for $T_{p,q}$ and a unit complex number $\omega$, the \emph{$\omega$-signature} $\sigma_\omega$ is the sum of the signs of the eigenvalues of the hermitian matrix
\[(1-\omega)V + (1-\overline{\omega})V^T.\]
This is independent of the choice of Seifert matrix.  The $\omega$-signature is an integer-valued function that is continuous (and therefore constant) everywhere except at the unit roots of the Alexander polynomial $\Delta_{p,q}(t) = \det(V-tV^T)$.  At these points, the signature `jumps', with the value of the jump at $\omega = e^{2\pi i n/pq}$ being given by $2j(n)$.

The $\omega$-signature has proved to be useful in a variety of areas of mathematics; see Stoimenow's paper \cite{Stoimenow05} to get a comprehensive list, including applications to unknotting numbers of knots, Vassiliev invariants and algebraic functions on projective spaces. The jump function in particular appears to be related to the Jones polynomial~\cite{Garoufalidis03}.  But the most important use for signatures is in the study of knot concordance.

Cochran, Orr and Teichner~\cite{COT03} have recently been probing the secrets of the concordance group $\C$ using the difficult techniques of $L^2$ signatures.  Amazingly, it turns out that a special case of these $L^2$ signatures is the integral of the $\omega$-signatures over the unit circle.

Even more amazingly, we find that despite the $\omega$-signatures being fairly unpredictable for a torus knot, the \emph{integral} of the $\omega$-signatures has the following beautiful formula.

\begin{thmtorussig}
  Let $p$ and $q$ be coprime positive integers.  Then the integral of the $\omega$-signatures of the $(p,q)$ torus knot is
  \[ \displaystyle \int_{S^1} \sigma_{\omega} = -\frac{(p-1)(p+1)(q-1)(q+1)}{3pq} \text{ .} \]
\end{thmtorussig}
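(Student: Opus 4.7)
My plan is to compute $\int_{S^1}\sigma_\omega$ by exploiting the fact that $\sigma_{e^{i\theta}}$ is a step function on $[0,2\pi]$ whose jumps are completely controlled by the combinatorial function $j(n)$ described in the introduction. Since $\sigma_\omega$ vanishes near $\omega=1$ and jumps by $2j(n)$ at $\omega=e^{2\pi in/(pq)}$, I would first write
\[
  \sigma_{e^{i\theta}} \;=\; \sum_{\substack{n\in N\\ 2\pi n/(pq)<\theta}}\!\! 2j(n),\qquad N=\{1,\dots,pq-1\}\setminus(p\Z\cup q\Z).
\]
The antisymmetry $j(pq-n)=-j(n)$ is immediate from the definition, since if $n=ap+bq$ with $0<a<q$ then $pq-n=(q-a)p+(-b)q$. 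Hence $\sum_{n\in N}j(n)=0$, and integrating termwise gives
\[
  \int_0^{2\pi}\!\sigma_{e^{i\theta}}\,d\theta \;=\; \sum_{n\in N}2j(n)\!\left(2\pi-\tfrac{2\pi n}{pq}\right) \;=\; -\frac{4\pi}{pq}\sum_{n\in N} j(n)\,n,
\]
so it suffices to show that $S:=\sum_{n\in N}j(n)\,n$ equals $\tfrac{(p^2-1)(q^2-1)}{6}$.

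Next I would parametrize $N$ by pairs $(a,b)$ with $0<a<q$, $b\neq 0$, $|b|<p$, where $n=ap+bq$ and $j(n)=\sign(b)$. For fixed $a\in\{1,\dots,q-1\}$ set $r_a=ap\bmod q$ and $d_a=\lfloor ap/q\rfloor$, so $ap=d_aq+r_a$ with $0<r_a<q$. Then $b$ ranges over $\{-d_a,\dots,-1\}\cup\{1,\dots,p-1-d_a\}$, and a routine arithmetic summation yields
\[
  S_a \;=\; r_a(p-1-2d_a)\;+\;q\!\left(\tfrac{p(p-1)}{2}-d_a^{\,2}\right).
\]
Summing over $a$ and using $\sum_a r_a=\tfrac{q(q-1)}{2}$ (since $a\mapsto r_a$ permutes $\{1,\dots,q-1\}$) leaves
\[
  S \;=\; \frac{q(p^2-1)(q-1)}{2}\;-\;\Bigl(\,2\sum_a r_a d_a+q\sum_a d_a^{\,2}\Bigr).
\]

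The crux of the argument is evaluating the bracketed expression. Substituting $d_a=(ap-r_a)/q$ expands $2\sum r_a d_a+q\sum d_a^{\,2}$ into a linear combination of $\sum a^2$, $\sum r_a^2$, and the Dedekind-sum-like quantity $\sum a\,r_a$. Miraculously, the two contributions of $\sum a\,r_a$ carry opposite signs $+2p$ and $-2p$ and cancel exactly, leaving
\[
  2\sum_a r_a d_a+q\sum_a d_a^{\,2} \;=\; \frac{p^{2}\sum_a a^{2}-\sum_a r_a^{2}}{q} \;=\; \frac{(p^2-1)(q-1)(2q-1)}{6},
\]
where the final step uses $\sum_a a^2=\sum_a r_a^2=\tfrac{(q-1)q(2q-1)}{6}$. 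Substituting back, the factor $(p^2-1)(q-1)$ comes out and the simplification $\tfrac{1}{6}[3q-(2q-1)]=\tfrac{q+1}{6}$ gives $S=\tfrac{(p^2-1)(q^2-1)}{6}$, whence the stated formula for $\int_{S^1}\sigma_\omega$ follows at once. The main obstacle is precisely this Dedekind-sum cancellation: without it the answer would depend on $\sum a\,r_a$, which has no closed form, so the challenge is to organise the algebra so that the bookkeeping of the $b>0$ and $b<0$ contributions conspires to kill the offending terms.
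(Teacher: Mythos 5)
Your proof is correct, and its second half takes a genuinely different route from the paper's. The first half coincides: both arguments write $\sigma_{e^{i\theta}}$ as a partial sum of the jumps $2j(n)$ and reduce the integral to $-\frac{2}{pq}\sum_n n\,j(n)$ (the paper writes this as $\frac{2}{pq}\sum_n(pq-n)\,j(n)$, the same thing once $\sum_n j(n)=0$ is known, which both of you extract from the involution $n\mapsto pq-n$). The divergence is in evaluating this sum. The paper passes to the set $S=\{qx+py:\ 0<x<p,\ 0<y<q\}$ of indices with positive jump, rewrites the sum as $\frac{2}{pq}\sum_{n\in S}(pq-2n)$, and then \emph{quotes} Mordell's closed formula for $\sum_{n\in S}n$ together with $|S|=\tfrac12(p-1)(q-1)$. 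You instead parametrise the jump set by $n=ap+bq$ with $0<a<q$, $0<|b|<p$, sum over $b$ for fixed $a$ (your formula $S_a=r_a(p-1-2d_a)+q\bigl(\tfrac{p(p-1)}{2}-d_a^2\bigr)$ checks out), and then observe that substituting $d_a=(ap-r_a)/q$ collapses $2\sum_a r_ad_a+q\sum_a d_a^2$ to $\bigl(p^2\sum_a a^2-\sum_a r_a^2\bigr)/q$, the cross terms $\pm 2ap\,r_a$ cancelling; since $a\mapsto r_a$ permutes $\{1,\dots,q-1\}$ this has a closed form, giving $\sum_n n\,j(n)=\tfrac{(p^2-1)(q^2-1)}{6}$. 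In effect you have reproved, self-containedly, the consequence of Mordell's identity that the paper cites, and your version makes transparent why the Dedekind-sum quantity $\sum_a a\,r_a$ (which has no elementary closed form) is absent from the answer — a cancellation the citation hides. One bookkeeping caution: the theorem's integral is normalised so that $S^1$ has total length $1$, so your $\int_0^{2\pi}\sigma\,d\theta=-\frac{4\pi}{pq}\sum_n n\,j(n)$ must be divided by $2\pi$; with that, your value of the sum yields exactly $-\frac{(p-1)(p+1)(q-1)(q+1)}{3pq}$.
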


In this chapter we prove Theorem \ref{Thm:L2torussig}, the reason for which is that the result can be combined with a theorem in \cite{COT03} to recover the old Casson-Gordon theorem that the twist knots are not slice.  The hope is that the techniques here may prove useful in investigating signatures of other families of knots and in proving more general theorems about the structure of the concordance group.

\section{Signatures and jump functions}
\label{Sec:sigAndjump}

Let $K$ be a knot, $V$ be a Seifert matrix for $K$ of size $2g \times 2g$ and $\omega$ be a unit complex number.  We would like to define the $\omega$-signature to be the signature of $P:=(1-\omega)V + (1-\overline{\omega})V^T$.  However, notice that $P=(1-\omega)(V-\overline{\omega}V^T)$ and $\det P = (1-\omega)^{2g}\Delta_K(\overline{\omega})$ where $\Delta_K(t) := \det(V-tV^T)$ is the Alexander polynomial of $K$.  This means that $P$ becomes degenerate at the unit roots of the Alexander polynomial and we will need an alternative definition of the signature at these points.

\begin{definition}
For a unit complex number $\omega$ which is not a root of the Alexander polynomial $\Delta_K$, the \emph{$\omega$-signature} $\sigma_\omega(K)$ is the signature (i.e. the sum of the signs of the eigenvalues) of the hermitian matrix
  \[ P:=(1-\omega)V + (1-\overline{\omega})V^T \text{ .} \]
If $\omega$ is a unit root of $\Delta_K$, we define $\sigma_\omega(K)$ to be the average of the limit on either side.
\end{definition}

This concept was formulated independently by Levine~\cite{Levine69} and Tristram~\cite{Tristram69}; hence the $\omega$-signature is sometimes called the \emph{Levine-Tristram signature}.  It is a generalisation of the usual definition of a knot signature, i.e. the signature of $V+V^T$, which was developed by Trotter and Murasugi \cite{Trotter62, Murasugi65}.

For any knot $K$, the function $\sigma_\omega(K)$ is continuous as a function of $\omega$ except at roots of the Alexander polynomial.  Since $\sigma_\omega$ is integer-valued, this means that it is a step function with jumps at the roots of $\Delta_K$.

\begin{definition}
  The jump function $j_K \colon [0,1) \to \Z$ of a knot $K$ is defined by
    \[ j_K(x) = \displaystyle \frac{1}{2} \lim_{\eps \to 0} (\sigma_{\xi^+}(K) - \sigma_{\xi^-}(K)) \]
  where $\xi^+ = e^{2\pi i (x + \eps)}$ and $\xi^- = e^{2\pi i (x - \eps)}$ for $\eps>0$.
\end{definition}

\begin{lemma}
\label{Lemma:jump_properties}
  The jump function $j_K$ and the $\omega$-signature $\sigma_\omega(K)$ have the following properties.
  \begin{enumerate}
    \item $j_K(x) = 0$ if $e^{2\pi i x}$ is not a unit root of the Alexander polynomial of $K$.
    \item In particular, $j_K(0)=0$ and $\sigma_1(K) = 0$.
    \item $\sigma_\omega(K) = \sigma_{\overline{\omega}}(K)$ so $j_K(x) = -j_K(1-x)$.
    \item $\displaystyle \sigma_{e^{2\pi i x}}(K) = 2\sum_{y \in [0,x]} j_K(y)$ if $e^{2\pi i x}$ is not a root of the Alexander polynomial of $K$. (Notice that this is a finite sum because only finitely many of the jumps are non-zero.)
  \end{enumerate}
\end{lemma}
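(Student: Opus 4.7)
The plan is to establish each of the four properties directly from the definitions, with only property (2) requiring any genuine argument; the rest are largely formal consequences of standard facts about hermitian matrices.

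For property (1), the matrix $P(\omega) := (1-\omega)V + (1-\overline{\omega})V^T$ is hermitian and depends continuously (indeed smoothly) on $\omega \in S^1$. The eigenvalues therefore vary continuously in $\omega$, and the signature can only change when an eigenvalue crosses zero, i.e.\ when $P(\omega)$ becomes singular. Since $\det P(\omega) = (1-\omega)^{2g}\Delta_K(\overline{\omega})$, this happens only at $\omega = 1$ or at unit roots of $\Delta_K$; thus for any other $\omega$ the signature is locally constant and the jump vanishes.

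For property (2), the value $\sigma_1(K)$ is immediate because $P(1)$ is the zero matrix, so its signature is $0$. The delicate point is showing $j_K(0) = 0$, since $\omega = 1$ is a degeneracy of $P$ even though it is not generally a root of $\Delta_K$. I would analyse $P$ near $\omega = 1$ by setting $\omega = e^{i\theta}$ and expanding to first order: $P(e^{i\theta}) = i\theta\,(V^T - V) + O(\theta^2)$. The matrix $i(V^T - V)$ is hermitian, and since $V - V^T$ is a real skew-symmetric matrix its eigenvalues occur in conjugate pairs $\pm i\mu_k$, so the eigenvalues of $i(V^T-V)$ are $\pm \mu_k$ in balanced pairs. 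Hence its signature is zero, and by continuity the signature of $i\theta(V^T - V)$ vanishes for all small $\theta \neq 0$ of either sign. Combined with property (1), this shows $\sigma_\omega(K) \to 0$ as $\omega \to 1$ from either direction along $S^1$, so $j_K(0) = 0$. This handling of the apparent singularity at $\omega = 1$ is the main obstacle in the proof.

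For property (3), a direct computation gives $P(\overline{\omega}) = (1-\overline{\omega})V + (1-\omega)V^T = P(\omega)^T$. Since transposition preserves the signature of a hermitian matrix, $\sigma_\omega(K) = \sigma_{\overline{\omega}}(K)$. The identity $j_K(x) = -j_K(1-x)$ then follows by noting that the map $x \mapsto 1-x$ on $[0,1)$ corresponds to $\omega \mapsto \overline{\omega}$, which reverses the direction of traversal of $S^1$ at the point $e^{2\pi i x}$; hence the one-sided limits are swapped and the jump changes sign.

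Finally, property (4) is a telescoping statement: since $\sigma_1(K) = 0$ by (2) and $\sigma_\omega(K)$ is locally constant away from jump points by (1), the value $\sigma_{e^{2\pi i x}}(K)$ for $e^{2\pi i x}$ not a root of $\Delta_K$ is obtained by summing up all the jumps encountered as $\omega$ traverses $S^1$ from $1$ to $e^{2\pi i x}$, each jump contributing $2 j_K(y)$ by definition.
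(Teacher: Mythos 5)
Your proof is correct. The thesis states this lemma without proof, so there is nothing to compare against; your argument supplies exactly the standard justification, and you correctly identify the only genuinely delicate point, namely that $\omega=1$ is a singularity of $P(\omega)=(1-\omega)V+(1-\overline{\omega})V^T$ even though $1$ is never a root of $\Delta_K$, so the local-constancy argument of part (1) does not by itself give $j_K(0)=0$. Your expansion $P(e^{i\theta})=i\theta(V^T-V)+O(\theta^2)$ handles this; the one step worth making explicit is that the conclusion ``the signature of $P(e^{i\theta})$ agrees with that of the leading term for small $\theta\neq 0$'' requires the leading term to be \emph{nonsingular}, which follows from the unimodularity $\det(V-V^T)=\pm 1$ established earlier in the thesis (Lemma \ref{Lemma:unimodSeifert}); without that, an eigenvalue of the bracketed matrix could sit at $0$ and be pushed either way by the $O(\theta)$ correction. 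With that citation added, parts (1)--(4) are all complete: (3) is the observation $P(\overline{\omega})=P(\omega)^T=\overline{P(\omega)}$ together with the reversal of orientation under $x\mapsto 1-x$, and (4) is the telescoping of a locally constant integer-valued step function starting from the value $0$ near $\omega=1$.
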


We saw in Section \ref{subsec:SeifertMatricesSlice} (specifically, Corollary \ref{Cor:sliceinvts}) that the $\omega$-signatures (excepting the jump points) vanish for slice knots, meaning they are concordance invariants.  In fact, it turns out that the integral of the $\omega$-signatures, for $\omega \in S^1$, is a special case of a more powerful invariant.

\begin{definition}
An $L^2$-signature (or $\rho$-invariant) of a knot $K$  is a number $\rho(M,\phi) \in \R$ associated to a representation $\phi\colon \pi_1(M) \to \Gamma$, where $M$ is the zero-framed surgery on $S^3$ along $K$ and $\Gamma$ is a group.
\end{definition}

The precise definition is complicated and may be found in \cite[Section 5]{COT03}.  $L^2$ signatures are, in general, very difficult to compute.  However, if we pick a nice group for $\Gamma$ then magic happens and we get an explicit formula:

\begin{lemma}\cite[Lemma 5.4]{COT03}
  When $\Gamma = \Z$ and $\phi$ is the canonical abelianisation epimorphism, we have that $\rho(M,\phi) = \int_{\omega \in S^1} \sigma_{\omega}(K)$, normalised so that the circle has total length $1$.
\end{lemma}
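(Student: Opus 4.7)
The plan is to reduce the computation of $\rho(M,\phi)$ to the signature of an explicit twisted intersection form built from a Seifert surface, and then to convert the resulting von~Neumann signature into an integral over $S^1$ via Fourier analysis on the group $\Z$.

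First I would unpack the definition of $\rho$. The Cheeger--Gromov--von~Neumann $\rho$-invariant is defined as a signature defect: pick a compact oriented 4-manifold $W$ with $\partial W = M$ such that $\phi$ extends to a homomorphism $\Psi \colon \pi_1(W) \to \Z$, and set
\[
\rho(M,\phi) \;=\; \sigma^{(2)}(W,\Psi) \;-\; \sigma(W),
\]
where $\sigma^{(2)}(W,\Psi)$ is the von~Neumann signature of the intersection form on $H_2(W;\mathcal N\Z)$. A key preliminary is that this is independent of $W$, which follows from Novikov additivity and the fact that $L^2$-signatures vanish on boundaries of closed 4-manifolds with coefficient system through $\Z$. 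For the computation I would take $W$ to be the 4-manifold obtained by pushing a Seifert surface $F$ for $K$ (of genus $g$, with Seifert matrix $V$) slightly into $D^4$ and surgering; equivalently, the trace of the $0$-framed surgery stabilized by this Seifert-surface pushoff. This $W$ is designed so that $\sigma(W)=0$, so the ordinary signature term drops out.

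Second, I would compute the twisted intersection form. By a Mayer--Vietoris argument, or directly by lifting a symplectic basis of $H_1(F)$ to the infinite cyclic cover of $W \setminus F$, one shows that $H_2(W;\Z[t^{\pm 1}])$ is free of rank $2g$ and that the intersection pairing is represented, up to stabilisation by hyperbolic summands, by the hermitian matrix
\[
A(t) \;=\; (1-t)\,V + (1-t^{-1})\,V^{T}.
\]
This is the standard identification used for Casson--Gordon and Levine--Tristram invariants; the linking-number calculations in the infinite cyclic cover produce exactly the $(1-t)$ and $(1-t^{-1})$ factors because transverse translates of $F$ pair with each other via $V$ and $V^T$.

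Third, I would use the Fourier-theoretic description of $\mathcal N\Z$ to convert the von~Neumann signature into an integral. Under the canonical isomorphism $\mathcal N\Z \cong L^{\infty}(S^1)$, the standard trace becomes integration against normalised Haar measure on $S^1$. A hermitian form over $\mathcal N\Z$ whose matrix has entries in $\Z[t^{\pm 1}]$ is then, fibrewise at each $\omega \in S^1$, an ordinary hermitian form over $\CC$. Combining the spectral theorem with the measurable functional calculus shows that the von~Neumann signature is the integral of the pointwise signatures. Applied to our $A(t)$, this yields
\[
\sigma^{(2)}(W,\Psi) \;=\; \int_{S^1} \sigma\!\bigl((1-\omega)V + (1-\bar\omega)V^{T}\bigr)\, d\omega \;=\; \int_{S^1} \sigma_{\omega}(K)\, d\omega,
\]
where the second equality is the definition of the Levine--Tristram signature and the set of Alexander roots where $A(\omega)$ is singular has measure zero.

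The main obstacle is the intersection-form identification in the second step: tracking orientations, basepoint conventions, and the lift of a homology basis to the infinite cyclic cover carefully enough that the symmetrised Seifert matrix appears with the correct sign. A secondary subtlety is the independence of $\rho(M,\phi)$ from the choice of $W$, which must be verified in the $L^2$ setting; once this is in place and combined with the vanishing of $\sigma(W)$ for the surgery trace and the Fourier identification of the trace, the lemma follows.
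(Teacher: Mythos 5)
The paper offers no proof of this statement; it is quoted directly from \cite{COT03} (Lemma 5.4), so there is nothing internal to compare against. Your outline reproduces the standard argument from that reference correctly: reduce $\rho$ to an $L^2$-signature defect over a $4$-manifold $W$ with $\partial W = M$, identify the $\Z[t^{\pm 1}]$-twisted intersection form with $(1-t)V+(1-t^{-1})V^T$, and use $\mathcal{N}\Z \cong L^\infty(S^1)$ to turn the von Neumann signature into the integral of the pointwise Levine--Tristram signatures, discarding the measure-zero set of Alexander roots.

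One detail in your second step deserves care: the complement of a pushed-in Seifert surface $\hat F \subset D^4$ has boundary $(S^3\setminus \nu K)\cup_{T^2}(F\times S^1)$, which is not the zero-surgery $M$; to get $\partial W = M$ you must either cap off appropriately or use the standard cobordism from $M$ built out of the Seifert surface (as in the cited construction), after which the symmetrised matrix $(1-t)V+(1-t^{-1})V^T$ does represent the twisted form and the untwisted signature vanishes as you claim. With that adjustment the argument goes through as written.
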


Henceforth we shall refer to the integral of the $\omega$-signatures as \emph{the} $L^2$ signature of the knot.  We end the section with a formula relating the $L^2$ signature of a knot to its jump function.

\begin{lemma}
\label{Lemma:L2asjump}
Suppose that the unit roots of the Alexander polynomial of a knot $K$ are $\omega_k = e^{2\pi i x_k}$ for $k=1, \dots, n$ and $x_1 < \dots < x_n$.  Then the $L^2$ signature of $K$ is
\[ \int_{\omega \in S^1} \sigma_{\omega}(K) = 2\sum_{i=1}^{n-1} \left(x_{i+1}-x_i \right) \sum_{k=1}^i j_K(x_k)\]
\end{lemma}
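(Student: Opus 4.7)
The plan is to compute the integral directly using the fact that $\sigma_{\omega}(K)$ is a step function in $\omega$. Parametrising $\omega = e^{2\pi i x}$ with $x \in [0,1)$, and letting $x_0 := 0$ and $x_{n+1} := 1$, I would break up the circle into the $n+1$ subintervals $[x_i, x_{i+1}]$, on each of which $\sigma_{e^{2\pi i x}}(K)$ is constant (the jump function is concentrated at the $x_k$, by property (1) of Lemma \ref{Lemma:jump_properties}).

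On the interval $(x_i, x_{i+1})$ with $1 \le i \le n-1$, property (4) of Lemma \ref{Lemma:jump_properties} gives the constant value
\[ \sigma_{e^{2\pi i x}}(K) = 2\sum_{k=1}^{i} j_K(x_k). \]
For the first interval $(x_0, x_1) = (0, x_1)$ the value is $\sigma_1(K) = 0$ by property (2), so this interval contributes nothing. For the final interval $(x_n, 1)$ the constant value is $2\sum_{k=1}^n j_K(x_k)$, and I would check that this sum vanishes: since the Alexander polynomial has real coefficients, its unit roots come in conjugate pairs, so the set $\{x_1,\dots,x_n\}$ is invariant under $x \mapsto 1-x$, and property (3) then gives $\sum_{k=1}^n j_K(x_k) = 0$. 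Hence the last interval also contributes nothing.

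Summing the contributions of the middle intervals yields
\[ \int_{\omega \in S^1} \sigma_{\omega}(K) \;=\; \sum_{i=1}^{n-1} (x_{i+1} - x_i) \cdot 2\sum_{k=1}^{i} j_K(x_k), \]
which is precisely the claimed formula. There is no genuine obstacle here: the proof is essentially a bookkeeping exercise translating the step-function description of $\sigma_\omega$ into a Riemann sum. The only mild subtlety is confirming that the two boundary intervals $(0, x_1)$ and $(x_n, 1)$ drop out, which is where the symmetry of the roots of $\Delta_K$ and property (2) are used.
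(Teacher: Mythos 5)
Your proof is correct and follows essentially the same route as the paper: partition the circle into arcs on which $\sigma_\omega(K)$ is constant and rewrite each constant value via property (4) of Lemma \ref{Lemma:jump_properties}. You are in fact slightly more explicit than the paper's own proof, which does not spell out why the two boundary arcs $(0,x_1)$ and $(x_n,1)$ contribute nothing; your use of $\sigma_1(K)=0$ and the conjugate symmetry of the roots fills that in correctly.
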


\begin{proof}
Let $\xi_k$ be any unit complex number between $\omega_k$ and $\omega_{k+1}$ for $k=1, \dots, n-1$. Then we have that
\[ \int_{\omega \in S^1} \sigma_{\omega}(K) = \sum_{i=1}^{n-1} \left(x_{i+1}-x_i\right) \sigma_{\xi_i}\]
where we multiply by $(x_{i+1}-x_i)$ because that is the proportion of the unit circle which has signature $\sigma_{\xi_i}$. We now use (1) and (4) of Lemma \ref{Lemma:jump_properties} to rewrite $\sigma_{\xi_i}$ in terms of the jump function:
\[ \sigma_{\xi_i} = 2\sum_{y \in [0, x_i]} j_K(y)\, = 2\sum_{k=1}^{i} j_K(x_k)\]
\end{proof}

\begin{example}
To illustrate the notation in Lemma \ref{Lemma:L2asjump} we shall calculate the $L^2$ signature of the knot $K := 5_1$, otherwise known as the cinquefoil knot or the $(2,5)$ torus knot.  The Alexander polynomial of $K$ is
\[ 1-t+t^2-t^3+t^4 = \frac{(1-t^{10})(1-t)}{(1-t^2)(1-t^5)}\]
whose roots are the $10^{\text{th}}$ roots of unity that are neither $5^{\text{th}}$ roots of unity nor $-1$.  This means that the roots are $\omega_k =  e^{2\pi i x_k}$ where $x_1 = \frac{1}{10}$, $x_2 = \frac{3}{10}$, $x_3 = \frac{7}{10}$ and $x_4 = \frac{9}{10}$.

Let $\xi_1 = e^{\frac{4}{5}\pi i}$, $\xi_2 = e^{\pi i}$ and $\xi_3 = e^{\frac{8}{5}\pi i}$.  Computing the $\omega$-signature at these points gives us $\sigma_{\xi_1} = -2$, $\sigma_{\xi_2} = -4$ and $\sigma_{\xi_3} = -2$.  We can thus draw the signature for every value on the unit circle:
\begin{figure}[h]
\begin{center}
\begin{tikzpicture}[scale=2.5,cap=round]
  \def\cosone{0.80901699}
  \def\sinone{0.58778525}
  \def\costhree{-0.30901699}
  \def\sinthree{0.951056516}
  \def\cosseven{-0.30901699}
  \def\sinseven{-0.951056516}
  \def\cosnine{0.80901699}
  \def\sinnine{-0.58778525}


   \node[above right] at (\cosone,\sinone) {$\omega_1$};
   \node[above left] at (\costhree,\sinthree) {$\omega_2$};
   \node[below left] at (\cosseven,\sinseven) {$\omega_3$};
   \node[below right] at (\cosnine,\sinnine) {$\omega_4$};

   \fill[blue!10!white] (0,0) -- (1,0) arc (0:36:1cm);
   \fill[blue!10!white] (0,0) -- (1,0) arc (0:-36:1cm);
   \fill[blue!20!white] (0,0) -- (\cosone,\sinone) arc (36:108:1cm);
   \fill[blue!20!white] (0,0) -- (\cosone,-\sinone) arc (-36:-108:1cm);
   \fill[blue!30!white] (0,0) -- (\costhree,\sinthree) arc (108:252:1cm);

   \draw (0,0) -- (\cosone,\sinone);
   \draw (0,0) -- (\costhree,\sinthree);
   \draw (0,0) -- (\cosseven,\sinseven);
   \draw (0,0) -- (\cosnine,\sinnine);

   \node at (0.5,0) {$\sigma_\omega = 0$};
   \node at (0.2, \sinone) {$\sigma_\omega = -2$};
   \node at (-0.5,0) {$\sigma_\omega = -4$};
   \node at (0.2, -\sinone) {$\sigma_\omega = -2$};

   \draw (0,0) circle (1cm);

   \draw[fill=black] (36:1cm) circle (0.01cm);
   \node[above] at (72:1cm) {$\xi_1$};
   \draw[fill=black] (72:1cm) circle (0.01cm);
   \draw[fill=black] (108:1cm) circle (0.01cm);
   \draw[fill=black] (-1,0) circle (0.01cm);
   \node[left] at (-1,0) {$\xi_2$};
   \draw[fill=black] (-36:1cm) circle (0.01cm);
   \node[below] at (-72:1cm) {$\xi_3$};
   \draw[fill=black] (-72:1cm) circle (0.01cm);
   \draw[fill=black] (-108:1cm) circle (0.01cm);
\end{tikzpicture}
\end{center}
\end{figure}

We can now compute the $L^2$ signature to be
\begin{eqnarray*}
\int_{\omega \in S^1} \sigma_\omega & = & (x_2 - x_1)\sigma_{\xi_1} + (x_3 - x_2)\sigma_{\xi_2} + (x_4 - x_3)\sigma_{\xi_3}\\
& = & \frac{2}{10}(-2) + \frac{4}{10}(-4) + \frac{2}{10}(-2)\\
& = & -\frac{12}{5}.
\end{eqnarray*}
\end{example}

\section{Torus knot signatures}

For coprime integers $p$ and $q$, the $(p,q)$ torus knot $T_{p,q}$ is the knot lying on the surface of a torus which winds $p$ times around the meridian and $q$ times around the longitude.  If $p$ and $q$ are not coprime, then $T_{p,q}$ is a link of more than one component.  The Alexander polynomial of $T_{p,q}$ is
  \[ \Delta_{p,q}(t) = \frac{(1-t^{pq})(1-t)}{(1-t^p)(1-t^q)}.\]
(A proof can be found in, for example, Lickorish~\cite[pg 119]{Lickorish}.)  The roots of this polynomial are the $pq^\text{th}$ roots of unity that are neither $p^\text{th}$ nor $q^\text{th}$ roots of unity.  This gives us $pq-p-q+1$ places at which the signature function could jump, namely $e^{2\pi i n/pq}$ for $n \in \Z$ with $0<n<pq$ such that $n$ is not divisible by $p$ or $q$.

The jump functions of torus knots have been investigated by Litherland~\cite{Litherland79}.  His result is that
    \[ j_{p,q}\left(\frac{n}{pq}\right) = |L(n)| - |L(pq+n)| \]
  where $pq>n \in \N$ and
    \[ L(n) = \left\{ (i,j) \; | \; iq+jp = n, \,\, 0 \leq i\leq p, \, 0 \leq j \leq q \right\} \text{ .} \]

Notice that if $n$ is not a multiple of $p$ or $q$ then $L(n)$ and $L(pq+n)$ cannot both be non-empty.  To see this, suppose that $(i_1, j_1) \in L(n)$ and $(i_2,j_2) \in L(pq+n)$. Then $(i_2-i_1)q + (j_2-j_1)p = pq$, and since $p$ and $q$ are coprime we must have $i_2=i_1$ (mod $p$) and $j_2=j_1$ (mod $q$).  But this forces $i_1=i_2$ and $j_1=j_2$, which is a contradiction.  A similar argument shows that neither $L(n)$ nor $L(pq+n)$ can contain more than one element.  However, at least one of the two sets is non-empty. For, we can write $n=iq+jp$ with $0<i<p$, and if $j>0$ then $(i,j) \in L(n)$ whilst if $j<0$ we have $(i,j+q) \in L(pq+n)$.

If $n$ is a multiple of $p$ or $q$ then $|L(n)| = 1 = |L(pq+n)|$.  Putting these results together gives us the following.

\begin{proposition}
\label{Prop:torusjump}
  The jump function of the $(p,q)$ torus knot is
  \[ j_{p,q}\left(\frac{n}{pq}\right) =
    \begin{cases}
      +1 & \text{ if } |L(n)|=1 \\
      -1 & \text{ if } |L(n)|=0 \\
      0  & \text{ if } n \text{ is a multiple of } p \text{ or } q
    \end{cases} \]
\end{proposition}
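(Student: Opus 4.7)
The plan is to deduce the proposition directly from Litherland's formula $j_{p,q}(n/pq) = |L(n)| - |L(pq+n)|$ together with the elementary counting facts established in the paragraph preceding the statement. Concretely, I would split into the two cases given in the statement and compute $|L(n)| - |L(pq+n)|$ in each.

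First I would handle the case where $n$ is a multiple of $p$ or $q$. Say $n = kp$ with $0 < k < q$ (the case $n = kq$ is symmetric). Then $(0,k) \in L(n)$ and $(p, k) \in L(pq+n)$, so both sets are non-empty; combined with the uniqueness observation (no two pairs in $L(m)$ can coincide, by coprimality of $p$ and $q$ exactly as in the excerpt) we obtain $|L(n)| = 1 = |L(pq+n)|$ and the jump is $0$.

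Next I would handle the case where $n$ is coprime to $pq$. The excerpt already shows two things: (i) $L(n)$ and $L(pq+n)$ cannot both be non-empty, because a pair of witnesses would force $(i_2 - i_1)q + (j_2 - j_1)p = pq$ and coprimality would then force the two pairs to agree (a contradiction); (ii) at least one of the two sets is non-empty, by writing $n = iq + jp$ with $0 < i < p$ and then placing the witness in $L(n)$ or $L(pq+n)$ according to the sign of $j$. Together with the uniqueness observation applied within each set, these give $\{|L(n)|, |L(pq+n)|\} = \{0,1\}$, so $|L(n)| - |L(pq+n)|$ is $+1$ when $|L(n)| = 1$ and $-1$ when $|L(n)| = 0$, exactly as claimed.

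There is no real obstacle here: the content of the proposition is just a tidy restatement of the bookkeeping already carried out, and the only thing to be careful about is the boundary convention $0 \leq i \leq p$, $0 \leq j \leq q$ in the definition of $L$, which is what makes the multiple-of-$p$-or-$q$ case contribute two solutions rather than one and thereby produces the zero jump. So the proof is essentially a one-paragraph case split invoking Litherland's theorem and the three elementary lemmas (uniqueness, mutual exclusivity, existence) from the preceding discussion.
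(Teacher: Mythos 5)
Your argument is correct and is essentially the paper's own: the proposition is obtained by combining Litherland's formula with the three counting facts (mutual exclusivity, uniqueness, existence) established in the preceding paragraph, together with the observation that $|L(n)| = |L(pq+n)| = 1$ when $n$ is a multiple of $p$ or $q$ --- for which you even supply the explicit witnesses $(0,k)$ and $(p,k)$ that the paper omits. One small labelling point: your second case should read ``$n$ not a multiple of $p$ or $q$'' rather than ``$n$ coprime to $pq$'' (these differ when $p$ or $q$ is composite), but since the argument you invoke uses only the former hypothesis, nothing breaks.
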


We need one more lemma before we are ready to find a formula for the $L^2$ signature.

\begin{lemma}
\label{Lemma:oneof}
  If $p$ and $q$ are coprime and $1 \leq n \leq pq-1$ with $n$ not a multiple of $p$ or $q$, then exactly one of $n$ and $pq-n$ can be written as $iq+jp$ for $i,j >0$.
\end{lemma}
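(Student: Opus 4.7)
The plan is to use the fact that $p$ and $q$ being coprime makes representations of $n$ as $iq+jp$ unique up to a one-parameter family, and to exploit a symmetry that sends the representation of $n$ to that of $pq-n$.

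First I would establish the following normalization. Since $\gcd(p,q)=1$, for any integer $n$ the set of integer solutions to $n = iq + jp$ is
\[ \{(i + kp, \, j - kq) \colon k \in \Z\}. \]
Given $n$ with $0 < n < pq$ and $n$ not a multiple of $p$ or $q$, one can choose $k$ so that $0 < i < p$; the hypothesis that $p \nmid n$ rules out $i = 0$ (which would force $n = jp$) and $i = p$, so this representative is unique and in this representative $j \neq 0$ (since $q \nmid n$).

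Next I would show that the normalized representative is the only candidate for a fully positive representation. If $n = iq + jp$ with $i, j > 0$, then $i \geq p$ would force $iq \geq pq > n$, giving $jp = n - iq < 0$, contradicting $j > 0$. Hence any such positive representation already satisfies $0 < i < p$ and must coincide with the normalized one. Therefore $n$ admits a representation with both coefficients positive if and only if the normalized $j$ is positive.

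The main and pleasant observation is the symmetry $n \mapsto pq - n$. If $(i,j)$ is the normalized representative of $n$ (so $0 < i < p$), then
\[ pq - n = (p - i)q + (-j)p, \]
and $0 < p - i < p$, so $(p-i, -j)$ is the normalized representative of $pq - n$. Its second coordinate is $-j$, which is positive precisely when $j$ is negative. Combined with the previous step, $n$ is expressible as $iq + jp$ with $i, j > 0$ iff $j > 0$ in its normalized rep, while $pq - n$ is so expressible iff $j < 0$; since $j \neq 0$, exactly one of these holds. I don't anticipate any real obstacle here — the only subtlety worth stating carefully is the uniqueness of the normalization and the argument that no non-normalized representation can have both coordinates positive.
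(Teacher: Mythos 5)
Your proof is correct and complete: the normalization to the unique representative with $0<i<p$, the observation that any representation with both coefficients positive must already be that normalized one, and the involution $(i,j)\mapsto(p-i,-j)$ exchanging $n$ and $pq-n$ together give exactly the dichotomy claimed. The paper itself supplies no proof here --- it only cites an external reference --- but your argument is the standard one and matches the reasoning the paper uses in the surrounding paragraphs about the sets $L(n)$ and $L(pq+n)$, so nothing further is needed.
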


\begin{proof}
  See, for example, \cite[Lemma 1.6]{BeckSinai}.
\end{proof}

\begin{theorem}
\label{Thm:L2torussig}
  Let $p$ and $q$ be coprime positive integers.  Then the $L^2$ signature of the $(p,q)$ torus knot is
  \[ \displaystyle \int_{S^1} \sigma_{\omega} = -\frac{(p-1)(p+1)(q-1)(q+1)}{3pq} \text{ .} \]
\end{theorem}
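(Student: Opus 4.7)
The plan is to reduce the $L^2$-signature to an explicit lattice sum over a triangle, and evaluate that sum via Dedekind sum reciprocity. First I would apply Lemma \ref{Lemma:L2asjump}, which expresses $\int_{S^1}\sigma_\omega$ as a weighted sum of jumps. Using the symmetry $j_K(1-x) = -j_K(x)$ from Lemma \ref{Lemma:jump_properties} to eliminate the constant contribution $\sum_k j_{p,q}(k/pq) = 0$, the integral rewrites cleanly as
\[ \int_{S^1}\sigma_\omega(T_{p,q}) = -\frac{2}{pq}\sum_{k=1}^{pq-1} k\, j_{p,q}(k/pq). \]

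Next, using Proposition \ref{Prop:torusjump} together with Lemma \ref{Lemma:oneof}, I would translate the jumps into lattice data. Let $A := \{(i,j) \in \Z^2 : 0 < i < p,\ 0 < j < q,\ iq+jp < pq\}$. Lemma \ref{Lemma:oneof} implies $|A| = (p-1)(q-1)/2$, with positive jumps at $k = iq+jp$ for $(i,j)\in A$ and negative jumps at $pq - iq - jp$ for the same pairs. Elementary rearrangement then produces
\[ \sum_{k=1}^{pq-1} k\, j_{p,q}(k/pq) = 2\sum_{(i,j)\in A}(iq+jp) - \frac{pq(p-1)(q-1)}{2}, \]
so the problem reduces to evaluating $S_A := \sum_{(i,j)\in A}(iq+jp) = q\sum_A i + p\sum_A j$. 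The natural involution $(i,j) \mapsto (p-i, q-j)$ bijecting $A$ with its complement in the box turns out to give only consistency relations and cannot determine $S_A$, so I would fall back on direct computation.

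To compute $\sum_A i$, I would sum over $i$ the number of admissible $j$'s to get $\sum_A i = \sum_{i=1}^{p-1} i \lfloor q(p-i)/p\rfloor$. Substituting $m = p-i$ and using $\lfloor qm/p\rfloor = (qm - r_m)/p$ with $r_m := qm \bmod p$, this splits off the classical sum $\sum_m \lfloor qm/p\rfloor = (p-1)(q-1)/2$ and leaves $\sum_{m=1}^{p-1} m\, r_m$. Applying the sawtooth identities $m = p((m/p)) + p/2$ and $r_m = p((qm/p)) + p/2$, this last sum evaluates to $p^2 s(q,p)$ up to easily computed polynomial corrections, where $s(q,p) := \sum_{m=1}^{p-1}((m/p))((qm/p))$ is the classical Dedekind sum. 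Interchanging the roles of $p$ and $q$ produces the analogous formula for $\sum_A j$, introducing $s(p,q)$.

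Finally I would apply Dedekind reciprocity,
\[ s(p,q) + s(q,p) = \frac{1}{12}\left(\frac{p}{q} + \frac{q}{p} + \frac{1}{pq}\right) - \frac{1}{4}, \]
to evaluate the symmetric combination arising in $S_A$. The algebraic simplification should yield $S_A = (p-1)(q-1)(4pq + p + q + 1)/12$, which when substituted into the formula from the first paragraph delivers the target $-(p-1)(p+1)(q-1)(q+1)/(3pq)$. The main obstacle is the bookkeeping in this final simplification: the Dedekind sum contributions must cancel precisely against specific polynomial terms, and the quartic numerator must emerge from the collision of several quadratic pieces. I would sanity-check against small cases such as $(p,q) = (2,3)$ and $(3,5)$ both to catch arithmetic errors and to confirm the reciprocity term carries exactly the weight required.
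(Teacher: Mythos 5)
Your proposal is correct, and its skeleton coincides with the paper's: both pass from Lemma \ref{Lemma:L2asjump} to a weighted sum of jumps, use Proposition \ref{Prop:torusjump} and Lemma \ref{Lemma:oneof} to identify the positive jumps with the lattice set $S=\{\,iq+jp : 0<i<p,\ 0<j<q,\ iq+jp<pq\,\}$ (your $A$ parametrises exactly this set), and reduce everything to the single quantity $\sum_{n\in S}n$; your intermediate identity $\sum_k k\,j_{p,q}(k/pq)=2S_A-\tfrac{1}{2}pq(p-1)(q-1)$ is just a rearrangement of the paper's $\frac{2}{pq}\sum_{n\in S}(pq-2n)$, and your closed form $S_A=\tfrac{1}{12}(p-1)(q-1)(4pq+p+q+1)$ agrees with the value the paper uses. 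The one genuine divergence is the last step: the paper simply cites Mordell's formula for $\sum_{n\in S}n$, whereas you rederive it by writing $\sum_A i=\sum_{i}i\lfloor q(p-i)/p\rfloor$, extracting $\sum_m m\,r_m = p^2 s(q,p)+\tfrac{1}{4}p^2(p-1)$ via the sawtooth identities, and invoking Dedekind reciprocity. This buys a self-contained proof (and makes visible the Dedekind-sum structure that also underlies Hirzebruch's formula for $\sigma_{-1}(T_{p,q})$ mentioned in the remark following the theorem), at the cost of the bookkeeping you flag; since Mordell's result is itself proved essentially this way, you are in effect unpacking the citation rather than taking a structurally different route.
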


\begin{proof}
  Denote the jump function of the $(p,q)$ torus knot by $j_{p,q}$. The signature function at $\omega$ can be defined as the sum of the jump functions up to that point (Lemma \ref{Lemma:jump_properties}).  If $\omega_n := e^{2\pi i x}$ with $x \in (\frac{n}{pq},\frac{n+1}{pq})$ then
  \[ \sigma_{\omega_n}(T_{p,q}) = \displaystyle 2 \sum_{i=1}^n j_{p,q}\left(\frac{i}{pq}\right) \]

  We can now use Lemma \ref{Lemma:L2asjump} to find a formula for the $L^2$ signature in terms of the jump function.

  \begin{eqnarray}
  \int_{S^1} \sigma_\omega & = & \sum_{n=1}^{pq-1} \frac{1}{pq} (\sigma_{\omega_n})\\
  & = & \frac{2}{pq} \sum_{n=1}^{pq-1}\sum_{i=1}^n j_{p,q}\left(\frac{i}{pq}\right) \\
  & = & \frac{2}{pq} \left( j_{p,q}\left(\frac{1}{pq}\right) + \left(j_{p,q}\left(\frac{1}{pq}\right)+j_{p,q}\left(\frac{2}{pq}\right)\right) + \dots + \sum_{i=1}^{pq-1}j_{p,q}\left(\frac{i}{pq}\right)\right)\\
  \label{eqn:sigint} & = & \frac{2}{pq} \sum_{n=1}^{pq-1} (pq-n)\,j_{p,q}\left(\frac{n}{pq}\right)
  \end{eqnarray}

  Let $S$ be the set defined by
  \[ \bigg\{  n \in \{1,\dots, pq-1\}\; | \;  n=qx+py, \,\, 0 < x < p,\,\, 0 < y < q \bigg\} \]

  Given an integer $n \in \{1,\dots, pq-1\}$ which is not a multiple of $p$ or $q$, we can write $n=qx+py$
  with $0<x<p$. By Lemma \ref{Lemma:oneof}, either $n \in S$ or $pq-n \in S$.  Proposition \ref{Prop:torusjump}
  tells us that in the first case we have $j_{p,q}(n/pq) = 1$, whilst in the second case we have $j_{p,q}(n/pq) = -1$.
  If $n$ is a multiple of $p$ or $q$ then the jump function will be zero and so it will not contribute to the sum.

We may rewrite equation (\ref{eqn:sigint}) as

  \[ \int_{S^1} \sigma_\omega = \frac{2}{pq}\left(\sum_{n \in S}(pq-n) - \sum_{n \in S}n\right) = \frac{2}{pq}\sum_{n \in S}(pq-2n) \text{ .} \]

  There are $\frac{1}{2}(p-1)(q-1)$ points in $S$, and in the paper by Mordell \cite{Mordell51} we find the following formula
  \[ \sum_{n \in S} n = \frac{1}{3}pq(p-1)(q-1) + \frac{1}{12}(p-1)(q-1)(p+q+1) \text{ .} \]

  Putting these together gives us
  \begin{eqnarray*}
    \int_{S^1} \sigma_\omega & = & \frac{2}{pq}\left(\sum_{n \in S}pq - 2\sum_{n \in S} n\right)\\
    & = & \frac{2}{pq}\left(\frac{1}{2}(p-1)(q-1)pq - 2\left(\frac{1}{3}pq(p-1)(q-1) + \frac{1}{12}(p-1)(q-1)(p+q+1)\right) \right)\\
    & = & \frac{1}{pq}(p-1)(q-1) \left(pq - \frac{4}{3}pq - \frac{1}{3}(p+q+1)\right) \\
    & = & -\frac{1}{3pq}(p-1)(q-1)(pq+p+q+1) \\
    & = & -\frac{1}{3pq}(p-1)(q-1)(p+1)(q+1).
  \end{eqnarray*}
\end{proof}

\begin{remark}
  That there is such a neat formula for the $L^2$ signature of a torus knot is all the more surprising considering
  the absence of an explicit formula for the usual signature $\sigma_{-1}$ of a torus knot.  We have the following
  formula due to Hirzebruch~\cite{Hirzebruch67} for $p$ and $q$ odd and coprime:
    \[ \sigma_{-1}(T_{p,q}) = -\left(\frac{(p-1)(q-1)}{2} + 2(N_{p,q} + N_{q,p})\right)\]
    where
    \[ N_{p,q} = \#\left\{ (x,y) \:| \: 1 \leq x \leq \frac{p-1}{2}, \,\, 1 \leq y \leq \frac{q-1}{2}, \,\, -\frac{p}{2} < qx-py <0 \right\}.\]
    Further work was done by Brieskorn~\cite{Brieskorn66} and Gordon/Litherland/Murasugi~\cite{GordonLitherlandMurasugi81},
    without success, and in $2010$ Borodzik and Oleszkiewicz published a proof \cite{Borodzik10} that in fact there can never be a rational function $R(p,q)$ such that $R(p,q)=\sigma_{-1}(T_{p,q})$ for all odd coprime integers $p$ and $q$.
\end{remark}

\section{Twist knots}

As an important corollary, we show that the twist knots $K_n$ are not slice.  This was proved in the 1970s by Casson and Gordon~\cite{CassonGordon86} but the following proof, which uses a result of Cochran, Orr and Teichner, is much shorter and simpler\footnote{It is also an interesting historical point that Milnor used an early version of the $\omega$-signatures to show that an infinite number of the $K_n$ are independent in the concordance group $\C$~\cite{Milnor68}.}.

\begin{definition}
\label{Def:twistKnot}
The \emph{twist knots} $K_n$ are the following family of knots:
  \begin{center}
    \includegraphics[width=6cm]{chapter10/twisty}
  \end{center}
For example, $K_{-1}$ is the trefoil, $K_1$ is the figure-eight knot and $K_2$ is Stevedore's knot $6_1$.  The knot $K_n$ is sometimes called the \emph{$n$-twisted double of the unknot}.
\end{definition}

A Seifert matrix for $K_n$ is
  \[V = \left(\begin{array}{cc} -1 & 1\\ 0 & n\end{array}\right)\]
which gives the Alexander polynomial as $-nt^2+(2n+1)t-n$.  There is a class of twist knots (those for which $n=m(m+1)$ for some $m$) which are algebraically slice.  This means that there is a simple closed curve $\gamma$ on the Seifert surface $F$ such that $\gamma$ is nontrivial in $H_1(F)$ and such that $\gamma^+$, which is the curve pushed off the Seifert surface, has zero linking with $\gamma$.  The consequence of this is that all signatures and other known slice invariants vanish.  The question is then: are these knots \emph{really} slice?

The following theorem shows us that one way of finding the answer is to consider the slice properties of the curve $\gamma$ rather than those of the original knot. For those readers unfamiliar with the work of Cochran, Orr and Teichner, read ``$(1.5)$-solvable" as ``having vanishing Casson-Gordon invariant".

\begin{theorem}[\cite{COT03}]
\label{Thm:genusonesurface}
  Suppose $K$ is a $(1.5)$-solvable knot with a genus $1$ Seifert surface $F$. Suppose that the classical Alexander polynomial of $K$ is non-trivial. Then there exists a homologically essential simple closed curve $J$ on $F$, with self-linking zero, such that the integral over the circle of the $\omega$-signature function of $J$ (viewed as a knot) vanishes.
\end{theorem}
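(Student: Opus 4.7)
My plan is to combine the algebraic consequences of $(1.5)$-solvability, which produce a family of candidate curves, with the $L^2$-signature technology of Cochran--Orr--Teichner to isolate one that works.

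First, $(1.5)$-solvability implies algebraic sliceness, so the Seifert form on $H_1(F) \cong \Z^2$ has a rank-one metaboliser; any primitive element of this metaboliser is represented by a homologically essential simple closed curve $J \subset F$ with self-linking zero. This immediately produces many candidates, but algebraic sliceness alone does not force any specific $J$ to have vanishing $L^2$-signature, so more information is needed.

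To narrow the choice I would use the $(1.5)$-solution $W$ (with $\partial W = M_K$) together with its basis $D, L$ of $H_2(W)$ realising a hyperbolic intersection form, where $\pi_1(D) \subset \pi_1(W)^{(1)}$ and $\pi_1(L) \subset \pi_1(W)^{(2)}$. Pushing $F$ slightly into $W$ identifies $H_1(F)$ with a submodule of the rational Alexander module of $K$, and the classes coming from $D$ pick out a specific Lagrangian for the Blanchfield form, non-trivial because $\Delta_K \neq 1$. I would then choose $J$ to represent a generator of this Lagrangian; the crucial extra property, beyond self-annihilation under the Seifert form, is that a suitable lift of $J$ into $W$ lies in $\pi_1(W)^{(2)}$ rather than merely in $\pi_1(W)^{(1)}$.

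The main obstacle is verifying that $\int_{S^1} \sigma_\omega(J) = 0$ for this particular $J$. By the $L^2$-signature formula quoted in the thesis, this integral equals $\rho(M_J, \phi)$, where $\phi \colon \pi_1(M_J) \to \Z$ is the abelianisation. My strategy would be to surger $W$ along $J$ to construct a $4$-manifold cobounding $M_J$ with a natural mixed coefficient system, and to compute $\rho(M_J, \phi)$ as the signature defect of the induced twisted intersection form on this cobordism. The hyperbolic structure of the intersection form on $W$, combined with the location of $J$ in $\pi_1(W)^{(2)}$, should force the twisted and ordinary signatures to agree, making the defect zero. Carrying this through rigorously requires the $L^2$-signature vanishing theorem of \cite{COT03}; the non-triviality of $\Delta_K$ enters at this step to guarantee that the relevant derived quotients are non-zero, so that the hyperbolic-form argument has content and the $\rho$-invariant really does vanish.
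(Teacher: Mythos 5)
First, a point of reference: the thesis does not prove this statement at all --- it is quoted verbatim from Cochran--Orr--Teichner \cite{COT03} (their genus-one first-order signature theorem) and used as a black box in Corollary \ref{Cor:twistslice}. So there is no in-paper proof to compare against; what you have written is a sketch of the external COT argument.

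As a sketch it has the right architecture --- algebraic sliceness supplies the two candidate derivative curves, the $(1.5)$-solution $W$ and the self-annihilating submodule $P=\ker\bigl(H_1(M_K;\Q[t^{\pm1}])\to H_1(W;\Q[t^{\pm1}])\bigr)$ of the Blanchfield form select one of them (this, not ``pushing $F$ into $W$'', is where $\Delta_K\neq 1$ enters: it guarantees the Alexander module is non-trivial so that $P$ actually singles out a curve), and the conclusion is a $\rho$-invariant computation. But the step that carries the entire content of the theorem is asserted rather than proved. Saying that the hyperbolic form on $H_2(W)$ ``should force the twisted and ordinary signatures to agree'' elides two separate non-trivial inputs. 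First, the vanishing one actually gets from the $(1.5)$-Lagrangian ($\pi_1(L)\subset\pi_1(W)^{(2)}$ with duals in $\pi_1(W)^{(1)}$) is that the intersection form on $W$ with coefficients in $\Z[\pi_1(W)/\pi_1(W)^{(2)}]$ admits a metaboliser, so the $L^2$-signature defect of $W$ over that quotient is zero; this is the COT $L^2$-signature theorem applied to a specific solvable quotient, not a formal consequence of hyperbolicity. Second, and more seriously, that defect is a $\rho$-invariant of $M_K$, whereas the quantity you need to kill is $\int_{S^1}\sigma_\omega(J)=\rho(M_J,\phi)$ for the \emph{different} $3$-manifold $M_J$ with its abelianisation. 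Bridging these requires (a) an explicit cobordism between $M_K$ and $M_J$ (plus auxiliary pieces) over which the coefficient system extends compatibly --- ``surger $W$ along $J$'' does not literally make sense since $J\subset\partial W$ --- and (b) the induction/subgroup property of $L^2$ $\rho$-invariants, namely that restricting to the infinite cyclic subgroup generated by $J$ (which lies in $\pi_1(W)^{(2)}$, hence maps to an abelian subgroup of the relevant quotient) recovers the integral of the Levine--Tristram signatures. Neither ingredient appears in your outline, and without them the argument does not close.
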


Thus if the $L^2$-signature is non-zero for any closed curve on $F$ with self-linking zero, then the knot cannot be $(1.5)$-solvable and therefore cannot be slice.

\begin{corollary}
\label{Cor:twistslice}
  The twist knots $K_n$ are not slice unless $n=0$ or $n=2$.
\end{corollary}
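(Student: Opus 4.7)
The plan is to combine the algebraic structure of the Seifert form of $K_n$ with Theorem \ref{Thm:genusonesurface} and the torus knot signature formula of Theorem \ref{Thm:L2torussig}. First I would apply the classical necessary condition for algebraic sliceness to the Seifert matrix $V=\left(\begin{smallmatrix}-1 & 1\\ 0 & n\end{smallmatrix}\right)$: a self-linking-zero class $a\alpha+b\beta\in H_1(F)$ must satisfy $-a^{2}+ab+nb^{2}=0$, equivalently $(2a-b)^{2}=(4n+1)b^{2}$. With $\gcd(a,b)=1$, primitive integer solutions exist precisely when $4n+1$ is an odd square, i.e.\ $n=m(m+1)$ for some $m\in\Z$. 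Slice implies algebraically slice, so all $n\notin\{m(m+1)\}$ are immediately excluded; the values $n=0$ and $n=2$ are known to be slice (the unknot and Stevedore's knot $6_1$), and it remains to treat $n=m(m+1)$ with $m\neq 0,-1$ corresponding to $n\geq 6$.

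For such $n$, a short computation shows that the equation $(2a-b)^{2}=(2m+1)^{2}b^{2}$ has, up to sign, exactly two primitive solutions in $H_1(F)$, namely $J_{1}=(m+1)\alpha+\beta$ and $J_{2}=-m\alpha+\beta$. These are the only candidates for the curve $J$ produced by Theorem \ref{Thm:genusonesurface}, so if $K_n$ were $(1.5)$-solvable (in particular, if $K_n$ were slice), at least one of $J_{1},J_{2}$ would be a knot in $S^{3}$ with vanishing $L^{2}$-signature $\int_{S^{1}}\sigma_{\omega}$.

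The central geometric step is then to identify $J_{1}$ and $J_{2}$ as knots in $S^{3}$. The standard genus-$1$ Seifert surface for $K_{n}$ with $n=m(m+1)$ is a disc with two bands: one with a half twist (dual to $\alpha$) and one with $m(m+1)$ full twists (dual to $\beta$). I would draw the curves $J_{i}$ on this surface and, by an isotopy that untwists the bands while tracking framings, show that they lie on the boundary of an unknotted torus in $S^{3}$ with the correct winding data to be torus knots $T_{p_i,q_i}$. The hard part will be getting the exact pair $(p_i,q_i)$ right, including checking that the isotopy does not introduce spurious crossings; I expect to find that $J_{1}$ and $J_{2}$ are nontrivial torus knots (with $|p_i|,|q_i|\geq 2$) whenever $m\neq 0,-1$, mirroring the only two slice exceptions.

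Finally, Theorem \ref{Thm:L2torussig} gives
\[
\int_{S^{1}}\sigma_{\omega}(T_{p,q})\;=\;-\,\frac{(p^{2}-1)(q^{2}-1)}{3pq},
\]
which vanishes precisely when $|p|=1$ or $|q|=1$, i.e.\ when the torus knot is unknotted. Since both $J_{1}$ and $J_{2}$ are nontrivial torus knots in the remaining cases, their $L^{2}$-signatures are strictly negative, contradicting the conclusion of Theorem \ref{Thm:genusonesurface}. Therefore $K_{n}$ cannot be slice for $n\neq 0,2$, which completes the argument. The only step requiring genuine care is the explicit identification of $J_{1},J_{2}$ as torus knots; once that is in hand, the rest is a one-line application of the signature formula.
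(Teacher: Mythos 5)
Your proposal follows essentially the same route as the paper: reduce to $n=m(m+1)$ via the self-linking-zero condition on the genus-one Seifert surface, identify the metabolising curves as torus knots, and combine Theorem \ref{Thm:genusonesurface} with the $L^2$-signature formula of Theorem \ref{Thm:L2torussig}. The one step you leave open --- the explicit identification of $J_1$ and $J_2$ as nontrivial torus knots --- is precisely the crux, and the paper closes it only by citing Kauffman~\cite[Chapter VIII]{Kauffman}: both curves are (up to mirror and reverse, which do not affect the non-vanishing of the signature integral) the $(m,m+1)$ torus knot, which is nontrivial exactly when $m\neq 0,-1,1,-2$, matching the two slice exceptions $n=0,2$. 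So your outline is sound, but as written the argument is incomplete without that geometric input.
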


\begin{proof}
  The Alexander polynomial of $K_n$ is $-nt^2 + (2n+1)t - n$.  If $n<0$ then $\Delta_{K_n}$ has distinct roots on the unit circle and an easy computation shows that the signature is non-zero.  If $n>0$ then $\Delta_{K_n}$ is reducible if and only if $4n+1$ is a square.  Since the Alexander polynomial of a slice knot has the form $f(t)f(t^{-1})$ \cite{FoxMilnor66}, it follows that $K_n$ cannot be slice if $4n+1$ is not a square.

  Suppose $4n+1$ = $l^2$ with $l=2m+1$.  Then $n=m(m+1)$.  Using the obvious genus $1$ Seifert surface $F$ for $K_{m(m+1)}$ it can be seen that the only simple closed curve on $F$ with self-linking zero is the $(m,m+1)$ torus knot (see, for example, Kauffman~\cite[Chapter VIII]{Kauffman}).  Since the $L^2$ signature for any torus knot $T_{m,m+1}$ is non-zero (except for $m=0,-1,1,-2$) by Theorem \ref{Thm:L2torussig}, this means that $K_{m(m+1)}$ cannot be (1.5)-solvable and therefore not slice unless $n=0$ or $n=2$.
\end{proof}

We now apply Theorem \ref{Thm:L2torussig} to the $n$-twisted doubles of knots, an example of which can be seen in Figure \ref{fig:twisteddouble}.

\begin{corollary}
\label{Cor:twisteddouble}
Let $K$ be a knot and $D_n(K)$ the $n$-twisted double ($n \neq 0$) of $K$.
\begin{itemize}
\item[(a)] $D_n(K)$ cannot be slice unless $n=m(m+1)$ for some $m \in \Z$ and $\int_{S^1} \sigma_\omega(K) = \frac{(m-1)(m+2)}{3}$.  In particular, $D_2(K)$ can only be slice if $\int_{S^1} \sigma_\omega(K)=0$.
\item[(b)] For any given $K$ with $\int_{S^1} \sigma_\omega(K) \neq 0$, there is at most one $D_n(K)$ which can be slice.
\end{itemize}
\end{corollary}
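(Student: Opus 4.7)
The plan is to adapt the proof of Corollary \ref{Cor:twistslice} by replacing the unknotted companion with an arbitrary knot $K$. The knot $D_n(K)$ admits a natural genus-$1$ Seifert surface $F$ consisting of a twisted, clasped band that runs along $K$. Since tying $K$ into one of the bands changes none of the linking numbers between the standard generators $\alpha,\beta$ of $H_1(F)$, the Seifert matrix with respect to $(\alpha,\beta)$ is identical to the twist-knot Seifert matrix
\[ V = \begin{pmatrix} -1 & 1 \\ 0 & n \end{pmatrix}. \]
In particular $D_n(K)$ has the same non-trivial Alexander polynomial $-nt^2+(2n+1)t-n$ as $K_n$ whenever $n \neq 0$, so Theorem \ref{Thm:genusonesurface} will apply. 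The same quadratic analysis as in Corollary \ref{Cor:twistslice} shows that $F$ carries a homologically essential simple closed curve of self-linking zero if and only if $n = m(m+1)$ for some $m \in \Z$; such a curve is now, up to isotopy on $F$, an $(m,m+1)$-cable of $K$ (instead of the $(m,m+1)$-torus knot appearing when $K$ is the unknot).

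Next I would apply Litherland's formula for the $\omega$-signature of a satellite knot: for a pattern $P$ in a standardly embedded solid torus wrapping $p$ times around the core, and companion $K$, one has $\sigma_\omega(P(K)) = \sigma_\omega(P) + \sigma_{\omega^p}(K)$. Since $\omega \mapsto \omega^p$ is a measure-preserving self-map of $S^1$, integration yields
\[ \int_{S^1} \sigma_\omega\bigl(C_{m,m+1}(K)\bigr) = \int_{S^1}\sigma_\omega(T_{m,m+1}) + \int_{S^1}\sigma_\omega(K). \]
Theorem \ref{Thm:L2torussig} evaluates the torus-knot integral as $-(m-1)(m+2)/3$.

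If $D_n(K)$ is slice then it is in particular $(1.5)$-solvable, and Theorem \ref{Thm:genusonesurface} forces the $L^2$-signature of every self-linking-zero simple closed curve on $F$ to vanish. Combining this with the previous display gives
\[ \int_{S^1}\sigma_\omega(K) = \frac{(m-1)(m+2)}{3}, \]
which is the content of part (a); the specialisation $n = 2$ corresponds to $m=1$ or $m=-2$, both forcing the right-hand side to vanish.

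For part (b), observe that $n = m^2+m$ and $(m-1)(m+2) = m^2+m-2 = n-2$, so the constraint from part (a) reads $n = 3v + 2$ where $v := \int_{S^1}\sigma_\omega(K)$. Thus $v$ determines $n$ uniquely, and when $v \neq 0$ the unique candidate satisfies $n \neq 2$, so at most one $D_n(K)$ can be slice. The principal obstacle is the first step: correctly identifying the self-linking-zero simple closed curves on $F$ as specific cables of $K$ (not merely as homology classes), and verifying that Litherland's satellite formula applies to them with the correct wrapping number. Once this geometric identification is settled, the rest of the argument is parallel to Corollary \ref{Cor:twistslice}.
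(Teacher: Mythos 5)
Your strategy is the paper's: apply Theorem \ref{Thm:genusonesurface} to the genus-one Seifert surface of $D_n(K)$, identify the self-linking-zero curve, and compute its $L^2$-signature via Theorem \ref{Thm:L2torussig}. However, your geometric identification of that curve is not right. With respect to the basis in which $e_2$ is the core of the band following $K$, the Seifert matrix $\left(\begin{smallmatrix} -1 & 1\\ 0 & n\end{smallmatrix}\right)$ gives self-linking $-a^2+ab+nb^2$ for the class $ae_1+be_2$, and with $n=m(m+1)$ the primitive solutions are $(m+1)e_1+e_2$ and $-me_1+e_2$. The curve therefore traverses the companion band exactly \emph{once}: it is a winding-number-one satellite of $K$, and since the $m+1$ passes through the clasp band occur in a ball it is in fact the connected sum $K\# T_{m,m+1}$, as the paper states. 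It is not the $(m,m+1)$-cable of $K$, which would have winding number $m$ or $m+1$ about $K$. Fortunately your argument is insensitive to this error: Litherland's satellite formula with winding number $p$ gives $\sigma_\omega(T_{m,m+1})+\sigma_{\omega^p}(K)$, and integrating over $S^1$ kills the dependence on $p$ (for $p=1$ this is just additivity of signatures under connected sum, which is all the paper needs). So the identity
\[ \int_{S^1}\sigma_\omega = -\tfrac{(m-1)(m+2)}{3} + \int_{S^1}\sigma_\omega(K) \]
and hence part (a) survive; you should just correct the description of the curve.

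Your part (b) is correct and in fact slicker than the paper's: writing $(m-1)(m+2)=n-2$ turns the constraint into $n=3v+2$ with $v=\int_{S^1}\sigma_\omega(K)$, so $v$ determines $n$ outright. The paper instead solves $m^2+m-2-3v=0$ and observes that its two roots sum to $-1$, hence yield the same value of $n=m(m+1)$; the two arguments are equivalent, but yours makes the uniqueness immediate.
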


\begin{figure}
  \centering
  \includegraphics[width=9cm]{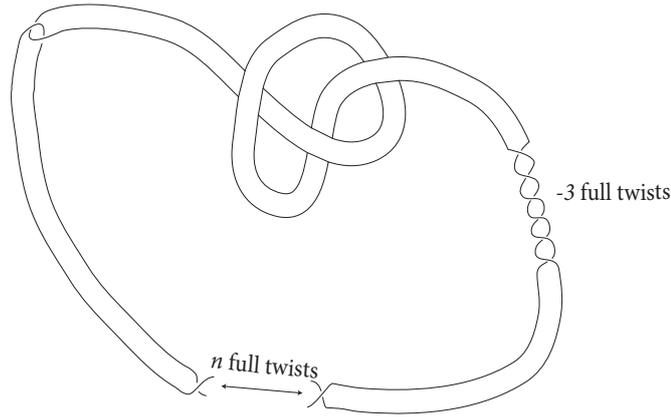}
  \caption{The $n$-twisted double of the right-handed trefoil.}
  \label{fig:twisteddouble}
\end{figure}

\begin{proof}
The Alexander polynomial of $D_n(K)$ is once again $-nt^2 + (2n+1)t -n$ and the same argument as in the proof of Corollary \ref{Cor:twistslice} shows that $D_n(K)$ is algebraically slice if and only if $n=m(m+1)$ for some integer $m$. (Notice that if $n=0$ then the Alexander polynomial is trivial and $D_0(K)$ is slice by Freedman's work \cite{FreedmanQuinn}.)  The zero-framed curve on the obvious Seifert surface is the connected sum of $K$ and the $(m,m+1)$ torus knot, $K \# T_{(m,m+1)}$.  If we denote the $L^2$ signature by $s$, we have
  \begin{eqnarray*}
    s(K \# T_{m(m+1)}) & = & s(K) + s(T_{(m,m+1)})\\
                       & = & s(K) - \frac{(m-1)(m+1)m(m+2)}{3m(m+1)}\\
                       & = & s(K) - \frac{(m-1)(m+2)}{3}
  \end{eqnarray*}
By Theorem \ref{Thm:L2torussig}, $D_n(K)$ can only be slice if $s(K) = \frac{(m-1)(m+2)}{3}$.  In particular, if $m=1$ or $m=-2$ then $T_{(m,m+1)}$ is the unknot and so $s(K)$ must be zero for $D_2(K)$ to be slice.  This proves (a).
For (b), suppose that $3s(K) = (m-1)(m+2) \neq 0$.  Rearranging, we get $m^2+m - 2-3s(K)=0$.  Suppose that $m_1$ and $m_2$ are roots.  Then $m_1+m_2 = -1$, so $m_1(m_1+1) = -(m_2+1)(-m_2) = m_2(m_2+1)$, giving only one value for $n$.
\end{proof}

Se-Goo Kim \cite{Kim05} proves that for any knot $K$, all but finitely many algebraically slice twisted doubles of $K$ are linearly independent in the concordance group $\C$.  Using our theorem we can conjecture that there is a much stronger result about the independence of the twisted doubles of $K$.

\begin{conjecture} For a fixed knot $K$, the $D_{m(m+1)}(K)$ are linearly independent in $\C$ for all but one (or two, if $\int_{S^1} \sigma_\omega(K) = 0$) values of $m(m+1)$.
\end{conjecture}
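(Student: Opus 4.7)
The plan is to generalize the proof of Corollary \ref{Cor:twisteddouble} from a single twisted double to a linear combination, combining the Cochran--Orr--Teichner machinery underlying Theorem \ref{Thm:genusonesurface} with the explicit torus-knot formula of Theorem \ref{Thm:L2torussig} applied to higher-genus Seifert surfaces. Suppose for contradiction that $J = \sum_{i=1}^k a_i D_{n_i}(K)$ is slice in $\C$, where the $n_i = m_i(m_i+1)$ are distinct algebraically slice twist values, all $a_i \neq 0$, and none of the $n_i$ equals the exceptional value(s) isolated by Corollary \ref{Cor:twisteddouble}(b). First I would construct a natural Seifert surface $F$ for $J$ as the boundary connected sum of $|a_i|$ copies of the standard genus one Seifert surface for $\pm D_{n_i}(K)$ (the sign matching that of $a_i$), so that $g(F) = \sum_i |a_i|$ and the Seifert form $V_F$ splits as a block sum of copies of the individual twist-knot Seifert forms $V_i = \bigl(\begin{smallmatrix} -1 & 1 \\ 0 & n_i \end{smallmatrix}\bigr)$.

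Since $J$ is slice it is in particular $(1.5)$-solvable. I would then invoke the higher-genus version of the Cochran--Orr--Teichner theorem underlying Theorem \ref{Thm:genusonesurface}: there exists a metaboliser $L \subset H_1(F;\Z)$ for $V_F$ that admits a generating set of simple closed curves on $F$, each of which, viewed as a knot in $S^3$, has vanishing integral $L^2$-signature. Because $V_F$ is a block sum, the restriction of any $\alpha \in L$ to the $i$-th genus-one block has vanishing self-linking, and on that block the primitive zero-self-linking classes are exactly the two lines spanned by $(m_i+1)\gamma_1^{(i)} + \gamma_2^{(i)}$ and $-m_i\gamma_1^{(i)} + \gamma_2^{(i)}$. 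By the argument in the proof of Corollary \ref{Cor:twistslice}, each such class is represented on $F$ by a curve concordant to $K \# T_{m_i,m_i+1}$ (with sign according to the orientation of the piece and the chosen line).

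Any generator $\alpha \in L$ therefore represents a knot concordant to a connected sum $\#_i c_i(K \# T_{m_i,m_i+1})$ for integer multiplicities $c_i = c_i(\alpha)$; additivity of the $L^2$-signature together with Theorem \ref{Thm:L2torussig} gives the linear relation
\begin{equation*}
\Bigl(\textstyle\sum_i c_i\Bigr)\, s(K) \;=\; \sum_i c_i\,\frac{(m_i-1)(m_i+2)}{3},
\end{equation*}
where $s(K) := \int_{S^1} \sigma_\omega(K)$. Ranging over a generating set of $L$ yields a system of such equations whose coefficient vectors are controlled by the block structure of $V_F$ and by the two-fold choice of isotropic line in each piece. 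I would complete the argument by a combinatorial analysis: because the values $\tfrac{1}{3}(m-1)(m+2)$ are strictly monotone in $|m|$ outside the exceptional range, and because the constraint that $L$ have full rank $\sum |a_i|$ forces classes drawn from \emph{every} occurrence of every piece to appear non-trivially in some generator, the only simultaneous integer solution is $a_i = 0$ for all $i$, which contradicts our standing assumption.

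The main obstacle is precisely this last combinatorial step, coupled with pinning down the correct higher-genus formulation of Theorem \ref{Thm:genusonesurface}. A single metaboliser may yield only one or two linearly dependent equations, and one needs enough algebraically independent metabolisers to exclude every $(a_1,\dots,a_k) \neq 0$; in general, mixing diagonal classes across different pieces enlarges the space of allowed $c_i$-tuples in a way that threatens to admit cancellation. In cases where the first-order $L^2$-signature obstruction alone is insufficient, the strategy would be to iterate the argument using the $(n.5)$-solvability filtration of \cite{COT03} and the associated higher $\rho$-invariants, although the explicit calculations become considerably harder and may require an analogue of Theorem \ref{Thm:L2torussig} for twisted $L^2$-signatures of torus knots.
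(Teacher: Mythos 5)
This statement is labelled as a conjecture in the thesis and is \emph{not} proved there; the text immediately following it explains that a proof would require extending Theorem \ref{Thm:genusonesurface} to connected sums of genus $1$ Seifert surfaces, and identifies the analysis of metabolisers on higher-genus surfaces as the obstacle. Your proposal is a reasonable sketch of how such a proof might go, but it contains a genuine gap at exactly the point the thesis flags as the difficulty, so it does not close the conjecture.

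Concretely, the step ``because $V_F$ is a block sum, the restriction of any $\alpha \in L$ to the $i$-th genus-one block has vanishing self-linking'' is false. A metaboliser of a block sum need not restrict to isotropic classes on each block: only the \emph{total} self-linking $\sum_i V_i(\alpha_i,\alpha_i)$ vanishes, and diagonal classes mixing blocks (the standard example being the diagonal metaboliser of $V \oplus (-V)$) have non-isotropic restrictions. This is precisely why one ``cannot deduce enough information about the metabolisers of a connected sum from looking at metabolisers of the component knots.'' Two further points would also need justification even if that were repaired: (i) the higher-genus version of Theorem \ref{Thm:genusonesurface} you invoke --- that a $(1.5)$-solvable knot admits a metaboliser generated by disjoint simple closed curves on $F$, each with vanishing $\int_{S^1}\sigma_\omega$ --- is not established and is strictly stronger than the genus-one statement; and (ii) the knot type of a simple closed curve on $F$ representing a mixed homology class is not determined by that class, so the identification of such a curve with $\#_i c_i(K \# T_{m_i,m_i+1})$ up to concordance is unjustified. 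The closing suggestion to iterate with higher $\rho$-invariants is a sensible direction but is not carried out. As it stands, your argument establishes the conjecture only in the (already known) case where the metaboliser happens to split block-by-block.
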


The proof of this conjecture will require Theorem \ref{Thm:genusonesurface} to be extended to connected sums of genus $1$ Seifert surfaces. The difficulty with this is analysing all the possible metabolising curves (i.e. half-bases of homology curves with self-linking zero) on a higher genus surface. The number of such curves would grow exponentially as the genus increased, and it is not possible to deduce enough information about the metabolisers of a connected sum from looking at metabolisers of the component knots. 
\chapter{\label{chapter9} Further work}

In this thesis we have developed new techniques for studying knot concordance and have determined many previously unknown concordance orders of knots. Our explorations have shed more light on the big questions of concordance and have also revealed new questions hitherto unasked. In this chapter we discuss the questions that remain open and give our thoughts on how to go about attacking them.

\section{The algebraic concordance group}

In Chapter \ref{chapter4} we make various conjectures about the classification of knots in the algebraic concordance group. To determine the order of a knot in $\G$, the current literature says that we need to look at the image of the knot in $W(\F_p)$ for primes $p$ dividing the discriminant and the leading coefficient of the Alexander polynomial.  Together, Conjectures \ref{Conj:DetPrime} and \ref{Conj:Witt2} say that all the information required to determine the algebraic order of a knot is contained in the groups $W(\F_p)$ for primes $p$ dividing the determinant of the knot.  More analysis should be done on algebraic concordance orders of knots, for example by building on the work of the $9$-crossing analysis to analyse prime knots of $10$ or $11$ crossings, to see further whether these conjectures hold.

Another interesting exercise would be to find a prime knot of algebraic order $2$ which turns out to be concordant to twice another prime knot of algebraic order $4$. Such examples have not yet been exhibited.

\section{Developing twisted Alexander polynomials}

Although the theorems of Chapter \ref{chapter5} go a long way in developing twisted Alexander polynomials as slicing obstructions, there are still cases to be looked at in order to complete the theory. These are
 \begin{itemize}
   \item where $H_1(\Sigma_2(K);\Z) \cong \Z_q \oplus \Z_q$ for a prime knot $K$, or more generally...
   \item ... where $H_1(\Sigma_2(K);\Z) \cong \Z_{q^m} \oplus \Z_{q^n}$ for a prime knot $K$ and some $m,n \in \N$;
   \item where $H_1(\Sigma_2(K_i);\Z) \cong \bigoplus \Z_{q^{n_i}}$ for at least two distinct $n_i$ and $K=K_1 + \dots + K_n$;
   \item where $H_1(\Sigma_p(K);\Z) \cong \Z_q \oplus \Z_q \oplus \Z_q \oplus \Z_q$ for a prime knot $K$ and $p>2$;
   \item most generally, where $H_1(\Sigma_p(K);\Z) \cong \bigoplus_{i} \Z_{q^{n_i}}$ for any knot $K$, any prime power $p$ and any integers $n_i$.
 \end{itemize}
 This last condition also includes the case where $H_1(\Sigma_p;\Z)$ does not split into 1-dimensional eigenspaces. For example, those cases where $H_1(\Sigma_p;\Z) \cong \Z_q \oplus \Z_q$ where $p$ does not divide $q-1$.
 
 Some work in the direction of these generalisations has been done by Livingston and Naik~\cite{LivingstonNaik09}, where they show that knots with $H_1(\Sigma_2) \cong \Z_3 \oplus \Z_{3^{2i}}$ for any $i$ are not of order $4$ in $\C$. The amount of algebra and number theory involved in even this simple case shows how difficult such general theorems would be to prove.

In addition to these generalisations, it should be possible to get tighter conditions for some of the existing theorems, particularly those relating to knots which are connected sums. The proof of Proposition \ref{Prop:9-23} shows that not all of the constituent knots of a connected sum need to satisfy the conditions of Theorem \ref{Thm:infiniteordersum}, so a next step would be to work out exactly how many of the conditions are required to prove the infinite order of a connected sum.

\section{Signatures vs twisted polynomials}

The proofs of the fact that knots with $H_1(\Sigma_2) \cong \Z_{q^n}$, $n$ odd, have infinite order in $\C$ use Casson-Gordon invariants, but in the form of signatures.  The obstructions given in this thesis to knots having finite order use Casson-Gordon invariants, but in the form of twisted Alexander polynomials.  Both of these simplifications are necessary for a computable obstruction theory, since the Casson-Gordon invariants themselves are too complicated to compute except in a few simple cases.

Twisted Alexander polynomials seem like a weaker obstruction, since they are of order $2$ (i.e. a twisted Alexander polynomial can only be an obstruction if it occurs with odd exponent).  It would be good to show that in some cases they are equally effective.  For example, in Theorem \ref{Thm:twistedpolyq^n}, it would be good to show that for $q \equiv 3$ mod $4$ and $n$ odd, that the conditions given always hold.  We would then have a second proof of the Livingston-Naik theorem, which could perhaps be generalised to other situations where the signatures are not effective.

In the paper \cite{HeddenKirkLivingston11} Hedden, Kirk and Livingston use a combination of signatures and twisted Alexander polynomials to show that a set of knots is linearly independent. Perhaps these techniques could also be used to prove Conjecture~\ref{Conj:geomclassification}, which does not appear to be tractable by using twisted Alexander polynomials alone.

\section{Structure of the concordance group $\C$}

The conjecture of the knot theoretic community is that $\C \cong (\Z)^\infty \oplus (\Z_2)^\infty$.  The main reasons behind this conjecture are that no knots of order $4$ have been found, and that there are no knots of any finite order other than $2$ and $4$ in the higher-dimensional knot concordance groups.

A first step on the road to proving this conjecture would be to prove the conjecture made by Livingston~\cite[1.4]{LivingstonNaik01} that knots of algebraic order $4$ have infinite order in $\C$.  A knot is of algebraic order $4$ if and only if it has primes congruent to $3$ mod $4$ dividing its determinant with odd exponent\cite[23(c)]{Levine69-1}.  Livingston and Naik have proved various theorems about the orders of knots with homology of this form: knots with $H_1(\Sigma_2) \cong \Z_{q^n}$ for $n$ odd are always of infinite order in $\C$ \cite{LivingstonNaik01}, and knots with $H_1(\Sigma_2) \cong \Z_3 \oplus \Z_{3^{2i}}$ for any $i$ are not of order $4$ in $\C$ \cite{LivingstonNaik09}.  These results need to be generalised to knots whose $H_1(\Sigma_2)$ is a direct sum of $\Z_{q^{n_i}}$ where $q \equiv 3$ (mod 4) and at least one $n_i$ is odd.  The difficulty seems to lie only in the linear algebra and number theory, rather than in the knot theory.

A result of this form would be surprising because it would be an abelian invariant which provides the obstruction.  Why should the shape of the homology of the $2$-fold branched cover determine the order of the knot in the concordance group?  This is certainly not true in general: for any knot which is algebraically slice, there is a slice knot which has exactly the same abelian invariants \cite{Kearton04}.  It is therefore impossible to use abelian invariants to obstruct an algebraically slice knot from being slice.

Furthermore, it is surprising to think that no more sophisticated tools than Casson-Gordon invariants will be needed to prove this conjecture.  Given the infinite filtration of the knot concordance group, why should all algebraic order $4$ knots lie in the same part of it?  A corresponding result is certainly not true for order $2$ knots: Cochran, Harvey and Leidy~\cite{CochranHarveyLeidy11} have shown that there are (infinitely many) order $2$ knots in every level $\CF_{n}/\CF_{n.5}$ of the concordance filtration.

It is fascinating that it should be some simple number theory which determines the concordance order of a knot.  The condition that the determinant of a knot has no primes congruent to 3 mod 4 with odd exponents is equivalent to the condition that the determinant is the sum of two squares.  It is known that the determinant of an amphicheiral knot is of this form \cite{Goeritz33}, but this is almost certainly a property of the concordance group, not of the symmetry type of the knot.  We conjecture that a result of the form of Hartley and Kawauchi (that the Conway polynomial $C(z)$ of an amphicheiral knot splits as $f(z)f(-z)$)\cite{HartleyKawauchi79}, or of the form of Conant's conjecture (that for an amphicheiral knot $C(z)C(iz)C(z^2)$ is a perfect square inside the ring of power series with integer coefficients)\cite{Conant06}, should be true for any knot concordant to an amphicheiral knot.

\section{Where do order 2 knots come from?}

It is an open conjecture whether the $\Z_2$ summand in $\C$ is generated by amphicheiral knots.  In other words, is there a knot of concordance order 2 which is neither amphicheiral nor concordant to an amphicheiral knot?

In Chapter \ref{chapter8} we found examples of knots of order 2 which are not negative amphicheiral but which turn out to be concordant to amphicheiral knots ($4_1$ or $6_3$).  Part of the reason why we have only found knots of this kind is that it is much easier to prove that a knot is concordant to a smaller one than to prove it is of order 2 directly.  If a knot $K$ has $12$ crossings then $K\# K$ has $24$ crossings and it is virtually impossible to see where to make a slice move. However, we have found two possible candidates for a finite order knot which is not concordant to an amphicheiral knot: namely, $12a_{631}$ and $12n_{846}$. It seems worth investigating these particular knots to see whether they are counterexamples to the conjecture; perhaps by developing computer techniques (or manual techniques) for finding slice movies. Some work has been done in this area by Ayumu Inoue~\cite{Inoue10}, who uses computer simulations to find slice movies for the twist-spun trefoils.

\section{Second-order slice obstructions}

In Chapter \ref{chapter10} we use $\omega$-signatures as a second-order obstruction to provide another proof that the twist knots are not slice.  This appears to be a powerful, yet simple, technique which could be used for other families of knots.  As mentioned at the end of the chapter, the result should also be able to prove the linear independence of families of knots, provided the Cochran-Orr-Teichner theorem can be extended to connected sums of genus $1$ knots.  It is yet a more difficult problem to extend the theorem to prime knots of higher genus, since it is then much more difficult to analyse the metabolising curves on the surface. 

\appendix
\renewcommand{\chaptername}[1]{Appendix A. }
\renewcommand{\chaptermark}[1]{\markboth{\chaptername \ #1}{}}
\chapter{Signature calculations}
\label{AppendixD}

Here we give the results of the analysis of signatures done in Section \ref{Sec:signature}.

We find the set of unit roots, with positive imaginary part, of Alexander polynomials of knots in $E$, of which there are $70$. The midpoint of each consecutive pair of values is found; these are the $\delta_i$ listed along the top of the table. 
Evaluating the signature function of each knot at each of the $\delta_i$ gives us a complete picture of the signature function of each knot.  The matrix presented here is the result of these calculations put into reduced echelon form.  It allows us to read off a basis for the kernel of the signature function as well as a basis for the knots which are of infinite order.

Non-zero values are highlighted to make reading the table easier.

\begin{sidewaysfigure}
\centering
\includegraphics[width=24cm]{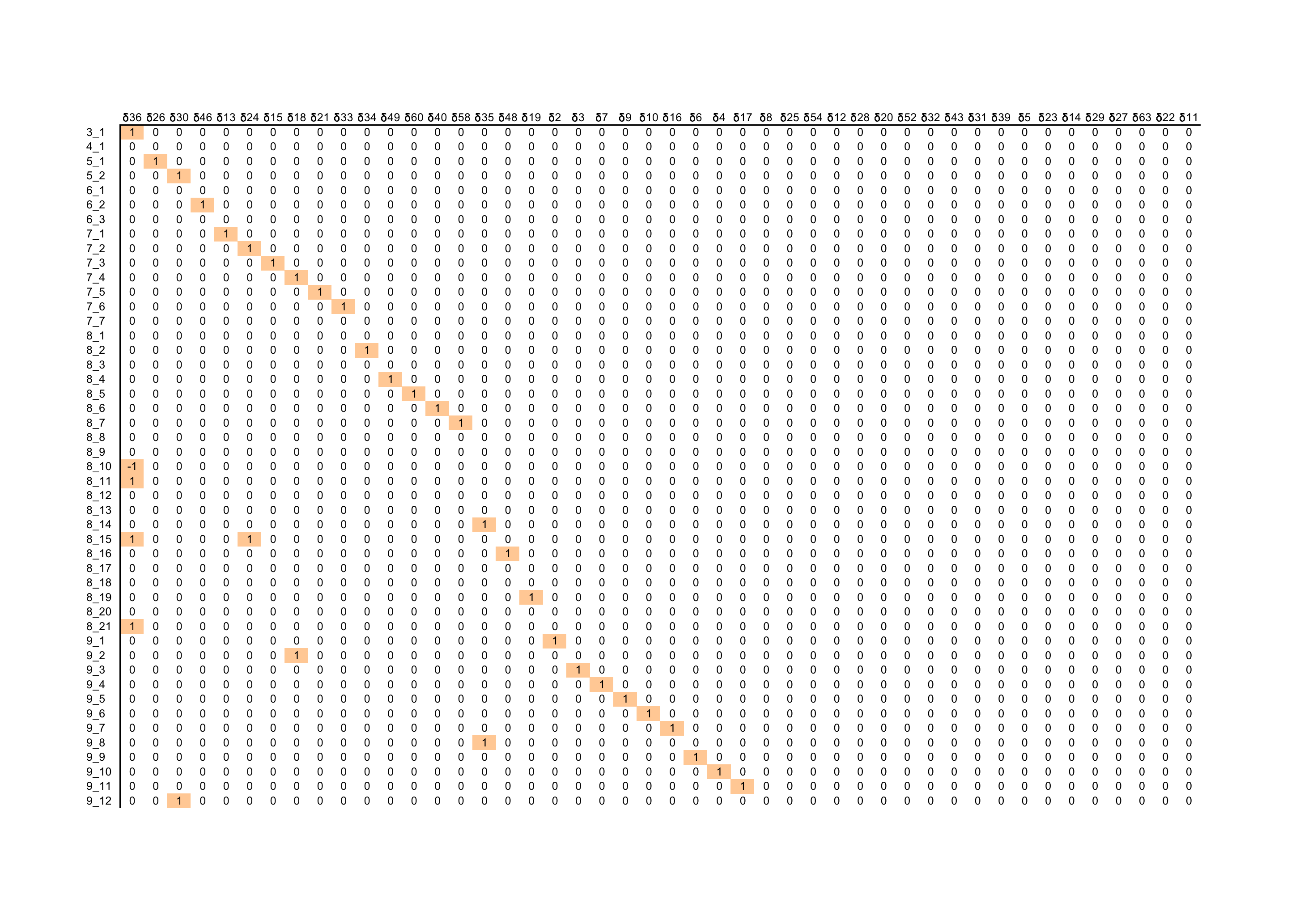}
\end{sidewaysfigure}

\newpage
\begin{sidewaysfigure}
\centering
\includegraphics[width=24cm]{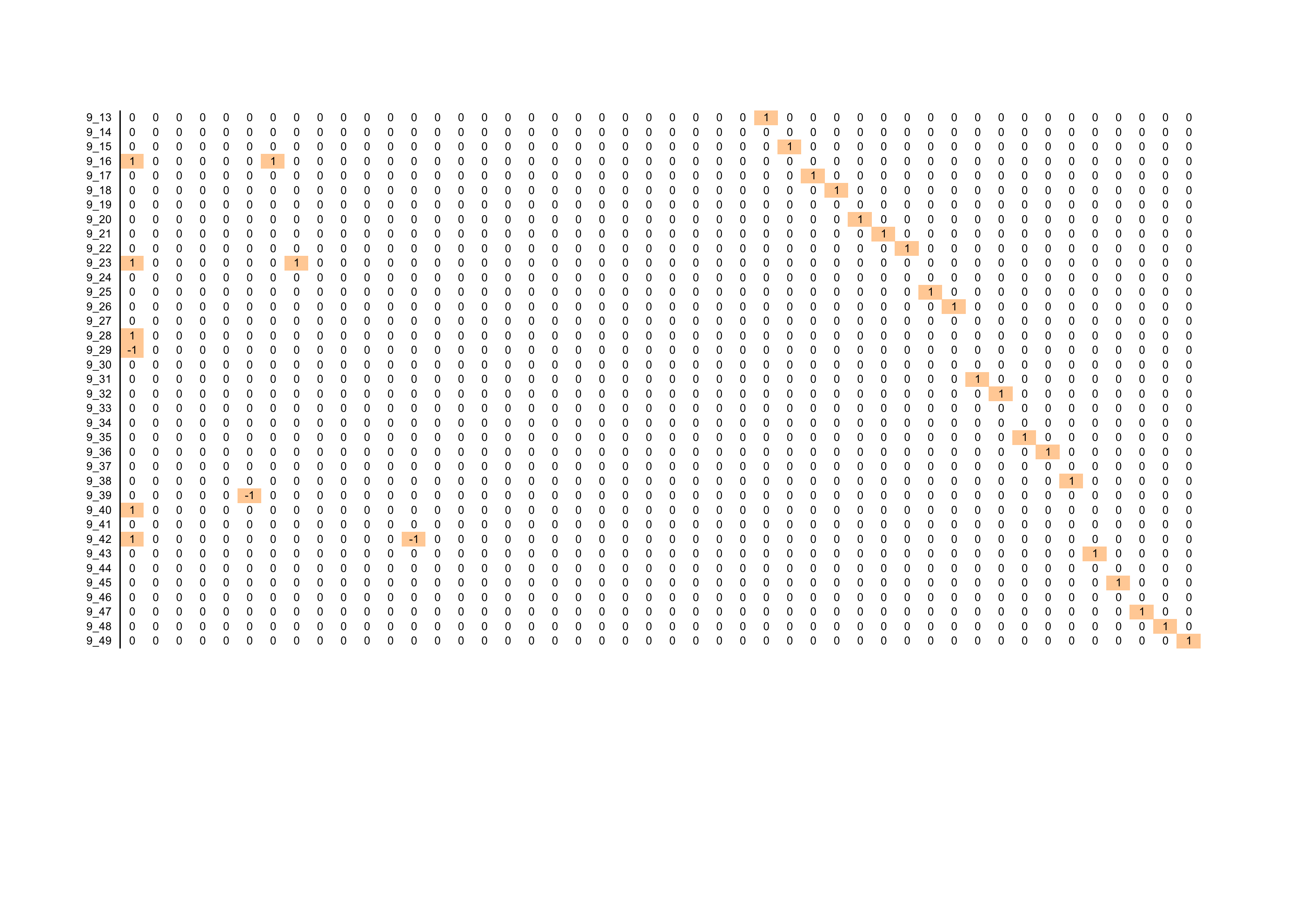}
\end{sidewaysfigure} 
\renewcommand{\chaptername}[1]{Appendix B. }
\renewcommand{\chaptermark}[1]{\markboth{\chaptername \ #1}{}}
\chapter{Factorised Alexander polynomials for knots in $E$}
\label{AppendixF}

Here we give a list of the Alexander polynomials for the knots in $B_\sigma \subset \CF_E$; that is, those which are a basis for the kernel of the signature function. The polynomials are factorised over $\Z[t,t^{-1}]$ and the determinant of the knot, which is the Alexander polynomial evaluated at $t=-1$, is given. These calculations are necessary for the analysis in Chapter \ref{chapter4}; in particular, it is the symmetric irreducible factors which are important in the algebraic concordance classification.

\newpage

\begin{tabular}{ccc}
Knot $K$ & $\Delta_K(t)$ & $\Delta_K(-1)$ \\ \hline

$4_1$ & $1-3t+t^2$ & $5$\\
$6_1$ & $(2-t)(-1+2t)$ & $3^2$\\
$6_3$ & $1-3t+5t^2-3t^3+t^4$ & $13$ \\
$7_7$ & $1-5t+9t^2-5t^3+t^4$ & $3\cdot 7$ \\
$8_1$ & $3-7t+3t^2$ & $13$ \\
$8_3$ & $4-9t+4t^2$ & $17$ \\
$8_8$ & $(1-2t+2t^2)(t^2-2t+2)$ & $5^2$\\
$8_9$ & $(t^3-2t^2+t-1)(t^3-t^2+2t-1)$ & $5^2$ \\
$(8_{10}+3_1)$ & $(1-t+t^2)^4$ & $3^4$ \\
$(8_{11}-3_1)$ & $(1-t+t^2)^2(t-2)(2t-1)$ & $3^4$ \\
$8_{12}$ & $1-7t+13t^2-7t^3+t^4$ & $29$ \\
$8_{13}$ & $2-7t+11t^2-7t^3+2t^4$ & $29$ \\
$(8_{15}-7_2-3_1)$ & $(1-t+t^2)^2(3-5t+3t^2)^2$ & $3^2 \cdot 11^2$ \\
$8_{17}$ & $1-4t+8t^2-11t^3+8t^4-4t^5+t^6$ & $37$ \\
$8_{18}$ & $(1-3t+t^2)(1-t+t^2)^2$ & $3^2 \cdot 5$ \\
$8_{20}$ & $(1-t+t^2)^2$ & $3^2$\\
$(8_{21}-3_1)$ & $(1-t+t^2)^2(1-3t+t^2)$ & $3^2 \cdot 5$ \\
$(9_2-7_4)$ & $(4-7t+4t^2)^2$ & $3^2 \cdot 5^2$ \\
$(9_8-8_{14})$ & $(2-8t+11t^2-8t^3+2t^4)^2$ & $31^2$ \\
$(9_{12}-5_2)$ & $(2-3t+2t^2)^2(1-3t+t^2)$ & $5 \cdot 7^2$ \\
$9_{14}$ & $2-9t+15t^2 - 9t^3 + 2t^4$ & $37$ \\
$(9_{16}-7_3-3_1)$ & $(1-t+t^2)^2(2-3t+3t^2-3t^3+2t^4)^2$ & $3^3 \cdot 13^2$ \\
$9_{19}$ & $2-10t+17t^2-10t^3+2t^4$ & $41$ \\
$(9_{23}-7_4-3_1)$ & $(1-t+t^2)^2(4-7t+4t^2)^2$ & $3^4 \cdot 5^2$ \\
$9_{24}$ & $(1-t+t^2)^2(1-3t+t^2)$ & $3^2 \cdot 5$ \\
$9_{27}$ & $(t^3-3t^2+2t-1)(t^3-2t^2+3t-1)$ & $7^2$ \\
$(9_{28}-3_1)$ & $(1-t+t^2)^2(1-4t+7t^2-4t^3+t^4)$ & $3^2 \cdot 17$ \\
$(9_{29}+3_1)$ & $(1-t+t^2)^2(1-4t+7t^2-4t^3+t^4)$ & $3^2 \cdot 17$ \\
$9_{30}$ & $1-5t+12t^2-17t^3+12t^4-5t^5+t^6$ & $53$ \\
$9_{33}$ & $1-6t+14t^2-19t^3+14t^4-6t^5+t^6$ & $61$ \\
$9_{34}$ & $1-6t+16t^2-23t^3+16t^4-6t^5+t^6$ & $3 \cdot 23$ \\
$9_{37}$ & $(1-3t+t^2)(t-2)(2t-1)$ & $3^2 \cdot 5$ \\
$(9_{39}+7_2)$ & $(1-3t+t^2)(3-5t+3t^2)^2$ & $5 \cdot 11^2$ \\
$(9_{40}-3_1)$ & $(1-t+t^2)^2(1-3t+t^2)^2$ & $3^2 \cdot 5^2$ \\
$9_{41}$ & $(t^2-3t+3)(3t^2-3t+1)$ & $7^2$ \\
$(9_{42}+8_5-3_1)$ & $(1-t+t^2)^2(1-2t+t^2-2t^3+t^4)^2$ & $3^2 \cdot 7^2$ \\
$9_{44}$ & $1-4t+7t^2-4t^3+t^4$ & $17$ \\
$9_{46}$ & $(t-2)(2t-1)$ & $3^2$ \\
$(8_{17}-8_{17}^r)$ & $(1-4t+8t^2-11t^3+8t^4-4t^5+t^6)^2$ & $37^2$ \\
$(9_{32}-9_{32}^r)$ & $(1-6t+14t^2-17t^3+14t^4-6t^5+t^6)^2$ & $59^2$ \\
$(9_{33}-9_{33}^r)$ & $(1-6t+14t^2-19t^3+14t^4-6t^5+t^6)^2$ & $61^2$
\end{tabular} 
\renewcommand{\chaptername}[1]{Appendix C. }
\renewcommand{\chaptermark}[1]{\markboth{\chaptername \ #1}{}}
\chapter{Witt group calculations for $p\equiv 3$ mod 4}
\label{AppendixA}

Here we analyse the image of the knots in $B_{\sigma}$ in the Witt groups $W(\F_p)$ for $p\equiv 3$ mod $4$. More precisely, for each knot we compute a Seifert matrix $V$ and the corresponding isometric structure $(V+V^T, V^{-1}V^T)$, then look at the image of this form in $W(\F_p)$, restricted to the $\lambda(t)$-primary component for each $\lambda(t)$ an irreducible symmetric factor of the Alexander polynomial $\Delta_V(t)$. The method of finding this image is detailed in Section \ref{Sec:ElementsOrder4}. The entries of the following table are elements in $\Z_4$.

\newpage

\begin{tabular}{lcccccccccc}
Polynomial & \begin{sideways}$1-t+t^2$\end{sideways} & \begin{sideways}$1-5t+9t^2-5t^3+t^4$\end{sideways} & \begin{sideways}$1-5t+9t^2-5t^3+t^4$\end{sideways} & \begin{sideways}$3-5t+3t^2$\end{sideways} & \begin{sideways}$4-7t+4t^2$\end{sideways} & \begin{sideways}$2-8t+11t^2-8t^3+2t^4$\end{sideways} & \begin{sideways}$2-3t+2t^2$\end{sideways} & \begin{sideways}$1-6t+16t^2-23t^3+16t^4-6t^5+t^6$\end{sideways} & \begin{sideways}$1-6t+16t^2-23t^3+16t^4-6t^5+t^6$\end{sideways} & \begin{sideways}$1-2t+t^2-2t^3+t^4$\end{sideways}\\ \hline
Prime & 3 & 3 & 7 & 11 & 3 & 31 & 7 & 3 & 23 & 7 \\ \hline
$7_7$ & 0 & 3 & 3 & 0 & 0 & 0 & 0 & 0 & 0 & 0 \\
$(8_{10}+3_1)$ & 0 & 0 & 0 & 0 & 0 & 0 & 0 & 0 & 0 & 0 \\
$(8_{11}-3_1)$ & 0 & 0 & 0 & 0 & 0 & 0 & 0 & 0 & 0 & 0 \\
$(8_{15}-7_2-3_1)$ & 2 & 0 & 0 & 2 & 0 & 0 & 0 & 0 & 0 & 0\\
$8_{18}$ & 2 & 0 & 0 & 0 & 0 & 0 & 0 & 0 & 0 & 0 \\
$8_{20}$ & 0 & 0 & 0 & 0 & 0 & 0 & 0 & 0 & 0 & 0 \\
$(8_{21}-3_1)$ & 2 & 0 & 0 & 0 & 0 & 0 & 0 & 0 & 0 & 0\\
$(9_2-7_4)$ & 0 & 0 & 0 & 0 & 2 & 0 & 0 & 0 & 0 & 0\\
$(9_8-8_{14})$ & 0 & 0 & 0 & 0 & 0 & 0 & 0 & 0 & 0 & 0\\
$(9_{12}-5_2)$ & 0 & 0 & 0 & 0 & 0 & 0 & 2 & 0 & 0 & 0\\
$(9_{16}-7_3-3_1)$ & 2 & 0 & 0 & 0 & 0 & 0 & 0 & 0 & 0 & 0\\
$(9_{23}-7_4-3_1)$& 0 & 0 & 0 & 0 & 2 & 0 & 0 & 0 & 0 & 0 \\
$9_{24}$ & 0 & 0 & 0 & 0 & 0 & 0 & 0 & 0 & 0 & 0\\
$(9_{28}-3_1)$ & 2 & 0 & 0 & 0 & 0 & 0 & 0 & 0 & 0 & 0\\
$(9_{29}+3_1)$ & 2 & 0 & 0 & 0 & 0 & 0 & 0 & 0 & 0 & 0\\
$9_{34}$ & 0 & 0 & 0 & 0 & 0 & 0 & 0 & 3 & 1 & 0\\
$(9_{39}+7_2)$ & 0 & 0 & 0 & 0 & 0 & 0 & 0 & 0 & 0 & 0\\
$(9_{40}-3_1)$ & 2 & 0 & 0 & 0 & 0 & 0 & 0 & 0 & 0 & 0\\
$(9_{42}+8_5-3_1)$ & 2 & 0 & 0 & 0 & 0 & 0 & 0 & 0 & 0 & 2
\end{tabular} 

\renewcommand{\chaptername}[1]{Appendix D. }
\renewcommand{\chaptermark}[1]{\markboth{\chaptername \ #1}{}}
\chapter{Witt group calculations for $p\equiv 1$ mod 4}
\label{AppendixB}

Here we analyse the image of the knots in $B_{3}$ in the Witt groups $W(\F_p)$ for $p\equiv 1$ mod $4$. As in Appendix \ref{AppendixA}, for each knot we compute a Seifert matrix $V$ and the corresponding isometric structure $(V+V^T, V^{-1}V^T)$, then look at the image of this form in $W(\F_p)$, restricted to the $\lambda(t)$-primary component for each $\lambda(t)$ an irreducible symmetric factor of the Alexander polynomial $\Delta_V(t)$. The entries of the following table are elements in $\Z_2 \oplus \Z_2$.

\medskip

\begin{tabular}{lcccc}
Polynomial & \begin{sideways}$1-3t+t^2$\end{sideways} & \begin{sideways}$2-3t+3t^2-3t^3+2t^4$\end{sideways} & \begin{sideways}$4-7t+4t^2$\end{sideways} & \begin{sideways}$1-4t+7t^2-4t^3+t^4$\end{sideways} \\ \hline
Prime & 5 & 13 & 5 & 17  \\ \hline
$(8_{21}-8_{18}-3_1)$ & 0 & 0 & 0 & 0 \\
$(9_{16}-8_{18}-7_3-4_1-3_1)$ & (1,1) & (1,1) & 0 & 0  \\
$(9_{23}-9_2-3_1)$ & 0 & 0 & 0 & 0  \\
$(9_{24}-4_1)$ & 0 & 0 & 0 & 0 \\
$(9_{29}-9_{28}+2(3_1))$ & 0 & 0 & 0 & 0  \\
$(9_{37}-4_1)$ & 0 & 0 & 0 & 0  \\
$(9_{39}+7_2-4_1)$ & 0 & 0 & 0 & 0 \\
$(9_{40}-8_{18}-4_1-3_1)$ & 0 & 0 & 0 & 0 \\
$(9_{44}-9_{28}+8_{18}-4_1+3_1)$ & (1,1) & 0 & 0 & (1,1) \\
\end{tabular} 

\renewcommand{\chaptername}[1]{Appendix E. }
\renewcommand{\chaptermark}[1]{\markboth{\chaptername \ #1}{}}
\chapter{Twisted Alexander polynomial calculations}
\label{AppendixC}

Here we list the knots analysed in Section \ref{Subsec:infiniteorder}, giving the trivial ($\Delta_{\chi_0}$) and non-trivial ($\Delta_{\chi}/(t-1)$) twisted Alexander polynomials factorised into irreducibles over $\Q(\zeta_q)[t,t^{-1}]$ for each knot.  The prime $q$ being used is given, and $\zeta_q$ is abbreviated to $w$ in each case. Polynomials are written modulo norms; thus, for example, in the first calculation the trivial twisted polynomial corresponding to $7_2$ is \emph{not} a norm.

The program used for these computations (and for those in Chapter \ref{chapter8}) was adapted from one written by Herald, Kirk and Livingston, described fully in \cite{HeraldKirkLivingston10} and run in Maple 13. This algorithm is slightly different from the one described in Section \ref{Sec:twistpolyobstr} in that it takes advantage of a relationship between twisted polynomials associated to representations of $\pi_1(X_p)$ and twisted polynomials associated to representations of $\pi_1(X)$ in order to simplify calculations. Group presentations of $\pi_1(X_p)$ become complicated very quickly as $p$ increases, so this algorithm makes computation much faster for the polynomials associated to the higher branched covers.

In addition to the twisted Alexander polynomials, we also give the relevant linking forms for each knot (since these are necessary for the use of Theorem \ref{Thm:infiniteordersum}) and say whether each one is a square in $\Z_q$. The linking form for $\Sigma_2$ is found by calculating $(V+V^T)^{-1}$ for a Seifert matrix $V$ and taking one of the diagonal entries.

\medskip

$\mathbf{K=8_{15}-7_2-3_1}$, $\mathbf{H_1(\Sigma_2;\Z) = \Z_{33} \oplus \Z_{11} \oplus \Z_3}$, $\mathbf{q=11}$, $\lk_{8_{15}}(3,3) = \frac{-5}{11}$ (not square), $\lk_{-7_2}(1,1) = \frac{-1}{11}$ (not square);

\smallskip

\begin{tabular}{cccl}
$8_{15}$ & $\Delta_{\chi_0}$ & $=$ & $(9t^2-7t+9)(t^2+t+1)$\\
& $\Delta_{\chi_1}$ & $=$ & $1+t(w^3-w^4+w^8-w^7+3w^5+3w^6-5)+t^2$\\
$7_2$ & $\Delta_{\chi_0}$ & $=$ & $9t^2-7t+9$\\
& $\Delta_{\chi_1}$ & $=$ & $1$\\
$3_1$ & $\Delta_{\chi_0}$ & $=$ & $t^2+t+1$
\end{tabular}

\medskip

where
\[ 9t^2-7t+9 = \frac{1}{9}(9t-1+5w+5w^3+5w^4+5w^5+5w^9)(9t-1+5w^2+5w^6+5w^7+5w^8+5w^{10}) \]

\medskip

$\mathbf{K=9_2-7_4}$, $\mathbf{H_1(\Sigma_2;\Z)= \Z_{15} \oplus \Z_{15}}$, $\mathbf{q=3}$, $\lk_{9_2}(5,5) = \frac{-1}{3}$ (not square), $\lk_{-7_4}(5,5) = \frac{-1}{3}$ (not square);

\smallskip

\begin{tabular}{cccl}
$9_{2}$ & $\Delta_{\chi_0}$ & $=$ & $16t^2-17t+16$\\
& $\Delta_{\chi_1}$ & $=$ & $1$\\
$7_4$ & $\Delta_{\chi_0}$ & $=$ & $16t^2-17t+16$\\
& $\Delta_{\chi_1}$ & $=$ & $1$\\
\end{tabular}

\bigskip

$\mathbf{K=9_{12}-5_2}$, $\mathbf{H_1(\Sigma_2;\Z) = \Z_{35} \oplus \Z_7}$, $\mathbf{q=7}$, $\lk_{9_{12}}(5,5)= \frac{2}{7}$ (square), $\lk_{-5_2}(1,1) = \frac{4}{7}$ (square);

\smallskip

\begin{tabular}{cccl}
$9_{12}$ & $\Delta_{\chi_0}$ & $=$ & $(4t^2-t+4)(t^2-7t+1)$\\
& $\Delta_{\chi_1}$ & $=$ & $t^2+t(w^2-w^3-w^4+w^5)+1$\\
$5_2$ & $\Delta_{\chi_0}$ & $=$ & $4t^2-t+4$\\
& $\Delta_{\chi_1}$ & $=$ & $1$\\
\end{tabular}

\medskip

where
\[ 4t^2-t+4 = \frac{1}{4}(4t-2-3w-3w^2-3w^4)(4t+1+3w+3w^2+3w^4) \]

\medskip

$\mathbf{K=9_{16}-7_3-3_1}$, $\mathbf{H_1(\Sigma_2;\Z) = \Z_{39} \oplus \Z_{13} \oplus \Z_3}$, $\mathbf{q=3}$, $\lk_{9_{16}}(13,13) = \frac{-1}{3}$ (not square), $\lk_{-3_1}(1,1) = \frac{-1}{3}$ (not square);

\smallskip

\begin{tabular}{cccl}
$9_{16}$ & $\Delta_{\chi_0}$ & $=$ & $(t-w)(t-w^2)(4t^4+3t^3-t^2+3t+4)$\\
& $\Delta_{\chi_1}$ & $=$ & $4t^4-3t^3+2t^2-3t+4$\\
$3_1$ & $\Delta_{\chi_0}$ & $=$ & $(t-w)(t-w^2)$\\
& $\Delta_{\chi_1}$ & $=$ & $1$\\
\end{tabular}

\bigskip

$\mathbf{K=9_{28}-3_1}$, $\mathbf{H_1(\Sigma_2;\Z) =\Z_{51} \oplus \Z_3}$, $\mathbf{q=3}$, $\lk_{9_{28}}(17,17) = \frac{-1}{3}$ (not square), $\lk_{-3_1}(1,1) = \frac{-1}{3}$ (not square);

\smallskip

\begin{tabular}{cccl}
$9_{28}$ & $\Delta_{\chi_0}$ & $=$ & $(t-w)(t-w^2)(t^4-2t^3+19t^2-2t+1)$\\
& $\Delta_{\chi_1}$ & $=$ & $t^4-5t^3+4t^2-5t+1$\\
$3_1$ & $\Delta_{\chi_0}$ & $=$ & $(t-w)(t-w^2)$\\
& $\Delta_{\chi_1}$ & $=$ & $1$\\
\end{tabular}

\bigskip

$\mathbf{K=9_{42}+8_5-3_1}$, $\mathbf{H_1(\Sigma_2;\Z) = \Z_7 \oplus \Z_{21} \oplus \Z_3}$, $\mathbf{q=3}$, $\lk_{8_5}(7,7) = \frac{-1}{3}$ (not square), $\lk_{-3_1}(1,1) = \frac{-1}{3}$ (not square);

\smallskip

\begin{tabular}{cccl}
$8_{5}$ & $\Delta_{\chi_0}$ & $=$ & $(t-w)(t-w^2)(t^4-2t^3-5t^2-2t+1)$\\
& $\Delta_{\chi_1}$ & $=$ & $t^4-5t^3+4t^2-5t+1$\\
$3_1$ & $\Delta_{\chi_0}$ & $=$ & $(t-w)(t-w^2)$\\
& $\Delta_{\chi_1}$ & $=$ & $1$\\
\end{tabular}

\bigskip

$\mathbf{K=9_{29}-9_{28}+2(3_1)}$, $\mathbf{H_1(\Sigma_2;\Z) = \Z_{51} \oplus \Z_{51} \oplus \Z_3 \oplus \Z_3}$, $\mathbf{q=3}$, $\lk_{9_{29}}(17,17) = \frac{1}{3}$ (square), $\lk_{-9_{28}}(17,17) = \frac{1}{3}$ (square), $\lk_{3_1}(1,1) = \frac{1}{3}$ (square);

\smallskip

\begin{tabular}{cccl}
$9_{29}$ & $\Delta_{\chi_0}$ & $=$ & $(t-w)(t-w^2)(t^4-2t^3+19t^2-2t+1)$\\
& $\Delta_{\chi_1}$ & $=$ & $t^4+t^3+t^2+t+1$\\
$9_{28}$ & $\Delta_{\chi_0}$ & $=$ & $(t-w)(t-w^2)(t^4-2t^3+19t^2-2t+1)$\\
& $\Delta_{\chi_1}$ & $=$ & $t^4-5t^3+4t^2-5t+1$\\
$3_1$ & $\Delta_{\chi_0}$ & $=$ & $(t-w)(t-w^2)$\\
& $\Delta_{\chi_1}$ & $=$ & $1$\\
\end{tabular}

\bigskip

$\mathbf{K=9_{40}-8_{18}-4_1-3_1}$ (but consider $\mathbf{2K=2(9_{40}-3_1)}$),\\ $\mathbf{H_1(\Sigma_2(2K);\Z) = 2((\Z_{5} \oplus \Z_5 \oplus \Z_3) \oplus \Z_3)}$, $\mathbf{q=3}$, $\lk_{9_{40}}(25,25) = \frac{-1}{3}$ (not square), $\lk_{-3_1}(1,1) = \frac{-1}{3}$ (not square);

\smallskip

\begin{tabular}{cccl}
$9_{40}$ & $\Delta_{\chi_0}$ & $=$ & $(t-w)(t-w^2)(t^4-2t^3-5t^2-2t+1)$\\
& $\Delta_{\chi_1}$ & $=$ & $t^4-5t^3+4t^2-5t+1$\\
$3_1$ & $\Delta_{\chi_0}$ & $=$ & $(t-w)(t-w^2)$\\
& $\Delta_{\chi_1}$ & $=$ & $1$\\
\end{tabular}

\bigskip

$\mathbf{K=(8_{21}-8_{18}-3_1)}$ (but consider $\mathbf{2K=2(8_{21}-3_1)}$), $\mathbf{H_1(\Sigma_2(2K);\Z) = 2(\Z_{15} \oplus \Z_3)}$, $\mathbf{q=5}$;
\smallskip

\begin{tabular}{cccl}
$8_{21}$ & $\Delta_{\chi_0}$ & $=$ & $(t^2+t+1)$\\
& $\Delta_{\chi_1}$ & $=$ & $t^2 - 3t(w^3+w^2+1) +1$\\
\end{tabular}

\bigskip

$\mathbf{K=(9_{39}+7_2-4_1)}$ (but consider $\mathbf{2K=2(9_{39}+7_2)}$), $\mathbf{H_1(\Sigma_2(2K);\Z) = 2(\Z_{55} \oplus \Z_{11})}$, $\mathbf{q=5}$

\smallskip

\begin{tabular}{cccl}
$9_{39}$ & $\Delta_{\chi_0}$ & $=$ & $(9t^2-7t+9)$\\
& $\Delta_{\chi_1}$ & $=$ & $t^2+t(5w^2+5w^3-3)+1$\\
\end{tabular}

\bigskip

$\mathbf{K=(9_{23}-9_2-3_1)}$, $\mathbf{H_1(\Sigma_2;\Z) = (\Z_9 \oplus \Z_5) \oplus (\Z_5 \oplus \Z_3) \oplus \Z_3}$, $\mathbf{q=5}$, $\lk_{9_{23}}(9,9) = \frac{1}{5}$ (square), $\lk_{-9_2}(3,3) = \frac{-1}{5}$ (square);

\smallskip

\begin{tabular}{cccl}
$9_{23}$ & $\Delta_{\chi_0}$ & $=$ & $(t^2+t+1)(16t^2-17t+16)$\\
& $\Delta_{\chi_1}$ & $=$ & $t^2+t(9w^2+9w^3-7)+1$\\
$9_{2}$ & $\Delta_{\chi_0}$ & $=$ & $16t^2-17t+16$\\
& $\Delta_{\chi_1}$ & $=$ & $1$\\
\end{tabular}

\bigskip

$\mathbf{K=(9_8-8_{14})}$, $\mathbf{H_1(\Sigma_2;\Z) = \Z_{31} \oplus \Z_{31}}$, $\mathbf{q=31}$, $\lk_{9_8}(1,1) = \frac{11}{31}$ (not square), $\lk_{-8_{14}}(1,1) = \frac{-12}{31}$ (square);

\smallskip

\begin{tabular}{cccl}
$9_{8}$ & $\Delta_{\chi_0}$ & $=$ & $4t^4-20t^3+t^2-20t+4$\\
& $\Delta_{\chi_1}$ & $=$ & $1+t^2 + t(-w -3w^2 -5w^3 -7w^4 -10w^5 -11w^6 -12w^7 -13w^8$\\
& & & $-14w^9 -16w^{10}- 18w^{11} -20w^{12} -22w^{13} - 24w^{14} -24w^{15} -24w^{16}$\\
& & & $-24w^{17} -22w^{18}-20w^{19}-18w^{20}-16w^{21}-14w^{22}-13w^{23}-12w^{24}$\\
& & & $-11w^{25}-10w^{26}-7w^{27}-5w^{28}-3w^{29}-w^{30})$\\

$8_{14}$ & $\Delta_{\chi_0}$ & $=$ & $4t^4-20t^3+t^2-20t+4$\\ 
& $\Delta_{\chi_1}$ & $=$ & $1+t^2+t(32w+11w^2+20w^3+26w^4+6w^5+36w^6+5w^7+28w^8$\\
& & & $+17w^9+13w^{10}+30w^{11}+3w^{12}+34w^{13}+9w^{14}+23w^{15}+23w^{16}$\\
& & & $+9w^{17}+34w^{18}+3w^{19}+30w^{20}+13w^{21}+17w^{22}+28w^{23}+5w^{24}$\\
& & & $+36w^{25}+6w^{26}+26w^{27}+20w^{28}+11w^{29}+32w^{30})$
\end{tabular}

\bigskip

$\mathbf{K=(9_{32}-9_{32}^r)}$, $\mathbf{H_1(\Sigma_5;\Z) = \Z_{11} \oplus \Z_{11}} = E_3 \oplus E_4$, $\mathbf{q=11}$

\smallskip

\begin{tabular}{ccl}
$\Delta_{\chi_0}$ & $=$ & $1-16t + 5354t^2 - 10557t^3 + 5354t^4 - 16t^5 + t^6$\\
$\Delta_{\chi_3}$ & $=$ & $1+t(7w+10w^2+7w^3+7w^4+7w^5+10w^6+10w^7+10w^8+7w^9$\\
& & $+10w^{10}) + 23t^2 + t^3(10w+7w^2+10w^3+10w^4+10w^5+7w^6+7w^7$ \\
& & $+7w^8+10w^9+7w^{10}) + t^4$\\
$\Delta_{\chi_4}$ & $=$ & $1-t(15w+12w^2+15w^3+15w^4+15w^5+12w^6+12w^7+12w^8+15w^9$\\
& & $+12w^{10})+133t^2 - t^3(12w+15w^2+12w^3+12w^4+12w^5+15w^6+$\\
& & $15w^7+15w^8+12w^9+15w^{10})+ t^4$ \\
$\Delta_{\chi_3 + \chi_4}$ & $=$ & $1 + t(-4w+w^2+2w^3-3w^4-2w^5-4w^6-2w^7-3w^8+7w^9+5w^{10})$\\
& & $+ t^2(21w+20w^2-7w^3-15w^4+9w^5+9w^6-15w^7-7w^8+20w^9$\\
& & $+21w^{10}) + t^3(5w+7w^2-3w^3-2w^4-4w^5-2w^6-3w^7+2w^8+w^9$\\
& & $-4w^{10}) + t^4$\\
$\Delta_{\chi_3 - \chi_4}$ & $=$ & $1 + t(-3w-2w^2+3w^3+5w^4-7w^5-w^6-3w^7+2w^8-5w^9-3w^{10})$\\
& & $+ t^2(36w+10w^2+13w^3+23w^4+25w^5+25w^6+32w^7+13w^8+10w^9$\\
& & $+36w^{10})+ t^3(-3w-5w^2+2w^3-3w^4-w^5-7w^6+5w^7+3w^8-2w^9$\\
& & $-3w^{10})+ t^4$\\
\end{tabular}

\bigskip

$\mathbf{K=(9_{33}-9_{33}^r)}$, $\mathbf{H_1(\Sigma_5;\Z) = \Z_{101} \oplus \Z_{101}} = E_{36} \oplus E_{87}$, $\mathbf{q=101}$

\smallskip

\begin{tabular}{ccl}
$\Delta_{\chi_0}$ & $=$ & $1-236t - 706t^2 - 8319t^3 - 706t^4 - 236t^5 + t^6$
\end{tabular}

\medskip

\begin{tabular}{ccccccccccccccc}
& \multicolumn{14}{c}{Frequency of coefficients of $w^i$ in coefficient of $t$} \\
Polynomial & 4 & 3 & 2 & 1  & -1 & -2 & -3 & -4 & -5 & -6 & -7 & -8 & -9 & -10\\ \hline
(1,0)      & 0 & 0 & 0 & 10 & 15 & 55 &  5 & 10 &  0 &  0 &  0 &  0 &  0 &   0\\
(0,1)      & 0 & 0 & 0 & 0  &  0 &  0 &  5 &  0 &  0 & 50 &  5 & 35 &  5 &   0\\
(1,1)      & 0 & 0 & 1 & 3  &  9 & 18 & 20 & 25 &  9 &  3 &  0 &  1 &  0 &   0\\
(1,4)      & 0 & 0 & 1 & 2  & 15 & 10 & 22 & 26 &  9 &  3 &  1 &  0 &  0 &   0\\
(1,5)      & 0 & 0 & 0 & 0  &  0 &  2 &  0 &  5 &  4 &  9 &  7 & 13 & 16 &  15\\
(1,6)      & 0 & 0 & 1 & 2  & 18 & 20 & 24 & 19 &  9 &  4 &  0 &  0 &  0 &   0\\
(1,9)      & 0 & 0 & 0 & 0  &  0 &  0 &  1 &  5 &  4 &  8 &  6 & 15 & 22 &  14\\
(1,13)     & 1 & 1 & 5 & 15 & 24 & 16 & 11 &  3 &  0 &  0 &  0 &  0 &  0 &   0\\
(1,19)     & 0 & 0 & 0 & 0  &  2 &  6 & 11 & 25 & 27 & 19 &  6 &  2 &  1 &   1\\
(1,22)     & 0 & 0 & 2 & 6  & 23 & 21 & 21 &  8 &  4 &  0 &  0 &  0 &  0 &   0\\
(1,24)     & 0 & 0 & 0 & 0  &  1 &  2 &  6 & 14 & 22 & 22 & 15 & 15 &  2 &   1\\
(1,25)     & 0 & 0 & 0 & 0  &  0 &  2 &  0 &  2 &  1 &  0 &  6 &  1 &  3 &  11

\end{tabular} 
\renewcommand{\chaptername}[1]{Appendix F. }
\renewcommand{\chaptermark}[1]{\markboth{\chaptername \ #1}{}}
\chapter{Slice movies}
\label{AppendixE}

Here we give details of the slice movies used in Chapter \ref{chapter8} to prove that certain knots were of order $2$. For each of the knot diagrams we show where to make a saddle move (see Section \ref{Subsec:sliceMovie} for details on this procedure) in order to obtain two unlinked knots. In each case, one of the unlinked knots is the unknot and the other is either $4_1$ or $6_3$. Since both $4_1$ and $6_3$ are of order $2$ in $\C$, this means that the original knot must also be of order $2$.

\begin{figure}
  \begin{center}
    \includegraphics[width=15cm]{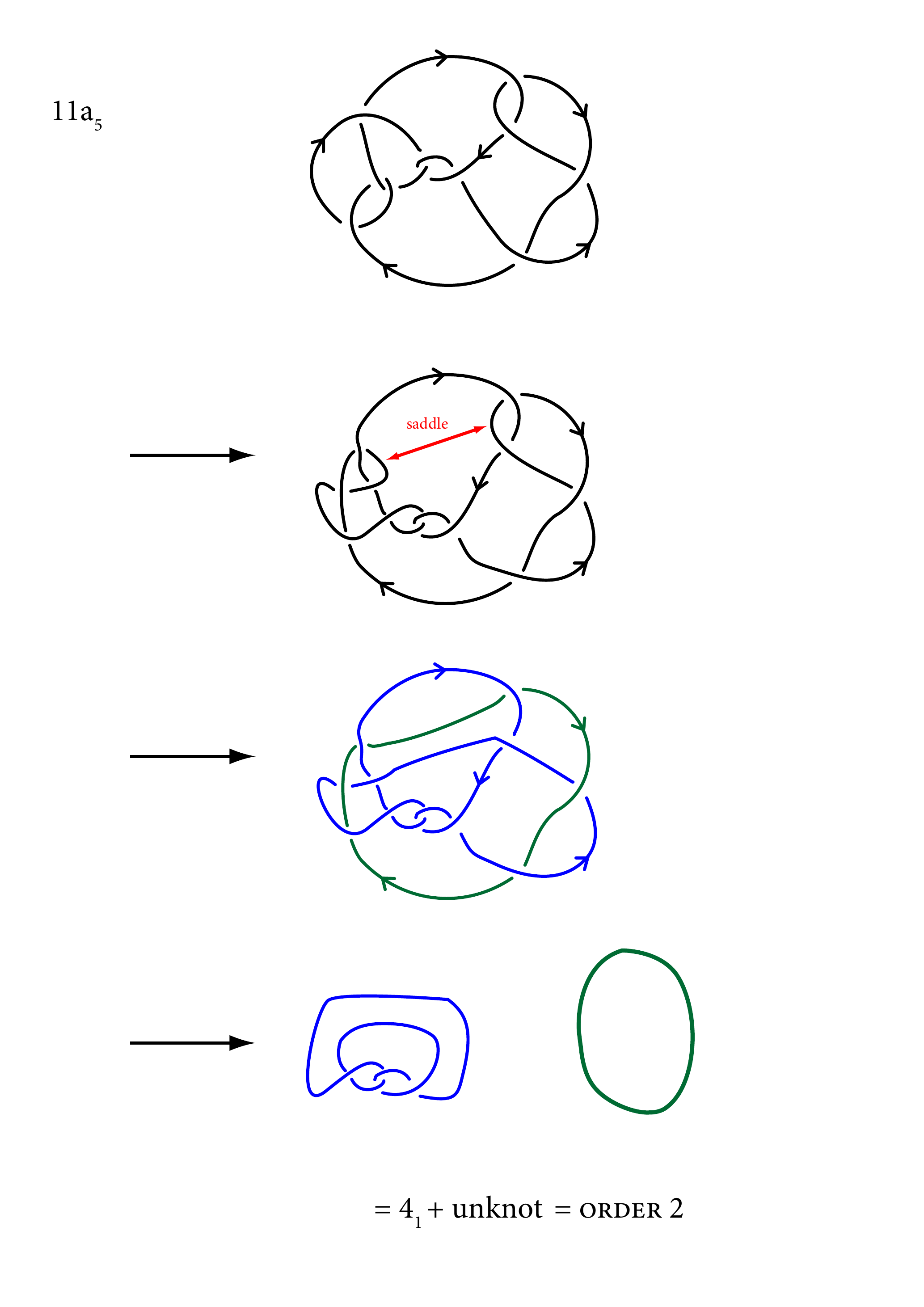}
  \end{center}
  \label{Fig:11a5}
\end{figure}

\begin{figure}
  \begin{center}
    \includegraphics[width=15cm]{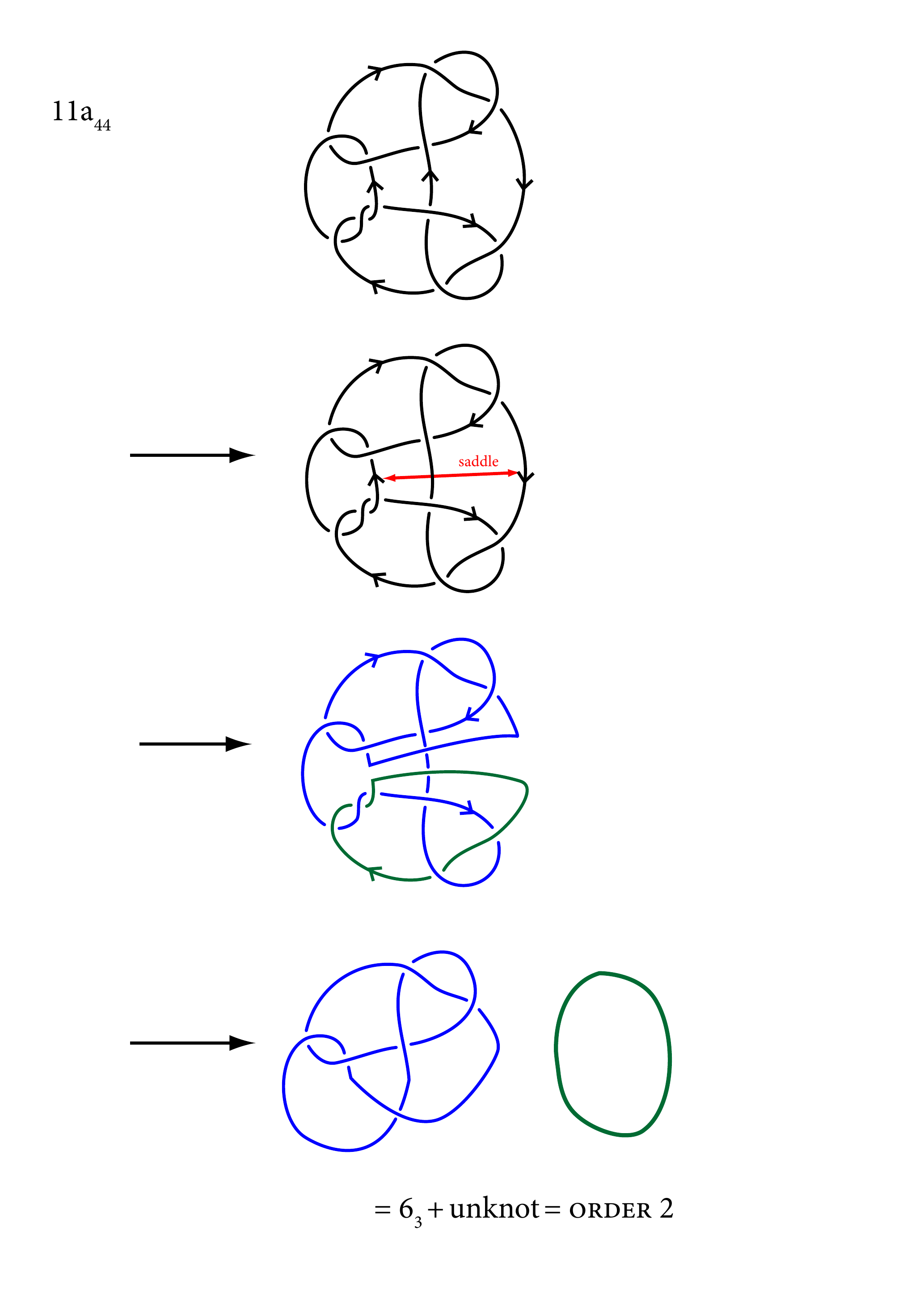}
  \end{center}
  \label{Fig:11a44}
\end{figure}

\begin{figure}
  \begin{center}
    \includegraphics[width=15cm]{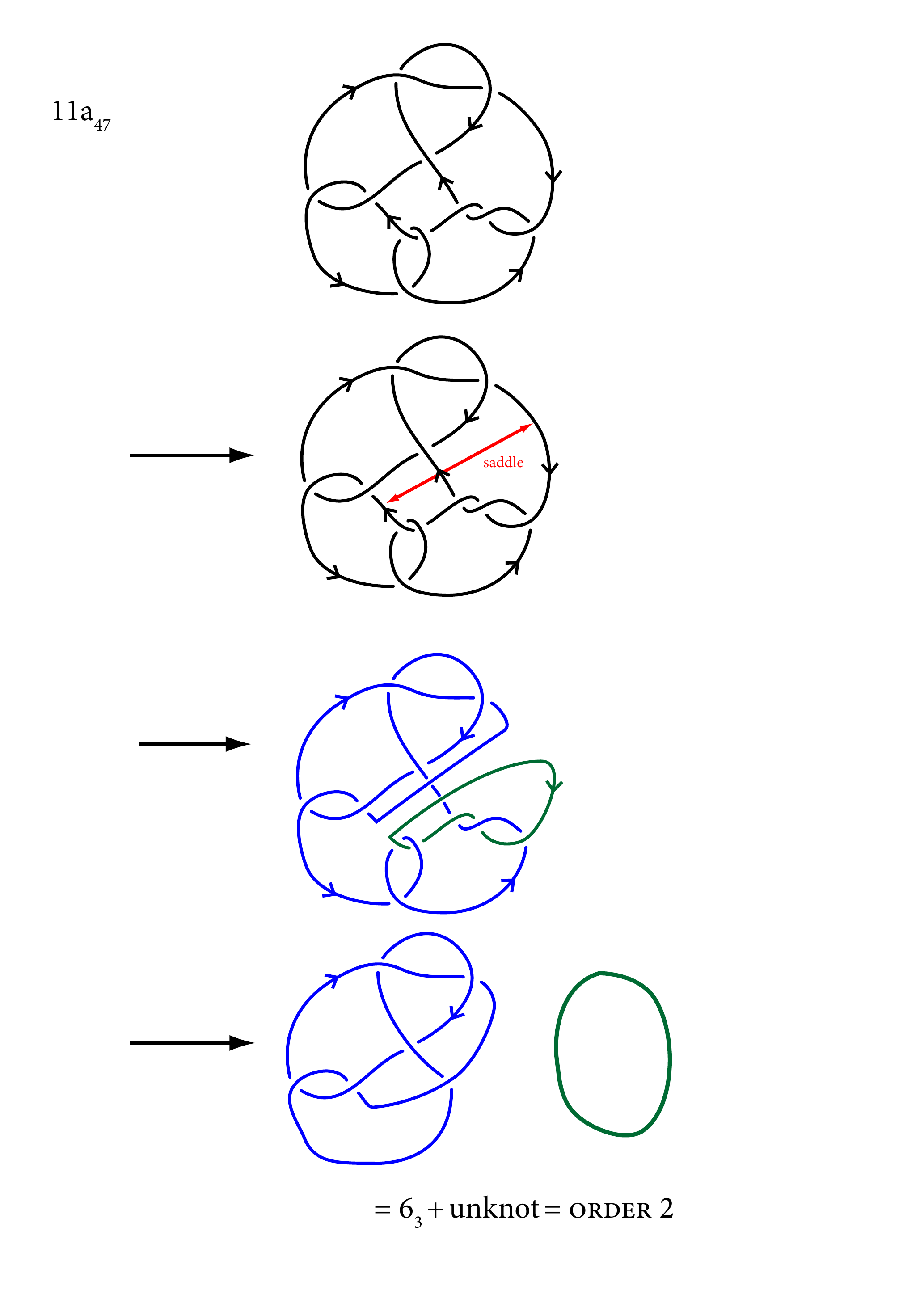}
  \end{center}
  \label{Fig:11a47}
\end{figure}

\begin{figure}
  \begin{center}
    \includegraphics[width=15cm]{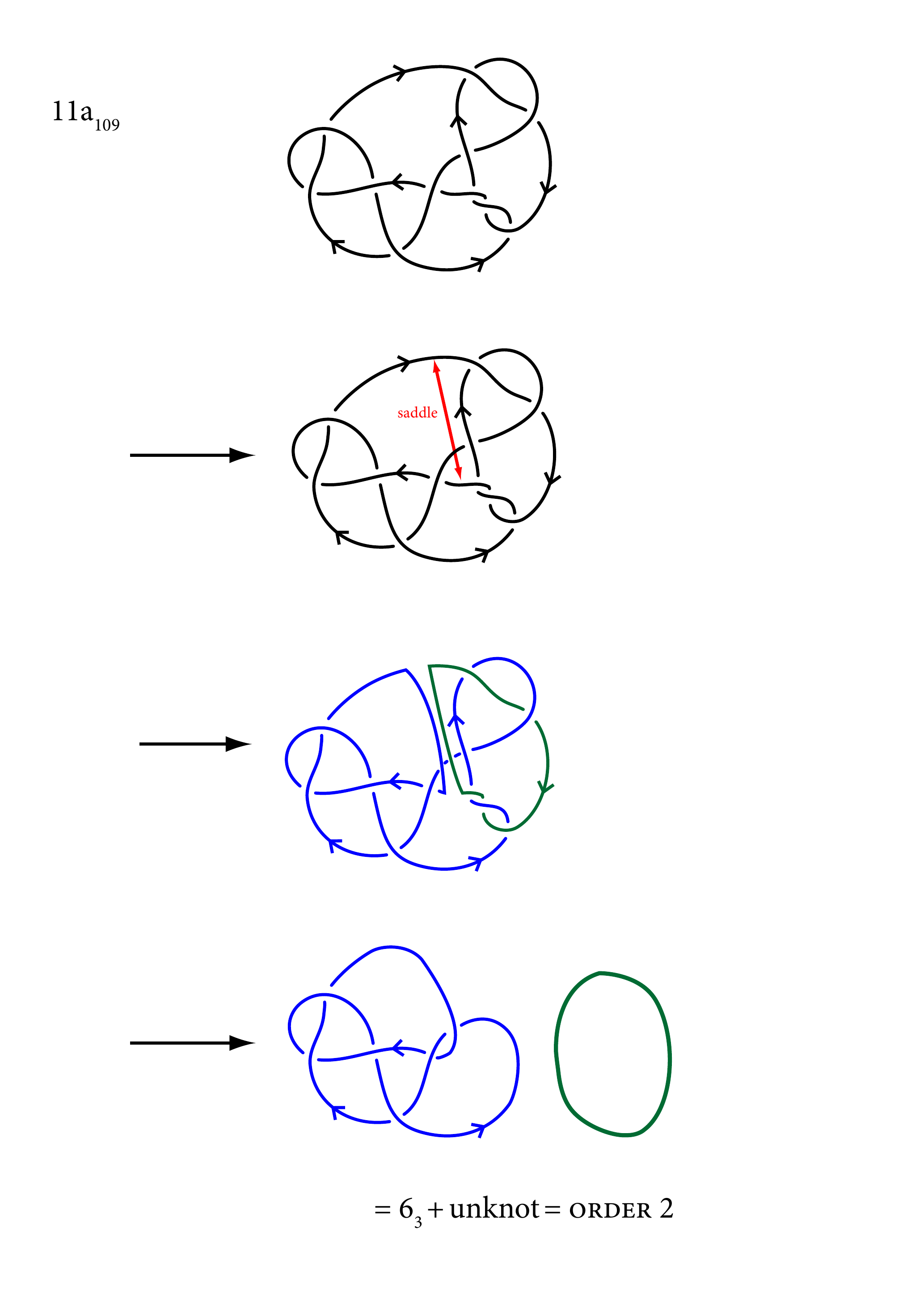}
  \end{center}
  \label{Fig:11a109}
\end{figure}

\begin{figure}
  \begin{center}
    \includegraphics[width=15cm]{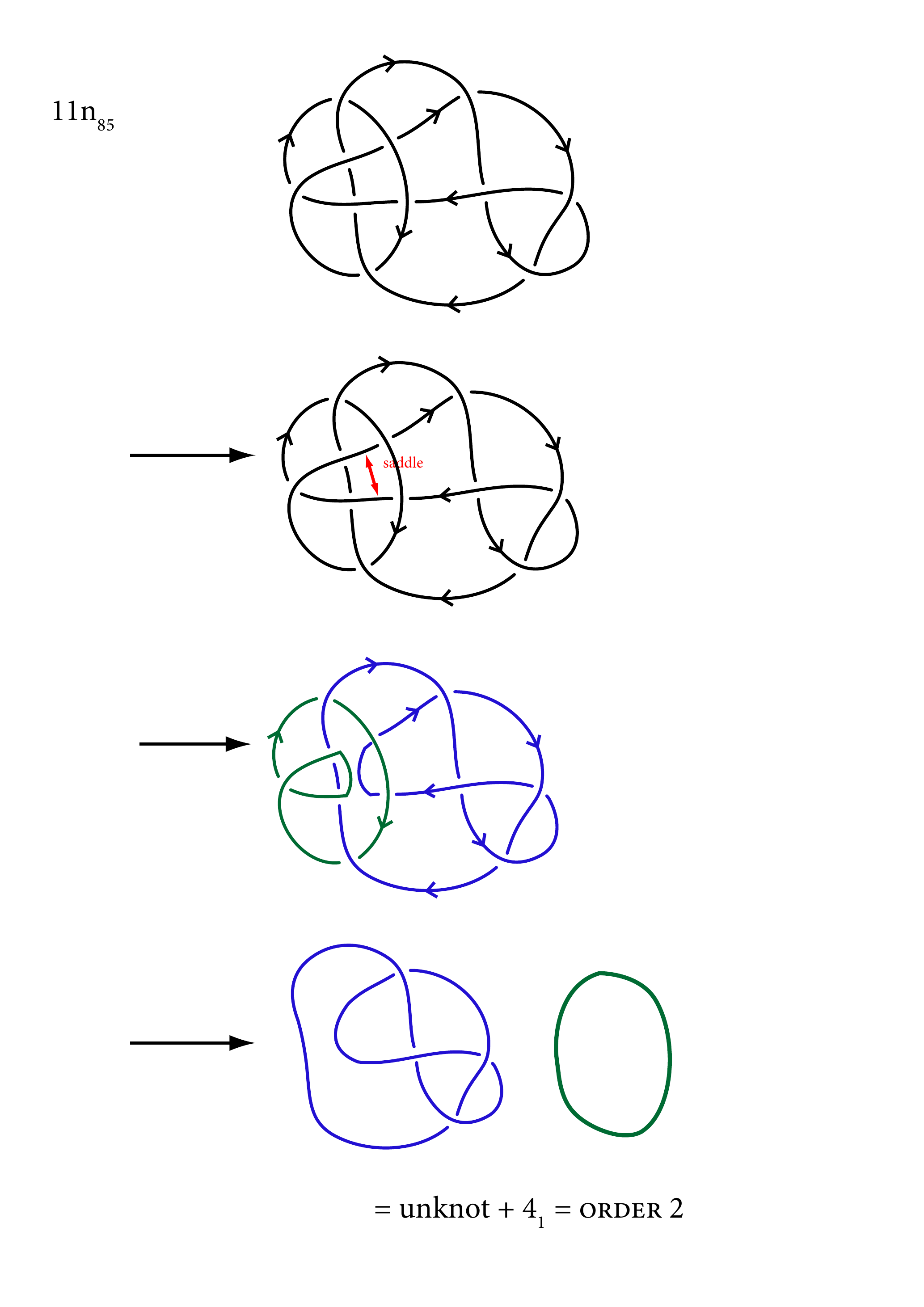}
  \end{center}
  \label{Fig:11n85}
\end{figure}

\begin{figure}
  \begin{center}
    \includegraphics[width=15cm]{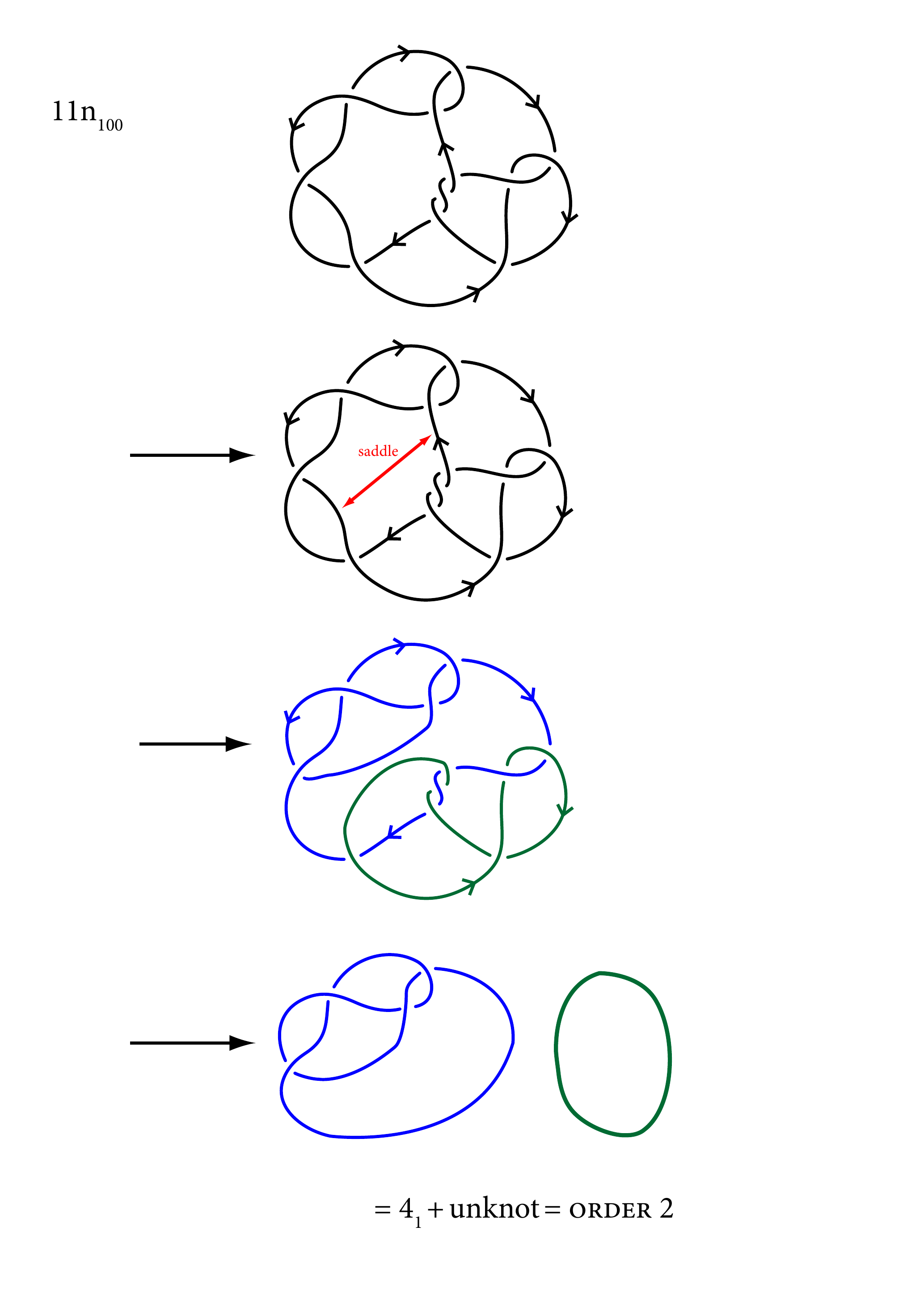}
  \end{center}
  \label{Fig:11n100}
\end{figure}

\begin{figure}
  \begin{center}
    \includegraphics[width=15cm]{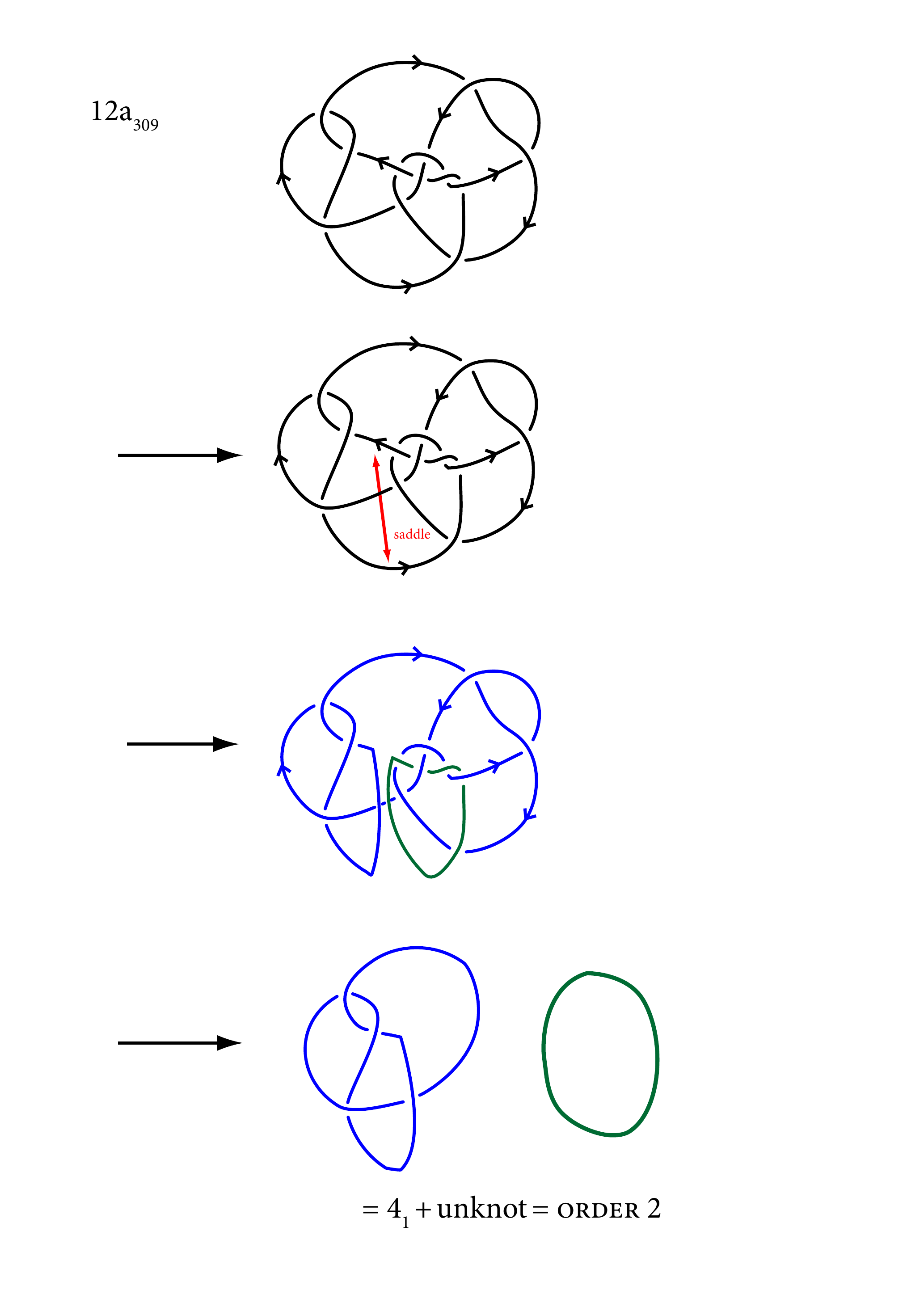}
  \end{center}
  \label{Fig:12a309}
\end{figure}

\begin{figure}
  \begin{center}
    \includegraphics[width=15cm]{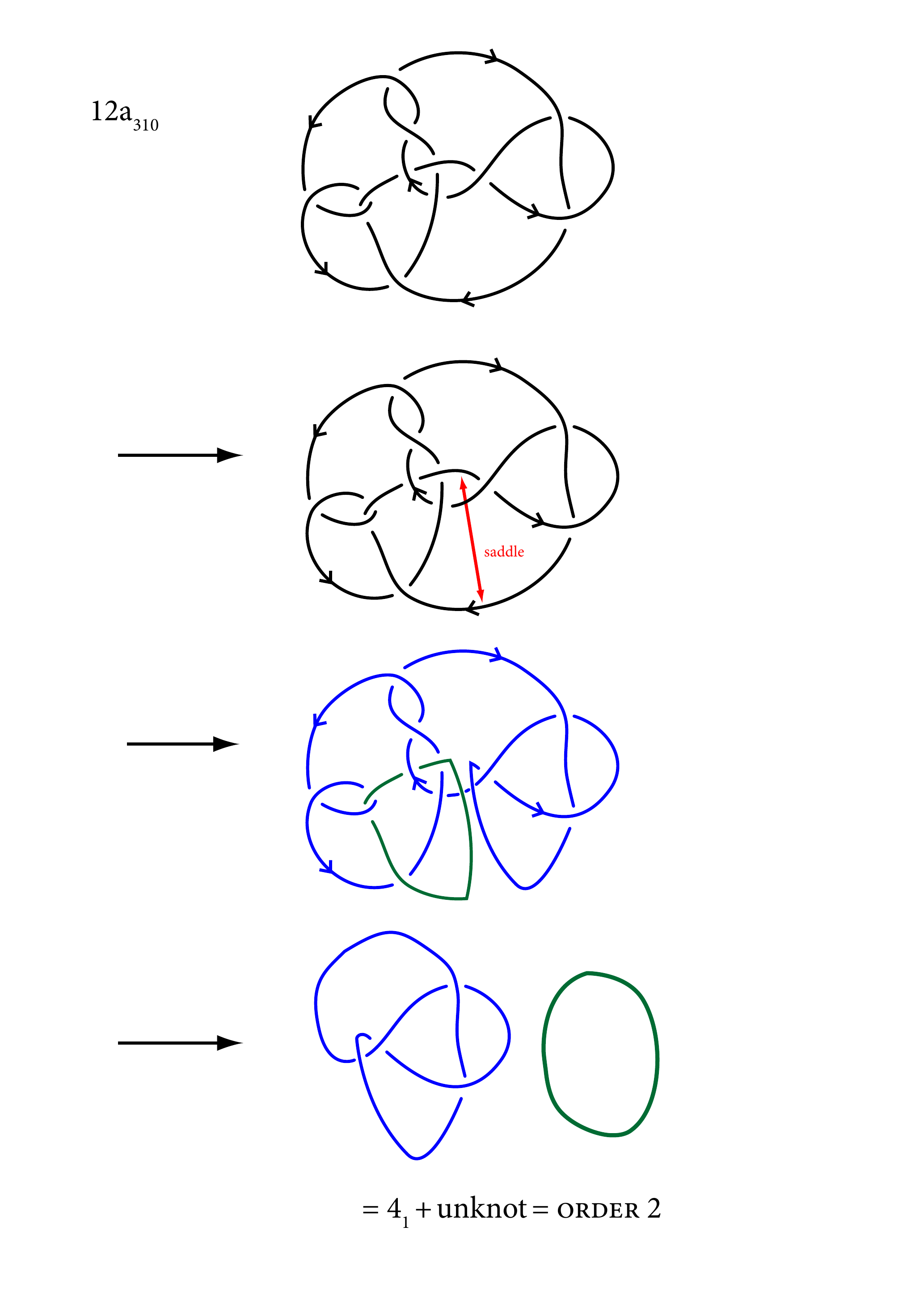}
  \end{center}
  \label{Fig:12a310}
\end{figure}

\begin{figure}
  \begin{center}
    \includegraphics[width=15cm]{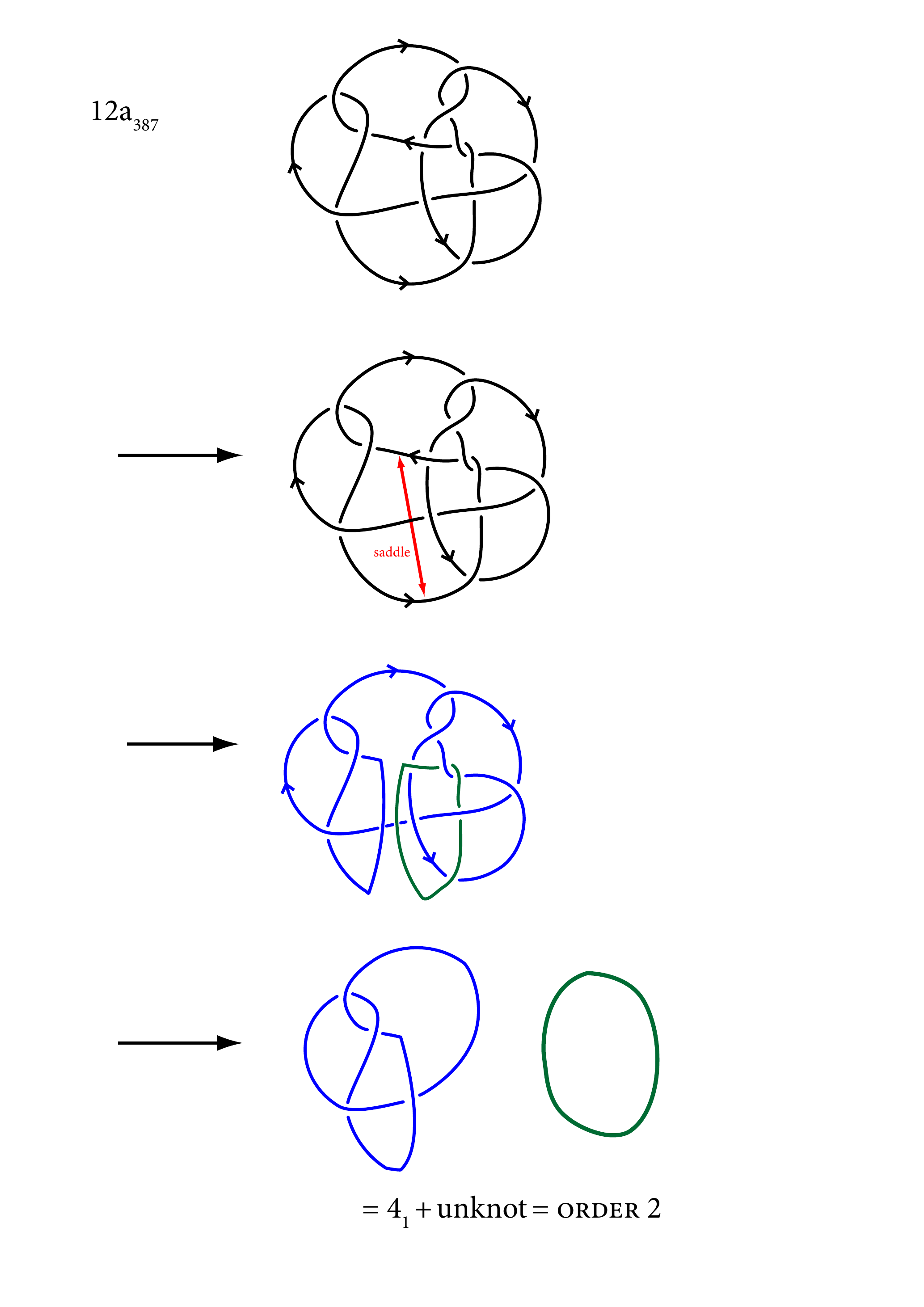}
  \end{center}
  \label{Fig:12a387}
\end{figure}

\begin{figure}
  \begin{center}
    \includegraphics[width=15cm]{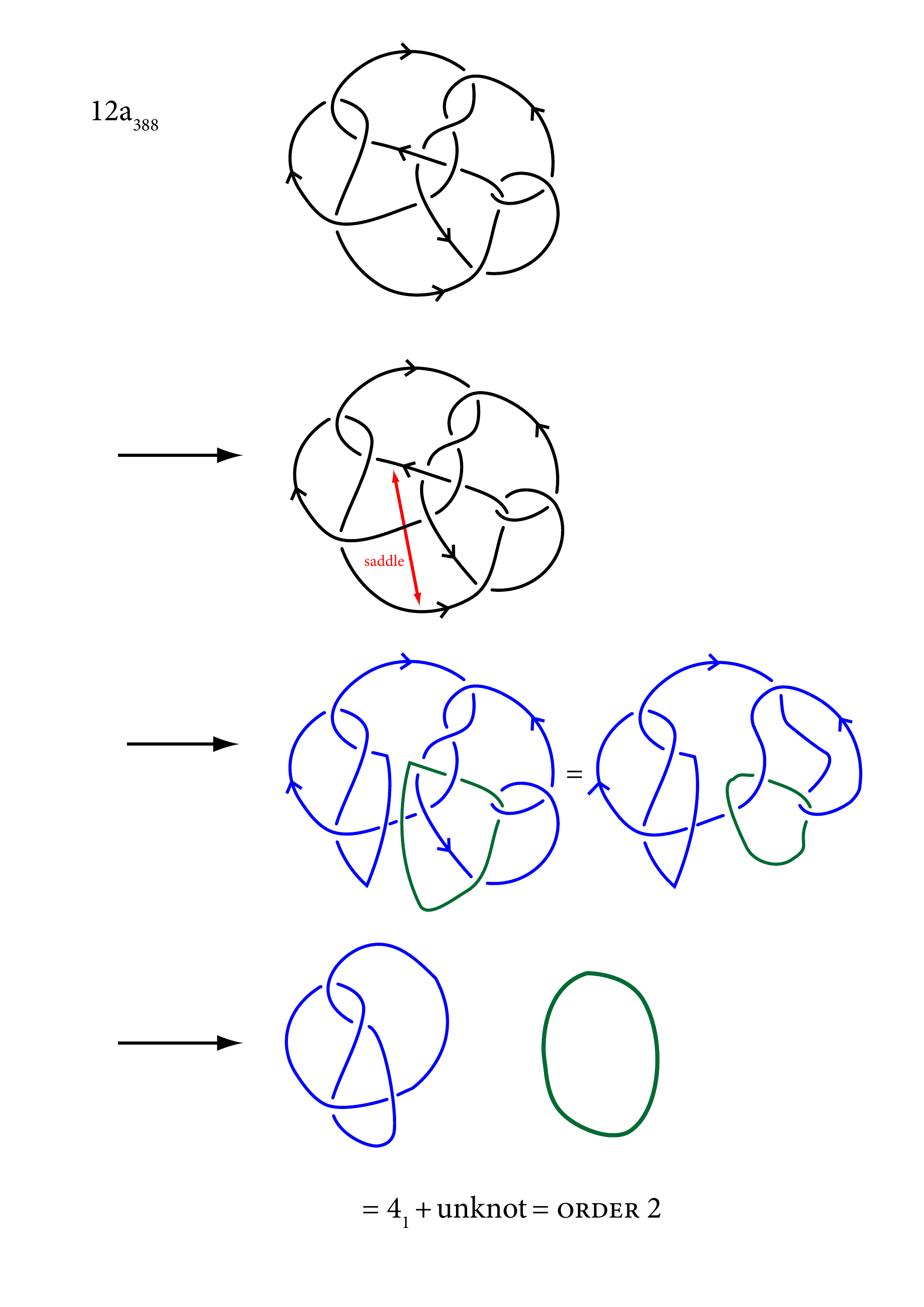}
  \end{center}
  \label{Fig:12a388}
\end{figure}

\begin{figure}
  \begin{center}
    \includegraphics[width=15cm]{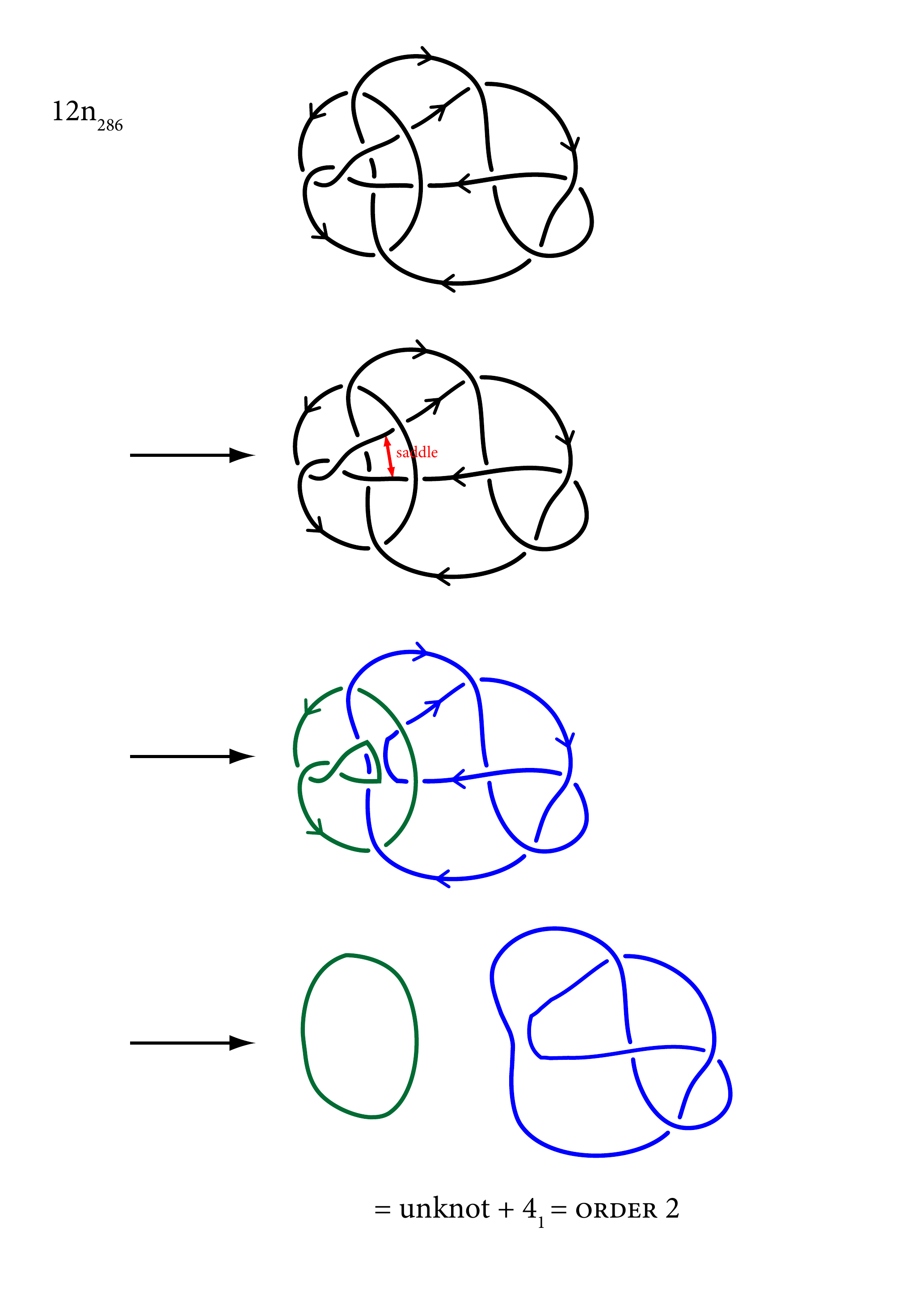}
  \end{center}
  \label{Fig:12n286}
\end{figure}

\begin{figure}
  \begin{center}
    \includegraphics[width=15cm]{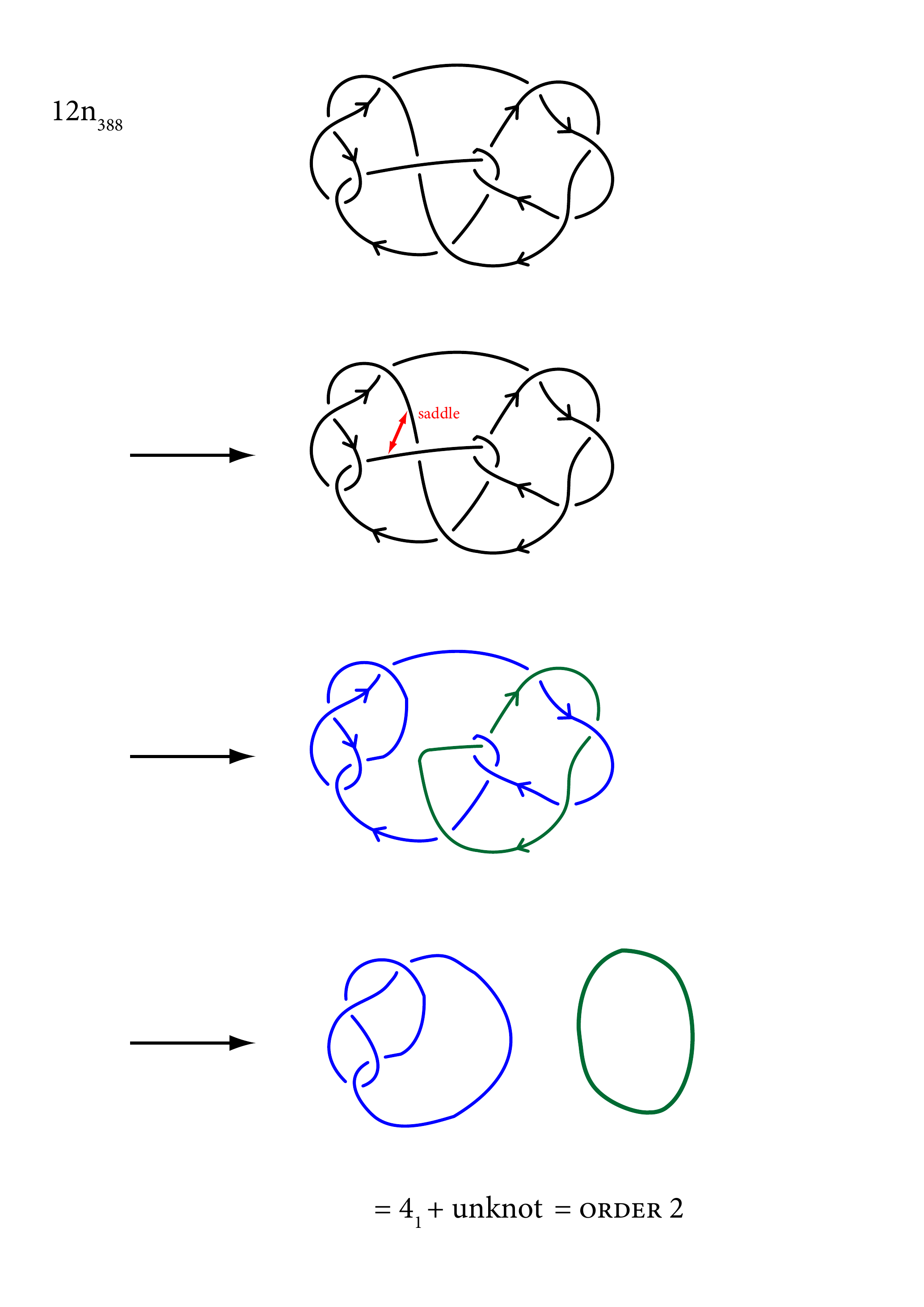}
  \end{center}
  \label{Fig:12n388}
\end{figure}

\small{
\singlespacing

\addcontentsline{toc}{chapter}{Bibliography}
\bibliographystyle{amsalpha}
\bibliography{bibfile}


}
\end{document}